\crefname{ex}{Example}{Examples}
\title{Dualizable presentable $\infty$-categories}
\author{Maxime Ramzi}
\date{}
\newtheorem{thm}{Theorem}[section]
\newtheorem{lm}[thm]{Lemma}
\newtheorem{prop}[thm]{Proposition}
\newtheorem{cor}[thm]{Corollary}
\newtheorem{add}[thm]{Addendum}
\newtheorem*{thm*}{Theorem}
\theoremstyle{definition}
\newtheorem{defn}[thm]{Definition}
\newtheorem{assu}[thm]{Assumption}
\newtheorem{nota}[thm]{Notation}
\newtheorem{ex}[thm]{Example}
\newtheorem{rmk}[thm]{Remark}
\newtheorem{ques}[thm]{Question}
\newtheorem{conj}[thm]{Conjecture}
\newtheorem{warn}[thm]{Warning}
\newtheorem{obs}[thm]{Observation}
\crefname{ques}{Question}{Questions}
\newtheorem{thmx}{Theorem}
\newcommand{\op}{^{\mathrm{op}}}
\newcommand{\cat}{\mathbf}
\newcommand{\Cat}{\cat{Cat}}
\newcommand{\on}{\operatorname}
\newcommand{\id}{\mathrm{id}}
\newcommand{\Fun}{\on{Fun}}
\newcommand{\map}{\on{map}}
\newcommand{\Map}{\on{Map}}
\newcommand{\Hom}{\on{Hom}}
\newcommand{\NN}{\mathbb N}
\newcommand{\Sph}{\mathbb S}
\newcommand{\CMon}{\mathrm{CMon}}
\newcommand{\CGrp}{\mathrm{CGrp}}
\newcommand{\Set}{\cat{Set}}
\newcommand{\Ab}{\cat{Ab}}
\newcommand{\Ss}{\cat S}
\newcommand{\coAlg}{\mathrm{coAlg}}
\newcommand{\Sp}{\cat{Sp}}
\newcommand{\Oo}{\mathcal O}
\newcommand{\PrL}{\cat{Pr}^\mathrm{L} }
\newcommand{\Alg}{\mathrm{Alg}}
\newcommand{\CAlg}{\mathrm{CAlg}}
\newcommand{\LMod}{\cat{LMod}}
\newcommand{\RMod}{\cat{RMod}}
\newcommand{\Mod}{\cat{Mod}}
\newcommand{\coMod}{\cat{coMod}}
\newcommand{\THH}{\mathrm{THH}}
\newcommand{\perf}{\mathrm{perf}}
\newcommand{\Ind}{\mathrm{Ind}}
\newcommand{\pt}{\mathrm{pt}}
\newcommand{\colim}{\mathrm{colim}}
\newcommand{\fib}{\mathrm{fib}}
\newcommand{\Psh}{\cat{Psh}}
\newcommand{\Sh}{\cat{Sh}}
\newcommand{\ca}{\mathrm{ca}}
\newcommand{\dbl}{{\mathrm{dbl}}}
\newcommand{\at}{{\mathrm{at}}}
\newcommand{\one}{\mathbf{1}}
\newcommand{\QCoh}{\mathrm{QCoh}}
\newcommand{\V}{\mathcal V}
\newcommand{\CompAss}{\mathrm{CompAss}}
\newcommand{\st}{\mathrm{st}}
\newcommand{\Dbl}[1]{\Mod(#1)^\dbl}
\newcommand{\Prdbl}{(\PrL_{\st})^\dbl}
\newcommand{\At}[1]{\Mod(#1)^\at}
\newcommand{\M}{\mathcal M}
\newcommand{\N}{\mathcal N}
\newcommand{\D}{\mathcal D}
\newcommand{\C}{\mathcal C}
\newcommand{\E}{\mathcal E}
\newcommand{\PP}{\mathcal P}
\newcommand{\Rig}{\mathrm{Rig}}
\newcommand{\Loc}{\mathrm{Loc}}
\newcommand{\Calk}{\mathrm{Calk}}
\newcommand{\Pro}{\mathrm{Pro}}
\DeclareFontFamily{U}{min}{}
\DeclareFontShape{U}{min}{m}{n}{<-> udmj30}{}
\newcommand{\category}{$\infty$-category}
\newcommand{\categories}{$\infty$-categories}
\begin{document}
\maketitle
\begin{abstract}
    We prove that for any presentably symmetric monoidal $\infty$-category $\mathcal{V}$, the $\infty$-category $\mathbf{Mod}_\mathcal{V}(\mathbf{Pr}^{\mathrm{L}})^{\mathrm{dbl}}$ of dualizable presentable $\V$-modules and internal left adjoints between them is itself presentable. 

Along the way, we survey formal properties of these dualizable $\mathcal V$-modules. We pay close attention to the case of the $\infty$-category of spectra, where we survey the foundational properties of ``compact morphisms''.
\end{abstract}
\tableofcontents
\setcounter{secnumdepth}{0}
\section{Introduction}
\subsection{Motivation}
\subsubsection{Continuous $K$-theory}
Algebraic $K$-theory is a fundamental invariant in algebra and geometry. Originally defined for rings by Grothendieck, the type of inputs it allows has been generalized several times over. The fundamental work of Blumberg--Gepner--Tabuada \cite{BGT} shows that in the setting of stable \categories, $K$-theory (in both its connective and nonconnective variants) has a very natural universal property, characterized, say, as a universal localizing invariant on small stable \categories{} (for the nonconnective variant). 

The recent work of Efimov \cite{Hoyois,youtubeDustin1} has shown that there is a natural extension of the scope of localizing invariants to so-called dualizable presentable stable \categories{} (henceforth, dualizable categories), or equivalently compactly assembled stable \categories. 

There are many interesting examples of such \categories{} which are not compactly generated (the latter correspond, under this extension, to the small stable \categories{} from the work of \cite{BGT}), such as categories of sheaves on locally compact Hausdorff spaces, categories of nuclear modules arising in condensed mathematics, categories coming from almost mathematics, or, finally, following the recent disproof of the telescope conjecture \cite{NoTel}, categories of $L_n$-acyclic spectra. Furthermore, this framework allows to phrase certain classical $K$-theoretic phenomena more naturally, such as \cite[Theorem 18]{tamme}, recovered as \cite[Corollary 13]{Hoyois}. 
\subsubsection{Brauer groups}
The strongest dualizability assumption one can make on an object is probably invertibility. Invertible categories are closely related to the theory of Azumaya algebras and Brauer groups - indeed, given a commutative ring spectrum $R$, an algebra $A$ is Azumaya in the sense of \cite{toen,BRS} if and only if the \category{} $\Mod_A(\Sp)$ of $A$-module spectra is invertible as an $R$-linear presentable \category{}. 

However, there could \emph{a priori} be more invertible categories - for example, if they are non-compactly generated, cf. the introduction to \cite[Section 3]{toen}. One of the original goals of this project was to answer the following question, posed to us by Lior Yanovski: 
\begin{ques}\label{ques:Brsmall}
    Given a commutative ring spectrum $R$, is the categorical Brauer group $\mathbf{Br}^{\mathrm{cat}}(R) := \mathbf{Pic}(\PrL_R)$ a small group ? More generally, replacing $\Mod_R$ with $\V$, for some $\V\in\CAlg(\PrL)$ ? 
\end{ques}
If all invertible categories are compactly generated, the answer is clearly yes, but we were not able to prove that they were. To answer this question, we embarked on trying to understand the formal properties of dualizable categories. Our main result answers this question in the positive, see \Cref{rmk:Bryes}.

\subsection{Outline}
As explained earlier, there has been a lot of recent interest in dualizable stable \categories. The goal of this paper is to survey some of the formal, or foundational properties of the \category{} of dualizable categories, as well as related structures and notions.  

Both because one might be interested in non-absolute localizing invariants, and because one might be interested in categorical Brauer groups and variants thereof, we have decided to study dualizable categories in the generality of a base $\V\in\CAlg(\PrL)$\footnote{Note that we do not require $\V$ to be stable.}. In the case where $\V=\Sp$, or where $\V$ is a \emph{rigid} $\Sp$-algebra, many of the results we present here are due to the current emerging folklore surrounding this notion, particularly as pioneered by Clausen-Scholze, Efimov \cite{sashaI}, and Krause-Nikolaus-Pützstück \cite{KNSh}. 

Besides extending much of the ideas and results in this folklore to an arbitrary ``enriching'' base $\V$, our main original contribution is the following answer to \Cref{ques:Brsmall} and to another question by Yanovski:
\begin{thmx}\label{thm:main}
    Let $\V$ be a presentably symmetric monoidal \category. The \category{} $\Dbl{\V}$ of dualizable presentable $\V$-modules and \emph{internal left adjoints} between them is itself presentable\footnote{It is crucial to restrict the morphisms, otherwise the morphism spaces are not even small!}. 
    
Furthermore, if $\V\in\CAlg(\PrL_\kappa)$, i.e. $\V$ is $\kappa$-compactly generated, its unit is $\kappa$-compact, and the tensor product preserves $\kappa$-compacts, for some \emph{uncountable} regular cardinal $\kappa$, then $\Dbl{\V}$ is $\kappa$-compactly generated. 
\end{thmx}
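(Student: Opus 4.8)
The plan is to derive the first (unconditional) assertion from the second: given any presentably symmetric monoidal \category{} $\V$, one may choose an uncountable regular cardinal $\kappa$ with $\V\in\CAlg(\PrL_\kappa)$ for which, moreover, the unit of $\V$ is $\kappa$-compact and $\otimes$ preserves $\kappa$-compact objects — all of these hold for every sufficiently large $\kappa$ — so it suffices to prove the conditional statement. Fix such a $\kappa$. From the formal properties surveyed earlier, $\Dbl{\V}$ admits all small colimits, computed compatibly with $\Mod_\V(\PrL)$; in particular the class of dualizable $\V$-modules is stable under filtered colimits taken along internal left adjoints. Hence it remains to produce a \emph{set} of $\kappa$-compact objects of $\Dbl{\V}$ generating it under $\kappa$-filtered colimits.

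For the generators I would take the $\V$-modules $\Ind_\kappa(A)$, where $A$ ranges over the (essentially small, hence set-sized) collection $S$ of $\kappa$-small idempotent-complete $\V^\kappa$-linear \categories{} with $\kappa$-small colimits such that $\Ind_\kappa(A)$ is a dualizable $\V$-module. That each such $\Ind_\kappa(A)$ is indeed $\kappa$-compact in $\Dbl{\V}$ rests on two points: first, the identification of morphisms — a colimit-preserving $\V$-linear functor between $\kappa$-compactly generated $\V$-modules is an internal left adjoint precisely when it preserves $\kappa$-compact objects, i.e. comes from a $\kappa$-small-colimit-preserving $\V^\kappa$-linear functor on $\kappa$-compact parts (essentially the statement that the symmetric monoidal equivalence $\PrL_\kappa\simeq\Cat^{\mathrm{rex}(\kappa)}_{\mathrm{idem}}$ carries $\V$-modules to $\V^\kappa$-modules and intertwines the relevant $2$-categorical structures); and second, since $A$ is $\kappa$-small, the mapping space out of $\Ind_\kappa(A)$ in $\Dbl{\V}$ commutes with $\kappa$-filtered colimits. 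The role of uncountability of $\kappa$ is that $\kappa$-small colimits then include all countable (in particular all sequential) ones, which is exactly what is needed both for $\Ind_\kappa(A)$ to be compactly assembled for an adequate supply of $A$ and for the sequential ``compact-assembly'' data below to be visible at level $\kappa$.

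Granting this, the theorem follows from the \textbf{density statement}: \emph{every dualizable $\V$-module is a $\kappa$-filtered colimit, formed in $\Dbl{\V}$ and hence along internal left adjoints, of modules of the form $\Ind_\kappa(A)$ with $A\in S$.} Once this is established, $S$ furnishes the desired set of $\kappa$-compact generators, so $\Dbl{\V}$ is $\kappa$-compactly generated, hence presentable, and presentable in general by the reduction above. I expect the density statement to be the main obstacle, and the reason is that the obvious approximations are not internal left adjoints: if $\M$ is a dualizable $\V$-module and $A\subseteq\M$ is a small $\V^\kappa$-linear subcategory closed under $\kappa$-small colimits, the counit $\Ind_\kappa(A)\to\M$ is colimit-preserving and $\V$-linear, but its right adjoint preserves colimits only if every object of $A$ is compact in $\M$ — and a dualizable $\V$-module may have no nonzero compact objects whatsoever. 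One must instead invoke the compact-assembly structure of $\M$, namely the left adjoint $\widehat{(-)}\colon\M\to\Ind(\M)$ to the colimit functor together with the calculus of compact morphisms surveyed earlier, to build the correct $\kappa$-filtered system — morally, to reconstruct $\M$ out of the sequential systems of compact morphisms that witness its compact assembly — and the delicate part is precisely to check that the comparison functors in this system are internal left adjoints and that the whole diagram lives over a $\kappa$-small base. The remaining ingredients — the identification of internal left adjoints in the $\kappa$-compactly generated case, the comparisons between colimits in $\Cat^{\mathrm{rex}(\kappa)}_{\mathrm{idem}}$, $\PrL_\kappa$, $\PrL$ and $\Dbl{\V}$, the interaction of $\V$-linear dualizability with the underlying presentable \category, and the cardinal bookkeeping — are more routine, though collectively they carry the argument.
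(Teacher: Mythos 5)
Your reduction to the conditional statement and the observation that $\Dbl{\V}$ has small colimits computed compatibly with $\Mod_\V(\PrL)$ are sound, and looking for a set of $\kappa$-compact generators is a reasonable strategy. But there is both an error and a gap. The error is in the morphism identification: a $\V$-linear colimit-preserving functor between $\kappa$-compactly generated $\V$-modules is \emph{not} an internal left adjoint merely because it preserves $\kappa$-compact objects. Preserving $\kappa$-compacts only says that the right adjoint preserves $\kappa$-filtered colimits; internal left adjointness requires the right adjoint to preserve \emph{all} colimits and to satisfy the projection formula, neither of which follows. This is precisely why $\Dbl{\V}$ sits inside $\Mod_\V(\PrL_\kappa)$ as a \emph{non-full} subcategory, and it undermines both the claimed $\kappa$-compactness of your proposed generators and the description of mapping spaces out of them. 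The gap, which you flag yourself, is the density statement: you do not establish that every dualizable $\V$-module is a $\kappa$-filtered colimit, along internal left adjoints, of the $\Ind_\kappa(A)$'s. You correctly diagnose that the naive counits $\Ind_\kappa(A)\to\M$ fail to be internal left adjoints, but you only gesture at the repair via $\hat y$ and compact morphisms, and this is exactly where the real work lies.

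The paper's proof avoids proving density directly and takes a genuinely different route. Having established the cardinal bound (that every dualizable $\V$-module lies in $\Mod_\V(\PrL_{\max(\kappa,\omega_1)})$, and indeed that $\Dbl{\V}\subset\Mod_\V(\PrL_\kappa)$ as a non-full subcategory), it proves that this inclusion is \emph{comonadic}, with $\kappa$-accessible comonad $T\colon\N\mapsto\PP_\V(\N^\kappa)$. Presentability then follows from the general fact that accessible comonads on presentable \categories{} have presentable comodule \categories, and the precise $\kappa$-compact generation is extracted by exhibiting $\Dbl{\V}$ as a retract of the \category{} of pointed $T$-coalgebras and invoking an explicit accessibility argument for coalgebras whose engine is a countable iteration — this is the true role of the uncountability hypothesis, which you correctly sense but do not quite locate. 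Your proposed route is closer in spirit to Efimov's independent proof in the case $\V=\Sp$, where one does exhibit an explicit $\omega_1$-compact generator, but as written the argument does not go through.
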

\begin{rmk}\label{rmk:Bryes}
    From this, a positive answer to \Cref{ques:Brsmall} follows immediately, as the Picard group of any presentably symmetric monoidal \category{} is small. In fact, a weaker theorem suffices to answer this positively, namely it suffices to prove that there is a bound on the accessibility rank of dualizable $\V$-modules, cf. \Cref{thm:CardBound}. 
\end{rmk}
In fact, we deduce this theorem from a stronger theorem, namely:
\begin{thmx}
    Let $\V\in\CAlg(\PrL_\kappa)$ for some uncountable regular cardinal $\kappa$. There is a (non-full) inclusion $\Dbl{\V}\subset\Mod_\V(\PrL_\kappa)$, and this inclusion is comonadic, for an accessible comonad on $\Mod_\V(\PrL_\kappa)$. 
\end{thmx}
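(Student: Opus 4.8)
The plan is to realise $\Dbl{\V}$ as the $\infty$-category of comodules over an explicit comonad $T$ on $\Mod_\V(\PrL_\kappa)$, namely the $\V$-linear free $\omega$-filtered cocompletion $T\colon\mathcal E\mapsto\Ind_\omega^\V(\mathcal E)$. Its counit is the colimit functor $\colim\colon\Ind_\omega^\V(\mathcal E)\to\mathcal E$, and its comultiplication $\Ind_\omega^\V(\mathcal E)\to\Ind_\omega^\V\Ind_\omega^\V(\mathcal E)$ is the colimit-preserving functor classified — through the universal property of $\Ind_\omega^\V$ of a $\V$-module — by the finite-colimit-preserving $\V$-linear composite of unit functors $\mathcal E\to\Ind_\omega^\V(\mathcal E)\to\Ind_\omega^\V\Ind_\omega^\V(\mathcal E)$. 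Granting that $T$ is a well-defined comonad on $\Mod_\V(\PrL_\kappa)$ and that $\coMod_T(\Mod_\V(\PrL_\kappa))\simeq\Dbl{\V}$ compatibly with the forgetful functor, the inclusion $U\colon\Dbl{\V}\hookrightarrow\Mod_\V(\PrL_\kappa)$ is identified with the forgetful functor out of $T$-comodules, and the forgetful functor out of comodules over a comonad on a complete $\infty$-category is always comonadic (its right adjoint is the cofree comodule $\mathcal E\mapsto(\Ind_\omega^\V\mathcal E,\delta_{\mathcal E})$, it is conservative, and $\Mod_\V(\PrL_\kappa)$, being presentable, has all totalisations). So the statement reduces to: (a) $T$ is a well-defined, accessible comonad on $\Mod_\V(\PrL_\kappa)$; and (b) the comodule identification.

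First I would pin down the inclusion on the nose. By the characterisation of dualizable $\V$-modules recalled earlier — $\mathcal E$ is dualizable iff $\colim\colon\Ind_\omega^\V(\mathcal E)\to\mathcal E$ admits a $\V$-linear left-adjoint section — together with the earlier bound showing every dualizable $\V$-module is $\kappa$-compactly generated as such, dualizable $\V$-modules lie in $\Mod_\V(\PrL_\kappa)$; and an internal left adjoint $f$ has a colimit-preserving right adjoint $g$, so $f$ preserves colimits and, since $g$ then also preserves $\kappa$-filtered colimits, $f$ preserves $\kappa$-compact objects — hence $f$ is a morphism of $\Mod_\V(\PrL_\kappa)$. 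Since internal left adjoints contain all identities and are stable under composition but form a proper subclass of the $\V$-linear colimit-preserving functors, $U$ is a wide, non-full subcategory inclusion; it is conservative because the inverse of an internal left adjoint that happens to be an equivalence in $\Mod_\V(\PrL_\kappa)$ is again an internal left adjoint, its right adjoint being the original, colimit-preserving, functor.

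For (a): that $\Ind_\omega^\V$ is a comonad is checked on all of $\Mod_\V(\PrL)$, the comonad identities reducing to $\colim\circ j\simeq\id$ for the various unit functors $j$. The role of the uncountability of $\kappa$ is to make $\Ind_\omega^\V$ \emph{restrict} to $\Mod_\V(\PrL_\kappa)$: if $\mathcal E$ is $\kappa$-compactly generated as a $\V$-module, a set of $\V$-module generators $\V\otimes a$ by $\kappa$-compact objects remains generating in $\Ind_\omega^\V(\mathcal E)$ and the $\V\otimes a$ remain $\kappa$-compact there — indeed $\omega$-compact — which uses $\omega<\kappa$. Accessibility then follows by passing to small models under $\Ind_\kappa$: $(\Ind_\omega^\V\mathcal E)^\kappa$ is produced from $\mathcal E^\kappa$ by a $\kappa$-small colimit construction — where regularity of $\kappa$ keeps matters $\kappa$-small — so $T$ commutes with $\kappa$-filtered colimits of $\Mod_\V(\PrL_\kappa)$, in particular it is accessible.

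For (b): unwinding the comonad, a $T$-comodule structure on $\mathcal E$ is a $\V$-linear $c\colon\mathcal E\to\Ind_\omega^\V(\mathcal E)$ with $\colim\circ c\simeq\id$ plus coherences, and by the earlier analysis such a $c$ is automatically a left adjoint of $\colim$ and the coherences constitute a property; so a $T$-comodule is the same as a dualizable $\V$-module, with essentially unique comodule structure. A morphism of $T$-comodules is a $\V$-linear colimit-preserving functor compatible with the structure maps $c$, and by the earlier description of internal left adjoints between dualizable $\V$-modules — via their interaction with these adjoint sections, equivalently with compact morphisms — this compatibility is precisely the internal-left-adjoint condition. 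This gives $\coMod_T(\Mod_\V(\PrL_\kappa))\simeq\Dbl{\V}$ over $\Mod_\V(\PrL_\kappa)$, hence the theorem; \Cref{thm:main} then follows, as comodules over an accessible comonad on a presentable $\infty$-category again form a presentable one. I expect the hard part to be step (b), and within it the morphism identification: matching comodule maps with internal left adjoints requires a precise grip on how colimit-preserving $\V$-linear functors interact with the left-adjoint sections of the colimit functors — i.e. on the earlier structural analysis of internal left adjoints / compact morphisms — and this, rather than formal comonad theory, is where the genuine content of the dualizable-categories machinery is spent; a secondary, more technical point is verifying in (a) that $\Ind_\omega^\V$ really does preserve $\kappa$-compact generation and is accessible, both hinging on $\omega<\kappa$ and on the compatibility of $\V\in\CAlg(\PrL_\kappa)$ with $\kappa$-compact objects.
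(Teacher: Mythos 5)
Your plan — realise $\Dbl{\V}$ as comodules over an explicit accessible comonad on $\Mod_\V(\PrL_\kappa)$ and invoke Barr--Beck plus presentability of comodule categories — is the right framework and matches the paper's. But the comonad you propose is the wrong one, and in a way that is fatal rather than cosmetic.

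You set $T(\mathcal E)=\Ind_\omega^\V(\mathcal E)$, the free (enriched) $\omega$-filtered cocompletion of the \emph{whole} presentable $\V$-module $\mathcal E$, and claim that for $\mathcal E\in\Mod_\V(\PrL_\kappa)$ a set of $\kappa$-compact generators ``remains generating'' in $T(\mathcal E)$ and remains $\kappa$-compact there, so that $T$ restricts to an endofunctor of $\Mod_\V(\PrL_\kappa)$. That cannot be right: if $\mathcal E$ is a large (e.g.\ presentable) $\infty$-category then $\Ind(\mathcal E)$ is \emph{very large} and is not presentable — it contains the essential image of the Yoneda embedding $\mathcal E\hookrightarrow\Ind(\mathcal E)$ as a full subcategory closed under no filtered colimits, and in particular the objects $\V\otimes a$ are \emph{not} $\kappa$-compact in $\Ind(\mathcal E)$ for any $\kappa$: for instance, $a\in\mathcal E$ regarded as a formal colimit of the constant $\kappa$-filtered diagram at $a$ gives a cocone $\colim y(a)\to y(a)$ that is not an equivalence. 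So $T$ does not land in $\Mod_\V(\PrL_\kappa)$, the accessibility claim is vacuous, and the whole scheme breaks at step (a). The paper is quite explicit about this obstruction (see the remark \cref{rmk:nonstandadjInd}: ``In what category does $\Ind$ of a large category live?'') and the role of the cardinal $\kappa$ is precisely to circumvent it: the correct comonad is $\N\mapsto\PP_\V(\N^\kappa)$, i.e.\ enriched presheaves on the \emph{small} category $\N^\kappa$ of $\kappa$-compacts. This stays in $\Mod_\V(\PrL_\kappa)$, and the adjunction $\Dbl{\V}\cap\Mod_\V(\PrL_\kappa)\rightleftarrows\Mod_\V(\PrL_\kappa)$ with unit $\hat y$ and counit the canonical functor $\PP_\V(\N^\kappa)\to\N$ is constructed directly in \cref{thm:nonstandadj} before any comonadicity is discussed; comonadicity is then extracted from Barr--Beck--Lurie by checking that split cosimplicial limits are created (\cref{thm:Comonadic}), not by hand-identifying the comodule category as you outline in step (b).

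A secondary point worth flagging: your step (b) asserts that a $T$-comodule structure on $\mathcal E$ is ``a section $c$ of $\colim$ plus coherences which constitute a property.'' Even for the correct comonad this needs an argument — a comodule structure also requires compatibility with the comultiplication, and the claim that this reduces to a property (i.e.\ that the comonad is lax idempotent) is exactly the content attributed in the paper to Blom, not something that comes for free. The paper avoids needing this by using Barr--Beck--Lurie. So on top of the set-theoretic gap in (a), step (b) as written would still require the most substantial lemma to be proved rather than invoked.
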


\begin{rmk}
    The crucial point of this theorem, namely the one after which one starts believing \Cref{thm:main}, is the first part, namely the fact that every dualizable $\V$-module is $\kappa$-presentable. In fact, I learned from Thomas Blom\footnote{Private communication, work in progress.} that the fact that this inclusion is comonadic can actually be obtained as a formal consequence from the theory of ``lax idempotent comonads''.
\end{rmk}

In turn, this theorem is proved by obtaining an understanding of dualizable $\V$-modules and their relationship to the ones we call \emph{atomically generated}. More specifically, we obtain a $\V$-linear analogue of Lurie's characterization of dualizable objects in $\PrL_\st$, see \Cref{thm:Luriethmgeneralbase}. 

We furthermore use this to \emph{deduce} the presentability of the \category{} of atomically generated $\V$-modules. Interpreting these as ``Cauchy-complete $\V$-enriched \categories'' (in $1$-category theory, they are equivalent to enriched $\V$-categories having all absolute $\V$-colimits), we can see this as a proof that these Cauchy complete $\V$-categories form a presentable \category. 

Besides proving these theorems, we spend a lot of time surveying basic properties of these categories, as well as of unstable compactly assembled categories, which are not necessarily needed for proving these results. 

Among other things, we study the notion of \emph{compact morphisms}, which allow for an intrinsic study of compactly assembled categories, quite analogous to that of compactly generated categories. We use them for example to describe limits and internal homs in $\Prdbl$.

There is a companion to this paper, namely \cite{companion}, which is meant to study a multiplicative version of the story of dualizable \categories, that of rigid (or more generally locally rigid) \categories. We also introduce there the notion of \emph{$\V$-atomic morphisms} in $\V$-modules, which are meant to be analogues of compact morphisms, and are also supposed to allow for an intrinsic study of dualizable $\V$-modules, though in a more restricted fashion.  

\begin{rmk}
    Since a draft of this paper first appeared on my webpage, Efimov released the preprint \cite{sashaI}, in which he provides a different proof of \Cref{thm:main} in the case $\V=\Sp$\footnote{This easily implies the case where $\V$ is rigid over $\Sp$, though not necessarily the general stable case, to my knowledge.}. There, he explicitly studies $\kappa$-compactness in $\Prdbl$ and produces an explicit $\omega_1$-compact generator. 
\end{rmk}
 
\subsection*{Perspectives}
In this paper, we work in the generality of a ``base $\V\in\CAlg(\PrL)$'', and we study the \category{} $\Mod_\V(\PrL)$, or perhaps more honestly, the corresponding $(\infty,2)$-category. It is probably worth wondering how much of our work extends to more general ``doubly presentable'' $(\infty,2)$-categories, of which $\Mod_\V(\PrL)$ should be the most basic example. An example which, in general, is not covered by this example, is $\PrL_{\mathcal X}$ where $\mathcal X$ is some $\infty$-topos, in the sense of \cite{MartiniWolf}. 

There is always a fully faithful, symmetric monoidal embedding\footnote{See \cite[Section 8.3]{MartiniWolf}.} $\Mod_\mathcal X(\PrL)\to \PrL_\mathcal X$ which compares a naive theory of ``$\mathcal X$-parametrized presentable \categories'' to a less naive one, and when $\mathcal X$ is $0$-localic (e.g. if it is sheaves on a topological space), this is an equivalence. 

It sounds likely that our methods could extend to $\PrL_\mathcal X$, but at this point it is unclear in which generality exactly they do. We thus raise the following as a meta-conjecture, since one of the terms appearing in it is not well-defined yet:
\begin{conj}
    Let $\mathcal P$ be a ``doubly-presentably'' symmetric monoidal $(\infty,2)$-category\footnote{$\Mod_\V(\PrL)$ and $\PrL_\mathcal X$, as above, are supposed to be examples of such.}. The $(\infty,1)$-category $\mathcal P^\dbl$ of dualizable objects and $\mathcal P$-internal left adjoints therein is presentable.
\end{conj}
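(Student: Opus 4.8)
The plan is to first extract, from the proof of \Cref{thm:main}, exactly which structural features of $\Mod_\V(\PrL)$ are actually used, and to adopt those as the definition of a ``doubly-presentable symmetric monoidal $(\infty,2)$-category'' $\mathcal P$. At a minimum I would ask for: (a) the underlying $(\infty,1)$-category $\mathcal P_{(1)}$ of objects and internal left adjoints is presentable, and the symmetric monoidal structure on $\mathcal P$ is closed and preserves colimits in $\mathcal P_{(1)}$ in each variable; (b) a ``free cocompletion'' $2$-comonad $\Ind_{\mathcal P}$ on $\mathcal P$ together with an assembly $1$-morphism $\colim\colon\Ind_{\mathcal P}(M)\to M$, so that the notions of \emph{atomic $1$-morphism} and of \emph{atomically generated object} make sense verbatim as in \Cref{thm:Luriethmgeneralbase}; (c) a notion, derived from (b), of $\kappa$-presentable object of $\mathcal P$, assembling for each regular $\kappa$ into a presentable $(\infty,1)$-category $\mathcal P_\kappa\subseteq\mathcal P_{(1)}$ (with restricted morphisms, not full) exhausting $\mathcal P_{(1)}$ — the abstract shadow of $\Mod_\V(\PrL_\kappa)$. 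The examples to keep in mind are $\Mod_\V(\PrL)$ and, conjecturally, $\PrL_{\mathcal X}$ for an $\infty$-topos $\mathcal X$; the whole point is that the argument below should be insensitive to which one we plug in.

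Granting such a framework, the first step is the abstract form of Lurie's theorem: an object $M\in\mathcal P$ is dualizable if and only if $\colim\colon\Ind_{\mathcal P}(M)\to M$ admits an internal left adjoint, if and only if $M$ is a retract, in $\mathcal P_{(1)}$ and through internal left adjoints, of an atomically generated object $N\simeq\Ind_{\mathcal P}(\mathcal A)$ with $\mathcal A$ ``small''. This is precisely the content of \Cref{thm:Luriethmgeneralbase} in the $\V$-linear case, and its proof uses only the $2$-categorical calculus of internal adjunctions together with the $\Ind$-construction, so I expect it to transcribe to any $\mathcal P$ once (a)--(b) are correctly axiomatized.

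The crux — and the step I expect to be the genuine obstacle — is the \emph{accessibility bound}: to produce a single regular cardinal $\kappa$, depending only on structural invariants of $\mathcal P$ (the presentability rank of the unit, of a generator of $\mathcal P_{(1)}$, and the rank at which the tensor product becomes accessible), such that every dualizable $M$ already lies in $\mathcal P_\kappa$ and is $\kappa$-presentable there. This is the abstract shadow of \Cref{thm:CardBound}, and — as the introduction stresses — it is the step ``after which one starts believing'' the theorem. The difficulty is that it cannot follow from a formal closure property: writing $M$ as the retract $\mathrm{im}(\ell r)$ of $N=\Ind_{\mathcal P}(\mathcal A)$, the colimit-preserving idempotent $\ell r$ need not be accessible at any fixed rank, so ``retracts of $\kappa$-presentables are $\kappa$-presentable'' is simply false. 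The only route I see is the cardinality argument used in the $\V$-linear case: a L\"owenheim--Skolem reflection showing that the atomic object $\mathcal A$ underlying a dualizable $M$, together with the coevaluation datum $\one\to M^\vee\otimes M$ exhibiting dualizability, can always be modelled by data of size bounded solely in terms of those fixed structural invariants. This is exactly where the absolute case $\V=\Sp$ exploits that $\Sph$ is $\omega$-compact and that even the telescopic constructions producing exotic dualizable categories stay countable; whether an abstract ``doubly-presentable'' $\mathcal P$ carries enough of an internal cardinality theory to run this is genuinely unclear, and making sense of it is presumably part of what it takes to pin down the term ``doubly-presentable'' in the first place.

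Once $\kappa$ is fixed, the conclusion is formal, following the pattern of the comonadicity theorem above. By the bound, $\mathcal P^{\dbl}$ sits inside $\mathcal P_\kappa$ as a (non-full) subcategory, and this inclusion is comonadic for an \emph{accessible} comonad — either by adapting the explicit comonad $M\mapsto\Ind_{\mathcal P}(M)$ followed by truncation into $\mathcal P_\kappa$, or, more cleanly, by invoking the theory of lax-idempotent comonads (in the spirit of Blom's forthcoming work), which promotes ``the assembly adjunction is suitably locally fully faithful'' to comonadicity for free. Since comodules over an accessible comonad on a presentable $(\infty,1)$-category again form a presentable $(\infty,1)$-category, $\mathcal P^{\dbl}$ is presentable; and if the structural cardinal can be taken uncountable with the unit and a generator $\kappa$-compact, the same bookkeeping upgrades ``presentable'' to ``$\kappa$-compactly generated'', exactly as in the second half of \Cref{thm:main}.
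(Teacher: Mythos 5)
This is stated in the paper as a \emph{conjecture}, not a theorem: the paper offers no proof, and indeed explicitly remarks that ``one of the terms appearing in it is not well-defined yet,'' namely the notion of a doubly-presentable symmetric monoidal $(\infty,2)$-category. So there is no ``paper's own proof'' to compare against, and your proposal cannot be assessed as a correct or incorrect rendering of it.

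That said, your sketch is a faithful and honest account of what a proof \emph{would} have to look like, essentially by re-running the $\V$-linear argument in a hypothetical axiomatic setting: an abstract $\Ind_{\mathcal P}$ and $\hat y$ giving an abstract version of \Cref{thm:Luriethmgeneralbase}, then a cardinality bound, then comonadicity over some $\mathcal P_\kappa$, then presentability by \Cref{prop:comodacc}. You also correctly identify the two genuine obstructions, which are exactly what makes this a conjecture rather than a theorem. First, the term ``doubly-presentable'' carries the entire weight of the statement: until one can say precisely what data $(\mathcal P_\kappa)_\kappa$, $\Ind_{\mathcal P}$, and the $\kappa$-presentability filtration consist of, one has not stated a theorem, only a slogan. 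Your list (a)--(c) is a reasonable first guess at the axioms, but you should be aware that the paper deliberately leaves this open; pinning down (b) in particular is subtle because in $\Mod_\V(\PrL)$ the $\Ind$-construction $\PP_\V(\M^\kappa)$ depends on an enriched $\Ind$-completion that has no obvious analogue in $\PrL_\mathcal X$ when $\mathcal X$ is not $0$-localic. Second, you rightly flag the cardinality bound (the abstract version of \Cref{thm:CardBound}/\Cref{cor:cardboundV}) as ``the step after which one starts believing.'' You are correct that it cannot follow from a closure property of retracts, and that the $\V$-linear proof leans on the concrete structure of $\kappa$-compact maps (\Cref{prop:cardboundunstable}). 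Whether an abstract $\mathcal P$ supports the combinatorics of $\kappa$-compact exhaustions is precisely the open problem; a precise formulation of ``doubly-presentable'' will need to secure it.

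In short: your proposal is a sound outline of the expected strategy and a correct diagnosis of the open points, but — as you yourself say — it is not a proof, and the paper does not claim one either.
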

One can make a similar conjecture for the category of rigid algebras in $\mathcal P$, cf. \cite[Theorem A]{companion}. Part of the conjecture (and maybe the most important part of the conjecture) is the existence of a reasonable notion of `` doubly-presentably symmetric monoidal $(\infty,2)$-~category''\footnote{Closely related morally, but maybe not technically, is the existence of a reasonable notion of $2$-stable $(\infty,2)$-category, of which $\Cat^\perf$ and $\PrL_\st$ are supposed to be the prime examples.}.

More generally, the results and some of the methods from both the present paper and the companion paper \cite{companion} raise this question to a point where it becomes increasingly awkward not to answer it. 

Surrounding \Cref{ques:Brsmall}, we were only able to answer the smallness question, and not the more pressing questions such as ``Are there any non-compactly generated invertible $R$-linear categories ?''. In \cite{Stefanich}, Stefanich proves that there are none for $R=k$ a (discrete) field, and manages to deduce the same result over slightly more general base, but the generality remains restricted. 

In fact, there are not even known counterexamples to the following conjecture: 
\begin{conj}
    Let $R$ be a commutative ring spectrum, and $C$ a dualizable $R$-linear presentable stable \category. If the coevaluation morphism $\Mod_R\to C\otimes_R C^\vee$ is an internal left adjoint, then $C$ is compactly generated. 
\end{conj}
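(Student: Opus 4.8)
The plan is to reformulate the hypothesis as a \emph{smoothness} condition on $C$, and then to exploit it to exhibit a generating family of compact objects. Identifying $C\otimes_R C^\vee$ with the \category{} $\Fun^L_R(C,C)$ of colimit-preserving $R$-linear endofunctors of $C$, the coevaluation becomes the functor $M\mapsto(M\otimes_R-)$, which carries $R$ to $\id_C$; hence ``$\mathrm{coev}$ is an internal left adjoint'' is equivalent to ``$\id_C$ is an $R$-atomic object of $\Fun^L_R(C,C)$'', i.e.\ to the colimit-preservation of the Hochschild cohomology functor $F\mapsto\underline{\on{nat}}_R(\id_C,F)$. Call this condition: $C$ is \emph{smooth over $R$}. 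One first records its formal features — smoothness is preserved by $\otimes_R$, by $(-)^\vee$ and by retracts along internal left adjoints, and for $C=\Mod_B$ it is smoothness of the $R$-algebra $B$ — and identifies it with the condition that $\id_C$ be a \emph{compact} ($\V$-atomic) endomorphism, in the sense of the theory of compact morphisms developed here and in the companion paper.

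The conjecture is then the statement that the compact objects $C^\omega$ generate $C$, equivalently that every object of $C$ is a filtered colimit of compact objects, equivalently that the coreflective inclusion $\Ind_\omega(C^\omega)\hookrightarrow C$ is an equivalence; equivalently still (by a standard cofiber argument) that there is no nonzero $c\in C$ with $\map_C(x,c)\simeq 0$ for all $x\in C^\omega$. So the task is to manufacture, from smoothness alone, enough compact objects.

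Here is the mechanism I would try to run, together with its obstacle. Smoothness makes $\id_C$ a compact object of $C^\vee\otimes_R C$; the statement one needs is that $\id_C$ then admits a presentation as a finite iterated cofiber (and retract) of functors of the form $(-\otimes_R h)\circ g$ with $g\in C^\vee$ and — crucially — $h$ a \emph{compact} object of $C$. Granting this, for any $c\in C$ the object $c\simeq\id_C(c)$ is built by the same finite operations out of the objects $g(c)\otimes_R h$; and writing $g(c)=\colim_j M_j$ as a filtered colimit of perfect $R$-modules yields $g(c)\otimes_R h=\colim_j(M_j\otimes_R h)$ with each $M_j\otimes_R h$ compact in $C$, so that $g(c)\otimes_R h$ — and hence, after finite colimits and retracts, $c$ itself — is a filtered colimit of compact objects; thus $C^\omega$ generates and $C$ is compactly generated. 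This is exactly how the case $C=\Mod_B$ works, with $h$ taken to be the free module $B$. \textbf{The main obstacle} — and presumably the reason the statement is still only conjectural — is to \emph{arrange the objects $h$ to be compact}: smoothness does give a finite-rank presentation of $\id_C$ inside $C^\vee\otimes_R C$, but a general such presentation involves decomposables $g\boxtimes h$ whose $h$-component need not be compact in $C$, and I see no general mechanism to replace it by a compact one. As a way in, I would first verify the claim for the free smooth dualizable $R$-linear \category{} on one generator — which is $\Mod_B$ for a free smooth $R$-algebra, hence compactly generated — and then attempt to reduce the general case to such ``building blocks'' using the comonadic embedding $\Dbl{R}\subset\Mod_R(\PrL_\kappa)$; over a field one could additionally try to graft on Stefanich's analysis of the invertible case. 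Whether an arbitrary smooth dualizable $\V$-module is ``atomically presented'' in this way is itself unclear, and I expect it to be essentially equivalent to the conjecture.
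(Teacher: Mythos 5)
This statement is a \emph{conjecture} in the paper; there is no proof to compare against, and the paper explicitly states that no progress has been made on it. You correctly recognize this, and your write-up is an analysis of the problem rather than a claimed proof, which is the honest thing to do.

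Your reformulation matches the paper's own framing exactly: the hypothesis that the coevaluation $\Mod_R\to C\otimes_R C^\vee$ is an internal left adjoint is the definition of a \emph{smooth} category given in \Cref{section:smooth}, where it is also rephrased as ``$\id_C$ is compact in $\Fun^L_R(C,C)$.'' Your identification of the obstacle is also the right one, and it matches the paper's own admission (end of \Cref{section:smooth}): smoothness does exhibit $\id_C$ as a retract of a finite iterated cofiber of pure tensors $g\boxtimes h\in C^\vee\otimes_R C$, but when $C$ is not already known to be compactly generated there is no way to replace the $h$-components by compact objects of $C$ — indeed $C$ might a priori have no nonzero compact objects, in which case the conjecture asserts $C=0$, and nothing in the finite-rank presentation rules this out. \Cref{thm:smooth} in the paper is exactly the statement you describe, but it \emph{assumes} $\M$ compactly generated; it does not deduce it, and it is not circular, because the paper uses it for a different purpose (\Cref{cor:finlimff} and the corollary preceding it on internal homs). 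Your suggestion to study the ``free smooth dualizable category on one generator'' and to leverage Stefanich's result over a field is a sensible direction and consistent with the paper's pointer to that work, but as you say yourself it is unclear how to reduce the general case to such building blocks. In short: your analysis is faithful to the paper's perspective, and the ``gap'' you name is precisely the open content of the conjecture — there is no fix to offer because none is known.
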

A weaker conjecture would ask for both the coevaluation and the evaluation morphism to be internal left adjoints, and finally, the weakest of the three conjectures asks for invertibility. We have made no progress on these questions, and thus leave them for future work (of course, they also have analogues over more general bases than $\Mod_R$, using the notion of ``atomically generated'' in place of compactly generated).

In this paper, we study the basics of an emerging higher algebra of dualizable categories, but we only scratch the surface, and as pointed out above, our knowledge regarding basic-sounding questions is lacking: much remains to be done. 

We also do not go very deep in the direction of Efimov's applications to $K$-theory, which involve much more technical results about (among other things) dualizable internal homs and dualizable limits. 
 
\subsection{Sectionwise outline}
We now outline the contents of the paper, section by section. 
\begin{itemize}
    \item In \Cref{section:prelim}, we set the stage by describing our basic objects of study: (dualizable) $\V$-modules, internal left adjoints, and compact assembly. The properties that are proved there will be used throughout. 
    \item In \Cref{section:atomic}, we introduce compact morphisms. We spend quite a bit of time studying compact morphisms and their variants, stably, and unstably, and we conclude the compact map part by describing the foundation of Efimov's work on localizing invariants. 
    \item \Cref{section:comonad} is where we prove our presentability theorem. 
    \item  In the final \Cref{section:limits}, we explain how to compute limits and internal homs in $\Prdbl$ - their existence follows from our presentability result, but it can also be established by hand. We also survey a number of elementary examples. 
\end{itemize}
\begin{rmk}
    As explained in the introduction, \Cref{thm:main} can be viewed as a pretext for me to write about dualizable \categories{} and survey the landscape. In particular, the paper is not specifically geared towards this, and a geodesic approach to this result might look different. Specifically, one needs to understand \Cref{thm:Luriethmgeneralbase} and  \Cref{prop:cardboundunstable} (for which a few, but not very many preliminaries about $\kappa$-compact maps are needed) and then one can jump to \Cref{section:comonad}.
\end{rmk}

We have three appendices. In \Cref{app:stable}, we review basic properties of stable \categories{} which will be needed when working out some examples and properties of the theory for $\V=\Sp$; in \Cref{app:invariants}, we review the basics of splitting or additive invariants, as a prerequisite for our brief discussion of Efimov's continuous $K$-theory; and finally in \Cref{app:milnor}, we review an unstable version of the Milnor exact sequence, which we use when comparing various versions of compact maps in the unstable setting.

\subsection{Conventions and notations}
We work throughout with the theory of \categories{} as developped thoroughly by Lurie in \cite{HTT,HA}, although we try to take a mostly model-independent approach. 
\begin{itemize}
\item Unless there is a specific need for it, we do not adress set-theoretic issues. $\widehat{\Cat}$ denotes the \category{} of large \categories.
\item We use $\Ss$ for the \category{} of spaces/anima and $\Sp$ for the \category{} of spectra. 
    \item We use $\PrL$ to denote the (very large) \category{} of presentable \categories, and $\PrL_{\st}$ the full subcategory spanned by the stable ones; we often view it as a symmetric monoidal \category{} through the Lurie tensor product - this is the only one we consider, so we keep it implicit throughout. We use $\Fun^L$ to denote the \category{} of colimit-preserving functors. For a regular cardinal $\kappa$, $\PrL_\kappa$ denotes the non-full subcategory of $\PrL$ spanned by $\kappa$-compactly generated \categories{} and $\kappa$-compact preserving left adjoints (equivalently, left adjoints whose right adjoint preserves $\kappa$-filtered colimits).
    \item For a symmetric monoidal \category{} $\M$, we let $\CAlg(\M)$ denote the \category{} of commutative algebras in $\M$, and for $A\in\CAlg(\M)$, we let $\Mod_A(\M)$ denote its \category{} of modules\footnote{Canonically, we mean $\mathbb E_\infty$-modules, although they are equivalent to left modules.}. For a non-necessarily commutative algebra $A$, we write $\LMod$ or $\RMod$ to indicate whether we consider left or right modules. 
    \item We use $\Map$ for mapping spaces, $\map$ for mapping spectra in stable \categories, and $\hom$ for internal homs, or enriched homs.
\end{itemize}

From now on, the word ``category'' means \category{} by default, and we write $1$-category when we want to single out ordinary categories.

\subsection{Acknowledgements}
Lior Yanovski is the person who led me to think about these questions, and specifically who asked me whether \Cref{thm:main} was true. This work would not have existed without him, and I thank him for suggesting I think about it, and for many discussions about internal left adjoints.

I wish to thank Dustin Clausen, Sasha Efimov, Achim Krause and Thomas Nikolaus for \emph{many} helpful discussions about the present subject. This work benefitted immensely from their help. 

I thank Robert Burklund for urging me to also write about (locally) rigid categories, leading to the writing of \cite{companion}, and for helpful conversations about basepoints; Shay Ben Moshe for some helpful feedback and Thomas Blom for interesting conversations related to monads and I thank Lars Hesselholt for feedback on an earlier draft of the proof of the main result. 

This work was supported by the Danish National Research Foundation through the
Copenhagen Centre for Geometry and Topology (DNRF151). Part of the results were obtained while I was visiting Thomas Nikolaus in Münster, and I thank both Thomas and Lisa for their hospitality, as well Münster Universität for its hospitality. Finally, I thank Harvard University and Mike Hopkins for their hospitality while I was in the finishing stages of writing this document. 

\setcounter{secnumdepth}{3}
\section{Preliminaries}\label{section:prelim}
In this section, we set the stage of the whole paper by introducing the basic objects: dualizable $\V$-modules, and $\V$-internal left adjoints. We set up their basic properties and their relations to other categories of $\V$-modules. We conclude by saying a few words about compact assembly. 

    Throughout this section, $\V$ will be a presentably symmetric monoidal \category, i.e. an object of $\CAlg(\PrL)$. Our main reference for enriched categories is \cite{heine}. 
    
    We use $\Cat_\V$ to denote the category of $\V$-enriched categories\footnote{$\widehat{\Cat}_\V$ if we want to allow large ones.}, $\Fun_\V$ to denote enriched functor (ordinary) categories, and $\Fun^L_\V$ to denote categories of functors in $\Mod_\V(\PrL)$, and $\PP_\V$ to denote $\V$-enriched presheaves. 
    
    We will often compare the general case to the specific case $\V=\Sp$, where a rich source of $\V$-enriched \categories{} are stable \categories{}. 
\subsection{Presentable $\V$-modules}
The goal of this subsection is to set up in the world of $\V$-modules the usual constructions and properties that we know from bare $\PrL$ or $\PrL_{\st}$. Most of what we prove is either contained in, or derived from \cite{heine,Shay,hinichday,hinichyoneda}. 
We start by recalling a result of Heine comparing $\V$-modules with certain enriched \categories: \
\begin{thm}[{\cite[Theorem 1.2]{heine}}]\label{thm: enrichedtensored}
    There is a canonical functor $$\chi: \Mod_\V(\PrL)\to \widehat{\Cat_\V}$$ which is fully faithful on hom-categories. Given two $\V$-modules $\M,\N$, the functor $$\Fun^L_\V(\M,\N)\to \Fun_\V(\chi(\M),\chi(\N))$$ has essential image those $\V$-functors that are left adjoints as $\V$-functors. 
\end{thm}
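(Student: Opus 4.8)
The plan is to treat $\chi$ first on objects and $2$-cells, then reduce the assertion to the case of $\V$-enriched presheaf categories by writing a general presentable $\V$-module as a localization, and finally dispatch the presheaf case by comparing two universal properties.

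\emph{Setting up $\chi$.} Any $\M\in\Mod_\V(\PrL)$ is canonically tensored and cotensored over $\V$: the tensoring is the action $\V\times\M\to\M$, while the cotensoring and the enriched hom-objects $\underline{\hom}_\M(m,n)\in\V$ exist by the adjoint functor theorem, being characterised by $\Map_\V(v,\underline{\hom}_\M(m,n))\simeq\Map_\M(v\otimes m,n)$; this underlying $\V$-enriched category is $\chi(\M)$. A $\V$-linear left adjoint $F\colon\M\to\N$ carries a coherent equivalence $F(v\otimes m)\simeq v\otimes F(m)$, which together with the functoriality of $F$ yields maps $\underline{\hom}_\M(m,n)\to\underline{\hom}_\N(Fm,Fn)$, i.e. a $\V$-enriched functor $\chi(F)$; likewise a $\V$-linear natural transformation induces an enriched one. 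The coherence needed to turn this into an actual functor $\Fun^L_\V(\M,\N)\to\Fun_\V(\chi\M,\chi\N)$, indeed into an $(\infty,2)$-functor $\chi$, is part of the enriched-category machinery of \cite{heine,hinichday,hinichyoneda}, which I would invoke. Two observations are used throughout: weighted $\V$-colimits in $\chi(\M)$ are built from ordinary colimits and tensors in $\M$, so (i) a colimit-preserving $\V$-linear $F$ gives a $\V$-colimit-preserving $\chi(F)$, hence — by the enriched adjoint functor theorem for presentable $\V$-enriched categories — a $\V$-enriched left adjoint, which is already the ``$\subseteq$'' half of the essential-image statement; and (ii) $\chi$ sends a $\V$-linear adjunction of presentable $\V$-modules to a $\V$-enriched adjunction (apply $\chi$ to its unit and counit).

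\emph{The presheaf case.} Let $\C$ be a small $\V$-enriched category and let $\PP_\V(\C)$ denote both the free presentable $\V$-module on $\C$ and, as a $\V$-enriched category, the $\V$-enriched presheaf category, with enriched Yoneda embedding $j\colon\C\to\PP_\V(\C)$. On the module side, restriction along $j$ gives $\Fun^L_\V(\PP_\V(\C),\N)\xrightarrow{\sim}\Fun_\V(\C,\chi\N)$ for every presentable $\V$-module $\N$, with inverse $\V$-enriched left Kan extension along $j$ (\cite{heine}). On the enriched side, $\PP_\V(\C)$ is the free $\V$-cocompletion: restriction along $j$ identifies the colimit-preserving $\V$-functors $\PP_\V(\C)\to\chi\N$ with all of $\Fun_\V(\C,\chi\N)$, again with inverse enriched left Kan extension. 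Since these two inverse equivalences are ``the same'' (left Kan extension along $j$), they are identified by $\chi$; hence the comparison functor $\Fun^L_\V(\PP_\V(\C),\N)\to\Fun_\V(\chi\PP_\V(\C),\chi\N)$ is fully faithful with essential image the colimit-preserving $\V$-functors, and those coincide with the $\V$-enriched left adjoints (one inclusion by (i), the other by the enriched adjoint functor theorem, or concretely by the right-adjoint formula $d\mapsto\big(c\mapsto\underline{\hom}_{\chi\N}(Fjc,d)\big)$).

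\emph{Reduction to the presheaf case.} A general $\M\in\Mod_\V(\PrL)$ is a $\V$-linear accessible localization of some $\PP_\V(\C)$: there is a small set $S$ of maps in $\PP_\V(\C)$ and a $\V$-linear adjunction with left adjoint $L\colon\PP_\V(\C)\to\M$ and fully faithful right adjoint $R$, exhibiting $\M$ as the $S$-local objects. Then $-\circ L$ is fully faithful $\Fun^L_\V(\M,\N)\hookrightarrow\Fun^L_\V(\PP_\V(\C),\N)$ with essential image the $S$-inverting functors. By (ii), $\chi(L)\dashv\chi(R)$ is a $\V$-enriched reflective localization, so $-\circ\chi(L)$ is fully faithful $\Fun_\V(\chi\M,\chi\N)\hookrightarrow\Fun_\V(\chi\PP_\V(\C),\chi\N)$ with essential image the $S$-inverting $\V$-functors (enriched localization theory, \cite{heine}); and because colimits in $\chi\M$ are computed by applying $\chi(L)$ to colimits in $\chi\PP_\V(\C)$, a $\V$-functor out of $\chi\M$ is a left adjoint iff its restriction along $\chi(L)$ is. Chasing the square formed by the two comparison functors and the two fully faithful precomposition functors, whose bottom edge is understood by the presheaf case, gives that the top edge — the comparison functor for $\M$ — is fully faithful, and that a $\V$-functor $\chi\M\to\chi\N$ lies in its essential image iff its $\chi(L)$-restriction is a $\V$-enriched left adjoint iff it is one itself. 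This is exactly the assertion.

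\emph{Main obstacle.} The bookkeeping above is formal; the genuine content is the enriched higher-category theory it rests on — that $\chi$ exists as a sufficiently functorial gadget, that it is compatible with the enriched Yoneda embedding, enriched Kan extensions and enriched (co)limits, that $\PP_\V(\C)$ enjoys the two stated universal properties on the module side and the enriched side, and that presentable $\V$-enriched categories satisfy an adjoint functor theorem. These are precisely the inputs supplied by the work of Hinich and Heine cited above; granting them, there is nothing further to do.
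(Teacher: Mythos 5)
This statement is quoted verbatim (with attribution) as \cite[Theorem 1.2]{heine}; the paper does not supply a proof of its own, so there is nothing to compare your argument against. Your outline is a plausible high-level reconstruction, but you should be aware of an ordering subtlety: you invoke the universal property of $\PP_\V(\C)$ (Theorem~\ref{thm: UPVPsh}, which is \cite[Theorem 1.13]{heine}) and the presentation of an arbitrary $\V$-module as a $\V$-linear accessible localization of a presheaf $\V$-module (essentially \Cref{cor:locVmod} here), both of which in this paper and in Heine's development are stated \emph{after} the present theorem and use the functor $\chi$ whose properties you are trying to establish. So as a self-contained proof the sketch is somewhat circular; as a description of how the theorem fits into the surrounding enriched machinery it is accurate, which you acknowledge in your final paragraph by deferring the real content to Hinich and Heine. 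If you wanted an argument genuinely independent of those later statements, you would need to build $\chi$ and verify its compatibility with tensors and enriched homs directly at the level of the model for enriched $\infty$-categories, which is precisely what Heine's paper does.
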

\begin{nota}
    Given $\M\in\Mod_\V(\PrL)$, we view $\chi(\M)$ as essentially the same object as $\M$, so we will abuse notation and denote both by the same symbol. 
\end{nota}
Furthermore, Heine and Hinich \cite{heine,hinichday} prove the expected universal property of presheaf $\V$-categories, namely: 
\begin{thm}[{\cite[Theorem 1.13]{heine}}]\label{thm: UPVPsh}
    Let $\M_0$ be a small $\V$-\category, and $\N$ a presentable $\V$-module. Restriction along the Yoneda embedding $\M_0\to \PP_\V(\M_0)$ induces an equivalence $$\Fun^L_\V(\PP_\V(\M_0), \N)\simeq \Fun_\V(\M_0,\N)$$
\end{thm}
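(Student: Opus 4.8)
The plan is to exhibit $(\PP_\V(\M_0),\yo)$ as the free presentable $\V$-module generated by $\M_0$, by constructing an explicit inverse to the restriction functor $\yo^*\colon\Fun^L_\V(\PP_\V(\M_0),\N)\to\Fun_\V(\M_0,\N)$ via enriched left Kan extension along the Yoneda embedding. Recall that $\PP_\V(\M_0)=\Fun_\V(\M_0\op,\V)$, with colimits and the $\V$-action computed ``pointwise'' in $\V$; in particular it is a presentable $\V$-module, and the representable presheaves form a small family that generates it under colimits and is stable under the $\V$-action in the appropriate sense. So the whole statement will follow once we produce a functor $\Fun_\V(\M_0,\N)\to\Fun^L_\V(\PP_\V(\M_0),\N)$ and check both composites with $\yo^*$ are equivalent to identities.

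First I would construct the candidate inverse. Given a $\V$-functor $F\colon\M_0\to\N$, since $\N$ is $\V$-cocomplete (a presentable $\V$-module is tensored over $\V$ and cocomplete) and $\M_0$ is small, the theory of enriched left Kan extensions (Hinich, Heine) provides a pointwise enriched left Kan extension $\mathrm{Lan}_\yo F\colon\PP_\V(\M_0)\to\N$, computed by the weighted colimit/coend
\[
(\mathrm{Lan}_\yo F)(P)\;\simeq\;\int^{m\in\M_0}P(m)\otimes F(m).
\]
This is visibly a $\V$-functor, and it preserves colimits: colimits in $\PP_\V(\M_0)$ are pointwise, the tensoring $\V\times\N\to\N$ preserves colimits in each variable, and coends commute with colimits. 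To land in $\Fun^L_\V$ rather than merely among colimit-preserving underlying functors, by \Cref{thm: enrichedtensored} it suffices to observe that $\mathrm{Lan}_\yo F$ is a left adjoint as a $\V$-functor, with right adjoint the ``enriched restricted nerve'' $N\mapsto\Map_\N(F(-),N)\in\PP_\V(\M_0)$; the unit and counit are produced in the usual way from the coend formula and the enriched Yoneda lemma. This yields the desired functor $F\mapsto\mathrm{Lan}_\yo F$.

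It then remains to check the two composites. For $\yo^*\circ\mathrm{Lan}_\yo$: the enriched co-Yoneda lemma (density formula) of \cite{hinichyoneda} gives $\int^{m}\M_0(m',m)\otimes F(m)\simeq F(m')$, so $(\mathrm{Lan}_\yo F)\circ\yo\simeq F$ naturally in $F$. For $\mathrm{Lan}_\yo\circ\yo^*$: given $G\in\Fun^L_\V(\PP_\V(\M_0),\N)$, the same co-Yoneda lemma exhibits every presheaf as the canonical colimit of representables weighted by itself, $P\simeq\int^{m}P(m)\otimes\yo(m)$; since $G$ is $\V$-linear and colimit-preserving it carries this to $G(P)\simeq\int^{m}P(m)\otimes G(\yo(m))\simeq(\mathrm{Lan}_\yo(G\circ\yo))(P)$, naturally in $P$ and $G$. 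Hence $\yo^*$ is an equivalence.

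The main obstacle is not the formal skeleton above but making enriched weighted colimits, coends, and the co-Yoneda/density statements fully precise in the $\infty$-enriched setting, and in particular verifying that $\mathrm{Lan}_\yo F$ is genuinely a $\V$-enriched morphism in $\Mod_\V(\PrL)$ and that the coend manipulations are homotopy-coherently compatible with the Day-convolution $\V$-module structure on $\PP_\V(\M_0)$. An alternative that sidesteps some of this bookkeeping is to bootstrap from the unenriched universal property of presheaf categories (\cite[5.1.5.6]{HTT}) together with the full faithfulness of $\chi$ on hom-categories (\Cref{thm: enrichedtensored}) and Hinich's analysis of Day convolution \cite{hinichday}: one checks that a colimit-preserving $\V$-functor out of $\PP_\V(\M_0)$ is exactly the data of a colimit-preserving functor on underlying categories compatible with the $\V$-action, and then deduces the enriched statement from the unenriched one applied to $\M_0$ while tracking the $\V$-module structure.
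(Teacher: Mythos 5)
The paper does not prove this statement: it is quoted verbatim from \cite[Theorem 1.13]{heine} and used as a black box, so there is no internal proof to compare against. Your proposal is a reasonable reconstruction of the standard argument, and it is in fact the conceptual skeleton that Heine (building on Hinich's enriched Yoneda lemma and free-cocompletion machinery) actually uses: exhibit $\PP_\V(\M_0)$ as the $\V$-enriched free cocompletion of $\M_0$ by producing the inverse to $\yo^*$ as enriched left Kan extension computed by the weighted-colimit/coend formula, verify it is a $\V$-linear left adjoint by writing down the enriched restricted nerve as its right adjoint, and then close the loop with the enriched co-Yoneda (density) lemma in both directions.

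You have correctly located where the real work lies: the formal skeleton is one paragraph, but establishing the coherent $\infty$-enriched coend calculus, the enriched Yoneda and density lemmas, and the compatibility of the Day-convolution $\V$-module structure on $\PP_\V(\M_0)$ with the pointwise description of colimits and tensors is essentially the content of \cite{hinichyoneda,hinichday,heine}. Two small points worth flagging. First, after constructing $\mathrm{Lan}_\yo F$ you invoke \Cref{thm: enrichedtensored} to recognise it as a morphism of $\Mod_\V(\PrL)$; for this you need the adjunction to the restricted nerve to be a $\V$-enriched adjunction, not merely an adjunction of underlying functors, so the unit/counit must be produced $\V$-naturally — this is exactly where Hinich's enriched Yoneda lemma is used in earnest, and should be cited rather than asserted. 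Second, your closing ``alternative'' route (bootstrap from \cite[5.1.5.6]{HTT} plus the faithfulness of $\chi$) is appealing but a bit too quick as stated: a colimit-preserving functor on underlying categories compatible with the $\V$-action is, a priori, weaker data than a colimit-preserving $\V$-functor, and the precise bridge between the two is again \Cref{thm: enrichedtensored} together with Hinich's Day-convolution analysis, so it is not really a shortcut around the enriched bookkeeping so much as a repackaging of it. With those caveats, the outline is correct and matches the cited source's strategy.
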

\begin{ex}
    For $\V= \Ss$, $\Ss$-enrichment is no data, and so we find that $\PP_\Ss$ is simply the usual presheaf category. 
\end{ex}
\begin{ex}
    For $\V=\Sp$, a natural class of $\Sp$-enriched categories is that of stable categories. Between two stable categories, the category of $\Sp$-enriched functors is equivalent to  that of exact functors by \cite[Theorem 1.11]{heine}. In particular, in this case, we find that $\PP_\Sp(C)= \Ind(C)$ when $C$ is small stable and canonically $\Sp$-enriched. 
\end{ex}
\begin{ex}
Suppose $\V$ is such that the canonical functor $\Psh(\V^\dbl)\to \V$ is a localization, i.e. $\V$ is canonically generated by its dualizable objects. In this case, if $C$ is $\V$-enriched and $\V^\dbl$-tensored, $\PP_\V(C) = \Fun_\V(C\op,\V) = \Fun_{\Mod_{\V^\dbl}}(C\op,\V)$ where the latter is the category of $\V^\dbl$-linear functors. This can be deduced from Heine's work in \cite{heine}, and the fact that a $\V$-enriched functor between $\V^\dbl$-tensored categories is automatically $\V^\dbl$-linear. 
\end{ex}
\begin{cor}\label{cor:locVmod}\label{lm:ffrightadjointV}
Assume $\V$ is $\kappa$-compactly generated, and the tensor product of $\V$ preserves $\kappa$-compacts. 
    Let $\M\in \Mod_\V(\PrL_{\lambda})$ for some $\lambda\geq \kappa$, and let $\M^\lambda$ denote the full $\V$-subcategory of $\M$ spanned by the $\lambda$-compacts in $\M$. 

    The inclusion $\M^\lambda\to \M$ induces a functor $\PP_\V(\M^\lambda)\to \M$ by \Cref{thm: UPVPsh}. In this situation, this functor is a localization of $\V$-modules, i.e. its right adjoint is $\V$-fully faithful.
\end{cor}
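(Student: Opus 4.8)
The plan is to identify the right adjoint of $L\colon\PP_\V(\M^\lambda)\to\M$ --- which by \Cref{thm: UPVPsh} is the essentially unique colimit-preserving $\V$-linear functor with $L\circ\yo\simeq\iota$, where $\iota\colon\M^\lambda\hookrightarrow\M$ denotes the inclusion --- and to show that its counit is an equivalence, via a $\lambda$-filtered colimit argument.

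Since $L$ is $\V$-linear and preserves colimits, \Cref{thm: enrichedtensored} exhibits it as a left-adjoint $\V$-functor; let $R$ be its $\V$-enriched right adjoint. Combining the enriched Yoneda lemma with $L\circ\yo\simeq\iota$ one computes $R(m)=\hom_\M(\iota(-),m)$, so $R$ is the restricted enriched Yoneda embedding. Now $R$ is $\V$-fully faithful if and only if the counit $\varepsilon\colon LR\Rightarrow\id_\M$ is an equivalence: under the enriched adjunction identification $\hom_{\PP_\V(\M^\lambda)}(Rm,Rm')\simeq\hom_\M(LRm,m')$, the map on $\V$-valued homs induced by $R$ becomes precomposition with $\varepsilon_m$, which is an equivalence for all $m'$ exactly when $\varepsilon_m$ is. As equivalences of $\V$-functors $\M\to\M$ are detected on objects, it suffices to prove $\varepsilon_m\colon LR(m)\to m$ is an equivalence for every $m\in\M$.

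I would establish this in three steps. (i) Both $\id_\M$ and $LR$ preserve $\lambda$-filtered colimits: $L$ preserves all colimits, and $R=\hom_\M(\iota(-),-)$ does because colimits in $\PP_\V(\M^\lambda)$ are objectwise and, for each $c\in\M^\lambda$, the functor $\hom_\M(c,-)\colon\M\to\V$ preserves $\lambda$-filtered colimits; this last fact reduces --- using that $\V$ is $\kappa$-compactly generated with tensor product preserving $\kappa$-compacts and $\lambda\ge\kappa$ --- to the preservation of $\lambda$-filtered colimits by $\Map_\M(v\otimes c,-)\simeq\Map_\V(v,\hom_\M(c,-))$ for $v\in\V^\lambda$, each $v\otimes c$ being $\lambda$-compact. (ii) $\varepsilon$ restricts to an equivalence on $\M^\lambda$: full faithfulness of $\M^\lambda\subseteq\M$ gives $R(\iota c)\simeq\yo(c)$, so the unit $\yo(c)\to RL\yo(c)=R(\iota c)$ is an equivalence, whence by a triangle identity $\varepsilon_{\iota c}\colon LR(\iota c)\to\iota c$ is an equivalence. (iii) Since $\M\in\Mod_\V(\PrL_\lambda)$ has $\lambda$-compactly generated underlying category, every object of $\M$ is a $\lambda$-filtered colimit of objects of $\M^\lambda$, so a natural transformation of $\lambda$-filtered-colimit-preserving endofunctors of $\M$ that is an equivalence on $\M^\lambda$ is an equivalence; applying this to $\varepsilon$ concludes, and incidentally shows $R$ is accessible.

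The real content is hidden in step (i) --- the rest is enriched bookkeeping --- namely that the \emph{enriched} hom $\hom_\M(c,-)$, not merely the mapping anima $\Map_\M(c,-)$, commutes with $\lambda$-filtered colimits for $\lambda$-compact $c$; this is exactly where the hypotheses on $\V$ are needed, and it is what makes $R$ (hence the localization) accessible. One could instead bypass the colimit argument by invoking that $\M\in\Mod_\V(\PrL_\lambda)$ means $\M\simeq\Ind_\lambda(\M^\lambda)$ as $\V$-modules together with the reflectivity of $\Ind_\lambda(\C)\hookrightarrow\PP_\V(\C)$, but matching that inclusion with $R$ requires the same unwinding as above.
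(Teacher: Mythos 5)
Your proof is correct, but it takes a genuinely different route from the paper. The paper argues \emph{globally}: it writes down a set $W$ of generating equivalences in $\PP_\V(\M^\lambda)$ (namely the maps $v\otimes y(m)\to y(v\otimes m)$ and $\colim_J y(m_j)\to y(\colim_J m_j)$, closed under $\V^\lambda$-tensors), forms the accessible $\V$-linear Bousfield localization $\M'$ of $\PP_\V(\M^\lambda)$ at $W$, and then shows $\M'\simeq\M$ by matching universal properties, i.e., showing $\Fun^L_\V(\M',\N)\simeq\Fun^L_\V(\M,\N)$ for all $\N$ by unwinding what it means to invert $W$. You instead argue \emph{locally}: you identify the $\V$-enriched right adjoint $R$ as the restricted Yoneda embedding $m\mapsto\hom_\M(\iota(-),m)$, reduce $\V$-full faithfulness to the counit $\varepsilon\colon LR\Rightarrow\id_\M$ being an equivalence, and check this first on $\M^\lambda$ (via the enriched Yoneda lemma and a triangle identity) and then on all of $\M$ by showing both $LR$ and $\id_\M$ preserve $\lambda$-filtered colimits. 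The key nontrivial step in your version --- that the enriched hom $\hom_\M(c,-)\colon\M\to\V$ commutes with $\lambda$-filtered colimits for $c\in\M^\lambda$, reduced to $\lambda$-compactness of $v\otimes c$ for $v\in\V^\lambda$ and detection of $\lambda$-filtered colimits in $\V$ by $\V^\lambda$ --- is exactly where the hypotheses $\V\in\CAlg(\PrL_\kappa)$ and $\M\in\Mod_\V(\PrL_\lambda)$ with $\lambda\geq\kappa$ enter, matching the role they play in the paper's proof. Your route is arguably more elementary and makes the accessibility of $R$ visible in the argument rather than as a byproduct; the paper's route has the advantage of exhibiting the generating equivalences of the localization explicitly, which is useful downstream.
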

\begin{proof}
    Let $y: \M^\lambda\to \PP_\V(\M^\lambda)$ be the Yoneda embedding, and let $W$ consist of the following maps in $\PP_\V(\M^\lambda)$: 
    \begin{enumerate}
        \item First, maps of the form $v\otimes y(m)\to y(v\otimes m)$ for $v\in\V^\lambda, m\in \M^\lambda$; 
        \item Second, maps of the form $\colim_J y(m_j)\to y(\colim_J m_j)$ for $\lambda$-small diagrams $m_\bullet:~J\to~\M^\lambda$. 
    \end{enumerate}
    Close $W$ under tensors by $\V^\lambda$, and let $\M'$ be the accessible localization of $\PP_\V(\M^\lambda)$ at $W$. This acquires a canonical $\V$-module structure, and the map $\PP_\V(\M^\lambda)\to \M$ clearly factors as a $\V$-module map through $\M'$. We claim that the map $\M'\to \M$ is an equivalence. 

    Using \Cref{thm: UPVPsh}, we find that for any $\V$-module $\N$, $$\Fun^L_\V(\M',\N)\simeq \Fun^L_{\V,W}(\PP_\V(\M^\lambda), \N)\simeq \Fun_{\V,W}(\M^\lambda, \N)$$ where the latter is the full subcategory of $\V$-enriched functors spanned by those functors that invert the maps in $W$. Now by assumption, $\M^\lambda$ is closed under tensors by $\V^\lambda$ and $\V$ is $\lambda$-compactly generated, so $\V$-enriched maps $\M^\lambda\to \N$ are equivalent to weakly $\V^\lambda$-tensored maps by \Cref{thm: enrichedtensored} (or rather its non-presentable variant, cf. \cite[Theorem 1.1]{heine}). The condition that $W$ be sent to equivalences, via the first half of maps in $W$, corresponds to the condition that the corresponding weakly $\V^\lambda$-tensored map be a map of $\V^\lambda$-modules; and via the second half of maps in $W$, to the condition that the underlying functor preserve $\lambda$-small colimits.

    Thus the claim now follows from the fact that $\Fun^{\lambda-\mathrm{rex}}_{\V^\lambda}(\M^\lambda,\N)\simeq \Fun^L_\V(\M,\N)$ (where $\Fun^{\lambda-\mathrm{rex}}$ is the subcategory of $\lambda$-small colimit preserving functors). 
\end{proof}

\begin{obs}\label{obs:naturalcolim}
    The inclusion $\M^\lambda\to \M$ is natural in $\M\in\Mod_\V(\PrL_\lambda)$, thus by \cite[Theorem A]{Shay}, so is the induced map $\PP_\V(\M^\lambda)\to \M$. 
\end{obs}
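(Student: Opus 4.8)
The plan is to first exhibit $\M\mapsto\M^\lambda$ as a genuine functor, then upgrade the inclusions $\M^\lambda\hookrightarrow\M$ to a natural transformation, and finally push this forward along the $\V$-enriched presheaf construction using \cite[Theorem A]{Shay}. So I would start with the functoriality of $(-)^\lambda$. By the definition of $\PrL_\lambda$, a morphism $F\colon\M\to\N$ of $\Mod_\V(\PrL_\lambda)$ is a $\V$-linear left adjoint whose right adjoint preserves $\lambda$-filtered colimits, equivalently one carrying $\lambda$-compact objects to $\lambda$-compact objects; hence $F$ restricts to a $\V$-functor $F^\lambda\colon\M^\lambda\to\N^\lambda$. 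To see that this is the value on objects and $1$-morphisms of an actual functor $\Mod_\V(\PrL_\lambda)\to\Cat_\V$, I would invoke the $\V$-linear refinement of the standard equivalence between $\PrL_\lambda$ and the \category{} of small $\lambda$-cocomplete \categories{} (given by $(-)^\lambda$, with inverse $\Ind_\lambda$; cf. \cite{HTT}). Under the standing hypotheses of \Cref{cor:locVmod}, $\otimes_\V$ preserves $\lambda$-compacts, so $\M^\lambda$ is $\V^\lambda$-tensored and $\Ind_\lambda$ of a small $\lambda$-cocomplete $\V^\lambda$-tensored \category{} is a presentable $\V$-module; thus $(-)^\lambda$ upgrades to an equivalence of $\Mod_\V(\PrL_\lambda)$ with the \category{} of small $\lambda$-cocomplete $\V^\lambda$-tensored \categories{} (and $\lambda$-small-colimit-preserving $\V^\lambda$-functors), with inverse $\Ind_\lambda$, and composing with the functor remembering only the underlying $\V$-enrichment (which uses that $\V$ is $\lambda$-compactly generated) gives the desired functor, still denoted $(-)^\lambda$.

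Next I would check that the inclusions $\iota_\M\colon\M^\lambda\hookrightarrow\M$ form a natural transformation from $(-)^\lambda$ to $\chi$ (restricted along $\Mod_\V(\PrL_\lambda)\subset\Mod_\V(\PrL)$), as functors into $\widehat{\Cat}_\V$: under the equivalence of the previous paragraph, $\iota_\M$ corresponds to the manifestly natural embedding $\mathcal A\hookrightarrow\Ind_\lambda(\mathcal A)$ of a small $\lambda$-cocomplete $\V^\lambda$-tensored \category{} into its $\Ind_\lambda$-completion, with the target identified with $\chi(\M)$ via \Cref{thm: enrichedtensored}. Equivalently, one obtains a functor $\Mod_\V(\PrL_\lambda)\to\Fun(\Delta^1,\widehat{\Cat}_\V)$, $\M\mapsto(\M^\lambda\hookrightarrow\M)$.

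Finally I would feed this into the functoriality of the presheaf construction. By \cite[Theorem A]{Shay}, $\PP_\V(-)$ is a functor $\Cat_\V\to\Mod_\V(\PrL)$ (covariant via $\V$-enriched left Kan extension) and the Yoneda embedding is a natural transformation $y\colon(\Cat_\V\hookrightarrow\widehat{\Cat}_\V)\Rightarrow\chi\circ\PP_\V$; combined with \Cref{thm: UPVPsh} this makes the universal property natural, i.e. the equivalence $\Fun^L_\V(\PP_\V(\M_0),\N)\simeq\Fun_\V(\M_0,\N)$ is natural in $(\M_0,\N)\in(\Cat_\V)\op\times\Mod_\V(\PrL)$. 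Composing $\PP_\V(-)$ with the functor $(-)^\lambda$ yields $\M\mapsto\PP_\V(\M^\lambda)$ as a functor $\Mod_\V(\PrL_\lambda)\to\Mod_\V(\PrL)$, and feeding the natural transformation $\iota$ into this natural form of \Cref{thm: UPVPsh} produces a natural transformation $\PP_\V((-)^\lambda)\Rightarrow\bigl(\Mod_\V(\PrL_\lambda)\hookrightarrow\Mod_\V(\PrL)\bigr)$ whose value at $\M$ is exactly the colimit-preserving $\V$-linear functor $\PP_\V(\M^\lambda)\to\M$ of \Cref{cor:locVmod}. This is the asserted naturality.

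The step I expect to be the main obstacle is the last one: upgrading the pointwise universal property of \Cref{thm: UPVPsh} to a statement in families, so that the individual extensions $\PP_\V(\M^\lambda)\to\M$ cohere into a single natural transformation. This is precisely what \cite[Theorem A]{Shay} is invoked for, and it is not formal from \Cref{thm: UPVPsh} alone. The other ingredients — the equivalence of $\PrL_\lambda$ with small $\lambda$-cocomplete \categories{} and its $\V$-linear refinement, and the bookkeeping around the $\V^\lambda$-tensoring of $\M^\lambda$ under the hypotheses of \Cref{cor:locVmod} — are routine.
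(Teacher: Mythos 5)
Your proposal follows the same route the paper intends for this observation: you first establish functoriality of $\M\mapsto\M^\lambda$ and the naturality of the inclusions $\M^\lambda\hookrightarrow\M$, then invoke \cite[Theorem A]{Shay} to get functoriality of $\PP_\V(-)$ and the coherent form of the universal property, and conclude. The paper compresses all of this into a one-sentence observation, but the decomposition you give — naturality of the restriction to $\lambda$-compacts (via the equivalence of $\Mod_\V(\PrL_\lambda)$ with small $\lambda$-cocomplete $\V^\lambda$-tensored \categories{}) followed by the functorial presheaf construction from \cite{Shay} — is exactly what is implicit in it, and you correctly identify the latter as the step that genuinely requires \cite[Theorem A]{Shay} rather than being formal from \Cref{thm: UPVPsh} alone.
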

\subsection{Internal left adjoints}
We now introduce internal left adjoints, which will be the relevant morphisms between dualizable $\V$-modules. They are suppposed to be analogues/generalizations of ``compact-preserving functors between compactly generated categories''. Much of the material here can also be found in \cite{BMS} or \cite{Shay}, in either greater or smaller generality. We refer to those papers for the proofs we omit (though the reader is encouraged to work them out as exercises). 
\begin{defn}
    Let $f:\M\to \N$ be a functor in $\Mod_\V(\PrL)$, with right adjoint $f^R$. Following \cite[Example 7.3.2.8., Remark 7.3.2.9.]{HA}, if the canonical projection map $$v\otimes f^R(n)\to f^R(v\otimes n)$$ is an equivalence for all $v\in\V,n\in\N$, we find that $f^R$ induces a right adjoint $\V$-module map $\N\to \M$. 
    
If it is furthermore colimit-preserving, we call $f$ an \emph{internal left adjoint}. 
\end{defn}
\begin{rmk}
    If $f^R$ is colimit-preserving, one can rephrase the condition about the projection map as the statement that the following square is horizontally right adjointable\footnote{See \cite[Definition 3.1]{HSS} for the notion of adjointability.}: 
    \[\begin{tikzcd}
	\V\otimes\M & \V\otimes\N \\
	\M & \N
	\arrow[from=2-1, to=2-2]
	\arrow[from=1-1, to=1-2]
	\arrow[from=1-1, to=2-1]
	\arrow[from=1-2, to=2-2]
\end{tikzcd}\]
\end{rmk}
\begin{rmk}
    The terminology comes from the fact that this condition is equivalent to $f$ being a left adjoint \emph{internal to} the $(\infty,2)$-category $\Mod_\V(\PrL)$ (equivalently, internal to its homotopy $2$-category). 
\end{rmk}
\begin{obs}
    Internal left adjoints are closed under composition. 
\end{obs}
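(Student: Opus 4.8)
The plan is to observe that the content is purely formal, so the argument should be short. Suppose $f\colon\M\to\N$ and $g\colon\N\to\mathcal P$ are internal left adjoints, with right adjoints $f^R$ and $g^R$. Then $g\circ f$ is colimit-preserving and $\V$-linear, being a composite of such functors, and its right adjoint is $f^R\circ g^R$, which is colimit-preserving since each of $f^R$, $g^R$ is. So the only thing left to verify is that the projection map
\[
v\otimes f^R(g^R(p))\longrightarrow f^R(g^R(v\otimes p))
\]
is an equivalence for all $v\in\V$ and $p\in\mathcal P$.

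For this I would first record that the lax $\V$-linear structure on the right adjoint of the composite $g\circ f$ agrees with the composite of the lax $\V$-linear structures on $g^R$ and $f^R$; this is a formal property of mates (equivalently, of the functoriality of passing to right adjoints in the $(\infty,2)$-category $\Mod_\V(\PrL)$). Granting this, the displayed projection map factors as
\[
v\otimes f^R(g^R(p))\longrightarrow f^R\bigl(v\otimes g^R(p)\bigr)\longrightarrow f^R\bigl(g^R(v\otimes p)\bigr),
\]
where the first arrow is the projection map of $f$ at the object $g^R(p)$, hence an equivalence since $f$ is an internal left adjoint, and the second is $f^R$ applied to the projection map of $g$ at $p$, hence an equivalence since $g$ is an internal left adjoint. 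Thus the composite is an equivalence, and $g\circ f$ is an internal left adjoint.

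The argument is entirely formal, and the only point requiring a little care — the ``main obstacle'', such as it is — is the compatibility of projection maps with composition. One can either cite this from the theory of mates, or, staying hands-on, check it by unwinding that the projection map of a $\V$-linear left adjoint $h$ is the mate along the two adjunctions of the canonical equivalence $h(v\otimes-)\simeq v\otimes h(-)$. Alternatively, one can sidestep the computation entirely by invoking the remark above that internal left adjoints are precisely the left adjoints internal to the $(\infty,2)$-category $\Mod_\V(\PrL)$, together with the elementary fact that in any $(\infty,2)$-category the left adjoints are closed under composition (with the right adjoint of $g\circ f$ given by $f^R\circ g^R$, and unit and counit assembled from those of $f$ and $g$).
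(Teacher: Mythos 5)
Your proof is correct. The paper states this as an unproved Observation, presumably relying on the remark immediately preceding it — that internal left adjoints are exactly the left adjoints internal to the $(\infty,2)$-category $\Mod_\V(\PrL)$ — together with the standard fact that left adjoints in a $2$-category compose; this is precisely the alternative you mention at the end, so your argument matches the paper's intent. Your hands-on verification (factoring the projection map of $g\circ f$ as the projection map of $f$ at $g^R(p)$ followed by $f^R$ of the projection map of $g$ at $p$, justified by functoriality of mates) is also sound and gives the same conclusion more explicitly.
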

\begin{ex}
    If $\V=\Ss$ or $\Sp$, the condition that the projection map be an equivalence in fact \emph{follows} from the fact that $f^R$ preserves colimits. This is more generally the case whenever $\V$ is a \emph{mode}, i.e. an idempotent algebra in $\PrL$ \cite[Proposition 5.2.15]{AmbiHeight}. A different generalization is that this is the case whenever $\V$ is generated under colimits by its dualizable objects, as is spelled out in \cite[Corollary 3.8]{companion}
\end{ex}
\begin{ex}
    In the case $\V=\Sp$, if the source $\M$ is compactly generated, this is equivalent to $f$ preserving compact objects - in general, this condition only \emph{implies} that $f$ preserves compact objects but the converse need not hold. See also \Cref{cor:atomicimpliesinternal}.
\end{ex}
\begin{ex}
    Let $f:I\to J$ be a functor between small \categories, and let $f_!:~\V^I\to~\V^J$ be the left Kan extension functor. This is left adjoint to $f^*$, and it is easy to see that this implies that $f_!$ is an internal left adjoint. 

    It is rarer for $f^*$ to be an internal left adjoint: its right adjoint is $f_*$, right Kan extension, and so this condition boils down to the limit functors over $I\times_J J_{j/}$ all preserving colimits and $\V$-tensors - this is some kind of ``finiteness relative to $\V$'' condition. The situation where $I,J$ are spaces is studied in some detail in \cite{CCRY}. 
\end{ex}
\begin{ex}
    Let $f: A\to B$ be a map of algebras in $\V$. Basechange along $f$, as a functor $\LMod_A(\V)\to \LMod_B(\V)$ is an internal left adjoint. 
\end{ex}
\begin{ex}
    If $v\in\V$ is dualizable, then for any $\V$-module $\M$, tensoring with $v$ is an internal left adjoint $\M\to \M$. 
\end{ex}
\begin{ex}
    In the case where $\V$ is stable, consider a localization sequence $$\M\to \N\to \N/\M$$ where $\M\to \N$ is fully faithful. In this case, the inclusion functor $i:\M\to \N$ is an internal left adjoint if and only if the projection functor $p:\N\to \N/\M$ is. This follows from the co/fiber sequence $ii^R\to \id_\N\to p^R p$ of endofunctors of $\N$ (see, e.g. \Cref{lm:fundamentalsequence}).
\end{ex}
\begin{ex}
    Let $\M\xrightarrow{f} \N_0\overset{i}{\hookrightarrow} \N$ be a composite in $\Mod_\V(\PrL)$ where $\N_0\to \N$ is fully faithful. If $i\circ f$ is an internal left adjoint, then so is $f$. Indeed, the right adjoint to $f$ can be written as $(i\circ f)^R \circ i$, as $i^Ri\simeq \id_{\N_0}$. 
\end{ex}
\begin{nota}
   Let $\M,\N\in\Mod_\V(\PrL)$. We let $\Fun^{iL}_\V(\M,\N) \subset \Fun^L_\V(\M,\N)$ denote the full subcategory spanned by internal left adjoints. 
\end{nota}
\begin{lm}\label{lm:smallinternalleft}
    Let $\M,\N\in\Mod_\V(\PrL)$. The \category{} $\Fun^{iL}_\V(\M,\N)$ of internal left adjoints from $\M$ to $\N$ is essentially small.
\end{lm}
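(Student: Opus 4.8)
The plan is to bound the size of $\Fun^{iL}_\V(\M,\N)$ by relating internal left adjoints to maps out of a small subcategory. First I would choose a regular cardinal $\lambda$ (large enough relative to the presentability ranks of $\M$, $\N$ and $\V$) such that $\M \in \Mod_\V(\PrL_\lambda)$, and such that the subcategory $\M^\lambda$ of $\lambda$-compact objects of $\M$ is essentially small and closed under tensoring by $\V^\lambda$; this is possible by standard accessibility arguments (cf.\ \cite{HTT}). By \Cref{cor:locVmod}, the induced $\V$-module functor $\PP_\V(\M^\lambda) \to \M$ is a localization, in particular it is essentially surjective on objects and its right adjoint is $\V$-fully faithful.

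The key point is that an internal left adjoint $f \colon \M \to \N$ is determined by its restriction to $\M^\lambda$. Precomposition with $\M^\lambda \hookrightarrow \M \to \PP_\V(\M^\lambda)$ gives, via \Cref{thm: UPVPsh} together with the localization of \Cref{cor:locVmod}, a fully faithful embedding of $\Fun^L_\V(\M, \N)$ onto a full subcategory of $\Fun_\V(\M^\lambda, \N)$ — namely those enriched functors inverting the class $W$ of \Cref{cor:locVmod}. In particular $\Fun^{iL}_\V(\M,\N)$ embeds fully faithfully into $\Fun_\V(\M^\lambda, \N)$. So it suffices to show this last category is essentially small, and for that I would invoke \Cref{thm: enrichedtensored} (or its non-presentable variant \cite[Theorem~1.1]{heine}): $\V$-enriched functors out of the small $\V$-category $\M^\lambda$ into $\N$ form an essentially small category, since $\M^\lambda$ is small and $\N$ is (locally) small, so its enriched presheaf-type mapping objects are computed by small limits in the locally small category $\N$.

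The main obstacle I anticipate is bookkeeping rather than conceptual: one must verify that $\lambda$ can be chosen so that all the relevant closure conditions (under $\V^\lambda$-tensors, under $\lambda$-small colimits, and the localization statement of \Cref{cor:locVmod}) hold simultaneously, and that the embedding $\Fun^L_\V(\M,\N) \hookrightarrow \Fun_\V(\M^\lambda,\N)$ genuinely lands in a full, rather than merely faithful, subcategory — this is exactly what the universal property in \Cref{thm: UPVPsh} and the localization in \Cref{cor:locVmod} buy us, so the argument reduces to citing those two results correctly. An alternative, slicker route avoiding \Cref{cor:locVmod} entirely would be to observe that any internal left adjoint, being in particular a left adjoint of $\V$-modules, preserves $\lambda$-compact objects for suitable $\lambda$ (as its right adjoint preserves $\lambda$-filtered colimits), hence restricts to a $\V^\lambda$-linear functor $\M^\lambda \to \N^\lambda$ between small $\V$-categories; the space of such is a subobject of $\Fun_\V(\M^\lambda, \N^\lambda)$, which is essentially small, and the restriction functor is faithful because $\M$ is generated under $\lambda$-small colimits and $\V^\lambda$-tensors by $\M^\lambda$. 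I would present the first route as the main proof and perhaps remark on the second.
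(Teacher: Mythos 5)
Your main route contains a genuine gap, and the gap is exactly where the hypothesis that $f$ is an \emph{internal} left adjoint needs to enter. The smallness claim on which everything rests --- that $\Fun_\V(\M^\lambda,\N)$ is essentially small --- is false: this functor category is merely locally small. Already a one-object $\V$-category in place of $\M^\lambda$ yields something at least as large (up to equivalence) as $\N$ itself, which is not essentially small. Worse, since you only use that $f$ is an internal left adjoint to cut down from $\Fun^L_\V(\M,\N)$ to a full subcategory, your argument would prove that $\Fun^L_\V(\M,\N)$ \emph{itself} is essentially small, which is absurd (it is a large presentable $\infty$-category in general). Your alternative route does bring the internal-left-adjoint hypothesis to bear --- $f^R$ preserves all colimits, so $f$ preserves $\lambda$-compacts, hence restricts to $\M^\lambda\to\N^\lambda$, and $\Fun_\V(\M^\lambda,\N^\lambda)$ between two small $\V$-categories really is essentially small --- but you claim only that the restriction functor is \emph{faithful}, and faithfulness is not enough to transfer essential smallness (a large discrete category maps faithfully to the terminal category). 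What you need is injectivity on equivalence classes of objects, for which full faithfulness suffices; that \emph{is} available here, since $\Fun^L_\V(\M,\N)\hookrightarrow\Fun^L_\V(\PP_\V(\M^\lambda),\N)\simeq\Fun_\V(\M^\lambda,\N)$ is fully faithful by \Cref{cor:locVmod} and \Cref{thm: UPVPsh}, and $\Fun_\V(\M^\lambda,\N^\lambda)$ sits fully inside $\Fun_\V(\M^\lambda,\N)$. With that correction the alternative route goes through, but it is not what the paper does.

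The paper's argument is shorter and bypasses enriched presheaves entirely. Since $\Fun^L_\V(\M,\M)$ is presentable, $\id_\M$ is $\kappa$-compact in it for some regular cardinal $\kappa$. For an internal left adjoint $f:\M\to\N$, the right adjoint $f^R$ is a colimit-preserving $\V$-linear functor, so postcomposition $g\mapsto f^R g$ is a colimit-preserving functor $\Fun^L_\V(\M,\N)\to\Fun^L_\V(\M,\M)$, and the adjunction identity $\Map(f,g)\simeq\Map(\id_\M,f^Rg)$ exhibits $f$ as $\kappa$-compact in $\Fun^L_\V(\M,\N)$ for the \emph{same} $\kappa$. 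Thus $\Fun^{iL}_\V(\M,\N)\subset\Fun^L_\V(\M,\N)^\kappa$, a small category. Note that the uniformity of $\kappa$ across all internal left adjoints is what makes this work; your approach achieves the analogous uniformity by instead choosing $\lambda$ so that both $\M$ and $\N$ are $\lambda$-compactly generated.
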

\begin{proof}
    Recall that $\Fun^L_\V(\M,\N)$ is presentable. In particular, for $\M=\N$, we find that $\id_\M$ is $\kappa$-compact for some $\kappa$. 

    If $f$ is an internal left adjoint, we find that for $g\in\Fun^L_\V(\M,\N)$, $\Map(f,g)\simeq \Map(\id_\M, f^Rg) $, so that $f$ is also $\kappa$-compact in $\Fun^L_\V(\M,\N)$. We therefore have $\Fun^{iL}_\V(\M,\N)\subset \Fun^L_\V(\M,\N)^\kappa$, and the latter is essentially small. 
\end{proof}

A crucial special case of internal left adjoints is when $\M=\V$. The following definition appeared in \cite{Shay,Sep}
\begin{defn}\label{defn:atomicob}
    An object $x\in \M$ is called $\V$-atomic, or simply atomic if the base $\V$ is understood, if the $\V$-linear functor $\V\xrightarrow{-\otimes x} \M$ it classifies\footnote{Under the equivalence $\Fun^L_\V(\V,\M)\simeq \M$.} is an internal left adjoint, i.e. if $\hom_\M(x,-): \M\to \V$ preserves colimits and the canonical map $v\otimes~\hom_\M(x,y)\to~\hom_\M(x,v\otimes~y)$ is an equivalence for all $v\in \V, y\in\M$. 
\end{defn}
\begin{ex}\label{ex:VatVdbl}
    $\V$-atomic objects in $\V$ are exactly the dualizable objects. It follows by considering the composite $\V\xrightarrow{-\otimes v}\V\xrightarrow{-\otimes m}\M$ that if $m\in\M$ is $\V$-atomic and $v\in\V$ is dualizable, then $v\otimes m$ is $\V$-atomic.
\end{ex}
\begin{ex}
    $\Sp$-atomic objects are exactly compact objects in the usual sense. More generally, if $\V$ is a mode, an object $m$ in $\M$ is atomic if and only if $\hom(m,-):\M\to\V$ preserves colimits. This case was studied in \cite{BMS}. 
\end{ex}
\begin{nota}
    Let $\M\in\Mod_\V(\PrL)$. We let $\M^\at\subset \M$ denote the full subcategory spanned by the atomic objects.
\end{nota}
\begin{cor}\label{cor:atimplcpt}
    Let $\M$ be a $\V$-module. The \category{} $\M^\at$ of atomic objects of $\M$ is essentially small. In fact, if the unit $\one_\V$ is $\kappa$-compact, we have $\M^\at\subset \M^\kappa$. 
\end{cor}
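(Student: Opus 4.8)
\subsection*{Proof proposal}

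The plan is to deduce both assertions from the identification of atomic objects with internal left adjoints out of $\V$, together with \Cref{lm:smallinternalleft} and the tensor--hom adjunction for the $\V$-module structure on $\M$ (part of the enriched-and-tensored data of \Cref{thm: enrichedtensored}).

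First, for essential smallness. Under the equivalence $\Fun^L_\V(\V,\M)\simeq \M$ sending a $\V$-linear left adjoint to its value on the unit $\one_\V$, \Cref{defn:atomicob} says precisely that the full subcategory $\M^\at\subset\M$ corresponds to the full subcategory $\Fun^{iL}_\V(\V,\M)\subset \Fun^L_\V(\V,\M)$ of internal left adjoints. By \Cref{lm:smallinternalleft}, the latter is essentially small, hence so is $\M^\at$.

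Second, for the compactness claim, fix an atomic object $x\in\M$. The key input is the natural equivalence $\Map_\M(x,y)\simeq \Map_\V(\one_\V,\hom_\M(x,y))$: this is the defining adjunction $\Map_\M(v\otimes x,y)\simeq\Map_\V(v,\hom_\M(x,y))$ for the enriched hom, evaluated at $v=\one_\V$ and using $\one_\V\otimes x\simeq x$, and it is natural in $y$. Thus $\Map_\M(x,-)$ factors as the composite $\M\xrightarrow{\hom_\M(x,-)}\V\xrightarrow{\Map_\V(\one_\V,-)}\Ss$. Since $x$ is atomic, $\hom_\M(x,-)$ preserves all colimits, in particular $\kappa$-filtered ones; since $\one_\V$ is $\kappa$-compact, $\Map_\V(\one_\V,-)$ preserves $\kappa$-filtered colimits. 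Hence the composite preserves $\kappa$-filtered colimits, so $x\in\M^\kappa$, giving $\M^\at\subset\M^\kappa$.

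I do not expect a genuine obstacle here; the only point requiring a little care is to record that the equivalence $\Map_\M(x,y)\simeq\Map_\V(\one_\V,\hom_\M(x,y))$ is natural in $y$ and compatible with the identification $\chi(\M)\simeq\M$, so that the factorization of $\Map_\M(x,-)$ through $\hom_\M(x,-)$ is an equivalence of functors $\M\to\Ss$. (Essential smallness of $\M^\at$ already forces $\M^\at\subset\M^\lambda$ for \emph{some} $\lambda$, but pinning $\lambda$ down to $\kappa$ is exactly what the factorization argument buys us.)
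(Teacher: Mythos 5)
Your proof is correct and follows essentially the same route as the paper. For the first part you invoke \Cref{lm:smallinternalleft} exactly as the paper does. For the second part, the paper's proof is terse: it says the result ``follows from the proof of \Cref{lm:smallinternalleft} together with the fact that under the equivalence $\Fun^L_\V(\V,\V)\simeq \V$, the identity $\id_\V$ corresponds to $\one_\V$.'' Your argument is precisely the unwinding of that statement: the equivalence $\Map(f,g)\simeq\Map(\id_\M,f^Rg)$ from the proof of that lemma, specialized to $\M=\V$, becomes exactly your identity $\Map_\M(x,y)\simeq\Map_\V(\one_\V,\hom_\M(x,y))$, after which $\kappa$-compactness of $\one_\V$ plus colimit preservation of $\hom_\M(x,-)$ gives $\kappa$-compactness of $x$. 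So the content is the same; you have simply spelled out concretely what the paper leaves implicit, which is arguably clearer.
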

\begin{proof}
    This follows from \Cref{lm:smallinternalleft} together with the definitional equivalence $\Fun^{iL}_\V(\V,\M)\simeq~\M^\at$. 

    The second part follows from the proof of \Cref{lm:smallinternalleft} together with the fact that under the equivalence $\Fun^L_\V(\V,\V)\simeq \V$, the identity $\id_\V$ corresponds to $\one_\V$. 
\end{proof}
\begin{defn}
   Let $\M\in\Mod_\V(\PrL)$. $\M$ is said to be $\V$-atomically generated if the smallest full $\V$-submodule of $\M$ closed under colimits and containing the atomics of $\M$ is $\M$ itself. 
\end{defn}
\begin{obs}\label{obs:descatgen}
    The tensoring of $\V$ on $\M$ restricts to a weak tensoring of $\V$ on $\M^\at$. By \cite{heine}, the canonical map of weakly tensored \categories{} $\M^\at\to \M$ induces a similar map of $\V$-enriched \categories{} between objects with the same name, and then by \Cref{thm: UPVPsh} a $\V$-linear, colimit preserving functor $i:\PP_\V(\M^\at)\to \M$. 

 For any given $y$, the collection of $x$'s such that the map $$\hom_{\PP_\V(\M^\at)}(x,y)\to \hom_\M(i(x),i(y))$$ is an equivalence is closed under colimits and $\V$-tensors. For $x\in \M^\at$ (viewed as an object of $\PP_\V(\M^\at)$ under the Yoneda embedding), the collection of $y$'s for which this is an equivalence is similarly closed under colimits and $\V$-tensors, and by construction (together with the Yoneda lemma, cf.\cite{hinichyoneda}) it contains the image of $\M^\at$. Putting these two together, we find that the functor $\PP_\V(\M^\at)\to \M$ is a fully faithful $\V$-linear left adjoint. It follows that if $\M$ is atomically generated, this functor is an equivalence. 

 We now observe that its inverse is explicit : it is given by the restricted Yoneda embedding.

 From this construction and this result, we deduce that $\M$ is atomically generated if and only if it is of the form $\PP_\V(\M_0)$ for some $\V$-enriched \category{} $\M_0$. 
\end{obs}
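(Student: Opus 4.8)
The plan is to show that the canonical $\V$-linear, colimit-preserving functor $i\colon\PP_\V(\M^\at)\to\M$ --- obtained by feeding the inclusion $\M^\at\hookrightarrow\M$ into \Cref{thm: UPVPsh} applied to the small $\V$-enriched category $\M_0=\M^\at$ (small by \Cref{cor:atimplcpt}, and $\V$-enriched via the weak tensoring it inherits from $\M$ in the sense of \cite{heine}), so that $i\circ y$ is the inclusion --- is $\V$-fully faithful, and then to identify its essential image. Being $\V$-fully faithful means the comparison $\hom_{\PP_\V(\M^\at)}(x,y')\to\hom_\M(i(x),i(y'))$ is an equivalence in $\V$ for all $x,y'$; this implies fully faithfulness on mapping spaces (as $\Map=\Map_\V(\one_\V,\hom(-,-))$), and via \Cref{thm: enrichedtensored} upgrades to an identification of $\PP_\V(\M^\at)$ with a full $\V$-submodule of $\M$.

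The heart of the matter is a two-step density argument. First fix $y'$: since $i$ preserves colimits and is $\V$-linear, and since $\hom(-,y')$ sends colimits to limits and sends a tensor $v\otimes x$ to $\hom_\V(v,\hom(x,y'))$, the class of $x$ for which the comparison is an equivalence is closed under colimits and $\V$-tensors; as $\PP_\V(\M^\at)$ is generated under colimits and $\V$-tensors by the representables, it suffices to take $x=y(m)$ with $m\in\M^\at$. Now fix such an $x$ and vary $F$: the enriched Yoneda lemma \cite{hinichyoneda} identifies the source $F\mapsto\hom_{\PP_\V(\M^\at)}(y(m),F)$ with evaluation $F\mapsto F(m)$, which preserves colimits and $\V$-tensors (computed pointwise on presheaves), and identifies the target with $F\mapsto\hom_\M(m,i(F))$. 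Here $\V$-atomicity of $m$ enters decisively: by \Cref{defn:atomicob} it says exactly that $\hom_\M(m,-)$ preserves colimits and that $v\otimes\hom_\M(m,z)\to\hom_\M(m,v\otimes z)$ is an equivalence, so the target is also colimit- and $\V$-tensor-preserving in $F$. Hence the class of such $F$ is closed under colimits and $\V$-tensors, and since on a representable $F=y(m')$ both sides compute $\hom_\M(m,m')$ and the comparison is the identity, it is everything.

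It then remains to read off the consequences. Since $i$ is a $\V$-fully faithful $\V$-linear left adjoint, its essential image is a full $\V$-submodule of $\M$ closed under colimits containing $\M^\at$ (the images of the representables), and conversely it is contained in every colimit-closed $\V$-submodule containing $\M^\at$; so $i$ is an equivalence exactly when $\M$ is $\V$-atomically generated. For the converse, if $\M\simeq\PP_\V(\M_0)$ for some small $\V$-category $\M_0$, each representable $y(m_0)$ is $\V$-atomic --- its corepresented functor is evaluation at $m_0$, which preserves colimits and $\V$-tensors --- and the representables generate $\PP_\V(\M_0)$ under colimits and $\V$-tensors, so $\M$ is atomically generated; this gives the ``if and only if''. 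Finally, when $\M$ is atomically generated, $i^{-1}=i^R$, and $\hom_{\PP_\V(\M^\at)}(y(m),i^R(z))\simeq\hom_\M(m,z)$ identifies $i^R$ with the restricted Yoneda embedding $z\mapsto\hom_\M(-,z)|_{\M^\at}$.

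The main obstacle is keeping the $\V$-enriched bookkeeping honest rather than any single hard step: one must arrange the weak-tensoring/enrichment packaging of $\M^\at$ so that \Cref{thm: UPVPsh} genuinely applies, keep the comparison map functorial in both variables so the two closure arguments chain correctly, and exploit the \emph{two-sided} content of $\V$-atomicity --- colimit preservation \emph{and} the projection-formula equivalence --- since it is precisely this that makes $\hom_\M(m,i(-))$ well-behaved and is the reason the argument would fail for objects that are merely compact in $\M$.
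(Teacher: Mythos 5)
Your proof is correct and follows essentially the same two-step density argument as the paper's: closure in the first variable reduces to representables, closure in the second variable (using atomicity of $m$ and the Yoneda lemma for the base case) then gives everything, and the remaining claims are read off from $i$ being a $\V$-fully-faithful left adjoint. You usefully make explicit where $\V$-atomicity (both colimit preservation \emph{and} the projection-formula condition) does work in the second closure step, a point the paper leaves implicit behind the word ``similarly''.
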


\begin{cor}\label{cor:atimpliescpct}
    Let $\V\in\CAlg(\PrL_\kappa)$, and let $\M$ be an atomically generated $\V$-module (thus of the form $\PP_\V(\M_0)$ for some small $\V$-\category{} $\M_0$). 

    In this case, $\M$ is $\kappa$-compactly generated, and $\kappa$-compacts in $\M$ are closed under tensoring with $\kappa$-compacts in $\V$. 
\end{cor}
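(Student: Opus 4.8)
The plan is to produce an explicit small set of $\kappa$-compact generators of $\M$, and then to pin down the $\kappa$-compact objects precisely enough to see that they are stable under tensoring by $\kappa$-compact objects of $\V$.

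First I would extract the generators from atomic generation. Since the smallest full $\V$-submodule of $\M$ closed under colimits and containing $\M^\at$ is all of $\M$, and since tensoring a colimit of objects of the form $v\otimes x$ (with $v\in\V$, $x\in\M^\at$) by any $w\in\V$ is again such a colimit, the closure of $\{v\otimes x : v\in\V,\ x\in\M^\at\}$ under ordinary colimits is a full $\V$-submodule closed under colimits containing $\M^\at$, hence is $\M$. As $\V$ is $\kappa$-compactly generated, every $v\in\V$ is a colimit of objects of $\V^\kappa$, so already $G:=\{v\otimes x : v\in\V^\kappa,\ x\in\M^\at\}$ generates $\M$ under colimits; it is essentially small since $\V^\kappa$ is and, by \Cref{cor:atimplcpt}, so is $\M^\at$. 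Each object of $G$ is $\kappa$-compact: for $v\in\V^\kappa$ and $x\in\M^\at$ one has $\Map_\M(v\otimes x,-)\simeq\Map_\V(v,\hom_\M(x,-))$, where $\hom_\M(x,-)\colon\M\to\V$ preserves all colimits by atomicity of $x$ while $\Map_\V(v,-)$ preserves $\kappa$-filtered colimits since $v\in\V^\kappa$. A presentable category admitting a small set of $\kappa$-compact generators is $\kappa$-compactly generated (\cite[\S5.5.7]{HTT}), so $\M\in\PrL_\kappa$.

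For the second assertion I would use the standard description of $\M^\kappa$: letting $\C_0\subseteq\M$ be the closure of $G$ under $\kappa$-small colimits, $\C_0$ is essentially small, consists of $\kappa$-compact objects ($\kappa$-compacts are closed under $\kappa$-small colimits), and generates $\M$ under colimits, so $\Ind_\kappa(\C_0)\to\M$ is an equivalence and $\M^\kappa$ is the idempotent completion of $\C_0$ inside $\M$. Now fix $v\in\V^\kappa$. The endofunctor $v\otimes-\colon\M\to\M$ preserves colimits (it is part of the $\V$-action) and sends a generator $v'\otimes x\in G$ to $(v\otimes v')\otimes x$, which again lies in $G$ because $v\otimes v'\in\V^\kappa$ (as $\V\in\CAlg(\PrL_\kappa)$, its tensor product preserves $\kappa$-compacts). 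Hence $v\otimes-$ carries $G$ into $G$, therefore $\C_0$ into $\C_0$ (it preserves $\kappa$-small colimits), therefore $\M^\kappa=\idem(\C_0)$ into $\M^\kappa$ (it preserves retracts, and $\M^\kappa$ is closed under retracts). This is exactly the claim that tensoring with a $\kappa$-compact object of $\V$ preserves $\kappa$-compact objects of $\M$.

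I do not expect a genuine obstacle: the substance is packaged in \Cref{cor:atimplcpt} (essential smallness of $\M^\at$, which rests on Heine's comparison) and in the hypothesis $\V\in\CAlg(\PrL_\kappa)$. The only points needing a little care are the translation of ``$\V$-atomically generated'' into ``generated under ordinary colimits by the $\V$-tensors of atomic objects'', and the routine invocation of the standard facts about $\kappa$-compactly generated presentable categories and the identity $\M^\kappa=\idem(\C_0)$; neither should cause trouble.
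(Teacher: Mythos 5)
Your proof is correct and follows essentially the same approach as the paper's: the paper's proof (which is quite terse, with "by direct inspection" and "by definition") produces the same generating set $\{v\otimes m: v\in\V^\kappa, m\in\M^\at\}$, observes each such object is $\kappa$-compact, and then notes that since $\V^\kappa$ is closed under tensor products these objects generate $\M^\kappa$ under $\kappa$-small colimits, giving closure of $\M^\kappa$ under tensoring with $\V^\kappa$. You have simply spelled out the details — the tensor-hom adjunction $\Map_\M(v\otimes x,-)\simeq\Map_\V(v,\hom_\M(x,-))$ behind "direct inspection," the colimit-closure argument behind "by definition," and the identification $\M^\kappa=\idem(\C_0)$ — all of which are precisely what the paper is leaning on implicitly.
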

\begin{proof}
    For each $v\in\V^\kappa$ and $m\in\M^\at$, $v\otimes m$ is $\kappa$-compact by direct inspection and these objects generate $\M$ under \emph{colimits} by definition. 

    As these objects therefore generate $\kappa$-compacts in $\M$ under $\kappa$-small colimits, and as $\V^\kappa$ is closed under tensor products by assumption, we find that tensoring with $\V^\kappa$ preserves $\M^\kappa$.
\end{proof}
\begin{lm}\label{lm:colimdetection}
Let $F: \M\to \N$ be a map in $\Mod_\V(\PrL)$; and let $\iota_i: \M_i\to \M$ be a family of internal left adjoints such that the right adjoints $(\iota_i)^R : \M\to \M_i$ are jointly conservative. 

In this case, $F$ is an internal left adjoint if and only if each $F\circ \iota_i$ is. 
\end{lm}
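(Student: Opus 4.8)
The plan is to reduce the statement to a statement about right adjoints and use the joint conservativity hypothesis to detect the two conditions defining an internal left adjoint: (i) colimit-preservation of the right adjoint, and (ii) the projection map being an equivalence. The forward direction is immediate since internal left adjoints are closed under composition, so I would focus on the converse: assume each $F\circ\iota_i$ is an internal left adjoint and deduce the same for $F$. Write $G = F^R$, $G_i = (\iota_i)^R$, and note $(F\circ\iota_i)^R \simeq G_i\circ G$. The hypothesis that each $\iota_i$ is an internal left adjoint already gives that each $G_i$ is a colimit-preserving $\V$-linear functor; the hypothesis that each $F\circ\iota_i$ is an internal left adjoint gives that each $G_i\circ G$ is colimit-preserving and $\V$-linear, and that the projection map for $F\circ\iota_i$ is an equivalence.

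The key step is then the following detection principle: if $\{G_i\}$ is a jointly conservative family of colimit-preserving functors and $G_i\circ G$ preserves colimits for each $i$, then $G$ preserves colimits. I would argue this by taking an arbitrary (small) diagram $d: J\to\N$, forming the comparison map $\colim_J (G\circ d) \to G(\colim_J d)$ in $\M$, and applying each $G_i$: since $G_i$ preserves colimits, $G_i$ applied to this map is identified with the comparison map $\colim_J(G_i G\circ d)\to G_iG(\colim_J d)$, which is an equivalence because $G_iG$ preserves colimits. As $\{G_i\}$ is jointly conservative, the original comparison map is an equivalence, so $G$ preserves colimits. The very same argument, applied to the projection maps rather than to colimit comparison maps, shows that the projection map $v\otimes G(n)\to G(v\otimes n)$ for $F$ is an equivalence: applying the colimit-preserving $\V$-linear $G_i$ to it yields the projection map for $G_i G = (F\circ\iota_i)^R$, which is an equivalence by hypothesis, and joint conservativity finishes the job. (Here I am using that $G_i$ being a right adjoint $\V$-module map intertwines the projection maps appropriately — this is exactly the compatibility packaged in the adjointable-square reformulation in the remark after the definition of internal left adjoint.)

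Putting these together: $G = F^R$ preserves colimits and its projection maps are equivalences, hence by definition $F$ is an internal left adjoint. The main obstacle I anticipate is purely bookkeeping: making precise that applying $G_i$ to the projection square for $F$ produces the projection square for $F\circ\iota_i$, i.e. that the natural transformations "projection map" are compatible under composition with a right-adjoint $\V$-module map on the outside. This is a coherence/naturality check rather than a substantive difficulty, and it is most cleanly handled via the horizontally-right-adjointable-square formulation: the square for $F\circ\iota_i$ is the horizontal pasting of the square for $\iota_i$ (which is right adjointable) with the square for $F$, and right adjointability of a pasting together with right adjointability of the first square does not automatically give it for the second — but joint conservativity of the $G_i$ is exactly what licenses the converse deduction. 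I would spell this out at the level of mate/Beck–Chevalley transformations, then invoke joint conservativity once at the end.
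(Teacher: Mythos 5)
Your proposal is correct and matches the paper's own argument: both reduce the two defining conditions (colimit-preservation of $F^R$ and the projection map being an equivalence) to the statement that a certain map is an equivalence, check this after applying the jointly conservative family $\iota_i^R$, and identify the result (via a 2-out-of-3/pasting of squares) with the corresponding maps for $F\circ\iota_i$ and $\iota_i$, which are assumed equivalences. The coherence subtlety you flag at the end is exactly what the paper's displayed commutative diagram records, so the proofs are essentially the same.
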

\begin{proof}
We need to check two things about $F^R$, namely that it preserves colimits, and that its canonical projection maps $v\otimes F^R(n)\to F^R(v\otimes n)$ are equivalences. Both statements are of the form ``this map is an equivalence'', and thus can be checked after applying the family $(\iota_i)^R$. 

In both cases, we reduce to the appropriate comparison map for $(F\circ \iota_i)^R$ and for $(\iota_i)^R$, but these are assumed to be equivalences, so we are done. Let us give more details, for the projection map - the case of colimits is completely analogous. 

The claim is that $\iota_i^R$ applied to the projection map $v\otimes F^R(n)\to F^R(v\otimes n)$ fits into the following commutative diagram, where the top horizontal map is the projection map for $F\circ \iota_i$, and the leftmost bottom horizontal map is the projection map for $\iota_i$: \[\begin{tikzcd}
	{v\otimes (F\circ \iota_i)^R(n)} && {(F\circ \iota_i)^R(v\otimes n)} \\
	{v\otimes (\iota_i^R\circ F^R)(n)} & {\iota_i^R(v\otimes F^R(n))} & {\iota_i^R\circ F^R(v\otimes n)}
	\arrow["{\simeq }"', from=1-1, to=2-1]
	\arrow[from=2-1, to=2-2]
	\arrow[from=2-2, to=2-3]
	\arrow["\simeq"', from=2-3, to=1-3]
	\arrow[from=1-1, to=1-3]
\end{tikzcd}\]

Thus, if both the ones for $\iota_i$ and for $F\circ \iota_i$ are equivalences, it follows that also $\iota_i^R$ applied to the one for $F$ is, as claimed. 
  \end{proof}
\begin{cor}\label{cor:atomicimpliesinternal}
    Let $\M,\N\in\Mod_\V(\PrL)$. Any internal left adjoint $f:\M\to \N$ preserves atomic objects. If $\M$ is atomically generated, the converse holds: if $f$ preserves atomic objects, then $f$ is an internal left adjoint.   
\end{cor}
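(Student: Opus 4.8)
The plan is to prove the two implications separately, each by reducing to the defining case $\M = \V$ via \Cref{lm:colimdetection} and \Cref{cor:atomicimpliesinternal}'s own statement applied to the generators.

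For the forward direction, suppose $f : \M \to \N$ is an internal left adjoint and let $x \in \M^\at$ be atomic, classified by $-\otimes x : \V \to \M$, which is an internal left adjoint by definition. Since internal left adjoints are closed under composition (as noted in \Cref{obs} right after the definition), the composite $\V \xrightarrow{-\otimes x} \M \xrightarrow{f} \N$ is an internal left adjoint; but this composite is exactly the functor classifying $f(x) \in \N$, so $f(x)$ is atomic. That settles the first claim with no hypothesis on $\M$.

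For the converse, assume $\M$ is atomically generated and $f$ preserves atomic objects. By \Cref{obs:descatgen}, $\M \simeq \PP_\V(\M^\at)$, and the generators to feed into \Cref{lm:colimdetection} are the functors $\iota_x := (-\otimes x) : \V \to \M$ for $x \in \M^\at$: each is an internal left adjoint, and I claim their right adjoints $\hom_\M(x,-) : \M \to \V$ are jointly conservative. Indeed a map $g$ in $\M \simeq \PP_\V(\M^\at)$ is an equivalence iff it is so after applying $\hom_\M(x,-)$ for every $x \in \M^\at$, since the (restricted) Yoneda embedding $\M \to \Fun_\V((\M^\at)\op, \V)$ is fully faithful (equivalently, the atomics generate $\M$ under colimits and $\V$-tensors, and $\hom_\M(x,-)$ commutes with both for $x$ atomic, so joint conservativity follows from conservativity of the enriched-Yoneda functor). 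Now \Cref{lm:colimdetection} reduces the claim to showing each composite $f \circ \iota_x : \V \to \N$ is an internal left adjoint. But $f \circ \iota_x$ classifies $f(x) \in \N$, which is atomic by hypothesis, hence by definition of atomic object this functor is an internal left adjoint. This completes the proof.

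The only genuinely nontrivial point is the joint conservativity of $\{\hom_\M(x,-)\}_{x \in \M^\at}$; everything else is formal bookkeeping with compositions. I expect to handle conservativity by invoking the fully faithful restricted Yoneda embedding $\M = \PP_\V(\M^\at) \hookrightarrow \Fun_\V((\M^\at)\op,\V)$ established in \Cref{obs:descatgen}, together with the fact that equivalences in an enriched presheaf category are detected pointwise (via \cite{hinichyoneda}). If one prefers to avoid the enriched-Yoneda machinery, the alternative is to argue directly: the full subcategory of $\M$ on objects $y$ such that $\hom_\M(x,y) \simeq 0$ for all atomic $x$ is closed under colimits and $\V$-tensors and contains no nonzero atomic object, hence (by atomic generation) is zero — but this phrasing tacitly uses pointedness, so the cleaner route is the enriched Yoneda statement already recorded in \Cref{obs:descatgen}.
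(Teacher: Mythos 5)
Your proof is correct and follows essentially the same route as the paper: the forward direction is the closure of internal left adjoints under composition, and the converse applies \Cref{lm:colimdetection} to the family $-\otimes x : \V \to \M$ for $x \in \M^\at$. The paper's proof merely states that atomic generation ``amounts to'' this family being internal left adjoints with jointly conservative right adjoints; your elaboration of the joint conservativity via the enriched Yoneda embedding from \Cref{obs:descatgen} is the right way to unwind that claim, and your caveat about the pointed ``zero-detection'' phrasing is well taken.
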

\begin{proof}
    The first part is obvious as internal left adjoints are closed under composition. 

    For the second part, we note that atomic generation amounts to the statement that the various $\V\xrightarrow{-\otimes m}\M$, for $m\in\M^\at$, are internal left adjoints and have jointly conservative right adjoints. The result thus follows from \Cref{lm:colimdetection} and the assumption that $f$ preserves atomic objects. 
\end{proof}
A more relevant corollary is:
\begin{cor}\label{cor:colimintleft}
Let $\Mod_\V(\PrL)^{iL}$ denote the (non-full) wide subcategory of $\Mod_\V(\PrL)$ whose morphisms are the internal left adjoints. The source of the forgetful functor $$\Mod_\V(\PrL)^{iL}\to \Mod_\V(\PrL)$$ has all (small) colimits, and they are preserved by the forgetful functor. 
\end{cor}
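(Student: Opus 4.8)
The plan is to reduce the statement about colimits in $\Mod_\V(\PrL)^{iL}$ to known facts about colimits in $\Mod_\V(\PrL)$, via the usual adjunction/straightening yoga. First I would recall that $\Mod_\V(\PrL)$ has all small colimits (it is presentable, in fact a module category over $\PrL$, and colimits of presentable categories with left-adjoint transition functors are computed as limits along the right adjoints). The key observation is that the forgetful functor $U:\Mod_\V(\PrL)^{iL}\to \Mod_\V(\PrL)$ is the identity on objects; so a diagram $p:K\to \Mod_\V(\PrL)^{iL}$ has an underlying diagram $Up$ in $\Mod_\V(\PrL)$, and I would form its colimit $\M := \colim Up$ there, with structure maps $\iota_k:\M_k\to \M$. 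The two things to check are: (1) each $\iota_k$ is itself an internal left adjoint, so that $\M$ together with the $\iota_k$ defines a cocone in $\Mod_\V(\PrL)^{iL}$; and (2) this cocone is initial among cocones in $\Mod_\V(\PrL)^{iL}$, i.e. for any $\N$ and any compatible family of internal left adjoints $\M_k\to \N$, the induced colimit-comparison map $\M\to \N$ is again an internal left adjoint.

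For step (1): by the explicit description of colimits in $\PrL$-module categories, $\M = \lim_{k\in K\op}\M_k$ computed along the right adjoints, and the canonical functor $\iota_k^R:\M\to \M_k$ is (up to equivalence) the projection out of this limit. The family $\{\iota_k^R\}_{k\in K}$ is jointly conservative — this is exactly the statement that an object/morphism of a limit of categories is detected by its projections. To see that $\iota_k$ is an internal left adjoint I would verify directly that $\iota_k^R$ preserves colimits (limits of presentable categories along right adjoints are computed in $\widehat{\Cat}$, and the projections preserve all limits and colimits that exist level-wise) and that the projection maps $v\otimes \iota_k^R(m)\to \iota_k^R(v\otimes m)$ are equivalences — but $\iota_k^R$ is a $\V$-linear right adjoint precisely because the limit is taken in $\Mod_\V(\PrL)$ and the transition right adjoints $\M_j^R\to\M_i^R$ are $\V$-linear (since the original transition maps were internal left adjoints), so $\iota_k^R$ is automatically $\V$-linear and the projection formula holds.

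For step (2): given a cocone $(g_k:\M_k\to\N)$ of internal left adjoints in $\Mod_\V(\PrL)^{iL}$, forgetting to $\Mod_\V(\PrL)$ it induces a unique map $g:\M\to\N$ with $g\circ\iota_k\simeq g_k$. Now I apply \Cref{lm:colimdetection}: the family $\iota_k:\M_k\to\M$ consists of internal left adjoints (by step (1)) whose right adjoints $\iota_k^R$ are jointly conservative (also step (1)); since each $g\circ\iota_k\simeq g_k$ is an internal left adjoint by hypothesis, the lemma gives that $g$ is an internal left adjoint. Hence $g$ is a morphism in $\Mod_\V(\PrL)^{iL}$, and it is clearly the unique such lifting $g_k$'s compatibly, so the cocone is a colimit there; and $U$ sends it to the colimit $\M$ we started with, proving preservation. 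The main obstacle is really step (1) — making precise that the structure maps $\iota_k$ of a colimit computed in $\Mod_\V(\PrL)$ are internal left adjoints, i.e. that their right adjoints are $\V$-linear and colimit-preserving; once that is in hand, \Cref{lm:colimdetection} does all the remaining work for the universal property, and the joint conservativity of the $\iota_k^R$ is exactly the input that lemma needs.
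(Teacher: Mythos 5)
Your proposal is correct and follows essentially the same route as the paper: identify the colimit in $\Mod_\V(\PrL)$ with the limit along the right adjoints, observe that the limit projections are therefore $\V$-linear, colimit-preserving, and jointly conservative (so the structure maps $\iota_k$ are internal left adjoints), and then invoke \Cref{lm:colimdetection} to transfer the universal property to $\Mod_\V(\PrL)^{iL}$.
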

\begin{proof}
    Let $\M_\bullet: I^\triangleright \to \Mod_\V(\PrL)$ be a colimit diagram whose restriction to $I$ lands in $\Mod_\V(\PrL)^{iL}$. Let $\infty \in I^\triangleright$ denote the cone point. 

    We first note that $\M_\infty$ is equivalent to the \emph{limit} of the diagram $I\op\to \Mod_\V(\PrL)$ obtained by taking right adjoints, in such a way that the limit projection maps $\M_\infty\to \M_i$ are the right adjoints to the canonical colimit inclusion maps $\M_i\to \M_\infty$. In particular, these right adjoints are colimit-preserving and $\V$-linear. In particular, the whole diagram $I^\triangleright\to \Mod_\V(\PrL)$ lands in $\Mod_\V(\PrL)^{iL}$. 

    Furthermore, these right adjoints are jointly conservative. Thus we are in position to apply \Cref{lm:colimdetection}: a $\V$-linear functor $\M_\infty\to \N$ is an internal left adjoint if and only if each $\M_i\to \M_\infty\to \N$ is - this is precisely what we need to check to prove that a forgetful functor from a wide subcategory creates colimits. 
\end{proof}
\begin{rmk}
    The category of presentable $\V$-modules and internal left adjoints enjoys more stability properties, related in particular to absolue limits in $\V$. They should be explored further, but we do not do this here. 
\end{rmk}
\subsection{Characterizations of dualizability}
Recall that our main object of study in this paper is dualizable $\V$-modules. We finally define them explicitly:
\begin{defn}
    A $\V$-module $\M$ is called dualizable if it is so for the (relative) Lurie tensor product $\otimes_\V$ on $\Mod_\V(\PrL)$.
    
    The \category{} $\Dbl{\V}$ is the full subcategory of $\Mod_\V(\PrL)^{iL}$ spanned by dualizable $\V$-modules, in other words the (non-full) subcategory of $\Mod_\V(\PrL)$ spanned by dualizable $\V$-modules, and with morphisms the internal left adjoints. 

    When $\V=\Ss$, we simply write $(\PrL)^\dbl$, and when $\V=\Sp$, we simply write $\Prdbl$. 
\end{defn}
In the case of the base $\V =\Sp$, Lurie proves in \cite[Proposition D.7.3.1.]{SAG} the following characterizations of dualizability:
\begin{thm}[Lurie]\label{thm:lurie}
    Let $\M\in\PrL_{\st}$. The following are equivalent: 
    \begin{itemize}
        \item $\M$ is dualizable.
        \item $\M$ is a retract, in $\PrL_\st$, of a compactly generated stable \category.
        \item $\M$ is the kernel of a compact-preserving localization between compactly generated stable \categories.
        \item There is an internally left adjoint fully faithful embedding of $\M$ in a compactly generated stable \category.
        \item The colimit functor $\Ind(\M)\to \M$ admits a left adjoint; equivalently, for any $\lambda$ such that $\M$ is $\lambda$-compactly generated, the colimit functor $\Ind(\M^\lambda)\to \M$ admits a left adjoint.
    \end{itemize}
\end{thm}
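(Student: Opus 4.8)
The plan is to prove the equivalences in a cycle that keeps the ``concrete'' criteria (retract of compactly generated, kernel of a localization, fully faithful internal left adjoint embedding) grouped together, and to treat the last criterion — the existence of a left adjoint to the colimit functor $\Ind(\M)\to\M$ — as the linchpin connecting abstract dualizability to the concrete pictures. Concretely I would show $\text{(dualizable)} \Rightarrow \text{(left adjoint to }\colim)\Rightarrow \text{(f.f.\ internal left adjoint embedding)}\Rightarrow \text{(kernel of a compact-preserving localization)}\Rightarrow \text{(retract)}\Rightarrow \text{(dualizable)}$, and separately note that the two formulations in the last bullet (using $\Ind(\M)$ versus $\Ind(\M^\lambda)$) agree, since $\M^\lambda\hookrightarrow\M$ induces a localization $\Ind(\M^\lambda)\to\M$ whose right adjoint is fully faithful, so having a further left adjoint on either of the two colimit functors is equivalent.

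For $\text{(dualizable)}\Rightarrow\text{(left adjoint to }\colim)$: the colimit functor $\colim\colon\Ind(\M)\to\M$ is (the component at $\M$ of) the counit of the adjunction $\Ind\dashv(\text{forget})$ between $\PrL_\st$ and its compactly generated part; more usefully, $\Ind(\M)\simeq\Fun^L(\M\op_{\text{small}},\Sp)$-style descriptions let one identify $\colim$ with a map of the form $\M\otimes(\text{something})\to\M$. The standard trick is that $\colim\colon\Ind(\M)\to\M$ is, up to equivalences, obtained by tensoring the evaluation/multiplication map of a suitable (co)algebra with $\M$; a left adjoint to it then exists for formal reasons once $\M$ is dualizable, because dualizable objects are exactly those for which the relevant unit/counit maps admit adjoints after tensoring. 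I would follow Lurie's argument in \cite[D.7.3.1]{SAG} here rather than reinvent it, as this is the step where one genuinely uses the dualizability hypothesis in the form ``$\M$ is a dualizable object of a closed symmetric monoidal $\infty$-category''.

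For the concrete implications: given a left adjoint $L$ to $\colim\colon\Ind(\M)\to\M$, the unit $\id\Rightarrow\colim\circ L$ is an equivalence (since $\colim$ is a localization: its right adjoint, the constant/ind-inclusion $\M\to\Ind(\M)$, is fully faithful), so $L$ is a fully faithful left adjoint; one checks its right adjoint $\colim$ preserves filtered colimits, hence $L$ is an internal left adjoint embedding into the compactly generated $\Ind(\M)$. Taking the localization $\Ind(\M)\to\Ind(\M)/\M$ (the cofiber in $\PrL_\st$), this is compact-preserving precisely because $\M\hookrightarrow\Ind(\M)$ is an internal left adjoint — using the cofiber sequence of endofunctors $i i^R\to\id\to p^R p$ exactly as in the localization-sequence example in the excerpt — so $\M$ is the kernel of a compact-preserving localization; a compact-preserving localization splits after passing to $\Ind$ of compact objects, exhibiting $\M$ as a retract of a compactly generated category; and a retract of a dualizable object in a symmetric monoidal $\infty$-category is dualizable, closing the loop (compactly generated stable categories being dualizable because they are $\Ind$ of a small idempotent-complete stable category, with explicit duals). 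The main obstacle, and the step I would budget the most care for, is the first implication $\text{(dualizable)}\Rightarrow\text{(left adjoint to }\colim)$: identifying $\colim\colon\Ind(\M)\to\M$ with a map to which abstract dualizability applies requires the right model of $\Ind(\M)$ as $\M\otimes_{\Cat^\perf\text{-}\Ind}\Ss$-type construction and a careful bookkeeping of which tensor/cotensor adjunctions are in play; everything after that is formal nonsense about localizations and retracts.
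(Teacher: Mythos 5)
The paper does not actually prove \Cref{thm:lurie} --- it cites Lurie's Proposition D.7.3.1 in \cite{SAG} and moves on. What the paper \emph{does} prove is the generalization to an arbitrary base $\V$, \Cref{thm:Luriethmgeneralbase}, and its proof differs from your proposed cycle precisely at the step you single out as the ``linchpin'' and the one you'd ``budget the most care for''. That step, as you sketch it, has a real gap.

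Your first implication, (dualizable) $\Rightarrow$ (left adjoint to $\colim$), leans on two claims that do not hold as stated. First, there is no adjunction ``$\Ind\dashv\text{forget}$ between $\PrL_\st$ and its compactly generated part'' having $\colim\colon\Ind(\M)\to\M$ as its counit: $\Ind$ of a large category does not live in a fixed universe, which is exactly the set-theoretic point the paper flags in \Cref{rmk:nonstandadjInd}. Second, and more substantively, $\Ind(\M)$ (or $\Ind(\M^\lambda)$) is \emph{not} of the form $\M\otimes(\text{something})$, so you cannot ``identify $\colim$ with a map obtained by tensoring an evaluation map and then apply dualizability formally.'' The tensor trick that actually works is one level up: $\Fun^L(\M,-)\simeq \M^\vee\otimes-$ preserves localizations (the paper's \Cref{cor:tensloc}), hence sends the localization $\Ind(\M^\lambda)\to\M$ to an essential surjection on functor categories, producing a \emph{section} $\M\to\Ind(\M^\lambda)$ of $\colim$ in $\PrL_\st$. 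But --- and the paper warns about this explicitly in a footnote in the proof of \Cref{thm:Luriethmgeneralbase} --- that section ``need a priori not be itself the left adjoint that we are after.'' So this argument proves (dualizable) $\Rightarrow$ (retract of compactly generated), not (dualizable) $\Rightarrow$ (left adjoint to $\colim$).

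The paper therefore routes around this: from dualizable it goes directly to the retract criterion as above, then establishes the left-adjoint criterion \emph{separately} for compactly/atomically generated categories by an explicit construction, and finally closes the loop by showing that the left-adjoint criterion is stable under retracts. That last step is not free: it uses \Cref{lm:retradj}, a genuinely $2$-categorical lemma (in a $2$-category whose hom-categories are idempotent-complete, a $1$-morphism that is a retract of a left adjoint is a left adjoint), and its companion \Cref{lm:retractBC} on retracts of adjointable squares. Nothing in your sketch supplies this input. Everything after your first step --- (left adjoint to $\colim$) $\Rightarrow$ (f.f.\ internal left adjoint embedding) $\Rightarrow$ (kernel of compact-preserving localization) $\Rightarrow$ (retract) $\Rightarrow$ (dualizable) --- is fine in outline (the kernel-to-retract step is a bit garbled in wording but the inclusion with its filtered-colimit-preserving right adjoint is the retraction), and the observation that the $\Ind(\M)$ and $\Ind(\M^\lambda)$ formulations agree also needs a cardinality-bound argument of the kind the paper supplies via \Cref{cor:boundyhat} rather than the bare assertion you give; but the load-bearing step is the one you deferred to Lurie, and the honest summary is that Lurie (and this paper) do not prove it in the direct form your cycle requires.
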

\begin{ex}
    A key example (which is an input to the previous theorem) is compactly generated stable \categories: they are dualizable over $\Sp$. For example, $\Mod_R(\Sp)$ for any $R\in\Alg(\Sp)$.  
\end{ex}
\begin{ex}\label{ex:Shisdbl}
    Let $X$ be a locally compact Hausdorff space. One can prove that $\Sh(X;\Sp)$ is dualizable - there are many perspectives for this, we will mention one involving so-called compact maps in \Cref{pfofShdbl}; but it also follows from Lurie's covariant Verdier duality \cite[§5.5.5]{HA} that one can provide an explicit duality datum for $\Sh(X;\Sp)$. 

    However this \category{} is rarely compactly generated, cf. \cite{Oscarcompact} for a modern treatment. 
\end{ex}
\begin{ex}
Let $L_n:\Sp\to L_n\Sp$ denote localization at Morava $E$-theory at height $n$ (and an implicit prime $p$). The smashing theorem proves that this is a compact-preserving localization between compactly generated categories, hence its kernel $\ker(L_n)=~\{X\in~\Sp\mid~E_n\otimes~X=~0\}$ is dualizable. The negative solution to the telescope conjecture \cite{NoTel} implies in particular that it is not compactly generated ($\Ind(\ker(L_n)^\omega)$ is by definition $\ker(L_n^f)$). 
\end{ex}
\begin{ex}
    When $A$ is an (ordinary) ring and $I\subset A$ a finitely generated two-sided ideal, $I^2=I$ implies that $I$ is generated by a single central idempotent element. However, without the finite generation hypothesis, there are many nontrivial examples, and there are also examples for which $I$ is flat over $A$. In particular it follows that $A\to A/I$ induces a localization $\Mod_A\to \Mod_{A/I}$, whose kernel is rarely compactly generated, but by the above it is dualizable. 

    Such examples are the subject of almost mathematics \cite{almost}, but can also be found in functional analysis. For example, let $A = C^0(X)$ be the ring of continuous complex (or real) valued functions on a topological space $X$, and let $I$ be the ideal of functions vanishing at $x\in X$. One easily verifies $I^2=I$, and its flatness can also be proved. Another example is, given a separable Hilbert space $H$, the algebra $B(H)$ of bounded operators on $H$, with the ideal $K(H)$ of compact operators on $H$. These examples are discussed in \cite{youtubeDustin2}, and go back to the work of Karoubi \cite{karoubiconj}, Suslin-Wodzicki \cite{suslinwodzicki} and Higson \cite{higson}.  
\end{ex}
Our aim in this section is to give a generalization of Lurie's characterization to an arbitrary base $\V$, except for the third bullet point in the unstable case. The analogue of compactly generated stable \categories{} will be $\V$-atomically generated $\V$-modules. For this, we first need the crucial input, due in this generality to Berman \cite{BermanI}: 
\begin{prop}\label{prop:atgenimpliesdbl}
    Any atomically generated $\V$-module is dualizable. 
\end{prop}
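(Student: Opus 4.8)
The plan is to produce an explicit duality datum, which is essentially the content of enriched Morita theory and is the route taken by Berman \cite{BermanI}. By \Cref{obs:descatgen}, an atomically generated $\V$-module is exactly one of the form $\PP_\V(\M_0)$ for a small $\V$-category $\M_0$, so it suffices to show that $\PP_\V(\M_0)$ is dualizable; the candidate dual is $\PP_\V(\M_0\op)$.

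First I would record two structural inputs. The first is that $\PP_\V(-)$ is symmetric monoidal from small $\V$-categories (with their tensor product) to $(\Mod_\V(\PrL),\otimes_\V)$: it carries the unit $\V$-category to $\V$, and there are natural ``Fubini'' equivalences $\PP_\V(\M_0)\otimes_\V\PP_\V(\N_0)\simeq\PP_\V(\M_0\otimes_\V\N_0)$, proved by comparing universal properties --- iterating \Cref{thm: UPVPsh} and using the closed structure on $\Mod_\V(\PrL)$ shows that both sides corepresent $\N\mapsto\Fun_\V(\M_0,\Fun_\V(\N_0,\N))$, which the defining property of the tensor product of enriched categories identifies with $\PP_\V(\M_0\otimes_\V\N_0)$ (\cite{heine,hinichday}); in particular, by \Cref{thm: UPVPsh} for the unit $\V$-category, $\V$-linear colimit-preserving functors $\V\to\D$ are the same as objects of $\D$. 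The second input is the enriched hom bifunctor $h:=\hom_{\M_0}(-,-)\colon\M_0\op\otimes_\V\M_0\to\V$ --- the $\V$-functor adjoint to the Yoneda embedding $y\colon\M_0\hookrightarrow\PP_\V(\M_0)$, part of the enriched Yoneda formalism of \cite{hinichyoneda} --- together with its density presentation $h\simeq\int^{x\in\M_0}y(x)\boxtimes y\op(x)$, where $y\op\colon\M_0\op\hookrightarrow\PP_\V(\M_0\op)$ is the opposite Yoneda embedding and $\boxtimes$ the canonical bilinear pairing; I am using $(\M_0\otimes_\V\M_0\op)\op\simeq\M_0\op\otimes_\V\M_0$ throughout.

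Then I would define the duality data and check the triangle identities. Via Fubini, $\PP_\V(\M_0)\otimes_\V\PP_\V(\M_0\op)\simeq\PP_\V(\M_0\otimes_\V\M_0\op)$, whose objects are $\V$-functors $\M_0\op\otimes_\V\M_0\to\V$, so there is a $\V$-linear colimit-preserving $\mathrm{coev}\colon\V\to\PP_\V(\M_0)\otimes_\V\PP_\V(\M_0\op)$ classifying $h$; dually, via $\PP_\V(\M_0\op)\otimes_\V\PP_\V(\M_0)\simeq\PP_\V(\M_0\op\otimes_\V\M_0)$ and \Cref{thm: UPVPsh}, let $\mathrm{ev}\colon\PP_\V(\M_0\op)\otimes_\V\PP_\V(\M_0)\to\V$ be the colimit-preserving extension of $h$ along Yoneda. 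For the first triangle identity, $(\id\otimes\mathrm{ev})\circ(\mathrm{coev}\otimes\id)\colon\PP_\V(\M_0)\to\PP_\V(\M_0)$ is colimit-preserving and $\V$-linear, so by \Cref{thm: UPVPsh} it suffices to compute its restriction along $y$; this restriction sends $m$ to $(\id\otimes\mathrm{ev})$ of $h\boxtimes y(m)\simeq\int^{x}y(x)\boxtimes y\op(x)\boxtimes y(m)$, and since $\mathrm{ev}$ is the Yoneda-extension of $h$ and the Yoneda embedding of $\M_0\op\otimes_\V\M_0$ is $(x,m)\mapsto y\op(x)\boxtimes y(m)$, this equals $\int^{x}\hom_{\M_0}(x,m)\otimes y(x)\simeq y(m)$ by enriched co-Yoneda --- i.e. the identity, as required. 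The second triangle identity is symmetric, exchanging representables and corepresentables. (As a sanity check, \Cref{thm: UPVPsh} with Fubini also gives $\Fun^L_\V(\PP_\V(\M_0),\PP_\V(\M_0))\simeq\PP_\V(\M_0\op)\otimes_\V\PP_\V(\M_0)$ with $\id$ corresponding to $h$, so the genuine content is precisely the two identities above.)

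The hard part is coherence bookkeeping rather than a new idea: one must set up the Fubini equivalences compatibly with the symmetry and associativity constraints, track which tensor factor each variable of $h$ occupies, and --- most of all --- rely on the enriched Yoneda formalism of \cite{heine,hinichyoneda} supplying $h$ as an honest $\V$-functor, its density presentation, and the co-Yoneda lemma, all in the $\infty$-categorical enriched setting. Conceptually everything is packaged by the assertion that $(\M_0,h)$ is self-dual in the Morita $(\infty,2)$-category of $\V$-categories and profunctors and that $\PP_\V(-)$ is symmetric monoidal on it; making that precise is where the real work resides, and is in effect what \cite{BermanI} carries out.
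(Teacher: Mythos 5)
Your argument is correct in outline, but it is precisely the Berman-style explicit Morita self-duality that the paper cites (\cite{BermanI}) and then deliberately sets aside. The paper's own proof is formal: it establishes the stronger \Cref{prop:dbldetect} --- if $f_i\colon\M_i\to\M$ is a family of internal left adjoints whose right adjoints are jointly conservative and each $\M_i$ is dualizable, then $\M$ is dualizable --- by showing directly, via the $2$-categorical \Cref{lm:technicaladj}, that $\Fun^L_\V(\M,\V)\otimes_\V\N\to\Fun^L_\V(\M,\N)$ is an equivalence for every $\N$, and then specializes to the family $\V\xrightarrow{-\otimes m}\M$ for $m\in\M^\at$. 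The paper's route is a purely $(\infty,2)$-categorical detection criterion that avoids constructing an explicit duality datum and sidesteps exactly the enriched-Fubini and co-Yoneda coherence bookkeeping you flag at the end; the detection lemma is also reused later. Your route buys the concrete identification of the dual as $\PP_\V(\M_0\op)$ together with explicit evaluation and coevaluation, but it genuinely depends on the symmetric monoidality of $\PP_\V$ on small $\V$-categories, the coend presentation of the hom bifunctor, and the enriched co-Yoneda lemma in the $\infty$-categorical setting, all of which are substantial inputs beyond what the paper's argument needs. Both are valid; the paper simply chose the one requiring less enriched-categorical infrastructure.
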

By \Cref{obs:descatgen}, any atomically generated $\V$-module is of the form $\PP_\V(\M_0)$ for some small $\V$-\category{} $\M_0$. By \cite[Theorem 1.7]{BermanI}, these are dualizable. For convenience, we provide a different proof that does not rely on Berman's argument. In fact, we prove the following more general statement:
\begin{prop}\label{prop:dbldetect}
    Suppose $f_i:\M_i\to \M$ is a family of internal left adjoints such that the $f_i^R$ are jointly conservative, and such that each $\M_i$ is dualizable. In this case, $\M$ is dualizable.
\end{prop}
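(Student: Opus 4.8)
The plan is to exhibit an explicit duality datum for $\M$ out of the duality data for the $\M_i$. Since each $\M_i$ is dualizable, write $\M_i^\vee$ for its dual in $\Mod_\V(\PrL)$, with evaluation $\mathrm{ev}_i:\M_i\otimes_\V\M_i^\vee\to\V$ and coevaluation $\mathrm{coev}_i:\V\to\M_i^\vee\otimes_\V\M_i$. The internal left adjoint $f_i:\M_i\to\M$ has a right adjoint $f_i^R:\M\to\M_i$ which is itself $\V$-linear and colimit-preserving (this is precisely the definition of internal left adjoint), so $f_i^R$ is a morphism in $\Mod_\V(\PrL)$ and hence has a $\V$-linear dual $(f_i^R)^\vee:\M_i^\vee\to\M^\vee$ — except that we do not yet know $\M^\vee$ exists; rather, the point is to build a candidate dual $\M^\vee$ and the structure maps directly. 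The natural candidate is to take $\M^\vee := \M$ heuristically fails; instead one should think of the $f_i,f_i^R$ as assembling $\M$ as a retract-like piece of $\prod_i\M_i$ or $\bigoplus_i\M_i$, so the candidate dual is built from the $\M_i^\vee$.

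Concretely, here is the approach I would carry out. First, note that dualizable objects in a symmetric monoidal $\infty$-category are closed under retracts, and more usefully, if $X$ is a retract of a dualizable $Y$ via $X\xrightarrow{s}Y\xrightarrow{r}X$ with $rs\simeq\id_X$, then $X$ is dualizable. So it suffices to realize $\M$ as a retract of a dualizable $\V$-module. The second step is to observe that the family $(f_i,f_i^R)$ does \emph{not} in general split $\M$ off a single $\M_i$, but joint conservativity of the $f_i^R$ together with the fact that each $f_i^R$ preserves colimits means the unit $\id_\M\to \prod_i (f_i)_* f_i^R$ — more precisely, one wants a coproduct rather than product to stay in $\PrL$ — needs care. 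The key move: consider $\bigoplus_{i} \M_i$ (small coproduct in $\Mod_\V(\PrL)$, which exists and is dualizable as a finite-or-infinite coproduct of dualizables — coproducts in $\PrL$ are products of underlying categories, and a countable or larger product of dualizables need not be dualizable!). This is the real subtlety, and it is why the statement must be proved more cleverly than by a crude retract argument.

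The correct route, I expect, is the following. Use the $\V$-linear colimit-preserving functor $f:=\bigoplus_i f_i : \bigoplus_i \M_i \to \M$ (adjoint to the $f_i$). Its right adjoint is $f^R = (f_i^R)_i : \M\to\prod_i\M_i$, landing in $\bigoplus_i\M_i$? No — it lands in the product. So instead: by joint conservativity of the $f_i^R$, the functor $\M\to\prod_i\M_i$ is conservative and colimit-preserving, hence comonadic-or-monadic in a suitable sense; but the clean statement is that $\M$ is a retract of $\prod_i \M_i$ in $\PrR$ (via $f^R$ and its further right adjoint), equivalently of $\bigoplus_i\M_i$ in $\PrL$. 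Then one must prove $\bigoplus_i\M_i$ is dualizable: this is true because $\bigoplus_i\M_i$ is itself atomically generated whenever each $\M_i$ is — but we are not assuming the $\M_i$ atomically generated, only dualizable. So the last ingredient is the general fact that a (small, possibly infinite) coproduct of dualizable $\V$-modules is dualizable in $\Mod_\V(\PrL)$; this should follow because $\Mod_\V(\PrL)$ is (by the companion theory) closed under the relevant colimits of dualizables along internal left adjoints, or alternatively by a direct duality-datum computation: the dual of $\bigoplus_i\M_i$ is $\prod_i\M_i^\vee$ with evaluation the product of the $\mathrm{ev}_i$ precomposed with the projection, and coevaluation assembled from the $\mathrm{coev}_i$ — one checks the triangle identities componentwise.

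The main obstacle, then, is exactly the passage from the pointwise (each $\M_i$ dualizable) hypothesis to controlling the possibly-infinite coproduct $\bigoplus_i\M_i$ and checking that the conservative family $(f_i^R)$ genuinely splits $\M$ off it as a retract \emph{in $\PrL$} (not merely that $\M\hookrightarrow\prod_i\M_i$), since retracts of dualizables are dualizable but closed-under-conservative-subobjects is false. I would resolve this by showing $f:\bigoplus_i\M_i\to\M$ admits not only the right adjoint $f^R$ but that $f^R$ further admits a $\V$-linear colimit-preserving right adjoint making $\M$ a retract of $\bigoplus_i\M_i$ inside $\Mod_\V(\PrL)^{iL}$ — concretely, because $f f^R$ receives a natural transformation from $\id$ exhibiting $\M$ as a localization, and joint conservativity forces this localization to be an equivalence onto a retract; then invoke retract-closure of dualizables and the coproduct computation above.
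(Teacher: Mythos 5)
The proposal has a genuine gap at the step claiming $\M$ is a retract of $\bigoplus_i\M_i$ in $\Mod_\V(\PrL)$. Joint conservativity of the $f_i^R$ says exactly that $f^R:\M\to\prod_i\M_i$ (which is the same underlying category as $\bigoplus_i\M_i$) is conservative; since it is also colimit-preserving, the Barr--Beck--Lurie theorem makes $f^R$ \emph{monadic}, not split. Monadic right adjoints are not sections of retractions unless the monad is idempotent, and nothing in the hypotheses makes the counit $ff^R\to\id_\M$ an equivalence; your last paragraph asserts this (``joint conservativity forces this localization to be an equivalence onto a retract''), but there is no mechanism for it. In fact the corollary the paper draws from this very proposition exhibits the failure immediately: for a dualizable $\V$-module $\N$ and a $\V$-linear colimit-preserving monad $T$ on $\N$, the free-module functor $\N\to\Mod_T(\N)$ together with the conservative forgetful functor satisfies the hypotheses with a single index, yet $\Mod_T(\N)$ is not a retract of $\N$ in $\PrL$ unless $T$ is essentially idempotent. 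So the retract strategy cannot deliver this proposition.

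The paper proves the statement without any retract. For fixed $\N$, one checks directly that the canonical map $\Fun^L_\V(\M,\V)\otimes_\V\N\to\Fun^L_\V(\M,\N)$ is an equivalence, by observing: (i) the $2$-functoriality of this transformation sends each $f_i$ to a vertically left adjointable square whose bottom row (for $\M_i$) is an equivalence since $\M_i$ is dualizable; (ii) the $f_i^*$ are jointly conservative on functor categories, so \Cref{lm:colimdetection} upgrades the top rows to internal left adjoints; and (iii) the $2$-categorical \Cref{lm:technicaladj} says that a map over a common base which is adjointable and has conservative legs is an equivalence. The crucial difference is that this argument only uses joint conservativity --- exactly the hypothesis you have --- and never needs a splitting. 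Incidentally, the auxiliary fact you invoke, that a small coproduct $\bigoplus_i\M_i$ of dualizable $\V$-modules is dualizable, is correct and can be checked by the componentwise duality-datum you sketch (the candidate dual is $\bigoplus_i\M_i^\vee\simeq\prod_i\M_i^\vee$); but it does not save the argument, because the retract onto $\M$ is what is missing.
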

The previous proposition can be obtained by putting all the $\M_i=\V, f_i= -\otimes i$ for $i\in\M^\at$. We will simply need the following technical lemma:
\begin{lm}\label{lm:technicaladj}
    Let $\mathcal B$ be a $2$-category, $f:x\to y$ be map over some object $z$ in $B$ such that $f$ admits a right adjoint and the corresponding triangle is horizontally right adjointable:
    \[\begin{tikzcd}
	x & y \\
	& z
	\arrow[from=1-1, to=1-2]
	\arrow[from=1-1, to=2-2]
	\arrow[from=1-2, to=2-2]
\end{tikzcd}\]
Finally, assume that both $x\to z$ and $y\to z$ are conservative, that is, for every $b\in B, \hom(b,-)$ applied to them is a conservative functor. Then $f$ is an equivalence. 
\end{lm}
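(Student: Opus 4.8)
The plan is to show that both the unit $\eta\colon\id_x\Rightarrow f^Rf$ and the counit $\varepsilon\colon ff^R\Rightarrow\id_y$ of the adjunction $f\dashv f^R$ are invertible $2$-cells in $\mathcal B$; by the triangle identities this exhibits $f\dashv f^R$ as an adjoint equivalence, so $f$ is an equivalence. Write $p\colon x\to z$, $q\colon y\to z$ for the two legs of the triangle and $\sigma\colon qf\xrightarrow{\ \sim\ }p$ for the (invertible, since $f$ is a map \emph{over} $z$) structural $2$-cell. I will use repeatedly that for a fixed $1$-morphism $g$, pre- and post-composition $(g\circ-)$ and $(-\circ g)$ are functors on hom-categories, hence preserve equivalences of $2$-cells, and that the conservativity hypothesis says precisely that $(p\circ-)=\hom(x,p)$ and $(q\circ-)=\hom(y,q)$ \emph{reflect} equivalences.

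\textbf{Step 1: the counit is invertible.} First I would unwind horizontal right adjointability: the mate of $\sigma$ along $f\dashv f^R$ is the composite
\[
\tilde\sigma\colon\ pf^R\ \xrightarrow{\ (\sigma f^R)^{-1}\ }\ qff^R\ \xrightarrow{\ q\varepsilon\ }\ q,
\]
and the hypothesis is that $\tilde\sigma$ is an equivalence. Since $\sigma$, and hence $\sigma f^R$, is already an equivalence, it follows that $q\varepsilon=(q\circ-)(\varepsilon)$ is an equivalence in $\hom(y,z)$. Conservativity of $q$ (applied with source $y$) then forces $\varepsilon$ itself to be an equivalence in $\hom(y,y)$.

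\textbf{Step 2: the unit is invertible.} From the triangle identity $(\varepsilon f)\cdot(f\eta)=\id_f$ in $\hom(x,y)$, and the fact that $\varepsilon f=(-\circ f)(\varepsilon)$ is an equivalence by Step 1, we get that $f\eta=(f\circ-)(\eta)$ is an equivalence in $\hom(x,y)$. Post-composing with $q$ and using $\sigma$ to identify $(q\circ-)(f\circ-)\simeq((qf)\circ-)\simeq(p\circ-)$, we find that $(p\circ-)(\eta)$ is an equivalence in $\hom(x,z)$. Conservativity of $p$ then shows $\eta$ is an equivalence. Combining with Step 1, $f$ is an equivalence.

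\textbf{Expected obstacle.} There is no deep difficulty here; the lemma is $2$-categorical yoga, and the only point requiring care is the bookkeeping of whiskerings and mates — in particular, checking that the mate $\tilde\sigma$ of the invertible cell $\sigma$ is exactly the displayed composite, and keeping straight which conservativity hypothesis is used where (Step 1 uses $q$, Step 2 uses $p$). It is also worth remarking that the argument uses nothing about $\mathcal B$ beyond the existence of the right adjoint $f^R$ and the stated conservativity of the two maps to $z$.
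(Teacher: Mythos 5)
Your proof is correct and takes essentially the same approach as the paper: reduce to $2$-cells by conservativity, then use horizontal right adjointability (the mate being invertible) plus the structural isomorphism $\sigma$ to show both unit and counit are invertible. The one organizational difference is in the unit step: the paper argues directly by exhibiting the chain $pf^Rf\simeq qf\simeq p$ and asserting (``one easily checks'') that $p\eta$ is identified with the identity under this chain, whereas you first settle the counit, then deduce invertibility of $f\eta$ from the triangle identity, and finally pass through $q$ and $\sigma$ to reach $p\eta$. Your route makes the bookkeeping slightly more explicit and avoids the hand-wave, at the cost of an extra invocation of the triangle identity; both are valid and rest on the same ingredients. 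One remark worth keeping in mind (and which you do handle implicitly): the identification $p\eta\simeq(qf)\eta$ under $\sigma$ uses the interchange law, since $\sigma$ is a single invertible $2$-cell, not on-the-nose equality.
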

\begin{proof}
    By applying $\hom(b,-)$ for every $b\in B$ we reduce to the case $B=\Cat$. In that case, we simply need to prove that the unit and counit $\id_x\to f^Rf$ and counit $ff^R\to \id_y$ are equivalences. By conservativity, this can be checked after mapping to $z$, where the claim follows by adjointability: letting $p:x\to z,q:y\to z$ denote the corresponding maps, we have $pf^Rf = qf=p= p\id_x$ (and one easily checks that the maps are the correct ones).
\end{proof}
\begin{proof}
We wish to show that for all $\N$, the map $\Fun^L_\V(\M,\V)\otimes_\V\N\to \Fun^L_\V(\M,\N)$ is an equivalence, so fix an arbitrary $\N$. 

We note that the transformation $\Fun^L_\V(-,\V)\otimes_\V\N\to \Fun^L_\V(-,\N)$ participates in a $2$-functor $\Mod_\V(\PrL)\op\to \Mod_\V(\PrL)^{\Delta^1}$ and so it sends our internal left adjoints $\M_i\to \M$ to left adjoints, i.e. to vertically left adjointable squares: 
\[\begin{tikzcd}
	{\Fun^L_\V(\M,\V)\otimes_\V\N} & {\Fun_\V^L(\M,\N)} \\
	{\Fun^L_\V(\M_i,\V)\otimes_\V\N} & {\Fun^L_\V(\M_i,\N)}
	\arrow[from=1-1, to=1-2]
	\arrow[from=1-1, to=2-1]
	\arrow[from=1-2, to=2-2]
	\arrow["\simeq"', from=2-1, to=2-2]
\end{tikzcd}\]
where the bottom horizontal arrows are equivalences since the $\M_i$'s are dualizable. 

Note that the $f_i^*: \Fun^L_\V(\M,\N)\to \Fun_\V^L(\M_i,\N)$ are jointly conservative since the images of the $f_i$'s generate $\M$ under colimits and $\V$-tensors. Thus, their left adjoints generate $\Fun^L_\V(\M,\N)$ under colimits and $\V$-tensors, and this is so also for $\N=\V$. This property is preserved by tensoring with $\N$. It follows then from \Cref{lm:colimdetection} that the top horizontal arrows are internal left adjoints. 

Now, the squares are vertically left adjointable, and hence horizontally right adjointable too. Since the bottom horizontal maps are equivalences, it follows from \Cref{lm:technicaladj} that the top map is an equivalence. 
\end{proof}
\begin{lm}
    Let $f:\M\to \N\in\PrL$ be a localization. For any $\D\in\PrL$, $\D\otimes f$ is a localization. 
\end{lm}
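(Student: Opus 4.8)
The plan is to reduce the statement to the fact that tensoring with a fixed presentable category preserves colimits, via the characterization of localizations in $\PrL$ as those maps whose codiagonal is an equivalence. First I would recall this criterion: a morphism $g\colon\mathcal A\to\mathcal B$ in $\PrL$, with right adjoint $g^R$, is a localization (i.e.\ $g^R$ is fully faithful) if and only if the codiagonal $\mathcal B\sqcup_{\mathcal A}\mathcal B\to\mathcal B$ is an equivalence. This follows by passing to right adjoints: the pushout $\mathcal B\sqcup_{\mathcal A}\mathcal B$ in $\PrL$ is computed as the pullback $\mathcal B\times_{g^R,\mathcal A,g^R}\mathcal B$ (taken along the right adjoints, inside $\PrR$), and under this identification the codiagonal corresponds to the diagonal $\mathcal B\to\mathcal B\times_{g^R,\mathcal A,g^R}\mathcal B$, which is an equivalence exactly when $g^R$ is fully faithful. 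Equivalently, $g$ is a localization if and only if it is an epimorphism in $\PrL$.

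With this in hand, the proof is short. The functor $\D\otimes(-)\colon\PrL\to\PrL$ is a left adjoint --- its right adjoint is $\Fun^L(\D,-)$ --- so it preserves all colimits, in particular the pushout square exhibiting $\N\sqcup_\M\N$ together with its codiagonal. I would therefore obtain a canonical equivalence $(\D\otimes\N)\sqcup_{\D\otimes\M}(\D\otimes\N)\simeq\D\otimes(\N\sqcup_\M\N)$ identifying the codiagonal of $\D\otimes f$ with $\D\otimes(\nabla_f)$, where $\nabla_f\colon\N\sqcup_\M\N\to\N$ is the codiagonal of $f$. Since $f$ is a localization, $\nabla_f$ is an equivalence by the criterion; hence so is $\D\otimes(\nabla_f)$, hence so is the codiagonal of $\D\otimes f$; applying the criterion again, $\D\otimes f$ is a localization.

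There is no serious obstacle here, as the statement is formal; the only point requiring a little care is the bookkeeping in the $\PrL$/$\PrR$ duality --- that the relevant pushout square goes to the relevant pullback square and that the codiagonal really is sent to the diagonal. If one prefers to avoid this, an alternative is to argue through the tensor--hom adjunction: for every presentable $\E$ the map $(\D\otimes f)^*\colon\Fun^L(\D\otimes\N,\E)\to\Fun^L(\D\otimes\M,\E)$ is identified with $f^*\colon\Fun^L(\N,\Fun^L(\D,\E))\to\Fun^L(\M,\Fun^L(\D,\E))$, and the latter is the inclusion of a reflective subcategory because $f$ is a localization (universal property of localizations in $\PrL$); one then deduces that $\D\otimes f$ is an epimorphism in $\PrL$, hence a localization.
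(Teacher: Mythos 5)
Your proof is correct, and it takes a genuinely different route from the paper's. You reduce everything to the characterization of localizations in $\PrL$ as epimorphisms (equivalently, morphisms whose codiagonal $\N\sqcup_\M\N\to\N$ is an equivalence, which by passing to $\PrR$ and noting that limits there are computed in $\widehat\Cat$ comes down to $f^R$ being a monomorphism of $\infty$-categories, i.e.\ fully faithful). Since $\D\otimes(-)$ preserves colimits, it preserves this pushout and the codiagonal, so the property transfers immediately. The paper instead argues at the level of functor categories: it fixes a generating set $W$ of morphisms inverted by $f$, observes via the tensor--hom adjunction that $\Fun^L(\D\otimes\N,\E)\to\Fun^L(\D\otimes\M,\E)$ is identified with restriction along $f$ on $\Fun^L(\D,\Fun^L(\M,\E))$, and then exhibits the essential image as $\Fun^L_{W'}(\D\otimes\M,\E)$ for the explicit set $W'=\D^\kappa\otimes W$. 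Your argument is slicker and entirely formal (no choice of $\kappa$ or generating set is needed); the paper's argument has the mild advantage of producing an explicit localizing class $W'$ for $\D\otimes f$, which is sometimes useful downstream. Your alternative sketch in the last paragraph is essentially the paper's route, minus the identification of $W'$. One small expository caveat: the paper's definition of ``localization'' is the Dwyer--Kan one, and your criterion identifies it with the Bousfield one; this is harmless since in $\PrL$ every morphism has a right adjoint and the two notions coincide by \Cref{lm:DKBous}, but it is worth flagging the identification.
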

\begin{proof}
For a class $S$ of maps, we let $\Fun_S$ denote the full subcategory of the functor category spanned by functors sending $S$ to equivalences.

    Let $W$ be a generating set of equivalences for the localization $f$. In this case, we note that for any $\E\in\PrL$ restriction along $ f$ induces an equivalence $$\Fun^L(\D,\Fun^L(\N,\E))\to \Fun^L(\D,\Fun^L_W(\M,\E))$$
    Thus letting $W'= \D^\kappa\otimes W$ for some $\kappa$ such that $\D$ is $\kappa$-compactly generated, we find that restriction along $\D\otimes f$ induces an equivalence $$\Fun^L(\D\otimes\N,\E)\to \Fun^L_{W'}(\D\otimes \M,\E)$$ 
    \end{proof}
    
\begin{warn}
    One may be tempted to conclude a similar statement about fully faithful functors. This is not so, for example the fully faithful inclusion $\Sp_{\geq 0}\to \Sp$ tensored with $\Set$ yields the unique map $\Ab\to 0$ (we learned of this example from Lior Yanovski). One can also produce stable counterexamples, cf. \cite[Theorem 2.2]{sashaI}.
\end{warn}
    \begin{lm}
        Let $f,g:I\to \PrL$ be small diagrams and $L:f\to g$ a natural transformation which is pointwise a localization. The induced functor $\colim_I f\to\colim_I g$ is also a localization.
    \end{lm}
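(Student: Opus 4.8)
\medskip

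\noindent\textbf{Proof proposal.} The plan is to use the colimit--limit duality in $\PrL$ together with the fact that a limit of a pointwise fully faithful natural transformation of $\infty$-categories is fully faithful. Recall that a morphism in $\PrL$ is a localization precisely when its right adjoint is fully faithful, that the equivalence $\PrL\simeq(\PrR)\op$ (passing to right adjoints) carries $\colim_I$ to $\lim_{I\op}$, and that limits in $\PrR$ are preserved by the forgetful functor to $\widehat{\Cat}$. Writing $f^R,g^R\colon I\op\to\widehat{\Cat}$ for the diagrams of right adjoints and $L^R\colon g^R\Rightarrow f^R$ for the induced (variance-reversed) transformation, the underlying $\infty$-category of $\colim_I f$ is $\lim_{I\op}f^R$, and the right adjoint of the induced morphism $\colim_I L\colon\colim_I f\to\colim_I g$ is identified with $\lim_{I\op}L^R\colon\lim_{I\op}g^R\to\lim_{I\op}f^R$.

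The key lemma is then: if $\alpha\colon X\Rightarrow Y$ is a natural transformation of diagrams $J\to\widehat{\Cat}$ which is pointwise fully faithful, then $\lim_J\alpha$ is fully faithful. I would prove this from the standard characterization that $p\colon\C\to\D$ is fully faithful if and only if the canonical functor $\Fun(\Delta^1,\C)\to\Fun(\Delta^1,\D)\times_{\Fun(\partial\Delta^1,\D)}\Fun(\partial\Delta^1,\C)$ is an equivalence. Since $\Fun(\Delta^1,-)$, $\Fun(\partial\Delta^1,-)=(-)\times(-)$ and pullbacks all commute with the limit $\lim_J$, applying $\lim_J$ to the equivalences witnessing full faithfulness of the $\alpha_j$ produces exactly the equivalence witnessing full faithfulness of $\lim_J\alpha$.

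Assembling the pieces: since each $L_j$ is a localization, each right adjoint $L_j^R$ is fully faithful, so by the lemma $\lim_{I\op}L^R$ is fully faithful; being the right adjoint of $\colim_I L$, this shows $\colim_I L$ is a localization. The step that requires genuine care, rather than being purely formal, is the bookkeeping in the first paragraph: correctly identifying the right adjoint of $\colim_I L$ with the honest $\widehat{\Cat}$-limit of the diagram of right adjoints, keeping track of the variance reversal $I\rightsquigarrow I\op$ and using that $\PrR\to\widehat{\Cat}$ preserves limits (so that ``fully faithful in $\PrR$'' simply means full faithfulness of the underlying functor). An alternative, avoiding the duality, would reduce an arbitrary colimit to coproducts and geometric realizations via the bar resolution: coproducts are immediate, since the right adjoint of $\coprod_i L_i$ is $\prod_i L_i^R$, and geometric realizations are handled by the same ``limit of fully faithfuls'' lemma applied to the totalizations of the right adjoints; but the duality argument seems cleanest.
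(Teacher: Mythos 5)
Your argument is correct and is exactly the paper's proof: pass to right adjoints so that $\colim_I f \to \colim_I g$ becomes a limit $\lim_{I\op} g^R \to \lim_{I\op} f^R$ of pointwise fully faithful functors (using HTT 5.5.3.18), and conclude by noting that limits of fully faithful functors are fully faithful. The paper merely cites these facts without spelling out the $\Fun(\Delta^1,-)$ characterization you use to prove the limit lemma.
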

    \begin{proof}
        It suffices to prove that its right adjoint is fully faithful. But now this follows from the description of colimits in $\PrL$ (cf. \cite[Theorem 5.5.3.18, Corollary 5.5.3.4]{HTT}) and the fact that limits of fully faithful functors are fully faithful. 
    \end{proof}
\begin{cor}\label{cor:tensloc}
    Let $f:\M\to \N\in\Mod_\V(\PrL)$ be a functor whose underlying functor in $\PrL$ is a localization. For any $\D\in\Mod_\V(\PrL)$, $\D\otimes_\V f$ is also a localization. 
\end{cor}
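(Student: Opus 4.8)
The plan is to express $\D\otimes_\V f$ as a geometric realization, computed in $\PrL$, of maps to which the first of the two preceding lemmas applies, and then to invoke the second. Concretely: the relative tensor product is the two-sided bar construction, so for a left $\V$-module $\N$ one has $\D\otimes_\V\N\simeq\colim_{[n]\in\Delta\op}\D\otimes\V^{\otimes n}\otimes\N$, a colimit a priori formed in $\Mod_\V(\PrL)$. Since the forgetful functor $\Mod_\V(\PrL)\to\PrL$ is a right adjoint to $\V\otimes(-)$ and moreover preserves all small colimits (as $\V\otimes(-)$ does in $\PrL$), this same colimit computes the underlying presentable category, the bar objects being formed using the absolute Lurie tensor product $\otimes$. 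Taking this description naturally in $\N$ identifies the underlying $\PrL$-functor of $\D\otimes_\V f$ with the realization of the simplicial map $[n]\mapsto(\D\otimes\V^{\otimes n})\otimes f$.

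Granting this, each level of that simplicial map is the functor $f$ tensored in $\PrL$ with the fixed presentable category $\D\otimes\V^{\otimes n}$; since $f$ is a localization in $\PrL$ by hypothesis, the first lemma above shows each such level is a localization. Because $\Delta\op$ is a small diagram, the second lemma above (colimits of pointwise localizations are localizations) then applies to this natural transformation and yields that its colimit, namely the underlying functor of $\D\otimes_\V f$, is a localization. That is the whole argument.

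The only step requiring genuine care is the identification in the first paragraph: that the underlying $\PrL$-object of the relative tensor product really is the bar-construction colimit formed in $\PrL$, so that the property ``being a localization'', which is a condition on underlying $\PrL$-functors, transports correctly. This rests on standard facts about the forgetful functor $\Mod_\V(\PrL)\to\PrL$ (colimit preservation and compatibility with the ambient tensor product on the bar objects), after which everything is formal. It is worth noting — and the intervening \textbf{Warning} makes this point — that one genuinely needs the colimit-of-localizations lemma here: the naive analogue via limits of right adjoints, which would try to argue about fully faithful functors directly, fails, so the colimit presentation of the relative tensor product is doing real work.
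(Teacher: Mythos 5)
Your proof is correct and takes essentially the same route as the paper's (very terse) proof: both write the underlying functor of $\D\otimes_\V f$ as the geometric realization in $\PrL$ of the levelwise maps $\D\otimes\V^{\otimes n}\otimes f$, apply the first lemma (tensoring with a fixed $\PrL$-object preserves localizations) at each simplicial level, and conclude with the second lemma (colimits of pointwise localizations are localizations). The extra care you devote to the identification of the underlying colimit and to the role of the Warning is a useful gloss but not a departure from the paper's argument.
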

\begin{proof}
    This map is the colimit of the maps $\D\otimes\V^{\otimes n}\otimes \M\to\D \otimes \V^{\otimes n}\otimes \N$. The result follows from combining the two previous lemmas.
\end{proof}
The final preparation we need is the following general lemma which can be found in \cite[Lemma 21.1.2.14]{SAG}: 
\begin{lm}\label{lm:retradj}
    Let $B$ be an $(\infty,2)$-category, and let $f:x\to y$ be a $1$-morphism in $B$. For $f$ to admit a right adjoint, it suffices that :
    \begin{enumerate}
        \item $B(z,x)$ is idempotent-complete for all $z\in B$; 
        \item $f$ is a retract, in $(\iota_1B)^{\Delta^1}$ of a $1$-morphism that admits a right adjoint.  
    \end{enumerate}
\end{lm}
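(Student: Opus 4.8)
The plan is to build the right adjoint of $f$ by hand out of the retract data, verify one triangle identity directly, and use idempotent-completeness of the mapping categories to repair the second one.

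First I would spell out condition~(2): a retract of $g\colon x'\to y'$ by $f$ in $(\iota_1 B)^{\Delta^1}$ amounts to $1$-morphisms $j_x\colon x\to x'$, $q_x\colon x'\to x$, $j_y\colon y\to y'$, $q_y\colon y'\to y$ together with coherent equivalences $q_xj_x\simeq \id_x$, $q_yj_y\simeq \id_y$, $gj_x\simeq j_yf$ and $fq_x\simeq q_yg$ (the coherence encoding that the composite $f\to g\to f$ of arrows is $\id_f$). Since an adjunction in $B$ can be detected in the homotopy $2$-category --- a $1$-morphism admits a right adjoint in $B$ iff it does in $h_2(B)$ --- it suffices to produce a $1$-morphism $f^R\colon y\to x$, a unit $\eta\colon\id_x\to f^Rf$, a counit $\epsilon\colon ff^R\to\id_y$, and witnesses for the two triangle identities; the higher coherences are then automatic. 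Note that $h_2(B)(y,x)$ is still idempotent-complete because $B(y,x)$ is.

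Writing $\eta_g,\epsilon_g$ for the unit and counit of $g\dashv g^R$, I would set
\[ f^R := q_x\, g^R\, j_y, \]
take for $\eta$ the whiskered $2$-cell $q_x\eta_g j_x\colon \id_x\simeq q_xj_x\Rightarrow q_xg^Rgj_x\simeq f^Rf$, and for $\epsilon$ the cell $q_y\epsilon_g j_y\colon ff^R\simeq q_ygg^Rj_y\Rightarrow q_yj_y\simeq\id_y$ (using $j_yf\simeq gj_x$ and $fq_x\simeq q_yg$). Whiskering the triangle identity $(\epsilon_g g)(g\eta_g)=\id_g$ by $q_y$ on the left and $j_x$ on the right shows at once that $(\epsilon f)(f\eta)=\id_f$, so this first triangle identity holds on the nose. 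The second one does not in general: the relevant composite drags in the idempotents $j_xq_x$ on $x'$ and $j_yq_y$ on $y'$ rather than identities.

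To finish, consider $p := (f^R\epsilon)(\eta f^R)\colon f^R\Rightarrow f^R$ in $h_2(B)(y,x)$. Using only the triangle identity already established, together with the interchange law, $p$ is an idempotent --- this is the standard observation that once one triangle identity holds, the ``defect'' of the other one is idempotent. Since $B(y,x)$, hence $h_2(B)(y,x)$, is idempotent-complete, $p$ splits as $p=ab$ with $ba=\id$, where $a\colon\widetilde{f^R}\to f^R$ and $b\colon f^R\to \widetilde{f^R}$. Then $\widetilde{f^R}$ is the desired right adjoint, with unit $(bf)\eta$ and counit $\epsilon(fa)$: a routine manipulation with the splitting relations turns \emph{both} triangle identities into honest equalities. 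The main obstacle is precisely this last step --- checking that $p$ is idempotent and that the corrected unit and counit really do satisfy both triangle identities is a somewhat fiddly exercise in whiskering and interchange; everything preceding it (the construction of $f^R$, $\eta$, $\epsilon$, and the first triangle identity) is formal bookkeeping with the retract data.
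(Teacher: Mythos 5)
Your proposal takes a genuinely different route from the paper. The paper reduces to the case $B=\Cat$, exhibits the presheaf $\Map(f(-),d)$ as a retract of a representable presheaf $\Map(-,r_C(f')^R i_D(d))$ inside the (idempotent-complete) presheaf category, and then splits the resulting \emph{coherent} idempotent on the representing object using idempotent-completeness of $C$; the general case is then bootstrapped from the $\Cat$ case via \Cref{lm:retractBC}. You instead build a ``half-adjunction'' $f^R=q_x g^R j_y$ directly, verify the first triangle identity by whiskering, observe that the defect $p=(f^R\epsilon)(\eta f^R)$ of the second triangle identity is idempotent, and split it. The latter two steps (that $p^2=p$ given one triangle identity, and that the corrected unit and counit make both triangle identities hold for the splitting) are correct, and the reduction to $h_2(B)$ for the purpose of \emph{checking} triangle identities is also a legitimate move.

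However, there is a real gap at the sentence ``Note that $h_2(B)(y,x)$ is still idempotent-complete because $B(y,x)$ is.'' This implication is false in general: idempotent-completeness of an $\infty$-category does \emph{not} imply idempotent-completeness of its homotopy $1$-category, because a $1$-categorical idempotent in $hC$ need not lift to a coherent idempotent in $C$. The classical counterexample is due to Freyd and Heller, who produced a homotopy idempotent on a CW complex that does not split, so that $h\Ss$ is not idempotent-complete even though $\Ss$ is. Thus once you have passed to $h_2(B)$, you have thrown away exactly the coherence data needed to split $p$. To repair the argument you would need to show that your particular $p$ lifts to a coherent idempotent in the $\infty$-category $B(y,x)$ (it does, precisely because $p$ is built out of the $\infty$-categorical retract datum in $(\iota_1 B)^{\Delta^1}$, which extends to a functor out of $\mathrm{Ret}$ and hence carries a coherent idempotent with it), and then split it there; but doing this carefully essentially reproduces the paper's argument, which keeps track of the retraction as a diagram in the presheaf category precisely to avoid this issue.
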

\begin{proof}
We first prove the corresponding property when $B=\Cat$. In this case, let $f:C\to D$ be a functor which is a retract, in $\Cat^{\Delta^1}$, of a left adjoint $f': C'\to D'$. Let $i_C,i_D$ denote the ``inclusions'', and $r_C,r_D$ the corresponding retractions. 

Now fix $d\in D$ and consider $\Map(f(-),d): C\op\to\Ss$ : we wish to prove it is representable, so by idempotent completeness, it suffices to prove it is the retract of a representable functor. 

We have maps $$\Map(f(-),d)\to \Map(i_D f(-), i_D(d))\simeq \Map(f' i_C(-), i_D(d))\simeq \Map(i_C(-), (f')^Ri_D(d))\to \Map(-,r_C(f')^Ri_D(d))$$ This last functor is clearly representable, so it suffices to produce a retraction. 

It is simply given by $$\Map(-,r_C(f')^Ri_D(d))\to \Map(f(-), fr_C(f')^R, i_D(d))\simeq \Map(f(-),r_D f'(f')^Ri_D(d))$$

$$\to \Map(f(-),r_Di_D(d))\simeq \Map(f(-),d)$$ and it is a simple matter of diagram chasing to prove that it is indeed a retraction.

We note that, in particular, the right adjoint $f^R$ is a retract of $r_C(f')^Ri_D$. 

We can now prove the general case: let $f:x\to y$ be a retract of $f':x'\to y'$, and we use the notations $i_x,i_y,r_x,r_y$ as above. In this case, for every $z\in B$, $B(z,f)$ is a retract of $B(z,f')$ and so admits a right adjoint, by the case of $\Cat$ and our idempotent-completeness assumption. It now suffices to prove that for any $g:z'\to z$, the following square is horizontally right adjointable:
\[\begin{tikzcd}
	{B(z,x)} & {B(z,y)} \\
	{B(z',x)} & {B(z',y)}
	\arrow["{B(g,x)}"', from=1-1, to=2-1]
	\arrow["{B(z',f)}"', from=2-1, to=2-2]
	\arrow["{B(g,y)}", from=1-2, to=2-2]
	\arrow["{B(z,f)}", from=1-1, to=1-2]
\end{tikzcd}\]

We note that this square is a retract of the corresponding square for $f'$, we are left with observing that retracts of horizontally right-adjointable diagrams are horizontally right adjointable, which is the object of the next lemma. 
\end{proof}
\begin{lm}\label{lm:retractBC}
    In $\Cat^{\square}$, retracts of horizontally right adjointable squares are horizontally right adjointable. 
\end{lm}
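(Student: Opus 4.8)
**Proof proposal for Lemma (retracts of horizontally right adjointable squares in $\Cat^\square$ are horizontally right adjointable).**

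The plan is to unwind what a retract of squares means and then verify the Beck–Chevalley condition by a diagram chase, using the explicit formula for the right adjoint of a retract of a left adjoint that was already extracted in the proof of \Cref{lm:retradj}. Concretely, suppose we have a commutative square $\sigma$ (with horizontal maps $u: A\to B$, $v: C\to D$, and vertical maps $A\to C$, $B\to D$) which is a retract, in $\Cat^\square$, of a square $\sigma'$ (with horizontal maps $u': A'\to B'$, $v': C'\to D'$) that is horizontally right adjointable, meaning $u'$ and $v'$ admit right adjoints $(u')^R$, $(v')^R$ and the canonical mate transformation is an equivalence. A retract datum in $\Cat^\square$ consists of maps of squares $\sigma\to\sigma'\to\sigma$ composing to the identity; on each corner this gives a retract diagram of categories, and on each edge a retract diagram of functors (compatible with the squares). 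In particular $u$ is a retract of $u'$ in $\Cat^{\Delta^1}$ and $v$ is a retract of $v'$ in $\Cat^{\Delta^1}$, so by the $\Cat$-case of \Cref{lm:retradj} both $u$ and $v$ admit right adjoints; moreover, as noted there, $u^R$ is (naturally) a retract of $r_A\,(u')^R\,i_B$, and similarly for $v^R$.

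The main step is then to show the mate square for $\sigma$ is an equivalence. First I would observe that the formation of the mate (Beck–Chevalley) transformation is functorial: given a map of squares where the horizontal edges admit right adjoints compatibly, one gets an induced transformation of the associated ``right-adjoint'' lax squares, and hence the mate transformation attached to $\sigma$ is a retract — in the arrow category $\Fun(\Delta^1,\Cat)$, or rather in the category of natural transformations — of the mate transformation attached to $\sigma'$. The cleanest way to package this is: mates are natural with respect to the obvious $2$-categorical functoriality (pasting), so a retract of $\sigma'$ through $\sigma'$ induces a retract of $\mathrm{mate}(\sigma')$ through $\mathrm{mate}(\sigma)$ at the level of natural transformations of functors. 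Since $\mathrm{mate}(\sigma')$ is a natural equivalence by hypothesis, and a retract of an equivalence (in any category, here in the category of natural transformations, evaluated pointwise in $\Ss$-valued mapping spaces) is an equivalence, we conclude $\mathrm{mate}(\sigma)$ is an equivalence, i.e. $\sigma$ is horizontally right adjointable.

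The one point that needs care — and which I expect to be the main obstacle — is verifying precisely that the mate transformation is \emph{functorial in the appropriate sense} on the relevant category of squares-with-right-adjointable-horizontal-edges, so that ``$\sigma$ is a retract of $\sigma'$'' really does upgrade to ``$\mathrm{mate}(\sigma)$ is a retract of $\mathrm{mate}(\sigma')$''. This is subtle because passing to right adjoints is only functorial on appropriate subcategories (of left adjoints, with right adjoints as morphisms going the other way), and one must check the retract maps of $\sigma'$ interact correctly with this op-type functoriality. I would handle this by using the explicit retract formula $u^R \simeq r_A (u')^R i_B$ from \Cref{lm:retradj}: plug it into the definition of the mate of $\sigma$ as the pasting of $\sigma$ with the unit of $u$ and the counit of $v$, substitute the retract expressions, and simplify using the triangle identities and the fact that the retraction maps $r_A i_A\simeq \id$, $r_B i_B \simeq \id$, etc., to literally exhibit $\mathrm{mate}(\sigma)$ as a composite $r\circ \mathrm{mate}(\sigma')\circ i$ of retraction and inclusion natural transformations. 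Once that identification is written out, the conclusion is immediate. A lighter-weight alternative, if the diagram chase proves unwieldy, is to test on mapping spaces: for every $z$, $\Map(z,-)$ carries everything into $\Cat$-valued (indeed $\Ss$-valued after further mapping) data where ``retract of an equivalence is an equivalence'' is completely elementary, and mates commute with such representable functors; but since we are already in $\Cat^\square$ this reduction is essentially vacuous and the real content remains the functoriality of the mate construction.
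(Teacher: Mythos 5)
Your proposal is correct and follows essentially the same route as the paper: both arguments exhibit the Beck--Chevalley transformation of the retract square as a retract of the conjugate $r\circ\mathrm{mate}(\sigma')\circ i$ of the Beck--Chevalley transformation of the ambient square, and then conclude since retracts of equivalences are equivalences. The paper, like you, defers the underlying diagram chase by appealing to functoriality of mates, so the two proofs match in both strategy and level of detail.
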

\begin{proof}
    Consider two squares $\square_i$, for 
    $i=0,1$, of the form : 
    \[\begin{tikzcd}
	{C_i} & {D_i} \\
	{E_i} & {F_i}
	\arrow["{q_i}"', from=1-1, to=2-1]
	\arrow["{h_i}"', from=2-1, to=2-2]
	\arrow["{f_i}", from=1-2, to=2-2]
	\arrow["{p_i}", from=1-1, to=1-2]
\end{tikzcd}\]
where $p_i,h_i$ have right adjoints $p_i^R,h_i^R$, and assume $\square_0$ is a retract of $\square_1$.

We have the Beck-Chevalley map $q_ip_i^R\to h_i^R f_i$. The claim is that the one for $i=0$ is a retract of a variant of the one for $i=1$. 

Specifically, call $i_C,i_D,i_E,i_F$ the functors participating in the ``inclusion'' functor of the retraction, and similarly $r_C,r_D,r_E,r_F$ the functors participating in the retraction. 

We claim that the map $r_E q_1p_1^R i_D\to r_E\circ h_1^Rf_1\circ i_D$ given by applying $r_E\circ (-)\circ i_D$ to the Beck-Chevalley map retracts onto the Beck-Chevalley map for $\square_0$. This is a simple but tedious diagram chase, which is in fact part of a more general statement about functoriality of Beck-Chevalley maps. 

We note that because equivalences can be checked in homotopy cateories, this statement is insensitive to passing to homotopy categories everywhere, and thus is a $1$-categorical statement. We leave the details to the reader. 
\end{proof}
We can now state and prove: 
\begin{thm}\label{thm:Luriethmgeneralbase}
    Let $\V\in\CAlg(\PrL_\kappa)$ and $\M\in\Mod_\V(\PrL)$. Suppose $\M \in \Mod_\V(\PrL_\lambda)$ for some $\lambda \geq \kappa$. The following are equivalent: 
    \begin{enumerate}
        \item $\M$ is a dualizable $\V$-module; 
        \item $\M$ is a retract of an atomically generated $\V$-module;
        \item There is an internally left adjoint fully faithful embedding $\M\to \N$ for some atomically generated $\V$-module $\N$;
        \item The canonical functor $\PP_\V(\M^\mu)\to \M$ admits an internal left adjoint for some $\mu\geq \lambda$ (here $\M^\mu$ denotes the full $\V$-subcategory of $\M$ spanned by the $\mu$-compacts). 
    \end{enumerate}
\end{thm}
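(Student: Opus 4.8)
The plan is to prove the cycle of implications $(1) \Rightarrow (4) \Rightarrow (3) \Rightarrow (2) \Rightarrow (1)$, with the last implication already in hand from \Cref{prop:dbldetect} (a retract of a dualizable module is dualizable, since dualizable objects in any symmetric monoidal category are closed under retracts). So the real content is $(1) \Rightarrow (4)$, $(4) \Rightarrow (3)$, and $(3) \Rightarrow (2)$.

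First I would handle $(4) \Rightarrow (3)$. Given an internal left adjoint $g: \PP_\V(\M^\mu) \to \M$ to the canonical functor $i: \PP_\V(\M^\mu)\to \M$, we know $ig \simeq \id_\M$ by \Cref{cor:locVmod} (the right adjoint of $i$ is fully faithful, so $i$ is a localization and its section $g$, being a further left adjoint, exhibits $\M$ as a full subcategory). Hence $g$ is a fully faithful internal left adjoint into the atomically generated module $\PP_\V(\M^\mu) = \PP_\V((\M^\mu)^{\text{at}})$... — more carefully, $\PP_\V(\M^\mu)$ is atomically generated because it is a presheaf $\V$-category, cf. \Cref{obs:descatgen}. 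This gives (3). The step $(3) \Rightarrow (2)$ is the standard retract argument: if $j: \M \to \N$ is a fully faithful internal left adjoint with $\N$ atomically generated, then $j^R j \simeq \id_\M$ exhibits $\M$ as a retract of $\N$ in $\Mod_\V(\PrL)$ — but I need the retract to live in $\Mod_\V(\PrL)^{iL}$; here $j$ is an internal left adjoint by hypothesis and $j^R$ is colimit-preserving $\V$-linear (fully faithfulness plus the internal-left-adjoint square forces the projection maps for $j^R$ to be equivalences), so in fact $j^R$ is itself an internal left adjoint, and the retract is an $iL$-retract. Together with $(2)\Rightarrow(1)$ via \Cref{prop:dbldetect} (or directly, closure of dualizables under retracts), this closes the loop once $(1)\Rightarrow(4)$ is established.

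The hard part will be $(1) \Rightarrow (4)$: producing the internal left adjoint to $i: \PP_\V(\M^\mu) \to \M$ from the mere assumption of dualizability. The strategy is to use a duality datum: dualizability of $\M$ gives a coevaluation $\mathrm{coev}: \V \to \M \otimes_\V \M^\vee$ and evaluation $\mathrm{ev}: \M^\vee \otimes_\V \M \to \V$ satisfying the triangle identities. I would choose $\mu$ large enough — by \Cref{lm:smallinternalleft}/\Cref{cor:atimplcpt} one needs $\mu$ so that the compact generators of the relevant $\V$-modules, the unit, and finitely many auxiliary objects are all $\mu$-compact, and also $\mu \geq \lambda$ — so that $\mathrm{coev}(\one_\V)$ decomposes (or at least is built from) $\mu$-compact data, and then transpose the identity: the composite $\M \xrightarrow{\mathrm{coev}\otimes \id} \M\otimes_\V\M^\vee\otimes_\V\M \xrightarrow{\id \otimes \mathrm{ev}} \M$ is the identity, and one uses this to write down a candidate left adjoint $\ell: \M \to \PP_\V(\M^\mu)$ factoring the would-be "splitting" of $i$. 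Concretely, $\M^\vee \simeq \Fun^L_\V(\M,\V)$, and the colimit functor $i$ corresponds under the presheaf universal property to the inclusion $\M^\mu \hookrightarrow \M$; the left adjoint should send $x \in \M$ to the presheaf $m \mapsto \hom_\M(m, x)$ — but this only lands in $\PP_\V(\M^\mu)$ and is colimit-preserving when $\hom_\M(m,-)$ preserves the relevant colimits for $m$ ranging over $\mu$-compacts, which is not automatic. The resolution is exactly where dualizability enters: the duality datum lets one express $\id_\M$ as a colimit (indexed by the "resolution of the diagonal") of functors of the form $(- )\otimes \hom_\M(m_\alpha, -)$-type compositions factoring through $\V$, and because $\mathrm{coev}$ can be taken to be an internal left adjoint after the retract reformulation — or rather, because the triangle identity forces the requisite adjointability — one gets that $i$ itself admits a left adjoint. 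I would model this on Lurie's argument in \cite[Proposition D.7.3.1]{SAG} for $\V = \Sp$, replacing "compact" by "$\V$-atomic" and "$\Ind$" by "$\PP_\V$" throughout, and using \Cref{cor:locVmod} and \Cref{obs:naturalcolim} to control the $\V$-linearity and naturality of the comparison. The main technical obstacle is ensuring the left adjoint one produces is an \emph{internal} left adjoint (i.e. its right adjoint $i$ has colimit-preserving, $\V$-linear right adjoint — which is clear — but more importantly that the left adjoint $\ell$ is $\V$-linear with the projection formula), and this is precisely where one invokes that the duality datum is compatible with the $\V$-module structure, i.e. that $\M^\vee$ is the $\V$-linear dual and $\mathrm{ev}, \mathrm{coev}$ are maps of $\V$-modules.

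Finally, I would remark that the passage from a fixed $\mu$ to "some $\mu$" in (4) is harmless: enlarging $\mu$ only enlarges $\M^\mu$, and the compatible colimit functors assemble (as in \Cref{obs:naturalcolim}), so if $i$ admits an internal left adjoint for one sufficiently large $\mu$ it does for all larger ones; conversely the statement only asserts existence of one such $\mu$. The cardinal bookkeeping — choosing $\mu$ to absorb the finitely many objects appearing in a chosen duality datum — should be isolated into a short lemma or simply spelled out inline, since it is routine once the structural argument is in place.
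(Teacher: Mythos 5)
Your handling of $(4)\Rightarrow(3)\Rightarrow(2)\Rightarrow(1)$ matches the paper, and your remark about enlarging $\mu$ is also in the paper (\Cref{rmk:cardindep}). The divergence is in $(1)\Rightarrow(4)$, which is the real content, and there your sketch both takes a different route and has a gap.

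The paper does not attempt to build the left adjoint to $\PP_\V(\M^\mu)\to\M$ directly from a duality datum for a general dualizable $\M$. Instead it factors the implication into three steps. First it proves $(1)\Rightarrow(2)$: since $\M$ is dualizable, $\Fun^L_\V(\M,-)\simeq\M^\vee\otimes_\V(-)$ preserves $\V$-linear localizations (\Cref{cor:tensloc}), so applied to the localization $\PP_\V(\M^\lambda)\to\M$ it gives an essential surjection $\Fun^L_\V(\M,\PP_\V(\M^\lambda))\to\Fun^L_\V(\M,\M)$; a preimage of $\id_\M$ is a section, exhibiting $\M$ as a retract of $\PP_\V(\M^\lambda)$. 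Second, it proves $(4)$ directly in the special case where $\M$ is atomically generated, by checking corepresentability of $\Map(v\otimes x,p(-))$ for $v\in\V$, $x\in\M^\at$ via the enriched Yoneda lemma, then extending to all of $\M$ under colimits. Third — and this is the step you don't have — it invokes that the property ``$\PP_\V(\M^\mu)\to\M$ admits a left adjoint'' is closed under retracts, which follows from the abstract $(\infty,2)$-categorical \Cref{lm:retradj}. The retract-closure step is what lets one avoid ever writing down the left adjoint for a non-atomically-generated $\M$.

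Your proposed direct construction has a genuine gap: the claim that the duality datum ``lets one express $\id_\M$ as a colimit of functors of the form $(-)\otimes\hom_\M(m_\alpha,-)$-type compositions factoring through $\V$'' is asserted without justification and is not correct as stated for a general base $\V$ — it presupposes something like a resolution by $\V$-atomic objects, which is essentially what we are trying to establish, and which fails for dualizable $\M$ that are not atomically generated. (For $\V=\Sp$, Lurie can write the coevaluation as a colimit of elementary tensors because $\M\otimes\M^\vee$ is itself accessible and one can unwind the colimit; but the analogous unwinding for general $\V$ is precisely what the presheaf-plus-retract reformulation is designed to sidestep.) You do gesture at a ``retract reformulation'' in one clause, but it is not developed into the argument, and without it the direct approach stalls at exactly the point you flag as the ``main technical obstacle.'' So the fix is to stop trying to produce the left adjoint on a general dualizable $\M$ and instead (a) get the retract from dualizability via the tensor-with-localization trick, (b) produce the left adjoint in the atomically generated case, and (c) propagate along retracts using \Cref{lm:retradj} and \Cref{lm:retractBC}.
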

\begin{rmk}\label{rmk:boundyhat}
    We will see later (cf. \Cref{thm:CardBound}) that any dualizable $\V$-module is $\max(\kappa,\aleph_1)$-compactly generated (and in fact, is in $\Mod_\V(\PrL_{\max(\kappa,\aleph_1)})$) so that the $\mu$ in the fourth equivalent statement can always chosen to be $\max(\kappa,\aleph_1)$. 
\end{rmk}
\begin{proof}
    (4. implies 3.) : By \Cref{lm:ffrightadjointV}, the right adjoint of $\PP_\V(\M^\lambda)\to \M$ is fully faithful, hence, if a left adjoint exists, it is also fully faithful\footnote{This is standard, see \Cref{cor:leftrightff}.}. Furthermore, $\PP_\V(\M^\lambda)$ is atomically generated, so we are done. 

    (3. implies 2.) : This is clear, as the ($\V$-linear, colimit-preserving) right adjoint of a fully faithful embedding $\M\to \N$ provides a retraction (indeed, the unit of the adjunction is an equivalence if the left adjoint is fully faithful). 

    (2. implies 1.) : As dualizable objects are closed under retracts in any idempotent-complete \category{} such as $\Mod_\V(\PrL)$, this follows at once from \Cref{prop:atgenimpliesdbl}.  

    To prove that 1. implies 4., we prove first that 1. implies 2., that 4. is satisfied in the case of atomically generated $\V$-modules, and finally that 4. is closed under retracts. 

    (1. implies 2.) : Let $\M$ be dualizable. In that case, $\Fun^L_\V(\M,-)\simeq \M^\vee \otimes_\V -$ preserves $\V$-linear localizations by \Cref{cor:tensloc}, in particular it sends them to (essential) surjections. 

    Thus the identity functor $\id_\M$ has a preimage under $\Fun^L_\V(\M,\PP_\V(\M^\lambda))\to \Fun^L_\V(\M,\M)$. This provides a section of the localization functor $\PP_\V(\M^\lambda)\to \M$\footnote{We warn the reader that this section need a priori not be itself the left adjoint that we are after.}, thus proving 2..

    Now, let $\M$ be $\V$-atomically generated, we prove 4. in this case. Let $p:\PP_\V(\M^\lambda)\to \M$ be the canonical functor. For $x\in \M^\at\subset \M^\lambda$, we have a map $$\hom_{\PP_\V(\M^\lambda)}(y(x),F)\to \hom_\M(x,p(F))$$ natural in $F\in\PP_\V(\M^\lambda)$. Both $x$ and $y(x)$ are atomic in $\M$ and $\PP_\V(\M^\lambda)$ respectively, so both sides are $\V$-linear colimit-preserving functors $\PP_\V(\M^\lambda)\to \V$.  Therefore this map is an equivalence for all $F$ if and only if its restriction to $\M^\lambda$ is, in which case this follows from the enriched Yoneda lemma applied to $\M^\lambda$. 
    
Therefore, for every $v\in\V$, there is a natural equivalence $$\Map(v\otimes x, p(-))\simeq \Map(v\otimes y(x), -)$$ which proves that $p$ admits a local left adjoint at $v\otimes x$ for every $x\in \M^\at$. It follows from cocompleteness that $p$ admits a left adjoint $p^L$. Furthermore, the previous construction at $v\otimes x$ shows that the canonical map $p^L(v\otimes x)\to v\otimes p^L(x)$ is an equivalence for all $x\in \M^\at$ and $v\in \V$. It follows by 2-out-of-3 that the following map is an equivalence for all $v,w\in\V, x\in\M^\at$: $p^L(v\otimes~w\otimes~x)\to~v\otimes~p^L(w\otimes~x)$, and thus, by completion under colimits and the fact that $\M$ is atomically generated, it follows that the canonical map $p^L(v\otimes x)\to~v\otimes~p^L(x)$is an equivalence for all $x\in \M$ and $v\in \V$, so that in total, $p^L$ is an internal left adjoint to $p$.
     
    Finally, 4. follows from (the dual of) \Cref{lm:retradj}: functors admitting a left adjoint are closed under retracts as long as the source is idempotent-complete (which presentable categories certainly are).
\end{proof}
\begin{rmk}\label{rmk:cardindep}
    Let $\M$ be a $\lambda$-compactly generated $\V$-module, and let $\mu\geq \lambda$. In this case, the canonical functor $\PP_\V(\M^\mu)\to \M$ factors as $\PP_\V(\M^\mu)\to \PP_\V(\M^\lambda)\to \M$, where the first map is restriction, and the second is the canonical functor.  Indeed, the composite $\M^\mu\to~\PP_\V(\M^\mu)\to~\PP_\V(\M^\lambda)$ is then the restriction to $\M^\mu$ of the restricted Yoneda embedding, and so mapping back to $\M$ yields the inclusion of $\M^\mu$ in $\M$, and this is enough by \Cref{thm: UPVPsh}. 

    In particular, if both canonical functors admit left adjoints, then the one for $\mu$ factors through the one for $\lambda$.
\end{rmk}
\begin{nota}
    Let $\M$ be a $\lambda$-compactly generated dualizable $\V$-module. Point 4. in \Cref{thm:Luriethmgeneralbase} provides the existence of a left adjoint to the canonical functor $\PP_\V(\M^\lambda)\to \M$. We let $\hat y$ denote this left adjoint\footnote{This notation is a bit abusive as $\hat y$ depends in principle on $\lambda$. However this is rather harmless, cf. \Cref{rmk:boundyhat}, and cf. \Cref{rmk:cardindep}}.
\end{nota}
\begin{rmk}\label{rmk:hatyff}
    The canonical functor $\PP_\V(\M^\lambda)\to \M$ is a localization by \Cref{cor:locVmod}, so its left adjoint $\hat y$ is fully faithful, and thus also $\V$-fully faithful because it is $\V$-linear. 
\end{rmk}
\begin{ex}
    From the proof, we see immediately that if $m\in \M^\at$, $\hat y(m) = y(m)$. More precisely, let $y: \M\to \PP_\V(\M^\lambda)$ be \emph{right} adjoint to the canonical functor. Since $y$ and $\hat y$ are both fully faithful, we obtain a comparison $\hat y\to y$ which is an equivalence when evaluated on atomic objects. 
\end{ex}
\begin{cor}
    Let $\M$ be a dualizable $\V$-module, and $T$ a $\V$-linear colimit-preserving monad on $\M$. The category $\Mod_T(\M)$ of $T$-modules is also dualizable, and the free $T$-module functor is an internal left adjoint.  
\end{cor}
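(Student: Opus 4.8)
The plan is to realize $\Mod_T(\M)$ as a presentable $\V$-module, show that the free functor is an internal left adjoint with conservative right adjoint, and then conclude by \Cref{prop:dbldetect}.

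First I would equip $\Mod_T(\M)$ with its $\V$-module structure. As $T$ is colimit-preserving it is accessible, so $\Mod_T(\M)$ is presentable, and the forgetful functor $U\colon\Mod_T(\M)\to\M$ creates all colimits since $T$ preserves them. Using the $\V$-linearity of $T$, for $v\in\V$ and a $T$-module $(m,\theta\colon Tm\to m)$ one obtains a $T$-module structure on $v\otimes m$ via $T(v\otimes m)\xrightarrow{\sim} v\otimes Tm\xrightarrow{v\otimes\theta} v\otimes m$; the coherence data of the $\V$-linear structure on $T$ promote this to a $\V$-action on $\Mod_T(\M)$ which preserves colimits in each variable, so that $\Mod_T(\M)\in\Mod_\V(\PrL)$ and $U$ is $\V$-linear and colimit-preserving (indeed $U(v\otimes x)=v\otimes U(x)$ on the nose).

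Next I would check that the free functor $F\colon\M\to\Mod_T(\M)$, left adjoint to $U$, is an internal left adjoint. It preserves colimits, being a left adjoint, and it is $\V$-linear because the canonical comparison map $F(v\otimes m)\to v\otimes F(m)$ is, on underlying objects, the equivalence $T(v\otimes m)\xrightarrow{\sim} v\otimes Tm$ witnessing the $\V$-linearity of $T$; thus $F$ is a morphism in $\Mod_\V(\PrL)$. Its right adjoint is $U$, which is colimit-preserving and whose projection maps $v\otimes U(n)\to U(v\otimes n)$ are equivalences by the previous paragraph. Hence $F$ is an internal left adjoint.

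Finally, $U=F^R$ is conservative, since the forgetful functor from modules over a monad always detects equivalences. So $\{F\colon\M\to\Mod_T(\M)\}$ is a one-element family of internal left adjoints with jointly conservative right adjoint whose source $\M$ is dualizable, and \Cref{prop:dbldetect} gives that $\Mod_T(\M)$ is dualizable. The argument is essentially formal: its only substantive ingredient is \Cref{prop:dbldetect}, and the one point needing attention is the verification that the free--forgetful adjunction is $\V$-linear on both sides, which is exactly where the hypothesis that $T$ is a \emph{$\V$-linear} monad enters (a merely colimit-preserving monad on the underlying category would not even suffice to define the $\V$-module $\Mod_T(\M)$).
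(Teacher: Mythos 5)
Your proof is correct and takes essentially the same route as the paper: the paper's own argument is simply that the free functor is an internal left adjoint with conservative right adjoint, so \Cref{prop:dbldetect} applies. You have spelled out more carefully why the free–forgetful adjunction is $\V$-linear, which the paper leaves implicit, but the underlying strategy is identical.
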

\begin{proof}
We first point out that the internal left adjoint claim is clear. In particular, since its right adjoint is conservative, \Cref{prop:dbldetect} applies, and we are done. 
\end{proof}
\begin{ques}
    Is the analogue of this statement true for $\V$-linear comonads ?
\end{ques}
\begin{prop}\label{prop:internalleftBC}
Assume $\V\in\CAlg(\PrL_\kappa)$, and let $\M,\N$ be $\lambda$-compactly generated dualizable $\V$-modules where $\lambda\geq \kappa$. 

A $\V$-linear, $\lambda$-compact preserving map $f:\M\to \N$ is an internal left adjoint if and only if the following square is vertically left adjointable: 
\[\begin{tikzcd}
	{\PP_\V(\M^\lambda)} & {\PP_\V(\N^\lambda)} \\
	\M & \N
	\arrow["f"', from=2-1, to=2-2]
	\arrow[from=1-1, to=2-1]
	\arrow[from=1-2, to=2-2]
	\arrow["{\PP_\V(f^\lambda)}", from=1-1, to=1-2]
\end{tikzcd}\]

In fact, if there \emph{exists a} homotopy filling the square: 
\[\begin{tikzcd}
	{\PP_\V(\M^\lambda)} & {\PP_\V(\N^\lambda)} \\
	\M & \N
	\arrow["f"', from=2-1, to=2-2]
	\arrow["{\PP_\V(f^\lambda)}", from=1-1, to=1-2]
	\arrow["{\hat y}", from=2-1, to=1-1]
	\arrow["{\hat y}"', from=2-2, to=1-2]
\end{tikzcd}\]
then $f$ is an internal left adjoint.
\end{prop}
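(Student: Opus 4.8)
The plan is to realize both $\PP_\V(f^\lambda)$ and $f$ as morphisms sitting over the canonical functors $p_\M\colon\PP_\V(\M^\lambda)\to\M$ and $p_\N\colon\PP_\V(\N^\lambda)\to\N$, to prove the (formally stronger) ``if there exists a homotopy'' clause first and deduce one direction of the equivalence from it, and to obtain the other direction by passing to right adjoints. For the setup: by \Cref{cor:locVmod} the functors $p_\M,p_\N$ are $\V$-linear localizations, and since $\M,\N$ are dualizable, \Cref{thm:Luriethmgeneralbase} provides the $\V$-linear --- hence, by \Cref{rmk:hatyff}, fully faithful --- left adjoints $\hat y_\M,\hat y_\N$; as $p_\M,p_\N$ are $\V$-linear and colimit-preserving, $\hat y_\M$ and $\hat y_\N$ are themselves internal left adjoints. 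Since $f$ is $\V$-linear and $\lambda$-compact-preserving, $f^\lambda\colon\M^\lambda\to\N^\lambda$ is a morphism of $\V$-enriched categories and induces $\PP_\V(f^\lambda)$, whose right adjoint is restriction along $f^\lambda$; restriction preserves colimits and the (pointwise) $\V$-action, so $\PP_\V(f^\lambda)$ is an internal left adjoint as well. Finally \Cref{obs:naturalcolim} supplies a homotopy $p_\N\circ\PP_\V(f^\lambda)\simeq f\circ p_\M$, so the mate $\alpha\colon\hat y_\N\circ f\to\PP_\V(f^\lambda)\circ\hat y_\M$ is defined, and ``vertically left adjointable'' means exactly that $\alpha$ is an equivalence.

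Assume first that we are merely given \emph{some} equivalence $\PP_\V(f^\lambda)\circ\hat y_\M\simeq\hat y_\N\circ f$. Its left-hand side is a composite of internal left adjoints, hence an internal left adjoint, so $\hat y_\N\circ f$ is one; since $\hat y_\N$ is a fully faithful morphism in $\Mod_\V(\PrL)$, the example stating that if a composite $i\circ g$ is an internal left adjoint and $i$ is fully faithful then $g$ is an internal left adjoint (applied with $i=\hat y_\N$, $g=f$) shows that $f$ is an internal left adjoint. This is exactly the last assertion of the proposition, and since an invertible mate $\alpha$ is in particular such a homotopy, it also yields the ``$\Leftarrow$'' of the stated equivalence.

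For the converse, suppose $f$ is an internal left adjoint; I would prove $\alpha$ invertible by passing to right adjoints. The right adjoint of $\hat y_\N\circ f$ is $f^R\circ p_\N$, and that of $\PP_\V(f^\lambda)\circ\hat y_\M$ is $p_\M\circ(\text{restriction along }f^\lambda)$; both are $\V$-linear colimit-preserving functors $\PP_\V(\N^\lambda)\to\M$, so by \Cref{thm: UPVPsh} it suffices to compare them on the representables $y(n)$, $n\in\N^\lambda$. The first sends $y(n)$ to $f^R(n)$; for the second, the internal-left-adjoint hypothesis provides the enriched adjunction identifying the presheaf $m\mapsto\hom_\N(f(m),n)$ --- which is the restriction of $y(n)$ along $f^\lambda$ --- with $m\mapsto\hom_\M(m,f^R(n))$, i.e.\ with the value at $f^R(n)$ of the restricted Yoneda embedding (the right adjoint of $p_\M$), so that $p_\M$ returns $f^R(n)$. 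Hence the two right adjoints agree, so $\hat y_\N\circ f\simeq\PP_\V(f^\lambda)\circ\hat y_\M$; by functoriality of the mate construction, $\alpha$ corresponds under passage to right adjoints to the Beck--Chevalley $2$-cell $p_\M\circ(\text{restriction})\to f^R\circ p_\N$ of the naturality square, and the computation just made shows that $2$-cell is an equivalence, so $\alpha$ is one too.

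I expect the main obstacle to be this last step. One cannot check $\alpha$ directly on a generating set, because any colimit-and-$\V$-tensor generating family of $\M$ must be chosen inside $\M^\lambda$, whereas $\hat y_\M$ is transparent only on the \emph{atomic} objects (there it is the Yoneda embedding); this forces the argument through right adjoints and the universal property of presheaf $\V$-categories, where everything reduces to representables. The remaining care is bookkeeping: that restriction along $f^\lambda$ is genuinely $\V$-linear (true, since the $\V$-action on $\V$-presheaf categories is computed pointwise), and that the equivalence of right adjoints produced above is compatible with the canonical mate $\alpha$ rather than merely an abstract equivalence between its source and target.
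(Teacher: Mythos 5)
Your proposal is correct and the overall plan (pass to right adjoints, compare the two resulting $\V$-linear colimit-preserving functors on $\PP_\V(\N^\lambda)$ via representables) matches the paper's, but the proof of the ``in fact'' clause goes through a genuinely different and more economical route. The paper establishes that $\PP_\V(f^\lambda)$ is an internal left adjoint, takes right adjoints of the given homotopy to get $f^R\circ p_\N\simeq p_\M\circ\PP_\V(f^\lambda)^R$, then precomposes with $\hat y_\N$ and exhibits $f^R$ directly as a composite of $\V$-linear colimit-preserving functors. You instead observe that $\PP_\V(f^\lambda)\circ\hat y_\M$ is an internal left adjoint (being a composite of two), so $\hat y_\N\circ f$ is one, and then invoke the example on factoring through a fully faithful functor (``if $i\circ f$ is internal-left-adjoint and $i$ is fully faithful then $f$ is'') with $i=\hat y_\N$. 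This is shorter and arguably clearer, and it does not require passing to right adjoints for this half of the argument.

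For the converse direction, your argument is structurally the same as the paper's: both reduce, via the universal property of $\PP_\V(\N^\lambda)$, to checking that the Beck--Chevalley mate is an equivalence on representables $y(n)$, where $\operatorname{res}(y(n))\simeq y(f^R(n))$ by the enriched adjunction and $p_\M\circ y\simeq\id_\M$. Two small remarks. First, the paper enlarges $\lambda$ so that $f^R$ preserves $\lambda$-compacts before passing to right adjoints (using \Cref{rmk:cardindep} to see this is harmless); you do not, but as you point out the identification $\operatorname{res}(y(n))\simeq y(f^R(n))$ holds regardless of whether $f^R(n)$ lands in $\M^\lambda$, since the restricted Yoneda embedding is defined on all of $\M$, so this omission is not a gap. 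Second, you correctly flag the remaining bookkeeping: showing the two right adjoints agree objectwise on representables is not quite the same as showing the \emph{specific} mate $\alpha^\vee$ is an equivalence on them; one must in addition unwind $\alpha^\vee$ on $y(n)$ and see that it gives the canonical identification, which is a triangle-identity check. The paper's proof has exactly the same level of informality here (``the conclusion is clear'' after restricting to $\N^\lambda$), so this is not a defect relative to the source; I would just recommend writing out the unit--counit computation if you wanted a fully self-contained version.
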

\begin{rmk}
    The subtlety in the ``in fact'' part is that we do not require the homotopy to be the canonical Beck-Chevalley transformation as in the definition of left adjointable. 
\end{rmk}
\begin{proof}
Let $p_\M:\PP_\V(\M^\lambda)\to \M$ (resp. $p_\N$) denote the canonical functor.

    First, assume that there is a homotopy making the following square commute (e.g. the Beck-Chevalley transformation, but not necessarily): 
    \[\begin{tikzcd}
	{\PP_\V(\M^\lambda)} & {\PP_\V(\N^\lambda)} \\
	\M & \N
	\arrow["f"', from=2-1, to=2-2]
	\arrow["{\PP_\V(f^\lambda)}", from=1-1, to=1-2]
	\arrow["{\hat y}", from=2-1, to=1-1]
	\arrow["{\hat y}"', from=2-2, to=1-2]
\end{tikzcd}\]
Note that by design, $\PP_\V(f^\lambda)$ preserves atomic objects in $\PP_\V(\M^\lambda)$, so it is itself an internal left adjoint by \Cref{cor:atomicimpliesinternal}. 

Taking right adjoints everywhere, we find that $f^R\circ p_\N\simeq p_\M\circ \PP_\V(f^\lambda)^R$. Precomposing by $\hat y_\N$ and using \Cref{rmk:hatyff}, we find that $f^R\simeq f^R\circ p_\N\circ \hat y_\N \simeq p_\M\circ \PP_\V(f^\lambda)^R \circ \hat y_\N$, all these equivalences being as $\V$-functors, or as lax $\V$-linear functors. As the last one is $\V$-linear and preserves colimits, we conclude that the same holds for $f^R$, thus proving the claim. 

Conversely, assume $f$ is an internal left adjoint, we wish to prove that the canonical natural transformation in the following square is an equivalence: 
\[\begin{tikzcd}
	{\PP_\V(\M^\lambda)} & {\PP_\V(\N^\lambda)} \\
	\M & \N
	\arrow["{\hat y}", from=2-1, to=1-1]
	\arrow["f"', from=2-1, to=2-2]
	\arrow["{\hat y}"', from=2-2, to=1-2]
	\arrow["{\PP_\V(f^\lambda)}", from=1-1, to=1-2]
	\arrow[shorten <=8pt, shorten >=8pt, Rightarrow, from=2-2, to=1-1]
\end{tikzcd}\]

Up to enlarging $\lambda$, we may assume that $f^R$ preserves $\lambda$-compacts - note that this does not affect $\hat y$, and thus the adjointability of this square, by \Cref{rmk:cardindep}. Now, pass to right adjoints in the whole diagram to reach: 
\[\begin{tikzcd}
	{\PP_\V(\M^\lambda)} & {\PP_\V(\N^\lambda)} \\
	\M & \N
	\arrow["{p_\M}"', from=1-1, to=2-1]
	\arrow["{p_\N}", from=1-2, to=2-2]
	\arrow["{\PP_\V((f^R)^\lambda)}"', from=1-2, to=1-1]
	\arrow["{f^R}", from=2-2, to=2-1]
	\arrow[shorten <=8pt, shorten >=8pt, Rightarrow, from=1-1, to=2-2]
\end{tikzcd}\]
For general reasons, see \cite[Remark 4.7.4.14]{HA}, the natural transformation in this adjointed diagram is the Beck-Chevalley transformation for the horizontal right adjointability of the original square: 
\[\begin{tikzcd}
	{\PP_\V(\M^\lambda)} & {\PP_\V(\N^\lambda)} \\
	\M & \N
	\arrow["f"', from=2-1, to=2-2]
	\arrow[from=1-1, to=2-1]
	\arrow[from=1-2, to=2-2]
	\arrow["{\PP_\V(f^\lambda)}", from=1-1, to=1-2]
\end{tikzcd}\]
Thus we can ask instead if the latter is horizontally right adjointable. But now, because $f^R$ preserves colimits and $\V$-tensors, this can be tested after restricting to $\N^\lambda$, where the conclusion is clear. 
\end{proof}
The adjointability of these squares provides canonical naturality squares. As the canonical map $\PP_\V(\M^\lambda)\to \M$ is itself natural, the above gives us:
\begin{cor}\label{cor:yhatnatural}
Let $\V\in\CAlg(\PrL_\kappa)$.
    The maps $\hat y: \M\to \PP_\V(\M^\lambda)$ assemble canonically into a natural transformation of functors defined on $\Dbl{\V}\cap \Mod_\V(\PrL_\lambda)$ for $\lambda\geq \kappa$; which is a natural section for the natural map $\PP_\V(\M^\lambda)\to \M$. 
\end{cor}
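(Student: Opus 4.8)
The plan is to deduce this from \Cref{prop:internalleftBC} by the formal ``calculus of mates'': once every naturality square of the canonical map $\PP_\V((-)^\lambda)\to(-)$ is known to be vertically left adjointable, passing to left adjoints of the vertical maps produces a natural transformation in the opposite $2$-cell direction, which is exactly $\hat y$.

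First I would fix a regular cardinal $\lambda\geq\kappa$ and organize everything into functors on $\D:=\Dbl{\V}\cap\Mod_\V(\PrL_\lambda)$, the non-full subcategory of $\Mod_\V(\PrL)$ whose objects are $\lambda$-compactly generated dualizable $\V$-modules and whose morphisms are the internal left adjoints preserving $\lambda$-compacts. The assignment $\M\mapsto\M^\lambda$ is functorial with values in small $\V$-\categories, and composing with the $\V$-presheaf construction $\PP_\V(-)$ (whose functoriality $f\mapsto\PP_\V(f^\lambda)$ is governed by \Cref{thm: UPVPsh}) yields a functor $P\colon\D\to\Mod_\V(\PrL)$, $\M\mapsto\PP_\V(\M^\lambda)$, alongside the inclusion $I\colon\D\to\Mod_\V(\PrL)$. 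By \Cref{obs:naturalcolim} the canonical localizations $p_\M\colon\PP_\V(\M^\lambda)\to\M$ assemble into a natural transformation $p\colon P\Rightarrow I$, and since each $p_\M$ is $\V$-linear and colimit-preserving, this is a transformation of functors valued in (the $(\infty,2)$-category underlying) $\Mod_\V(\PrL)$.

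Next, by point $(4)$ of \Cref{thm:Luriethmgeneralbase} each $p_\M$ admits a $\V$-linear colimit-preserving left adjoint $\hat y_\M$ — this is the definition of $\hat y$ — and by \Cref{rmk:hatyff} it is fully faithful, so the unit $\id_\M\to p_\M\hat y_\M$ is an equivalence. The key input is then \Cref{prop:internalleftBC}: for every morphism $f\colon\M\to\N$ in $\D$ the naturality square of $p$ is vertically left adjointable, i.e.\ the Beck--Chevalley mate $\PP_\V(f^\lambda)\circ\hat y_\M\to\hat y_\N\circ f$ is an equivalence. A natural transformation between functors $\D\to\Mod_\V(\PrL)$ all of whose components are right adjoints and all of whose naturality squares are left adjointable is precisely an adjunction in the functor $(\infty,2)$-category; transposing it produces a natural transformation $\hat y\colon I\Rightarrow P$ with components $\hat y_\M$ (so $\hat y\colon\id\Rightarrow\PP_\V((-)^\lambda)$ once $I$ is identified with the identity on underlying modules), together with the units assembled into $\id_I\to p\circ\hat y$. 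As each component of this last map is the equivalence $\id_\M\to p_\M\hat y_\M$, we get $p\circ\hat y\simeq\id_I$, exhibiting $\hat y$ as a natural section of $p$; compatibility with enlarging $\lambda$ then follows from \Cref{rmk:cardindep}, which provides the factorization $\PP_\V(\M^\mu)\to\PP_\V(\M^\lambda)\to\M$ and hence of the corresponding $\hat y$'s.

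The main obstacle is this transposition step: promoting the pointwise left adjoints $\hat y_\M$, which a priori only satisfy a square-by-square Beck--Chevalley condition, to a genuinely natural transformation carrying all higher coherences. This is exactly where one must invoke the $\infty$-categorical calculus of mates (equivalently, work inside the $(\infty,2)$-categorical enhancement of $\Mod_\V(\PrL)$ and use that a ``left adjointable diagram of right adjoints'' transposes to a ``diagram of left adjoints with reversed $2$-cells''); I would cite the relevant formal result rather than reprove it. Everything else — functoriality of $P$, naturality of $p$ via \Cref{obs:naturalcolim}, and full faithfulness of $\hat y_\M$ via \Cref{rmk:hatyff} — is already in hand, so the corollary is really just the mates packaging of \Cref{prop:internalleftBC}.
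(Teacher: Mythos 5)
Your proposal is correct and takes essentially the same route as the paper: set up the natural transformation $p\colon\PP_\V((-)^\lambda)\Rightarrow\id$ via \Cref{obs:naturalcolim}, take pointwise left adjoints $\hat y_\M$ from \Cref{thm:Luriethmgeneralbase}(4), and use the Beck--Chevalley condition from \Cref{prop:internalleftBC} together with a formal mates result to assemble them into a genuine natural transformation. The paper's proof is precisely this, citing \cite[Theorem 4.6]{Runemonad} for the formal step you flag as "the main obstacle" (it produces the colax transformation $\hat y$ and the equivalence $p\hat y\simeq\id$, after which \Cref{prop:internalleftBC} upgrades colax to strict).
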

\begin{rmk}
    As before, the restriction concerning $\lambda$ is quite harmless in view of \Cref{thm:CardBound} and \Cref{rmk:cardindep}, cf. \Cref{cor:yhatnatural2}. 
\end{rmk}
\begin{proof}
\cite[Theorem 4.6]{Runemonad} provides a colax natual transformation $\hat y$, and an equivalence of colax natural transformations $p\hat y \simeq \id_\M$. \Cref{prop:internalleftBC} proves that this colax natural transformation is in fact a natural transformation.
\end{proof}
We obtain the following which, in light of \Cref{thm:Luriethmgeneralbase} and the prevalence of ``atomic preserving functors'' suggests that internal left adjoints between dualizable $\V$-modules is the correct notion of morphisms:
\begin{cor}\label{cor:retractfunctor}
A $\V$-linear map $f:\M\to \N$ between dualizable $\V$-modules is an internal left adjoint if and only if it is a retract in $\Mod_\V(\PrL)^{\Delta^1}$ of an internal left adjoint\footnote{Equivalently atomic-preserving.} functor between atomically generated $\V$-modules.
\end{cor}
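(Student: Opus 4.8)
The plan is to handle the two implications separately. The ``easy'' direction is ``a retract of an internal left adjoint is an internal left adjoint'', and I would get it straight from \Cref{lm:retradj}. Recall that being an internal left adjoint means precisely being a left adjoint $1$-morphism in the $(\infty,2)$-category $\Mod_\V(\PrL)$, so an internal left adjoint $g$ admits a right adjoint there; moreover every hom-category $\Fun^L_\V(-,-)$ is presentable, hence idempotent-complete; and $(\iota_1\Mod_\V(\PrL))^{\Delta^1}=\Mod_\V(\PrL)^{\Delta^1}$. Thus, if $f\colon\M\to\N$ is a retract in $\Mod_\V(\PrL)^{\Delta^1}$ of such a $g$, applying \Cref{lm:retradj} with $B=\Mod_\V(\PrL)$ produces a right adjoint to $f$ in $B$; its underlying functor is $f^R$ by uniqueness of adjoints in $\widehat{\Cat}$, and, being a $1$-morphism of $\Mod_\V(\PrL)$, it is $\V$-linear and colimit-preserving, i.e.\ $f$ is an internal left adjoint. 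Note this direction uses neither dualizability of $\M,\N$ nor that $g$ runs between atomically generated modules.

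For the converse, let $f\colon\M\to\N$ be an internal left adjoint between dualizable $\V$-modules, fix $\kappa$ with $\V\in\CAlg(\PrL_\kappa)$, and choose $\lambda\geq\kappa$ with $\M,\N\in\Mod_\V(\PrL_\lambda)$. Since $f^R$ preserves all colimits, $f$ preserves $\lambda$-compacts, so $f$ is a morphism of $\Dbl{\V}\cap\Mod_\V(\PrL_\lambda)$ and induces $f^\lambda\colon\M^\lambda\to\N^\lambda$. I would take the ``larger'' map to be $\PP_\V(f^\lambda)\colon\PP_\V(\M^\lambda)\to\PP_\V(\N^\lambda)$: its source and target are atomically generated by \Cref{obs:descatgen}, and it is an internal left adjoint, since its right adjoint --- restriction of $\V$-enriched presheaves along $f^\lambda$, which by the enriched Yoneda lemma is computed objectwise --- preserves colimits and commutes with the $\V$-tensoring (cf.\ the proof of \Cref{prop:internalleftBC}). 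It then suffices to realize $f$ as a retract of $\PP_\V(f^\lambda)$ in $\Mod_\V(\PrL)^{\Delta^1}$, the ``inclusion'' being the square with horizontal legs $\hat y_\M,\hat y_\N$ and the ``retraction'' the square with horizontal legs the canonical functors $p_\M,p_\N$. This is exactly the content of \Cref{cor:yhatnatural}: over $\Dbl{\V}\cap\Mod_\V(\PrL_\lambda)$, the maps $\hat y$ assemble into a natural transformation $\id\Rightarrow\PP_\V((-)^\lambda)$ which is a natural section of the natural transformation $p\colon\PP_\V((-)^\lambda)\Rightarrow\id$ (the latter being natural by \Cref{obs:naturalcolim}); evaluating this retract-of-functors datum at the arrow $f$ yields the required retract.

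The bulk of the work having already been done in \Cref{prop:internalleftBC}, \Cref{cor:yhatnatural} and \Cref{lm:retradj}, no deep obstacle remains; the point that really needs care is \emph{coherence}. In the ``only if'' direction, one should resist building the two squares and the identity $p\hat y\simeq\id$ ``by hand'' --- incoherent data would only give a retract up to homotopy --- and instead read the whole retract diagram off the single natural transformation and natural section supplied by \Cref{cor:yhatnatural} (which itself rests on \cite[Theorem 4.6]{Runemonad}). In the ``if'' direction, one should make sure \Cref{lm:retradj} is read in the genuine $(\infty,2)$-category $\Mod_\V(\PrL)$ rather than in its homotopy $2$-category, and that the right adjoint it produces is recognized as an honest $1$-morphism of $\Mod_\V(\PrL)$.
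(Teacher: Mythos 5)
Your proof is correct, and it is a cleaner route than the paper's in the ``retract $\Rightarrow$ internal left adjoint'' direction. The paper proves this implication by first observing that the retract datum in $\Mod_\V(\PrL_\kappa)^{\Delta^1}$ together with the naturality of $p\colon\PP_\V((-)^\kappa)\to\id$ makes the square from \Cref{prop:internalleftBC} for $f$ a retract of the corresponding square for $f_0$, then applying \Cref{lm:retractBC} (retracts of adjointable squares are adjointable), and finally converting back via \Cref{prop:internalleftBC}. You instead apply \Cref{lm:retradj} directly in the $(\infty,2)$-category $B=\Mod_\V(\PrL)$: the hom-categories $\Fun^L_\V(-,-)$ are presentable hence idempotent-complete, the morphism $g$ admits a right adjoint in $B$ since it is an internal left adjoint, and a retract of $g$ in $(\iota_1 B)^{\Delta^1}$ therefore admits a right adjoint in $B$, i.e.\ is an internal left adjoint. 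This bypasses the whole $\PP_\V$/Beck--Chevalley machinery, and as you point out, it makes visible that this direction uses neither dualizability nor atomic generation. The ``only if'' direction is substantively the same as the paper's (which is a one-line citation of \Cref{prop:internalleftBC}), but you are more explicit and careful about coherence, correctly noting that to obtain a bona fide retract in $\Mod_\V(\PrL)^{\Delta^1}$ one should read the squares and the identification $p\hat y\simeq\id$ off the natural section supplied by \Cref{cor:yhatnatural} rather than assembling them ad hoc.
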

\begin{proof}
By \Cref{prop:internalleftBC}, we obtain ``only if''. 

Now suppose $f: \M\to \N$ is a retract of $f_0:\M_0\to \N_0$, an internal left adjoint between atomically generated $\V$-modules. This retraction happens in $\Mod_\V(\PrL_\kappa)$ for $\kappa$ sufficiently large, and therefore, by naturality of the map $\PP_\V(C^\kappa)\to C$, we obtain that the square from \Cref{prop:internalleftBC} for $f$ is a retract of the same square for $f_0$. By (the dual of) \Cref{lm:retractBC}, we obtain vertical left adjointability for the square 
\[\begin{tikzcd}
	{\PP_\V(\M^\kappa)} & {\PP_\V(\N^\kappa)} \\
	\M & \N
	\arrow[from=1-1, to=1-2]
	\arrow[from=1-1, to=2-1]
	\arrow[from=1-2, to=2-2]
	\arrow[from=2-1, to=2-2]
\end{tikzcd}\]
    which, by \Cref{prop:internalleftBC} implies that $f$ is strongly continuous. 
\end{proof}
There is another proof of the following result based on analyzing in detail the equivalence $\colim_I \M_i \simeq \lim_{I\op}\M_i$, but the above gives us access to another proof of:
\begin{prop}\label{prop:colimdbl}
    $\Dbl{\V}$ admits all small colimits, and the forgetful functor $$\Dbl{\V}\to \Mod_\V(\PrL)$$ preserves them.
\end{prop}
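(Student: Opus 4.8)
The plan is to reduce to \Cref{cor:colimintleft} together with the dualizability-detection criterion of \Cref{prop:dbldetect}. Recall that $\Dbl{\V}$ is, by definition, the \emph{full} subcategory of $\Mod_\V(\PrL)^{iL}$ on the dualizable $\V$-modules, and that by \Cref{cor:colimintleft} the forgetful functor $\Mod_\V(\PrL)^{iL}\to\Mod_\V(\PrL)$ creates small colimits. Hence for a small diagram $\M_\bullet\colon I\to\Dbl{\V}$ there is a colimit cocone on $\M_\bullet$ in $\Mod_\V(\PrL)^{iL}$ whose vertex $\M_\infty$ has underlying $\V$-module the colimit of $\M_\bullet$ formed in $\Mod_\V(\PrL)$. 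It therefore suffices to prove that $\M_\infty$ is dualizable: granting this, the cocone lands in the full subcategory $\Dbl{\V}\subseteq\Mod_\V(\PrL)^{iL}$, hence is already a colimit cocone there, and the forgetful functor to $\Mod_\V(\PrL)$ preserves it by \Cref{cor:colimintleft}.

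To prove that $\M_\infty$ is dualizable, I would use the internal description of this colimit recorded in the proof of \Cref{cor:colimintleft}: the colimit inclusions $\iota_i\colon\M_i\to\M_\infty$ are internal left adjoints, and their right adjoints $\iota_i^R$ --- which are the projections of the limit presentation $\M_\infty\simeq\lim_{I\op}\M_i$ --- are jointly conservative. Since each $\M_i$ is dualizable by hypothesis, this is exactly the situation of \Cref{prop:dbldetect}, which then gives that $\M_\infty$ is dualizable, finishing the argument.

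I do not expect a serious obstacle: the two substantive inputs --- the identification $\colim_I\M_i\simeq\lim_{I\op}\M_i$ with its jointly conservative projections, and the detection statement of \Cref{prop:dbldetect} --- are already available, so the proof is essentially an assembly, and the only point to watch is that dualizability of $\M_\infty$ is \emph{detected} rather than exhibited by an explicit duality datum. Alternatively, in the spirit of the remark preceding the statement, one can route the argument through \Cref{cor:retractfunctor}: assuming (harmlessly) that $\V\in\CAlg(\PrL_\kappa)$ and choosing $\lambda\geq\kappa$ with all $\M_i\in\Mod_\V(\PrL_\lambda)$, the natural section $\hat y$ of \Cref{cor:yhatnatural} presents $\M_\bullet$ as a retract, in $\Fun(I,\Mod_\V(\PrL)^{iL})$, of the diagram $\PP_\V(\M_\bullet^\lambda)$ of atomically generated $\V$-modules; since colimits commute with retracts and dualizable objects are retract-closed, this reduces the problem to the atomically generated case, which one settles again via \Cref{prop:dbldetect} (whose hypotheses hold because atomically generated $\V$-modules are dualizable by \Cref{prop:atgenimpliesdbl}). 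I would take the first, shorter route as the main proof.
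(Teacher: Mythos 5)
Your main route is correct and is, in fact, the ``other proof'' that the paper alludes to (but does not spell out) in the sentence immediately preceding the statement: it exploits the identification $\colim_I\M_i\simeq\lim_{I\op}\M_i$ together with \Cref{prop:dbldetect}. The paper instead proves dualizability of $\colim_I\M_i$ by first showing that atomically generated $\V$-modules are closed under colimits along internal left adjoints (because atomic objects are preserved and generate), and then exhibiting $\colim_I\M_i$ as a retract of $\colim_I\PP_\V(\M_i^\lambda)$ via the natural section $\hat y$ of \Cref{cor:yhatnatural}, finally invoking \Cref{thm:Luriethmgeneralbase}(2). Your route is shorter and avoids the $\hat y$-naturality machinery entirely, at the cost of feeding the detection statement \Cref{prop:dbldetect} directly; the paper's route has the pedagogical advantage of recording the intermediate observation that atomically generated modules are themselves closed under these colimits, which is of independent interest. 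One small point on your alternative route: after using the retraction onto the diagram of $\PP_\V(\M_i^\lambda)$'s, there is no need to invoke \Cref{prop:dbldetect} again --- the paper instead observes at that point that the colimit of atomically generated modules is itself atomically generated (hence dualizable by \Cref{prop:atgenimpliesdbl}), which is the cleaner way to finish that branch.
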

\begin{proof}
    By \Cref{cor:colimintleft}, it suffices to prove that the forgetful functor $\Dbl{\V}\to~\Mod_\V(\PrL)^{iL}$ preserves colimits, i.e. that dualizable $\V$-modules are closed under colimits along internal left adjoints. We first observe that this is true for atomically generated $\V$-modules. To prove this, let $\M_\bullet: I\to \Mod_\V(\PrL)^{iL}$ be a diagram of atomically generated $\V$-modules, and consider its colimit $\M_\infty$. By \Cref{cor:colimintleft}, each of the maps $\M_i\to\M_\infty$ is an internal left adjoint and thus sends atomics in $\M_i$ to atomics in $\M_\infty$. Furthermore, it is clear that the images of the $\M_i$ generate $\M_\infty$ under tensors and colimits, which all in all proves that $\M_\infty$ is atomically generated.

    Second, we note that for any diagram $\M_\bullet : I\to \Dbl{\V}$, there exists a $\lambda$ such that it lands in $\Dbl{\V}\cap \Mod_\V(\PrL_\lambda)$, so that  $\hat y: \M_i \rightleftarrows \PP_\V(\M_i^\lambda) $ provides a natural retraction (cf. \Cref{cor:yhatnatural}) of an atomically generated $\V$-module onto $\M_i$, and the maps in the diagram of the $\PP_\V(\M_i^\lambda)$ clearly preserve atomic objects. In particular, upon passing to the colimit, we find that $\colim_I \M_i$ is a retract of $\colim_I \PP_\V(\M_i^\lambda)$, and the latter is atomically generated by the previous discussion. By \Cref{thm:Luriethmgeneralbase}(2), this proves that $\colim_I\M_i$ is dualizable.  
\end{proof}
    \subsection{An adjunction}
    
    In \Cref{prop:colimdbl}, we saw that the functor $\Dbl{\V}\to \Mod_\V(\PrL)$ preserves all colimits; since this is also the case for $\Mod_\V(\PrL_{\kappa})$ for $\kappa$ large enough, it follows that for any $\kappa$ large enough, the inclusion $\Dbl{\V}\cap \Mod_\V(\PrL_{\kappa}) \to \Mod_\V(\PrL_\kappa)$ also does. In what follows, we will see that the source category is presentable, and hence this implies that this functor has a right adjoint. However, we can also produce a right adjoint explicitly, and in fact, this will be convenient\footnote{To avoid circularity.} because we \emph{use} this right adjoint to prove said presentability. 
    
    The main result, which is a general version of \cite[Proposition 1.90]{sashaI}, is: 
    \begin{thm}\label{thm:nonstandadj}
        Let $\V\in\CAlg(\PrL_\kappa)$, and consider the inclusion $$\Dbl{\V}\cap\Mod_\V(\PrL_\kappa)\to \Mod_\V(\PrL_\kappa)$$ It admits a right adjoint, given by $\N\mapsto \PP_\V(\N^\kappa)$. 
    \end{thm}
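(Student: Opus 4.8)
The plan is to exhibit the functor $R := \PP_\V((-)^\kappa)$ as a right adjoint by showing that post-composition with the canonical functor $p_\N \colon \PP_\V(\N^\kappa)\to\N$ of \Cref{thm: UPVPsh} induces an equivalence on mapping spaces, natural in both variables. Write $\mathcal C := \Mod_\V(\PrL_\kappa)$ and $\mathcal D := \Dbl{\V}\cap\Mod_\V(\PrL_\kappa)$. First, $R$ is a well-defined functor $\mathcal C\to\mathcal D$: for $\N\in\mathcal C$ the presheaf module $\PP_\V(\N^\kappa)$ is atomically generated, hence dualizable by \Cref{prop:atgenimpliesdbl} and $\kappa$-compactly generated by \Cref{cor:atimpliescpct}; and a morphism $w$ in $\mathcal C$ induces $\PP_\V(w^\kappa)$, which preserves atomic objects and is therefore an internal left adjoint by \Cref{cor:atomicimpliesinternal}. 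Moreover $p_\N$ is a morphism in $\mathcal C$: by \Cref{cor:locVmod} it is a localization, and — being (by the proof of \Cref{cor:locVmod}) a localization at an essentially small set of maps between $\kappa$-compact objects — its right adjoint preserves $\kappa$-filtered colimits, so $p_\N$ preserves $\kappa$-compacts; it is natural in $\N$ by \Cref{obs:naturalcolim}.

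Next I would write down a candidate inverse. For $\M\in\mathcal D$ and a morphism $f\colon\M\to\N$ in $\mathcal C$, set $\Phi(f) := \PP_\V(f^\kappa)\circ\hat y_\M\colon \M\to\PP_\V(\N^\kappa)$, where $\hat y_\M$ is the fully faithful internal left adjoint section of $p_\M$ provided by \Cref{thm:Luriethmgeneralbase} (and \Cref{rmk:hatyff}). Then $\Phi(f)$ is an internal left adjoint, being the composite of $\hat y_\M$ with the atomic-preserving $\PP_\V(f^\kappa)$, so $\Phi$ lands in $\Map_{\mathcal D}(\M,\PP_\V(\N^\kappa))$; it is natural in $\M$ by the naturality of $\hat y$ (\Cref{cor:yhatnatural}) together with functoriality of $\PP_\V$, and natural in $\N$ by functoriality of $\PP_\V$. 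The first composite is easy: $p_\N\circ\Phi(f) = p_\N\circ\PP_\V(f^\kappa)\circ\hat y_\M \simeq f\circ p_\M\circ\hat y_\M \simeq f$, using naturality of $p_\bullet$ (\Cref{obs:naturalcolim}) and that $\hat y_\M$ is a section of $p_\M$.

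For the other composite, let $g\colon\M\to\PP_\V(\N^\kappa)$ be an internal left adjoint; as a morphism between $\kappa$-compactly generated $\V$-modules it preserves $\kappa$-compact objects, so \Cref{prop:internalleftBC} (applied with target the dualizable $\kappa$-compactly generated module $\PP_\V(\N^\kappa)$) gives $\PP_\V(g^\kappa)\circ\hat y_\M\simeq\hat y_{\PP_\V(\N^\kappa)}\circ g$. Therefore
$$ \Phi(p_\N\circ g) = \PP_\V((p_\N\circ g)^\kappa)\circ\hat y_\M = \PP_\V(p_\N^\kappa)\circ\PP_\V(g^\kappa)\circ\hat y_\M \simeq \PP_\V(p_\N^\kappa)\circ\hat y_{\PP_\V(\N^\kappa)}\circ g, $$
so it remains to prove the identity $\PP_\V(p_\N^\kappa)\circ\hat y_{\PP_\V(\N^\kappa)}\simeq\id_{\PP_\V(\N^\kappa)}$. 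I expect this to be the main obstacle. Both sides are $\V$-linear colimit-preserving endofunctors of $\PP_\V(\N^\kappa)$, so by the universal property \Cref{thm: UPVPsh} it suffices to compare their restrictions along the enriched Yoneda embedding $y\colon\N^\kappa\to\PP_\V(\N^\kappa)$. On a representable $y(m)$ — an atomic object of $\PP_\V(\N^\kappa)$ — the functor $\hat y_{\PP_\V(\N^\kappa)}$ agrees with the Yoneda embedding of $\PP_\V(\N^\kappa)^\kappa$ into its own presheaf module (since $\hat y$ agrees with the restricted Yoneda on atomics, and the restricted Yoneda restricted to $\kappa$-compacts is that embedding); then $\PP_\V(p_\N^\kappa)$, which commutes with Yoneda embeddings by construction, sends that representable to $y\big(p_\N^\kappa(y(m))\big)=y\big(p_\N(y(m))\big)=y(m)$, using that $p_\N$ carries the representable $y(m)$ to $m$. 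Hence both functors restrict to $y$ along $y$, and are thus equivalent. The care required is in checking these comparisons are equivalences of $\V$-functors rather than merely objectwise — which comes down to unwinding the functoriality of $\PP_\V$ and the definition of $\hat y$ in \Cref{thm:Luriethmgeneralbase} — after which one concludes, via \Cref{thm: UPVPsh}, that $\Phi$ is a two-sided inverse to $(p_\N)_*$, proving the claim.
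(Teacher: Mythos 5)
Your proof is correct and takes essentially the same route as the paper's: both realize the adjunction through the unit $\hat y$ and counit $p_\N$ (your $\Phi(f)=\PP_\V(f^\kappa)\circ\hat y_\M$ is exactly the unit--counit transpose formula), and both reduce the one nontrivial verification --- that $\PP_\V(p_\N^\kappa)\circ\hat y_{\PP_\V(\N^\kappa)}\simeq\id$, which is the second triangle identity --- to a comparison on representables via the universal property of $\PP_\V$ in \Cref{thm: UPVPsh}. The only cosmetic difference is that you invoke \Cref{prop:internalleftBC} to commute $\hat y$ past $g$, where the paper invokes \Cref{cor:yhatnatural} and a direct Yoneda diagram chase, but these rest on the same underlying adjointability.
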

    \begin{rmk}\label{rmk:nonstandadjsymmon}
        We note that the inclusion in question is canonically symmetric monoidal, hence this upgrades to a symmetric monoidal adjunction, and thus to an adjunction between categories of commutative algebras. 
    \end{rmk}
    \begin{proof}
        We set up this adjunction using a unit and a counit. 
    
        (The unit.) By \Cref{cor:yhatnatural}, there is a natural transformation $\hat y: \M\to \PP_\V(\M^\kappa)$ on $\Dbl{\V}\cap\Mod_\V(\PrL_\kappa)$, we take this to be our unit. 
    
        (The counit.) There is a natural map of $\V$-categories $\N^\kappa\to \N$ which induces, by \Cref{thm: UPVPsh}, a natural map $c_\N: \PP_\V(\N^\kappa)\to \N$, we take this to be our counit. Note that it does live in $\Mod_\V(\PrL_\kappa)$ because it sends atomic generators in $\PP_\V(\N^\kappa)$ to $\kappa$-compacts in $\N$. 
    
        We now need to check two triangle identities. 
    
        Let $i$ denote the inclusion, and $P: \N\mapsto \PP_\V(\N^\kappa)$ our candidate right adjoint. The first triangle identity looks like: 
        \[\begin{tikzcd}
    	i\M & iPi\M \\
    	& i\M
    	\arrow["i\eta", from=1-1, to=1-2]
    	\arrow["{\epsilon i}", from=1-2, to=2-2]
    	\arrow["\id"', from=1-1, to=2-2]
    \end{tikzcd}\]
    which unravels to: 
    \[\begin{tikzcd}
    	\M & {\PP_\V(\M^\kappa)} \\
    	& \M
    	\arrow["{\hat y}", from=1-1, to=1-2]
    	\arrow["c", from=1-2, to=2-2]
    	\arrow["\id"', from=1-1, to=2-2]
    \end{tikzcd}\]
    However, we have seen in \Cref{cor:yhatnatural} that $\hat y$ was a natural section for $c$, i.e. this diagram \emph{does} naturally commute. 
    
    The second triangle identity, while psychologically confusing, is also essentially clear from our work so far. It looks like: 
    \[\begin{tikzcd}
    	{P(\N)} & {PiP(\N)} \\
    	& {P(\N)}
    	\arrow["{\eta P}", from=1-1, to=1-2]
    	\arrow["P\epsilon", from=1-2, to=2-2]
    	\arrow["\id"', from=1-1, to=2-2]
    \end{tikzcd}\]
    which unwinds to: 
    \[\begin{tikzcd}
    	{\PP_\V(\N^\kappa)} & {\PP_\V(\PP_\V(\N^\kappa)^\kappa)} \\
    	& {\PP_\V(\N^\kappa)}
    	\arrow["{\hat y_{\PP_\V(\N^\kappa)}}", from=1-1, to=1-2]
    	\arrow["{\PP_\V(c^\kappa)}", from=1-2, to=2-2]
    	\arrow["\id"', from=1-1, to=2-2]
    \end{tikzcd}\]
    
    We note that it suffices to prove that this triangle commutes (naturally) after precomposing with $y:\N^\kappa\to \PP_\V(\N^\kappa)$ (again by \Cref{thm: UPVPsh}). But now, by the way $\hat y$ was constructed in \Cref{thm:Luriethmgeneralbase}, we find that for $\PP_\V(C)$ for some small $\V$-enriched category $C$, we have a natural\footnote{Strictly speaking, we have not proved naturality in \Cref{thm:Luriethmgeneralbase}. It is, however, not hard to do so but also unnecessary: in this case a pointwise analysis suffices, cf. \cite[2.1.11.]{RV}.} commutative diagram: 
    \[\begin{tikzcd}
    	C & {\PP_\V(C)^\kappa} \\
    	{\PP_\V(C)} & {\PP_\V(\PP_\V(C)^\kappa)}
    	\arrow["y"', from=1-1, to=2-1]
    	\arrow["{\hat y}"', from=2-1, to=2-2]
    	\arrow["y", from=1-1, to=1-2]
    	\arrow["y", from=1-2, to=2-2]
    \end{tikzcd}\]
    and thus a commutative diagram: 
    \[\begin{tikzcd}
    	{\N^\kappa} & {\PP_\V(\N^\kappa)^\kappa} \\
    	{\PP_\V(\N^\kappa)} & {\PP_\V(\PP_\V(\N^\kappa)^\kappa)} \\
    	& {\PP_\V(\N^\kappa)}
    	\arrow["y"', from=1-1, to=2-1]
    	\arrow["y", from=1-2, to=2-2]
    	\arrow["y", from=1-1, to=1-2]
    	\arrow["{\hat y}"', from=2-1, to=2-2]
    	\arrow["{\PP_\V(c^\kappa)}", from=2-2, to=3-2]
    \end{tikzcd}\]
    Combining this with the definitional commutative diagram:
    \[\begin{tikzcd}
    	{\PP_\V(\PP_\V(\N^\kappa)^\kappa)} & {\PP_\V(\N^\kappa)^\kappa} \\
    	{\PP_\V(\N^\kappa)} & {\N^\kappa}
    	\arrow["{\PP_\V(c^\kappa)}"', from=1-1, to=2-1]
    	\arrow["{c^\kappa}", from=1-2, to=2-2]
    	\arrow["y"', from=1-2, to=1-1]
    	\arrow["y", from=2-2, to=2-1]
    \end{tikzcd}\]
    and finally using that the composite $\N^\kappa\xrightarrow{y}\PP_\V(\N^\kappa)^\kappa\xrightarrow{c^\kappa}\N^\kappa$ is the identity, we may conclude. 
    \end{proof}
    
    \begin{rmk}\label{rmk:nonstandadjInd}
    When $\V=\Sp$, $\PP_\V$ is better known as $\Ind(-)$. Unlike in the general case, $\Ind(-)$ can be defined also for presentable categories (as was e.g. used in the statement of \Cref{thm:lurie}), and the above theorem has an analogue with $\Ind(\N)$ in place of $\Ind(\N^\kappa)$. It becomes set-theoretically scarier to phrase it in terms of adjunctions\footnote{In what category does $\Ind$ of a large category live ? }, but one can prove that for $\M\in\Prdbl$, there is an equivalence: 
    $$\Fun^L(\M,\N)\simeq \Fun^{iL}(\M,\Ind(\N))$$
    \end{rmk}
    \begin{cor}
    Let $\M,\N\in\Mod_\V(\PrL_\kappa)$ and assume $\M,\N$ are further dualizable. Let $f:\M\to \N$ be a $\V$-linear functor, $\kappa$-compact-preserving functor and let $\hat f: \M\to \PP_\V(\N^\kappa)$ be the internal left adjoint corresponding to it via \Cref{thm:nonstandadj}. 
    
    $f$ is an internal left adjoint if and only if $\hat f$ factors through $\hat y_\N$. 
    \end{cor}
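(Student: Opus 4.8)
The plan is to reduce the statement entirely to \Cref{prop:internalleftBC}. First I would make the adjunct $\hat f$ completely explicit in two ways, using the unit and the counit of the adjunction of \Cref{thm:nonstandadj}. Applying the unit $\hat y$ identifies $\hat f$ with the composite $\PP_\V(f^\kappa)\circ\hat y_\M$ (here $f^\kappa\colon\M^\kappa\to\N^\kappa$ is the restriction of $f$, which exists since $f$ is $\kappa$-compact-preserving, and $\hat y_\M$ is the unit $\M\to\PP_\V(\M^\kappa)$). On the other hand, the counit triangle of the adjunction gives $f\simeq c_\N\circ\hat f$, where $c_\N\colon\PP_\V(\N^\kappa)\to\N$ is the counit; and by \Cref{cor:yhatnatural} the map $\hat y_\N$ is a (natural) section of $c_\N$.

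Next I would observe that, since $\hat y_\N$ is fully faithful by \Cref{rmk:hatyff}, to say that $\hat f$ factors through $\hat y_\N$ is to say that $\hat f\simeq\hat y_\N\circ g$ for some ($\V$-linear, colimit-preserving) $g$; applying $c_\N$ and using $c_\N\circ\hat y_\N\simeq\id_\N$ together with $c_\N\circ\hat f\simeq f$ forces $g\simeq f$. Hence ``$\hat f$ factors through $\hat y_\N$'' is equivalent to the single condition $\hat f\simeq\hat y_\N\circ f$, that is, to the existence of a homotopy $\PP_\V(f^\kappa)\circ\hat y_\M\simeq\hat y_\N\circ f$.

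The key point is now that this last condition is precisely the square-filling hypothesis appearing in the ``in fact'' part of \Cref{prop:internalleftBC}: both $\M$ and $\N$ are $\kappa$-compactly generated dualizable $\V$-modules and $f$ is $\V$-linear and $\kappa$-compact-preserving, so that proposition applies with $\lambda=\kappa$. Therefore, if $\hat f$ factors through $\hat y_\N$, the ``in fact'' half of \Cref{prop:internalleftBC} immediately gives that $f$ is an internal left adjoint. Conversely, if $f$ is an internal left adjoint, then by \Cref{prop:internalleftBC} the relevant square is vertically left adjointable, i.e.\ the Beck--Chevalley transformation $\hat y_\N\circ f\to\PP_\V(f^\kappa)\circ\hat y_\M$ is an equivalence; since the target of this transformation is $\hat f$, this exhibits $\hat f\simeq\hat y_\N\circ f$, so $\hat f$ factors through $\hat y_\N$.

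I do not expect a serious obstacle here: the argument is essentially formal once \Cref{prop:internalleftBC} and the description of the adjunction in \Cref{thm:nonstandadj} are in hand. The only steps requiring genuine care are the bookkeeping ones in the first two paragraphs — pinning down that the adjunct $\hat f$ really is $\PP_\V(f^\kappa)\circ\hat y_\M$ (equivalently, the unique internal left adjoint with $c_\N\circ\hat f\simeq f$), and checking that factorization through the fully faithful map $\hat y_\N$ collapses to the existence of a \emph{single} homotopy $\hat f\simeq\hat y_\N\circ f$ rather than any higher coherence; the section property $c_\N\circ\hat y_\N\simeq\id_\N$ from \Cref{cor:yhatnatural} is exactly what makes this collapse work.
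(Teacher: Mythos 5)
Your proof is correct and takes essentially the same route as the paper: identify $\hat f \simeq \PP_\V(f^\kappa)\circ\hat y_\M$ from the construction in \Cref{thm:nonstandadj}, observe that a factorization through $\hat y_\N$ forces the factoring map to be $f$ itself by applying the counit, and then invoke both directions of \Cref{prop:internalleftBC}. You spell out the bookkeeping (fully faithfulness of $\hat y_\N$, the section property) a bit more explicitly than the paper does, but the argument is the same.
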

    \begin{proof}
        From the proof of \Cref{thm:nonstandadj}, we see that $\hat f = \PP_\V(f^\kappa)\circ \hat y_\M$. If it factors through $\hat y_\N$, then it follows from applying $p_\N:\PP_\V(\N^\kappa)\to \N$ that the factorization $\M\to \N$ is equivalent to $f$ as a functor, so that the ``in fact'' part of \Cref{prop:internalleftBC} implies that $f$ is an internal left adjoint. 
    
        The converse is clear also because of \Cref{prop:internalleftBC}. 
    \end{proof}
\subsection{Compact assembly}
In the case $\V=\Ss$, dualizability is a rather strong condition to impose on an \category{} $\M\in \PrL$; one of the key examples, stably, is the \category{} of sheaves of specta on a locally compact Hausdorff space $X$, but unstably, it is not dualizable\footnote{See however the example given by Harpaz in \cite{Yonatan}, which is closely related to sheaves.}. However, it is still \emph{compactly assembled} in the sense of \cite[Definition 21.1.2.1]{SAG}, which amounts to saying that it is a retract of a compactly generated \category. 

This notion, while weaker than dualizability, is still relevant and we will study it in some detail too. In fact, our main results suggest that ``compactly assembled'' is a better notion than ``dualizable'', even in the stable case where they are equivalent. 
\begin{rmk}\label{rmk:compassdbl}
    In \cite[Remark 1.37]{sashaI}, Efimov sketches a proof of the fact that, while the notions of (unstably) dualizable and compactly assembled are not equivalent $(\PrL)^\dbl$ is (abstractly) equivalent to a certain \category{} of compactly assembled \categories{}, which we discuss later. In particular, our results about the \category{} $(\PrL)^\dbl$ itself carry over to this \category{} of compactly assembled \categories. 
\end{rmk}
\begin{defn}\label{defn:compactass}
    An \category{} $\M\in \widehat{\Cat_\infty}$ is called compactly assembled if it is a retract of a compactly generated \category; equivalently if it admits small filtered colimits and the colimit functor $\Ind(\M)\to \M$ admits a left adjoint. 
\end{defn}
In particular we do not assume that $\M$ is cocomplete, it only has to admit filtered colimits. 

There is an obvious variant with $\kappa$-compact generation, and the equivalence of definition holds just as well in this case. In particular, we mention: 
\begin{obs}\label{obs:kcpctass}
    An \category{} $\M\in\widehat{\Cat_\infty}$ is $\kappa$-compactly assembled if and only if there exists a $\kappa$-compactly generated \category{} $\N$, and a fully faithful embedding $i:~\M\to~\N$ admitting a $\kappa$-filtered colimit-preserving right adjoint. 
\end{obs}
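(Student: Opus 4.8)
The plan is to prove the two implications directly; in both directions the content is a routine manipulation of the adjunctions around the colimit functor $\colim\colon\Ind(\M)\to\M$ (in its $\kappa$-variant, $\Ind_\kappa(\M)\to\M$).

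\textbf{($\Leftarrow$).} Suppose $i:\M\to\N$ is fully faithful with $\N$ $\kappa$-compactly generated and right adjoint $r$ preserving $\kappa$-filtered colimits. Since $i$ is fully faithful, the unit $\id_\M\to ri$ is an equivalence. I would first observe that $\M$ admits $\kappa$-filtered colimits, computed as $r$ applied to the corresponding colimit in $\N$: for a $\kappa$-filtered diagram $x_\bullet$ in $\M$, full faithfulness of $i$ together with the adjunction $i\dashv r$ yields $\Map_\M(r(\colim_\N i(x_\bullet)),y)\simeq\lim\Map_\M(x_\bullet,y)$ naturally in $y\in\M$. Being a left adjoint, $i$ then preserves these colimits, so $i$ and $r$ are both morphisms in the $\infty$-category of $\infty$-categories admitting $\kappa$-filtered colimits and functors preserving them; combined with $ri\simeq\id_\M$, this exhibits $\M$ as a retract of the $\kappa$-compactly generated $\N$ in that $\infty$-category, so $\M$ is $\kappa$-compactly assembled by \Cref{defn:compactass} in its evident $\kappa$-variant.

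\textbf{($\Rightarrow$).} Suppose $\M$ is $\kappa$-compactly assembled, so it admits $\kappa$-filtered colimits and $\colim\colon\Ind_\kappa(\M)\to\M$ admits a left adjoint $\hat y$. I would take $\N:=\Ind_\kappa(\M)$, which is $\kappa$-compactly generated (more carefully, one replaces it by $\Ind_\kappa$ of a small dense subcategory of $\M$, which a $\kappa$-compactly assembled $\infty$-category possesses). Now $\colim$ is itself a left adjoint, its right adjoint being the fully faithful Yoneda embedding $\M\to\Ind_\kappa(\M)$; hence $\colim$ is a localization functor, and its left adjoint $\hat y$ is fully faithful by the standard fact recalled in \Cref{cor:leftrightff} (a functor $p$ with a fully faithful right adjoint satisfies $p\circ L\simeq\id$ for any left adjoint $L$ of $p$, so such an $L$ is fully faithful). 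Finally, the right adjoint of $\hat y$ is $\colim$, which preserves $\kappa$-filtered colimits since it is a left adjoint. Thus $i:=\hat y\colon\M\to\Ind_\kappa(\M)$ is the desired embedding.

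\textbf{Expected difficulty.} As with the neighbouring statements in this section, there is no single deep step; the care required is to keep the set theory honest — checking that $\Ind_\kappa$ of the relevant $\infty$-category is genuinely $\kappa$-compactly generated (via a small dense subcategory) rather than merely generated under $\kappa$-filtered colimits by a possibly large family, and ensuring that the colimit–Yoneda adjunction and the retract bookkeeping are all available on the not-necessarily-cocomplete $\infty$-categories in play. Everything else is formal.
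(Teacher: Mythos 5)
The paper gives no proof for this observation; it treats it as the routine $\kappa$-variant of the retract characterisation recalled in \Cref{defn:compactass}. Your two-direction argument is the expected one and lands in the right place, but the mapping-space identity you write in ($\Leftarrow$) is not a formal consequence of the two ingredients you cite. From $i\dashv r$ and full faithfulness one can extract, for $n:=\colim_\N i(x_\bullet)$, the equivalences $\Map_\M(r(n),y)\simeq\Map_\N(ir(n),i(y))$ and $\lim\Map_\M(x_\bullet,y)\simeq\Map_\N(n,i(y))$; identifying the two requires knowing that the counit $ir(n)\to n$ becomes an equivalence after mapping into $i(\M)$, and that is precisely what is at stake, not something the adjunction gives for free (for a generic coreflective $\M\subset\N$ and generic $n$, it simply fails). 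The argument you actually want is both shorter and forces you to use the hypothesis you have not yet invoked: since $r$ preserves $\kappa$-filtered colimits, applying $r$ to the colimit cocone $i(x_\bullet)\to\colim_\N i(x_\bullet)$ yields a colimit cocone $x_\bullet\simeq ri(x_\bullet)\to r(\colim_\N i(x_\bullet))$ in $\M$. This shows at once that $\M$ admits the $\kappa$-filtered colimit and that it is computed by $r$; the Hom formula is then just its universal property, with no detour.

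The ($\Rightarrow$) direction is fine. Do note that \Cref{cor:leftrightff} as stated concludes full faithfulness of the rightmost functor in a chain $L\dashv M\dashv R$ from that of the leftmost, whereas you need the dual (full faithfulness of $L:=\hat y$ from that of $R:=y$); this holds by applying the corollary to opposite categories, or by the direct verification your parenthetical sketches, namely $\Map_\M(\colim\hat y(m),m')\simeq\Map_{\Ind_\kappa\M}(\hat y(m),y(m'))\simeq\Map_\M(m,\colim y(m'))\simeq\Map_\M(m,m')$. The set-theoretic caveat you flag is the genuinely delicate point of this direction, and the fix you gesture at — taking $\Ind_\kappa$ of a small full subcategory rather than of $\M$ itself — is exactly what the paper does when it uses this observation (cf.\ the proof of \Cref{cor:cardboundcompass}, where $\N=\Ind_\kappa(\M^\lambda)$ for $\lambda$ large).
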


We point out here one of the key features of compactly assembled categories: 
\begin{prop}\label{prop:cpctasshyper}
    Let $\M\in\PrL$ be compactly assembled, and let $\mathcal X$ denote an $\infty$-topos. If $\mathcal X$ has enough points, then the collection of points $x^*:\mathcal X\to \Ss$ induces a jointly conservative family of functors $\Sh(\mathcal X;\M):=\Fun^R(\mathcal X\op,\M)\simeq\mathcal X\otimes\M\to\M$.
\end{prop}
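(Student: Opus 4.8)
The plan is to reduce to the case where $\M$ is compactly generated, where $\mathcal C$-valued sheaves on $\mathcal X$ and their stalks can be handled by hand, and then to test equivalences on compact generators. Since $\mathcal X$ has enough points, fix once and for all a \emph{set} $S$ of points of $\mathcal X$ with $\{x^*\}_{x\in S}$ jointly conservative; it then suffices to show that $\{x^*\otimes\id_\M\}_{x\in S}$ is jointly conservative on $\Sh(\mathcal X;\M)=\Fun^R(\mathcal X\op,\M)\simeq\mathcal X\otimes\M$. It is worth noting at the outset that the compact-assembly hypothesis on $\M$ is genuinely needed: tensoring with a fixed presentable category does not preserve conservativity in general, as the fully faithful (hence conservative) inclusion $\Sp_{\geq 0}\hookrightarrow\Sp$ becomes the non-conservative functor $\Ab\to 0$ after applying $-\otimes\Set$.

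The first step is the reduction to compactly generated $\M$. By \Cref{defn:compactass}, $\M$ is a retract in $\PrL$ of a compactly generated category $\mathcal C$ (for instance $\mathcal C=\Ind(\M)$, with retraction datum $\hat y\dashv\colim$). Since $\mathcal X\otimes(-)\colon\PrL\to\PrL$ is functorial and $x^*\otimes(-)$ is natural in its argument, the chosen retraction $\M\rightleftarrows\mathcal C$ exhibits each $x^*\otimes\id_\M$ as a retract of $x^*\otimes\id_{\mathcal C}$ in the arrow category $\PrL^{\Delta^1}$. As a retract of a jointly conservative family is again jointly conservative — if $\phi$ is inverted by every $x^*\otimes\id_\M$, then its image in $\mathcal X\otimes\mathcal C$ is inverted by every $x^*\otimes\id_{\mathcal C}$, hence is an equivalence, hence so is $\phi$ — we may assume $\M=\mathcal C$ is compactly generated.

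For compactly generated $\mathcal C$, represent objects of $\mathcal X\otimes\mathcal C\simeq\Fun^R(\mathcal X\op,\mathcal C)$ as $\mathcal C$-valued sheaves $F$ on $\mathcal X$. For a point $x\in S$, the functor $x^*\otimes\id_{\mathcal C}$ is the stalk-at-$x$ functor, computed — just as for $\mathcal C=\Ss$ — as a \emph{filtered} colimit $x^*\otimes\id_{\mathcal C}(F)\simeq\colim_{\mathcal N_x}F$ over the filtered category $\mathcal N_x$ of neighborhoods of $x$ (the category of elements of $x^*\colon\mathcal X\to\Ss$). Fix now a compact object $c\in\mathcal C^{\omega}$. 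Since $\Map_{\mathcal C}(c,-)\colon\mathcal C\to\Ss$ preserves limits, $U\mapsto\Map_{\mathcal C}(c,F(U))$ is an $\Ss$-valued sheaf on $\mathcal X$, i.e.\ an object of $\mathcal X$; and since $c$ is compact, $\Map_{\mathcal C}(c,-)$ commutes with the filtered colimit computing the stalk, so that
\[
\Map_{\mathcal C}\bigl(c,\ x^*\otimes\id_{\mathcal C}(F)\bigr)\ \simeq\ x^*\bigl(\Map_{\mathcal C}(c,F(-))\bigr).
\]
Suppose $\phi\colon F\to G$ is inverted by $x^*\otimes\id_{\mathcal C}$ for every $x\in S$. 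Applying $\Map_{\mathcal C}(c,-)$ and the displayed identity, $x^*\bigl(\Map_{\mathcal C}(c,F(-))\bigr)\to x^*\bigl(\Map_{\mathcal C}(c,G(-))\bigr)$ is an equivalence for all $x\in S$ and all $c\in\mathcal C^{\omega}$; joint conservativity of $\{x^*\}_{x\in S}$ on $\mathcal X$ then forces $\Map_{\mathcal C}(c,F(U))\to\Map_{\mathcal C}(c,G(U))$ to be an equivalence for every object $U$ of $\mathcal X$ and every $c\in\mathcal C^{\omega}$. As $\mathcal C^{\omega}$ generates $\mathcal C$ under colimits, the functors $\{\Map_{\mathcal C}(c,-)\}_{c\in\mathcal C^{\omega}}$ are jointly conservative, so $F(U)\to G(U)$ is an equivalence for all $U$; hence $\phi$ is a pointwise equivalence of functors $\mathcal X\op\to\mathcal C$, hence an equivalence.

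The one genuinely substantive input is the description used above of $x^*\otimes\id_{\mathcal C}$ as the stalk functor and its computation as a filtered colimit over the neighborhood category $\mathcal N_x$ — equivalently, the commutative square packaged in the displayed identity. For $\mathcal C=\Ss$ this is classical, and is where the $\infty$-topos structure of $\mathcal X$ genuinely enters (points have filtered categories of neighborhoods); the passage to compactly generated $\mathcal C$ should be extracted from this together with the identification $\mathcal X\otimes\mathcal C\simeq\Fun^R(\mathcal X\op,\mathcal C)$ and the functoriality of $-\otimes\mathcal C$ along the colimit-preserving, left-exact functor $x^*$. Everything else in the argument is formal bookkeeping with retracts, compact generation, and joint conservativity.
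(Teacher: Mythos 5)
Your proof is correct and follows essentially the same route as the paper's: reduce to the compactly generated case by retracts, and there test equivalences by mapping out of compacts $c\in\mathcal C^\omega$, using that $\Map_{\mathcal C}(c,-)$ both preserves limits (so preserves sheaves) and preserves filtered colimits (so commutes with the stalk functor $x^*\otimes\id_{\mathcal C}$). The only place where the paper does something you sketch rather than prove is the identification of $x^*\otimes\id_{\mathcal C}$ with a filtered colimit of sections $F\mapsto\colim_I F(U_i)$; the paper isolates this as \Cref{lm:points}, where it is proved by expressing the pro-object $x^*_{\mid\mathcal X^\kappa}$ as a cofiltered limit and checking compatibility with the Lurie tensor product by reducing to presheaf categories — note that the indexing diagram there is a cofiltered diagram $I\to\mathcal X$ of objects, not quite ``the category of elements of $x^*$'' as you wrote parenthetically (that description has a variance mismatch and involves a cofinality reduction), though this is a peripheral point and the argument you give does not depend on it.
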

To prove this in the case of a general $\infty$-topos, we need the following observation:
\begin{lm}\label{lm:points}
    Let $\mathcal X$ be an $\infty$-topos, and let $x^*:\mathcal X\to \Ss$ be a point, i.e. a left exact cocontinuous functor. There exists a cofiltered diagram $U_\bullet: I\to \mathcal X$ such that $x^*$ is naturally equivalent to $\colim_{I\op}\Gamma(U_i,-) := \colim_{I\op}\Map(U_i,-)$. 

    Furthermore, for any $\mathcal M\in \PrL$, the functor $\Fun^R(\mathcal X\op,\M)\simeq\mathcal X\otimes\M\xrightarrow{x^*\otimes\M}\M$ is equivalent to $F\mapsto \colim_I F(U_i)$. 
\end{lm}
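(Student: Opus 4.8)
The plan is to transport everything to the presheaf $\infty$-topos, where colimits are pointwise and the $2$-functoriality of $-\otimes\mathcal{M}$ is transparent. First I would fix a left exact accessible localization $L\colon\Psh(\mathcal{C})\rightleftarrows\mathcal{X}\colon\iota$ with $\iota$ fully faithful and $\mathcal{C}$ a small, finitely complete $\infty$-category (possible for any $\infty$-topos, \cite{HTT}), and write $y\colon\mathcal{C}\to\Psh(\mathcal{C})$ for the Yoneda embedding. Since $x^*$ and $L$ are colimit-preserving and left exact, so is $x^*_0:=x^*\circ L\colon\Psh(\mathcal{C})\to\Ss$, hence $g:=x^*_0\circ y\colon\mathcal{C}\to\Ss$ is left exact out of a finitely complete $\infty$-category. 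By $\infty$-categorical Diaconescu / the theory of flat functors, such a $g$ is a filtered colimit of corepresentables with cofiltered $\infty$-category of elements $I:=\int_\mathcal{C}g$; writing $U_\bullet\colon I\to\mathcal{C}$ for the projection, $g\simeq\colim_{I\op}\Map_\mathcal{C}(U_i,-)$. As $x^*_0$ is colimit-preserving and every presheaf is a colimit of representables, this upgrades to $x^*_0(P)\simeq\colim_{I\op}P(U_i)$ for all $P$, i.e. $x^*_0\simeq\colim_{I\op}\Map_{\Psh(\mathcal{C})}(y U_i,-)$.

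Next, putting $\bar U_i:=Ly(U_i)\in\mathcal{X}$ (a cofiltered diagram in $\mathcal{X}$), the counit equivalence $L\iota\simeq\id_{\mathcal{X}}$ and the $(L,\iota)$-adjunction give, for $Y\in\mathcal{X}$, $x^*(Y)\simeq x^*(L\iota Y)=x^*_0(\iota Y)\simeq\colim_{I\op}\Map_{\Psh(\mathcal{C})}(y U_i,\iota Y)\simeq\colim_{I\op}\Map_{\mathcal{X}}(\bar U_i,Y)$; renaming $\bar U_i\rightsquigarrow U_i$ and $\Map_{\mathcal{X}}(U_i,-)\rightsquigarrow\Gamma(U_i,-)$ yields the first assertion, naturally. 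For the second, I would tensor the whole picture with $\mathcal{M}$: under $\Psh(\mathcal{C})\otimes\mathcal{M}\simeq\Fun(\mathcal{C}\op,\mathcal{M})$ and $\mathcal{X}\otimes\mathcal{M}\simeq\Fun^R(\mathcal{X}\op,\mathcal{M})$, the adjunction $(L,\iota)$ induces $L_{\mathcal{M}}\colon\Fun(\mathcal{C}\op,\mathcal{M})\rightleftarrows\Fun^R(\mathcal{X}\op,\mathcal{M})\colon\iota_{\mathcal{M}}$, with $L_{\mathcal{M}}=L\otimes\mathcal{M}$ a localization by \Cref{cor:tensloc} (for $\V=\Ss$), so $\iota_{\mathcal{M}}$ is fully faithful; concretely $\iota_{\mathcal{M}}$ is the underlying-presheaf functor $F\mapsto F\circ(Ly)\op$, so $(\iota_{\mathcal{M}}F)(U_i)=F(\bar U_i)$. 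Applying $-\otimes\mathcal{M}$ to $x^*_0=x^*\circ L$ gives $x^*_0\otimes\mathcal{M}=(x^*\otimes\mathcal{M})\circ L_{\mathcal{M}}$.

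It then remains to compute $x^*_0\otimes\mathcal{M}$ on $\Fun(\mathcal{C}\op,\mathcal{M})$. I claim both $x^*_0\otimes\mathcal{M}$ and $G_0\colon P\mapsto\colim_{I\op}P(U_i)$ are colimit-preserving functors $\Fun(\mathcal{C}\op,\mathcal{M})\to\mathcal{M}$ (for $G_0$: pointwise and filtered colimits commute with all colimits), and under $\Fun^L(\Fun(\mathcal{C}\op,\mathcal{M}),\mathcal{M})\simeq\Fun(\mathcal{C},\Fun^L(\mathcal{M},\mathcal{M}))$ both correspond to the one functor $c\mapsto g(c)\otimes(-)$ — this is where the first assertion restricted to $\mathcal{C}$ re-enters — so $x^*_0\otimes\mathcal{M}\simeq G_0$. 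Hence for $F\in\mathcal{X}\otimes\mathcal{M}=\Fun^R(\mathcal{X}\op,\mathcal{M})$, using $L_{\mathcal{M}}\iota_{\mathcal{M}}\simeq\id$, $(x^*\otimes\mathcal{M})(F)\simeq(x^*\otimes\mathcal{M})(L_{\mathcal{M}}\iota_{\mathcal{M}}F)=(x^*_0\otimes\mathcal{M})(\iota_{\mathcal{M}}F)=G_0(\iota_{\mathcal{M}}F)=\colim_{I\op}(\iota_{\mathcal{M}}F)(U_i)=\colim_{I\op}F(U_i)$, naturally in $F$. The conceptual input is only the classical fact that a stalk is the filtered colimit over a neighbourhood pro-system, that this is unchanged by sheafification, and that it linearizes. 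The steps I expect to be fiddliest are: invoking $\infty$-Diaconescu with the right variance so that $I$ (not $I\op$) is the cofiltered index; making $x^*_0\otimes\mathcal{M}\simeq G_0$ genuinely functorial, not merely objectwise (which is exactly why I route it through $\Fun^L(-,\mathcal{M})\simeq\Fun(\mathcal{C},\Fun^L(\mathcal{M},\mathcal{M}))$, reducing it to the case $\mathcal{M}=\Ss$); and checking that $\iota_{\mathcal{M}}$ really is the underlying-presheaf functor, so that $(\iota_{\mathcal{M}}F)(U_i)=F(\bar U_i)$.
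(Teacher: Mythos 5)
Your argument is correct, and it takes a genuinely different route from the paper. The paper stays internal to $\mathcal X$: it chooses $\kappa$ so that $\mathcal X$ is $\kappa$-compactly generated with $\mathcal X^\kappa$ closed under finite limits, observes that $x^*$ restricted to $\mathcal X^\kappa$ is finite-limit-preserving (hence a pro-object of $\mathcal X^\kappa$), and then extends back by noting that both $x^*$ and $\colim_{I\op}\Map(U_i,-)$ preserve $\kappa$-filtered colimits so are determined by their restriction to $\mathcal X^\kappa$. You instead externalize: you present $\mathcal X$ as a left exact accessible localization of $\Psh(\mathcal C)$ for a small finitely-complete $\mathcal C$, run the pro-object/flat-functor argument at the presheaf level, and transport across the localization via $Ly$. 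The resulting $U_i$'s live in the image of $\mathcal C$, whereas the paper's live directly in $\mathcal X^\kappa$ — the content is the same (in the paper's proof one can take $\mathcal C = \mathcal X^\kappa$). For the ``furthermore'' part the contrast is sharper: the paper resolves the \emph{coefficient} category $\mathcal M$ as a localization of a presheaf category and commutes $\Fun(\mathcal M_0\op,-)$ past everything to reduce to $\mathcal M=\Ss$; you instead tensor the topos-side localization $(L,\iota)$ with $\mathcal M$ (using \Cref{cor:tensloc} with $\V=\Ss$) and compare $x^*_0\otimes\mathcal M$ with $P\mapsto\colim_{I\op}P(U_i)$ as a pair of colimit-preserving functors out of $\Fun(\mathcal C\op,\mathcal M)\simeq\Psh(\mathcal C)\otimes\mathcal M$ via the universal property, which kills the naturality worry in one stroke. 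The paper's version is slightly more economical because it never fixes a presentation of $\mathcal X$; yours makes the colimit formula and the identification of $\iota_{\mathcal M}$ as restriction along $(Ly)\op$ very transparent, and cleanly reduces the second assertion to the first by a universal-property argument rather than a second resolution.

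One small point worth making explicit when you enlarge $\mathcal C$ to be finitely complete: you should take the closure of a generating set of $\kappa$-compacts under finite limits \emph{inside} $\mathcal X$ (still small), so that $Ly:\mathcal C\to\mathcal X$ is the fully faithful inclusion and preserves finite limits, which is what makes $g=x^*\circ L\circ y\simeq x^*|_{\mathcal C}$ manifestly left exact and makes the identity $Ly(U_i)\simeq U_i$ available later. That $\Psh(\mathcal C)\to\mathcal X$ is then a left exact localization for this enlarged $\mathcal C$ is \cite[6.1.5.2]{HTT} (left Kan extension of a flat functor from a finitely-complete source is left exact).
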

\begin{proof}
    Let $\kappa$ be such that $\mathcal X$ is $\kappa$-compactly generated and such that $\kappa$-compact objects in $\mathcal X$ are closed under finite limits. 
    
    In this case, $x^*$ is left Kan extended from $\mathcal X^\kappa$, and the restriction to it is a finite limit-preserving functor $\mathcal X^\kappa\to \Ss$, i.e. a pro-object in $\mathcal X^\kappa$. The claim follows for $x^*_{\mid\mathcal X^\kappa}$, but now both $x^*$ and $\colim_{I\op}\Map(U_i,-)$ are left Kan extended from $\mathcal X^\kappa$, so the claim follows on all of $\mathcal X$.

    For the furthermore part, we write $\M$ as a localization of a presheaf category $\PP(\M_0)$ for some small $\M_0$. It follows that the map $\Fun^R(\mathcal X\op,\M)\to \M$ is given by $$\Fun^R(\mathcal X\op,\M)\to~\Fun^R(\mathcal X\op,\PP(\M_0))\to \PP(\M_0)\to \M$$ so we are reduced to the case of $\PP(\M_0)$. But now we can pull out $\Fun(\M_0\op,-)$ everywhere and reduce to the case of $\Ss$, where it is what we just proved. 
\end{proof}
\begin{proof}[Proof of \Cref{prop:cpctasshyper}]
    We first prove it for $\M$ compactly generated. In this case, we note that the collection of functors $\Map(m,-), m\in\M^\omega$ has the following properties:
    \begin{enumerate}
        \item They are jointly conservative;
        \item They preserve sheaves; 
        \item They preserve $x^*$.
    \end{enumerate}
    The last fact follows from \Cref{lm:points}. The first fact is the definition of compactly generated, and the second follows from the fact that they preserve arbitrary limits. 

From this, the claim follows. 

Finally, we note that the given property for $\M$ is closed under retracts in $\PrL$, so the claim follows for general compactly assembled categories.
\end{proof}

\begin{rmk}\label{rmk:hypercomplete}
    This fact is not true with no assumptions on $\M$. For example, fix a manifold $M$. It is not hard to prove that $U\mapsto \Sh(U)$ and $U\mapsto \Sh^{loc.cst}(U)$\footnote{The full subcategory spanned by locally constant sheaves.} are both sheaves on $M$ with values in $\PrL_{\omega_1}$, and that the canonical inclusion induces an equivalence on stalks, yet is not an equivalence.
    
    It seems harder to find an example where filtered colimits are left exact, but we conjecture that they exist. 
\end{rmk}
    
\section{Compact maps}\label{section:atomic}
In this section, we introduce the concept of compact maps, as well as higher cardinal variants of it. They allow for an \emph{internal} characterization and understanding of compact assembly, and therefore allow to manipulate dualizable categories in a way closer to the way we usually manipulate compactly generated categories. We learned of these ideas from Dustin Clausen (see \cite{youtubeDustin2}), but they also exist in the work of Johnstone--Joyal \cite{johnstonejoyal}. 

For our study of dualizability in general, they will mostly be helpful in proving \Cref{cor:cardboundV}, which is the key content of \Cref{thm:DblIsPrL}. In the companion paper \cite{companion}, we introduce a variant of the notion of compact maps, namely that of $\V$-atomic maps which is more adapted to the specific base $\V$.

The definition of compact maps is supposed to be a way of encoding, using only the map $f:x\to y$, the property that it has a ``compact image'', or perhaps factors through a compact object, even in the absence of compact objects. We generalize this to $\kappa$-compact maps, which will be technically convenient later on, even though the analogous notion of ``$\kappa$-compactly assembled category'' is uninteresting for uncountable $\kappa$, cf. \Cref{prop:cardboundunstable}. 
\subsection{Generalities}
\begin{defn}
    Let $\M$ be a category with $\kappa$-filtered colimits, and $f:x\to y$ a map in $\M$. $f$ is said to be a weakly-$\kappa$-compact morphism, or weakly-$\kappa$-compact map if for any $\kappa$-filtered diagram $z_\bullet : I\to \M$ and any map $h: y\to \colim_I z_i$, there exists an index $j\in I$ and a factorization as follows: 
    \[\begin{tikzcd}
	{z_j} & {\colim_Iz_i} \\
	x & y
	\arrow["f"', from=2-1, to=2-2]
	\arrow["h"', from=2-2, to=1-2]
	\arrow[from=1-1, to=1-2]
	\arrow[dashed, from=2-1, to=1-1]
\end{tikzcd}\]
When $\kappa=\omega$, we simply call them weakly compact maps.
\end{defn}
\begin{ex}\label{ex:O(X)}
    Let $X$ be a topological space, and $\Oo(X)$ its poset of opens - note that it has filtered colimits. 
    
    Let $U\subset V$ be two opens of $X$. If there is a compact subset $K$ of $X$ such that $U\subset K\subset V$, then the corresponding map in $\Oo(X)$ is a compact map - in this case, we write $U\Subset V$. This is perhaps the prototypical example. If $X$ is locally compact Hausdorff, the converse holds. 
\end{ex}
\begin{defn}
    Let $\M$ be a category with $\kappa$-filtered colimits, and $f:x\to y$ a map in $\M$. $f$ is said to be a $\kappa$-compact morphism, or $\kappa$-compact map if for any $\kappa$-filtered diagram $z_\bullet : I\to \M$, there exists a diagonal filler in the following commutative square: 
    \[\begin{tikzcd}
	{\colim_I\Map(y,z_i)} & {\Map(y,\colim_Iz_i)} \\
	{\colim_I\Map(x,z_i)} & {\Map(x,\colim_Iz_i)}
	\arrow[from=1-1, to=2-1]
	\arrow[from=2-1, to=2-2]
	\arrow[from=1-2, to=2-2]
	\arrow[from=1-1, to=1-2]
	\arrow[dashed, from=1-2, to=2-1]
\end{tikzcd}\]
\end{defn}
\begin{rmk}
    This idea of ``commuting symboles that are not supposed to commute, up to a map'' is one of the key ideas in the theory of compact maps and, as we shall see in \cite{companion}, atomic maps, and trace-class maps, and thus in studying compactly-assembled categories, dualizable $\V$-modules, and rigid $\V$-algebras. 
\end{rmk}
\begin{defn}
     Let $\M$ be a category with $\kappa$-filtered colimits, and $f:x\to y$ a map in $\M$. $f$ is said to be a strongly $\kappa$-compact morphism, or strongly $\kappa$-compact map if there exists a $\kappa$-filtered colimit preserving functor $C$ equipped with a natural transformation $\eta:C\to \Map(x,-)$ and a lift $\tilde f\in C(y)$ of $f\in\Map(x,y)$ along $\eta_y$.
\end{defn}
Before giving examples, let us point out that the terminology is reasonable:
\begin{lm}
    Any $\kappa$-compact map is weakly-$\kappa$-compact, and any strongly $\kappa$-compact map is $\kappa$-compact. 
\end{lm}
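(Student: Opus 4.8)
We must show two implications: (i) every $\kappa$-compact map is weakly $\kappa$-compact, and (ii) every strongly $\kappa$-compact map is $\kappa$-compact. Both are essentially unwinding the definitions and chasing the filler/lift through the relevant naturality square, so I would present this as a short double implication rather than anything involving new ideas. The one subtlety worth attention is purely homotopical: these definitions involve $\pi_0$ of mapping spaces (existence of factorizations/fillers) rather than strict equalities, so I should be careful that ``a diagonal filler exists'' is interpreted as existence on $\pi_0$ together with the appropriate compatibility, which is exactly what the diagrams assert.

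\begin{proof}
Suppose first that $f\colon x\to y$ is $\kappa$-compact, and let $z_\bullet\colon I\to\M$ be a $\kappa$-filtered diagram together with a map $h\colon y\to\colim_I z_i$, i.e. a point of $\Map(y,\colim_I z_i)$. Since $f$ is $\kappa$-compact, there is a diagonal filler $\Map(y,\colim_I z_i)\to\colim_I\Map(x,z_i)$ in the defining square; applying it to $h$ produces a point of $\colim_I\Map(x,z_i)$ whose image in $\Map(x,\colim_I z_i)$ is $h\circ f$. As $I$ is $\kappa$-filtered, this point lifts, up to the identifications, to some $j\in I$ and a map $x\to z_j$ whose composite to $\colim_I z_i$ agrees with $h\circ f$ and which, by commutativity of the square, is compatible with $h$; this is exactly a factorization as in the definition of weakly $\kappa$-compact. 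Hence $f$ is weakly $\kappa$-compact.

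Now suppose $f\colon x\to y$ is strongly $\kappa$-compact, witnessed by a $\kappa$-filtered-colimit-preserving functor $C$, a natural transformation $\eta\colon C\to\Map(x,-)$, and a lift $\tilde f\in C(y)$ of $f$. Fix a $\kappa$-filtered diagram $z_\bullet\colon I\to\M$. Because $C$ preserves $\kappa$-filtered colimits, the canonical map $\colim_I C(z_i)\to C(\colim_I z_i)$ is an equivalence. Consider the commutative square obtained by applying $\eta$ levelwise and passing to colimits: the natural transformation $\eta$ together with the colimit comparison maps yields a commuting square whose top horizontal map $\colim_I C(z_i)\to C(\colim_I z_i)$ is an equivalence, and which maps to the square defining $\kappa$-compactness of $f$ via $\eta$. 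Concretely, $C$ itself provides a factorization of the ``non-commuting'' square for $f$: the composite
\[
\Map(y,\colim_I z_i)\xrightarrow{\ C(y)\text{-structure via }\tilde f\ } C(\colim_I z_i)\xrightarrow{\ \simeq\ }\colim_I C(z_i)\xrightarrow{\ \eta\ }\colim_I\Map(x,z_i)
\]
gives the required diagonal filler, where the first map sends $h\colon y\to\colim_I z_i$ to $C(h)(\tilde f)\in C(\colim_I z_i)$; commutativity with the two triangles in the defining square follows from naturality of $\eta$ and the fact that $\eta_y(\tilde f)=f$. Thus $f$ is $\kappa$-compact.
\end{proof}

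In the write-up I would keep the first implication essentially as above and, for the second, emphasize that the only input is that $C$ preserves $\kappa$-filtered colimits, so that $C(h)(\tilde f)$ can be transported into $\colim_I C(z_i)$ and then pushed forward by $\eta$. I do not expect a genuine obstacle here; the closest thing to a subtle point is making sure the two triangles in the $\kappa$-compactness square commute, which is a formal consequence of the naturality of $\eta$ and of the colimit comparison maps, and which I would state but not belabor.
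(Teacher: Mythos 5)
Your proof is correct and follows essentially the same route as the paper: for the first implication, you pass to $\pi_0$ of the defining square (which the paper does implicitly by saying ``take $\pi_0$''), and for the second, you build exactly the same diagonal filler $\Map(y,\colim_I z_i)\to C(\colim_I z_i)\xleftarrow{\simeq}\colim_I C(z_i)\to\colim_I\Map(x,z_i)$, $h\mapsto C(h)(\tilde f)$, and appeal to naturality of $\eta$ to check the two triangles. The paper spells out the triangle-commutativity via two explicit commutative squares, whereas you state it more compactly, but the argument is the same.
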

\begin{proof}
    The fact that $\kappa$-compact implies weakly $\kappa$-compact is obvious by taking $\pi_0$ of the diagram defining $\kappa$-compactness. 

    So let $f:x\to y$ be strongly $\kappa$-compact and let $\eta: C\to \Map(x,-), \tilde f\in C(y)$ be witnesses of that. Now let $z_\bullet: I\to \M$ be a $\kappa$-filtered diagram, and consider the following map: $$\Map(y,\colim_I z_i)\xrightarrow{\id\times\lceil\tilde f\rceil} \Map(y,\colim_I z_i)\times C(y)\to C(\colim_I z_i) \xleftarrow{\simeq} \colim_I C(z_i)\to \colim_I \Map(x,z_i)$$
    We claim that this is the desired diagram filler.

    Let us first note that because of the commutative square :
\[\begin{tikzcd}
	{\colim_I C(z_i)} & {\colim_I \Map(x,z_i)} \\
	{C(\colim_Iz_i)} & {\Map(x,\colim_Iz_i)}
	\arrow[from=2-1, to=2-2]
	\arrow["\simeq"', from=1-1, to=2-1]
	\arrow[from=1-2, to=2-2]
	\arrow[from=1-1, to=1-2]
\end{tikzcd}\]
the lower triangle commutes. The argument for the upper triangle is similar and involves the commutative diagram: 
\[\begin{tikzcd}
	{\colim_I\Map(y,z_i)} & {\colim_I\Map(y,z_i)\times C(y)} & {\colim_IC(z_i)} \\
	{\Map(y,\colim_Iz_i)} & {\Map(y,\colim_Iz_i)\times C(y)} & {C(\colim_Iz_i)}
	\arrow[from=1-1, to=2-1]
	\arrow[from=2-1, to=2-2]
	\arrow[from=1-1, to=1-2]
	\arrow[from=1-2, to=2-2]
	\arrow[from=1-2, to=1-3]
	\arrow["\simeq", from=1-3, to=2-3]
	\arrow[from=2-2, to=2-3]
\end{tikzcd}\]
Pasting these together also shows that the homotopy one gets is the correct one. 
\end{proof}
\begin{rmk}
    We will see in \Cref{ex:embtocheckcpct} that in a \emph{$\kappa$-compactly assembled} category, both implications can be reversed. We will also see that the weakly $\kappa$-compact maps are enough to work with in the criterion for $\kappa$-compact assembly, if one assumes ahead of time that $\kappa$-filtered colimits are left exact, cf. \Cref{thm:compactassembly}\footnote{See also \Cref{prop:cardboundunstable} for the case of $\kappa>\omega$. }. However, we do not expect the converse to hold in general.
\end{rmk}
\begin{ex}
    In the example of $\Oo(X)$, $X$ a topological space, we find that if $U\Subset V$, then the corresponding map is strongly compact. Indeed, for $U\subset K\subset V$, we can choose for $C\to \hom(U,-)$ the subfunctor with value $\pt$ if $K\subset W$ and $\emptyset$ else\footnote{Something like ``$\hom(K,-)$''.}. 
\end{ex}

\begin{ex}\label{ex:cpctinSh}
    The compact maps of \Cref{ex:O(X)} are preserved by the embedding $\Oo(X)\to~\Sh(X)$, at least when $X$ is locally compact Hausdorff. In fact, it is not hard to prove that they are strongly compact maps: let $U\subset K\subset V$, and let $i:K\to X$ denote the inclusion. On $\Sh(K)$, we have the functors $\Gamma(K,-),\Gamma(U,-)$, and a map $\Gamma(K,-)\to \Gamma(U,-)$, which gives us a map $\Gamma(K,i^*(-))\to \Gamma(U,i^*(-))$ on $\Sh(X)$. Now, restricting to $K$, and evaluating on $U$ is the same as restricting to $U$ and taking global sections, which is the same as simply evaluating at $U$ from the beginning, so that $\Gamma(U,i^*(-))\simeq \Map_{\Sh(X)}(U,-)$. Finally, the map $U\to V$ in $\Sh(X)$ is in the image of $\Gamma(K,K)= \Gamma(K,i^*V)\to \Gamma(U,i^*(V))=\Map(U,V)$, and by \cite[Remark 7.3.1.5, Theorem 7.3.1.16]{HTT}, $\Gamma(K,i^*(-))$ preserves filtered colimits. 
\end{ex}
\begin{ex}\label{ex:cptobject}
    An object $x$ is $\kappa$-compact if and only if $\id_x$ is a $\kappa$-compact map, and in this case $\id_x$ is also strongly $\kappa$-compact. If $\id_x$ is weakly $\kappa$-compact and if $\M$ has finite limits which commute with $\kappa$-filtered colimits, then $x$ is $\kappa$-compact. We prove this converse below, as a prelude to \Cref{lm:basicnuc}.
\end{ex}
\begin{lm}
    Let $\M$ be a category with finite limits and left exact $\kappa$-filtered colimits, and let $x\in\M$ be such that $\id_x$ is weakly $\kappa$-compact, i.e., for all $\kappa$-filtered systems $z_\bullet:I\to\M$, the map $\pi_0\colim_I\Map(x,z_i)\to \pi_0\Map(x,\colim_Iz_i)$ is surjective. In this case, $x$ is $\kappa$-compact.
\end{lm}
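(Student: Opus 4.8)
The strategy is to upgrade the $\pi_0$-level surjectivity provided by weak $\kappa$-compactness of $\id_x$ to the full statement that $\Map(x,-)$ preserves $\kappa$-filtered colimits, using the left exactness of $\kappa$-filtered colimits to control the higher homotopy. First I would recall the standard fact that to show $x$ is $\kappa$-compact it suffices to show that for every $\kappa$-filtered $z_\bullet\colon I\to\M$ the canonical map $\theta\colon\colim_I\Map(x,z_i)\to\Map(x,\colim_I z_i)$ is an equivalence; by hypothesis $\pi_0$ of this map is surjective, so it remains to see it is also injective on $\pi_0$ and an isomorphism on all higher homotopy groups (at every basepoint). The key point is that left exactness of $\kappa$-filtered colimits lets us reduce these higher statements to further instances of the same surjectivity hypothesis, applied to diagrams built out of $z_\bullet$ by finite limits.

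Concretely, the plan is a diagonal/loop-space bootstrap. Given two elements $a,b\in\pi_0\colim_I\Map(x,z_i)$ with $\theta(a)=\theta(b)$, after passing to an index we may represent both by maps $x\to z_j$, and the space of paths between their images in $\Map(x,\colim z_i)$ is $\Map(x,\colim z_i)$-path-space, which by left exactness is $\colim_I P_i$ where $P_i$ is the relevant path space in $z_i$ (a finite limit construction: the pullback $z_i^{\Delta^1}\times_{z_i\times z_i}\{(a_i,b_i)\}$, over the cofinal subcategory $I_{j/}$). Thus a path between $\theta(a)$ and $\theta(b)$ is, by weak $\kappa$-compactness of $\id_x$ applied to this new $\kappa$-filtered diagram, witnessed at some finite stage, giving $a=b$ already in the colimit. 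The same maneuver with $z_i^{S^1}$ (free loop space, again a finite limit) in place of path spaces, based appropriately, handles $\pi_n$ for $n\geq 1$: surjectivity of $\pi_0\colim_I\Map(x,(z_i)^{S^n}_*)\to\pi_0\Map(x,(\colim z_i)^{S^n}_*)$ is exactly surjectivity on $\pi_n$, and one runs the path-space argument one dimension up for injectivity. Here one uses that $\kappa$-filtered colimits commute with the finite limits defining iterated based loop spaces and with the formation of $\Map(x,-)$ only in the target — but that is precisely what we are trying to prove, so the argument must be organized so that at each stage we only invoke the \emph{surjectivity} hypothesis (weak $\kappa$-compactness), never cocontinuity of $\Map(x,-)$, and use left exactness of $\colim_I$ purely to identify the relevant mapping spaces as colimits of finite-limit diagrams in $\M$.

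The main obstacle is bookkeeping: making sure that the auxiliary $\kappa$-filtered diagrams (path spaces, loop spaces, based at chosen lifts) are genuinely $\kappa$-filtered — they live over comma categories $I_{j/}$, which are $\kappa$-filtered and cofinal in $I$ — and that the finite-limit descriptions of based loop spaces are stable under $\kappa$-filtered colimits, which is exactly the left-exactness hypothesis. A clean way to package all of this is to prove by induction on $n\geq 0$ the statement ``for all $\kappa$-filtered $z_\bullet$ and all basepoints, $\pi_k\theta$ is surjective for $k\le n$ and injective for $k<n$,'' the base case $n=0$ being the hypothesis and the inductive step being the loop-space shift described above; then $\theta$ is an equivalence and $x$ is $\kappa$-compact. (Alternatively one can phrase the induction via the Whitehead/Milnor-type argument hinted at in Appendix~\ref{app:milnor}, but the direct loop-space induction is self-contained.)
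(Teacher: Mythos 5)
Your proposal is correct and follows essentially the same route as the paper: first prove $\pi_0$-injectivity directly by applying the weak $\kappa$-compactness hypothesis to the $\kappa$-filtered pullback diagram $z_i\times_{z_k}z_i$ (using left exactness to identify this as a $\kappa$-filtered colimit), then bootstrap to higher homotopy groups by a loop-space shift. The only organizational difference is that the paper, once the $\pi_0$-isomorphism is established, factors the bootstrap through the general \Cref{lm:pi0enough} about left exact functors (applied to $\colim\colon\Ind(\M)\to\M$ — the Yoneda embedding reduces the filtered-colimit statement to a statement about this single functor), rather than running the induction inline with shifted filtered diagrams as you do; the underlying computations are the same. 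One small thing worth being explicit about: the ``appropriately based'' loop/path-space object you gesture at is cleanest as the equalizer $z_i' := \mathrm{eq}(x\rightrightarrows z_i)$ (a finite limit in $\M$ along two copies of the basepoint $\alpha_i\colon x\to z_i$), which gives a split fiber sequence $\Omega_{\alpha_i}\Map(x,z_i)\to\Map(x,z_i')\to\Map(x,x)$; the resulting short exact sequences on homotopy groups, together with exactness of $\kappa$-filtered colimits, are exactly what drives your inductive step. The notation $(z_i)^{S^n}_*$ does not quite type-check since $z_i$ has no basepoint — the basepoint is the map $x\to z_i$, hence the equalizer over $x$.
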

\begin{proof}
    We first prove that the map on $\pi_0$ is an isomorphism, i.e. that it is injective. Let $x\to z_i, x\to z_j$ be two maps such that the composites $x\to z_i\to z, x\to z_j\to z$ agree, where $z=\colim_I z_\bullet$. In this case we get a map $x\to z_i\times_z z_j\simeq \colim_{I_{i,j/}}z_i\times_{z_k}z_j$ - here we use that $I_{i,j/}$ is $\kappa$-filtered, that the map $I_{i,j/}\to I$ is cofinal, and that $\kappa$-filtered colimits in $\M$ commute with pullbacks. 

    By weak $\kappa$-compactness, we find a $k$ and a lift $x\to z_i\times_{z_k}z_j$, so that $$x\to z_i\to z_k, x\to z_j\to z_k$$ are already equivalent, and thus represent the same point in $\pi_0\colim_I \Map(x,z_\bullet)$, thus proving injectivity. 

    Next, we observe that $\colim_I\Map(x,z_i) = \Map_{\Ind(\M)}(y(x),\colim_Iy(z_\bullet))$ where $y:~\M\to~\Ind(\M)$ is the Yoneda embedding, and that our map comes from applying the functor $\colim:~\Ind(\M)\to~\M$. Furthermore, $\Ind(\M)$ has finite limits and $\colim$ commutes with them. We are thus in the situation of the next lemma, and are thereby able to conclude. 
\end{proof}
We learned the following lemma from \cite{KNSh}.  
\begin{lm}\label{lm:pi0enough}
    Let $f:C\to D$ be a left exact functor between categories with finite limits. Let $x\in C$ be such that for all $c\in C$, $\pi_0\Map_C(x,c)\to \pi_0\Map_D(f(x),f(c))$ is an isomorphism. In that case, for all $c\in C$, $\Map_C(x,c)\to \Map_D(f(x),f(c))$ is an equivalence.  
\end{lm}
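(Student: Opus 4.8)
The plan is to bootstrap from $\pi_0$-isomorphisms to full equivalences by testing $\Map_D(f(x),f(c))$ against path spaces, using the left exactness of $f$ to translate loop spaces and based path spaces on the $D$-side back into pullbacks computed on the $C$-side. Concretely, for a fixed $c \in C$ and a point $\xi \in \Map_D(f(x),f(c))$, I want to show that the homotopy fiber of $\Map_C(x,c) \to \Map_D(f(x),f(c))$ over $\xi$ is contractible; by the $\pi_0$ hypothesis every such $\xi$ lies in the image, so I may assume $\xi = f(g)$ for some $g : x \to c$ in $C$. The key observation is that the homotopy fiber over $f(g)$ can be rewritten as $\Map_D(f(x), P)$ where $P \to f(c)$ is a suitable path-fibration object, and that because $f$ is left exact this $P$ is itself in the image of $f$ — namely $P = f(c^{\Delta^1}\times_{c\times c} \{(g,\mathrm{id}_?)\})$ or, more precisely, the relevant pullback of cotensors $c \times_c c$ pointed at $g$. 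This is where left exactness does all the work: $f$ commutes with the finite limits defining the relevant path objects.

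In more detail, the first step is to set up the "loop space" version: for $c \in C$ with a chosen basepoint (a map $x \to c$), form $\Omega_g c := \{g\} \times_{\Map_C(x,c)} \{g\}$, which is computed by the finite limit $c \times_{c \times c} c$ (diagonal against the constant pair $(g,g)$) inside the appropriate over/functor category; since $f$ is left exact, $f$ carries this to the analogous loop object in $D$. Applying the hypothesis at all objects of $C$ (in particular at these pullback objects) gives a $\pi_0$-isomorphism $\pi_0\Map_C(x, \Omega_g c) \to \pi_0 \Map_D(f(x), \Omega_{f(g)} f(c))$, i.e. an isomorphism $\pi_1(\Map_C(x,c), g) \cong \pi_1(\Map_D(f(x),f(c)), f(g))$. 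Iterating this construction — replacing $c$ by these finite-limit objects and re-applying the hypothesis, which costs nothing since the hypothesis is assumed for \emph{all} $c \in C$ — yields isomorphisms on all higher homotopy groups $\pi_n(\Map_C(x,c), g) \cong \pi_n(\Map_D(f(x),f(c)), f(g))$ at every basepoint $g$ in the image. The second step is to note surjectivity on $\pi_0$ means every basepoint of $\Map_D(f(x),f(c))$ is (up to homotopy) of the form $f(g)$, so the map $\Map_C(x,c) \to \Map_D(f(x),f(c))$ is a bijection on $\pi_0$ and an isomorphism on all homotopy groups at all basepoints, hence a weak equivalence (Whitehead).

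The main obstacle — and the only genuinely delicate point — is making the iteration precise: one must check that the finite-limit objects used to extract $\pi_1$ (path objects, loop objects) genuinely live in $C$ and are preserved by $f$, and that the identification of $\pi_n$ of a mapping space with $\pi_0$ of a mapping space into an iterated loop object is natural enough to be transported along $f$. A clean way to organize this is to work in $\Fun(C^{\mathrm{op}}, \Ss)$ and $\Fun(D^{\mathrm{op}}, \Ss)$: the functor $\Map_C(x,-)$ is corepresentable, and the statement becomes that a natural transformation of product-preserving (indeed finite-limit preserving, via $f$) functors which is a $\pi_0$-iso is an equivalence — which one proves by the loop-space dévissage just described. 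Alternatively, and perhaps most efficiently, one reduces to the classical fact that a left-exact functor detecting isomorphisms on $\pi_0$-mapping-sets out of $x$ detects them on full mapping anima, applied levelwise; this is essentially the argument in \cite{KNSh} that the lemma is being quoted from, and I would simply spell out that the truncation/loop-space induction goes through because each stage only requires the hypothesis at a new object of $C$ obtained by a finite limit.
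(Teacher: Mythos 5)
Your strategy is essentially the paper's: a dévissage via loop spaces, using left exactness of $f$ to carry the finite limits defining them over to $D$, plus an induction on $n$. But there is a genuine imprecision in the central construction that needs fixing, and the paper fixes it in a specific way.

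The problem is that the loop space $\Omega_g\Map_C(x,c)$ is \emph{not} directly representable as $\Map_C(x,-)$ of a finite limit in $C$. Your object ``$c\times_{c\times c}c$, diagonal against the constant pair $(g,g)$'' does not make sense: $g$ is a morphism $x\to c$, not a global point of $c$, so there is no second map $c\to c\times c$ to pull back against. The correct object --- the one the paper uses --- is the equalizer $c' := \mathrm{eq}(x\rightrightarrows c)$ with both arrows $\alpha$, equivalently the pullback $x\times_{c\times c}c$ of the diagonal against $(\alpha,\alpha)$. This does live in $C$ and is preserved by $f$, but crucially $\Map_C(x,c')$ is \emph{not} $\Omega_\alpha\Map_C(x,c)$; rather there is a fiber sequence
\[
\Omega_\alpha\Map_C(x,c)\;\longrightarrow\;\Map_C(x,c')\;\longrightarrow\;\Map_C(x,x),
\]
over the basepoint $\id_x$, and the right-hand map admits a section. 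So the $\pi_0$-hypothesis applied at $c'$ does \emph{not} directly give you $\pi_1$ at $(c,\alpha)$; you get information about $\pi_0\Map_C(x,c')$, which mixes $\pi_1\Map_C(x,c)$ with $\pi_0\Map_C(x,x)$. The paper disentangles these by using the section to split the long exact sequence into short exact sequences
\[
0\to\pi_{n+1}(\Map_C(x,c),\alpha)\to\pi_n(\Map_C(x,c'),\tilde\alpha)\to\pi_n(\Map_C(x,x),\id_x)\to 0,
\]
and then compares these (before and after applying $f$) by induction on $n$: the outer two terms are controlled by the inductive hypothesis, which forces the middle to match, giving $\pi_{n+1}$. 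There is also the edge case $n=0$, where one has exact sequences of pointed sets rather than groups, which requires a small additional observation (split fiber sequences give exactness at the set level). Your write-up flags that ``making the iteration precise'' is the delicate point, which is exactly right; the specific fix is the fiber-sequence-with-section argument, not a direct corepresentation of the loop space.
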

\begin{proof}
    We already know it is a $\pi_0$-isomorphism, so it suffices to show that for all basepoints and all $n\geq 1$, it induces an isomorphism on $\pi_n$ at that basepoint. We prove this by induction - precisely, we prove by induction on $n$ the statement ``for all $c\in C$ and all $\alpha\in \Map_C(x,c)$, the map $\pi_n(\Map_C(x,c),\alpha)\to\pi_n(\Map_D(f(x),f(c)),f(\alpha))$ is an isomorphism''. 

    So suppose it is true at $n$, and let us prove it at $n+1$. For this, fix $c$ and $\alpha$ as above, and let $c' := \mathrm{eq}(x\rightrightarrows c)$ (note that both arrows are $\alpha$!). We then have a fiber sequence at $\id_x$: $$\Omega_\alpha\Map(x,c)\to \Map_C(x,c')\to \Map_C(x,x)$$ and the rightmost map admits a section. Thus, the long exact sequence on homotopy groups splits into short exact sequences $$0\to \pi_{n+1}(\Map_C(x,c),\alpha)\to \pi_n(\Map_C(x,c'),\tilde \alpha)\to \pi_n(\Map_C(x,x),\id_x)\to 0$$

    Comparing this short exact sequence to the one obtained after applying $f$ and using the induction hypothesis gives the result at $n+1$. 
    \begin{rmk}
        In the case $n=0$, this is an exact sequence of pointed sets rather than of groups, so we need to argue a bit more. Nonetheless, it is generally true that if $F\to Y\to X$ is a fiber sequence at $x\in X$, and $Y\to X$ admits a section, then $\pi_0(F)\to \pi_0(Y)$ is exactly the pointed kernel of $\pi_0(Y)\to \pi_0(X)$. This is enough for our purposes.
    \end{rmk}
\end{proof}
We now point out a slight variant of this lemma which will be convenient later on:
\begin{lm}\label{lm:indprovariant}
     Let $f:C\to D$ be a left exact functor between categories with finite limits. For $x\in\Ind(C)$, consider the functor $\Map(x,-): C\to \Pro(\Ss)$. 

Assume that $x\in \Ind(C)$ is such that for all $c\in C$, $\pi_0\Map_C(x,c)\to \pi_0\Map_D(f(x),f(c))$ is a pro-isomorphism of sets. In that case, for all $c\in C$, all $\alpha \in \Map(x,c)$ and all $n\in\mathbb N$, 
$\pi_n(\Map_C(x,c),\alpha)\to \pi_n(\Map_D(f(x),f(c)),f(\alpha))$ is a pro-isomorphism.

Here, ``$\alpha\in \Map(x,c)$'' means equivalently a map $\pt\xrightarrow{\alpha}\Map(x,c)$ or a point in the image of $\Map(x,c)$ under $\lim:\Pro(\Ss)\to\Ss$. 
\end{lm}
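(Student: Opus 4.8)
The plan is to translate the whole statement into a level‑map statement in the category $\Pro(\Ss)$ of pro‑spaces and then run the induction of \Cref{lm:pi0enough} there. First I would pick a filtered category $I$ and a diagram $a_\bullet\colon I\to C$ with $x\simeq\colim_{i\in I}a_i$ in $\Ind(C)$; then each $a_i$ is compact in $\Ind(C)$, the functor $\Map(x,-)\colon C\to\Pro(\Ss)$ of the statement is $c\mapsto(\Map_C(a_i,c))_{i\in I}$ (the formal cofiltered limit over $I\op$), and, since $f$ is left exact, $\Ind(f)\colon\Ind(C)\to\Ind(D)$ is left exact, preserves the $a_i$, and carries $x$ to the pro‑object presented by $f\circ a_\bullet$. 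Hence the comparison morphism $\Map(x,c)\to\Map(f(x),f(c))$ is exactly the level map induced by the maps $\Map_C(a_i,c)\to\Map_D(f(a_i),f(c))$; with this dictionary ``$\alpha\in\Map(x,c)$'' is a compatible system $\alpha_\bullet$, the homotopy pro‑groups in the statement are the level objects $(\pi_n(\Map_C(a_i,c),\alpha_i))_{i\in I}$, and ``pro‑isomorphism'' means the relevant level map is an isomorphism in the appropriate pro‑category ($\Pro(\on{Set})$, $\Pro(\on{Set}_\ast)$, $\Pro(\on{Grp})$ or $\Pro(\on{Ab})$).

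Next I would prove by induction on $n$ the statement $P(n)$: for every $c\in C$ and every $\alpha\in\Map(x,c)$, the comparison $\pi_n(\Map(x,c),\alpha)\to\pi_n(\Map(f(x),f(c)),f(\alpha))$ is a pro‑isomorphism. Here $P(0)$ is precisely the hypothesis. For the step $P(n)\Rightarrow P(n+1)$, fix $c$ and $\alpha$ and set $c':=c\times_{c\times c}c\in C$, the pullback of the two diagonals $c\to c\times c$; this is a finite limit, hence lies in $C$, comes with a projection $\mathrm{pr}\colon c'\to c$ and a ``constant loop'' section $s\colon c\to c'$ with $\mathrm{pr}\circ s=\id_c$, all preserved by the left exact $f$. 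For every $i$, the identity $\Map_C(a_i,c')\simeq\Map_C(a_i,c)\times_{\Map_C(a_i,c)^2}\Map_C(a_i,c)$ exhibits $\Map_C(a_i,c')$ as the free loop space of $\Map_C(a_i,c)$, so $\mathrm{pr}_\ast\colon\Map_C(a_i,c')\to\Map_C(a_i,c)$ is, at the basepoint $\alpha_i$, a fibration with section $s_\ast$ and fibre $\Omega_{\alpha_i}\Map_C(a_i,c)$; these assemble over $I\op$ into a level‑split fibre sequence of pro‑spaces, to which $f$ applies compatibly. Taking levelwise long exact sequences and using the section to kill the connecting maps, one identifies $\pi_{n+1}(\Map(x,c),\alpha)=\pi_n(\Omega_\alpha\Map(x,c))$ with the levelwise kernel of $\pi_n(\Map(x,c'),s\circ\alpha)\to\pi_n(\Map(x,c),\alpha)$ (with pointed kernels when $n=0$, using the remark on split fibre sequences in the proof of \Cref{lm:pi0enough}), naturally in $f$. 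A levelwise kernel is a finite limit in the pro‑category, and $P(n)$ applies both to $c'\in C$ and to $c\in C$, making the two sides of that kernel into pro‑isomorphisms; hence the induced map on kernels — that is, the comparison on $\pi_{n+1}$ — is a pro‑isomorphism. This is $P(n+1)$, and $P(n)$ for all $n\in\NN$ is the lemma.

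The inductive bookkeeping is a direct transcription of \Cref{lm:pi0enough}, the one new ingredient being to replace ``$\mathrm{eq}(x\rightrightarrows c)$'' there (which used $x\in C$) by the path object $c\times_{c\times c}c\in C$, so that both objects fed back into the inductive hypothesis remain in $C$. The main point to be careful about is the pro‑homotopy‑theory used throughout: that $\Map_{\Ind(C)}(x,-)$ genuinely takes values in $\Pro(\Ss)$, that a levelwise fibre sequence of pro‑spaces carries a long exact sequence of levelwise homotopy pro‑groups, that levelwise kernels compute kernels in $\Pro(\on{Grp})$ and $\Pro(\on{Ab})$ (and pointed kernels in $\Pro(\on{Set}_\ast)$), and the usual $n=0$ pointed‑set arithmetic. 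All of this is classical — it can be extracted from the model structure on pro‑spaces, or from the appendix of Artin--Mazur — and requires no hypothesis on $\Ss$ beyond left‑exactness of filtered colimits, which holds.
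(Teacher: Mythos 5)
Your proof is correct and follows the paper's intended strategy: rerun the induction of \Cref{lm:pi0enough} levelwise in the pro-category, using that the split fibre sequences give short exact sequences of pro-groups whose middle and right terms are controlled by the inductive hypothesis. The one genuine improvement over the paper's terse ``the proof works exactly the same'' is that you noticed the object $c'=\mathrm{eq}(x\rightrightarrows c)$ used in \Cref{lm:pi0enough} no longer lies in $C$ once $x\in\Ind(C)$, so the inductive hypothesis (stated only for objects of $C$) cannot be applied to it. Your replacement by the path object $c\times_{c\times c}c\in C$, with its projection-and-section structure, fixes this cleanly: the fibre sequence $\Omega_\alpha\Map(x,c)\to\Map(x,c')\to\Map(x,c)$ is levelwise split with both ends in $C$, and the levelwise kernel identification together with the fact that finite limits in pro-categories are computed levelwise carries the step through. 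This is exactly the modification the paper's proof implicitly needs, and it is worth having written out.
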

\begin{rmk}
    This lemma is not a special case of the previous one ``applied to $\Ind(C)$'', because our assumption is of a different nature: the limit of the $\pi_0$'s is not the $\pi_0$ of the limit. 
\end{rmk}
\begin{proof}
    The proof works exactly the same, noting that at every step we could have added the word ``pro-''. In particular, note that in those short exact sequences, the only thing we really need is for the leftmost term to be the kernel of the right hand morphism, which is levelwise true as the maps are levelwise split, and levelwise fiber sequences. 
\end{proof}
We leave the following examples as instructive exercises for the reader:
\begin{ex}\label{ex:factorcompact}
    The collection of (strongly, weakly) $\kappa$-compact maps is a $2$-sided ideal : if $f$ is (weakly, strongly) $\kappa$-compact, then for any $g,h$ for which it makes sense, $gfh$ is also $\kappa$-compact.

    In particular, any map factoring through a $\kappa$-compact object is strongly $\kappa$-compact.
\end{ex}
\begin{ex}\label{ex:conversefactorcpt}
    In a $\kappa$-compactly generated \category, any weakly $\kappa$-compact map $f$ factors through a $\kappa$-compact: it suffices to write the target of $f$ as a $\kappa$-filtered colimit of $\kappa$-compacts to prove this. It follows from this and the previous example that in this case, weakly $\kappa$-compact maps are strongly $\kappa$-compact. 
\end{ex}
\begin{ex}\label{ex:reflectcpct}
    If $f: \M\to \N$ preserves $\kappa$-filtered colimits and is fully faithful, then it \emph{reflects} (weakly, strongly) $\kappa$-compact maps. 
\end{ex}
\begin{ex}\label{ex:preservecpct}
    If $f:\M\to \N$ has a $\kappa$-filtered colimit preserving right adjoint $f^R$, then $f$ preserves (weakly, strongly) $\kappa$-compact maps. The proof is the same as the proof that $f$ preserves $\kappa$-compact objects in this situation.
\end{ex}
\begin{ex}\label{ex:embtocheckcpct}
    Combining \Cref{ex:factorcompact,,ex:conversefactorcpt,,ex:reflectcpct,,ex:preservecpct}, we see that if $i:\M\to \N$ is a fully faithful embedding with a $\kappa$-filtered colimit preserving right adjoint, and if $\N$ is $\kappa$-compactly generated, then all variants of $\kappa$-compact maps in $\M$ are exactly those maps $f$ such that $i(f)$ factors through a $\kappa$-compact object in $\N$, and in particular they all agree in $\M$. 
    
    That such an $i$ exists is equivalent to $\M$ being $\kappa$-compactly assembled in the sense of \Cref{defn:compactass} - it follows that for a $\kappa$-compactly assembled category $\M$, all notions of $\kappa$-compact maps agree.
\end{ex}
We note that in the previous example, for a $\kappa$-compactly assembled $\M$, using $\hat y$, one can find a canonical way to factor a $\kappa$-compact morphism through a $\kappa$-compact object. Namely:
\begin{lm}
    Let $\M$ be a compactly assembled category with $\hat y : \M\to \Ind(\M)$ a left adjoint to the canonical functor $\Ind(\M)\to \M$. Given a morphism $f:x\to z$, the following are equivalent: 
    \begin{enumerate}
        \item $f$ is compact;
        \item $y(f)$ factors through the canonical map $\hat y(z)\to y(z)$; 
        \item $\hat y(f)$ factors through the canonical map $\hat y(x)\to y(x)$. 
    \end{enumerate}
\end{lm}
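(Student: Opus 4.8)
The plan is to work inside $\Ind(\M)$ and exploit the chain of adjoints around the canonical functor $p\colon\Ind(\M)\to\M$: by hypothesis $p$ has a left adjoint $\hat y$, and it always has as right adjoint the Yoneda embedding $y\colon\M\to\Ind(\M)$, so we have $\hat y\dashv p\dashv y$. Since $y$ is fully faithful, $p$ is a localization, hence (as in \Cref{rmk:hatyff}) $\hat y$ is fully faithful too; thus $p\hat y\simeq\id_\M\simeq py$, and the two ``canonical maps'' in the statement are the components $c_x\colon\hat y(x)\to y(x)$ and $c_z\colon\hat y(z)\to y(z)$ of the comparison transformation $c\colon\hat y\Rightarrow y$, which one can describe as $c_m=\epsilon_{y(m)}$ for $\epsilon\colon\hat yp\Rightarrow\id_{\Ind(\M)}$ the counit of $\hat y\dashv p$ (using $py\simeq\id_\M$). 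I will use two standard facts: every representable $y(m)$ is a compact object of $\Ind(\M)$; and since $\hat y$ is a fully faithful embedding into the compactly generated category $\Ind(\M)$ with filtered-colimit-preserving right adjoint $p$, \Cref{ex:embtocheckcpct} applies, so a morphism $g$ of $\M$ is compact iff $\hat y(g)$ factors through a compact object of $\Ind(\M)$.

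\textbf{Step 1: $(2)\Leftrightarrow(3)$, formally.} Naturality of $c$ at $f$ gives $c_z\circ\hat y(f)\simeq y(f)\circ c_x$. Identifying $\Map_{\Ind(\M)}(\hat y(x),\hat y(z))$, $\Map_{\Ind(\M)}(y(x),y(z))$ and $\Map_{\Ind(\M)}(\hat y(x),y(z))$ all with $\Map_\M(x,z)$ (via full faithfulness of $\hat y$, of $y$, and via $\hat y\dashv p$), the triangle identities show that under these identifications pre-composition with $c_x$ becomes the identity of $\Map_\M(x,z)$, while post-composition with $c_z$ becomes post-composition with the equivalence $p(c_z)\colon z\to z$; in particular both $(-)\circ c_x$ and $c_z\circ(-)$ are equivalences on the relevant mapping spaces. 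Hence if $y(f)\simeq c_z\circ g$ then $c_z\circ(g\circ c_x)\simeq y(f)\circ c_x\simeq c_z\circ\hat y(f)$, so $g\circ c_x\simeq\hat y(f)$ and $(3)$ holds with filler $h=g$; symmetrically, $(3)$ implies $(2)$ with $g=h$.

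\textbf{Step 2: $(3)\Rightarrow(1)$ and $(1)\Rightarrow(3)$.} The first is immediate: if $\hat y(f)$ factors through $c_x$, it factors through $y(x)$, a compact object of $\Ind(\M)$, so $f$ is compact by \Cref{ex:embtocheckcpct}. For $(1)\Rightarrow(3)$, present $\hat y(z)$ as a filtered colimit of representables, $\hat y(z)\simeq\colim_I y(w_i)$ with cocone $\iota_i\colon y(w_i)\to\hat y(z)$. Applying $p$ gives $z\simeq\colim_I w_i$ in $\M$ with structure maps $b_i:=p(\iota_i)\colon w_i\to z$, and the naturality square of $\epsilon$ at $\iota_i$ (together with the triangle identity, which makes $\epsilon_{\hat y(z)}$ an equivalence) identifies $\hat y(b_i)\simeq\iota_i\circ c_{w_i}$ up to that equivalence. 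Since $f$ is compact it is in particular weakly compact, so the map $f\colon x\to\colim_I w_i$ factors as $b_i\circ a$ for some index $i$ and some $a\colon x\to w_i$. Then $h:=\iota_i\circ y(a)\colon y(x)\to\hat y(z)$ works, since by naturality of $c$ at $a$
\[ h\circ c_x\simeq\iota_i\circ y(a)\circ c_x\simeq\iota_i\circ c_{w_i}\circ\hat y(a)\simeq\hat y(b_i)\circ\hat y(a)\simeq\hat y(b_i\circ a)\simeq\hat y(f) \]
(up to an explicit equivalence absorbed into $h$). Combined with Step 1 this gives $(1)\Leftrightarrow(3)\Leftrightarrow(2)$.

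\textbf{Main obstacle.} The mathematical content is light; the delicate part is the $2$-categorical bookkeeping around the triple adjunction $\hat y\dashv p\dashv y$ — namely checking that the stated ``canonical maps'' are indeed $\epsilon_{y(x)},\epsilon_{y(z)}$, that $\hat y(b_i)\simeq\iota_i\circ c_{w_i}$, and that pre-/post-composition with the comparison maps are equivalences on the relevant mapping spaces. These are routine diagram chases, but one must keep the coherence data straight; where convenient one can reduce to $\Cat$ and argue on homotopy categories, as elsewhere in the paper. A secondary point to handle with the paper's customary disregard for set theory is that $\Ind(\M)$ and its compact generation behave as expected even though $\M$ is large.
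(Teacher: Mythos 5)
Your proof is correct and follows essentially the same strategy as the paper's: prove $(2)\Leftrightarrow(3)$ via the chain of adjunctions $\hat y\dashv p\dashv y$, get $(3)\Rightarrow(1)$ immediately from the compactness of $y(x)$, and deduce $(1)\Rightarrow(3)$ from the compactness of $\hat y(f)$ in the compactly generated $\Ind(\M)$. The only cosmetic difference is in $(1)\Rightarrow(3)$, where the paper factors the compact map $\hat y(f)$ through a compact object $y(m)$ and then observes by adjunction that any map $\hat y(x)\to y(m)$ factors through $c_x$, whereas you unwind $\hat y(z)$ as a filtered colimit of representables and invoke weak compactness of $f$ directly; both reduce to the same underlying facts.
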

\begin{proof}
We first prove that 1. is equivalent to 3.. Suppose 1. holds. In this case by \Cref{ex:preservecpct} $\hat y(f)$ is also a compact map thus, as $\Ind(\M)$ is compactly generated,  it factors through a compact object $y(m)$. But now any map $\hat y(x)\to y(m)$ factors through $\hat y(x)\to y(x)$ by adjunction. Thus we find 3. 

Conversely, if 3. holds then by \Cref{ex:factorcompact}, $\hat y(f)$ is compact and thus by \Cref{ex:reflectcpct} $f$ is compact too. 

We now point out that 2. and 3. are equivalent for formal reasons because of the chain of adjunctions $\hat y\dashv \colim\dashv y$: in the square \[\begin{tikzcd}
	{\hat y(x)} & {\hat y(z)} \\
	{y(x)} & {y(z)}
	\arrow[from=2-1, to=2-2]
	\arrow[from=1-1, to=2-1]
	\arrow[from=1-1, to=1-2]
	\arrow[from=1-2, to=2-2]
	\arrow[dashed, from=2-1, to=1-2]
\end{tikzcd}\] 
the dotted lift makes the lower triangle commute if and only if it makes the upper triangle commute by adjunction. 
\end{proof}
This example will be generalized and made more precise in \cite[Example 2.14]{companion}, where we will see that a reasonable name for $\Map(y(x),\hat y(z))$ would be the ``space of compact maps from $x$ to $z$'' (making ``$f$ is compact'' an extra structure on $f$ rather than a property).  
\begin{rmk}
    The notion of compact maps will be compared in \cite{companion} to the special case $\V=\Sp$ of a general notion of $\V$-atomic maps. However, in the same way that ($\kappa$-)compact objects are relevant to presentability even for other bases, ($\kappa$-)compact maps will also be relevant, see for example the proof of \Cref{prop:cardboundunstable} (which is involved in \Cref{cor:cardboundV}).
\end{rmk}
\subsection{Compact exhaustions}
In this section, we prove one of the key features of $\kappa$-compact maps: ``in the limit'' they behave like compact objects. We make this precise via the following terminology. 
\begin{defn}
    Let $\M$ be a category with filtered colimits, and let $x_\bullet: \mathbb N\to \M$ be a diagram. Suppose that each transition map $x_n\to x_{n+1}$ is $\kappa$-compact. 

    We say that $x_\bullet$ is a $\kappa$-compact exhaustion of $x:=\colim_\mathbb Nx_n$, and for $x\in\M$, we say that $x$ is $\kappa$-compactly exhaustible if there exists a $\kappa$-compact exhaustion of it. 

    We define weak $\kappa$-compact exhaustion and weakly $\kappa$-compactly exhaustible in the obvious way. 
\end{defn}
The main result of this subsection is: 
\begin{lm}\label{lm:basicnuc}
    Let $\M$ be a category admitting filtered colimits, and let $c:\Ind_\kappa(\M)\to \M$ be the canonical functor. Let $x_\bullet:\mathbb N\to \M$ be a weakly $\kappa$-compact exhaustion of $x=\colim_\mathbb Nx_n$. 
    
    In this case, for any $\kappa$-ind-object $y=\{y_i\}_{i\in I}$, the following map is an equivalence: $$\Map(\{x_n\}, y)\to \Map(\colim_\mathbb N x_n, p(y)) = \Map(x,\colim_I y_i)$$
\end{lm}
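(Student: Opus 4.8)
I would begin by making the left-hand side explicit. Write $\{x_n\}$ for the object $\colim_{n\in\mathbb N}x_n$ of $\Ind_\kappa(\M)$ (built, if one worries that $\mathbb N$ is not $\kappa$-filtered, inside $\Psh(\M)$ — only the formula for mapping out of it will be used), and use that each representable $x_n$ is $\kappa$-compact in $\Ind_\kappa(\M)$ while $y=\colim_{i\in I}y_i$ is a $\kappa$-filtered colimit, so that
$$\Map(\{x_n\},y)\simeq\lim_n\Map(x_n,y)\simeq\lim_n\colim_{i\in I}\Map_\M(x_n,y_i),$$
whereas the right-hand side is $\Map(x,p(y))\simeq\lim_n\Map_\M(x_n,\colim_{i\in I}y_i)$, and the comparison map is $\lim_n$ applied to the canonical arrows $\mu_n\colon\colim_i\Map(x_n,y_i)\to\Map(x_n,\colim_i y_i)$. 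So the statement reduces to: \emph{$\lim_n\mu_n$ is an equivalence}. The point worth stressing up front is that the $\mu_n$ are individually \emph{not} equivalences — $\mu_n$ is invertible exactly when $x_n$ is $\kappa$-compact — so the content is that limiting over the exhaustion repairs this, with the only available input being that each transition $x_n\to x_{n+1}$ is weakly $\kappa$-compact.

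The strategy is a Mittag–Leffler comparison of the towers $A_n:=\colim_i\Map(x_n,y_i)$ and $B_n:=\Map(x_n,\colim_i y_i)$. By the unstable Milnor exact sequence of \Cref{app:milnor}, $\pi_m$ of the homotopy limit of a tower of pointed spaces is controlled by $\lim_n\pi_m$ and $\lim^1_n\pi_{m+1}$, so $\lim_n\mu_n$ is an equivalence as soon as the level map of towers $\{\pi_m A_n\}_n\to\{\pi_m B_n\}_n$ is a pro-isomorphism for every $m$ and every basepoint. The surjective half of this is exactly the definition of weak $\kappa$-compactness: a map $x_m\to\colim_i y_i$ restricted along the transition $x_n\to x_m$ (which is weakly $\kappa$-compact, being a composite of such, by \Cref{ex:factorcompact}) factors through some $y_i$, so the transition $\pi_0 B_m\to\pi_0 B_n$ lands in the image of $\mu_n$, and likewise for $\pi_m$ at any basepoint. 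The injective half — that elements of $A_m$ with the same image in $B_m$ already agree after enough further restriction — is the delicate point: it requires forming the relevant equalizers in $\Ind_\kappa(\M)$ (or $\Psh(\M)$), where they exist even though they need not in $\M$, together with a reduction from the $\pi_0$-statement to the $\pi_m$-statements of the kind carried out in \Cref{lm:indprovariant} and \Cref{lm:pi0enough}.

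I expect this last point — the coherent, basepoint-by-basepoint Mittag–Leffler bookkeeping, in particular controlling the $\lim^1$ terms while reindexing the exhaustion — to be the only real obstacle; everything else is a matter of unwinding adjunctions, and \Cref{app:milnor} is presumably set up precisely to handle it. As a sanity check, the argument specializes correctly: if every $x_n$ is already $\kappa$-compact then each $\mu_n$ is an equivalence and the statement is immediate, recovering (and generalizing) the elementary case in \Cref{ex:cptobject}.
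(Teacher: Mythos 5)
Your overall plan matches the paper's: reduce to a pro-isomorphism statement on the towers $A_n=\colim_i\Map(x_n,y_i)$ and $B_n=\Map(x_n,\colim_iy_i)$ using the unstable Milnor machinery from \Cref{app:milnor} together with \Cref{lm:pi0enough} and \Cref{lm:indprovariant}, and then verify the pro-isomorphism level by level, getting surjectivity (existence of the dotted lift) from weak $\kappa$-compactness of the transition maps. That is exactly the skeleton of the paper's argument, including the double step $n\leadsto n+2$ you allude to (one step of weak compactness for existence, one for uniqueness).

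The gap is precisely where you flagged it, and your proposed patch does not work. For the \emph{injective} half you suggest forming the relevant equalizer/pullback in $\Ind_\kappa(\M)$ or $\Psh(\M)$ ``where it exists even though it need not in $\M$,'' but this move is blocked: you still need to apply \emph{weak $\kappa$-compactness of $x_n\to x_{n+1}$}, and that is a statement about maps into $\kappa$-filtered colimits \emph{in $\M$}. Passing to $\Ind_\kappa(\M)$ or $\Psh(\M)$ changes what ``filtered colimit'' means for the target, and the hypothesis does not transport. The paper instead forms the pullback $y_j\times_{\colim_Iy_i}y_j$ \emph{inside} $\M$, uses that $\kappa$-filtered colimits in $\M$ are left exact to rewrite it as $\colim_{I_{j/}}(y_j\times_{y_i}y_j)$, and then applies weak $\kappa$-compactness of $x_n\to x_{n+1}$ to the map $x_{n+1}\to y_j\times_{\colim y}y_j$. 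In other words, the injective half is another application of weak $\kappa$-compactness, not a detour through a larger category. (You may note that the stated hypotheses of \Cref{lm:basicnuc} — ``$\M$ admits filtered colimits'' — do not literally include finite limits and left exactness; these appear as additional hypotheses in \Cref{cor:basicnuccpct}, where the lemma is applied to the weakly compact case, and they are genuinely used in the proof of the lemma. So the injectivity step is not a bookkeeping formality that \Cref{app:milnor} handles for you; it is a real use of an additional hypothesis, and your proposal does not supply it.)

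One further small point: for the $\pi_m$-statements at higher $m$ you gesture at ``likewise for $\pi_m$ at any basepoint,'' but the clean way to do this — and the way the paper does it — is to reduce everything to $\pi_0$ via \Cref{lm:pi0enough} applied to $c\colon\Ind_\kappa(\M)\to\M$, and then obtain the $\pi_1$ pro-isomorphism from \Cref{lm:indprovariant}, rather than rerunning the weak-compactness argument for each $m$ and basepoint.
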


    We give ``two proofs'' of this lemma. One of them is a proof of the lemma as stated, but before that we give a proof of a weaker lemma, where we assume that we have a $\kappa$-compact exhaustion of $x$, rather than a weakly $\kappa$-compact exhaustion. We start by the latter, as it is more natural and also much simpler, and then explain the necessary modifications for the former.  
\begin{proof}[Proof of the weaker lemma]
    Fix $x_\bullet: \mathbb N\to \M$ a sequential diagram where each of the transition maps is $\kappa$-compact.

    Note that the map in question is the canonical map $$\lim_\mathbb N\colim_I\Map(x_n,y_i)\to\lim_\mathbb N\Map(x_n,\colim_I y_i)$$
    In other words, it is $\lim_\mathbb N$ of the map $\colim_I\Map(x_n,y_i)\to\Map(x_n,\colim_I y_i)$. We prove a stronger claim than the claim that this is an equivalence on limits, namely we prove that this is a pro-equivalence $\{\colim_I\Map(x_n,y_i)\}_n\to \{\Map(x_n,\colim_I y_i)\}_n$.

    The general statement is that in an \category{} $C$, if $A_\bullet, B_\bullet: \mathbb N\op\to C$ are two inverse sequential systems in $C$, and $f: A_\bullet\to B_\bullet$ is a natural transformation such that for each $n$, there exists a dashed arrow making the two triangles below commute, then $f$ induces a pro-equivalence:
\[\begin{tikzcd}
	{A_{n+1}} & {B_{n+1}} \\
	{A_n} & {B_n}
	\arrow[from=1-1, to=2-1]
	\arrow[from=2-1, to=2-2]
	\arrow[from=1-2, to=2-2]
	\arrow[from=1-1, to=1-2]
	\arrow[dashed, from=1-2, to=2-1]
\end{tikzcd}\]
This is an instructive exercise in cofinality which we leave to the reader. From this, the claim follows immediately of course, from the definition of a $\kappa$-compact map. 
\end{proof}
The proof of the actual lemma looks similar, except that we cannot actually obtain lifts at the space level as above, because the definition of a weakly $\kappa$-compact map is too ``$\pi_0$-ish''. To prove it, we use two reductions: firstly, we use \Cref{lm:pi0enough} to reduce to a $\pi_0$-claim, secondly, we use some kind of unstable Milnor sequence to prove this $\pi_0$-claim. 
\begin{proof}[Proof of the lemma]
       First note that applying \Cref{lm:pi0enough} to $c:\Ind(\M)\to \M$, we can reduce to proving that $\pi_0(\lim_\mathbb N \colim_I \Map(x_n, y_i))\to \pi_0(\lim_\mathbb N\Map(x_n,\colim_I y_i))$ is an isomorphism. 
    
    Second, by \Cref{lm:unstableMilnor}, it suffices to prove that \begin{itemize}
        \item[(a)]$\pi_0(\colim_I\Map(x_n,y_i))\to \pi_0(\Map(x_n,\colim_Iy_i))$ is a pro-isomorphism of sets,
        \item[(b)] for any $f\in \lim_\mathbb N\colim_I\Map(x_n,y_i)$, the induced map $\pi_1(\colim_I \Map(x_n,y_i),f_n)\to~\pi_1(\Map(x_n,\colim_I y_i),f_n)$ is a pro-isomorphism of groups. 
    \end{itemize} 
   \begin{rmk}
       If $\M$ is stable, then (b) follows from (a) by using $\Omega$. Thus the complications related to (b) below are only relevant in the unstable case. 
   \end{rmk}

    To prove (a), we use the same general statement as in the previous proof, except that we replace $A_{n+1}$ (resp. $B_{n+1}$) by $A_{n+2}$ (resp. $B_{n+2}$), namely we prove that there exist dotted lifts in the following diagrams: 
    
\[\begin{tikzcd}
	{\colim_I \pi_0\Map(x_{n+2},y_i)} & {\pi_0\Map(x_{n+2},\colim_I y_i)} \\
	{\colim_I \pi_0\Map(x_n,y_i)} & {\pi_0\Map(x_n,\colim_I y_i)}
	\arrow[from=1-1, to=1-2]
	\arrow[from=2-1, to=2-2]
	\arrow[from=1-1, to=2-1]
	\arrow[from=1-2, to=2-2]
	\arrow[dashed, from=1-2, to=2-1]
\end{tikzcd}\]

This is good enough because the pro-system $\{A_{2n}\}$ is naturally isomorphic to the pro-system $\{A_n\}$.

So let $f: x_{n+2}\to \colim_I y_i$ be any map. By weak compactness of $x_{n+1}\to x_{n+2}$, the restriction to $x_{n+1}$ factors through some $y_j$. We want to pove that, after restricting to $x_n$ any two factorizations of $x_{n+1}\to x_{n+2}\to \colim_I y_i$ give the same result in $\colim_I\pi_0\Map(x_n,y_i)$. 

The point is that if we have two factorizations, say $f_0$ and $f_1$, by filteredness of $I$, we may assume that they factor through the same $y_j$, and then we get a map $x_{n+1}\to y_j\times_y y_j$. As filtered colimits are left exact in $\M$, this target is the filtered colimit $\colim_{I_{j/}} y_j\times_{y_i}y_j$, so that the compactness of $x_n\to x_{n+1}$ guarantees  that the map $x_n\to x_{n+1}\to y_j\times_y y_j$ factors through some $y_j\times_{y_i}y_j$. Therefore, the composites $x_n\to x_{n+1}\xrightarrow{f_0,f_1}y_i$ agree. Thus we get a well-defined map of sets $\pi_0\Map(x_{n+2},\colim_I y_i)\to \colim_I\pi_0\Map(x_n,y_i)$. The uniqueness proved here shows that the upper triangle commutes, and clearly the bottom triangle in the above square commutes, by construction. 

Thus the pro-objects are isomorphic, as claimed. Now, the claim about $\pi_1$ simply follows from \Cref{lm:indprovariant}.
\end{proof}
A key corollary of this is the following:
\begin{cor}\label{cor:basicnuccpct}
Let $\M$ be a category with filtered colimits. 
    Let $x$ be a $\kappa$-compactly exhaustible object of $\M$. In this case, $x$ is $\max(\kappa,\omega_1)$-compact.

    The same holds for weakly $\kappa$-compactly exhaustible objects if $\M$ has finite limits and $\kappa$-filtered colimits in $\M$ are left exact.
\end{cor}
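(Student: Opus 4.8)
The plan is to deduce this directly from \Cref{lm:basicnuc}. Write $\lambda = \max(\kappa,\omega_1)$; we must show that $\Map_\M(x,-)$ preserves $\lambda$-filtered colimits. Fix a $\lambda$-filtered diagram $z_\bullet \colon J \to \M$, and fix a (weak) $\kappa$-compact exhaustion $x_\bullet \colon \mathbb{N} \to \M$ of $x = \colim_\mathbb{N} x_n$. Since $\lambda \geq \kappa$, the diagram $J$ is in particular $\kappa$-filtered, so $\{z_j\}_{j\in J}$ defines a $\kappa$-ind-object with $p(\{z_j\}) = \colim_J z_j$. Applying \Cref{lm:basicnuc} to it gives an equivalence $\Map_{\Ind_\kappa(\M)}(\{x_n\},\{z_j\}) \xrightarrow{\ \simeq\ } \Map_\M(x,\colim_J z_j)$, and exactly as at the start of the proof of \Cref{lm:basicnuc}, since $\{x_n\}$ is a countable colimit of $\kappa$-compact objects of $\Ind_\kappa(\M)$ while $\{z_j\}$ is a $\kappa$-filtered colimit of representables, the left-hand side unwinds to $\lim_n \colim_J \Map_\M(x_n, z_j)$.

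On the other hand, $\colim_J \Map_\M(x,z_j) = \colim_J \lim_n \Map_\M(x_n,z_j)$, where the inner limit is indexed by $\mathbb{N}\op$, a countable — hence $\omega_1$-small — diagram, and $J$ is $\omega_1$-filtered because $\lambda \geq \omega_1$. As $\omega_1$-filtered colimits commute with $\omega_1$-small limits in $\Ss$, we obtain a canonical equivalence $\colim_J \lim_n \Map_\M(x_n,z_j) \simeq \lim_n \colim_J \Map_\M(x_n,z_j)$. Composing with the displayed equivalence of \Cref{lm:basicnuc}, the canonical comparison map $\colim_J \Map_\M(x,z_j)\to \Map_\M(x,\colim_J z_j)$ is an equivalence; a short naturality check confirms that the map assembled from the colimit/limit swap and \Cref{lm:basicnuc} is indeed the canonical comparison map (both are induced by the structure maps $z_j \to \colim_J z_j$). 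Hence $x$ is $\lambda$-compact.

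This settles the first assertion, where $x_\bullet$ is a genuine $\kappa$-compact exhaustion, since the version of \Cref{lm:basicnuc} proved for (non-weak) $\kappa$-compact exhaustions requires nothing of $\M$ beyond filtered colimits. For the second assertion, one invokes \Cref{lm:basicnuc} in full generality, whose proof uses \Cref{lm:pi0enough}, the unstable Milnor sequence, and the left exactness of $\kappa$-filtered colimits — precisely the extra hypotheses ``$\M$ has finite limits and $\kappa$-filtered colimits in $\M$ are left exact'' appearing in the statement; the rest of the argument above is unchanged.

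The real content is carried entirely by \Cref{lm:basicnuc}, so the remaining work is bookkeeping. The one place that forces $\max(\kappa,\omega_1)$ rather than $\kappa$ is the commutation of the $\lambda$-filtered colimit $\colim_J$ with the tower limit $\lim_n$ coming from the exhaustion: this needs $J$ to be $\omega_1$-filtered, which is automatic once $\kappa \geq \omega_1$ but must be imposed by hand when $\kappa = \omega$. I expect the only (minor) obstacle to be verifying that the equivalence built from the swap map and \Cref{lm:basicnuc} genuinely is the canonical comparison map $\colim_J \Map_\M(x,z_j)\to \Map_\M(x,\colim_J z_j)$, which is a routine diagram chase.
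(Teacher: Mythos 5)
Your proof is correct and follows essentially the same route as the paper's: invoke \Cref{lm:basicnuc} to identify $\Map_\M(x,\colim_J z_j)$ with $\lim_\NN \colim_J \Map(x_n,z_j)$, then swap the countable limit past the $\max(\kappa,\omega_1)$-filtered colimit. The paper's proof is terser and leaves the naturality check and the bookkeeping for the weak case implicit, but the content is identical.
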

\begin{proof}
    \Cref{lm:basicnuc} shows that for a $\kappa$-filtered diagram $y_\bullet$, $\Map_\M(x,\colim_I y_i)\simeq \Map_{\Ind_\kappa(\M)}(\{x_n\}, \{y_i\})$ where $x_\bullet$ is some $\kappa$-compact exhaustion of $x$. 

In turn, the latter is $\lim_\mathbb N\colim_I\Map(x_n,y_i)$. If $I$ is also $\omega_1$-filtered (so, in total, $\max(\kappa,\omega_1)$-filtered), the colimit and the limit commute so that $$\Map(x,\colim_Iy_i)\simeq \colim_I \lim_\mathbb N\Map(x_n,y_i)\simeq \colim_I \Map(x,y_i)$$ as required. 
\end{proof}
\subsection{$\kappa\geq \omega_1$}
We have already noted that in the case of $\kappa$-compactly generated categories, $\kappa$-compact maps were a bit boring: they are exactly the maps factoring through a $\kappa$-compact object. We prove that for uncountable $\kappa$, the same holds for $\kappa$-compactly assembled categories - in fact, we prove that for uncountable $\kappa$, $\kappa$-compactly assembled implies $\kappa$-compactly generated. This serves two purposes: firstly, this will imply a greatly practical cardinal bound which is the key ingredient in proving the presentability of $\Dbl{\V}$, and secondly, this indicates that compact assembly is really only an interesting phenomenon at $\omega$. 

Alternatively, one can view the following proof as an indication of the usefulness of the notion of $\kappa$-compact maps, as well as a demonstration of some of the tricks in this business.

\begin{prop}\label{prop:cardboundunstable}
Let $i: \M\to \N$ be a fully faithful functor admitting a right adjoint $i^R$, where $\N$ is $\kappa$-compactly generated,  for some \emph{uncountable} cardinal $\kappa$. Assume $\M,\N$ admit sequential colimits. If the right adjoint $i^R$ preserves $\kappa$-filtered colimits, then $\M$ is also $\kappa$-compactly generated. 
\end{prop}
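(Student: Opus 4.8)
The plan is to prove that $\M$ is equivalent to $\Ind_\kappa(\M^\kappa)$, which is exactly the assertion that $\M$ is $\kappa$-compactly generated. By \Cref{obs:kcpctass} the hypotheses say precisely that $\M$ is $\kappa$-compactly assembled, so in particular $\M$ has $\kappa$-filtered colimits. Granting this, it suffices to check (i) that $\M^\kappa$ is essentially small, and (ii) that every object of $\M$ is a $\kappa$-filtered colimit of $\kappa$-compact objects of $\M$: indeed, these two facts together with the existence of $\kappa$-filtered colimits in $\M$ imply by a standard argument that the restricted Yoneda embedding $\M\to\Psh(\M^\kappa)$ is fully faithful with essential image $\Ind_\kappa(\M^\kappa)$.

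Both (i) and a reduction of (ii) are quick. First, since $i$ is a left adjoint it preserves the $\kappa$-filtered colimits that exist in $\M$; combining this with full faithfulness of $i$, the assumption that $i^R$ preserves $\kappa$-filtered colimits, and $\kappa$-compactness of the relevant objects, a short computation with the adjunction $i\dashv i^R$ shows that $i$ carries $\kappa$-compact objects of $\M$ to $\kappa$-compact objects of $\N$; since $\N^\kappa$ is essentially small and $i$ is fully faithful, (i) follows. Second, for (ii), given $x\in\M$ write $i(x)=\colim_{j\in J}n_j$ as a $\kappa$-filtered colimit of objects $n_j\in\N^\kappa$ (possible since $\N$ is $\kappa$-compactly generated) and apply $i^R$ to get $x\simeq\colim_J i^R(n_j)$; thus (ii) reduces to showing that $i^R(n)$ is $\kappa$-compact in $\M$ for each $n\in\N^\kappa$.

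To prove that $i^R(n)$ is $\kappa$-compact, I would show it is $\kappa$-compactly exhaustible — i.e. a colimit of a sequence $p_0\to p_1\to\cdots$ whose transition maps are $\kappa$-compact morphisms of $\M$ — and then invoke \Cref{cor:basicnuccpct}: because $\kappa$ is uncountable, $\max(\kappa,\omega_1)=\kappa$, and $\M$ has the sequential (by hypothesis) and $\kappa$-filtered colimits needed for that result, so it concludes that $i^R(n)$ is $\kappa$-compact. Constructing this exhaustion is the step I expect to be the main obstacle. The idea is to produce the tower $p_\bullet$ by iterated factorization through $\kappa$-compact objects: starting from the counit $i\,i^R(n)\to n$ and a $\kappa$-compact presentation of $i\,i^R(n)$ in $\N$, one repeatedly factors approximating maps through objects of $\N^\kappa$ and transports these factorizations back along $i^R$ — here one uses that $i$ reflects $\kappa$-compact maps (\Cref{ex:reflectcpct}) and that any map of $\M$ which becomes, after applying $i$, a map factoring through a $\kappa$-compact object is itself $\kappa$-compact, so the transported transition maps $p_k\to p_{k+1}$ are $\kappa$-compact in $\M$ — and one then forms the colimit of the resulting $\mathbb N$-indexed tower inside $\M$, using the assumed sequential colimits, and identifies it with $i^R(n)$. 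Both the uncountability of $\kappa$ and the existence of sequential colimits in $\M$ enter crucially here; this is consistent with the statement being false for $\kappa=\omega$ (e.g. $\Sh(X;\Sp)$ for $X$ locally compact Hausdorff).
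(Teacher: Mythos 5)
The strategy (use $\kappa$-compact exhaustions of countable colimits of $i^R$'s of $\kappa$-compacts, invoke \Cref{cor:basicnuccpct} with $\max(\kappa,\omega_1)=\kappa$) is in the spirit of the paper's proof, and your verification that $i$ preserves $\kappa$-compacts and the reduction via $x\simeq\colim_J i^R(n_j)$ are both fine. But the key reduction — that $i^R(n)$ is $\kappa$-compact in $\M$ for each $n\in\N^\kappa$ — is false in general, and the sketched construction of a compact exhaustion of $i^R(n)$ cannot repair it.

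For a concrete failure, take $\N=\Sp$, $\M=\Sp_{\mathbb Q}$ (rational spectra), $\kappa=\omega_1$. The inclusion $i:\Sp_{\mathbb Q}\hookrightarrow\Sp$ preserves colimits (rationalization is smashing), so it has a right adjoint $i^R$, and $i^R$ preserves $\omega_1$-filtered colimits since $i$ preserves $\omega_1$-compacts. With $n=\Sph$, one computes $i^R(\Sph)\simeq \Sph^{\Sph_{\mathbb Q}}$, whose $\pi_{-1}$ is $\widehat{\mathbb Z}/\mathbb Z$ — an uncountable $\mathbb Q$-vector space. So $i^R(\Sph)$ is not $\omega_1$-compact in $\Sp_{\mathbb Q}$ even though $\Sph$ is compact in $\Sp$ and both categories are compactly generated. (Here $i^R$ is the \emph{right} adjoint of the inclusion, not the localization $L_{\mathbb Q}$.) This shows that, in general, $i^R$ does not preserve $\kappa$-compacts; there is no hidden argument that would make your reduction go through.

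The gap is also visible structurally in the step where you assert that the countable tower $p_\bullet$ obtained by iterated factorization can be identified with $i^R(n)$. Writing $ii^R(n)\simeq\colim_J n'_j$ with $J$ $\kappa$-filtered and $n'_j\in\N^\kappa$, any tower you build selects a countable subdiagram of $J$; since $\kappa$ is uncountable, a countable subset of $J$ has an upper bound $j^*$, so that subdiagram can only be cofinal if $J$ has a terminal object — i.e. only if $ii^R(n)$ is already $\kappa$-compact in $\N$, which is exactly what you are trying to prove. The colimit of your tower is therefore a strict subobject of $i^R(n)$ in general.

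What the paper does differently, and why it succeeds: it never asserts that any single $i^R(n_j)$ is $\kappa$-compact. Instead, starting from $m\simeq\colim_J i^R(n_j)$, it left Kan extends to $Down_\omega(J)$ and proves that the subposet $P$ of those $F$ for which $\tilde n_F=\colim_{j\in F}i^R(n_j)$ is $\kappa$-compactly exhaustible is \emph{cofinal} in $Down_\omega(J)$. The cofinality argument hinges on choosing, for any $j_0$, a $j_1\geq j_0$ such that $n_{j_0}\to n_{j_1}$ factors through the counit $ii^R(n_{j_1})\to n_{j_1}$; this (by the adjunction and \Cref{ex:reflectcpct}) forces $i^R(n_{j_0})\to i^R(n_{j_1})$ to be $\kappa$-compact. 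Iterating produces a countable $F$, and $\tilde n_F$ is $\kappa$-compactly exhaustible hence $\kappa$-compact by \Cref{cor:basicnuccpct}. The conclusion is only that $m$ is a (not necessarily $\kappa$-filtered) colimit of $\kappa$-compacts, which is still enough for $\kappa$-compact generation. In short: your proof needs ``$i^R$ preserves $\kappa$-compacts,'' which is false; the paper instead uses the weaker, correct statement that countable colimits along a carefully curated cofinal family of subsequences are $\kappa$-compactly exhaustible.
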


The proof is much simpler in the stable case, where it really is a higher cardinal version of the generalized telescope conjecture which turns out to be correct. 

For the reader to get an idea of the argument, we begin with this simpler case:
\begin{prop}\label{prop:telescope}
    Let $\kappa$ be an \emph{uncountable} cardinal, and let $L:\D\to\E$ be a $\kappa$-compact preserving localization between $\kappa$-compactly generated stable \categories. Its kernel $\ker(L)$ is $\kappa$-compactly generated. 
\end{prop}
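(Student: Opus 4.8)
The plan is to reduce the statement to producing a \emph{set} of $\kappa$-compact generators of $\ker L$, and then to build those generators by a countable approximation argument governed by \Cref{cor:basicnuccpct}.

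First I would set up the localization sequence $\ker L\xrightarrow{j}\D\xrightarrow{L}\E$, with $i\colon\E\to\D$ the fully faithful right adjoint of $L$. Since $L$ preserves $\kappa$-compact objects, $i$ preserves $\kappa$-filtered colimits, hence so does $iL$; as $\D$ is stable, the functor $j^{R}:=\fib(\id_{\D}\to iL)$ preserves $\kappa$-filtered colimits as well, and a direct check shows it is right adjoint to $j$ with $j^{R}j\simeq\id$. Consequently $\ker L$ is closed under $\kappa$-filtered colimits in $\D$, and a formal manipulation of these two facts yields $(\ker L)^{\kappa}=\D^{\kappa}\cap\ker L$. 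Because $\D$ is $\kappa$-compactly generated, any $y\in\ker L$ is the canonical $\kappa$-filtered colimit $\colim_{d\to y,\,d\in\D^{\kappa}}d$ in $\D$; applying $j^{R}$ and using $y=j^{R}y$ gives $y\simeq\colim_{d\to y}j^{R}(d)$. Hence it suffices to show that $j^{R}(d)$ is a $\kappa$-compact object of $\ker L$ for every $d\in\D^{\kappa}$: then $\{j^{R}(d)\}_{d\in\D^{\kappa}}$ is a set of $\kappa$-compact generators, every object of $\ker L$ is a $\kappa$-filtered colimit of $\kappa$-compacts, and the canonical functor $\Ind_{\kappa}\big((\ker L)^{\kappa}\big)\to\ker L$ is an equivalence.

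To see that $j^{R}(d)=\fib(d\to iL(d))$ is $\kappa$-compact I would exhibit a $\kappa$-compact exhaustion of it inside $\ker L$: a sequential diagram $z_{0}\to z_{1}\to\cdots$ with each $z_{n}\in\D^{\kappa}\cap\ker L$ and $\colim_{n}z_{n}\simeq j^{R}(d)$. By \Cref{ex:factorcompact} every map between $\kappa$-compact objects is a $\kappa$-compact map, so all transition maps are $\kappa$-compact, and \Cref{cor:basicnuccpct} (applicable since $\ker L$, like $\D$, is stable) then forces $j^{R}(d)$ to be $\max(\kappa,\omega_{1})=\kappa$-compact. I would construct the $z_{n}$ recursively, in the style of the construction of a compact generator: set $z_{0}=0$, and given $z_{n}\to j^{R}(d)$ with $z_{n}\in\D^{\kappa}\cap\ker L$, use that $j^{R}$ preserves $\kappa$-filtered colimits --- so that $j^{R}(d)$, and the cofiber of $z_{n}\to j^{R}(d)$, is detected by $\kappa$-compact objects --- to attach a $(<\kappa)$-indexed family of $\kappa$-compact cells lying in $\ker L$, obtaining $z_{n+1}\in\D^{\kappa}\cap\ker L$. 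Uncountability of $\kappa$ enters essentially and twice: a countable union of $(<\kappa)$-sized families is again $(<\kappa)$-sized because $\kappa$ has uncountable cofinality, so the $z_{n}$ stay $\kappa$-compact; and \Cref{cor:basicnuccpct} only upgrades a \emph{countable} exhaustion by $\kappa$-compacts to $\kappa$-compactness. (For $\kappa=\omega$ the scheme produces only $\omega_{1}$-compactness, consistent with the existence of dualizable --- equivalently, by \Cref{thm:lurie}, kernels of compact-preserving localizations --- stable categories that are not compactly generated, such as $\Sh(X;\Sp)$ for suitable locally compact Hausdorff $X$; see \Cref{ex:Shisdbl}.)

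The hard part is this recursive construction: one must arrange the bookkeeping so that, after only countably many rounds of attaching $(<\kappa)$-many $\kappa$-compact cells, the colimit of the tower is exactly $j^{R}(d)$. I expect the cleanest route is to package it through \Cref{lm:basicnuc}, i.e. to produce a pro-object $\{z_{n}\}$ in $\ker L$ with $\kappa$-compact terms such that the comparison $\Map(\{z_{n}\},-)\to\Map(j^{R}(d),-)$ is an equivalence on $\kappa$-ind-objects; this reduces the problem to a sequence of ``lift through the cofiber up to a $\kappa$-compact error'' assertions that can be checked one homotopy group at a time, using the Milnor-type sequence recalled in \Cref{app:milnor} exactly as in the proof of \Cref{lm:basicnuc} itself.
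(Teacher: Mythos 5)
The proposal contains a fatal gap: the key reduction, ``it suffices to show that $j^R(d)$ is $\kappa$-compact in $\ker L$ for every $d\in\D^\kappa$'', is not a route to the theorem because that auxiliary claim is \emph{false} in general. The set-up before that point is sound (the fiber sequence $jj^R\to\id_\D\to iL$ gives $\kappa$-filtered-colimit-preservation of $j^R$, which gives $(\ker L)^\kappa=\D^\kappa\cap\ker L$ and $y\simeq\colim_{d\to y,\,d\in\D^\kappa}j^R(d)$), but the objects $j^R(d)$ for $\kappa$-compact $d$ are \emph{not} $\kappa$-compact in general, so they are not a generating set of the required kind. Concretely, take $\D=\Mod_{\Z}$, and $\E=\Mod_{\Z[S^{-1}]}$ for a set of primes $S$ with $|S|\geq\kappa$, with $L$ the base-change functor. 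Then $L$ preserves compacts (hence $\kappa$-compacts) and is a localization, and indeed $\ker L$ is $\kappa$-compactly generated (by the $\Z/p$, $p\in S$), so the hypotheses and the conclusion of \Cref{prop:telescope} hold; yet $j^R(\Z)=\fib(\Z\to\Z[S^{-1}])$ is not $\kappa$-compact, because $\Z$ is compact while $\Z[S^{-1}]$ is a filtered colimit of copies of $\Z$ indexed over a poset of cofinality $\geq|S|\geq\kappa$, so $\Z[S^{-1}]\notin\D^\kappa$, and by the fiber sequence $j^R(\Z)\notin\D^\kappa$ either. Consequently the ``hard part'' you defer --- producing a \emph{countable} tower of $\kappa$-compacts in $\ker L$ with colimit $j^R(d)$ --- is not a matter of bookkeeping: by \Cref{cor:basicnuccpct} any such tower would force $j^R(d)\in\D^\kappa$, which fails.

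What goes wrong conceptually is the instinct to approximate by objects that are themselves in $\ker L$. The paper's proof sidesteps this entirely. Rather than a canonical generating set, it invokes \Cref{lm:kappacpctgen}: it suffices that every nonzero $X\in\ker L$ receives a nonzero map from \emph{some} $\kappa$-compact object of $\ker L$. Starting from a $\kappa$-filtered colimit presentation $X\simeq\colim_I X_i$ with $X_i\in\D^\kappa$ \emph{not necessarily in $\ker L$}, one picks $i_0$ with $X_{i_0}\to X$ nonzero (using \Cref{prop:nonzerocolim}), and then --- this is the key move --- uses that $L(X_{i_n})\in\E^\kappa$ and $LX=0$ to find $i_{n+1}>i_n$ with $L(X_{i_n}\to X_{i_{n+1}})=0$. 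The object $Y:=\colim_{n\in\NN}X_{i_n}$ is then in $\ker L$ \emph{not} because each $X_{i_n}$ is, but because $L$ kills every transition map; it is $\kappa$-compact since it is a countable ($<\kappa$, as $\kappa$ is uncountable and regular) colimit of $\kappa$-compacts; and the composite $X_{i_0}\to Y\to X$ is nonzero. Your proposal has no analogue of the step of choosing intermediate approximations \emph{outside} $\ker L$ and letting $L$ trivialize the transition maps; some such diagonal trick is essential, and it is precisely what the paper introduces here (and then abstracts, in \Cref{prop:smalllim}, \Cref{prop:cardboundunstable}, and \Cref{prop:coalgacc}, as a recurring proof pattern for uncountable $\kappa$).
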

The proof of this proposition is relatively simple: one tries to prove the statement for $\kappa=\omega$ and, failing to prove that (wrong) statement, one then simply observes that it works for higher cardinals.
\begin{proof}
The right adjoint of $\D\to \E$ preserves $\kappa$-filtered colimits, hence so does the right adjoint to the inclusion of the kernel by \Cref{cor:colimleftright}. Therefore the inclusion of the kernel preserves $\kappa$-compacts. It also reflects $\kappa$-compactness because it is fully faithful and colimit preserving. 

By \Cref{lm:kappacpctgen}, we are thus trying to prove that any nonzero $X\in\ker(L)$ receives a nonzero map from some $Y\in\ker(L)\cap \D^\kappa = \ker(L)^\kappa$. So let $X\in \ker(L)$ be nonzero. We can write $X = \colim_I X_i$ as a $\kappa$-filtered colimit of $\kappa$-compact objects of $\D$. Of course, the $X_i$'s themselves have no reason to be in $\ker(L)$. 

We first find an $i_0$ such that the canonical map $X_{i_0}\to X$ is nonzero, which exists by \Cref{prop:nonzerocolim}. 

Applying $L$ to it, and because $L(X_{i_0})$ is $\kappa$-compact by assumption, there is some $i_1 > i_0$ such that $L(X_{i_0})\to L(X_{i_1})$ is already zero. Start again with $X_{i_1}$ and find some $i_2$, and iterate to get a sequence $i: \mathbb N\to I$ with $i(0) = i_0$ such that for every $n$, the map $L(X_{i(n)}\to X_{i(n+1)})$ is zero.  

Take $Y := \colim_\mathbb N X_{i(n)}$. Then, because $L$ preserves colimits, and every map in the diagram of the $L(X_{i(n)})$'s is $0$, we find $LY = 0$, i.e. $Y\in \ker(L)$. Furthermore, $\mathbb N$ is $\kappa$-small\footnote{This is where we crucially use that $\kappa > \omega$. }, and hence $Y$ is $\kappa$-compact, as a $\kappa$-small colimit of $\kappa$-compacts. In other words, $Y\in \ker(L)^\kappa$. Finally, we have a factorization $X_{i_0}\to Y\to X$, and $X_{i_0}\to X$ is nonzero, so $Y\to X$ is also nonzero. 
\end{proof}
We recall that the kernel $\ker(L)$ is dualizable if $\kappa=\omega$, but it need not be compactly generated. On the other hand, the above proof affords the following interpretation of compact maps in $\ker(L)$:
\begin{cor}
   Let $L:\D\to\E$ be a compact preserving localization between compactly generated stable \categories, and let $i^R: \D\to \ker(L)$ be the right adjoint to the inclusion. Let $f:x\to y$ be a map in $\D$ such that $L(f) = 0$, and such that $x$ is compact. In that case, $r(f)$ is compact in $\ker(L)$. 
\end{cor}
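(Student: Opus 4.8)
The plan is to reduce to a statement about factorizations in the ambient category $\D$ and then exhibit an explicit such factorization through $x$ itself. First I would record that, since $L$ is compact-preserving, $L^R$ preserves filtered colimits, so by \Cref{cor:colimleftright} the right adjoint $i^R$ to the inclusion $i\colon\ker(L)\to\D$ also preserves filtered colimits. Thus $i$ is a fully faithful functor into a compactly generated category with a filtered-colimit-preserving right adjoint, and \Cref{ex:embtocheckcpct} applies: a map $g$ in $\ker(L)$ is compact if and only if $i(g)$ factors through a compact object of $\D$. Hence it suffices to factor $i\,i^R(f)\colon i\,i^R(x)\to i\,i^R(y)$ through a compact object of $\D$ (here $r(f)=i^R(f)$ in the notation of the statement).

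Next I would bring in the fundamental fiber sequence of endofunctors $i\,i^R\to\id_\D\to L^R L$ (\Cref{lm:fundamentalsequence}), with counit $\epsilon\colon i\,i^R\to\id_\D$. Applying $\map_\D(x,-)$ to the fiber sequence $i\,i^R(y)\xrightarrow{\epsilon_y} y\to L^R L(y)$ and using $\map_\D(x,L^R L(y))\simeq\map_\E(L(x),L(y))$ shows that the image of $f$ in $\map_\E(L(x),L(y))$ is $L(f)=0$; hence $f$ lifts to a map $\tilde f\colon x\to i\,i^R(y)$ with $\epsilon_y\circ\tilde f\simeq f$. This is the only place the hypothesis $L(f)=0$ is used, and it is genuinely needed — without it $i^R(x)$ need not be compact in $\ker(L)$ at all.

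Finally I would show $i\,i^R(f)\simeq\tilde f\circ\epsilon_x$, which finishes the proof since then $i\,i^R(f)$ factors through the compact object $x$. Both $i\,i^R(f)$ and $\tilde f\circ\epsilon_x$ are maps $i\,i^R(x)\to i\,i^R(y)$ whose composite with $\epsilon_y$ is $f\circ\epsilon_x$: for $i\,i^R(f)$ by naturality of $\epsilon$ applied to $f$, and for $\tilde f\circ\epsilon_x$ by the defining property of $\tilde f$. So both lie in the homotopy fiber of $\map_\D(i\,i^R(x),i\,i^R(y))\to\map_\D(i\,i^R(x),y)$ over $f\circ\epsilon_x$, and that fiber is (the underlying space of) $\map_\D(i\,i^R(x),\fib(\epsilon_y))$. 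But $\fib(\epsilon_y)\simeq L^R(L(y)[-1])$ is $L$-local, while $i\,i^R(x)$ is $L$-acyclic — applying $L$ to $i\,i^R(x)=\fib(x\to L^R L(x))$ and using $L L^R\simeq\id_\E$ gives $L(i\,i^R(x))=0$ — and mapping spectra from $L$-acyclic objects to $L$-local objects vanish. Hence the fiber is contractible and the two lifts coincide.

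The fiber-sequence bookkeeping, the identification $\fib(\epsilon_y)\simeq L^R(L(y)[-1])$, and the acyclic/local vanishing are all routine. The one point that needs care — and the only real content — is the last step: one must check $i\,i^R(f)\simeq\tilde f\circ\epsilon_x$ as an equivalence of morphisms, not merely that both factorize $f\circ\epsilon_x$, which is exactly what the vanishing $\map_\D(i\,i^R(x),L^R L(y)[-1])\simeq 0$ provides. One should also be mindful of the direction of that vanishing: it is maps \emph{from} $L$-acyclic objects \emph{to} $L$-local objects that are trivial, which is precisely the configuration appearing here, and the same mechanism that underlies the proof of \Cref{prop:telescope}.
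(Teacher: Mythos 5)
Your proof is correct and follows essentially the same route as the paper's: lift $f$ to $\tilde f\colon x\to ii^R(y)$ along the fiber sequence using $L(f)=0$, show $ii^R(f)\simeq\tilde f\circ\epsilon_x$, and conclude that $ii^R(f)$ factors through the compact object $x$, so $i^R(f)$ is compact by \Cref{ex:reflectcpct}. The only difference is in justifying $ii^R(f)\simeq\tilde f\circ\epsilon_x$: the paper dispatches it in one line — ``by adjunction'' postcomposition with $\epsilon_y$ gives an equivalence $\Map(ii^R(x),ii^R(y))\to\Map(ii^R(x),y)$ because $\Map_\D(ii^R(x),ii^R(y))\simeq\Map_{\ker L}(i^R x,i^R y)\simeq\Map_\D(ii^R(x),y)$ — whereas you compute the fiber of that map as $\map(ii^R(x),\fib(\epsilon_y))\simeq\map(ii^R(x),L^RL(y)[-1])$ and observe it vanishes because it is a map from an $L$-acyclic to an $L$-local object. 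These are two phrasings of the same fact; yours is more hands-on, the paper's slightly slicker.
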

\begin{proof}
    Let $i$ denote the inclusion and $L^R$ be the right adjoint to the inclusion. In $\D$, we have the following diagram:
    \[\begin{tikzcd}
	{ii^R(x)} & x & {L^RL(x)} \\
	{ii^R(y)} & y & {L^RL(y)}
	\arrow[from=2-1, to=2-2]
	\arrow[from=2-2, to=2-3]
	\arrow[from=1-2, to=2-2]
	\arrow[from=1-3, to=2-3]
	\arrow[from=1-1, to=2-1]
	\arrow[from=1-1, to=1-2]
	\arrow[from=1-2, to=1-3]
	\arrow[dashed, from=1-2, to=2-1]
\end{tikzcd}\]
where there is a dotted map making the bottom right triangle commute because the lower row is a fiber sequence and the right-most map is null by assumption. By adjunction, the dotted map also makes the top left hand triangle commute and thus $ii^R(x)\to ii^R(y)$ factors through $x$ and is therefore compact. By \Cref{ex:reflectcpct}, $i^R(f)$ is also compact. 
\end{proof}
\begin{cor}
    Let $i:\mathcal K\to\D$ be a fully faithful map in $\PrL_{\st}$, such that $i^R$ preserves $\kappa$-filtered colimits and assume $\D$ is $\kappa$-compactly generated. If $\kappa$ is uncountable, then $\mathcal K$ is $\kappa$-compactly generated. 
\end{cor}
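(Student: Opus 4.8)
The plan is to deduce this from \Cref{prop:telescope} by realizing $\mathcal K$ as the kernel of a localization. Identifying $\mathcal K$ with its essential image in $\D$, the hypotheses say that $\mathcal K\subseteq\D$ is a presentable, colimit-closed (hence localizing) full subcategory whose inclusion $i$ admits a right adjoint $i^R$. As recalled in \Cref{app:stable}, such a subcategory is precisely the kernel of the Verdier localization $L:\D\to\E$, where $\E:=\D/\mathcal K$; here $i^R$ is the right adjoint of $i$, the (fully faithful) right adjoint $L^R$ of $L$ has essential image $\mathcal K^\perp$, and $\E$ is presentable and stable, being a reflective localization of $\D$.

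It then remains to verify the two hypotheses needed to feed $L$ into \Cref{prop:telescope}, namely that $\E$ is $\kappa$-compactly generated and that $L$ is $\kappa$-compact preserving. For the latter I would first note that $L^R$ preserves $\kappa$-filtered colimits: this follows from the fundamental fiber sequence $ii^R\to\id_\D\to L^RL$ of endofunctors of $\D$ (\Cref{lm:fundamentalsequence}) together with the hypothesis on $i^R$ and two-out-of-three for $\kappa$-filtered colimits in the stable \category{} $\D$ --- this is exactly \Cref{cor:colimleftright}, as used in the proof of \Cref{prop:telescope}; equivalently, $L$ carries $\D^\kappa$ into $\E^\kappa$. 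For the former, the objects $L(d)$ with $d\in\D^\kappa$ generate $\E$ under colimits (since $L$ is colimit-preserving, essentially surjective, and $\D^\kappa$ generates $\D$), and each such $L(d)$ is $\kappa$-compact because $\Map_\E(L(d),-)\simeq\Map_\D(d,L^R(-))$ is a composite of two functors preserving $\kappa$-filtered colimits; hence $\E$ is $\kappa$-compactly generated (and stable).

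With $\E$ and $L$ in hand, \Cref{prop:telescope} applies and gives that $\ker(L)=\mathcal K$ is $\kappa$-compactly generated, which is what we wanted. I do not expect any serious obstacle: essentially all the content is the bookkeeping around $L$ in the previous paragraph, and the uncountability of $\kappa$ is used only inside \Cref{prop:telescope} (to ensure that a countable colimit of $\kappa$-compacts stays $\kappa$-compact). One could instead simply observe that the statement is the stable instance of \Cref{prop:cardboundunstable}, whose extra ``admits sequential colimits'' hypothesis is automatic for presentable \categories; but routing through \Cref{prop:telescope} keeps the stable discussion self-contained.
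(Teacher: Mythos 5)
Your proof is correct and takes essentially the same route as the paper: form the Verdier quotient $\E=\D/\mathcal K$, use \Cref{lm:fundamentalsequence} and \Cref{cor:colimleftright} to get that $L$ preserves $\kappa$-compacts and that $\E$ is $\kappa$-compactly generated, and then invoke \Cref{prop:telescope}. You simply spell out the ``standard arguments'' the paper glosses over, and your side remark that the statement also follows from \Cref{prop:cardboundunstable} is accurate.
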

\begin{proof}
    Consider the localization $\E:=\D/\mathcal K$, with localization functor $L:\D\to \E$. By standard arguments, the fact that $i^R$ preserves $\kappa$-filtered colimits implies the same for the right adjoint $L^R$, and in particular $L$ preserves $\kappa$-compact objects. It follows that $\E$ is $\kappa$-compactly generated, and thus $L:\D\to\E$ fits in the situation of the previous proposition, so that its kernel $\mathcal K$ is $\kappa$-compactly generated, as was to prove. 
\end{proof}
The point of the proof of this corollary is that (at least in the presentable case) stability allows us to rephrase the problem as a problem of limits in $\PrL_\kappa$. In fact, the proof of \Cref{prop:telescope} is a special case of the more general following statement:
\begin{prop}\label{prop:smalllim}
    For any uncountable cardinal $\kappa$, the inclusion $\PrL_\kappa\to \PrL$ preserves $\kappa$-small limits. 
\end{prop}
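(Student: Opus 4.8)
The plan is to show that $\PrL_\kappa$ is closed under $\kappa$-small limits in $\PrL$ and that the inclusion creates them, and to reduce this to two cases: $\kappa$-small products and pullbacks. The reduction is formal: $\PrL$ has all small limits, and every $\kappa$-small limit is an iterate of $\kappa$-small products and pullbacks — writing $\lim_I F$ as the totalization of its cobar cosimplicial object exhibits it as a sequential limit of finite limits of $\kappa$-small products of the $F(i)$, a sequential limit $\lim_{\mathbb{N}^{\mathrm{op}}}$ is the equalizer of two endomorphisms of a countable product, and an equalizer $\mathrm{eq}(u,v\colon\mathcal M\rightrightarrows\mathcal N)$ is the pullback of $(u,v)\colon\mathcal M\to\mathcal N\times\mathcal N$ along the diagonal. (The diagonal, and more generally $(u,v)$ with $u,v\in\PrL_\kappa$, lies in $\PrL_\kappa$: its right adjoint $(n_1,n_2)\mapsto u^R n_1\times v^R n_2$ preserves $\kappa$-filtered colimits, since finite products commute with filtered colimits in a presentable category and $u^R,v^R$ preserve $\kappa$-filtered colimits.) So it suffices to prove the inclusion preserves $\kappa$-small products and pullbacks; once products are in hand, the cobar object lands in $\PrL_\kappa$ and the rest of the reduction takes place inside $\PrL$.

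\emph{Products.} For a $\kappa$-small family $\{\mathcal C_i\}$ in $\PrL_\kappa$, the $\PrL$-product $\prod_i\mathcal C_i$ has colimits computed levelwise. Any family $(x_i)$ with each $x_i\in\mathcal C_i^\kappa$ is $\kappa$-compact there, since $\kappa$-filtered colimits are levelwise and commute with $\kappa$-small products in $\Ss$; the objects $\iota_i(c)$ (for $c\in\mathcal C_i^\kappa$, with $c$ in slot $i$ and the initial object elsewhere, $\iota_i$ being left adjoint to the projection $p_i$) are of this form, form a set, and generate under colimits because each $\mathcal C_i$ is $\kappa$-compactly generated, so $\prod_i\mathcal C_i\in\PrL_\kappa$. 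Each $p_i$ lies in $\PrL_\kappa$: its right adjoint sends $c$ to the family with $c$ in slot $i$ and the terminal object elsewhere, and this preserves $\kappa$-filtered colimits because filtered colimits of a terminal object are terminal in any presentable category. Finally a $\PrL$-map $a\to\prod_i\mathcal C_i$ — a family of maps $a\to\mathcal C_i$ — preserves $\kappa$-compacts iff each $a\to\mathcal C_i$ does (``only if'' since the $p_i$ lie in $\PrL_\kappa$; ``if'' since then $a^\kappa$ lands in the levelwise-$\kappa$-compact families), so $\prod_i\mathcal C_i$ with the $p_i$ is the product in $\PrL_\kappa$.

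\emph{Pullbacks.} Let $f\colon\mathcal C\to\mathcal E$, $g\colon\mathcal D\to\mathcal E$ in $\PrL_\kappa$ and $\mathcal P:=\mathcal C\times_\mathcal E\mathcal D$, the $\PrL$-pullback (computed levelwise, with colimits — in particular $\kappa$-filtered colimits — levelwise, since $f,g$ preserve colimits). An object $(c,d,\phi)$ with $c\in\mathcal C^\kappa$, $d\in\mathcal D^\kappa$ is $\kappa$-compact in $\mathcal P$: $\Map_\mathcal P((c,d,\phi),-)$ is built by a finite limit from $\Map_\mathcal C(c,-)$, $\Map_\mathcal D(d,-)$ and $\Map_\mathcal E(fc,-)$, precomposed with the projections and $g$, and each of these commutes with $\kappa$-filtered colimits — for the last one crucially because $f\in\PrL_\kappa$ makes $fc$ $\kappa$-compact in $\mathcal E$ — while finite limits commute with $\kappa$-filtered colimits in $\Ss$. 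These objects form a set $G$, and I claim $\{\Map_\mathcal P(g',-)\}_{g'\in G}$ is jointly conservative; by the standard criterion (cf. \Cref{lm:kappacpctgen}) this gives $\mathcal P\in\PrL_\kappa$. Then the projections lie in $\PrL_\kappa$ by the usual retract argument (every object of $\mathcal P^\kappa$ is a retract of a $\kappa$-small colimit of objects of $G$, which $\pi_\mathcal C$ carries to a retract of a $\kappa$-small colimit of $\kappa$-compacts), and $\mathcal P$ is the pullback in $\PrL_\kappa$, since a $\PrL$-map into it preserves $\kappa$-compacts iff its two components do (``if'' because then $a^\kappa$ lands in $G$).

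\emph{The key step, and the main obstacle.} To prove joint conservativity of $G$ I use a back-and-forth construction generalizing the proof of \Cref{prop:telescope}; this is where $\kappa>\omega$ enters. Fix $(X,Y,\phi)\in\mathcal P$ and write $X=\colim_A X_a$, $Y=\colim_B Y_b$ as $\kappa$-filtered colimits of $\kappa$-compacts. Given any $\kappa$-compact $c_0$ with a map to $X$: factor it through some $X_{a_0}=:c^{(0)}$, then — using that $f(c^{(0)})$ is $\kappa$-compact — factor $f(c^{(0)})\to f(X)\xrightarrow{\phi}g(Y)=\colim_B g(Y_b)$ through some $g(Y_{b_0})=:g(d^{(0)})$, then — using that $g(d^{(0)})$ is $\kappa$-compact — factor $g(d^{(0)})\to g(Y)\xrightarrow{\phi^{-1}}f(X)$ through some $f(X_{a_1})=:f(c^{(1)})$, and iterate, producing a zig-zag $f(c^{(0)})\to g(d^{(0)})\to f(c^{(1)})\to g(d^{(1)})\to\cdots$ in $\mathcal E$ over the identification $\phi$. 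Setting $c':=\colim_n c^{(n)}$ and $d':=\colim_n d^{(n)}$, these are \emph{countable}, hence $\kappa$-small, colimits of $\kappa$-compacts, so $c'\in\mathcal C^\kappa$ and $d'\in\mathcal D^\kappa$, and $f(c')\simeq\colim_n f(c^{(n)})\simeq\colim_n g(d^{(n)})\simeq g(d')$ supplies $\phi'$, giving $(c',d',\phi')\in G$ together with a map to $(X,Y,\phi)$ through which $c_0\to X$ factors; symmetrically on the $\mathcal D$-side. Joint conservativity then follows: if $\psi\colon(X_1,Y_1,\phi_1)\to(X_2,Y_2,\phi_2)$ is inverted by all $\Map_\mathcal P(g',-)$ but, say, $\pi_\mathcal C(\psi)$ is not an equivalence, pick $c_0\in\mathcal C^\kappa$ detecting this, run the construction on both $(X_i,Y_i,\phi_i)$ feeding in the finitely many homotopy classes that witness the failure, and take a common refinement using that $G$ is closed under $\kappa$-small coproducts, to obtain a single $g'\in G$ through which these classes factor — so $\Map_\mathcal P(g',-)$ already detects the failure, a contradiction. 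The genuinely delicate points, and the place where I expect to spend the most effort, are exactly here: assembling the interleaved zig-zag into a coherent diagram (the maps $f(c^{(n)})\to g(d^{(n)})$ and the transitions must be organized compatibly, with the resulting $\phi'$ compatible with $\phi$), and organizing the unstable ``$\pi_0$-plus-$\pi_1$'' bookkeeping so that one fixed $g'$ witnesses a given non-equivalence — both handled by the devices already used in the proof of \Cref{lm:basicnuc} (reduce to $\pi_0$ via \Cref{lm:pi0enough}, then run the factorization argument on $\pi_0$ and on $\pi_1$), cf. also \Cref{cor:basicnuccpct}.
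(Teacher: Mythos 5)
Your decomposition into $\kappa$-small products and pullbacks, and the back-and-forth ladder for pullbacks exploiting $\kappa>\omega$, match the paper's proof exactly; the product case is also handled the same way and is fine. However, I see two genuine gaps in the pullback step.

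First, you go after \emph{joint conservativity} of $\{\Map_\mathcal P(g',-)\}_{g'\in G}$ rather than \emph{essential surjectivity} of the fully faithful embedding $\Ind_\kappa(\mathcal C^\kappa\times_{\mathcal E^\kappa}\mathcal D^\kappa)\to \mathcal C\times_\mathcal E\mathcal D$. The two are formally equivalent (once the embedding is known to be fully faithful, its right adjoint is conservative iff the adjunction is an equivalence), but proving conservativity directly is what forces the unstable $\pi_0$-plus-$\pi_1$ bookkeeping you flag as the main obstacle. Your final paragraph — picking ``the finitely many homotopy classes that witness the failure'' and a ``common refinement'' — does not hold up: a map in a presentable $\infty$-category can fail to be an equivalence on $\pi_n$ at varying basepoints for all $n$ at once, and there is no finite witnessing set to feed into the ladder. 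In contrast, proving essential surjectivity (exhibiting a given $(X,Y,\phi)$ as a $\kappa$-filtered colimit of objects of $G$) requires no $\pi_n$-analysis at all, and the failure-detection argument evaporates; this is the route the paper takes.

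Second, the coherence of the interleaved zig-zag is not ``handled by the devices already used in the proof of \Cref{lm:basicnuc}'' — that lemma concerns compact exhaustions inside a single category and supplies the $\pi_0$ reduction via pro-isomorphisms, not a mechanism for coherently organizing a zig-zag between two $\kappa$-filtered presentations indexed over \emph{different} posets. The actual resolution is a pair of reductions made explicit in the paper: (i) because the $f(c_i)$ and $g(d_j)$ are all $\kappa$-compact in $\mathcal E$ and the index posets are $\kappa$-filtered, one may replace the two presentations by a \emph{common} $\kappa$-filtered poset $I$ and realize $\phi$ as induced by a map of diagrams $\alpha_i\colon f(c_i)\to g(d_i)$; (ii) one then Kan-extends the diagrams to $\mathrm{Down}_\kappa(I)$ so that, without loss of generality, $I$ admits $\kappa$-small filtered colimits and the diagrams preserve them. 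After these reductions the ladder argument becomes a clean cofinality statement — the subposet $I^{\mathrm{eq}}$ where $\alpha_i$ is an equivalence is unbounded in $I$ — and the coherence problems you worry about never arise, because the zig-zag lives entirely inside a single diagram. Without those reductions, the maps $f(c^{(n)})\to g(d^{(n)})$ and the transition maps must be chosen to be compatible up to coherent homotopy, and it is not automatic that the resulting $\phi'$ agrees with the restriction of $\phi$; this is a real hole in the argument as written, not merely a matter of effort.
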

\begin{proof}
    It suffices to prove so for pullbacks and $\kappa$-small products. The case of $\kappa$-small products does not require the uncountability of $\kappa$: let $I$ be a $\kappa$-small set, and $C_i$ a family of small categories. The canonical map $\prod_IC_i\to \prod_I\Ind_\kappa(C_i)$ lands in $\kappa$-compact objects: indeed, $\Map_{\prod_I}((x_i), -) = \prod_I\Map(x_i,-)$ is a $\kappa$-small limit which thus commutes with $\kappa$-filtered colimits in spaces. It follows that the induced functor $\Ind_\kappa(\prod_IC_i)\to \prod_I\Ind_\kappa(C_i)$ is fully faithful. 
    
    Now fix an object $(x_i)$ in the target, and for each $i$, write $x_i=\colim_{K_i}x_i^k$ for a $\kappa$-filtered poset $K_i$ and a diagram $x_i^\bullet: K_i\to C_i$. Note that $\prod_I K_i$ is also $\kappa$-filtered, and each projection map $\prod_I K_i\to K_i$ is cofinal, so that $(x_i)= \colim_{(k_i)\in \prod_I K_i} (x_i^{k_i})$ in $\prod_I \Ind_\kappa(C_i)$ proves essential surjectivity.

    We now move on to pullbacks, where uncountability will be needed. Let $C\xrightarrow{p} D\xleftarrow{q} E$ be a diagram in $\PrL_\kappa$. The inclusion $C^\kappa\times_{D^\kappa}E^\kappa\to C\times_D E$ clearly lands in $\kappa$-compact objects, so we have a fully faithful embedding $\Ind_\kappa(C^\kappa\times_{D^\kappa}E^\kappa)\to C\times_D E$ and we need to prove it is essentially surjective. 

    We first describe the informal idea, and make it precise later: take $(c,e,\alpha:~p(c)\simeq~q(e))~\in~C\times_D~E$, and we want to approximate it by similar objects where $c\in C^\kappa, e\in E^\kappa$. Write $c=\colim_I c_i, e\simeq \colim_J e_j$ with $c_i\in C^\kappa,e_j\in E^\kappa$ and $I,J$ $\kappa$-filtered posets. 

    Fix any $i_0\in I$ - as $p(c_{i_0})$ is $\kappa$-compact, the map $p(c_{i_0})\to p(c)\simeq q(e)$ factors through some $q(e_{j_0})$. Similarly, the map $q(e_{j_0})\to q(e_j)\simeq p(c)$ factors through some later $p(c_{i_1})$, and we can now iterate this game to find a ``ladder'': 
    \[\begin{tikzcd}
	{p(c_{i_0})} & {q(e_{j_0})} \\
	{p(c_{i_1})} & {q(e_{j_1})} \\
	{p(c_{i_2})} & {q(e_{j_2})} \\
	\dots & \dots
	\arrow[from=1-1, to=1-2]
	\arrow[from=1-1, to=2-1]
	\arrow[from=1-2, to=2-1]
	\arrow[from=1-2, to=2-2]
	\arrow[from=2-1, to=2-2]
	\arrow[from=2-1, to=3-1]
	\arrow[from=2-2, to=3-1]
	\arrow[from=3-1, to=3-2]
	\arrow[from=2-2, to=3-2]
	\arrow[from=3-1, to=4-1]
	\arrow[from=3-2, to=4-2]
	\arrow[from=3-2, to=4-1]
	\arrow[from=4-1, to=4-2]
\end{tikzcd}\]
whose colimit gives us some $c_\infty,e_\infty$ with an equivalence $p(c_\infty)\simeq q(e_\infty)$ compatible with the map $c_\infty\to c, e_\infty\to e$ and the equivalence $p(c)\simeq q(e)$. As $\kappa$ was uncountable and we only took a countable colimit, $c_\infty\in C^\kappa,e_\infty\in E^\kappa$. Furthermore, we started with an arbitrary $i_0\in I$, so we can ``exhaust'' $(c,e,\alpha)$ with these $c_\infty,e_\infty$. 

We now make the idea precise. Firstly, we note that we may assume without loss of generality that $I=J$, and that the equivalence $p(c)\simeq q(e)$ is induced by a map of diagrams $\alpha_i : p(c_i)\to q(e_i)$, as $p(c_i)$ and $q(e_j)$ are all $\kappa$-compact and $I$ is $\kappa$-filtered. 

We then left Kan extend both diagrams $c_\bullet$ and $e_\bullet$ from $I$ to $Down_\kappa(I)$, the set of $\kappa$-small downwards closed filtered subsets of $I$. The resulting diagrams have the same colimit, so we may assume without loss of generality that $I$ admits $\kappa$-small filtered colimits and that both $c_\bullet$ and $e_\bullet$ preserve them.

In that situation, the previous argument essentially shows that the sub-poset $I^{eq}$ of $I$ spanned by those $i$'s for which $\alpha_i: p(c_i)\to q(e_i)$ is an equivalence is unbounded in $I$, and hence cofinal. Thus $(c,e,\alpha)\simeq\colim_{I^{eq}} (c_i, e_i,\alpha_i) $, as needed. 
 \end{proof}
\begin{rmk}
    In the above, we used filtered posets as we find them easier to manipulate than arbitrary filtered categories. One could give an alternative proof of the above by essentially following the approach from \cite[Section 5.4.6]{HTT}, keeping more careful track of cardinalities. 
\end{rmk}

In the case of \Cref{prop:cardboundunstable}, we are unable to reduce to a statement about limits in $\PrL_\kappa$ (or in $\kappa$-accessible categories), even if we assume that $\M,\N$ are presentable, so we really need an extra proof, although it will look similar. 
\begin{proof}[Proof of \Cref{prop:cardboundunstable}]

    We first observe the following general phenomenon: if $i(m)\simeq~\colim_J n_j$ in $\N$, where $J$ is $\kappa$-filtered, then in fact the co-unit maps $ii^R(n_j)\to n_j$ induces equivalences $\colim_J ii^R(n_j)\simeq \colim_J~n_j \simeq~i(m)$\footnote{This trick is very useful and will be used again later.}.

    Indeed, it is not hard to verify that the total composite $\colim_J~ii^R(n_j)\to~\colim_J n_j\to~i(m)$ is $i(-)$ applied to $\colim_J i^R(n_j)\to m$. The latter is $i^R(-)$ applied to our original equivalence, using that $i^R$ preserves $\kappa$-filtered colimits, so that it is indeed an equivalence. By 2-out-of-3, all the maps involved in the composite are thus equivalences. 

    Having observed this, fix $m\in \M$, and write $i(m)\simeq \colim_J n_j$ for some $\kappa$-filtered diagram $n_\bullet: J\to\N^\kappa$, which exists by assumption. We thus have $m\simeq \colim_J i^R(n_j)$.

    Let $Down_\omega(J)$ denote the poset of downwards closed filtered subsets of $J$ that have a countable cofinal subset - note that this is a filtered subset and we have an inclusion $J\to Down_\omega(J), j\mapsto \{\leq j\}$. Left Kan extend the diagram $i^R(n_\bullet)$ to $Down_\omega(J)$ to get a diagram $\tilde n_\bullet: Down_\omega(J)\to \M$. 

    We claim that the subposet $P\subset Down_\omega(J)$ spanned by those $F$'s such that $\tilde n_F$ is $\kappa$-compactly exhaustible is cofinal. Once we know this, we will be done: it will follow that $\M$ is generated under colimits\footnote{Surprisingly, the colimit we produce is not a priori $\kappa$-filtered. This is not a problem to obtain $\kappa$-compact generation.} by $\kappa$-compactly exhaustible objects, and hence, by \Cref{cor:basicnuccpct}, by $\kappa$-compact objects. 

To prove this cofinality, since $Down_\omega(J)$ is a filtered poset, it suffices to produce, for every $F_0$, a bigger $F$ with the desired property. Now $F_0\subset J$ has a countable cofinal subset, and $J$ is $\kappa$-filtered, so we can find a $j_0\in J$ which is an upper bound for $F_0$. Thus we may assume $F_0= \{\leq j_0\}$. 

In this case, $\tilde n_{j_0} = i^R(n_{j_0})$. Now we note that since $n_{j_0}$ is $\kappa$-compact, the map $n_{j_0}\to~m\simeq~\colim_J~ii^R(n_k)$ factors through some $ii^R(n_{j'_1})$. Using again $\kappa$-compactness, there is a $j_1\geq j'_1,j_0$ such that the composite $n_{j_0}\to ii^R(n_{j_1})\to n_{j_1}$ is the map induced by $n_\bullet$. Thus, for any $j_0$, we find a $j_1\geq j_0$ such that $n_{j_0}\to n_{j_1}$ factors through the counit $ii^R(n_{j_1})\to n_{j_1}$. 

The claim is now that this forces $i^R(n_{j_0})\to i^R(n_{j_1})$ to be $\kappa$-compact. Once we prove this, we will be done: indeed, we can then iterate from $j_1$ to find a $j_2$, and then a $j_n$ and so on, and we can let $F$ be the downwards closure $\{j_n, n\in\mathbb N\}$. For this $F$, we will have $\tilde n_F= \colim_{j\in F} i^R(n_j)= \colim_n i^R(n_{j_n})$, which will be $\kappa$-compactly exhaustible, and by construction $\{\leq j_0\}\subset F$.  

Now this claim is in turn easy to prove : in the following diagram, if there exists a dotted lift making the lower right triangle commute 
\[\begin{tikzcd}
	{ii^R(n_0)} & {n_0} \\
	{ii^R(n_1)} & {n_1}
	\arrow["f", from=1-2, to=2-2]
	\arrow["{\epsilon_{n_0}}", from=1-1, to=1-2]
	\arrow["{ii^R(f)}"', from=1-1, to=2-1]
	\arrow["{\epsilon_{n_1}}"', from=2-1, to=2-2]
	\arrow[dotted, from=1-2, to=2-1]
\end{tikzcd}\]
then the upper left triangle automatically commutes by adjunction: maps $ii^R(n_0)\to ii^R(n_1)$ are entirely determined by the composition $ii^R(n_0)\to ii^R(n_1)\to n_1$. Thus $ii^R(f)$ factors through the $\kappa$-compact object $n_0$, and is therefore $\kappa$-compact by \Cref{ex:factorcompact}. Thus, by \Cref{ex:reflectcpct}, $i^R(n_0)\to i^R(n_1)$ is also $\kappa$-compact, as was claimed.
\end{proof}

\begin{cor}\label{cor:cardboundcompass}
    Let $\M$ be a compactly assembled category. $\M$ is $\omega_1$-compactly generated. 

   If $\M'$ is $\kappa$-compactly assembled and admits colimits, then it is $\max(\kappa,\omega_1)$-compactly generated. 
\end{cor}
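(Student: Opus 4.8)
The plan is to deduce both statements from \Cref{prop:cardboundunstable}, using the characterization of $\kappa$-compact assembly recorded in \Cref{obs:kcpctass}; the corollary is then just bookkeeping of cardinals on top of that result.

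\textbf{First claim.} Let $\M$ be compactly assembled, i.e. $\omega$-compactly assembled. By \Cref{obs:kcpctass} (with $\kappa=\omega$) there are a compactly generated category $\N$ and a fully faithful embedding $i\colon\M\to\N$ whose right adjoint $i^R$ preserves filtered colimits. I would then invoke \Cref{prop:cardboundunstable} with the \emph{uncountable} cardinal $\omega_1$: indeed $\N$, being compactly generated, is in particular $\omega_1$-compactly generated; $\M$ admits filtered colimits, hence sequential ones, and $\N$ is presentable, so both admit sequential colimits; and $i^R$, preserving all filtered colimits, a fortiori preserves $\omega_1$-filtered colimits. Hence \Cref{prop:cardboundunstable} yields that $\M$ is $\omega_1$-compactly generated. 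Note that cocompleteness of $\M$ is never used here — only the sequential-colimit hypothesis of \Cref{prop:cardboundunstable}.

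\textbf{Second claim.} Now let $\M'$ be $\kappa$-compactly assembled and cocomplete, where $\kappa$ is a regular cardinal. If $\kappa=\omega$, then $\M'$ is compactly assembled (being cocomplete it certainly has filtered colimits), so the first claim already gives that it is $\omega_1=\max(\kappa,\omega_1)$-compactly generated. Otherwise $\kappa$ is uncountable, and by \Cref{obs:kcpctass} there are a $\kappa$-compactly generated $\N$ and a fully faithful $i\colon\M'\to\N$ with $i^R$ preserving $\kappa$-filtered colimits; since $\M'$ is cocomplete and $\N$ is presentable, both admit sequential colimits, so \Cref{prop:cardboundunstable} applies verbatim and shows that $\M'$ is $\kappa$-compactly generated, i.e. $\max(\kappa,\omega_1)$-compactly generated.

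\textbf{On the difficulty.} There is no genuine obstacle here beyond \Cref{prop:cardboundunstable} itself, in whose proof all the real work is concentrated. The only points needing a moment's care are the standard facts that a $\kappa$-compactly generated category is $\lambda$-compactly generated for all $\lambda\geq\kappa$, and that a right adjoint preserving $\kappa$-filtered colimits preserves $\lambda$-filtered colimits for $\lambda\geq\kappa$ (both used to pass from $\omega$ to $\omega_1$ in the first claim), together with the observation that one must feed \Cref{prop:cardboundunstable} the \emph{embedding} form of compact assembly from \Cref{obs:kcpctass}, not the bare retract description of \Cref{defn:compactass}.
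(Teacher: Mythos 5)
Your proof is correct and follows essentially the same route as the paper: both claims are reduced to \Cref{prop:cardboundunstable} via the embedding form of ($\kappa$-)compact assembly from \Cref{obs:kcpctass}. You handle the case $\kappa=\omega$ in the second claim a touch more explicitly than the paper does, which is a small improvement; the paper's wording ``so that \Cref{prop:cardboundunstable} applies again, and $\M'$ is $\kappa$-compactly generated'' tacitly assumes $\kappa\geq\omega_1$.

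One point where the paper is more careful than you: in the second claim you assert ``$\N$ is presentable'' and conclude that $\N$ has sequential colimits, but \Cref{obs:kcpctass} by itself only hands you a $\kappa$-compactly generated $\N$, and for $\kappa>\omega$ the category $\Ind_\kappa(C)$ of a small $C$ need not a priori admit sequential (i.e.\ $\omega$-filtered, hence not $\kappa$-filtered) colimits unless $C$ is closed under $\kappa$-small colimits. The paper sidesteps this by making the explicit choice $\N=\Ind_\kappa((\M')^\lambda)$ for $\lambda$ large enough, which \emph{is} presentable precisely because $\M'$ is cocomplete, so that $(\M')^\lambda$ is closed under $\kappa$-small colimits for suitable $\lambda$. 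This is in fact the second place where the cocompleteness hypothesis on $\M'$ is doing work --- not only to give $\M'$ itself sequential colimits, but also to guarantee a good ambient $\N$. Your proof would be airtight if you either adopted the same explicit choice or recorded a one-line justification that the $\N$ produced by \Cref{obs:kcpctass} may be taken presentable.
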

\begin{proof}
For the first part of the statement, by compact assembly, we find a fully faithful left adjoint $i:\M\to \N$, where $\N$ is compactly generated and $i^R$ preserves filtered colimits. It follows that $\N$ is $\omega_1$-compactly generated and that $i^R$ preserves $\omega_1$-filtered colimits. 
    By \Cref{prop:cardboundunstable}, $\M$ is $\omega_1$-compactly generated. 

    For the second half of the statement, we again note (by \Cref{obs:kcpctass}) that we can find a fully faithful left adjoint $i:\M'\to \N$ where $\N$ is $\kappa$-compactly generated \emph{and} admits colimits (this is because $\N$ can be chosen to be $\Ind_\kappa((\M')^\lambda)$ for $\lambda$ large enough), so that \Cref{prop:cardboundunstable} applies again, and $\M'$ is $\kappa$-compactly generated. 
\end{proof}
In fact we can completely characterize the $\omega_1$-compacts in $\M$ if $\M$ is compactly assembled and presentable: 
\begin{cor}\label{cor:characcptex}
    Suppose $\M$ is compactly assembled. An object $x\in\M$ is $\omega_1$-compact if and only if it is compactly exhaustible. 
\end{cor}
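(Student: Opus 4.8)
The ``if'' direction is immediate: a compactly exhaustible object has a genuine compact exhaustion, so it is $\max(\omega,\omega_1)=\omega_1$-compact by \Cref{cor:basicnuccpct} (applied with $\kappa=\omega$). So the content is the converse, and my plan is to push $x$ into $\Ind(\M)$, write it there as a \emph{sequential} colimit of genuine compacts, and then descend that presentation back to $\M$. Fix an $\omega_1$-compact $x\in\M$, and let $\hat y\colon\M\to\Ind(\M)$ be the fully faithful left adjoint to the canonical functor $p\colon\Ind(\M)\to\M$; recall $p$ is itself a left adjoint (to the Yoneda embedding $y$) and hence preserves all colimits. Therefore $\hat y$ preserves $\omega_1$-compact objects, so $\hat y(x)$ is $\omega_1$-compact in $\Ind(\M)$.

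Next I would show that $\hat y(x)\simeq\colim_{n\in\NN}y(a_n)$ for some sequential diagram $a_\bullet\colon\NN\to\M$ (here I use that $\M$, having sequential colimits, is idempotent-complete, so $\Ind(\M)^\omega\simeq\M$). Since $\Ind(\M)$ is compactly generated, the $\omega_1$-compact object $\hat y(x)$ is a retract of an $\omega_1$-small filtered colimit of compacts, hence a retract of some $Z=\colim_n c_n$ with $c_n$ compact. Writing $\hat y(x)$ as the splitting of the associated idempotent $e\colon Z\to Z$, we get $\hat y(x)\simeq\colim(Z\xrightarrow{e}Z\xrightarrow{e}\cdots)$; because each $c_n$ is genuinely compact, $e$ restricted to $c_n$ factors through some $c_{\sigma(n)}$, and spreading this out over a countable filtered poset exhibits $\hat y(x)$ as a countable colimit of the $c_n$'s. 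Exhausting that (countable) indexing category by finite subcategories then rewrites $\hat y(x)$ as a sequential colimit of finite colimits of compacts, i.e. of compacts, with transition maps between compact objects. I expect this coherence bookkeeping — equivalently, the statement that a retract of a compactly exhaustible object of a compactly generated category is again compactly exhaustible — to be the main technical obstacle; everything else is either a citation or a transcription.

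With $\hat y(x)\simeq\colim_n y(a_n)$ in hand I would run the argument from the proof of \Cref{prop:cardboundunstable} essentially verbatim (with $i=\hat y$, $i^R=p$, $\N=\Ind(\M)$, $\kappa=\omega$). Applying $p$ (which preserves colimits and satisfies $py\simeq\id$) and then $\hat y$ gives $x\simeq\colim_n a_n$ and $\hat y(x)\simeq\colim_n\hat y(a_n)$. Since each $y(a_n)$ is compact in $\Ind(\M)$ and postcomposition with the counit $\hat y(a_m)\to y(a_m)$ is an equivalence on maps out of $\hat y(a_n)$ (by the adjunction $\hat y\dashv p$), one shows exactly as there that for each $n$ there is $n'>n$ with the transition map $a_n\to a_{n'}$ a compact morphism in $\M$: the corresponding $\hat y(a_n\to a_{n'})$ factors through the compact object $y(a_n)$, hence is compact in $\Ind(\M)$, and $\hat y$, being fully faithful and colimit-preserving, reflects compact maps by \Cref{ex:reflectcpct}. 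Iterating from $n_0=0$ yields a cofinal subsequence $n_0<n_1<\cdots$ of $\NN$ along which every transition map is compact, and then $x\simeq\colim_n a_n\simeq\colim_k a_{n_k}$ is a compact exhaustion of $x$. So the proof reduces to the two points above: the elementary reduction in $\Ind(\M)$ and the direct importation of the $\Cref{prop:cardboundunstable}$ argument, with the idempotent-splitting/``staircase'' step being where the real work lies.
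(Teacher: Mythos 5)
Your proof is correct in outline and takes a genuinely different route from the paper's. The paper stays inside $\M$: it strengthens the analysis in \Cref{prop:cardboundunstable} by showing that the poset of compactly exhaustible subdiagrams appearing there is in fact $\omega_1$-filtered, so that every object of $\M$ is an $\omega_1$-filtered colimit of compactly exhaustible objects and hence any $\omega_1$-compact $x$ is a \emph{retract} of a compactly exhaustible $x'$; it then closes the loop with a staircase argument carried out directly in $\M$, writing the retract as $\colim(x'\xrightarrow{e}x'\to\cdots)$ and using compactness of the transition maps of a chosen exhaustion of $x'$ to factor each $e|_{x'_n}$ compactly through a later stage. You instead transfer the question to $\Ind(\M^{\omega_1})$ (which you should use in place of the overly large $\Ind(\M)$ on set-theoretic grounds, though this is cosmetic), where the input ``$\omega_1$-compact implies retract of a countable filtered colimit of compacts'' is the standard accessibility fact \cite[5.4.2.9]{HTT} rather than a bespoke countability argument, run the same kind of staircase there (slightly cleaner, since compact morphisms in a compactly generated category are exactly those factoring through a compact object), and then descend to $\M$ by extracting a cofinal subsequence with compact transition maps via the adjunction/counit trick from \Cref{prop:cardboundunstable} --- a descent step the paper does not need. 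Both proofs therefore hinge on the same staircase; what you trade is the paper's direct $\omega_1$-filteredness argument for a standard citation plus a nontrivial descent, and the bookkeeping you correctly flag as the main obstacle (a retract of a compactly exhaustible object of a compactly generated $\infty$-category is compactly exhaustible) is exactly where the paper also spends its effort, just executed in $\M$ rather than in $\Ind(\M^{\omega_1})$.
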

\begin{proof}
    One direction is \Cref{cor:basicnuccpct}. For the other direction, we note that the proof in \Cref{prop:cardboundunstable} shows that every object of $\M$ can be written as an $\omega_1$-filtered colimit of compactly exhaustible objects. Indeed, we need to show that, in the notation of that proof, $P$ is $\omega_1$-filtered. Since it is cofinal in the poset $Down_\omega(J)$, it suffices to show it for the latter. Now for this, if $F_i, i\in\NN$ is a family of elements of $Down_\omega(J)$, we may choose, for every $i\in\NN$, a cofinal subset of the form $\{x_n^i, n\in\NN\}$. Fix an enumeration $s:\NN^4\to \N$. For every $(n,i), (m,j)\in\NN^4$, find some $\beta_{s(n,i,m,j)}\geq x_n^i, x_m^j, \beta_{s(n,i,m,j)-1}$ (which exists, as $J$ is filtered), and let $F:=\{y\in J\mid \exists k\in\NN, y\leq \beta_k\} $. $F$ is clearly in $Down_\omega(J)$ and each $F_i\subset F$, thus proving the claim. 

    It follows from this that every $\omega_1$-compact object is a retract of a compactly exhaustible object. Now let $x_n, n \in \NN$ be a compact exhaustion of $x$, and let $p:x\to m, i: m\to x$ be a retraction of $x$ onto $m$. 

    Letting $e:= ip$, we have that $m\simeq \colim_\NN(x\xrightarrow{e}x\xrightarrow{e}\dots )$. Furthermore, for every $n$, the composite $x_n\to x\xrightarrow{e}x$ is compact and hence factors through some $x_m$. Composing with $x_m\to x_{m+1}$, we may assume the factorization is itself compact. Up to a cofinal subset, we may assume $m=n+1$, and we let $\tilde e_n: x_n\to x_{n+1}$ be a chosen compact factorization. We then find $m=\colim_\NN x_{2n}$ along $x_{2n}\xrightarrow{i_{2n+1}\circ \tilde e_{2n}}x_{2n+2}$ where $i_m: x_m\to x_{m+1}$ is the map in the original compact exhaustion. 
    \end{proof}
    \begin{rmk}
        A ``moral'' corollary of this corollary is that in the analogy between compactly generated \categories{} and compactly assembled ones, we should not replace compacts by compactly exhaustible objects (since these are exactly the $\omega_1$-compacts), but rather by the compact \emph{exhaustions}. 
    \end{rmk}
\begin{cor}\label{cor:cardboundV}
    Let $\V\in\CAlg(\PrL_\kappa)$ and let $\M$ be a dualizable $\V$-module. In this case, $\M$ is $\max(\kappa,\omega_1)$-compactly generated.
\end{cor}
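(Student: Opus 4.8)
The plan is to reduce the statement to the purely $\infty$-categorical \Cref{prop:cardboundunstable} (equivalently the second half of \Cref{cor:cardboundcompass}) by means of the characterization of dualizability in \Cref{thm:Luriethmgeneralbase}.

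First I would pick $\lambda \geq \kappa$ large enough that $\M \in \Mod_\V(\PrL_\lambda)$; such a $\lambda$ exists for any presentable $\V$-module by a routine cofinality argument (enlarge $\lambda$ until the $\lambda$-compact objects of $\M$ are closed under tensoring with the $\lambda$-compact objects of $\V$, so that the tensoring $\V\otimes\M\to\M$ is $\lambda$-compact preserving, i.e.\ lies in $\PrL_\lambda$). Then \Cref{thm:Luriethmgeneralbase}, in the form ``item $1$ implies item $3$'', furnishes an internally left adjoint, fully faithful embedding $i\colon\M\to\N$ with $\N$ a $\V$-atomically generated $\V$-module. Since $\V\in\CAlg(\PrL_\kappa)$, \Cref{cor:atimpliescpct} shows that $\N$ is $\kappa$-compactly generated, hence a fortiori $\max(\kappa,\omega_1)$-compactly generated.

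Now I would forget the $\V$-module structures. The functor $i$ is an internal left adjoint, so its right adjoint $i^R$ preserves all colimits, in particular $\max(\kappa,\omega_1)$-filtered ones; moreover $\M$ and $\N$ are presentable, hence admit sequential colimits, and $i$ is a fully faithful left adjoint. Thus $i\colon\M\to\N$ satisfies the hypotheses of \Cref{prop:cardboundunstable} with respect to the uncountable cardinal $\max(\kappa,\omega_1)$, and we conclude that $\M$ is $\max(\kappa,\omega_1)$-compactly generated. (Equivalently, $\M$ is a retract of $\N$ in $\PrL$, hence $\kappa$-compactly assembled in the sense of \Cref{defn:compactass} and cocomplete, so the second part of \Cref{cor:cardboundcompass} applies verbatim.)

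I do not expect a genuine obstacle here: the real content has already been extracted in \Cref{thm:Luriethmgeneralbase} and, above all, in \Cref{prop:cardboundunstable}, whose proof is where uncountability of the cardinal is actually used — this is precisely why the bound is $\max(\kappa,\omega_1)$ and not $\kappa$ itself. The only point deserving a sentence of care is the initial reduction, namely the existence of a suitable $\lambda$ making \Cref{thm:Luriethmgeneralbase} applicable, but this is standard and used implicitly throughout the paper.
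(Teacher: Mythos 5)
Your proof is correct and follows the same route as the paper's: apply \Cref{thm:Luriethmgeneralbase} to embed $\M$ fully faithfully and internally-left-adjointly into an atomically generated $\V$-module $\N$, use \Cref{cor:atimpliescpct} to see that $\N$ is $\kappa$-compactly generated, and then invoke \Cref{prop:cardboundunstable}. Your extra paragraph spelling out the choice of $\lambda$ so that \Cref{thm:Luriethmgeneralbase} applies is a small but legitimate point of care that the paper leaves implicit.
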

\begin{proof}
    By \Cref{thm:Luriethmgeneralbase}, there is an internally left adjoint fully faithful embedding $\M\to \N$ for some atomically generated $\V$-module $\N$. By \Cref{cor:atimpliescpct}, $\N$ is $\kappa$-compactly generated and thus by \Cref{prop:cardboundunstable}, $\M$ is $\max(\kappa,\omega_1)$-compactly generated. 
\end{proof}
\begin{cor}\label{cor:boundyhat}
    Let $\M$ be a compactly assembled category. The left adjoint $\hat y:~\M\to~\Ind(\M)$ factors through $\Ind(\M^{\omega_1})$. 

    If $\M$ is a dualizable $\V$-module, where $\V\in\CAlg(\PrL_\kappa)$, then $\hat y$ can be chosen to have values in $\PP_\V(\M^{\max(\kappa,\omega_1)})$. 
\end{cor}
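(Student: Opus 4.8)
The plan is to prove the two assertions separately. The second one I expect to be essentially immediate from results already in place: if $\M$ is a dualizable $\V$-module with $\V\in\CAlg(\PrL_\kappa)$, then \Cref{cor:cardboundV} tells us $\M$ is $\max(\kappa,\omega_1)$-compactly generated, i.e.\ $\M\in\Mod_\V(\PrL_{\max(\kappa,\omega_1)})$. Since $\max(\kappa,\omega_1)\geq\kappa$ and $\M$ is dualizable, I would invoke \Cref{thm:Luriethmgeneralbase}(4) with $\lambda=\mu=\max(\kappa,\omega_1)$ to obtain an internal left adjoint to the canonical functor $\PP_\V(\M^{\max(\kappa,\omega_1)})\to\M$; this is by definition a legitimate choice of $\hat y$, and it visibly takes values in $\PP_\V(\M^{\max(\kappa,\omega_1)})$. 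So the real work is in the first assertion.

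For the first assertion, let $\M$ be (presentable and) compactly assembled, so that $\hat y$ is the left adjoint of the colimit functor $\colim\colon\Ind(\M)\to\M$. By \Cref{cor:cardboundcompass}, $\M$ is $\omega_1$-compactly generated, hence $\M=\Ind_{\omega_1}(\M^{\omega_1})$. I would first note that the inclusion $\M^{\omega_1}\hookrightarrow\M$ preserves finite colimits (the $\omega_1$-compacts are closed under $\omega_1$-small, in particular finite, colimits), so the induced functor $\Ind(\M^{\omega_1})\to\Ind(\M)$ is fully faithful, colimit-preserving, and has colimit-closed essential image. Since $\hat y$ preserves colimits and $\M^{\omega_1}$ generates $\M$ under colimits, it then suffices to show $\hat y(u)\in\Ind(\M^{\omega_1})$ for every $u\in\M^{\omega_1}$.

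To do this, fix $u\in\M^{\omega_1}$. By \Cref{cor:characcptex} such a $u$ is compactly exhaustible, so I would choose a compact exhaustion $u=\colim_n u_n$ with every transition map $f_n\colon u_n\to u_{n+1}$ compact. By \Cref{lm:basicnuc}, the ind-object $\{u_n\}_n\in\Ind(\M)$ corepresents the functor $Y\mapsto\Map_\M(u,\colim Y)$ on $\Ind(\M)$, which by the defining property of $\hat y$ is $\Map_{\Ind(\M)}(\hat y(u),-)$; hence $\hat y(u)\simeq\{u_n\}_n$. Now each $f_n$, being weakly compact, factors through some object $v_{n+1}\in\M^{\omega_1}$ appearing as a stage in an $\omega_1$-filtered (hence filtered) presentation of $u_{n+1}$ by $\omega_1$-compacts. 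Choosing these factorizations coherently, I would assemble a telescope $v_1\to v_2\to\cdots$ interleaved with $u_0\to u_1\to\cdots$, so that $\{u_n\}_n\simeq\{v_n\}_n$ in $\Ind(\M)$. As each $v_n$ lies in $\M^{\omega_1}$, the formal filtered colimit $\{v_n\}_n$ lies in $\Ind(\M^{\omega_1})$; therefore so does $\hat y(u)$, which gives the desired factorization.

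The hard part is the third paragraph: combining the identification $\hat y(u)\simeq\{u_n\}_n$ from \Cref{lm:basicnuc} with the passage to an exhaustion whose terms are $\omega_1$-compact. The point that needs care is that an arbitrary compact exhaustion of an $\omega_1$-compact object need not have $\omega_1$-compact terms, so one cannot simply cite \Cref{lm:basicnuc} and stop — one has to re-present the ind-object $\{u_n\}_n$, using that membership in $\Ind(\M^{\omega_1})$ depends only on the terms of a telescope and not on the compactness of its transition maps. The bookkeeping for the interleaving — choosing the factorizations so that $\{u_n\}_n$ and $\{v_n\}_n$ really agree as objects of $\Ind(\M)$ — is routine but is where one has to be attentive; everything else is a direct application of the cited results.
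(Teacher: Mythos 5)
Your proof is correct. For the second assertion it follows the paper's own route (\Cref{cor:cardboundV} together with \Cref{thm:Luriethmgeneralbase}(4)), with one small slip worth flagging: \Cref{cor:cardboundV} gives $\max(\kappa,\omega_1)$-compact generation of the underlying category, whereas you assert the stronger statement $\M\in\Mod_\V(\PrL_{\max(\kappa,\omega_1)})$, which also requires the structure map $\V\otimes\M\to\M$ to preserve $\max(\kappa,\omega_1)$-compacts. That is true (\Cref{lm:structuremapcardbound}), but is proved later in the paper; the paper's own proof of this corollary sidesteps the issue by observing that the construction of $\hat y$ in \Cref{thm:Luriethmgeneralbase} only ever used $\lambda$-compact generation, and then invokes \Cref{rmk:cardindep} for the compatibility between different choices of $\lambda$.

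For the first assertion (bare compactly assembled $\M$, not a dualizable $\V$-module), your argument is genuinely more explicit than what the paper offers: the displayed proof addresses only the $\V$-module case, and the construction of $\hat y$ relevant to a compactly assembled category (\Cref{thm:compactassembly} via \Cref{lm:basicnuc}) gives $\hat y(x)=\{x_n\}$ for a compact exhaustion whose terms need not be $\omega_1$-compact, so something further is needed. Your refactoring step supplies it: since $\M$ is $\omega_1$-compactly generated (\Cref{cor:cardboundcompass}), each compact transition map is in particular weakly $\omega_1$-compact and hence factors through an $\omega_1$-compact (this is \Cref{ex:conversefactorcpt}; you need not unwind the $\omega_1$-filtered presentation of $u_{n+1}$ by hand), and interleaving produces a cofinal telescope in $\M^{\omega_1}$ representing the same ind-object. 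The coherence concern you flag is harmless: a sequence of composable arrows extends essentially uniquely to a functor out of the linear poset $\NN$, and cofinal interleaving of sequential diagrams is standard. So both halves go through, and the second half is a useful unfolding of a step the paper leaves implicit.
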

\begin{proof}
In the proof of the existence of $\hat y:\M\to\PP_\V(\M^\lambda)$, all we needed was for $\M$ to be $\lambda$-compactly generated. By \Cref{cor:cardboundV}, $\M$ is $\max(\kappa,\omega_1)$-compactly generated, and thus $\hat y$ exists there. 

By \Cref{rmk:cardindep}, this $\hat y$ is the same as the one for any chosen $\lambda$. 
\end{proof}

By \Cref{cor:yhatnatural}, we find: 
\begin{cor}\label{cor:yhatnatural2}
Let $\V\in\CAlg(\PrL_\kappa)$. 
    The functors $\hat y_\M, \M\in\Dbl{\V}$ promote to a natural transformation $\M\xrightarrow{\hat y}\PP_\V(\M^{\max(\omega_1,\kappa)})$ between functors $\Dbl{\V}\to \Dbl{\V}$. 
\end{cor}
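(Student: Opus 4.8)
Write $\lambda:=\max(\omega_1,\kappa)$, and note $\lambda\geq\kappa$, so that \Cref{cor:yhatnatural} is available with this value of $\lambda$. The plan is simply to feed the uniform cardinal bound \Cref{cor:cardboundV} into \Cref{cor:yhatnatural}, after checking that source and target of the two functors involved are the intended subcategories. First I would verify that $\Dbl{\V}$ is a (non-full) subcategory of $\Mod_\V(\PrL_\lambda)$ and in fact equals $\Dbl{\V}\cap\Mod_\V(\PrL_\lambda)$: on objects this is precisely \Cref{cor:cardboundV}, and on morphisms it is because an internal left adjoint $f\colon\M\to\N$ has colimit-preserving right adjoint $f^R$, which in particular preserves $\lambda$-filtered colimits, so $f$ is a $\lambda$-compact-preserving $\V$-linear left adjoint, i.e. a morphism of $\Mod_\V(\PrL_\lambda)$.

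Next I would record that $\M\mapsto\PP_\V(\M^\lambda)$ is a well-defined endofunctor of $\Dbl{\V}$. On objects, $\PP_\V(\M^\lambda)$ is atomically generated (cf.\ \Cref{obs:descatgen}), hence dualizable by \Cref{prop:atgenimpliesdbl}. On morphisms, by the previous step an internal left adjoint $f\colon\M\to\N$ preserves $\lambda$-compact objects, hence restricts to $f^\lambda\colon\M^\lambda\to\N^\lambda$, which induces $\PP_\V(f^\lambda)$ via \Cref{thm: UPVPsh}; this is an internal left adjoint since it sends representable presheaves to representable presheaves and $\PP_\V(\M^\lambda)$ is atomically generated, so \Cref{cor:atomicimpliesinternal} applies. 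Functoriality of $\M\mapsto\PP_\V(\M^\lambda)$ follows as in \Cref{obs:naturalcolim} from \cite[Theorem A]{Shay}.

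Finally I would invoke \Cref{cor:yhatnatural} with this $\lambda$: it furnishes a natural transformation $\hat y\colon\id\Rightarrow\PP_\V((-)^\lambda)$ of functors defined on $\Dbl{\V}\cap\Mod_\V(\PrL_\lambda)$, which by the first step is all of $\Dbl{\V}$, and whose target functor lands in $\Dbl{\V}$ by the second step. By \Cref{cor:boundyhat} together with \Cref{rmk:cardindep}, the value of $\hat y$ computed with this specific $\lambda$ agrees with the one obtained from any larger regular cardinal, so the transformation is well defined independently of auxiliary choices; this is exactly the asserted natural transformation $\M\xrightarrow{\hat y}\PP_\V(\M^{\max(\omega_1,\kappa)})$ between endofunctors of $\Dbl{\V}$.

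I do not expect a genuine obstacle: all the analytic work is already contained in \Cref{cor:yhatnatural} and \Cref{cor:cardboundV}, the latter serving precisely to enlarge the domain of naturality from $\Dbl{\V}\cap\Mod_\V(\PrL_\lambda)$ to the whole of $\Dbl{\V}$. The only thing requiring care is the bookkeeping above, namely confirming that the relevant (non-full) subcategory structures and morphism classes (internal left adjoints versus $\lambda$-compact-preserving maps) match up.
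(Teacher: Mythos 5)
Your proof is correct and follows the same route the paper takes: \Cref{cor:yhatnatural} already gives the natural transformation on $\Dbl{\V}\cap\Mod_\V(\PrL_\lambda)$, and the cardinal bound is what extends the domain to all of $\Dbl{\V}$. One small imprecision worth noting: for the object-level inclusion $\Dbl{\V}\subset\Mod_\V(\PrL_\lambda)$ you cite only \Cref{cor:cardboundV}, which gives $\lambda$-compact generation but not that the module structure map $\V\otimes\M\to\M$ preserves $\lambda$-compacts; that half is \Cref{lm:structuremapcardbound} (the two together constitute \Cref{thm:CardBound}), a forward reference the paper itself acknowledges in \Cref{rmk:boundyhat}.
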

\subsection{Characterization of compact assembly}
The goal of this section is to characterize compact assembly in terms of compact maps, in as similar a way to compact generation. Note that we now focus on compact = $\omega$-compact maps because \Cref{prop:cardboundunstable} implies that for uncountable $\kappa$, $\kappa$-compactly assembled implies $\kappa$-compactly generated. 

\begin{thm}\label{thm:compactassembly}
    Let $\M$ be a category with filtered colimits. $\M$ is compactly assembled if and only if compactly exhaustible objects generate $\M$ under filtered colimits, if and only if filtered colimits in $\M$ are left exact and weakly compactly exhaustible objects generate $\M$ under filtered colimits. 
\end{thm}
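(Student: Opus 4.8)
The proof will establish a cycle of implications between the three conditions: (i) $\M$ is compactly assembled, (ii) compactly exhaustible objects generate $\M$ under filtered colimits, and (iii) filtered colimits in $\M$ are left exact and weakly compactly exhaustible objects generate $\M$ under filtered colimits. The natural order is (i) $\Rightarrow$ (ii) $\Rightarrow$ (iii) $\Rightarrow$ (i), though in fact (ii) $\Rightarrow$ (iii) will require a small detour since left exactness of filtered colimits is not obvious from (ii) alone—so I will instead prove (i) $\Rightarrow$ (iii), (iii) $\Rightarrow$ (i), and (i) $\Leftrightarrow$ (ii), using (i) as the hub.

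\textbf{Step 1: (i) $\Rightarrow$ (ii) and (i) $\Rightarrow$ (iii).} Suppose $\M$ is compactly assembled, so there is a fully faithful $i:\M\to\N$ with $\N$ compactly generated and $i^R=\colim$ the left adjoint's... more precisely $i$ is left adjoint to the canonical $c:\Ind(\M)\to\M$, or we may take any $\N$ compactly generated with a fully faithful $i$ whose right adjoint preserves filtered colimits. Left exactness of filtered colimits in $\M$ follows because $i$ preserves and reflects finite limits (it is fully faithful with a right adjoint, hence preserves finite limits; and its essential image is closed under the relevant operations since $i^R$ preserves filtered colimits) while filtered colimits are left exact in the compactly generated $\N$. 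For the generation statement: reusing the argument inside the proof of \Cref{prop:cardboundunstable} (the ``ladder'' construction producing, for any $m\in\M$ and any $\kappa$-filtered presentation $i(m)=\colim_J n_j$ with $n_j\in\N^\omega$, a cofinal family of compactly exhaustible objects mapping to $m$), every object of $\M$ is a filtered colimit of compactly exhaustible objects. Since compactly exhaustible objects are in particular weakly compactly exhaustible, (iii) follows, and (ii) follows as well.

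\textbf{Step 2: (iii) $\Rightarrow$ (i), and (ii) $\Rightarrow$ (i).} Assume (iii). By \Cref{cor:basicnuccpct}, since filtered colimits in $\M$ are left exact, every weakly compactly exhaustible object is $\omega_1$-compact; in particular (as these generate under filtered colimits) $\M$ admits all filtered colimits of such generators, but more importantly I want a fully faithful embedding into a compactly generated category with filtered-colimit-preserving right adjoint. Consider $c:\Ind(\M)\to\M$; I must show it admits a left adjoint $\hat y$. By \Cref{lm:basicnuc}, for any weak compact exhaustion $x_\bullet$ of $x=\colim_\NN x_n$ and any ind-object $\{y_i\}$, the map $\Map_{\Ind(\M)}(\{x_n\},\{y_i\})\to\Map_\M(x,c(\{y_i\}))$ is an equivalence—so $\{x_n\}\in\Ind(\M)$ corepresents $\Map_\M(x,c(-))$, i.e.\ $c$ admits a local left adjoint at every weakly compactly exhaustible $x$. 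Since these generate $\M$ under (filtered, hence all) colimits, cocompleteness of $\Ind(\M)$ lets us assemble a global left adjoint $\hat y$ to $c$. Then $\hat y$ is fully faithful (its composite with $c$ is the identity on generators, hence everywhere), $c=\hat y^R$ preserves filtered colimits (it is the canonical functor $\Ind(\M)\to\M$), and $\Ind(\M)$ is compactly generated—so $\M$ is compactly assembled by \Cref{defn:compactass}. For (ii) $\Rightarrow$ (i): the same argument goes through once we know filtered colimits are left exact, so it remains to deduce left exactness of filtered colimits from (ii) alone. This is the one genuinely delicate point, and I would handle it by first running the local-left-adjoint construction using genuine (not merely weak) compact exhaustions—\Cref{lm:basicnuc} applies to those without any left-exactness hypothesis—so that $c:\Ind(\M)\to\M$ acquires a fully faithful left adjoint purely from (ii); then $\M$ is already exhibited as a retract of the compactly generated $\Ind(\M)$, whence filtered colimits in $\M$ are left exact after all, closing the loop.

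\textbf{Main obstacle.} The subtle step is the last one: extracting left exactness of filtered colimits in the proof of (ii) $\Rightarrow$ (i) without circularity. The resolution is that \Cref{lm:basicnuc}'s weaker form—stated for honest $\kappa$-compact exhaustions—carries \emph{no} left-exactness assumption, so condition (ii) directly furnishes the left adjoint $\hat y\dashv c$ and hence the retraction $\M\hookrightarrow\Ind(\M)$; left exactness is then a \emph{consequence}, not a hypothesis. Everything else is assembling local left adjoints into a global one (standard, using cocompleteness of $\Ind(\M)$) and invoking \Cref{lm:basicnuc}, \Cref{cor:basicnuccpct}, and the ladder argument from \Cref{prop:cardboundunstable}, all of which are available.
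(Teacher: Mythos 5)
Your proof follows the paper's approach closely: the forward direction invokes the ladder argument from the proof of \Cref{prop:cardboundunstable} to exhibit every object as a filtered colimit of compactly exhaustible ones, and the converse assembles a global left adjoint $\hat y$ to $c:\Ind(\M)\to\M$ from local corepresentability at the exhaustible generators via \Cref{lm:basicnuc} together with cocompleteness of $\Ind(\M)$. You also correctly isolate the key subtlety: the ``compact'' (as opposed to ``weakly compact'') case of \Cref{lm:basicnuc} needs no left-exactness hypothesis, which is exactly how the paper avoids circularity when deducing compact assembly from condition (ii).

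One claim in your Step~1 is false, however: a fully faithful functor with a right adjoint is a \emph{left} adjoint, and left adjoints (even fully faithful ones) need not preserve finite limits. For instance the inclusion $\Sp_{\geq 0}\hookrightarrow\Sp$ is fully faithful with right adjoint $\tau_{\geq 0}$, yet it does not preserve fibers; the essential image of such an $i$ is in general not closed under finite limits, and this matters here since the theorem is unstable. The correct route to left exactness, which the paper takes, is different: since $i^R$ preserves both limits (being a right adjoint) and filtered colimits (by hypothesis), both $\lim_J$ and $\colim_I$ in $\M$ can be computed by applying $i$, forming the co/limit in $\N$, and then applying $i^R$. Using that $i$ preserves filtered colimits — so that the intermediate composite $ii^R$ acts as the identity on the relevant objects — the canonical comparison $\colim_I\lim_J\to\lim_J\colim_I$ in $\M$ is identified with $i^R$ applied to the corresponding comparison in $\N$, which is an equivalence because filtered colimits are left exact in the compactly generated $\N$. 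With this correction, your proof matches the paper's.
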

This result has a more ``effective'' stable variant, which we will discuss in the next subsection.

\begin{proof}
    First assume that $\M$ is compactly assembled. In this case, we can find a fully faithful embedding $i:\M\to \N$ with a filtered colimit preserving right adjoint $i^R$, where $\N$ is compactly generated. Following the proof of \Cref{prop:cardboundunstable}, for a given $m\in \M$, we write $m\simeq \colim_J n_j$ for some filtered diagram $J\to \N^\omega$. Still following that proof, we see that any $m\in \M$ can be written as $\colim_{F\in P}\colim_{i\in F}i^R(n_i)$, where $P\subset Down_\omega(J)$ was the subposet spanned by the $F\in Down_\omega(J)$ such that $\colim_{i\in F}i^R(n_i)$ is compactly exhaustible. There we concluded that each of these sequential colimits was $\kappa$-compact, but in our situation, we can simply use the definition: each of these colimits is compactly exhaustible, and $P$ is filtered, so we are done. 
    
    We also need to prove that filtered colimits in $\M$ are left exact. For this we note that filtered colimits \emph{and} limits can be computed by applying $i$, taking the correspond co/limit, and then applying $i^R$, so that we are reduced to the compactly generated case. In the compactly generated case, we can test left exactness of filtered colimits by mapping out of all compact objects, as the collection of functors $\Map(c,-), c\in \M^\omega$ is jointly conservative, and they each preserve filtered colimits and finite limits. In other words, we reduce to the case of $\Ss$, where it is true that filtered colimits are left exact \cite[Proposition 5.3.3.3]{HTT}.

    Conversely, assume $\M$ is generated under filtered colimits by (weakly) compactly exhaustible objects, and let $c:\Ind(\M)\to \M$ denote the colimit functor, which we wish to prove admits a left adjoint $\hat y$. 
Our assumption, together with the fact that $\Ind(\M)$ admits filtered colimits, shows that it suffices to prove that $\hat y$ exists on (weakly) compactly exhaustible objects, i.e. that for any (weakly) compactly exhaustible $x$, the functor $y\mapsto \Map_\M(x, c(y))$ is corepresentable in $\Ind(\M)$. Indeed, as $\Ind(\M)$ is cocomplete, the collection of $x$'s such that this functor is corepresentable is closed under colimits. 

This case is now covered by \Cref{lm:basicnuc} - either with no assumption if we are in the compact case, or with the left exactness of filtered colimits if we are in the weakly compact case.  
\end{proof}
We can now characterize functors between compactly assembled categories whose right adjoints preserve filtered colimits in a similar way to how they are characterized between compactly generated categories. 
\begin{cor}\label{cor:strongadjcompactass}
    Let $\M,\N\in\PrL$, and assume $\M$ is compactly assembled, and let $f:~\M\to~\N$ be a left adjoint. In this case, $f^R$ preserves filtered colimits if and only if $f$ preserves compact maps. If filtered colimits in $\N$ are left exact, this is the case if and only if $f$ preserves weakly compact maps. 

More generally, $f^R$ preserves $\kappa$-filtered colimits if and only if $f$ sends compact maps to $\kappa$-compact maps.
\end{cor}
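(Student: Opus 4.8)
In all three assertions the ``only if'' implication is immediate from \Cref{ex:preservecpct}: if $f^R$ preserves $\kappa$-filtered colimits then $f$ preserves $\kappa$-compact maps (and weakly $\kappa$-compact maps), and since a compact map is in particular a $\kappa$-compact map for every $\kappa\geq\omega$, $f$ in particular sends compact maps to $\kappa$-compact maps; the case $\kappa=\omega$ recovers the ``only if'' of the first two statements. Note also that the first statement is literally the case $\kappa=\omega$ of the ``more generally'', since ``$f$ sends compact maps to $\omega$-compact maps'' is the same as ``$f$ preserves compact maps''. So the real content is the ``if'' direction, and I would prove the most general form: assuming $f$ sends compact maps to $\kappa$-compact maps, I want $f^R$ to preserve $\kappa$-filtered colimits.

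\textbf{Reduction to compact exhaustions.} Fix a $\kappa$-filtered diagram $z_\bullet\colon I\to\N$ and consider the canonical comparison $\theta\colon\colim_I f^R(z_i)\to f^R(\colim_I z_i)$ in $\M$. Since $\M$ is compactly assembled and presentable, it is $\omega_1$-compactly generated (\Cref{cor:cardboundcompass}) and its $\omega_1$-compact objects are exactly the compactly exhaustible ones (\Cref{cor:characcptex}); hence the family of functors $\Map_\M(x,-)$, with $x$ compactly exhaustible, is jointly conservative, and it suffices to prove that $\Map_\M(x,\theta)$ is an equivalence for every compact exhaustion $x=\colim_{\NN}x_n$ (each $x_n\to x_{n+1}$ compact).

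\textbf{The sandwich.} The plan is to identify both the source and the target of $\Map_\M(x,\theta)$ with the common object $\lim_n\colim_I\Map_\M(x_n,f^R z_i)$. On the source side, $\{x_n\}$ is a genuine $\kappa$-compact exhaustion of $x$ in $\M$ and $\{f^R z_i\}_{i\in I}$ is a $\kappa$-ind-object of $\M$, so \Cref{lm:basicnuc} gives $\Map_\M(x,\colim_I f^R z_i)\simeq\lim_n\colim_I\Map_\M(x_n,f^R z_i)$. On the target side, $f(x)=\colim_{\NN}f(x_n)$ since $f$ is a left adjoint, and each transition $f(x_n)\to f(x_{n+1})$ is $\kappa$-compact by hypothesis, so $\{f(x_n)\}$ is a $\kappa$-compact exhaustion of $f(x)$ in $\N$; applying \Cref{lm:basicnuc} in $\N$ to it and the $\kappa$-ind-object $\{z_i\}_{i\in I}$, together with the pointwise adjunction $f\dashv f^R$, yields
\[
\Map_\M(x,f^R(\colim_I z_i))\simeq\Map_\N(f(x),\colim_I z_i)\simeq\lim_n\colim_I\Map_\N(f(x_n),z_i)\simeq\lim_n\colim_I\Map_\M(x_n,f^R z_i).
\]
Tracing a map $\phi\colon x\to\colim_I f^R z_i$ through the two instances of \Cref{lm:basicnuc} and the counit of $f\dashv f^R$ shows that, under these identifications, $\Map_\M(x,\theta)$ is the identity; hence it is an equivalence, so $\theta$ is an equivalence and $f^R$ preserves $\kappa$-filtered colimits. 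For the variant with $\kappa=\omega$ and weakly compact maps one changes only the $\N$-side: now one merely knows each $f(x_n)\to f(x_{n+1})$ is weakly compact, so $\{f(x_n)\}$ is a weakly compact exhaustion of $f(x)$, and one invokes the weakly-compact case of \Cref{lm:basicnuc} in $\N$, which is legitimate precisely because filtered colimits in $\N$ are assumed left exact; the $\M$-side is unchanged because $\{x_n\}$ is still a genuine compact exhaustion.

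\textbf{Main obstacle.} No single step is deep: the engine is \Cref{lm:basicnuc}, and the actual work lies in (a) the reduction, which rests on the cardinality bounds \Cref{cor:cardboundcompass} and \Cref{cor:characcptex}, and (b) verifying that the two comparison isomorphisms are compatible, so that the sandwich genuinely exhibits $\Map_\M(x,\theta)$ as an identity --- i.e.\ a naturality bookkeeping exercise rather than a new idea. The one genuinely load-bearing observation is that the hypothesis ``$f$ sends compact maps to $\kappa$-compact maps'' is exactly what converts a compact exhaustion of $x$ into a $\kappa$-compact exhaustion of $f(x)$, which is what permits \Cref{lm:basicnuc} to be applied symmetrically on both sides.
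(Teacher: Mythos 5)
Your proof is correct and is essentially the same argument as the paper's: the load-bearing observation in both cases is that \Cref{lm:basicnuc} can be applied once in $\M$ and once in $\N$, and the hypothesis that $f$ sends compact maps to $\kappa$-compact maps is precisely what transports a compact exhaustion $\{x_n\}$ of $x$ to a $\kappa$-compact exhaustion $\{f(x_n)\}$ of $f(x)$, after which the two sides of the comparison are identified by $f\dashv f^R$ at each level. The only difference is in the packaging of the reduction step: you reduce to testing $\Map_\M(x,\theta)$ for $x$ compactly exhaustible by appealing to \Cref{cor:cardboundcompass} and \Cref{cor:characcptex} (i.e.\ $\omega_1$-compact generation), whereas the paper isolates this reduction as \Cref{lm:colim-detection}, which rests on the lighter fact (from \Cref{thm:compactassembly}) that compactly exhaustible objects generate under filtered colimits. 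Both are available at this point in the paper and both give joint conservativity of the $\Map_\M(x,-)$; the paper's route has the small advantage of absorbing the naturality bookkeeping you flag into the adjunction $\Ind_\kappa(f)\dashv\Ind_\kappa(f^R)$, so that the map to be checked is literally rewritten as the comparison map of \Cref{lm:basicnuc} in $\N$ rather than identified with it up to a square.
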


We use the following lemma: 
\begin{lm}\label{lm:colim-detection}
    Let $\M$ be compactly assembled, and let $z_\bullet :I\to \M$ be a filtered diagram with a cocone $z_\bullet \to z$. 

    This cocone is a colimit cocone if and only if for any sequential diagram $x_\bullet:\mathbb N\to\M$ with compact\footnote{By \Cref{ex:embtocheckcpct}, all variants of compact maps agree in $\M$.} transition maps, the canonical map $$\Map_{\Ind(\M)}(\{x_n\}_n,\{z_i\})\to \Map_\M(\colim_\mathbb N x_n, z)$$ is an equivalence.
\end{lm}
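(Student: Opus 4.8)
The plan is to compare the given cocone with the genuine colimit. Write $w:=\colim_I z_i$ for the colimit computed in $\M$, with its tautological cocone $z_\bullet\to w$, and let $\phi\colon w\to z$ be the comparison map induced by the cocone $z_\bullet\to z$; the cocone $z_\bullet\to z$ is a colimit cocone precisely when $\phi$ is an equivalence. Since $\M$ is compactly assembled, filtered colimits in $\M$ are left exact and all flavours of compact map in $\M$ agree (\Cref{thm:compactassembly}, \Cref{ex:embtocheckcpct}); in particular a sequential diagram $x_\bullet\colon\NN\to\M$ with compact transition maps is a compact exhaustion of $x:=\colim_\NN x_n$, so \Cref{lm:basicnuc} applies to it. Applied to the ind-object $\{z_i\}\in\Ind(\M)$ it gives, for every such $x_\bullet$, an equivalence
$$\Map_{\Ind(\M)}(\{x_n\},\{z_i\})\xrightarrow{\ \sim\ }\Map_\M\bigl(x,c(\{z_i\})\bigr)=\Map_\M(x,w),$$
where $c\colon\Ind(\M)\to\M$ is the colimit functor.

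The first thing I would pin down is that the map appearing in the statement is exactly this equivalence followed by postcomposition with $\phi$. Indeed, the cocone $z_\bullet\to z$ corresponds to a map $\{z_i\}\to y(z)$ in $\Ind(\M)$, where $y$ is the Yoneda embedding and $\{z_i\}=\colim_I y(z_i)$; the canonical map of the statement is postcomposition with it, under $\Map_{\Ind(\M)}(\{x_n\},y(z))\simeq\Map_\M(x,z)$. Applying $c$ and using $c\circ y\simeq\id$ identifies $\{z_i\}\to y(z)$ with $\phi\colon w\to z$, hence identifies this postcomposition with $\phi_*$ following the \Cref{lm:basicnuc} equivalence. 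Consequently the hypothesis of the lemma — the canonical map is an equivalence for all such $x_\bullet$ — is equivalent to saying that $\phi_*\colon\Map_\M(x,w)\to\Map_\M(x,z)$ is an equivalence for every compactly exhaustible $x$.

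Granting this reformulation, both implications are short. For the forward direction, if $z_\bullet\to z$ is a colimit cocone then $\phi$ is an equivalence, hence so is $\phi_*$, and we are done. For the converse, suppose $\phi_*$ is an equivalence on every compactly exhaustible object. By \Cref{thm:compactassembly}, compactly exhaustible objects generate $\M$ under filtered colimits, so every $m\in\M$ can be written as $\colim_F x_F$ with each $x_F$ compactly exhaustible and $F$ ranging over a filtered diagram; since $\Map_\M(-,w)$ and $\Map_\M(-,z)$ carry this filtered colimit to a limit, $\phi_*\colon\Map_\M(m,w)\to\Map_\M(m,z)$ is an equivalence for all $m$, so $\phi$ is an equivalence by Yoneda, i.e. $z_\bullet\to z$ is a colimit cocone.

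The only genuinely fiddly step is the identification in the second paragraph — matching the ``canonical map'' of the statement with $\phi_*$ composed with the \Cref{lm:basicnuc} comparison. This is pure bookkeeping with the colimit functor $c\colon\Ind(\M)\to\M$, its preservation of colimits, and the retraction $c\circ y\simeq\id$; there is no real content beyond unwinding definitions, but it must be carried out carefully since the whole argument pivots on it.
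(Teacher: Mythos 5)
Your proof is correct and follows essentially the same route as the paper's: both decompose the ``canonical map'' as the \Cref{lm:basicnuc} equivalence followed by postcomposition with the comparison map $\phi\colon\colim_I z_i\to z$, and then use the fact from \Cref{thm:compactassembly} that compactly exhaustible objects generate $\M$ under (filtered) colimits to pass between ``$\phi_*$ is an equivalence on all compactly exhaustible objects'' and ``$\phi$ is an equivalence''. The only difference is that you spell out the bookkeeping identification a bit more explicitly than the paper does.
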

\begin{proof}
The canonical map is a composite $$\Map_{\Ind(\M)}(\{x_n\}_n,\{z_i\})\to \Map_\M(\colim_\mathbb N x_n,\colim_I z_i)\to  \Map_\M(\colim_\mathbb N x_n, z)$$ where the first map is an equivalence by \Cref{lm:basicnuc}. 

Thus it is an equivalence if and only if the second one is, i.e. if and only if $\Map_\M(x,\colim_I z_i)\to~\Map_\M(x,z)$ is an equivalence for all compactly exhaustible $x$. Because these generate $\M$ under colimits by \Cref{thm:compactassembly}, this map is an equivalence for all of them if and only if $\colim_Iz_i\to z$ is an equivalence. 
\end{proof}
\begin{proof}[Proof of \Cref{cor:strongadjcompactass}]
Only if is \Cref{ex:preservecpct} and it does not use that $\M$ is compactly assembled. 

    Now assume that $\M$ is compactly assembled, and that $f$ preserves compact maps. 

Consider a filtered diagram $z_\bullet: I\to \N$, and the cocone $f^R(z_\bullet)\to f^R(\colim_I z_i)$, which we want to prove is a colimit-cocone. By \Cref{lm:colim-detection}, it suffices to prove that for any sequential system $x_\bullet$ along compact maps, $\Map_{\Ind(\M)}(\{x_n\}, \{f^R(z_i)\})\to \Map_\M(\colim_\mathbb N x_n, f^R(\colim_I z_i))$ is an equivalence. This map is now equivalent to the map $\Map_{\Ind(\N)}(\{f(x_n)\}, \{z_i\})\to~\Map_\N(\colim_\mathbb N f(x_n), \colim_Iz_i)$.

That this is an equivalence follows from \Cref{lm:basicnuc} and the assumption that $f$ preserves compact maps, so that $f(x_\bullet)$ is still a sequential system along compact maps. 

If filtered colimits in $\N$ are left exact, the same argument goes through, using at the end this assumption to apply \Cref{lm:basicnuc}. 

The proof for the $\kappa$-variant is the same. 
\end{proof}
From the proof, it is clear that we have the following addendum:
\begin{add}\label{add:enoughcpct}
    In the sitation of \Cref{cor:strongadjcompactass}, for $f^R$ to preserve filtered colimits, it suffices that $f$ preserve ``enough'' compact maps, that is, that there exist a set $S$ of compact maps that $f$ sends to compact maps, and such that sequential colimits along maps in $S$ generate $\M$ under colimits. 
\end{add}
Away from presentability, we can still give a characterization of functors that preserve compact morphisms \emph{between} compactly assembled \categories: 
\begin{cor}
    Let $f:\M\to \N$ be a filtered colimit preserving functor between compactly assembled \categories. The functor $f$ preserves compact morphisms if and only if the following diagram is vertically left adjointable:
    \[\begin{tikzcd}
	\M & \N \\
	{\Ind(\M)} & {\Ind(\N)}
	\arrow["f", from=1-1, to=1-2]
	\arrow["{p_\M}", from=2-1, to=1-1]
	\arrow["{\Ind(f)}"', from=2-1, to=2-2]
	\arrow["{p_\N}"', from=2-2, to=1-2]
\end{tikzcd}\]
i.e. if the canonical map $\hat y_\N \circ f\to \Ind(f)\circ \hat y_\M$ is an equivalence. 
\end{cor}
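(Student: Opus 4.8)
The plan is to reduce both implications to the behaviour of $\hat y$ on compact exhaustions, via \Cref{lm:basicnuc}. Write $p_\M\colon\Ind(\M)\to\M$ and $p_\N\colon\Ind(\N)\to\N$ for the colimit functors, so that $\hat y_\M\dashv p_\M$ and $\hat y_\N\dashv p_\N$; since $f$ and $\Ind(f)$ preserve filtered colimits and $\Ind(f)\circ y_\M\simeq y_\N\circ f$ on Yoneda embeddings, the square $f\circ p_\M\simeq p_\N\circ\Ind(f)$ commutes, and its vertical left adjointability is precisely the statement that the Beck--Chevalley transformation $\theta\colon\hat y_\N\circ f\to\Ind(f)\circ\hat y_\M$ is an equivalence. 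The first preliminary observation is a \emph{key identification}: for a compactly exhaustible object $c=\colim_\NN x_n$ with compact transition maps, \Cref{lm:basicnuc} gives $\Map_{\Ind(\M)}(\{x_n\},W)\simeq\Map_\M(c,p_\M(W))$ naturally in $W\in\Ind(\M)$, where $\{x_n\}:=\colim_\NN y_\M(x_n)$; comparing this with the adjunction equivalence $\Map_{\Ind(\M)}(\hat y_\M(c),W)\simeq\Map_\M(c,p_\M(W))$ and invoking Yoneda yields a canonical equivalence $\hat y_\M(c)\simeq\colim_\NN y_\M(x_n)$, under which the canonical comparison $\hat y_\M(c)\to y_\M(c)$ corresponds to the colimit-comparison map $\colim_\NN y_\M(x_n)\to y_\M(c)$. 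Applying $\Ind(f)$ then gives $\Ind(f)(\hat y_\M(c))\simeq\colim_\NN y_\N(f(x_n))$.

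For the direction ``$f$ preserves compact maps $\Rightarrow$ $\theta$ is an equivalence'': both $\hat y_\N\circ f$ and $\Ind(f)\circ\hat y_\M$ preserve filtered colimits, being composites of filtered-colimit-preserving functors (the left adjoints $\hat y$, the functor $f$, and $\Ind(f)$), so by \Cref{thm:compactassembly} it suffices to check that $\theta_c$ is an equivalence for $c$ compactly exhaustible. Writing $c=\colim_\NN x_n$ as above, the hypothesis makes $f(x_\bullet)$ a compact exhaustion of $f(c)=\colim_\NN f(x_n)$, so the key identification applied in $\N$ gives $\hat y_\N(f(c))\simeq\colim_\NN y_\N(f(x_n))$, matching $\Ind(f)(\hat y_\M(c))$; unwinding the construction of $\theta$ from the commuting square and the relevant units and counits, one checks that $\theta_c$ is exactly this equivalence.

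For the converse ``$\theta$ an equivalence $\Rightarrow$ $f$ preserves compact maps'': let $g\colon a\to b$ be a compact map in $\M$. By the characterization of compact maps in a compactly assembled category (the unlabelled lemma preceding \Cref{rmk:compassdbl}), $\hat y_\M(g)\colon\hat y_\M(a)\to\hat y_\M(b)$ factors through the canonical map $\hat y_\M(a)\to y_\M(a)$. Apply $\Ind(f)$ to such a factorization and use three facts: $\Ind(f)\circ\hat y_\M\simeq\hat y_\N\circ f$ via $\theta$, where naturality of $\theta$ turns $\Ind(f)(\hat y_\M(g))$ into $\hat y_\N(f(g))$; the equality $\Ind(f)\circ y_\M\simeq y_\N\circ f$; and the compatibility that $\Ind(f)$ carries the canonical comparison $\hat y_\M\to y_\M$ to the canonical comparison $\hat y_\N\circ f\to y_\N\circ f$. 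This exhibits $\hat y_\N(f(g))$ as factoring through $\hat y_\N(f(a))\to y_\N(f(a))$, so by the same characterization $f(g)$ is compact.

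The main obstacle is the bookkeeping underlying the last two paragraphs: identifying the Beck--Chevalley transformation $\theta$ and the canonical comparisons $\hat y\to y$ with the explicit maps arising from compact exhaustions, i.e. verifying that $\Ind(f)$ is compatible with the adjunction triple $\hat y\dashv p\dashv y$. This is purely formal --- it follows from naturality of the unit/counit data and from the fact that $\Ind(f)$ is determined by $\Ind(f)\circ y_\M\simeq y_\N\circ f$ together with preservation of filtered colimits --- but it is the part that requires care to set up cleanly, and it is the reason the argument is not merely a one-line invocation of \Cref{lm:basicnuc}.
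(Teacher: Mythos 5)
Your proof is correct, and it is interesting that your two directions line up with the paper's in opposite order of difficulty. For the direction ``$f$ preserves compact maps $\Rightarrow$ $\theta$ is an equivalence,'' you fill in exactly the argument the paper sketches with ``one proves (e.g. by comparing $\hat y$ and $y$)'': reduce to compactly exhaustible $c=\colim_\NN x_n$, identify $\hat y(c)$ with $\colim_\NN y(x_n)$ via \Cref{lm:basicnuc}, and observe that $f$ carries the exhaustion to an exhaustion of $f(c)$, so both sides of $\theta_c$ compute the same colimit. That is the intended proof, made explicit.

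For the converse, however, you take a genuinely different route. The paper argues: $\hat y_\M$ and $\Ind(f)$ are both internal left adjoints, hence preserve compact maps (\Cref{ex:preservecpct}), so $\Ind(f)\hat y_\M(\alpha)\simeq\hat y_\N f(\alpha)$ is compact, and then $\hat y_\N$ is fully faithful and filtered-colimit-preserving, hence reflects compact maps (\Cref{ex:reflectcpct}), so $f(\alpha)$ is compact. This is very short. You instead invoke the factorization characterization (the lemma after \Cref{ex:embtocheckcpct}, which states $g$ is compact iff $\hat y(g)$ factors through the comparison $\hat y(a)\to y(a)$), push the factorization through $\Ind(f)$ via naturality of $\theta$, and read off compactness of $f(g)$. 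Your version is correct, but it pays for not using the ``internal-left-adjoints-preserve-compact-maps'' slogan: as you yourself flag, you then owe a compatibility check that $\Ind(f)$ carries the canonical comparison $\hat y_\M\to y_\M$ to the canonical comparison $\hat y_\N f\to y_\N f$, a formal but fiddly identification that the paper's route avoids entirely. Also, a small citation slip: the factorization lemma you want is the unlabelled one following \Cref{ex:embtocheckcpct} in the ``Compact maps'' section, not one preceding \Cref{rmk:compassdbl}.
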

\begin{proof}
    On the one hand, suppose $\hat y_\N \circ f\simeq \Ind(f)\circ \hat y_\M$, and let $\alpha$ be a compact morphism in $\M$. In this case, $\Ind(f)\circ \hat y_\M(\alpha)$ is a compact morphism, and hence so is $\hat y_\N\circ f(\alpha)$, and by \Cref{ex:reflectcpct}, we find that $f(\alpha)$ is compact. 

    Conversely, suppose $f$ preserves compact morphisms. In this case it sends compact exhaustions to compact exhaustions.Using this, one proves (e.g. by comparing $\hat y$ and $y$) that the canonical map $\hat y_\N \circ f\to \Ind(f)\circ \hat y_\M$ is an equivalence on compactly exhaustible objects, and therefore (by filtered colimit preservation and \Cref{thm:compactassembly}) on all objects, as needed. 
\end{proof}
\begin{rmk}
    If $\M,\N$ do not have all colimits or if $f$ does not preserve them, $f$ need not have a right adjoint so we cannot really phrase this in terms of right adjoints. 
\end{rmk}
\begin{defn}
A filtered colimit preserving functor $f:\M\to \N$ between compactly assembled categories is called compactly assembled if it satisfies the equivalent conditions of the previous corollary. 
\end{defn}
\begin{rmk}
    Note that it follows from the corollary that any compactly assembled functor is a retract of a compact preserving, filtered colimit preserving functor between compactly generated \categories. 
\end{rmk}
The following definition is now relatively justified:
\begin{defn}
    Let $\CompAss$ denote the \category{} of compactly assembled \categories{} and compactly assembled functors.  

    Let $\PrL_\ca\subset\CompAss$ denote the non-full subcategory spanned by the presentable \categories{} and colimit-preserving functors\footnote{That are also compactly assembled.}
\end{defn}
\begin{rmk}
    $\CompAss$ is the \category{} of compactly assembled \categories{} mentioned in \Cref{rmk:compassdbl}.
\end{rmk}
We may now prove an analogue of \Cref{prop:colimdbl}:
\begin{prop}\label{prop:colimCompAss}
    The \category{} $\PrL_\ca$ is cocomplete, and furthermore the forgetful functor to $\PrL$ preserves colimits.
\end{prop}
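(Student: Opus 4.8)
The plan is to follow the same strategy that worked for $\Dbl{\V}$ in \Cref{prop:colimdbl}, using compact exhaustions in place of atomic generators. First I would reduce to showing that the forgetful functor to $\PrL$ reflects colimit cocones in an appropriate sense, or rather that $\PrL_\ca$ is closed under colimits in a suitable ambient setting; concretely, given a diagram $\M_\bullet:I\to\PrL_\ca$, I would form the colimit $\M_\infty$ of the underlying diagram in $\PrL$ (equivalently the limit of the right adjoints, as in \Cref{cor:colimintleft}) and argue two things: (i) $\M_\infty$ is compactly assembled, and (ii) each structure map $\M_i\to\M_\infty$ as well as any cocone map $\M_\infty\to\N$ in $\PrL_\ca$ is a compactly assembled functor, so that colimits in $\PrL_\ca$ exist and agree with those in $\PrL$.

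For step (i), the key observation is that the colimit inclusions $\iota_i:\M_i\to\M_\infty$ have right adjoints $\iota_i^R$ that preserve filtered colimits (this is the standard description of colimits in $\PrL$ as limits, combined with the fact that in $\PrL_\ca$ the morphisms have filtered-colimit-preserving right adjoints, so the transition maps in the limit diagram do too), and these right adjoints are jointly conservative. Since each $\M_i$ is compactly assembled, by \Cref{thm:compactassembly} it is generated under filtered colimits by compactly exhaustible objects; because $\iota_i$ preserves compact maps (its right adjoint preserves filtered colimits, so apply \Cref{ex:preservecpct}), it sends compact exhaustions to compact exhaustions. Hence the images $\iota_i(x)$ of compactly exhaustible $x\in\M_i$ are compactly exhaustible in $\M_\infty$, and since the images of the $\M_i$ generate $\M_\infty$ under colimits, one checks these compactly exhaustible objects generate $\M_\infty$ under filtered colimits (using that a general colimit of compactly exhaustible objects can be rewritten as a filtered colimit of such, exactly as in the proof of \Cref{cor:characcptex}). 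Then \Cref{thm:compactassembly} gives that $\M_\infty$ is compactly assembled. One must also check filtered colimits in $\M_\infty$ are left exact, which follows because they are computed compatibly through the $\iota_i^R$ from those in the $\M_i$, or directly from the compact-assembly criterion.

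For step (ii): that each $\iota_i$ is compactly assembled follows from the corollary characterizing compactly assembled functors, since $\iota_i$ preserves compact morphisms as noted. For a cocone $(g_i:\M_i\to\N)$ in $\PrL_\ca$ inducing $g:\M_\infty\to\N$, I would show $g$ preserves compact maps: a compact map in $\M_\infty$ can, after the above analysis, be tested via compact exhaustions pulled from the $\M_i$, on which $g\circ\iota_i=g_i$ preserves compact maps by hypothesis; combined with \Cref{add:enoughcpct}-style reasoning (it suffices that $g$ preserve ``enough'' compact maps — those whose sequential colimits generate $\M_\infty$) one concludes $g^R$ preserves filtered colimits, hence $g$ is compactly assembled. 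Finally, naturality and the universal property transfer verbatim: a $\V$-linear (here just colimit-preserving) cocone out of $\M_\infty$ in $\PrL_\ca$ corresponds to one in $\PrL$ whose legs happen to be compactly assembled, which is exactly the data of a cocone in $\PrL_\ca$.

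I expect the main obstacle to be step (i) — specifically, verifying carefully that the compactly exhaustible objects obtained from the $\iota_i$ actually generate $\M_\infty$ \emph{under filtered colimits} (not merely under all colimits), since a priori the colimit expressing an object of $\M_\infty$ in terms of the images of the $\M_i$ need not be filtered. The resolution should mirror the argument in \Cref{cor:characcptex} and the proof of \Cref{prop:cardboundunstable}: rewrite using the poset $Down_\omega$ of countably-cofinal downward-closed filtered subsets to manufacture a filtered (indeed $\omega_1$-filtered) indexing poset whose terms are compactly exhaustible. Once that bookkeeping is done, everything else is a formal consequence of the machinery already developed for compact maps.
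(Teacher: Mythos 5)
Your reduction to steps (i) and (ii), via the detection lemma for filtered-colimit-preserving right adjoints (the $\Ss$-linear analogue of \Cref{lm:colimdetection} and \Cref{cor:colimintleft}), correctly mirrors the paper's setup. However, your treatment of (i) departs from the paper's argument and has a gap. You propose to verify the criterion of \Cref{thm:compactassembly} for $\M_\infty$ directly, by showing that the compactly exhaustible images $\iota_i(x)$ generate $\M_\infty$ under \emph{filtered} colimits. They certainly generate under all colimits; but passing to filtered colimits is exactly where the difficulty lies, because unlike compact objects, compactly exhaustible objects in a general presentable category are not obviously closed under finite colimits (cf.\ \Cref{warn:cpctcofib} and the care needed already stably in \Cref{prop:cpctcofib}), so one cannot simply rewrite an arbitrary colimit as a filtered colimit of its finite subcolimits. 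You acknowledge the obstacle, but your proposed fix --- reusing the $\mathrm{Down}_\omega$ rewriting from \Cref{prop:cardboundunstable} and \Cref{cor:characcptex} --- is circular: those arguments begin by embedding the ambient category fully faithfully into a compactly generated one with filtered-colimit-preserving right adjoint, which is exactly the compact assembly of $\M_\infty$ you are trying to establish.

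The proof the paper has in mind (``similar to \Cref{prop:colimdbl}'') is a retraction argument that sidesteps (i) entirely. First, compactly generated categories are closed in $\PrL$ under colimits along compact-preserving functors: by the analogue of \Cref{cor:colimintleft} the $\iota_i$ preserve compacts, whose images form a set of compacts generating $\M_\infty$ under colimits, so $\M_\infty$ is compactly generated (equivalently, one uses that $\PrL_\omega\hookrightarrow\PrL$ preserves colimits). Second, given a general $\M_\bullet:I\to\PrL_\ca$, the naturality of $\hat y:\M_i\to\Ind(\M_i^{\omega_1})$ --- the unstable version of \Cref{cor:yhatnatural}, which the paper already relies on in \Cref{prop:fffiltered} --- exhibits $\colim_I\M_i$ as a retract of $\colim_I\Ind(\M_i^{\omega_1})$; the latter is compactly generated by the first step, and a retract of a compactly generated category is compactly assembled \emph{by definition} (\Cref{defn:compactass}). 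This never needs a filtered generating family of compactly exhaustible objects in $\M_\infty$. You should rework (i) along these lines; your (ii) then follows formally by the same detection lemma.
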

\begin{proof}
    The proof is similar to \Cref{prop:colimdbl}, and we actually leave it as an exercise.
\end{proof}

We now arrive at a non-obvious fact:
\begin{prop}\label{prop:Spcpt}
    The functor $(-)^\omega:\PrL_\ca\to \Cat$ preserves filtered colimits. 
\end{prop}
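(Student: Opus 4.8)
The plan is to reduce the statement to the known behaviour of $\Ind$ and of $(-)^\omega$ on $\PrL_\omega$, and then to control precisely how $\M^\omega$ sits inside $(\Ind\M)^\omega$. So fix a filtered diagram $\M_\bullet\colon I\to\PrL_\ca$ with colimit $\M_\infty$; by \Cref{prop:colimCompAss} this is also the colimit in $\PrL$, and every structure map $\M_i\to\M_\infty$, being a compactly assembled functor, preserves compact objects (the identity of a compact object is a compact morphism, and filtered colimits are left exact in compactly assembled categories, so \Cref{ex:cptobject} applies). We thus get a canonical comparison functor $\Phi\colon\colim_I\M_i^\omega\to\M_\infty^\omega$, and the goal is to show it is an equivalence. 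The first input is that $\Ind$ upgrades to a functor $\PrL_\ca\to\PrL_\omega$ — for compactly assembled $f$ the functor $\Ind(f)$ is compact-preserving, since $f$ preserves $\omega_1$-compact objects and, by \Cref{cor:cardboundcompass}, one may take $\Ind(\M)=\Ind_\omega(\M^{\omega_1})$, which is genuinely compactly generated — and that this functor preserves filtered colimits: indeed one checks $\Ind\colon\PrL_\ca\to\PrL_\omega$ is left adjoint to the inclusion $\PrL_\omega\hookrightarrow\PrL_\ca$, with unit the natural transformation $\hat y$ (naturality of $\hat y$ on $\PrL_\ca$ being precisely the defining property of a compactly assembled functor). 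Since moreover $(-)^\omega\colon\PrL_\omega\to\Cat$ preserves filtered colimits — via $\PrL_\omega\simeq\Cat^{\mathrm{idem}}$ and the fact that a filtered colimit in $\Cat$ of idempotent-complete categories is idempotent-complete — we obtain $(\Ind\M_\infty)^\omega\simeq\colim_I(\Ind\M_i)^\omega$ in $\Cat$.

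The second input is that for any $\M\in\PrL_\ca$ the fully faithful functor $\hat y_\M$ identifies $\M^\omega$ with the full subcategory of $(\Ind\M)^\omega$ spanned by the compact objects lying in the essential image $\hat y_\M(\M)$. Indeed $\hat y_\M$ preserves compact objects, its right adjoint $\colim_\M$ being filtered-colimit-preserving; conversely, if $z=\hat y_\M(m)$ is compact in $\Ind(\M)$ then $m=\colim_\M(z)$ is compact in $\M$, because $\hat y_\M$, being a left adjoint, preserves filtered colimits. Here $\hat y_\M(\M)$ is a colocalizing, hence retract-closed, subcategory of $\Ind(\M)$, being the image of a fully faithful left adjoint. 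Under this identification, the equivalence of the first input, and the naturality of $\hat y$, the functor $\Phi$ becomes the inclusion of one full subcategory of $\colim_I(\Ind\M_i)^\omega\simeq(\Ind\M_\infty)^\omega$ into another; it is therefore automatically fully faithful, and only essential surjectivity remains.

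For essential surjectivity, take a compact $z\in\Ind(\M_i)$ whose image $z_\infty\in(\Ind\M_\infty)^\omega$ lies in $\hat y_{\M_\infty}(\M_\infty)$; we must produce $j\ge i$ with $z_j\in\hat y_{\M_j}(\M_j)$. Write $e_\M:=\hat y_\M\circ\colim_\M$. By naturality of $\hat y$ and of the canonical map $\colim_\M\colon\Ind(\M)\to\M$, together with the fact that $\Ind$ preserves the colimit $\M_\infty$, the counit $\epsilon_\infty\colon e_{\M_\infty}(z_\infty)\to z_\infty$ — an equivalence since $z_\infty\in\hat y_{\M_\infty}(\M_\infty)$ — is the image of the counit $\epsilon\colon e_{\M_i}(z)\to z$ under $\Ind(\M_i)\to\Ind(\M_\infty)$. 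Each $\Ind(\M_k)$ is compactly generated and the transition functors are compact-preserving, so $\Map_{\Ind(\M_\infty)}(z_\infty,-)$ restricted to the image of $\Ind(\M_i)$ computes as $\colim_{k\ge i}\Map_{\Ind(\M_k)}(z_k,-)$; feeding in an inverse of $\epsilon_\infty$ produces, at some stage $j_1$, a map $\eta_{j_1}\colon z_{j_1}\to e_{\M_{j_1}}(z_{j_1})$, and feeding in $\id_{z_\infty}$ shows that after enlarging $j_1$ to some $j$ one has $\epsilon_j\circ\eta_j=\id_{z_j}$. Thus $z_j$ is a retract, in $\Ind(\M_j)$, of $e_{\M_j}(z_j)\in\hat y_{\M_j}(\M_j)$; since that subcategory is retract-closed, $z_j\in\hat y_{\M_j}(\M_j)$, and $z_j$ being compact it lies in $\M_j^\omega$ by the second input. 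Hence $z_\infty$ is in the image of $\Phi$, completing the proof.

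The hard part is this final essential-surjectivity step. One cannot argue naively that $\epsilon$ "becomes an equivalence at a finite stage'', because $e_{\M_j}=\hat y_{\M_j}\circ\colim_{\M_j}$ does not preserve compact objects of $\Ind(\M_j)$ once $\M_j$ fails to be compactly generated — since $\colim_{\M_j}$ only lands in $\omega_1$-compacts of $\M_j$ — so $e_{\M_j}(z_j)$ need not be compact, and filtered-colimit arguments about when a morphism becomes invertible do not apply to it directly; the fix is to track only the retraction $z_j\xrightarrow{\eta_j}e_{\M_j}(z_j)\xrightarrow{\epsilon_j}z_j$, whose source and target are compact, and then exploit retract-closure of the colocalizing subcategory $\hat y_{\M_j}(\M_j)$. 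The other point requiring genuine care is the set-theoretic bookkeeping around $\Ind(-)$, using \Cref{cor:cardboundcompass}, that is needed to make $\Ind\colon\PrL_\ca\to\PrL_\omega$ a bona fide colimit-preserving functor with unit $\hat y$.
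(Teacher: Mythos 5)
Your proof proposal has a fatal flaw in its ``first input'': the claim that $\Ind\colon\PrL_\ca\to\PrL_\omega$ preserves filtered colimits. This is false, and the paper explicitly flags it. In \Cref{warn:colimInd} the paper warns that $\colim_K\Ind(\M_k^\kappa)\not\simeq\Ind((\colim_K\M_k)^\kappa)$ in general, and in its own proof of \Cref{prop:Spcpt} it forms the colimit $\hat j_\infty\colon\M_\infty\to\colim_I\Ind(\M_i^{\omega_1})$ of the $\hat y_i$ and then pointedly observes ``Note that this is not $\hat y_{\M_\infty}$ in general.'' If $\Ind$ preserved filtered colimits, $\hat j_\infty$ \emph{would} be $\hat y_{\M_\infty}$, since $\hat y$ is a natural transformation $\id\Rightarrow\Ind$. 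The underlying obstruction is that $\Ind(\M)=\Ind(\M^{\omega_1})$ is built from $\omega_1$-compacts, and $(-)^{\omega_1}$ does not commute with merely $\omega$-filtered colimits: an $\omega_1$-compact object of $\M_\infty$ is a countable colimit of compacts, each of which is visible at some finite stage, but a countable family of stages need not have a common upper bound in a filtered (as opposed to $\omega_1$-filtered) indexing category.

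Your justification --- that $\Ind$ is a left adjoint to the inclusion $\PrL_\omega\hookrightarrow\PrL_\ca$ with unit $\hat y$ --- is also false, for the same reason: given a compactly assembled $g\colon\M\to C$ with $C$ compactly generated, the candidate extension $\Ind(\M^{\omega_1})\to C$ that restricts along $\hat y_\M$ to $g$ must send $y(x)$ to $g(x)$ for $x\in\M^{\omega_1}$, but $g(x)$ is only $\omega_1$-compact in $C$, not compact, so this extension does not land in $\PrL_\omega$. (The adjunction that \emph{does} hold, \Cref{thm:nonstandadj}, is for $\PP_\V((-)^\kappa)$ as a right adjoint to the inclusion of dualizables into $\kappa$-compactly generated modules --- same $\kappa$ on both sides, which is exactly what you don't have here.) Since the identification $(\Ind\M_\infty)^\omega\simeq\colim_I(\Ind\M_i)^\omega$ and the compatibility of the counits $\epsilon$ across the diagram both rest on this false premise, the essential-surjectivity argument does not go through.

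Your underlying instinct is sound, though, and is in fact the same as the paper's: produce a retraction of $z_j$ onto something in the image of $\hat y_{\M_j}$ at a finite stage. The difference is that the paper carries this out inside $\colim_I\Ind(\M_i^{\omega_1})$ --- which exists and is compactly generated without needing it to equal $\Ind(\M_\infty)$ --- and replaces the non-compact object $e_{\M_j}(z_j)$ by a compact exhaustion. Via \Cref{cor:characcptex}, the $\omega_1$-compact $x_i$ has a compact exhaustion $x_{i,0}\to x_{i,1}\to\cdots$, which gives compact objects $y(x_{i,n})$ in $\Ind(\M_i^{\omega_1})$ through which to factor; witnessing the resulting retraction at a finite stage then produces a \emph{compact} transition map $x_{j,n}\to x_{j,n+1}$ through which $\id_{x_j}$ factors, hence $x_j$ is compact by \Cref{ex:factorcompact}. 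Working with the compact exhaustion rather than with $e_{\M_j}$ is precisely what dodges the cardinality mismatch. You would need to restructure your essential-surjectivity argument along these lines --- never leaving $\colim_I\Ind(\M_i^{\omega_1})$ and replacing $e_{\M_j}(z_j)$ by the terms of a compact exhaustion --- to make it correct.
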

The proof will use the following, which is also not obvious a priori:
\begin{prop}\label{prop:fffiltered}
    Filtered colimits in $\PrL_\ca$ preserve fully faithful embeddings.
\end{prop}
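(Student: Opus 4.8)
The plan is as follows. First, by \Cref{prop:colimCompAss} a filtered colimit in $\PrL_\ca$ is already computed in $\PrL$, so I fix a filtered diagram $i\mapsto(f_i\colon\M_i\to\N_i)$ in $(\PrL_\ca)^{\Delta^1}$ with each $f_i$ fully faithful, set $\M=\colim_i\M_i$, $\N=\colim_i\N_i$, $f=\colim_i f_i$, and must show $f$ is fully faithful. The subtlety — and the reason the compactly assembled structure cannot be dropped — is exactly the warning preceding \Cref{cor:tensloc}: a filtered colimit in $\PrL$ of fully faithful functors need not be fully faithful.

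To circumvent this I would transport the whole diagram to the world of compactly generated categories, where filtered colimits are concrete. By \Cref{cor:cardboundcompass} each $\M_i$ and $\N_i$ is $\omega_1$-compactly generated, and by \Cref{cor:boundyhat} the left adjoint $\hat y$ to the canonical functor $\Ind((-)^{\omega_1})\to(-)$ exists with that target. Since the transition functors and the $f_i$ are compactly assembled they preserve compact exhaustions, hence compactly exhaustible objects, hence (by \Cref{cor:characcptex}) the $\omega_1$-compact objects; so the diagram $i\mapsto(f_i^{\omega_1}\colon\M_i^{\omega_1}\to\N_i^{\omega_1})$ makes sense, and applying $\Ind(-)$ yields a filtered diagram $i\mapsto(\Ind(f_i^{\omega_1})\colon\Ind(\M_i^{\omega_1})\to\Ind(\N_i^{\omega_1}))$ of compact-preserving functors between compactly generated categories, each of which is fully faithful (as $f_i^{\omega_1}$ is, and $\Ind(-)$ preserves full faithfulness). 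Its colimit $\hat f$ — computed via the filtered colimit of the categories of compact objects, then $\Ind$ — is therefore fully faithful, because a filtered colimit of fully faithful functors of categories is fully faithful and $\Ind(-)$ preserves this. Moreover the canonical functors $p_i\colon\Ind(\M_i^{\omega_1})\to\M_i$ are localizations, their right adjoints (the restricted Yoneda embeddings) being fully faithful by \Cref{cor:cardboundcompass}; by the corollary characterizing compactly assembled functors through the identity $\hat y_\N f\simeq\Ind(f)\hat y_\M$, they assemble into a natural transformation $p$ compatible with the $f_i$, of which $\hat y$ is a natural section (\Cref{cor:yhatnatural2} and its underlying naturality).

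Now comes the descent, which is the crux. Applying $\colim_i$ to the natural transformations above, and using the lemma immediately preceding \Cref{cor:tensloc} (a colimit of pointwise localizations is a localization), I would obtain a localization $p_\M\colon\hat\M\to\M$ with fully faithful right adjoint, and a section $\hat y_\M\colon\M\to\hat\M$ which, being the left adjoint of $p_\M$ — the left adjoint of a functor with fully faithful right adjoint is fully faithful, \Cref{cor:leftrightff} — is itself fully faithful; likewise $\hat y_\N\colon\N\hookrightarrow\hat\N$; together with a homotopy $\hat f\circ\hat y_\M\simeq\hat y_\N\circ f$. Since $\hat f$ and $\hat y_\M$ are fully faithful, so is $\hat y_\N\circ f$; since $\hat y_\N$ is fully faithful, $f$ is fully faithful, as desired.

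The main obstacle I expect is precisely this descent step: one must carry the adjunctions $\hat y_i\dashv p_i$ and the naturality of $\hat y$ and $p$ coherently through the colimit — so that $\hat y_\M$ genuinely is the left adjoint of the localization $\colim_i p_i$ and $\hat y_\N f\simeq\hat f\hat y_\M$ holds on the nose — and one must track that the constructions $(-)\mapsto\M$, $(-)\mapsto\Ind((-)^{\omega_1})$, and the colimit interact as claimed; this is exactly where a naive argument fails, per the warning box. I expect this to be bookkeeping of the kind already carried out in \Cref{cor:yhatnatural2} and in the proof of \Cref{prop:cardboundunstable}, rather than a new idea. As an alternative one can attempt to run the mapping-space argument of \Cref{lm:colim-detection} and \Cref{lm:basicnuc} directly on $\M$, using that $f$ preserves compact exhaustions; but then one must produce compact exhaustions of objects of $\M$ whose terms are controlled by the $\M_i$, which again returns one to the analysis of filtered colimits in $\PrL$.
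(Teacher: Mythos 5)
Your proposal is correct and is, in substance, the paper's own proof: both lift the filtered diagram along the natural section $\hat y$ to the square of $\Ind((-)^{\omega_1})$'s, use that filtered colimits of compact-preserving fully faithful functors between compactly generated categories are fully faithful, and descend through the vertical fully faithful left adjoints, whose compatibility with the colimit is guaranteed by the adjointability of the naturality squares (the content of the corollary you cite characterizing compactly assembled functors via $\hat y_\N f \simeq \Ind(f)\hat y_\M$). The only cosmetic difference is that you argue the full faithfulness of $\colim\hat y$ by routing through the lemma on colimits of localizations and \Cref{cor:leftrightff}, whereas the paper reads it off directly from the pointwise fully faithful left adjoint structure being natural; these are the same argument.
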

\begin{proof}
    Suppose $C_i\to D_i$ is a filtered diagram in $\PrL_\ca$ of fully faithful embeddings. 

    We have a commutative square of filtered diagrams: 
    \[\begin{tikzcd}
	{C_i} & {D_i} \\
	{\Ind(C_i^{\omega_1})} & {\Ind(D_i^{\omega_1})}
	\arrow[from=1-1, to=1-2]
	\arrow["{\hat y}"', from=1-1, to=2-1]
	\arrow["{\hat y}", from=1-2, to=2-2]
	\arrow[from=2-1, to=2-2]
\end{tikzcd}\]
where the diagram $C_i\to \Ind(C_i^{\omega_1})$ is a fully faithful left adjoint \emph{in the category of diagrams}, because each of the naturality squares: 
\[\begin{tikzcd}
	{C_i} & {C_j} \\
	{\Ind(C_i^{\omega_1})} & {\Ind(C_j^{\omega_1})}
	\arrow[from=1-1, to=1-2]
	\arrow["{\hat y}"', from=1-1, to=2-1]
	\arrow["{\hat y}", from=1-2, to=2-2]
	\arrow[from=2-1, to=2-2]
\end{tikzcd}\] is vertically right adjointable (and the vertical left adjoints are fully faithful). Thus, when passing to colimits, we have a square of the form: 
\[\begin{tikzcd}
	{\colim_I C_i} & {\colim_I D_i} \\
	{\colim_I\Ind(C_i^{\omega_1})} & {\colim_I\Ind(D_i^{\omega_1})}
	\arrow[from=1-1, to=1-2]
	\arrow["{\hat y}"', from=1-1, to=2-1]
	\arrow["{\hat y}", from=1-2, to=2-2]
	\arrow[from=2-1, to=2-2]
\end{tikzcd}\]
where the vertical functors are fully faithful by adjointability, the bottom horizontal functor is fully faithful by the corresponding statement in $\PrL_\omega$, and therefore the top horizontal functor is fully faithful too.
\end{proof}

\begin{proof}[Proof of \Cref{prop:Spcpt}]
Let $\M_\bullet: I\to \PrL_\ca$ be a filtered system, with colimit $\M_\infty$ and canonical maps $\iota_i: \M_i\to \M_\infty$. 

We first prove that the functor $\colim_I \M_i^\omega\to \M_\infty^\omega$ is fully faithful. This map is the map induced on compacts by $\colim_I \Ind(\M_i^\omega)\to \colim_I \M_i$, which is a filtered colimit of fully faithful embeddings and hence is fully faithful by \Cref{prop:fffiltered}. 

Now we prove that the functor $\colim_I \M_i^\omega\to \M_\infty^\omega$ is essentially surjective. For this, we first set up the diagram of the $\hat y_i:\M_i\to \Ind(\M_i^{\omega_1})$ and take its colimit $\hat j_\infty: \M_\infty\to~\colim_I \Ind(\M_i^{\omega_1})$. Note that this is not $\hat y_{\M_\infty}$ in general. We use $\hat\iota_i:\Ind(\M_i^{\omega_1})\to \colim_I \Ind(\M_j^{\omega_1})$ denote the canonical functor. 

Let $x\in \M_\infty$ be compact. Since $\hat j_\infty$ is compactly assembled, $\hat j_\infty(x)\in \colim_I \Ind(\M_i^{\omega_1})$ is compact. By the analogous claim for $\PrL_{\omega}$, we find that there is an $i$ and an $x_i\in \M_i^{\omega_1}$ such that $\hat \iota_i(y_{\M_i}(x_i))= \hat j_\infty(x)$. Note that $x_i = p_i(j_i(x_i))$, and we have $$x=(\hat j_\infty)^R \hat j_\infty (x) = (\hat j_\infty)^R \hat\iota_i y_i(x_i) = \iota_i p_i y_i(x_i) = \iota_i x_i$$ 

Thus also $\hat j_\infty(x) = \hat j_\infty \iota_i (x_i) = \hat \iota_i \hat y_i(x_i)$: the canonical map $\hat y_i(x_i)\to y_i(x_i)$ becomes an equivalence after applying $\hat \iota_i$. Our goal is to ``witness that equivalence'' at a finite stage $j\geq i$. 

Since $x_i$ is $\omega_1$-compact, by \Cref{cor:characcptex}, it is compactly exhausted, so fix $x_{i,0}\to~\dots\to~x_{i,n}\to~\dots$ a compact exhaustion of $x_i$. We have $\hat y_i(x_i) = \colim_\NN y(x_{i,n})$ and so $\hat \iota_i y_i(x_i) = \hat j_\infty(x) = \colim_\NN \hat\iota_i y_i(x_{i,n})$ - but $\hat j_\infty(x)$ is compact, so we may find a retraction of the form $\hat \iota_i y_i(x_i)\to~\hat \iota_i y_i(x_{i,n})\to~\hat \iota_i y_i(x_i)$. We may furthermore freely insert one more composite in the colimit, so we find that the following composite is equivalent to the identity of $\hat \iota_i y_i(x_i)$:
$$\hat \iota_i y_i(x_i)\to\hat \iota_i y_i(x_{i,n})\to \hat \iota_i y_i(x_{i,n+1})\to \hat\iota_i $$
This is now an equivalence of maps between compacts in $\colim_I \Ind(\M_j^{\omega_1})$ that all come from the $i$th stage, and thus, by the analogous claim for $\PrL_\omega$, it is witnessed at a finite stage: up to picking a larger $j$, we may assume that this equivalence happens in $\M_j^{\omega_1}$. 

But $x_{j,n}\to x_{j,n+1}$ is compact, as it is the image under the compactly assembled functor $\M_i\to \M_j$ of a compact map. By \Cref{ex:factorcompact}, it follows that the identity of $x_j$ is compact, i.e. that $x_j$ is compact. In total, this proves essential surjectivity since $\iota_j(x_j) =~\iota_j \iota_i^j(x_i) =~\iota_i (x_i) = x$. 
\end{proof}
\begin{cor}
    Let $\V\in\CAlg(\PrL_\omega)$. The functor $(-)^\omega: \Dbl{\V}\to \Cat$ preserves filtered colimits. 
\end{cor}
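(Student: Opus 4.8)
The plan is to reduce the statement to $\Cref{prop:Spcpt}$ by showing that the forgetful functor $\Dbl{\V}\to\PrL$ factors through the subcategory $\PrL_\ca$, giving $U\colon\Dbl{\V}\to\PrL_\ca$, and that $U$ preserves filtered colimits. Since $(-)^\omega\colon\Dbl{\V}\to\Cat$ is then the composite of $U$ with $(-)^\omega\colon\PrL_\ca\to\Cat$, the claim follows at once.

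First I would check that $U$ is well defined. Given a dualizable $\V$-module $\M$, $\Cref{thm:Luriethmgeneralbase}$ (applied with $\kappa=\omega$) provides an internally left adjoint fully faithful embedding $\M\to\N$ into an atomically generated $\V$-module $\N$, which is $\omega$-compactly generated by $\Cref{cor:atimpliescpct}$; since the right adjoint of $\M\to\N$ preserves all colimits, $\Cref{obs:kcpctass}$ shows that the underlying $\infty$-category of $\M$ is compactly assembled. If moreover $f\colon\M\to\N$ is an internal left adjoint between dualizable $\V$-modules, then $f$ is colimit-preserving and $f^R$ preserves all colimits, in particular filtered ones, so $\Cref{cor:strongadjcompactass}$ shows that $f$ preserves compact maps, i.e. $f$ is a compactly assembled functor. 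Hence $\Dbl{\V}\to\Mod_\V(\PrL)\to\PrL$ factors through the (non-full) subcategory $\PrL_\ca\subseteq\PrL$, and the composite $(-)^\omega\circ U$ agrees with $(-)^\omega\colon\Dbl{\V}\to\Cat$ (both send $\M$ to the full subcategory of compact objects of its underlying $\infty$-category, using that internal left adjoints preserve compact objects since their right adjoints preserve filtered colimits).

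Next I would show that $U$ preserves filtered colimits. By $\Cref{prop:colimdbl}$, filtered colimits in $\Dbl{\V}$ are computed in $\Mod_\V(\PrL)$, hence on underlying $\infty$-categories they are the colimits computed in $\PrL$ (the forgetful functor $\Mod_\V(\PrL)\to\PrL$ preserves colimits); and by $\Cref{prop:colimCompAss}$ the same holds for filtered colimits in $\PrL_\ca$. Since the inclusion $\PrL_\ca\subseteq\PrL$ is conservative — an inverse in $\PrL$ to an underlying equivalence of a morphism of $\PrL_\ca$ is again colimit-preserving with filtered-colimit-preserving right adjoint, hence again compactly assembled by $\Cref{cor:strongadjcompactass}$ — the canonical comparison between the colimit cocone in $\PrL_\ca$ and the image under $U$ of the colimit cocone in $\Dbl{\V}$, which agree on underlying $\infty$-categories, is an equivalence. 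Thus $U$ preserves filtered colimits, and composing with $\Cref{prop:Spcpt}$ finishes the proof.

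The only real content is in the second paragraph, namely recognizing internal left adjoints between dualizable $\V$-modules as (colimit-preserving) compactly assembled functors, so that $\Dbl{\V}$ sits inside $\PrL_\ca$ compatibly with $(-)^\omega$; once this is in place, the result is a formal consequence of the colimit computations of $\Cref{prop:colimdbl}$ and $\Cref{prop:colimCompAss}$ together with the already-proven $\Cref{prop:Spcpt}$, and no further $\infty$-categorical input is needed.
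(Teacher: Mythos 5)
Your proof is correct and follows essentially the same route as the paper: factor $\Dbl{\V}\to\PrL$ through $\PrL_\ca$ via \Cref{thm:Luriethmgeneralbase} and \Cref{cor:atimpliescpct}, use \Cref{prop:colimdbl}, \Cref{prop:colimCompAss} and conservativity of $\PrL_\ca\to\PrL$ to see that the resulting $\Dbl{\V}\to\PrL_\ca$ preserves colimits, and then compose with \Cref{prop:Spcpt}. The only difference is that you spell out a few details (that morphisms land in compactly assembled functors via \Cref{cor:strongadjcompactass}, and why $\PrL_\ca\to\PrL$ is conservative) that the paper leaves implicit.
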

\begin{proof}
    The functors $\Dbl{\V}\to \Mod_\V(\PrL)$ and $\Mod_\V(\PrL)\to \PrL$ preserve all colimits, the second for general reasons and the first by \Cref{prop:colimdbl}. 

    Since $\V$ is in $\CAlg(\PrL_\omega)$, combining \Cref{thm:Luriethmgeneralbase} and \Cref{cor:atimpliescpct}, the composite $\Dbl{\V}\to \PrL$ factors through $\PrL_\ca$. It follows from \Cref{prop:colimCompAss} (and conservativity of $\PrL_\ca\to \PrL$) that $\Dbl{\V}\to~\PrL_\ca$ preserves colimits. 

    Now we can conclude with \Cref{prop:Spcpt} that the composite $\Dbl{\V}\to~\PrL_\ca\xrightarrow{(-)^\omega}~\Cat$ preserves filtered colimits. 
\end{proof}
\begin{rmk}
    This result can be almost equivalently stated as ``If $\V\in \CAlg(\PrL_\omega)$, then $\V$ is compact in $\Dbl{\V}$'' (this statement does follow from the result). 
\end{rmk}
 
We conclude this section with a proof of \Cref{ex:Shisdbl}: 
\begin{prop}\label{pfofShdbl}
    Let $X$ be a locally compact Hausdorff space. The \category{} $\Sh(X)$ of sheaves on $X$ is compactly assembled. In particular, $\Sh(X;\Sp)$ is dualizable in $\PrL_{\st}$.
\end{prop}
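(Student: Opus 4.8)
The plan is to verify the criterion of \Cref{thm:compactassembly}: since $\Sh(X)$ is an $\infty$-topos it admits filtered colimits and these are left exact, so it suffices to exhibit $\Sh(X)$ as generated under filtered colimits by compactly exhaustible objects. The representable sheaves $h_U$, $U\in\Oo(X)$, generate $\Sh(X)$ under colimits, but a general $h_U$ need not be compactly exhaustible; the key move is to replace the basis $\Oo(X)$ by a finer one.

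First I would observe that, $X$ being locally compact Hausdorff, it is completely regular, so the relatively compact cozero sets form a basis $\mathcal B$ of $\Oo(X)$: given $x\in U$, shrink $U$ to a relatively compact open neighbourhood of $x$ and apply Urysohn's lemma. Each $V\in\mathcal B$ is then $\sigma$-compact --- a cozero set is an increasing union of closed sets, which here are compact since they lie in the compact set $\overline V$ --- and, being open in $X$, is itself locally compact Hausdorff, hence hemicompact: we may write $V=\bigcup_n V_n$ with each $\overline{V_n}$ compact and $\overline{V_n}\subseteq V_{n+1}$, i.e. $V_n\Subset V_{n+1}$ in the sense of \Cref{ex:O(X)} (using that a compact subset of the open set $V$ is closed in $X$). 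By \Cref{ex:cpctinSh} the transition maps $h_{V_n}\to h_{V_{n+1}}$ are (strongly) compact in $\Sh(X)$, so $h_V\simeq\colim_n h_{V_n}$ exhibits each $h_V$, $V\in\mathcal B$, as a compactly exhaustible object.

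Since $\mathcal B$ is a basis, the $h_V$ with $V\in\mathcal B$ generate $\Sh(X)$ under colimits, hence --- writing colimits as filtered colimits of finite ones --- $\Sh(X)$ is generated under filtered colimits by finite colimits of such $h_V$; these finite colimits are again compactly exhaustible, by choosing the exhaustions $V_n$ compatibly with the relevant inclusions. (Alternatively one can bypass this bookkeeping: $\Ind(\Sh(X))$ is cocomplete, so in the proof of \Cref{thm:compactassembly} the class of objects on which the left adjoint $\hat y$ to $\Ind(\Sh(X))\to\Sh(X)$ is defined is closed under all colimits; combined with \Cref{lm:basicnuc}, which produces $\hat y$ on compactly exhaustible objects, this forces $\hat y$ to exist everywhere.) Either way \Cref{thm:compactassembly} applies and $\Sh(X)$ is compactly assembled.

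Finally, for the last sentence: $\Sh(X;\Sp)\simeq\Sh(X)\otimes\Sp$, and compact assembly is stable under $-\otimes\Sp$ --- if $\Sh(X)$ is a retract in $\PrL$ of a compactly generated category $\mathcal C$ (\Cref{defn:compactass}), then $\Sh(X;\Sp)$ is a retract in $\PrL_{\st}$ of the compactly generated stable category $\mathcal C\otimes\Sp$ --- so $\Sh(X;\Sp)$ is dualizable by \Cref{thm:lurie}. The main obstacle is the generation step: recognizing that one must pass to the cozero basis $\mathcal B$ (since relatively compact opens can fail to be $\sigma$-compact, so their representables need not be compactly exhaustible) and then checking that each basic $h_V$ genuinely carries a countable compact exhaustion in $\Oo(X)$.
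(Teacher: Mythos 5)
Your proof is correct and follows essentially the same strategy as the paper: exhibit a basis of opens admitting a countable compact exhaustion, then conclude via \Cref{ex:cpctinSh} and \Cref{thm:compactassembly}. The only real difference is in the point-set topology step: you identify the basis as relatively compact cozero sets (via Urysohn), whereas the paper iterates the shrinking $U_n\Subset U_{n+1}\Subset V$ directly and takes $U=\bigcup_n U_n$; both are standard, and your parenthetical remark about the class of objects on which $\hat y$ exists being closed under all colimits correctly handles the colimit-vs-filtered-colimit bookkeeping that the paper leaves implicit.
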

\begin{proof}

    It follows from \Cref{ex:cpctinSh} that for all $U\Subset V$, the induced map of sheaves $h_U\to h_V$ is compact in $\Sh(X)$. 

In particular, whenever $U$ is an open in $X$ such that $U=\bigcup_n U_n$ for some sequence of opens with $U_n\Subset U_{n+1}$, $h_U$ is compactly exhaustible. So it suffices to prove that these $h_U$'s generate $\Sh(X)$ under colimits. It therefore suffices to prove that these opens form a basis of $X$. 

It suffices to prove that for any $x\in X$, and and any $V\ni x$ open containing $x$, there exists such a $U$ with $x\in U\subset V$. But we have the following two facts:
\begin{enumerate}
    \item For any $x\in V$, one can find a relatively compact open $U$ with $x\in U\Subset V$;
    \item For any good inclusion $W_0\Subset W_1$, there exists a refinement to $W_0\Subset W_2 \Subset W_1$.
\end{enumerate}
Together they lead to the following construction: start with $x\in U_0\Subset V$ with $U_0$ as in the first point; then construct $U_1$ so that $U_0\Subset U_1\Subset V$, $U_2$ so that $U_1\Subset U_2\Subset V$, etc. so that ultimately, $U = \bigcup_n U_n\subset V$ is one of these opens. 
\end{proof}
\subsection{Stable compact assembly and continuous $K$-theory}
In this section, we specialize to the case of stable \categories. In this case, we will be able to rephrase the criterion from \Cref{thm:compactassembly} in terms of ``zero-ness'', similarly to the case of compact generation. We will also survey the theory of compactly assembled stable \categories, as well as the basics of Efimov's continuous $K$-theory. Note that by combining \Cref{defn:compactass} and \Cref{thm:lurie}, we find that compactly assembled stable \categories{} are exactly dualizable presentable stable \categories. 

 We learned the following statement from Dustin Clausen:
 \begin{thm}\label{thm:dblviacpctmaps}
    Let $\M$ be a stable presentable \category. $\M$ is compactly assembled if and only if there exists a small set $S$ of compact maps in $\M$ such that for any $x\in \M$, if $x$ is nonzero, then there exists some nonzero map to $x$ that factors through $S$, and such that either of the following two conditions hold:
    \begin{enumerate}
        \item Any map in $S$ factors as a composite of two maps in $S$; 
        \item All the objects appearing as sources of maps in $S$ are in the colimit-completion of $S$-telescopes in $\M$. Here, an $S$-telescope is a sequential colimit along maps in $S$. 
    \end{enumerate}
     \end{thm}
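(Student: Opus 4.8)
The plan is to prove both directions by reducing to the criterion in \Cref{thm:compactassembly}, which characterizes compact assembly via compactly exhaustible objects generating under filtered colimits. Since $\M$ is stable, ``nonzero map to $x$'' plays the role that ``nonzero compact object mapping to $x$'' plays for compact generation, by a standard argument using cofiber sequences: a family of (compactly exhaustible) objects generates a stable presentable category under colimits if and only if the maps out of them detect nonzero objects. So the bulk of the work is to massage the hypotheses of the theorem into the shape ``compactly exhaustible objects detect nonzero objects''.

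\textbf{The ``if'' direction.}
Assume $S$ is as in the statement. First I would observe that, under either condition (1) or (2), every source of a map in $S$ is (a retract of, in case one needs it, but actually literally) a compactly exhaustible object: under (1), given $f_0\colon a\to b$ in $S$, factor it through $S$ repeatedly, $a = a_0\to a_1\to a_2\to\cdots$ with each transition in $S$ hence compact, and note $f_0$ (and in particular $a$, as the source) sits at the bottom of an $S$-telescope whose colimit we may call $\tilde a$; the point is that the telescope of $a$ with the maps being iterated factorizations of $f_0$ itself — more carefully, one builds the telescope so that $a$ maps into $\colim$ and this suffices. Under condition (2) this is assumed directly: sources of maps in $S$ lie in the colimit-completion of $S$-telescopes, and $S$-telescopes are compactly exhaustible by definition (sequential colimits along compact maps), hence in the colimit-completion of compactly exhaustible objects. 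Either way, let $\mathcal{G}$ be the colimit-completion in $\M$ of the sources of maps in $S$ together with all $S$-telescopes; then $\mathcal{G}$ is contained in the colimit-completion of the compactly exhaustible objects. Now I claim $\mathcal{G} = \M$: by stability it suffices to show that if $x\in\M$ is nonzero then some object of $\mathcal{G}$ maps nonzero to $x$, which is exactly the hypothesis (the nonzero map factors through $S$, so in particular through a source of a map in $S$, which lies in $\mathcal{G}$). Hence compactly exhaustible objects generate $\M$ under filtered colimits — one has to upgrade ``colimit-completion'' to ``filtered-colimit-completion'', but in a stable presentable category the colimit-completion of a set of objects closed under finite colimits (or after closing under them — and note compactly exhaustible objects are \emph{not} closed under finite colimits in general, but $S$-telescopes combined with finite colimits of the sources still land among compactly exhaustible-ish objects; more cleanly, use that $\Ind(\M)\to\M$ existence of a left adjoint can be tested on a generating set) equals their filtered-colimit completion after closing under finite colimits and retracts. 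So by \Cref{thm:compactassembly}, $\M$ is compactly assembled. I would route the final step through \Cref{lm:basicnuc} directly: to produce $\hat y$ it suffices to corepresent $y\mapsto \Map_\M(x,c(y))$ for $x$ ranging over a generating set, and for $x$ an $S$-telescope or a source of a map in $S$ this is \Cref{lm:basicnuc}.

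\textbf{The ``only if'' direction.}
Assume $\M$ is compactly assembled. Take $S$ to be (a small skeleton of) the collection of \emph{all} compact maps in $\M$ — or, to get condition (1) on the nose, the collection of compact maps $f$ that admit a factorization $f = f'' f'$ with $f', f''$ compact; by \Cref{ex:factorcompact} and the fact that in a compactly assembled category every compact map factors through a compact object (\Cref{ex:embtocheckcpct}), every compact map is a composite of two compact maps (compact object $\to$ compact object $\to$ compact object: insert the identity of the compact object, which is a compact map by \Cref{ex:cptobject}), so this is no restriction and condition (1) holds automatically. Smallness of $S$ follows since compact maps have compactly exhaustible-bounded, in fact by \Cref{cor:cardboundcompass} $\omega_1$-accessible, source and target. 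It remains to check the detection hypothesis: if $x\neq 0$, there is a nonzero compact map to $x$. By \Cref{thm:compactassembly}, $x$ is a filtered colimit of compactly exhaustible objects $x_\alpha$; since $x\neq 0$ and in a stable presentable category a filtered colimit is zero only if the system is pro-zero, some structure map $x_\alpha\to x$ is nonzero (this is the stable analogue of \Cref{prop:nonzerocolim}). Writing $x_\alpha = \colim_n x_{\alpha,n}$ as a compact exhaustion, some $x_{\alpha,n}\to x_\alpha\to x$ is nonzero, and the map $x_{\alpha,n}\to x_{\alpha,n+1}$ being compact shows (by \Cref{ex:factorcompact}, the two-sided ideal property) that $x_{\alpha,n}\to x$, which factors through that compact map, is itself a compact map. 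This is the desired nonzero map factoring through $S$.

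\textbf{Main obstacle.}
The delicate point is the ``generate under \emph{filtered} colimits'' versus ``generate under colimits (and finite colimits/retracts)'' bookkeeping in the ``if'' direction, since compactly exhaustible objects are not obviously closed under finite colimits, so one must either argue directly at the level of corepresentability via \Cref{lm:basicnuc} (which is what I would do) or carefully check that closing the sources of $S$ under finite colimits keeps them inside the class for which \Cref{lm:basicnuc} applies. A secondary subtlety is verifying condition (1) is automatic in the ``only if'' direction, which rests on the innocuous but essential observation that $\id_c$ for $c$ compact is a compact map, so any compact map $a\to c\to y$ through a compact $c$ decomposes as $(a\to c)\circ\id_c$-style into two compact maps.
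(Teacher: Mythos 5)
Your overall strategy is the same as the paper's — reduce both directions to \Cref{thm:compactassembly} (equivalently to detecting nonzero objects by maps from compactly exhaustible objects via \Cref{lm:kappacpctgen}) — but two of your intermediate steps are wrong.

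In the ``only if'' direction, your justification that condition (1) holds is incorrect. You propose to factor a compact map $f\colon x\to y$ in $\M$ as a composite of two compact maps by factoring it through a compact object and inserting the identity of that compact object. But \Cref{ex:embtocheckcpct} places that compact object inside the \emph{ambient} compactly generated category $\N$ (e.g.\ $\Ind(\M^{\omega_1})$), not inside $\M$ — a compactly assembled $\M$ may well have \emph{no} nonzero compact objects. So the factorization $i(x)\to c\to i(y)$ you produce does not yield maps in $\M$, and the argument collapses. The fact you need — that in a compactly assembled category every compact map factors as a composite of two compact maps — is \Cref{rmk:compactmapsplitunstable}, and it is not formal: one proves it (as the paper does) by writing $\hat y(x)\simeq\colim_I y(x_i)\simeq\colim_I\hat y(x_j)$, using compactness of $y(x_i)$ to factor through $\hat y(x_j)$, and then factoring $\hat y(x_j)\to\hat y(x)$ through $y(x_j)$. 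Relatedly, your smallness claim for $S$ is imprecise: ``all compact maps'' is large; the paper takes $S$ to be compact maps \emph{between $\omega_1$-compacts}, which is small by \Cref{cor:cardboundcompass}.

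In the ``if'' direction under condition (1), your opening claim that ``every source of a map in $S$ is \ldots{} literally a compactly exhaustible object'' is false, and the $\mathcal G$-based argument that rests on it has a gap. Factoring $f_0\colon a\to b$ repeatedly through $S$ builds a telescope $a=a_0\to a_1\to\cdots$ whose colimit $\tilde a$ sits \emph{between} $a$ and $b$; that exhibits $\tilde a$ as compactly exhaustible, not $a$. Your $\mathcal G$, which includes the sources $a$, therefore need not lie in the colimit-completion of compactly exhaustible objects. What you actually want (and what the paper proves) is the detection statement directly: given nonzero $a\to b\to x$ with $a\to b\in S$, build $\tilde a$ so that $a\to\tilde a\to b$, note $a\to x$ factors through $\tilde a\to x$, hence $\tilde a\to x$ is nonzero, and $\tilde a$ is compactly exhaustible; then conclude with \Cref{lm:kappacpctgen} (or, under (2), with \Cref{add:kappacpctgen}). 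You gesture at this (``$a$ maps into $\colim$ and this suffices'') but then revert to the broken $\mathcal G$ claim. Drop $\mathcal G$ and argue via the detection criterion directly.
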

These conditions help to guarantee that the compactly exhaustible objects generate $\M$ under colimits, leading us to \Cref{thm:compactassembly}. 
\begin{proof}
  (Only if) Suppose that $\M$ is compactly assembled, and let $\hat y: \M\to\Ind(\M^{\omega_1})$ denote the left adjoint to the canonical functor $c: \Ind(\M)\to \M$ (cf. \Cref{cor:boundyhat}), and $y$ the right adjoint to $c$, i.e. the Yoneda embedding. 

We let $S$ denote the set of all compact maps between $\omega_1$-compacts in $\M$. Let $x$ be a nonzero object of $\M$, and write $\hat y(x)\simeq\colim_I y(x_i)$ for some filtered diagram $I\to \M^{\omega_1}$. As $\Ind(\M^{\omega_1})$ is compactly generated, one of the maps $y(x_i)\to \hat y(x)$ must be nonzero by \Cref{prop:nonzerocolim}. 

Furthermore, as in the proof of \Cref{prop:cardboundunstable}, we also have that the natural map $\hat y(x_j)\to y(x_j)$ induces an equivalence $\colim_I \hat y(x_j)\to \hat y(x)$: it is a map between objects in the image of $\hat y$, and applying $c$ clearly yields an equivalence. Compactness of $y(x_i)$ guarantees that the map $y(x_i)\to \hat y(x)$ factors through some $\hat y(x_j)$, and the map $\hat y(x_j)\to \hat y(x)$ factors, by construction, through $y(x_j)\to \hat y(x)$. 

It follows that $x_i\to x_j$ is a compact map between $\omega_1$-compacts, and that $x_i\to x_j\to x$ is nonzero.

The same kind of argument immediately shows that any compact map between $\omega_1$-compacts factors as a composite of two such compact maps. 

Finally, the proof of \Cref{thm:compactassembly} shows that all objects of $\M$ are filtered colimits of compactly exhaustible objects, and so this proves that both 1. and 2. are satisfied. 
\newline 

(If) By \Cref{thm:compactassembly}, it suffices to prove that $\M$ is generated under colimits by compactly exhaustible objects. 

We need to prove that for any nonzero $x\in \M$, there is a nonzero map from a compactly exhaustible object $y$. For this, use our first assumption, namely that there is a nonzero composite $y_0\to y_\infty\to x$, with $y_0\to y_\infty\in S$. 

If we first assume 1., one can factor this as $y_0\to y_1\to  y_\infty$, with both maps in $S$, and then we can similarly factor $y_1\to y_\infty$ as $y_1\to y_2\to y_\infty$ and so on. Ultimately, we find a sequential diagrams with transition maps in $S$ whose colimit $y_\omega$ fits as $y_0\to y_\omega\to y_\infty$, so that the map $y_\omega\to y_\infty\to x$ is nonzero, as $y_0\to x$ factors through it. $y_\omega$ is clearly compactly exhaustible, so we are done by \Cref{lm:kappacpctgen}. 

We now assume 2. in place of 1.. In this case, $y_0$ is by assumption in the colimit-completion of $S$-telescopes, and so we are simply done by \Cref{add:kappacpctgen}.   
\end{proof}
\begin{rmk}\label{rmk:compactmapsplitunstable}
    The proof of (Only if) works without stability, in particular, in any compactly assembled category, any compact map factors as a composite of two compact maps. 
\end{rmk}
A corollary of this remark is the following: 
\begin{cor}\label{cor:cpctexhaustQ}
    Let $\M$ be a compactly assembled category, and $x$ a compactly exhaustible object therein. There exists a $\mathbb Q_{\geq 0}$-indexed diagram with colimit $x$ such that each of the transition maps is compact. 
\end{cor}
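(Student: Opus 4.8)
The plan is to start from an $\mathbb N$-indexed compact exhaustion $x_0 \to x_1 \to \dots$ with colimit $x$ and to interpolate a $\mathbb Q_{\geq 0}$-indexed diagram refining it, so that $x$ remains the colimit and all transition maps stay compact. The key input is \Cref{rmk:compactmapsplitunstable} (equivalently, the ``Only if'' part of \Cref{thm:dblviacpctmaps} as noted in that remark): in any compactly assembled category, every compact map factors as a composite of two compact maps. This ``halving'' operation is what lets us subdivide an interval in $\mathbb Q_{\geq 0}$.

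First I would fix a compact exhaustion $(x_n)_{n \in \mathbb N}$ of $x$, so each $x_n \to x_{n+1}$ is compact and $\colim_n x_n \simeq x$. I would then build the $\mathbb Q_{\geq 0}$-diagram by dyadic subdivision: for each $n$ and each $k \geq 0$, I want to define objects $x_q$ for $q$ a dyadic rational in $[n, n+1]$ with denominator $2^k$, together with compact transition maps, compatibly as $k$ increases, so that $x_n$ and $x_{n+1}$ are the ones already chosen and the composite along $[n,n+1]$ recovers $x_n \to x_{n+1}$. This is done inductively on $k$: given the diagram on dyadics of denominator $2^k$, each elementary step $x_{q} \to x_{q'}$ (with $q' - q = 2^{-k}$) is a compact map, and by the halving property it factors as $x_q \to x_{q + 2^{-(k+1)}} \to x_{q'}$ through a (chosen) new object with both maps compact. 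Taking the (filtered) colimit over $k$ of these nested finite diagrams on each $[n,n+1]$, and then gluing over $n$, yields a diagram indexed by the dyadic rationals $\mathbb Q_{\geq 0}^{\mathrm{dy}} \subset \mathbb Q_{\geq 0}$; since $\mathbb Q_{\geq 0}^{\mathrm{dy}}$ is cofinal in $\mathbb Q_{\geq 0}$ (and in $\mathbb N$), the colimit of the resulting diagram is still $x$, and each transition map in the dyadic diagram is compact by construction. To get a genuinely $\mathbb Q_{\geq 0}$-indexed diagram I would left Kan extend along the cofinal inclusion $\mathbb Q_{\geq 0}^{\mathrm{dy}} \hookrightarrow \mathbb Q_{\geq 0}$ (which along a filtered cofinal poset inclusion just takes filtered colimits over down-sets), and check that the new transition maps remain compact: each transition map $x_q \to x_{q'}$ in $\mathbb Q_{\geq 0}$ for $q < q'$ factors through some dyadic $x_r$ with $q < r < q'$, hence factors through a compact map (using \Cref{ex:factorcompact}, compact maps form a two-sided ideal).

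Alternatively, and perhaps more cleanly, I would avoid the Kan extension bookkeeping by noting that it suffices to produce any diagram $D : \mathbb Q_{\geq 0} \to \M$ whose restriction to $\mathbb N$ (or to the dyadics) is cofinal, has colimit $x$, and has compact transition maps along a cofinal set of pairs — and then observe that \emph{every} transition map $x_q \to x_{q'}$ with $q < q'$ is then automatically compact, since it factors through some $x_r$ on a cofinal piece where the maps are known to be compact, and compact maps are a two-sided ideal. So the real content is just: build a $\mathbb Q_{\geq 0}$-diagram extending the given $\mathbb N$-exhaustion with compact transition maps between consecutive dyadics, which the halving lemma delivers by iterated subdivision.

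I expect the main obstacle to be purely organizational: making the dyadic subdivision coherent as an honest diagram in an $\infty$-category (rather than a 1-category), since each application of the halving property involves a choice and these choices must be assembled into a functor $\mathbb Q_{\geq 0}^{\mathrm{dy}} \to \M$. The clean way to handle this is to build the diagram as a sequential colimit (over $k$) in the functor category $\Fun(P, \M)$, where at stage $k$ one has a diagram on the poset of dyadics of denominator $\leq 2^k$ and one extends it; each extension step is a section of a trivial-on-underlying-objects fibration of diagram categories, existence of which follows from the factorization property applied pointwise together with the fact that subdividing one edge of a poset and filling is a cofibration-type extension. No delicate cardinality or left-exactness hypotheses are needed here — compact assembly of $\M$ enters only through \Cref{rmk:compactmapsplitunstable}, i.e. the existence of $\hat y$ and the argument that compact maps split.
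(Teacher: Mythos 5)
Your proposal is correct and is essentially the paper's proof: the paper isolates the dyadic-subdivision step as \Cref{lm:rationals}, proved by noting that each $G_k$ (a finite linear poset) is free on its spine, and applies it to each interval $[n,n+1]$ of the given compact exhaustion, using \Cref{rmk:compactmapsplitunstable} for the halving input. One small simplification to your write-up: the left Kan extension from $\mathbb Q_{\geq 0}^{\mathrm{dy}}$ to $\mathbb Q_{\geq 0}$ is unnecessary, since $\mathbb Q_{\geq 0}^{\mathrm{dy}}$ and $\mathbb Q_{\geq 0}$ are order-isomorphic by Cantor's back-and-forth theorem — the same fact the paper invokes for $\colim_k G_k \simeq \mathbb Q_{\geq 0}^\triangleright$ — so a dyadic-indexed diagram already is, up to relabeling, a $\mathbb Q_{\geq 0}$-indexed diagram with no Kan-extension bookkeeping needed.
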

This will be convenient later on to construct new compactly assembled categories in \Cref{section:limits}, as well as to construct rigidifications in \cite{companion}. To prove this, we first explain how to construct maps out of $\mathbb Q_{\geq 0}$: 
\begin{lm}\label{lm:rationals}
    Let $C$ be a category, and $S$ a collection of maps in $C$ such that for any $f:x\to y$ in $S$, there exist $x\to z, z\to y$, both in $S$, whose composite is $f$. 

    In this case, for any $f:x\to y$ in $S$, there exists a diagram $x_\bullet: \mathbb Q_{\geq 0}^\triangleright\to C$ such that $x_0= x, x_\infty = y$, and each  nontrivial map $r<s$ in $\mathbb Q_{\geq 0}^\triangleright$ is sent to a map in $S$ in $C$.
\end{lm}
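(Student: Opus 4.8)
The plan is to reduce to a concrete countable dense linear order and then build the functor by iterated binary subdivision, using the fibrancy of $C$ and the fact that spine inclusions are inner anodyne to take care of coherence.

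First I would observe that $\mathbb{Q}_{\geq 0}^\triangleright$ is order-isomorphic to the poset $D$ of dyadic rationals in $[0,1]$: both are countable, densely ordered, and possess a least and a greatest element, hence isomorphic by the usual back-and-forth argument, and one may take the isomorphism to send $0\mapsto 0$ and the cone point $\infty\mapsto 1$. So it suffices to produce a functor $D\to C$ sending $0\mapsto x$, $1\mapsto y$, and every nonidentity relation to a map in $S$. Writing $D=\bigcup_{n\ge 0}D_n$ with $D_n=\{k2^{-n}:0\le k\le 2^n\}$, the $D_n$ are finite linear orders with $N(D_n)\cong\Delta^{2^n}$ and $D_n\hookrightarrow D_{n+1}$ a monomorphism of simplicial sets; since $C$ is an $\infty$-category, each restriction $\Fun(N(D_{n+1}),C)\to\Fun(N(D_n),C)$ is an isofibration and $\Fun(N(D),C)=\lim_n\Fun(N(D_n),C)$. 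It therefore suffices to construct a tower of functors $F_n\colon D_n\to C$ with $F_n|_{D_{n-1}}\simeq F_{n-1}$, with $F_0$ the morphism $f$, and with every edge of $F_n$ lying in $S$; the isofibration property then rectifies this tower to a strictly compatible one, which assembles into a functor $F\colon D\to C$, and transporting along $D\cong\mathbb{Q}_{\geq 0}^\triangleright$ finishes the construction.

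The inductive step is where the hypothesis on $S$ is used. Maintaining the invariant that the spine of $F_{n-1}$ is a chain of $2^{n-1}$ composable morphisms each lying in $S$ (true for $F_0$, as $f\in S$), I would apply the factorization hypothesis to each spine morphism $a\xrightarrow{g}a'$ to obtain $a\xrightarrow{g''}b\xrightarrow{g'}a'$ with $g',g''\in S$, together with a $2$-simplex of $C$ over $a,a'$ witnessing $g\simeq g'g''$. Concatenating produces a chain of $2^n$ morphisms in $S$, together with $2$-simplices showing that composing consecutive pairs recovers the spine of $F_{n-1}$. Since the spine inclusion $\mathrm{Sp}^{2^n}\hookrightarrow\Delta^{2^n}=N(D_n)$ is inner anodyne, this chain is the spine of a functor $F_n\colon D_n\to C$, unique up to contractible choice, and the $2$-simplices exhibit $F_n|_{D_{n-1}}\simeq F_{n-1}$ (on the nose on vertices, and up to the prescribed equivalences on the remaining edges), so the invariant persists and the tower is built.

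The main obstacle is not the coherence bookkeeping, which the spine-inclusion fact handles cleanly, but the observation that the conclusion demands \emph{every} relation $r<s$ in $\mathbb{Q}_{\geq 0}^\triangleright$ — not only the consecutive ones — to land in $S$. By functoriality an edge of $F_n$ between non-consecutive vertices is a composite of spine edges, all of which lie in $S$ by construction; so such an edge lies in $S$ as soon as $S$ is closed under composition. This holds in the situations where the lemma is applied, where $S$ consists of compact maps and compact maps form a two-sided ideal (\Cref{ex:factorcompact}), hence are closed under composition; I would record this closure property among the hypotheses (the construction in any case produces a functor whose consecutive relations are sent to $S$, which is all that is needed downstream, the rest following from the ambient ideal property of compact maps).
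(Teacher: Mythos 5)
Your proposal is correct and follows essentially the same route as the paper's proof: filter $\mathbb{Q}_{\geq 0}^\triangleright$ by finite dyadic posets $G_k \cong \Delta^{2^k}$ (Cantor's back-and-forth argument supplies the identification of the colimit), lift inductively by factoring spine edges, and use that the spine inclusion of $\Delta^n$ is inner anodyne so that a chain of composable morphisms extends essentially uniquely to a full simplex. Your extra discussion of isofibrations and tower rectification spells out what the paper leaves implicit, but it is the same construction.

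The one substantive addition in your write-up is the observation that the lemma's conclusion asks for \emph{every} nontrivial map $r<s$ to land in $S$, whereas the construction only directly controls the consecutive (spine) edges at each finite stage. You are right that this is a real subtlety: already at the second subdivision, the map $1/4 \to 3/4$ is a composite of two newly created spine edges, and the factorization hypothesis does not by itself put it in $S$. Your fix---requiring $S$ to be closed under composition---is correct, and your remark that this is automatic in every application (where $S$ is always a two-sided ideal, cf.\ \Cref{ex:factorcompact} and \Cref{ass:settheory}, hence closed under composition) resolves the issue for the paper's purposes. This is a genuine but harmless imprecision in the lemma as stated; either adding the closure hypothesis as you propose, or weakening the conclusion to the spine edges (which is all that is actually used downstream), gives the rigorous formulation.
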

\begin{proof}
For each $k$, let $G_k$ denote the full subposet of the dyadics spanned by the dyadics of depth $\leq k$, i.e. of the form $\frac{m}{2^k}$ for some $m\in [0,2^k]$. This is a finite totally ordered set, i.e. a simplex, and so it is a free on its spine. 

Thus, to give a dotted lift as in \[\begin{tikzcd}
	{G_k} & C \\
	{G_{k+1}}
	\arrow[from=1-1, to=2-1]
	\arrow["f", from=1-1, to=1-2]
	\arrow[dashed, from=2-1, to=1-2]
\end{tikzcd}\] it suffices to fill it on the spine, i.e. to provide a factorization of each $f(\frac{m}{2^k})\to f(\frac{m+1}{2^k})$ through some object (to be the image under the dotted map of $\frac{2m+1}{2^{k+1}}$). 

Finally, by Cantor's theorem, $\colim_k G_k\simeq \mathbb Q_{\geq 0}^\triangleright$. 
\end{proof}
\begin{rmk}
    The inclusion $\mathbb N\to \mathbb Q_{\geq 0}$ is cofinal, thus, given a $\mathbb Q_{\geq 0}$-indexed diagram $x_\bullet$, we have $\colim_\mathbb N x_n \simeq \colim_{\mathbb Q_{\geq 0}}x_r$. The relevance of $\mathbb Q_{\geq 0}$ in this story is thus completely captured by the above lemma: given an $x$ which can be written as a sequential colimit of maps in a collection $S$, the fact that it can be written as a $\mathbb Q_{\geq 0}$-indexed colimit of such things is supposed to mean that the maps in the sequential diagram whose colimit is $x$ can themselves be refined to composites of maps in $S$, who themselves can be refined as composites of maps in $S$ etc. 
\end{rmk}

\begin{proof}[Proof of \Cref{cor:cpctexhaustQ}]
    Write $x=\colim_\mathbb N x_n$ along compact maps. Now for each $n$, the map $x_n\to x_{n+1}$ is compact and so, by \Cref{rmk:compactmapsplitunstable}, we can fit a $\mathbb Q^{\triangleright}_{\geq 0}$-diagram with compact transition maps inbetween $x_n$ and $x_{n+1}$. Putting all these diagrams together gives a $\mathbb Q_{\geq 0}$-indexed diagrams in which $\mathbb N$ is cofinal, and hence it has colimit $x$, as desired.   
\end{proof}

We now explore Efimov's continuous $K$-theory, and the structure of $\Prdbl$, the category of dualizable presentable stable categories. We first start by noting that the inclusion $\Prdbl\to \PrL_{\st}$ preserves \emph{some} pullbacks. As a warm-up, we note:
\begin{lm}
    Let $f:\M\to\N$ be an internal left adjoint between dualizable stable categories. The full subcategory $\N_0$ of $\N$ generated under colimits by the image of $\M$ is dualizable, and the inclusion $\N_0\to \N$ is an internal left adjoint. 
\end{lm}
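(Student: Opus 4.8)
The plan is to identify $\N_0$ as the image of an explicit internal left adjoint from a dualizable category, and then invoke \Cref{prop:dbldetect} and the fully-faithfulness criterion. Concretely, since $f\colon \M\to\N$ is an internal left adjoint and $\M$ is dualizable, pick $\lambda$ large enough that $\M,\N\in\Mod_\Sp(\PrL_\lambda)$ and $f^R$ preserves $\lambda$-compacts; by \Cref{thm:Luriethmgeneralbase}(4) we have the internal left adjoint $\hat y_\M\colon \M\to\PP_\Sp(\M^\lambda)=\Ind(\M^\lambda)$, a section of the localization $p_\M$. The composite $\M\xrightarrow{\hat y_\M}\Ind(\M^\lambda)$ is fully faithful (\Cref{rmk:hatyff}), so $\M$ is a retract of the compactly generated category $\Ind(\M^\lambda)$. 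First I would consider the functor $g\colon \Ind(\M^\lambda)\xrightarrow{\Ind(f^\lambda)}\Ind(\N^\lambda)\to\N$; this is an internal left adjoint (it preserves compact objects, and its source is atomically/compactly generated, so \Cref{cor:atomicimpliesinternal} applies), and its essential image under colimits is exactly $\N_0$, since $\M^\lambda$ generates $\M$ under colimits and $\V$-tensors (recall $\V=\Sp$ here, so tensors are built from colimits), hence $f(\M)$ and $g(\Ind(\M^\lambda))$ generate the same colimit-closed subcategory.

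Next I would argue that $\N_0\hookrightarrow\N$ is an internal left adjoint. Here the cleanest route is \Cref{lm:colimdetection}: one needs a family of internal left adjoints into $\N_0$ with jointly conservative right adjoints, each with dualizable source. Take the single functor $g'\colon \Ind(\M^\lambda)\to\N_0$ obtained by corestricting $g$ (it lands in $\N_0$ by construction and is still colimit-preserving; it is an internal left adjoint into $\N_0$ because $\N_0\hookrightarrow\N$ is fully faithful and $g$ is an internal left adjoint, using the last \Cref{ex} before \Cref{lm:smallinternalleft}). Its right adjoint $(g')^R\colon \N_0\to\Ind(\M^\lambda)$ is conservative: if $(g')^R(n)\simeq 0$ then $\Map_{\N_0}(g'(c),n)\simeq\Map(c,(g')^R(n))\simeq 0$ for all $c\in\Ind(\M^\lambda)$, and since the $g'(c)$ generate $\N_0$ under colimits, $n\simeq 0$. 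Thus \Cref{prop:dbldetect} shows $\N_0$ is dualizable. For the inclusion $i\colon\N_0\to\N$ being an internal left adjoint: $i\circ g'= g$ is an internal left adjoint, and $g'$ has jointly conservative right adjoint; apply \Cref{lm:colimdetection} with the single map $\iota_1=g'$ and $F=i$ — this shows $i$ is an internal left adjoint iff $i\circ g'$ is, which it is.

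I expect the main obstacle to be the bookkeeping around ``$\N_0$ = image of $g$ under colimits,'' i.e. verifying that the essential image of $g'$ genuinely generates $\N_0$ under colimits (so that $(g')^R$ is conservative and \Cref{lm:colimdetection} applies cleanly), and making sure $g'$ corestricts correctly as a $\Sp$-linear colimit-preserving functor — one must check $\N_0$ is closed under the relevant colimits in $\N$ and that $\N_0$ inherits presentability (it does, being a colimit-closed, accessibly-embedded subcategory of a presentable category generated by a set of objects). A secondary subtlety is confirming that $\N_0$, a priori only closed under colimits, is automatically a $\Sp$-submodule; but over $\Sp$ this is automatic since the $\Sp$-tensoring is determined by colimits. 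Everything else reduces to the already-established \Cref{prop:dbldetect}, \Cref{lm:colimdetection}, \Cref{cor:atomicimpliesinternal}, and the elementary fact (stated in the excerpt) that a factorization $\M\to\N_0\hookrightarrow\N$ through a fully faithful functor with $\M\to\N$ internally left adjoint forces $\M\to\N_0$ internally left adjoint.
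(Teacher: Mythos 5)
Your proposal has a genuine gap at the crucial step. You claim that $g := p_\N\circ\Ind(f^\lambda)\colon \Ind(\M^\lambda)\to\N$ is an internal left adjoint because ``it preserves compact objects, and its source is atomically/compactly generated, so \Cref{cor:atomicimpliesinternal} applies.'' But this is false in general: $g$ sends the compact generator $y(m)$, $m\in\M^\lambda$, to $f(m)\in\N$, which is only $\lambda$-compact, not $\omega$-compact (and over $\Sp$ the atomics are exactly the $\omega$-compacts). Equivalently, $g^R = (\Ind(f^\lambda))^R\circ y_\N$, where $y_\N\colon\N\to\Ind(\N^\lambda)$ is the Yoneda embedding right adjoint to $p_\N$, and $y_\N$ only preserves $\lambda$-filtered colimits rather than all colimits. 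So $g$ need not be an internal left adjoint, and everything downstream of that claim (the internal left adjointness of the corestriction $g'$, and the applications of \Cref{prop:dbldetect} and \Cref{lm:colimdetection} with $g'$ as the test functor) is unsupported. The detour through $\Ind(\M^\lambda)$ is exactly what introduces the problem.

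The fix is immediate and gives a correct proof: drop the detour and work with the corestriction $\tilde f\colon\M\to\N_0$ of $f$ itself. Since $i\circ\tilde f = f$ is an internal left adjoint and $i\colon\N_0\hookrightarrow\N$ is fully faithful, the \Cref{ex} you cite gives that $\tilde f$ is an internal left adjoint; $\tilde f^R$ is conservative because the image of $\tilde f$ generates $\N_0$ under colimits; $\M$ is dualizable. Then \Cref{prop:dbldetect} (with the single functor $\tilde f$) yields dualizability of $\N_0$, and \Cref{lm:colimdetection} yields that $i$ is an internal left adjoint. This corrected argument is precisely the paper's proof of the subsequent $\V$-linear version, \Cref{lm:dblim}, and is in fact more general. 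By contrast, the paper's own proof of the stable lemma you were asked about takes a different, stable-specific route: it uses \Cref{prop:colimdbl} to compute the pushout $\N\amalg_\M 0 \simeq \N/\N_0$ in $\Prdbl$, concluding the localization $\N\to\N/\N_0$ is an internal left adjoint, and then invokes \Cref{cor:incstrongiffprojstrong} (the fiber/cofiber duality coming from \Cref{lm:fundamentalsequence}) to transfer this to the inclusion $\N_0\to\N$; dualizability then follows because $\N_0$ is a retract of $\N$.
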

\begin{proof}
    By \Cref{prop:colimdbl}, the pushout $\N\coprod_\M 0$ in $\PrL_{\st}$ lives in $\Prdbl$. But this pushout in $\PrL_{\st}$ is exactly the quotient $\N/\langle \mathrm{im}(\M)\rangle$, where $\langle -\rangle$ means the localizing subcategory generated by a given subcategory. Thus the localization $\N\to \N/\langle \mathrm{im}(\M)\rangle $ is an internal left adjoint, and it follows formally that the inclusion $\langle \mathrm{im}(\M)\rangle \to \N$ is also an internal left adjoint, see, e.g. \Cref{cor:incstrongiffprojstrong}. 
    
    It follows that $\langle \mathrm{im}(\M)\rangle$ is dualizable, as a retract of $\N$. 
\end{proof}
We do not need this right now, but we record the general version, for which the proof needs to be slightly different:
\begin{lm}\label{lm:dblim}
    Let $f:\M\to \N$ be a map in $\Dbl{\V}$. The full subcategory of $\N$ generated under colimits and $\V$-tensors by the image of $\M$ is dualizable, and both its inclusion into $\N$ as well as the corestriction of $f$ are internal left adjoints. 
\end{lm}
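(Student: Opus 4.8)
The plan is to avoid the quotient/kernel route used in the stable case (which relied on a localization sequence) and instead work directly with the corestriction $f_0\colon\M\to\N_0$ of $f$. The point is that $\N_0$ is by construction closed under colimits and $\V$-tensors inside $\N$, and this single fact forces $f_0$ to be an internal left adjoint with conservative right adjoint; everything in the statement then follows formally from \Cref{prop:dbldetect} and \Cref{lm:colimdetection}.

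In detail, I would first note that since $\M$ is $\nu$-compactly generated for some $\nu$ and $\V\in\CAlg(\PrL_\kappa)$, the subcategory $\N_0$ is generated under colimits by the small set $\{v\otimes f(c):v\in\V^\kappa,\ c\in\M^\nu\}$, so it is a presentable $\V$-module, and the inclusion $i\colon\N_0\to\N$ and the corestriction $f_0\colon\M\to\N_0$ are $\V$-linear colimit-preserving functors ($f_0$ because $i$ is fully faithful and $i\circ f_0=f$ preserves colimits and $\V$-tensors); in particular $f_0$ has a right adjoint $f_0^R\colon\N_0\to\M$. The key observation is that $f_0^R$ is nothing but the restriction of $f^R$: for $m\in\M$ and $z\in\N_0$ one has $\Map_{\N_0}(f_0(m),z)=\Map_\N(f(m),z)=\Map_\M(m,f^R(z))$. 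Since $\N_0$ is closed under colimits and $\V$-tensors in $\N$, these are computed in $\N$; as $f$ is an internal left adjoint, $f^R$ preserves colimits and its projection maps $v\otimes f^R(z)\to f^R(v\otimes z)$ are equivalences, hence the same is true of $f_0^R=f^R|_{\N_0}$. Thus $f_0$ is an internal left adjoint, which is the assertion about the corestriction. Next, $f_0^R$ is conservative: the objects $v\otimes f_0(m)$ ($v\in\V$, $m\in\M$) generate $\N_0$ under colimits, so the functors $\Map_{\N_0}(v\otimes f_0(m),-)\simeq\Map_\V\!\big(v,\hom_\M(m,f_0^R(-))\big)$ are jointly conservative, and any morphism inverted by $f_0^R$ is therefore inverted by all of them, hence an equivalence. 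Now \Cref{prop:dbldetect}, applied to the single internal left adjoint $f_0\colon\M\to\N_0$ with $\M$ dualizable and $f_0^R$ conservative, shows $\N_0$ is dualizable; and \Cref{lm:colimdetection}, applied to $i\colon\N_0\to\N$ together with the internal left adjoint $f_0$ (with conservative right adjoint), shows $i$ is an internal left adjoint if and only if $i\circ f_0=f$ is — which it is.

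The only mildly delicate point is the very first one: verifying that $\N_0$ really is a presentable $\V$-module and that $i$, $f_0$ are morphisms in $\Mod_\V(\PrL)$. This is standard — a full subcategory of a presentable $\V$-module that is closed under colimits and $\V$-tensors and generated under colimits by a small set of objects is again a presentable $\V$-module, with all structure restricted — and once it is in place the remaining steps are essentially formal, so I do not anticipate a real obstacle. (As a sanity check, the same argument re-proves the stable case, with $\langle\mathrm{im}(\M)\rangle$ in place of $\N_0$ and the $\V$-tensor conditions vacuous.)
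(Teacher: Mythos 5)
Your proof is correct and takes essentially the same route as the paper: factor $f$ as $\M\to\N_0\to\N$, observe that the corestriction's right adjoint is the restriction of $f^R$ (hence the corestriction is an internal left adjoint with conservative right adjoint), and then invoke \Cref{lm:colimdetection} to deduce that the inclusion $i$ is an internal left adjoint. The only variation is the final step: you get dualizability of $\N_0$ by applying \Cref{prop:dbldetect} to the corestriction $\M\to\N_0$, whereas the paper deduces it from $\N_0$ being a retract of the dualizable $\N$ (via the fully faithful internal left adjoint $i$); both are valid one-line closures once the rest is in place.
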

\begin{proof}
    Let $\mathcal I$ denote said full subcategory. We have a factorization of $f$ as $\M\xrightarrow{\tilde f} \mathcal I\xrightarrow{i} \N$. As $i$ is fully faithful and $f$ lands in its image, we find that the canonical map $\tilde f^R \to f^R \circ i$ is an equivalence, from which it follows that $\tilde f$ is an internal left adjoint. Furthermore, $\tilde f^R$ is conservative by construction, so that \Cref{lm:colimdetection} implies that $i$ is an internal left adjoint, as $i\circ \tilde f$ is. 

    As $i$ is fully faithful and an internal left adjoint, we have that $\mathcal I$ is a retract of $\N$ and is thus dualizable. 
\end{proof}
\begin{lm}\label{lm:kernel}
    Let $f:\M\to \N$ be map in $\Prdbl$ whose underlying map in $\PrL_{\st}$ is a localization. In this case the forgetful functor $\Prdbl\to \PrL_{\st}$ preserves $\fib(f)$\footnote{In particular this fiber exists - this is clear from the presentability result we will prove, but we do not need this here.}.

    More generally, for any $f:\M\to\N$ in $\Prdbl$, $\fib(f)$ is the largest dualizable subcategory of $\M$ contained in $\ker(f)$ and for which the inclusion functor into $\M$ is an internal left adjoint\footnote{In particular this largest dualizable subcategory exists.}.
\end{lm}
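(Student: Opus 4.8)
The plan is to prove the general statement directly, constructing $\fib(f)$ in $\Prdbl$ by hand (so the forthcoming presentability theorem is not needed), and then read off the localization case. Throughout, hom-spaces in $\Prdbl$ are small by \Cref{lm:smallinternalleft}, and the zero category is a zero object of $\Prdbl$, so an object realizing $\fib(f)$ must corepresent $\D\mapsto\fib\big(\Map_{\Prdbl}(\D,\M)\to\Map_{\Prdbl}(\D,\N)\big)$, the homotopy fiber over the zero functor.

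\emph{Identifying the fiber functor.} I would first observe that this homotopy fiber is the union of the components of $\Fun^{iL}(\D,\M)^\simeq$ on which $f\circ g$ is equivalent to $0$: the remaining ``nullhomotopy'' datum lies in a torsor under $\Omega_0\Fun^{iL}(\D,\N)^\simeq$, which is trivial because $\Fun(\D,\N)$ is stable, whence $\Map(0,0)=\ast$. Next, $f\circ g\simeq 0$ is equivalent to $g$ taking values objectwise in $\ker(f)$ — the only non-formal direction being that a pointwise-zero functor factors through the trivial full subcategory of $\N$ spanned by zero objects, hence is the zero functor. So the fiber functor is $\D\mapsto\{g\in\Fun^{iL}(\D,\M)^\simeq:\operatorname{im}(g)\subseteq\ker f\}$.

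\emph{The representing object.} For any such $g$, \Cref{lm:dblim} shows the full subcategory $\langle\operatorname{im}(g)\rangle$ of $\M$ generated under colimits and tensors is dualizable, its inclusion into $\M$ is an internal left adjoint, and the corestriction $\D\to\langle\operatorname{im}(g)\rangle$ is one; moreover $\langle\operatorname{im}(g)\rangle\subseteq\ker f$ since $f$ preserves colimits and tensors. Hence it suffices to exhibit the \emph{largest} dualizable subcategory $\mathcal I\subseteq\ker f$ whose inclusion into $\M$ is an internal left adjoint: every $g$ as above then factors through $\mathcal I$, the factorization being automatically an internal left adjoint (it composes with the fully faithful $\mathcal I\hookrightarrow\M$ to give $g$, by the example on composites through fully faithful functors), while conversely post-composing an element of $\Fun^{iL}(\D,\mathcal I)$ with the inclusion lands in the fiber functor; these assignments are mutually inverse, so $\mathcal I=\fib(f)$ in $\Prdbl$, and the general statement follows.

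\emph{Existence of $\mathcal I$ — the main obstacle.} Here I would invoke \Cref{cor:cardboundV}: every dualizable $\Sp$-module is $\omega_1$-compactly generated, so any dualizable $\mathcal I'\subseteq\M$ with internal-left-adjoint (hence $\omega_1$-compact-preserving) inclusion satisfies $(\mathcal I')^{\omega_1}\subseteq\M^{\omega_1}$ and $\mathcal I'=\langle(\mathcal I')^{\omega_1}\rangle$ in $\M$. Since $\M^{\omega_1}$ is essentially small, there is only a small family of such $\mathcal I'$; set $A:=\bigcup_{\mathcal I'}(\mathcal I')^{\omega_1}$ and $\mathcal I:=\langle A\rangle$. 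The coproduct $\coprod_{\mathcal I'}\mathcal I'$ formed in $\Prdbl$ is dualizable by \Cref{prop:colimdbl} and maps to $\M$ via an internal left adjoint whose image generates exactly $\mathcal I$, so \Cref{lm:dblim} makes $\mathcal I$ dualizable with internal-left-adjoint inclusion; $\mathcal I\subseteq\ker f$ because $A\subseteq\ker f$ and $\ker f$ is closed under colimits and tensors; and $\mathcal I$ visibly contains every $\mathcal I'$, so it is largest. The delicate part is exactly this step — pinning down the $\omega_1$-bound so that ``largest'' literally makes sense, and checking that \Cref{lm:dblim} applies to the assembled small-indexed coproduct; everything in the preceding two steps is formal once $\mathcal I$ is in hand.

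\emph{The localization case.} If $f$ is a localization, the example on localization sequences in the stable case, applied to $\ker f\hookrightarrow\M\xrightarrow{f}\N$, shows $\ker f\hookrightarrow\M$ is an internal left adjoint because $f$ is, and $\ker f$ is a retract of $\M$ in $\PrL_{\st}$, hence dualizable. Being all of $\ker f$, it is a fortiori the largest dualizable subcategory of $\ker f$ with internal-left-adjoint inclusion, so the general case gives $\fib_{\Prdbl}(f)=\ker f=\fib_{\PrL_{\st}}(f)$, with the inclusion as projection; in particular the forgetful functor $\Prdbl\to\PrL_{\st}$ preserves $\fib(f)$.
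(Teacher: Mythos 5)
Your proof is correct and takes essentially the same route as the paper: it identifies $\fib(f)$ in $\Prdbl$ as the largest dualizable full subcategory of $\ker(f)$ whose inclusion into $\M$ is an internal left adjoint, proves that such a largest one exists by combining the $\omega_1$-compactness bound with \Cref{lm:dblim}, and handles the localization case by observing that $\ker(f)$ itself already qualifies. Your added explicitness — the nullhomotopy-torsor argument identifying the fiber functor, and assembling the maximal subcategory from a coproduct in $\Prdbl$ rather than the paper's colimit over the poset — merely spells out what the paper compresses into ``the result follows formally from this.''
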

\begin{proof}
For $f:\M\to \N$ as in the first half of the lemma, $\ker(f)$ is dualizable by \Cref{thm:compactassembly} and the inclusion $\ker(f)\to \M$ is an internal left adjoint (again by \Cref{cor:incstrongiffprojstrong}) so it clearly suffices to prove the second half of the lemma. 

Now consider the category (which is a poset) of full subcategories $\M_0\subset \M$ that are dualizable and for which the inclusion $\M_0\to\M$ is an internal left adjoint. We claim that this is a small category. Indeed, they are all $\omega_1$-compactly generated by \Cref{cor:cardboundcompass}, and the inclusion into $\M$ preserves $\omega_1$-compacts, so that this category is actually a full subposet of the poset of full subcategories of $\M^{\omega_1}$, which is clearly small. 

In particular it has a colimit in which, by \Cref{cor:colimintleft} is computed in $\PrL_{\st}$ and is simply the largest element in this poset. We call it $\M_f$. 

Now let $g:\mathcal E\to \M$ be an internal left adjoint with $\mathcal E$ dualizable, such that $f\circ g = 0$. In this case, $f$ restricts to $0$ also on $\langle \mathrm{im}(g)\rangle$ which, by the previous lemma is therefore included in $\M_f$. The result follows formally from this. 
\end{proof}
Recall again from \Cref{prop:colimdbl} that $\Prdbl\to\PrL_{\st}$ preserves all small colimits. Together with the above lemma, this shows that fiber-cofiber sequences in $\Prdbl$ are the same as fiber-cofiber sequences in $\PrL_{\st}$ whose terms and functors lie in $\Prdbl$. In $\PrL_{\st}$, fiber-cofiber sequences are exactly localization sequences by \Cref{prop:loc=fibcofib}.
\begin{defn}\label{defn:locinv}
    Let $E:\Prdbl\to\E$ be a pointed functor to an additive category $\E$. We say $E$ is a localizing invariant if it sends fiber-cofiber sequences in $\Prdbl$ to fiber sequences in $\E$. 

    We let $\Loc(\E)$ denote the \category{} of $\E$-valued localizing invariants, and let $\Loc(\Cat^\perf,\E)$ be defined similarly with $\Cat^\perf$ in place of $\Prdbl$, where $\Cat^\perf$ is the category of idempotent-complete small stable categories.
\end{defn}
The key construction to prove Efimov's \Cref{thm:Efimov} is the Calkin construction:
\begin{defn}
    Let $\M$ be a dualizable stable \category, and $\hat y :\M\to \Ind(\M^\kappa)$ the left adjoint to $c: \Ind(\M^\kappa)\to \M$, for $\kappa\geq \omega_1$. We define $\Calk_\kappa(\M) := \Ind(\M^\kappa)/\hat y(\M)$. With no subscript, we let $\Calk(\M):= \Calk_{\omega_1}(\M)$. 

    Finally, when $\M=\Ind(\M_0)$ is compactly generated, we sometimes abuse notation and write $\Calk_\kappa(\M_0)$ in place of $\Calk_\kappa(\M)$. 
\end{defn}
\begin{obs}\label{obs:natseq}
    For $\kappa\geq\omega_1$, by \Cref{cor:yhatnatural}, the map $\hat y: \M\to\Ind(\M^\kappa)$ is natural in $\M$, and therefore so is the localization sequence $\M\to \Ind(\M^\kappa)\to \Calk_\kappa(\M)$.
\end{obs}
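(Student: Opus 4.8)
The plan is to exhibit $\M$, $\Ind(\M^\kappa)$, and $\Calk_\kappa(\M)$ all as values of functors on $\Prdbl$, and then to get naturality of the localization sequence for free from the fact that colimits — in particular cofibers — of $\PrL_\st$-valued functors are computed pointwise. Fix $\kappa\geq\omega_1$.

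First I would record that $\M\mapsto\Ind(\M^\kappa)$ is a well-defined functor $\Prdbl\to\Prdbl$. Every dualizable stable category is $\omega_1$-compactly generated by \Cref{cor:cardboundV}, so $\M^\kappa$ makes sense and generates $\M$; moreover any internal left adjoint $f\colon\M\to\N$ between dualizable categories has $f^R$ colimit-preserving (hence, stably, automatically exact and $\kappa$-filtered-colimit-preserving), so $f$ restricts to an exact functor $f^\kappa\colon\M^\kappa\to\N^\kappa$ and $\Ind(f^\kappa)$ is again an internal left adjoint since it preserves compact objects; alternatively this can be seen through \Cref{cor:strongadjcompactass}. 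In fact this functoriality, packaged together with the natural transformation $\hat y\colon\id_{\Prdbl}\Rightarrow\Ind((-)^\kappa)$, is exactly the content of \Cref{cor:yhatnatural} specialized to $\V=\Sp$ (using \Cref{cor:cardboundV} to identify $\Dbl{\Sp}\cap\Mod_\Sp(\PrL_\kappa)$ with $\Prdbl$ for $\kappa\geq\omega_1$, and \Cref{rmk:cardindep} to compare the various $\hat y$'s), so I would simply cite it.

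Next I would observe that each $\hat y_\M$ is a fully faithful internal left adjoint: fully faithful by \Cref{rmk:hatyff}, and internal because its right adjoint is the canonical functor $\Ind(\M^\kappa)\to\M$, which is itself a left adjoint and hence colimit-preserving. Therefore $\hat y_\M$ sits in a localization sequence in $\PrL_\st$, and by \Cref{prop:loc=fibcofib} its cofiber in $\PrL_\st$ is precisely the Verdier quotient $\Ind(\M^\kappa)/\hat y(\M)=\Calk_\kappa(\M)$. Finally, since $\hat y$ is a natural transformation of functors $\Prdbl\to\PrL_\st$ and the cofiber construction $\Fun(\Delta^1,\PrL_\st)\to\PrL_\st$ is functorial with colimits in functor categories computed objectwise, the assignment $\Calk_\kappa(-)$ is a functor $\Prdbl\to\PrL_\st$, and the three-term diagram $\M\to\Ind(\M^\kappa)\to\Calk_\kappa(\M)$ upgrades to a natural (co)fiber sequence of functors on $\Prdbl$.

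There is no genuinely hard step here; the only point deserving care is the functoriality of $\M\mapsto\Ind(\M^\kappa)$ with respect to internal left adjoints, which as noted is either read off from \Cref{cor:yhatnatural} or verified directly using that internal left adjoints between dualizable categories preserve $\kappa$-compacts.
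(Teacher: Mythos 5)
Your proof is correct and takes essentially the same route as the paper, which simply invokes \Cref{cor:yhatnatural} for the naturality of $\hat y$ and treats the rest as immediate. You have merely unpacked the "and therefore" step: cofibers in functor categories are computed pointwise, so a natural transformation of fully faithful internal left adjoints yields a natural Verdier sequence.
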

\begin{ex}
    Let $\M=\LMod_R$ for some ring spectrum $R$, and let $M,N\in~\LMod_R^\kappa$. In this case, $\map_{\Calk_\kappa(\LMod_R)}(y(M),y(N))$ can be computed in a familiar way. Indeed, by design it is $$\map_{\Ind(\LMod_R^\kappa)}(y(M),y(N)/\hat y(N)) \simeq\map_{\Ind(\LMod_R^\kappa)}(y(M),y(N))/\map_{\Ind(\LMod_R^\kappa)}(y(M),\hat y(N)) $$ $$\simeq \map(M,N)/\map(y(M),\hat y(N))$$ 

    Now we need to compute $\hat y(N)$. For this, we note that $\hat y$ preserves colimits, and agrees with $y$ on compact objects (by \Cref{lm:basicnuc} : indeed, they are compactly exhaustible with identity transition maps!). Thus the functor $\map(y(M),\hat y(-))$ is the colimit-preserving extension of the functor $\map_R(M,-)$ on compacts, i.e. $\map_R(M,R)\otimes_R - =: M^\vee\otimes_R -$. In total, we find: 
    $$\map_{\Calk_\kappa(\LMod_R)}(y(M),y(N))\simeq \map_R(M,N)/M^\vee\otimes_R N$$ so something one can think of as ``maps modulo finite rank maps''.
\end{ex}
This construction (or a variant thereof) was used originally by Bass to construct negative algebraic $K$-theory (although not in this language), as it provides canonical deloopings for localizing invariants (see also \cite{tabuadasusp}), because of the following lemmas:
\begin{lm}
    Let $E:\Prdbl\to\E$ be a localizing invariant to an additive category $\E$.  For any $\M\in\Prdbl$ and any uncountable $\kappa$, we have $E(\Ind(\M^\kappa)) = 0$. 
\end{lm}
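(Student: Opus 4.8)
The plan is to use the Eilenberg swindle. The key observation is that $\Ind(\M^\kappa)$, for $\kappa$ uncountable (hence $\geq \omega_1$), admits all $\kappa$-small colimits among its compact objects, and in particular countable coproducts of $\kappa$-compact objects are again $\kappa$-compact, since $\NN$ is $\kappa$-small. Concretely, I would consider the functor $F := \bigoplus_{\NN} \id : \Ind(\M^\kappa) \to \Ind(\M^\kappa)$, i.e. $X \mapsto \bigoplus_{n \in \NN} X$. This preserves colimits (it is a colimit of colimit-preserving functors), is $\Sp$-linear, and its right adjoint is $\prod_\NN \id$; I would check it preserves $\kappa$-filtered colimits — equivalently, $F$ preserves $\kappa$-compact objects — which holds since a countable coproduct of $\kappa$-compacts is $\kappa$-compact. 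Hence $F$ is an internal left adjoint, i.e. a morphism in $\Prdbl$ (indeed even in compactly generated categories).

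The swindle then proceeds as follows. There is a natural fiber-cofiber sequence of endofunctors of $\Ind(\M^\kappa)$, namely $\id \to F \to F$ where the first map is the inclusion of the first summand and the second is the shift $(x_0, x_1, \dots) \mapsto (x_1, x_2, \dots)$; equivalently $F \simeq \id \oplus F$ as functors, split by the evident inclusion and projection. Viewing these as maps in $\Prdbl$ (all are internal left adjoints for the reason above, and the categories involved are all $\Ind(\M^\kappa)$), and recalling from \Cref{prop:colimdbl} and \Cref{lm:kernel} that fiber-cofiber sequences in $\Prdbl$ are detected in $\PrL_\st$, I get a fiber-cofiber sequence $\Ind(\M^\kappa) \xrightarrow{\id} \Ind(\M^\kappa) \xrightarrow{\mathrm{shift}} \Ind(\M^\kappa)$ — more carefully, a cofiber sequence exhibiting the cofiber of the split inclusion as $\Ind(\M^\kappa)$ again via $F$. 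Applying the localizing invariant $E$ and using additivity, the split short exact sequence $F \simeq \id \oplus F$ of endofunctors gives, after evaluating at the object $\Ind(\M^\kappa)$ and functoriality of $E$, the identity $E(\Ind(\M^\kappa)) \simeq E(\Ind(\M^\kappa)) \oplus E(\Ind(\M^\kappa))$ as a direct summand decomposition compatible with the map being an isomorphism onto one factor; equivalently, the endomorphism $E(F)$ of $E(\Ind(\M^\kappa))$ satisfies $E(F) = \id + E(F)$, forcing $\id = 0$ on $E(\Ind(\M^\kappa))$, hence $E(\Ind(\M^\kappa)) = 0$.

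The main obstacle, and the only genuinely delicate point, is bookkeeping: one must make sure the swindle is carried out with morphisms that are actually \emph{internal left adjoints} so that it lives in $\Prdbl$ and $E$ can be applied, and that the "$F \simeq \id \oplus F$" decomposition is realized by an honest (co)fiber sequence in $\Prdbl$ rather than merely a diagram in $\PrL_\st$. Both are handled by the remarks above — countable coproducts of compacts are $\kappa$-compact for uncountable $\kappa$, so $F$ and the structure maps are all internal left adjoints, and \Cref{lm:kernel} together with \Cref{prop:colimdbl} guarantees the relevant fiber-cofiber sequence is preserved by $\Prdbl \to \PrL_\st$. It is precisely here that uncountability of $\kappa$ is used: for $\kappa = \omega$ a countable coproduct of compacts need not be compact, and indeed $E(\Ind(\M^\omega))$ is generally nonzero.
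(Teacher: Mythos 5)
Your core idea is exactly the paper's: an Eilenberg swindle, using that $\kappa$ uncountable makes countable coproducts of $\kappa$-compacts again $\kappa$-compact, so that $F=\bigoplus_\NN\id$ and the split sequence $\id\to F\to F$ live entirely in $\Prdbl$. (The paper's proof is a one-liner citing \Cref{prop:Eilenberg} plus the observation that $\M^\kappa$ has countable coproducts.) Your final displayed conclusion, $E(F)\simeq\id+E(F)$ and hence $\id=0$ on $E(\Ind(\M^\kappa))$, is also correct.

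However, the way you try to justify this in the middle of the proof does not work as written. The sequence $\id\to F\to F$ is a cofiber sequence of \emph{endofunctors} of $\Ind(\M^\kappa)$; it is not a fiber-cofiber (localization) sequence of categories in $\Prdbl$, and \Cref{lm:kernel} and \Cref{prop:colimdbl} are not the relevant facts. In a localization sequence the first functor is a fully faithful kernel inclusion and the second is a quotient with that kernel, which is plainly not the case for $\Ind(\M^\kappa)\xrightarrow{\id}\Ind(\M^\kappa)\xrightarrow{\mathrm{shift}}\Ind(\M^\kappa)$; moreover ``shift'' here is a natural transformation $F\to F$, not a functor in its own right. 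In particular there is no direct-sum decomposition $E(\Ind(\M^\kappa))\simeq E(\Ind(\M^\kappa))\oplus E(\Ind(\M^\kappa))$ to extract as an intermediate step: the statement is about the endomorphism $E(F)$ of the single object $E(\Ind(\M^\kappa))$, not about a summand decomposition of it. The ingredient you should invoke is that any localizing invariant is a \emph{splitting} (additive) invariant, so that a cofiber sequence $\id\to F\to F$ of internal left adjoints yields $E(F)\simeq E(\id)+E(F)$ as endomorphisms of $E(\Ind(\M^\kappa))$; since $\E$ is additive, the endomorphism monoid is a group, forcing $E(\id)=\id_{E(\Ind(\M^\kappa))}=0$, i.e. $E(\Ind(\M^\kappa))=0$. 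This is precisely \Cref{prop:Eilenberg} together with the lemma in \Cref{app:invariants} that localizing invariants are splitting invariants, which is what the paper's proof cites. Replace the muddled middle paragraph with that citation and the proof is clean.
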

\begin{proof}
    This follows by an Eilenberg swindle, see \Cref{prop:Eilenberg}. Indeed, $\M^\kappa$ admits countable coproducts.
\end{proof}
\begin{cor}\label{cor:deloop}
Let $E:\Prdbl\to\E$ be a localizing invariant and $\kappa \geq \omega_1$. There is a natural equivalence $\Omega E(\Calk_\kappa(\M))\simeq E(\M)$. 
\end{cor}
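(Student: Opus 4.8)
\textbf{Proof plan for \Cref{cor:deloop}.}

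The plan is to feed the natural localization sequence $\M \to \Ind(\M^\kappa) \to \Calk_\kappa(\M)$ from \Cref{obs:natseq} into the localizing invariant $E$. By \Cref{defn:locinv}, $E$ sends this fiber-cofiber sequence in $\Prdbl$ to a fiber sequence in $\E$, namely $E(\M) \to E(\Ind(\M^\kappa)) \to E(\Calk_\kappa(\M))$. One must first check that the localization sequence above really \emph{is} a fiber-cofiber sequence in $\Prdbl$ in the sense required by \Cref{defn:locinv}: the localization $\Ind(\M^\kappa) \to \Calk_\kappa(\M)$ is a compact-preserving localization of compactly generated categories (since $\hat y(\M)$ is generated by $\omega_1$-compacts, hence by compacts after enlarging — or rather one uses that $\Ind(\M^\kappa)$ is compactly generated and the kernel is the localizing subcategory generated by the compactly exhaustible $\hat y(x_i)$), so by \Cref{thm:compactassembly} and the discussion preceding \Cref{defn:locinv}, $\Calk_\kappa(\M)$ is dualizable and the three functors lie in $\Prdbl$; and by \Cref{prop:loc=fibcofib} this localization sequence is a fiber-cofiber sequence. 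Hence the hypothesis of \Cref{defn:locinv} applies and we get a fiber sequence $E(\M) \to E(\Ind(\M^\kappa)) \to E(\Calk_\kappa(\M))$ in the additive (hence stable) category $\E$.

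Next I would invoke the preceding lemma, which gives $E(\Ind(\M^\kappa)) = 0$ for uncountable $\kappa$ via the Eilenberg swindle (using that $\M^\kappa$ admits countable coproducts). Plugging $E(\Ind(\M^\kappa)) = 0$ into the fiber sequence $E(\M) \to 0 \to E(\Calk_\kappa(\M))$ shows that $E(\M)$ is the fiber of the zero map $0 \to E(\Calk_\kappa(\M))$, i.e. $E(\M) \simeq \Omega E(\Calk_\kappa(\M))$. Since $\E$ is additive and $E$ preserves fiber sequences, $\E$ has the relevant loop/suspension structure making this meaningful (one can take the fiber of $0\to Y$ to be $\Omega Y$ in any such setting, or work in the stabilization).

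For naturality: by \Cref{obs:natseq} the entire localization sequence $\M \to \Ind(\M^\kappa) \to \Calk_\kappa(\M)$ is natural in $\M \in \Prdbl$ (restricting to internal left adjoints, which is the relevant morphism class, and noting $\hat y$ is natural by \Cref{cor:yhatnatural}). Applying $E$ to this natural diagram produces a natural fiber sequence, and the identification $E(\M) \simeq \Omega E(\Calk_\kappa(\M))$ is the comparison map of this natural fiber sequence with its zero middle term, hence natural. The only mild subtlety — and the place I'd be most careful — is making sure the ``fiber-cofiber sequence'' language of \Cref{defn:locinv} matches: one needs both that $\M \to \Ind(\M^\kappa)$ is the fiber of $\Ind(\M^\kappa) \to \Calk_\kappa(\M)$ in $\Prdbl$ (which is \Cref{lm:kernel}, since $\hat y$ is fully faithful and an internal left adjoint by \Cref{thm:Luriethmgeneralbase} and \Cref{rmk:hatyff}) and that it is simultaneously a cofiber sequence; the latter holds because in $\PrL_{\st}$ a localization sequence is a fiber-cofiber sequence (\Cref{prop:loc=fibcofib}) and $\Prdbl \to \PrL_{\st}$ preserves colimits (\Cref{prop:colimdbl}), so the cofiber computed in $\PrL_{\st}$ already lies in $\Prdbl$ and agrees with the one computed there. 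This is genuinely the main point to verify rather than a difficulty; the rest is formal.
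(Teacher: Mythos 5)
Your proof is correct and follows essentially the same route as the paper's: apply $E$ to the natural localization sequence from \Cref{obs:natseq}, kill the middle term via the Eilenberg swindle lemma, and read off the delooping. The only minor muddle is the parenthetical attempt to justify that $\Calk_\kappa(\M)$ is compactly generated — the clean statement is \Cref{lm:calkcpct} (the quotient functor is an internal left adjoint localization of a compactly generated category), but you don't actually need compact generation here; dualizability of $\Calk_\kappa(\M)$ already follows from \Cref{prop:colimdbl}, as you correctly note later.
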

\begin{proof}
    By \Cref{obs:natseq}, there is a natural fiber-cofiber sequence $\M\to \Ind(\M^\kappa)\to \Calk_\kappa(\M)$, which induces upon applying $E$ a natural fiber sequence $E(\M)\to 0\to E(\Calk_\kappa(\M))$ by the previous lemma, i.e. it induces a natural equivalence as claimed. 
\end{proof}
\begin{lm}
    Let $\mathcal K\xrightarrow{i} \M\xrightarrow{p} \N$ be a fiber-cofiber sequence in $\Prdbl$. For any $\kappa\geq \omega_1$, $\Calk_\kappa(\mathcal K)\to \Calk_\kappa(\M)\to \Calk_\kappa(\N)$ is a fiber-cofiber sequence. 
\end{lm}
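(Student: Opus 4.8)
The strategy is to reduce the statement about the Calkin constructions to the already-established naturality of the localization sequence $\M\to\Ind(\M^\kappa)\to\Calk_\kappa(\M)$ (\Cref{obs:natseq}) together with the fact that $\Ind((-)^\kappa)$ and the identity both commute with the relevant colimits. Concretely: given a fiber-cofiber sequence $\mathcal K\xrightarrow{i}\M\xrightarrow{p}\N$ in $\Prdbl$, I want to produce a commutative diagram whose rows are the localization sequences defining $\Calk_\kappa$ for $\mathcal K$, $\M$, $\N$, and whose columns express the input fiber-cofiber sequence levelwise; then I read off that the induced sequence $\Calk_\kappa(\mathcal K)\to\Calk_\kappa(\M)\to\Calk_\kappa(\N)$ is a fiber-cofiber sequence by a three-by-three / stability argument.

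\textbf{Key steps.} First, by \Cref{obs:natseq} the assignment $\M\mapsto(\M\to\Ind(\M^\kappa)\to\Calk_\kappa(\M))$ is natural in internal left adjoints, so applying it to $i,p$ gives a commutative ladder
\[\begin{tikzcd}
\mathcal K & \M & \N \\
\Ind(\mathcal K^\kappa) & \Ind(\M^\kappa) & \Ind(\N^\kappa) \\
\Calk_\kappa(\mathcal K) & \Calk_\kappa(\M) & \Calk_\kappa(\N)
\arrow[from=1-1,to=1-2]\arrow[from=1-2,to=1-3]
\arrow[from=2-1,to=2-2]\arrow[from=2-2,to=2-3]
\arrow[from=3-1,to=3-2]\arrow[from=3-2,to=3-3]
\arrow[from=1-1,to=2-1]\arrow[from=1-2,to=2-2]\arrow[from=1-3,to=2-3]
\arrow[from=2-1,to=3-1]\arrow[from=2-2,to=3-2]\arrow[from=2-3,to=3-3]
\end{tikzcd}\]
in $\PrL_\st$, in which each column is a fiber-cofiber sequence. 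The top row is a fiber-cofiber sequence by hypothesis. The crucial middle step is that the middle row is also a fiber-cofiber sequence: since $\Prdbl\to\PrL_\st$ preserves colimits (\Cref{prop:colimdbl}) and, by the corollary preceding \Cref{prop:Spcpt}, $(-)^\omega$ — and the same argument at level $\kappa$, or rather \Cref{cor:colimleftright}-type bookkeeping — one checks $\Ind((\mathcal K)^\kappa)\to\Ind(\M^\kappa)\to\Ind(\N^\kappa)$ is a localization sequence; alternatively this is exactly a fiber-cofiber sequence of compactly generated categories coming from the localization sequence $\mathcal K^\kappa\to\M^\kappa\to\N^\kappa$ of small idempotent-complete stable categories (using that $i^R,p^R$ preserve $\kappa$-filtered colimits so $i,p$ preserve $\kappa$-compacts, and that $\ker(p)=\M_f=\mathcal K$ by \Cref{lm:kernel}). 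Once the top two rows and all three columns are fiber-cofiber sequences in the stable category $\PrL_\st$, the bottom row is a fiber-cofiber sequence: this is the standard fact that in a stable $\infty$-category, in a commutative $3\times3$ square-diagram, if two of the three rows and all three columns are (co)fiber sequences then so is the third row — apply it after observing that all the terms and maps lie in the stable full subcategory spanned by dualizable categories (so the cofibers, being the same as in $\PrL_\st$ by \Cref{prop:colimdbl}, remain dualizable), hence the conclusion holds inside $\Prdbl$.

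\textbf{Main obstacle.} The one genuinely non-formal point is the middle row: one must justify that $\Ind((-)^\kappa)$ applied to the localization sequence $\mathcal K\to\M\to\N$ yields a localization sequence, i.e. that $\Calk_\kappa$ is being computed on the ``correct'' models. This amounts to checking that $\mathcal K^\kappa\to\M^\kappa\to\N^\kappa$ is a localization sequence of small stable categories — equivalently that $p^\kappa$ is a Verdier quotient with kernel $\mathcal K^\kappa$ — which uses that $p$ and its right adjoint behave well with respect to $\kappa$-compacts (available since $\kappa\geq\omega_1$ and, enlarging $\kappa$ if necessary, $p^R$ preserves $\kappa$-filtered colimits by \Cref{cor:colimleftright}), together with the identification $\fib(p)=\mathcal K$ from \Cref{lm:kernel}. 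Everything else is diagram-chasing in a stable $\infty$-category and the already-proven naturality statements; I expect no surprises there.
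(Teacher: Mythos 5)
Your outline of the ladder and the identification of the middle row is essentially the paper's argument up to that point (the paper cites \Cref{prop:kappaloc} for the localization sequence $\mathcal K^\kappa\to\M^\kappa\to\N^\kappa$ of small stable categories, which is the precise statement you gesture towards). The gap is in your final step. You invoke the $3\times 3$ lemma ``in the stable category $\PrL_{\st}$'' --- but $\PrL_{\st}$ is \emph{not} a stable $\infty$-category (limits and colimits do not commute there; a sequence $\C\to\C\to 0$ is a cofiber but not a fiber sequence), and fiber-cofiber sequences in $\PrL_{\st}$ are special (they are precisely the localization sequences, cf.~\Cref{prop:loc=fibcofib}), so the $3\times3$ lemma simply does not apply. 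What it would purport to give is exactly the hard half of the claim: while cocartesianness of the bottom row does follow formally from commutation of cofibers (this is the easy half, and the paper indeed just notes it), the \emph{cartesianness} --- i.e.\ full faithfulness of $\Calk_\kappa(\mathcal K)\to\Calk_\kappa(\M)$ --- is where the content is, and it does not follow by abstract $3\times3$ nonsense in a non-stable ambient category. The paper handles this by observing that the square
\[\begin{tikzcd}
\mathcal K & \M \\
\Ind(\mathcal K^\kappa) & \Ind(\M^\kappa)
\arrow[from=1-1,to=1-2]\arrow[from=1-1,to=2-1]\arrow[from=1-2,to=2-2]\arrow[from=2-1,to=2-2]
\end{tikzcd}\]
is vertically right adjointable (since $\hat y$ is an internal left adjoint), and then propagating the adjointability downward through the split Verdier projections via \Cref{prop:adjVerdier} to the square with target $\Calk_\kappa(\mathcal K)\to\Calk_\kappa(\M)$, from which full faithfulness follows. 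You need an argument of this kind to close the proof; ``apply the $3\times3$ lemma'' does not do it.
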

\begin{proof}
We first note that $\mathcal K^\kappa\to \M^\kappa\to\N^\kappa$ is a localization sequence of small stable \categories{} by \Cref{prop:kappaloc}.

It follows that $\Ind(\mathcal K^\kappa)\to \Ind(\M^\kappa)\to \Ind(\N^\kappa)$ is a localization sequence. Since the cofiber functor commutes with cofiber sequences, it follows that $\Calk_\kappa(\M)/\Calk_\kappa(\mathcal K)\simeq\Calk_\kappa(\N)$, and since the square: 
\[\begin{tikzcd}
	\mathcal K & \M \\
	{\Ind(\mathcal K^\kappa)} & {\Ind(\M^\kappa)}
	\arrow[from=1-1, to=2-1]
	\arrow[from=2-1, to=2-2]
	\arrow[from=1-2, to=2-2]
	\arrow[from=1-1, to=1-2]
\end{tikzcd}\]
is vertically right adjointable, it follows from \Cref{prop:adjVerdier} that so too is the square: 
\[\begin{tikzcd}
	{\Ind(\mathcal K^\kappa)} & {\Ind(\M^\kappa)} \\
	{\Calk_\kappa(\mathcal K)} & {\Calk_\kappa(\M)}
	\arrow[from=1-1, to=2-1]
	\arrow[from=2-1, to=2-2]
	\arrow[from=1-1, to=1-2]
	\arrow[from=1-2, to=2-2]
\end{tikzcd}\]\footnote{Note that this square is most often not horizontally right adjointable - it is the case if and only if the right adjoint $i^R:\M\to \mathcal K$ is itself an internal left adjoint. }
From this, it follows that $\Calk_\kappa(\mathcal K)\to\Calk_\kappa(\N)$ is fully faithful, and so we are done. 
\end{proof}
A consequence of this is the stability of the \category{} of localizing invariants:
\begin{cor}
    Let $\E$ be an additive category with finite limits. The forgetful functor $\Loc(\Sp(\E))\to \Loc(\E)$ is an equivalence. 
\end{cor}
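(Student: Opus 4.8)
The plan is to realize $\Sp(\E)$ as the limit of the tower $\cdots\xrightarrow{\Omega}\E\xrightarrow{\Omega}\E$ and to transport this identification through $\Loc(-)$, thereby reducing the statement to the fact that the endofunctor $\Omega_*\colon\Loc(\E)\to\Loc(\E)$, $E\mapsto\Omega\circ E$, is an equivalence --- which is exactly what the Calkin construction provides.

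First I would observe that since $\E$ is additive it is in particular pointed, so $\Sp(\E)=\lim\bigl(\cdots\xrightarrow{\Omega}\E\xrightarrow{\Omega}\E\bigr)$, indexed by $\mathbb N\op$, with limit projection to the zeroth term the functor $\Omega^\infty$; this is the functor along which the forgetful map $\Loc(\Sp(\E))\to\Loc(\E)$ is given by postcomposition. Each transition functor $\Omega$ and each projection preserves finite limits and the zero object. Applying $\Fun(\Prdbl,-)$, which preserves limits, and observing that a functor $F\colon\Prdbl\to\lim_n\E$ is pointed, resp. sends fiber--cofiber sequences to fiber sequences, if and only if each component $\mathrm{proj}_n\circ F$ is (on the one hand, being terminal or being a pullback square in $\lim_n\E$ is detected levelwise; on the other, each $\mathrm{proj}_n$ is pointed and left exact, so $\mathrm{proj}_n\circ F$ inherits the property from $F$), one gets an equivalence $\Loc(\Sp(\E))\simeq\lim_n\Loc(\E)$ under which the projection to the zeroth term is the forgetful functor and the transition functors are $\Omega_*\colon E\mapsto\Omega\circ E$.

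Next I would show $\Omega_*$ is an equivalence, with inverse $\Calk^*\colon E\mapsto E\circ\Calk_{\omega_1}$. This is well-defined: $\Calk_{\omega_1}$ is an endofunctor of $\Prdbl$ by \Cref{obs:natseq}, it sends $0$ to $0$, and it preserves fiber--cofiber sequences by the preceding lemma, so $E\circ\Calk_{\omega_1}$ is again a localizing invariant whenever $E$ is. Evaluating at $\M\in\Prdbl$, \Cref{cor:deloop} supplies natural equivalences $\Omega\bigl(E(\Calk_{\omega_1}\M)\bigr)\simeq E(\M)$; reading these as identities of functors $\Loc(\E)\to\Loc(\E)$ exhibits both composites $\Omega_*\circ\Calk^*$ and $\Calk^*\circ\Omega_*$ as naturally equivalent to the identity, so $\Omega_*$ is an equivalence (concretely, its inverse sends a localizing invariant $E$ to the $\Sp(\E)$-valued invariant whose $n$-th deloopings are $\M\mapsto E(\Calk_{\omega_1}^n\M)$).

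Finally, since all transition functors in the tower $\cdots\xrightarrow{\Omega_*}\Loc(\E)\xrightarrow{\Omega_*}\Loc(\E)$ are equivalences, this tower is equivalent to the constant one at $\Loc(\E)$, and as $\mathbb N\op$ is weakly contractible its limit is computed by any projection; hence $\lim_n\Loc(\E)\to\Loc(\E)$, that is the forgetful functor $\Loc(\Sp(\E))\to\Loc(\E)$, is an equivalence. The only point demanding a little care is the levelwise detection of pointedness and of the localizing-invariant condition through the limit $\Sp(\E)=\lim_n\E$; once \Cref{cor:deloop} and the preservation of fiber--cofiber sequences by $\Calk_{\omega_1}$ are granted, the rest is formal.
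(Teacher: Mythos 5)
Your proof is correct and takes essentially the same route as the paper's: both identify $\Loc(\Sp(\E))$ with the limit of the tower of $\Loc(\E)$'s along $\Omega_*$ (equivalently, with $\Sp(\Loc(\E))$), and both reduce to the fact that $\Omega_*$ is an equivalence on $\Loc(\E)$ with inverse precomposition by $\Calk_{\omega_1}$, via \Cref{cor:deloop} and the preceding lemma. The only cosmetic difference is that the paper invokes the stability criterion \cite[Proposition 1.4.2.11.(3)]{HA} to conclude $\Loc(\E)$ is stable and hence that the tower collapses, whereas you unwind that criterion by hand with the ``tower of equivalences'' argument.
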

\begin{proof}
    It is clear that $\Loc(\Sp(\E))\simeq \Sp(\Loc(\E))$, compatibly with the forgetful functor to $\Loc(\E)$. In particular, it suffices to prove that $\Loc(\E)$ is stable. 

    Because it is pointed and has finite limits, by \cite[Proposition 1.4.2.11.(3)]{HA}, it suffices to prove that $\Omega$ induces an equivalence on it - but $\Omega$ is given by postcomposition with $\Omega$ and it commutes with precomposition by $\Calk_\kappa$ (which preserves $\Loc(\E)$ by the previous lemma), and they are inverses to one another by \Cref{cor:deloop}. 
\end{proof}
The key lemma is now:
\begin{lm}\label{lm:calkcpct}
    Let $\M \in\Prdbl$. For any $\kappa$, $\Calk_\kappa(\M)$ is compactly generated. 
\end{lm}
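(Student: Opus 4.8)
The plan is to exhibit $\hat y(\M)\subseteq\Ind(\M^\kappa)$ as a \emph{smashing} localizing subcategory (recall $\kappa\geq\omega_1$, as is needed for $\Calk_\kappa$ to be defined) and then run the standard argument that a smashing localization of a compactly generated stable category is compactly generated. For the setup: since $\M$ is compactly assembled it is $\kappa$-compactly generated (\Cref{cor:cardboundcompass}), so $\M^\kappa$ is essentially small and idempotent-complete, hence is precisely the subcategory of compact objects of $\Ind(\M^\kappa)$; in particular $\Ind(\M^\kappa)$ is compactly generated by $\M^\kappa$. By \Cref{rmk:hatyff} (applied with $\V=\Sp$) the functor $\hat y\colon\M\to\Ind(\M^\kappa)$ is fully faithful, and by construction its right adjoint is the colimit functor $c\colon\Ind(\M^\kappa)\to\M$, while $\Calk_\kappa(\M)=\Ind(\M^\kappa)/\hat y(\M)$. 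I would write $q\colon\Ind(\M^\kappa)\to\Calk_\kappa(\M)$ for the quotient, $q^R$ for its fully faithful right adjoint, and $L:=q^Rq$ for the associated Bousfield localization endofunctor of $\Ind(\M^\kappa)$; since $\hat y(\M)$ is coreflective with coreflector $\hat y\,c$ (the counit of $\hat y\dashv c$), there is a cofiber sequence of endofunctors $\hat y\, c\to\id_{\Ind(\M^\kappa)}\to L$.

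The key step is the observation that $\hat y$ and $c$ both preserve all colimits: $\hat y$ because it is a left adjoint, and $c$ because it is the colimit functor, i.e. a left adjoint (to the Yoneda embedding $y\colon\M\to\Ind(\M^\kappa)$, which exists as $\M$ is presentable) — equivalently, $c$ is a localization. Hence $\hat y\,c$ preserves colimits, and therefore so does its cofiber $L$. From this I would deduce that $q^R$ itself preserves colimits: for any diagram $z_\bullet\colon I\to\Calk_\kappa(\M)$ one has $z_i\simeq q\,q^R(z_i)$, so
\[
q^R\big(\colim_I z_i\big)\;\simeq\;L\big(\colim_I q^R(z_i)\big)\;\simeq\;\colim_I L\,q^R(z_i)\;\simeq\;\colim_I q^R(z_i),
\]
using that $q$ and $L$ preserve colimits and that $L\,q^R\simeq q^Rq\,q^R\simeq q^R$. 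In particular $q^R$ preserves filtered colimits, so $q$ preserves compact objects.

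Finally, since $\Ind(\M^\kappa)$ is the localizing subcategory generated by its compact objects $\M^\kappa$, applying the exact, colimit-preserving, essentially surjective functor $q$ shows that $\Calk_\kappa(\M)$ is the localizing subcategory generated by $\{q(P):P\in\M^\kappa\}$ — a small set of objects which, by the previous paragraph, are compact. Hence $\Calk_\kappa(\M)$ is compactly generated. The one point requiring genuine care is the smashing observation together with its consequence for $q^R$ (the displayed computation); once one sees that the right adjoint $c$ of $\hat y$ is again colimit-preserving, the rest is formal Bousfield-localization bookkeeping.
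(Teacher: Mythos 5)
Your argument is correct and is essentially the paper's: you observe that $\hat y$ has a colimit-preserving right adjoint $c$ (i.e.\ is an internal left adjoint), re-derive via the fiber sequence $\hat y\, c\to\id\to L$ that the quotient $q$ is therefore also an internal left adjoint, and conclude that the images of the compact generators of $\Ind(\M^\kappa)$ are compact and generate $\Calk_\kappa(\M)$. The paper compresses your middle paragraph to a single citation of \Cref{cor:incstrongiffprojstrong}, whose proof (via \Cref{lm:fundamentalsequence} and \Cref{cor:colimleftright}) is precisely the endofunctor-cofiber-sequence bookkeeping you spell out.
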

\begin{proof}
The inclusion $\hat y : \M\to \Ind(\M^\kappa)$ is an internal left adjoint, therefore so is the quotient functor $\Ind(\M^\kappa)\to\Calk_\kappa(\M)$. Now an internal left adjoint localization of a compactly generated \category{} is itself compactly generated, as the images of the compact objects are compact and generate.
\end{proof}
We can now state and prove Efimov's theorem:
\begin{thm}\label{thm:Efimov}
    Let $\E$ be an additive category with finite limits. The forgetful functor $\Loc(\E)\to \Loc(\Cat^\perf,\E)$ is an equivalence, with inverse given by $E\mapsto (\M\mapsto \Omega E(\Calk(\M)^{\omega})$. 
\end{thm}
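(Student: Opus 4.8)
# Proof Proposal for Theorem~\ref{thm:Efimov}

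The plan is to exhibit the functor $\Phi: \Loc(\Cat^\perf,\E)\to\Loc(\E)$, $E\mapsto(\M\mapsto\Omega E(\Calk(\M)^\omega))$, and to check that it is well-defined, that it is inverse to the forgetful functor $\Psi:\Loc(\E)\to\Loc(\Cat^\perf,\E)$, $F\mapsto(C\mapsto F(\Ind(C)))$, on both sides. The backbone of the argument is the natural localization sequence $\M\to\Ind(\M^{\omega_1})\to\Calk(\M)$ from \Cref{obs:natseq}, together with the two facts already proved: $\Calk_\kappa(\M)$ is compactly generated (\Cref{lm:calkcpct}), so $\Calk(\M)^\omega\in\Cat^\perf$ makes sense, and $\Calk_\kappa$ takes fiber-cofiber sequences in $\Prdbl$ to fiber-cofiber sequences (the lemma preceding \Cref{thm:Efimov}), so $C\mapsto\Calk(\Ind(C))^\omega$ is itself exact in the appropriate sense.

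First I would check $\Phi$ is well-defined: given $E\in\Loc(\Cat^\perf,\E)$, the composite $\M\mapsto E(\Calk(\M)^\omega)$ sends a fiber-cofiber sequence $\mathcal K\to\M\to\N$ in $\Prdbl$ to a fiber sequence in $\E$, because $\Calk(-)$ preserves such sequences, the functor $(-)^\omega:\Prdbl_{cg}\to\Cat^\perf$ sends localization sequences of compactly generated categories to (idempotent-completions of) localization sequences of small stable categories by \Cref{prop:kappaloc}, and $E$ is a localizing invariant on $\Cat^\perf$; applying $\Omega$ (which commutes with finite limits in the additive category $\E$) preserves fiber sequences. So $\Phi(E)\in\Loc(\E)$.

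Next, the two natural isomorphisms. For $\Phi\Psi\simeq\id$: starting from $F\in\Loc(\E)$, $\Phi\Psi(F)(\M)=\Omega F(\Ind(\Calk(\M)^\omega))$; since $\Calk(\M)$ is compactly generated, $\Ind(\Calk(\M)^\omega)\simeq\Calk(\M)$, so this is $\Omega F(\Calk(\M))$, which is naturally $F(\M)$ by \Cref{cor:deloop} (whose proof used exactly the natural fiber-cofiber sequence $\M\to\Ind(\M^{\omega_1})\to\Calk(\M)$ and the vanishing $F(\Ind(\M^{\omega_1}))=0$). For $\Psi\Phi\simeq\id$: starting from $E\in\Loc(\Cat^\perf,\E)$ and $C\in\Cat^\perf$, we get $\Psi\Phi(E)(C)=\Omega E(\Calk(\Ind(C))^\omega)$. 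Here one uses that $\Calk_{\omega_1}(\Ind(C)) = \Ind(\Ind(C)^{\omega_1})/\hat y(\Ind(C))$, and since $\Ind(C)$ is compactly generated $\hat y=y$ is the ordinary Yoneda embedding; so $\Calk(\Ind(C))^\omega$ is a model for the $S^1$-shift (suspension) of $C$ in $\Cat^\perf$ — this is precisely the Bass--Thomason delooping, i.e.\ $\Calk(\Ind(C))^\omega\simeq S^{1}C$ up to the relevant fiber-cofiber sequence $C\to(\text{flasque})\to S^1C$ in $\Cat^\perf$ — and then $\Omega E(S^1 C)\simeq E(C)$ naturally, again by the localizing property of $E$ together with the Eilenberg-swindle vanishing $E(\Ind(C)^{\omega_1}\text{-piece})=0$ (\Cref{prop:Eilenberg}). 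One must also verify that these equivalences are natural in $E$ (resp.\ $F$), i.e.\ that $\Phi,\Psi$ are genuinely mutually inverse equivalences of $\infty$-categories and not merely bijective on objects; this follows because all constructions involved ($\Calk$, $(-)^\omega$, $\Ind$, $\Omega$, the natural localization sequence) are functorial, so the two composites come with natural equivalences to the identity functors.

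The main obstacle I expect is the identification, in the $\Psi\Phi$ direction, of $\Calk(\Ind(C))^\omega$ with the categorical suspension of $C\in\Cat^\perf$ and the verification that this is compatible with the deloopings used on the $\Prdbl$ side — in other words, checking that the two ``Calkin'' constructions (the one on dualizable categories via $\hat y$, and the classical Bass--Thomason one on small stable categories) agree on compactly generated inputs, including on morphisms and on the relevant fiber-cofiber sequences. This is where I would spend the most care: one wants a clean statement that $C\mapsto\Calk(\Ind(C))^\omega$ is the inverse of $\Omega$ on $\Loc(\Cat^\perf,\E)$, mirroring \Cref{cor:deloop}, and then both $\Phi\Psi\simeq\id$ and $\Psi\Phi\simeq\id$ reduce to the corresponding two $\Omega$-inversion statements together with the fact that the forgetful functor $\Loc(\E)\to\Loc(\Cat^\perf,\E)$ intertwines the two $\Calk$-induced endofunctors (via $\Ind$). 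Everything else is bookkeeping with the natural fiber-cofiber sequences already established.
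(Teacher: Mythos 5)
Your overall plan is the one the paper's (very terse) proof is implicitly invoking, so the architecture is right. But one step in your $\Psi\Phi$ direction is false as stated: for $\Ind(C)$ compactly generated it is \emph{not} the case that $\hat y = y$ as functors $\Ind(C)\to\Ind(\Ind(C)^{\omega_1})$. For instance with $C=\Sp^\omega$, the spectrum $\Sph[1/p]=\colim(\Sph\xrightarrow{p}\Sph\to\cdots)$ is $\omega_1$-compact, so $y(\Sph[1/p])$ is a \emph{compact} object of $\Ind(\Sp^{\omega_1})$; whereas $\hat y(\Sph[1/p])=\colim_\NN y(\Sph)$ along the multiplication-by-$p$ maps, which is a genuinely non-constant ind-object and hence not compact. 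Both $\hat y$ and $y$ are fully faithful (they are the two adjoints of the localization $\Ind(\Ind(C)^{\omega_1})\to\Ind(C)$), but their essential images differ.

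What you actually need, and what is true, is the weaker statement that $\hat y$ agrees with $y$ on \emph{compact} objects of $\Ind(C)$: for $m\in C=\Ind(C)^\omega$, both $\hat y(m)$ and $y(m)$ corepresent $z\mapsto\Map_{\Ind(C)}(m,\colim z)$ and both preserve filtered colimits in $z$, so they coincide. Since $\hat y$ is an internal left adjoint it preserves compacts, and therefore the functor it induces on compacts in the localization sequence $\Ind(C)\xrightarrow{\hat y}\Ind(\Ind(C)^{\omega_1})\to\Calk(\Ind(C))$ is precisely the inclusion $C\hookrightarrow\Ind(C)^{\omega_1}$. Now \Cref{prop:kappaloc} with $\kappa=\omega$ (applicable because the kernel $\hat y(\Ind(C))\simeq\Ind(C)$ is compactly generated) produces the Karoubi sequence $C\to\Ind(C)^{\omega_1}\to\Calk(\Ind(C))^\omega$, and since $\Ind(C)^{\omega_1}$ admits countable coproducts, the Eilenberg swindle kills the middle term and gives $E(C)\simeq\Omega E(\Calk(\Ind(C))^\omega)$ naturally, as you wanted. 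So your conclusion stands; just replace ``$\hat y = y$'' by ``$\hat y$ restricts to the Yoneda embedding on compacts,'' which is the statement that actually makes the identification of $\Calk(\Ind(C))^\omega$ with the Bass--Thomason suspension go through.
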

\begin{proof}
Everything we did so far proves that this functor is inverse to the restriction. 
\end{proof}
\begin{nota}
    Given a localizing invariant $E:\Cat^\perf\to \E$, we let $E_{cont}$ (``continuous $E$'') denote its unique extension to a localizing invariant on $\Prdbl$. 
\end{nota}
\begin{rmk}
    There is no variant of this theorem with additive invariants in the sense of \cite{BGT}. Indeed, ind-completions of functors with right adjoints correspond to functors whose right adjoint is itself an internal left adjoint, and for a dualizable $\M$, if there is such an inclusion in a compactly generated category $\N$, then $\M$ is also compactly generated.

    In particular, $\M\mapsto K(\M^\omega)$ would be an additive invariant in any reasonable sense, and it agrees with $K_{cont}$ on compactly generated categories but not on arbitrary dualizable categories (combine \Cref{ex:KofSh} with \cite{Oscarcompact}). 
\end{rmk}
\begin{ex}
    $\THH$ is naturally defined on $\Prdbl$ as a trace, cf. \cite{HSS}; and this functor is localizing by \cite[Theorem 3.4]{HSS}. It follows that it is $\THH_{cont}$.
\end{ex}
We also record the following variant from \cite{Hoyois}. 
\begin{cor}
Let $\E$ be a cocomplete stable \category, and let $\mathcal K$ be a collection of small \categories{} and let $\Loc_\mathcal K(\E)$ denote the full subcategory of $\Loc(\E)$ spanned by localizing invariants preserving $\mathcal K$-shaped colimits. The forgetful functor $\Loc_\mathcal K(\E)\to \Loc_\mathcal K(\Cat^\perf,\E)$ is an equivalence.
\end{cor}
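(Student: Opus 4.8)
The strategy is to deduce this from Efimov's \Cref{thm:Efimov}: it exhibits an equivalence $\Loc(\E)\simeq\Loc(\Cat^\perf,\E)$ given by restriction along $\Ind\colon\Cat^\perf\to\Prdbl$, with inverse $F\mapsto F_{\mathrm{cont}}$, $F_{\mathrm{cont}}(\M)=\Omega F(\Calk(\M)^{\omega})$. To obtain the corollary it suffices to check that this equivalence restricts to one between the subcategories of $\mathcal K$-colimit-preserving invariants, i.e.\ that restriction along $\Ind$ sends $\Loc_\mathcal K(\E)$ into $\Loc_\mathcal K(\Cat^\perf,\E)$, and that $F\mapsto F_{\mathrm{cont}}$ sends $\Loc_\mathcal K(\Cat^\perf,\E)$ into $\Loc_\mathcal K(\E)$.

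The first statement is immediate: $\Ind\colon\Cat^\perf\to\PrL_{\st}$ is colimit-preserving (a standard fact, cf.\ \cite{BGT}; concretely one reduces to filtered colimits and pushouts, using $\Ind(\mathrm{Idem}(-))=\Ind(-)$ to absorb the idempotent completion in colimits of $\Cat^\perf$), it factors through $\Prdbl$, and colimits in $\Prdbl$ are computed in $\PrL_{\st}$ by \Cref{prop:colimdbl}; hence $E\circ\Ind$ preserves $\mathcal K$-shaped colimits whenever $E$ does. For the second statement, since $\E$ is stable the loop functor $\Omega\colon\E\to\E$ is an equivalence and so preserves all colimits; therefore $F_{\mathrm{cont}}=\Omega\circ F\circ\big(\Calk(-)^{\omega}\big)$ will preserve $\mathcal K$-shaped colimits once we know $\Calk(-)^{\omega}\colon\Prdbl\to\Cat^\perf$ does. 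By \Cref{lm:calkcpct} each $\Calk(\M)$ is compactly generated, so $\Calk(-)^{\omega}$ is $\Calk$ followed by the restriction of $(-)^{\omega}$ to compactly generated stable categories, which is an equivalence onto $\Cat^\perf$ and hence colimit-preserving; and by \Cref{obs:natseq} and \Cref{cor:yhatnatural2}, $\Calk$, regarded as an endofunctor of $\Prdbl$, is the cofiber of the natural transformation $\hat y\colon\mathrm{id}_{\Prdbl}\Rightarrow\Ind\big((-)^{\omega_1}\big)$. Since $\Prdbl$ is cocomplete (\Cref{prop:colimdbl}) and cofibers commute with colimits, everything comes down to the statement that the functor $\M\mapsto\Ind(\M^{\omega_1})$ preserves $\mathcal K$-shaped colimits.

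This last point is the crux. Given $\M=\colim_{k\in K}\M_k$ in $\Prdbl$: the structure maps $\iota_k\colon\M_k\to\M$ are internal left adjoints, so each $\iota_k^R$ is colimit-preserving and in particular $\iota_k$ preserves $\omega_1$-compact objects, which yields a canonical comparison functor $\colim_{k}\M_k^{\omega_1}\to\M^{\omega_1}$; we must show it becomes an equivalence after applying $\Ind$ (equivalently, after idempotent completion). For filtered $K$ this is a higher-cardinal version of \Cref{prop:Spcpt}: using that every $\M_k$ and $\M$ is $\omega_1$-compactly generated (\Cref{cor:cardboundcompass}) and that in such categories the $\omega_1$-compact objects are precisely the compactly exhaustible ones (\Cref{cor:characcptex}), one reruns the full-faithfulness and essential-surjectivity arguments from the proof of \Cref{prop:Spcpt}, now witnessing objects and the needed equivalences between them at finite stages of the diagram. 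The remaining generating shapes, namely (co)products and pushouts, are handled more easily, since for compactly generated categories these colimits are computed compatibly in $\PrL_{\st}$ and in $\Cat^\perf$ once one passes through $\Ind$. Granting this, $\M\mapsto\Ind(\M^{\omega_1})$, hence $\Calk(-)^{\omega}$, hence $F_{\mathrm{cont}}$, preserves $\mathcal K$-shaped colimits, and the two inclusions together give the asserted equivalence.
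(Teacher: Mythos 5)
Your proposal runs into a genuine gap at what you yourself identify as the crux: the claim that $\M\mapsto\Ind(\M^{\omega_1})$ preserves $\mathcal K$-shaped colimits is false in general. In fact the paper records exactly this in \Cref{warn:colimInd}: it is \emph{not} the case in general that $\colim_K\Ind(\M_k^\kappa)\simeq \Ind\big((\colim_K\M_k)^\kappa\big)$, and the accompanying remark indicates that a proof sketch in the literature along these lines could not be made sense of for precisely this reason. Your attempt to rescue this by rerunning the argument of \Cref{prop:Spcpt} does not work: that proposition concerns $(-)^{\omega}$ on compactly assembled categories, where compact objects are ``robust'', whereas $(-)^{\omega_1}$ is not filtered-colimit-preserving in any comparable way, and you offer no actual argument for the finite-shape cases (pushouts), which is where the failure lives. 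Since your reduction of the corollary to colimit-preservation of $\Calk$, and hence of $\Ind\big((-)^{\omega_1}\big)$, is built entirely on this false premise, the second half of your proof collapses.

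The paper's proof sidesteps this issue: rather than trying to identify $\colim_K\Ind(\M_k^\kappa)$ with $\Ind\big((\colim_K\M_k)^\kappa\big)$, it applies the colimit to the localization sequence $\M_k\to\Ind(\M_k^\kappa)\to\Calk_\kappa(\M_k)$ termwise, then observes that $E_{\mathrm{cont}}$ \emph{vanishes} on the middle term $\colim_K\Ind(\M_k^\kappa)$: the latter is compactly generated with compacts $\colim_K\M_k^\kappa$, and $E(\M_k^\kappa)=0$ by the Eilenberg swindle ($\M_k^\kappa$ has countable coproducts as $\kappa\geq\omega_1$), so $E_{\mathrm{cont}}\big(\colim_K\Ind(\M_k^\kappa)\big)\simeq\colim_K E(\M_k^\kappa)=0$. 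That vanishing, not any comparison of the two Ind-categories, is what lets one desuspend $E_{\mathrm{cont}}$ of the Calkin term to get $E_{\mathrm{cont}}(\colim_K\M_k)$ and conclude. Your first paragraph (the easy direction, using that $\Ind:\Cat^\perf\to\Prdbl$ preserves colimits and restriction along a colimit-preserving functor preserves colimit-preservation) is correct and matches the paper.
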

\begin{rmk}
    We could not make sense of the proof sketch in \cite{Hoyois}, cf. \Cref{warn:colimInd}
\end{rmk}
\begin{proof}
It suffices to prove that for any small \category{} $K$, $E$ preserves $K$-shaped colimits if and only if $E_{cont}$ does. Since $\PrL_{\st,\omega}\to \Prdbl$ preserves arbitrary colimits, one direction is clear: if $E_{cont}$ preserves them, then so does $E$. 

Conversely, suppose $E$ preserves $K$-shaped colimits and let $\M_\bullet : K\to\Prdbl$ be an arbitrary diagram. Consider the natural localization sequence $\M_k\to \Ind(\M_k^\kappa)\to \Calk_\kappa(\M_k)$ and consider its colimit: it gives a localization sequence $\colim_K \M_k\to \colim_K\Ind(\M_k^\kappa)\to \colim_K\Calk_\kappa(\M_k)$. 

We now claim that $E_{cont}$ vanishes on the middle category, from which it follows that $E_{cont}(\colim_K\M_k)\simeq \E_{cont}(\colim_K\Calk_\kappa(\M_k))$. Now each $\Calk_\kappa(\M_k)$ is compactly generated, therefore so is the colimit, and therefore $$E_{cont}(\colim_K\Calk_\kappa(\M_k))\simeq E((\colim_K\Calk_\kappa(\M_k))^\omega) \simeq E(\colim_K\Calk_\kappa(\M_k)^\omega)\simeq \colim_KE(\Calk_\kappa(\M_k)^\omega)$$ 

From this it follows that $E_{cont}(\colim_K \M_k)\simeq \colim_K E_{cont}(\M_k)$. 

    To prove that $E_{cont}$ vanishes on $\colim_K\Ind(\M_k^\kappa)$ we note that the latter is compactly generated with compacts $\colim_K \M_k^\kappa$ and hence $E_{cont}$ evaluated on it is $E(\colim_K\M_k^\kappa)\simeq~\colim_K E(\M_k^\kappa)=~0$. 
\end{proof}
\begin{warn}\label{warn:colimInd}
    It is \emph{not} the case in general that $\colim_K\Ind(\M_k^\kappa)\simeq \Ind((\colim_K\M_k)^\kappa)$. 
\end{warn}
\begin{ex}\label{ex:KofSh}
    Algebraic $K$-theory commutes with filtered colimits, it follows that $K_{cont}$ also does. Combining this with formal properties of the assignment $X\mapsto \Sh(X;\M)$ allows for Efimov's calculation $K_{cont}(\Sh(X;\M))\simeq \Gamma(X,K_{cont}(\M))$ for any dualizable $\M$ and compact Hausdorff topological space $X$, cf. \cite{Hoyois,youtubeDustin1}\footnote{Hoyois proves this when $X$ is hypercomplete, e.g. if it is a manifold, but Clausen's proof, as well as Efimov's original proof work in this generality. In fact, these proofs are enough to identify the noncommutative motive of $\Sh(X;\M)$ as opposed to only its $K$-theory.}. 
\end{ex}
\begin{rmk}
    Continuous $K$-theory (and more generally, continuous $E$ for any localizing invariant $E$) can be \emph{constructed} in terms of the Calkin construction, but in concrete cases, it is often helpful to compute it using different ``resolutions'', i.e. given a dualizable $\M$, finding an appropriate compactly-generated $\N$ and an internal left adjoint embedding $\M\to\N$, which need not be $\hat y: \M\to \Ind(\M^{\omega_1})$. 
\end{rmk}
\subsection{An example: smooth categories}\label{section:smooth}
In this section, we exploit a structural property of compact maps to record an analogue of the fact that modules over smooth algebras that are underlying compact, are compact as modules.

This structural property will be helpful later on when we describe a construction of dualizable stable categories. 

\begin{prop}\label{prop:cpctcofib}
    Consider a diagram of vertical cofiber sequences of the form: 
    \[\begin{tikzcd}
	{x_0} & {y_0} & {z_0} \\
	{x_1} & {y_1} & {z_1} \\
	{x_2} & {y_2} & {z_2}
	\arrow["{f_0}", color={rgb,255:red,214;green,92;blue,92}, from=1-1, to=1-2]
	\arrow["{g_0}", color={rgb,255:red,214;green,92;blue,92}, from=1-2, to=1-3]
	\arrow["{f_1}", color={rgb,255:red,214;green,92;blue,92}, from=2-1, to=2-2]
	\arrow["{g_1}", color={rgb,255:red,214;green,92;blue,92}, from=2-2, to=2-3]
	\arrow[from=2-1, to=3-1]
	\arrow[from=1-1, to=2-1]
	\arrow[from=1-2, to=2-2]
	\arrow[from=1-3, to=2-3]
	\arrow[from=2-2, to=3-2]
	\arrow[from=2-3, to=3-3]
	\arrow["{f_2}"', from=3-1, to=3-2]
	\arrow["{g_2}"', from=3-2, to=3-3]
\end{tikzcd}\]
where we have suppressed the nullhomotopies for notational convenience, but the horizontal maps are maps of cofiber sequences; and where $f_0,f_1,g_0,g_1$ are weakly compact maps. 

In that case, $g_2\circ f_2$ is weakly compact. 
\end{prop}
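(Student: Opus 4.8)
The strategy is to exploit that we have \emph{vertical cofiber sequences} and that the category is stable (this is implicit in the statement, as cofiber sequences of $x,y,z$ are fiber sequences too), so that I can rotate the diagram and turn the outer two columns into a three-by-three grid of bicartesian squares. Concretely, name the vertical maps $a_i:x_i\to x_{i+1}$, $b_i:y_i\to y_{i+1}$, $c_i:z_i\to z_{i+1}$ for $i=0,1$. I want to produce, for an arbitrary filtered system $w_\bullet:I\to\M$ and a map $h:z_2\to\colim_I w_i$, a factorization of $g_2\circ f_2:x_2\to\colim_I w_i$ through some $w_j$ — or rather the $\pi_0$-version, since we only claim weak compactness. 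The key input is that, because each column is a cofiber sequence and $\M$ is stable, the square
\[\begin{tikzcd}
	{x_1} & {z_1} \\
	{x_2} & {z_2}
	\arrow["{g_1 f_1}", from=1-1, to=1-2]
	\arrow[from=1-1, to=2-1]
	\arrow[from=1-2, to=2-2]
	\arrow["{g_2 f_2}", from=2-1, to=2-2]
\end{tikzcd}\]
need not be a pushout, but the composite $g_2 f_2$ fits into a map of cofiber sequences whose ``kernel part'' is controlled by $f_0,g_0,f_1,g_1$. I would first reduce, using rotation of the cofiber sequences, to understanding how $\cofib(a_1)\simeq x_0[1]$ (in the stable case; unstably one must be more careful, but the weak-compactness hypothesis is exactly what makes the unstable argument go through) and similarly for the other columns, so that the obstruction to factoring $g_2 f_2$ lives in mapping spectra out of the compactly-mapped objects $x_0,x_1,z_0,z_1$.

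The cleanest route is probably the following. Given $h:z_2\to\colim_I w_i$, consider $h\circ g_2:y_2\to\colim_I w_i$. Now $y_2=\cofib(y_0\to y_1)$, so a map out of $y_2$ is the same as a map out of $y_1$ together with a null-homotopy of its restriction to $y_0$; since $f_1:x_1\to y_1$ is weakly compact and $f_0:x_0\to y_0$ is weakly compact, I can use weak compactness of $f_0$ and $f_1$ to push the relevant data to a finite stage — that is, the composite $x_1\to y_1\xrightarrow{h\circ g_2\circ b_1 \text{-type maps}}\colim_I w_i$ factors through some $w_j$, and likewise the nullhomotopy datum coming from $x_0$. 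Running the same argument one column over using weak compactness of $g_0,g_1$ to transport the $z$-side information, and using filteredness of $I$ to put everything over a common index, I obtain a factorization of $g_2 f_2$ through $w_j$ at the level of $\pi_0$ (which is all weak compactness requires). The bookkeeping is a diagram chase organized by the map of cofiber sequences, and I would present it by first doing the case $\M$ stable (where cofib = $\Sigma$ fib makes the rotations literal) and then remarking, as the paper does elsewhere for such unstable statements, that the weak-compactness formulation survives because we only ever need surjectivity on $\pi_0$ of the relevant mapping-space colimit comparison maps — cf. the style of the proof of \Cref{lm:basicnuc}.

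\textbf{Main obstacle.} The hard part is the unstable case: without stability, ``cofiber sequence'' does not let me freely rotate, so I cannot literally say a map out of $z_2$ is controlled by maps out of $z_0,z_1$ plus coherence data. The honest fix is to work with the \emph{pushout} structure — each column is $x_i\to y_i\to y_i\cup_{x_i} 0 = z_i$, so $z_i=\colim(0\leftarrow x_i\to y_i)$, a \emph{finite} colimit — and then a map $z_2\to\colim_I w_i$ is a pair (a map $y_2\to\colim_I w_i$, a nullhomotopy of its pullback to $x_2$). Weak compactness of $f_2$ would be needed to handle the $x_2$ part, but $f_2$ is exactly what we are trying to prove is weakly compact — so instead I must use the two-step filtration: first transport the $y$-side data to a finite stage using weak compactness of $f_1$ (and $f_0$ for the coherence), then transport the residual $x_2$-coherence using weak compactness of $g_1\circ f_1$ restricted along $x_1\to x_2$, which is weakly compact as a composite with a weakly compact map (\Cref{ex:factorcompact}). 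Making these two transports compatible over a single index of $I$, with all higher coherences, is the genuinely technical point; I expect it to be a somewhat involved but ultimately routine cofinality-and-filteredness argument, and I would likely relegate the full coherence verification to ``we leave the details to the reader'' in the spirit of several nearby proofs in the paper.
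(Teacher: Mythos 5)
There is a genuine gap, and it starts with a misreading of the diagram. In your ``Main obstacle'' paragraph you write that each column is $x_i\to y_i\to y_i\cup_{x_i}0=z_i$, i.e.\ $z_i=\mathrm{cofib}(x_i\to y_i)$. That is not the hypothesis: the \emph{columns} are $x_0\to x_1\to x_2$, $y_0\to y_1\to y_2$, $z_0\to z_1\to z_2$, and these are the cofiber sequences, while the \emph{rows} $x_i\to y_i\to z_i$ are only maps of cofiber sequences and are not assumed exact. If the rows were cofiber sequences the proposition would in fact be false (\Cref{warn:cpctcofib}). So the decomposition ``a map $z_2\to\colim w_i$ is a map $y_2\to\colim w_i$ plus a nullhomotopy on $x_2$'' is simply not available; the cofiber identification you actually have is $z_2\simeq\mathrm{cofib}(z_0\to z_1)$, and what you need to produce is a map out of $x_2\simeq\mathrm{cofib}(x_0\to x_1)$.

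The second issue is the order in which you apply the compactness hypotheses. Applying weak compactness of $f_1\colon x_1\to y_1$ to the composite $y_1\to\colim w_i$ only factors the composite \emph{out of $x_1$} (the source of $f_1$) through some $w_{j_0}$. To then control the nullhomotopy of $x_0\to w_{j_0}$ that you need in order to assemble a map out of $x_2$, you would want to route a lift through $y_0$ and apply weak compactness of $f_0$; but producing a lift $y_0\to\fib(w_{j_0}\to\colim w_i)$ requires a map $y_0\to w_{j_0}$ in the first place, which you do not have --- $w_{j_0}$ was chosen as a target for $x_1$, not for $y_1$. The paper's proof avoids this by starting instead with $g_1\colon y_1\to z_1$, applied to $y_1\to z_1\to z_2\to\colim m_i$: this produces a factorization $y_1\to m_{i_0}$, out of $y_1$. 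Then $y_0\to y_1\to m_{i_0}\to\colim m_i$ is canonically null, because it factors through $z_0\to z_1\to z_2$ (the $z$-column nullhomotopy); this gives a lift $y_0\to\fib(m_{i_0}\to\colim_{I_{i_0/}}m_i)$, and weak compactness of $f_0\colon x_0\to y_0$ pushes its restriction to $x_0$ through some $\fib(m_{i_0}\to m_{i_1})$. Combining the resulting nullhomotopy of $x_0\to m_{i_1}$ with the map $x_1\to y_1\to m_{i_1}$ yields a map $x_2\to m_{i_1}$, and one checks the compatibility with $h\circ g_2 f_2$ by tracing the nullhomotopies back to the cofiber structure and the compatibility of the horizontal maps. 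Note also that the argument is already unstable: it uses only $x_2\simeq\mathrm{cofib}(x_0\to x_1)$ and left exactness of filtered colimits (to identify $\fib(m_{i_0}\to\colim m_i)\simeq\colim_{i\geq i_0}\fib(m_{i_0}\to m_i)$), so the worry you raise about rotating cofiber sequences stably is not where the real difficulty lies.
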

\begin{warn}\label{warn:cpctcofib}
    It is not generally true that $f_2$ (resp. $g_2$) is itself a compact map. To give an example, let $f:x\to y$ be an arbitrary map. It can be written as the cofiber of a canonical map 
    \[\begin{tikzcd}
	{\Omega x} & 0 \\
	0 & y
	\arrow[from=1-1, to=2-1]
	\arrow[from=2-1, to=2-2]
	\arrow[from=1-2, to=2-2]
	\arrow[from=1-1, to=1-2]
\end{tikzcd}\]
where the homotopy making the square commute records $f$. As $0$ is a compact object, both maps $\Omega x\to 0, 0\to y$ are compact.

Note that this example more generally shows that any map is the cofiber of compact maps, so that \Cref{prop:cpctcofib} should not be summarized to quickly as ``A composite of two cofibers of compact maps is compact''. The compatibility of the presentation as such cofibers is essential. 
\end{warn}
\begin{proof}
    Let $z_2\to \colim_I m_i$ be a map to a filtered colimit. The restriction to $y_1$ factors through a finite stage $m_{i_0}$ as $y_1\to y_2$ is compact. 

We further note that the map $y_0\to y_1\to m_{i_0}\to \colim_I m_i$ is $0$, as it factors through $z_0\to z_1\to z_2$. It follows that we get a map $y_0\to \fib(m_{i_0}\to\colim_{I_{i_0/}}m_i)$, whose restriction to $x_0$ factors through a finite stage $\fib(m_{i_0}\to m_{i_1})$, as $x_0\to y_0$ is compact. 

Thus the map $x_1\to y_1\to m_{i_1}$ has a canonical nullhomotopy when restricted to $x_0$, and thus induces a map $x_2\to m_{i_1}$. Furthermore, the composite $x_2\to m_{i_1}\to \colim_I m_i$ corresponds to the nullhomotopy $x_0\to \fib(m_{i_0}\to m_{i_1})\to\fib(m_{i_0}\to \colim_I m_i)$ which in turn comes from the map $x_0\to y_0$ and the nullhomotopy induced by $y_0\to z_0\to z_1\to~\colim_I m_i$. 

Tracing through, we find that the composite $x_2\to \colim_I m_i$ is indeed restricted from $z_2$, because the corresponding nullhomotopy is. 
\end{proof}
We also make the following trivial observation:
\begin{lm}
The collection of (weakly, strongly) compact maps is closed under retracts in $C^{\Delta^1}$. 
\end{lm}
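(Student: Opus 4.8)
The plan is to verify each of the three variants (compact, weakly compact, strongly compact) is preserved under retracts in the arrow category $C^{\Delta^1}$, treating them in increasing order of subtlety. For all three, suppose $f$ is a retract of $g$ in $C^{\Delta^1}$: writing $f: x\to y$ and $g: a\to b$, this gives morphisms of arrows $\iota: f\to g$ (with components $\iota_x: x\to a$, $\iota_y: y\to b$) and $r: g\to f$ (with components $r_x: a\to x$, $r_y: b\to y$) such that $r\circ\iota \simeq \id_f$, i.e. $r_x\iota_x\simeq\id_x$ and $r_y\iota_y\simeq\id_y$ compatibly with the square structure.

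For the weakly compact case: given a $\kappa$-filtered diagram $z_\bullet: I\to C$ and a map $h: y\to \colim_I z_i$, precompose with $r_y: b\to y$ is the wrong direction, so instead I would use $\iota_y$ in the other spot. Consider $h\circ\ r_y : b\to\colim_I z_i$; since $g=(a\to b)$ is weakly $\kappa$-compact, the composite $a\xrightarrow{g} b\xrightarrow{h r_y}\colim_I z_i$ factors through some $z_j$ via a lift $a\to z_j$. Precomposing this lift with $\iota_x: x\to a$ and using the commuting square $r_y\circ g\circ \iota_x\simeq r_y\circ \iota_y\circ f\simeq f$, one checks the resulting map $x\to z_j$ is a factorization of $h\circ f$ as required. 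This is a routine diagram chase once the retract data is spelled out; the only care needed is tracking that the nullhomotopies/commutativities assemble correctly, which is the kind of $1$-categorical-after-passing-to-homotopy-category bookkeeping used elsewhere in the paper (cf. the proof of \Cref{lm:retractBC}).

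For the $\kappa$-compact case the argument is the same in spirit but at the space level: the defining diagonal filler in the square with corners $\colim_I\Map(y,z_i)$, $\Map(y,\colim_I z_i)$, $\colim_I\Map(x,z_i)$, $\Map(x,\colim_I z_i)$ for $f$ is produced by conjugating the corresponding filler for $g$ by the maps induced by $\iota$ and $r$ on all four corners; one uses $r_y^*$ and $\iota_x^*$ on the appropriate $\Map$-spaces and the retract identities to see that the composite $\Map(x,-)\to\Map(x,-)$ through $\Map(a,-)$ is (naturally) the identity. For the strongly compact case: if $g: a\to b$ is strongly $\kappa$-compact, witnessed by a $\kappa$-filtered-colimit-preserving $C\to\Map(a,-)$ with a lift $\tilde g\in C(b)$, then $r_y^*\circ C\to r_y^*\Map(a,-)$ together with the natural transformation $\Map(a,-)\to\Map(x,-)$ induced by $\iota_x$ gives a $\kappa$-filtered-colimit-preserving functor mapping to $\Map(x,-)$, and $r_y^*(\tilde g)$ (pushed along $\iota_x$) provides the lift of $f\in\Map(x,y)$ after using the retract identities; alternatively one can invoke \Cref{ex:factorcompact} as already done in \Cref{ex:embtocheckcpct}.

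The main obstacle, such as it is, is purely bookkeeping: making sure that the retract in $C^{\Delta^1}$ is used in the correct ``orientation'' so that the induced factorizations land where they should, and that the higher coherences (the homotopies filling the retract squares) compose to give the canonical homotopy in the target factorization. Since equivalences can be checked in homotopy categories, all of this can be carried out $1$-categorically, exactly as in the paper's treatment of Beck--Chevalley retracts, so the statement is genuinely ``trivial'' in the sense the author asserts, and no essential new idea is needed beyond unwinding the retract data. I would present it compactly, doing the weakly compact case in full and remarking that the other two are identical modulo replacing $\pi_0$-level factorizations with space-level or functor-level ones.
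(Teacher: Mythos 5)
Your argument is correct, but you miss that the whole lemma collapses to a one--line observation you yourself mention in passing at the very end. If $f$ is a retract of $g$ in $C^{\Delta^1}$, with retract data $\iota_x,\iota_y,r_x,r_y$ as you set up, then $f \simeq r_y\circ g\circ \iota_x$ (from $g\iota_x\simeq\iota_y f$ and $r_y\iota_y\simeq\id_y$), i.e.\ \emph{$f$ factors through $g$}. Now \Cref{ex:factorcompact} says precisely that (weakly, strongly) $\kappa$-compact maps form a two-sided ideal, so any map factoring through such a map is itself such a map. That is the paper's entire proof, and it covers all three variants simultaneously with no case analysis. Your explicit diagram chases for the weak, $\kappa$-compact, and strong cases are each correct, and indeed each one is implicitly re-deriving the ideal property in its particular flavour (your use of $r_y^*$ and $\iota_x^*$ on the mapping spaces is exactly conjugation along the factorization $f = r_y g\iota_x$). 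But the work has already been done once and for all in \Cref{ex:factorcompact}, and the only new input needed here is the two-line observation that a retract in the arrow category factors through the thing it retracts off of. In short: same underlying idea, but the clean reduction makes the explicit chases unnecessary, and you should lead with the factorization rather than bury it as an afterthought.
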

\begin{proof}
    If $f$ is a retract of $g$, then $f$ factors through $g$. Hence this follows from \Cref{ex:factorcompact}.
\end{proof}
We can now explain the desired example:
\begin{thm}\label{thm:smooth}
Let $\M$ be a smooth category, that is, an object of $\Prdbl$ such that the coevaluation map $\Sp\to \M\otimes\M^\vee$ is itself an internal left adjoint; or equivalently such that the identity functor in $\Fun^L(\M,\M)$ is compact. 

We also assume that $\M$ is compactly generated. In this case, there is a compact object $x\in\M$, and a nonnegative integer $n$ such that for any composable sequence of maps $f_1,...,f_n$ in $\M$, if each $\map(x,f_i)$ is compact in $\Sp$, then $f_1\circ ...\circ f_n$ is compact in $\M$. 

More generally, let $\N$ be a dualizable category. For any composable sequence of maps $f_1,...,f_n$ in $\M\otimes\N$, if the functor $\map(x,-)\otimes \id_\N:\M\otimes\N\to \N$ has compact values on the $f_i$'s, then $f_1\circ ... \circ f_n$ is compact in $\M\otimes \N$. 

If $\M=\Mod_A$ for some smooth algebra $A$, $x$ can be chosen to be $A$ itself, so that $U=\map(A,-)$ is the forgetful functor $\M\to\Sp$, and in the relative case, $U\otimes \N$ is identified with the forgetful functor $\Mod_A(\N)\to \N$. 
\end{thm}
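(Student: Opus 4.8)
The plan is to unwind the smoothness hypothesis into a statement about the identity functor of $\M$ as an object of $\Fun^L(\M,\M)$, and then to combine it with the cofiber-sequence stability of weakly compact maps (\Cref{prop:cpctcofib}) and its retract-closure. First I would record the equivalence between the two formulations of smoothness: $\Sp \to \M \otimes \M^\vee$ being an internal left adjoint is, via the identification $\M \otimes \M^\vee \simeq \Fun^L(\M,\M)$ (valid for $\M$ dualizable), exactly the statement that the unit object of $\Fun^L(\M,\M)$, i.e. $\id_\M$, is $\Sp$-atomic, which for the base $\Sp$ means that $\id_\M$ is a compact object of $\Fun^L(\M,\M)$ (using \Cref{ex:VatVdbl} and the fact that $\Sp$-atomic $=$ compact). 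So $\id_\M$ is $\omega$-compact in $\Fun^L(\M,\M)$.

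Next I would produce the object $x$. Since $\M$ is compactly generated, $\Fun^L(\M,\M) \simeq \Fun(\M^\omega, \M)$ (colimit-preserving functors out of a compactly generated category are left Kan extended from compacts), and this category is compactly generated by functors of the form $y \otimes \map(c,-)$ for $c \in \M^\omega$, $y \in \M^\omega$ (these corepresent evaluation-at-$c$ composed with a corepresentable, and together they detect everything). Because $\id_\M$ is compact, it is a retract of a \emph{finite} colimit of such generators; write this finite colimit in terms of finitely many compact objects $c_1,\dots,c_k$, and set $x := c_1 \oplus \dots \oplus c_k$, a single compact object. Then $\map(x,-) \simeq \bigoplus_j \map(c_j,-)$, and after reindexing, $\id_\M$ is a retract (in $\Fun^L(\M,\M)^{\Delta^1}$, say, or just as an object) of a functor built as an iterated cofiber, in finitely many stages, from functors of the form $y_\alpha \otimes \map(x,-)$ with $y_\alpha \in \M$. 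The number $n$ in the statement will be (a small multiple of) the number of cofiber stages in this finite presentation of $\id_\M$; I would keep track of it but not optimize it.

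Then I would run the core argument. Given composable $f_1,\dots,f_n$ with each $\map(x,f_i)$ compact in $\Sp$, the functor $y_\alpha \otimes \map(x,-)$ sends $f_i$ to $y_\alpha \otimes \map(x,f_i)$, which is a compact map in $\M$ (tensoring a compact map in $\Sp$ with a fixed object is a colimit-preserving operation on the $\Delta^1$-category, so it preserves compact maps — this is \Cref{ex:preservecpct} applied to $- \otimes y_\alpha : \Sp \to \M$). Now $f_1 \circ \cdots \circ f_n$, viewed through the finite cofiber presentation of $\id_\M$, becomes a composite of maps each of which sits in a compatible diagram of cofiber sequences whose "building block" arrows are these compact maps; \Cref{prop:cpctcofib} (possibly iterated, once per cofiber stage — this is why we need $n$ composable arrows rather than one) shows the relevant composite is weakly compact, and then the retract lemma (closure of weakly compact maps under retracts in $C^{\Delta^1}$) transfers the conclusion from the finite colimit back to $\id_\M$, giving that $\id_\M(f_1 \circ \cdots \circ f_n) = f_1 \circ \cdots \circ f_n$ is weakly compact, hence compact (in a compactly generated, a fortiori compactly assembled, category weak compactness equals compactness by \Cref{ex:embtocheckcpct}). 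The relative statement over a dualizable $\N$ is obtained by tensoring the whole finite presentation with $\N$: $\Fun^L(\M,\M) \otimes \N \simeq \Fun^L(\M, \M \otimes \N)$ when $\M$ is dualizable, $\id_\M \otimes \id_\N = \id_{\M \otimes \N}$ remains a retract of the corresponding finite colimit, the building blocks become $(y_\alpha \otimes \map(x,-)) \otimes \id_\N$, i.e. $(\map(x,-)\otimes\id_\N)$ post-composed with $-\otimes y_\alpha$, so the same argument applies verbatim with $\map(x,-)$ replaced by $\map(x,-)\otimes\id_\N$; and compactness of maps in $\M\otimes\N$ (a dualizable, hence compactly assembled, category) is again detected correctly. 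Finally, if $\M = \Mod_A$ with $A$ smooth, then $\id_{\Mod_A}$ corresponds to the $A$-$A$-bimodule $A$, which is perfect over $A \otimes A^{\mathrm{op}}$ and in particular built in finitely many cofiber steps from shifts of $A \otimes A^{\mathrm{op}} = (A$ as a left module$) \otimes (A$ as a right module$)$; reading this off gives $x = A$ and $\map(A,-) = U$ the forgetful functor, and tensoring with $\N$ identifies $U \otimes \id_\N$ with the forgetful functor $\Mod_A(\N) \to \N$.

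\textbf{Main obstacle.} The delicate point is organizing the "finite cofiber presentation of $\id_\M$" so that \Cref{prop:cpctcofib} applies cleanly: that proposition needs a genuine \emph{compatible} diagram of vertical cofiber sequences with the specified maps compact, not merely the abstract fact that $\id_\M$ is a finite colimit of nice functors — as \Cref{warn:cpctcofib} emphasizes, "composite of cofibers of compact maps" is not automatically compact without the compatibility. So the real work is to choose the presentation of $\id_\M$ (equivalently, of the bimodule $A$ in the $\Mod_A$ case) as an explicit finite tower of cofiber sequences of the functors $y_\alpha \otimes \map(x,-)$, verify the horizontal maps in each stage are honestly maps of cofiber sequences, and confirm that evaluating such a tower on $f_1, \dots, f_n$ reproduces precisely the hypothesis of \Cref{prop:cpctcofib} at each stage — with the count of stages dictating $n$. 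Everything else (the retract lemma, \Cref{ex:preservecpct}, the identification of $\Fun^L$ with tensor products for dualizable $\M$, and weak-compact $=$ compact in the compactly assembled setting) is already available in the excerpt.
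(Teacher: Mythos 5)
Your proposal takes essentially the same route as the paper: the compact object $x$ is produced the same way (a finite direct sum of compacts $x_i$ appearing in a finite presentation of $\id_\M$ as a retract of a colimit of pure tensors), the key lemma is \Cref{prop:cpctcofib} combined with closure of compact maps under retracts, and the handling of the relative case over $\N$ and of the $\Mod_A$ case is the same. The ``main obstacle'' you flag is not really an obstacle and the paper dispatches it by a cleaner packaging: one argues that the collection of functors $F\colon \M\otimes\N\to\M\otimes\N$ for which the conclusion holds (for some $n$ depending on $F$) is a thick subcategory -- it contains the $(m\otimes\map(x,-))\otimes\id_\N$ with $n=1$, is closed under retracts, under desuspension, and under cofibers with $n$'s adding -- which makes the iteration automatic, and the compatibility you worry about is free because taking a cofiber of a natural transformation $F_0\to F_1$ in $\Fun^L$ and evaluating it on a chain of maps $a_0\to a_1\to a_2$ tautologically produces the compatible diagram of cofiber sequences that \Cref{prop:cpctcofib} requires.
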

\begin{proof}
Consider the identity of $\M$ as an object of $\M\otimes\M^\vee$. As such, it is a colimit of pure tensors, and so, as it is compact, it is a retract of a finite colimit of pure tensors, that is, objects of the form $y_i\boxtimes f_i, y_i\in\M, f_i:\M\to \Sp$. As $\M$ is compactly generated, the $y_i$'s can be chosen to be compact, and the $f_i$'s too, i.e. they can be chosen to be of the form $\map(x_i,-), x_i\in\M^\omega$. 

Furthermore, if $\M=\Mod_A$, $\Fun^L(\Mod_A,\Mod_A) \simeq \Mod_{A\otimes A\op}$ and so up to considering a possibly iterated finite colimit, we may choose $x_i= A, y_i = A$, as claimed. 

The object $x$ we choose in general is $\bigoplus_i x_i$ - it is a finite direct sum, hence still compact. 

Now consider the collection of functors $F:\M\otimes \N\to \M\otimes \N$ for which there exists a natural number $n$ so that the desired statement holds, namely any composable sequence of length $n$ of maps $f_i$ for which $(\map(x,-)\otimes\id_\N)(f_i)$ is compact satisfies that $F(f_1\circ ...\circ f_n)$ is compact in $\M$. This collections contains the functors $(m\otimes \map(x,-))\otimes \id_\N$ for any compact $m$, where the number $n$ is simply $1$. 

The previous lemma shows that this collection is closed under retracts (and thus contains each $(m\otimes\map(x_i,-))\otimes \id_\N, m\in\M^\omega$), and \Cref{prop:cpctcofib} shows that it is closed under cofibers (where, for $\mathrm{cofib}(F_0\to F_1)$, the corresponding number is $n_0+n_1$). As it is clearly closed under desuspensions, in total it contains the thick subcategory generated by the $(m\otimes\map(x_i,-))\otimes \id_\N,m\in\M^\omega$, and thus by smoothness it contains $\id_\M\otimes\id_\N = \id_{\M\otimes \N}$. The result follows. 

We note that the number $n$ is independent of $\N$. 
\end{proof}
\begin{rmk}\label{rmk:relsmooth}
   It is not hard to see that the same holds when replacing $\Sp$ with a rigidly-compactly generated $\V\in\CAlg(\PrL_{\st})$, so we're allowed to consider also relatively smooth algebras. 
   \end{rmk}
\begin{ex}
    We record an example to show that the $n$ is in general $>1$. Let $\M=~\Mod_{\Sph[t]}$, which is smooth. We view objects therein as spectra equipped with endomorphisms; and let $f: (\Sph,\id)\to (\Sigma \Sph,\id)$ be the map given by the $0$ on underlying spectra map, and an interesting homotopy $h$ in the square: \[\begin{tikzcd}
	\Sph & \Sigma\Sph \\
	\Sph & {\Sigma \Sph}
	\arrow["0"', from=2-1, to=2-2]
	\arrow["\id", from=1-2, to=2-2]
	\arrow["\id"', from=1-1, to=2-1]
	\arrow["0", from=1-1, to=1-2]
	\arrow["h"{description}, draw=none, from=1-1, to=2-2]
\end{tikzcd}\]
    
    In this case, the underlying map of $f$ is $0$, and so the underlying map of $\bigoplus_\mathbb N f$ is also $0$ and hence compact. However, $\bigoplus_\mathbb N f$ itself is not compact: if it were, it would factor through some finite direct sum $\bigoplus_F \Sigma\Sph$, but it does not. 
\end{ex}



\begin{rmk}
We do not know whether there exist non-compactly generated smooth categories. In fact, as mentioned in the introduction, we do not even know in general whether there exist non-compactly generated invertible categories! See \cite{Stefanich} for related work. 
\end{rmk}

\section{Comonadicity and presentability}\label{section:comonad}
The goal of this section is to prove our main theorem, namely: 
\begin{thm}\label{thm:DblIsPrL}
    Let $\V\in\CAlg(\PrL)$. The \category{} $\Dbl{\V}$ is presentable. 

    If $\V\in\CAlg(\PrL_\kappa)$, where $\kappa$ is an uncountable regular cardinal, then $\Dbl{\V}$ is $\kappa$-compactly generated. 
\end{thm}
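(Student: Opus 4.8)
The plan is to realize $\Dbl{\V}$ as the \category{} of comodules over an accessible comonad on a presentable \category, via the adjunction of \Cref{thm:nonstandadj}. First I would reduce to the case of an uncountable regular cardinal: any $\V\in\CAlg(\PrL)$ lies in $\CAlg(\PrL_\kappa)$ for some uncountable regular $\kappa$ (take $\kappa$ above the presentability rank of $\V$, the compactness rank of $\one_\V$, and some rank for which $\otimes$ preserves compacts, the last existing since $\otimes$ is a left adjoint in each variable), so the first assertion follows from the second. Fixing such a $\kappa$, \Cref{cor:cardboundV} (see also \Cref{rmk:boundyhat}) shows every dualizable $\V$-module lies in $\Mod_\V(\PrL_\kappa)$, and an internal left adjoint, having a colimit-preserving right adjoint, is a fortiori a morphism of $\Mod_\V(\PrL_\kappa)$; hence $\Dbl{\V}=\Dbl{\V}\cap\Mod_\V(\PrL_\kappa)$, and \Cref{thm:nonstandadj} provides an adjunction $i\dashv P$ with $i\colon\Dbl{\V}\hookrightarrow\mathcal C:=\Mod_\V(\PrL_\kappa)$ the non-full inclusion and $P(\N)=\PP_\V(\N^\kappa)$.

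Next I would show the comonad $T:=iP$ on $\mathcal C$ is accessible. The \category{} $\mathcal C$ is presentable, and in fact $\kappa$-compactly generated. By \Cref{prop:colimdbl} the functor $i$ preserves all small colimits (colimits in both $\Dbl{\V}$ and $\mathcal C$ being computed by the underlying colimit in $\Mod_\V(\PrL)$ along the relevant functors), and $T$ preserves $\kappa$-filtered colimits: the functor $(-)^\kappa$ preserves $\kappa$-filtered colimits (for $\kappa\geq\omega_1$ these are computed naively on the enriched \categories{} of $\kappa$-compact objects and remain idempotent-complete), while $\PP_\V(-)\colon\Cat_\V\to\mathcal C$ preserves all small colimits by the universal property \Cref{thm: UPVPsh}. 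Hence $T$ is an accessible comonad on the presentable \category{} $\mathcal C$, so $\coMod_T(\mathcal C)$ is presentable (comodules over an accessible comonad on a presentable \category{} form a presentable \category); and since $\mathcal C$ is $\kappa$-compactly generated and $T$ preserves $\kappa$-filtered colimits, $\coMod_T(\mathcal C)$ is $\kappa$-compactly generated.

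It then remains to identify $\Dbl{\V}$ with $\coMod_T(\mathcal C)$, i.e. to show $i$ is comonadic. I would use the dual Barr--Beck criterion: $i$ is conservative since an equivalence of $\V$-modules is automatically an internal left adjoint; and $\Dbl{\V}$ admits, and $i$ preserves, totalizations of $i$-cosplit cosimplicial objects. For the latter, if $X^\bullet$ is $i$-cosplit then $iX^\bullet$ is split in $\mathcal C$, so $\lim_\Delta iX^\bullet$ is a retract of $iX^0$, hence dualizable (dualizable objects are closed under retracts in the idempotent-complete \category{} $\Mod_\V(\PrL)$), and using that the unit $\hat y$ of \Cref{thm:nonstandadj} is a natural section of the counit $c$ (\Cref{cor:yhatnatural}) one checks this retract is the totalization of $X^\bullet$ inside $\Dbl{\V}$ and that $i$ preserves it. Unwinding, a $T$-comodule is exactly an $\N\in\mathcal C$ whose counit $c_\N\colon\PP_\V(\N^\kappa)\to\N$ admits a section --- equivalently, a retract of an atomically generated module, equivalently a dualizable $\V$-module, by \Cref{thm:Luriethmgeneralbase} --- and a morphism of $T$-comodules between dualizable modules is exactly an internal left adjoint, by the corollary following \Cref{thm:nonstandadj}. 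Together with the previous paragraph this shows $\Dbl{\V}$ is presentable, and $\kappa$-compactly generated when $\V\in\CAlg(\PrL_\kappa)$ with $\kappa$ uncountable regular.

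The main obstacle is the comonadicity of $i$ --- the precise matching of $T$-comodule structures (a priori extra data) with dualizability (a property), and of $T$-comodule morphisms with internal left adjoints; concretely, in the totalization step one must check that the structure maps produced are genuinely internal left adjoints, which is exactly where \Cref{thm:Luriethmgeneralbase}, \Cref{cor:yhatnatural} and the explicit description of $\hat y$ in \Cref{thm:nonstandadj} are needed (and which can alternatively be deduced from the fact that $T$ is a lax idempotent comonad). A secondary point is the bookkeeping ensuring the accessibility rank can be taken to be $\kappa$ itself.
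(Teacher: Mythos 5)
Your overall strategy matches the paper exactly: embed $\Dbl{\V}$ into $\Mod_\V(\PrL_\kappa)$, produce the adjunction $i\dashv P$ of \Cref{thm:nonstandadj}, and argue that $i$ is comonadic with accessible comonad. However, there are three places where what you write is strictly weaker than what is needed. First, to get the inclusion $\Dbl{\V}\subset\Mod_\V(\PrL_\kappa)$ you invoke only \Cref{cor:cardboundV}, which says each dualizable $\M$ is $\kappa$-compactly generated; but membership in $\Mod_\V(\PrL_\kappa)$ is a condition on \emph{objects of $\Mod_\V(\PrL)$}, and it additionally requires the action map $\V\otimes\M\to\M$ to preserve $\kappa$-compacts. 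That is the content of \Cref{lm:structuremapcardbound}, which together with \Cref{cor:cardboundV} gives \Cref{thm:CardBound}, the statement you actually need. Second, for the cardinality bound you assert that if $T$ is a $\kappa$-accessible comonad on a $\kappa$-compactly generated category then the comodule category is $\kappa$-compactly generated; this is true but not at all elementary, and the paper deliberately \emph{avoids} proving it in this generality. Instead the paper observes that $\Dbl{\V}$ is a retract of the category of \emph{pointed $T$-coalgebras} $\coAlg_T^{\mathrm{ptd}}(\Mod_\V(\PrL_\kappa))$, which is a much simpler limit (a pullback of $\coAlg_T$ along a section), and then applies \Cref{prop:coalgacc} and \Cref{prop:smalllim}. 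This is the step that actually makes the $\kappa$-compactness bookkeeping work; asserting the comodule statement as a black box leaves a real gap.

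Third, you write that ``a $T$-comodule is exactly an $\N$ whose counit admits a section''; this conflates $T$-comodules with pointed $T$-coalgebras. A $T$-comodule carries a coalgebra structure coherently compatible with the comultiplication of $T$, which is more data than a single section. What is true, and is the pivot of the paper's argument, is that the forgetful map $\coMod_T\to\coAlg_T^{\mathrm{ptd}}$ followed by the factorization through $\Dbl{\V}$ (which exists by \Cref{thm:Luriethmgeneralbase} and \Cref{cor:retractfunctor}) exhibits $\Dbl{\V}$ as a retract of the pointed-coalgebra category. Finally, the comonadicity step itself is condensed to a one-line hand-wave; the actual proof in \Cref{thm:Comonadic} requires showing that the projection maps $\M^{-1}\to\M^n$ of the $\Delta_+$-cone are internal left adjoints (via the ``In fact'' clause of \Cref{prop:internalleftBC}) and then checking the universal property by postcomposing with the conservative map $Pi\M^{-1}\to Pi\M^0$. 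You do correctly flag this as the main obstacle, and the key input is that $\M^{-1}$ is a retract of $\M^0$, hence \emph{dualizable} (not merely atomically generated), which is the one step where the argument works for $\Dbl{\V}$ but fails for $\At{\V}$.
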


The general strategy is as follows: firstly, we prove that if $\kappa$ is an uncountable regular cardinal such that $\V\in\CAlg(\PrL_\kappa)$ (such a $\kappa$ always exists), then $\Dbl{\V}$ is a (non-full) subcategory of $\Mod_\V(\PrL_\kappa)$. 

Note that this is saying strictly more than only ``each dualizable $\V$-module is $\kappa$-compactly generated'' (which we have proved in \Cref{cor:cardboundV}) - we will unwind what this concretely means when we come to it. As a second step, we prove that the forgetful functor $\Dbl{\V}\to \Mod_\V(\PrL_\kappa)$ (which therefore exists) is \emph{comonadic}, with an explicit, and accessible comonad. As $\PrL_\kappa$ is itself presentably symmetric monoidal, $\Mod_\V(\PrL_\kappa)$ is presentable, and thus, so is $\Dbl{\V}$. 

We now outline a few consequences of this theorem. 
\begin{cor}
    Let $\V\in\CAlg(\PrL)$. The \category{} $\Dbl{\V}$ admits limits, and is closed symmetric monoidal. 
\end{cor}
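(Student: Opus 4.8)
The plan is to deduce both statements from \Cref{thm:DblIsPrL}, using the presentability of $\Dbl{\V}$ together with the general theory of presentable $\infty$-categories. First I would establish the existence of limits: by \Cref{thm:DblIsPrL}, $\Dbl{\V}$ is presentable, and a presentable $\infty$-category admits all small limits (as well as all small colimits, which we already knew from \Cref{prop:colimdbl}). This is the quickest part. For the symmetric monoidal structure, the idea is that $\Dbl{\V}$ inherits the relative Lurie tensor product $\otimes_\V$ from $\Mod_\V(\PrL)$. One first checks that dualizable $\V$-modules are closed under $\otimes_\V$ — this is standard: a tensor product of dualizable objects in a symmetric monoidal $\infty$-category is dualizable, with dual the tensor product of the duals — and that the internal left adjoints are closed under $\otimes_\V$, so that $\otimes_\V$ restricts to a functor $\Dbl{\V}\times\Dbl{\V}\to\Dbl{\V}$. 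The unit $\V$ is itself dualizable (even atomically generated, indeed $\V=\PP_\V(\mathrm{pt})$), so $(\Dbl{\V},\otimes_\V,\V)$ is a symmetric monoidal $\infty$-category.

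Next I would argue that this monoidal structure is closed, i.e. that each $-\otimes_\V\M$ admits a right adjoint. Here is where presentability does the real work: $\otimes_\V$ preserves colimits in each variable (it does so already in $\Mod_\V(\PrL)$ where colimits along internal left adjoints are computed as in $\Mod_\V(\PrL)$ by \Cref{prop:colimdbl}, and the forgetful functor to $\Mod_\V(\PrL)$ preserves these colimits and is conservative), and $\Dbl{\V}$ is presentable, so by the adjoint functor theorem each $-\otimes_\V\M$ has a right adjoint $\hom_{\Dbl{\V}}(\M,-)$. To make this precise one wants $\otimes_\V\colon\Dbl{\V}\times\Dbl{\V}\to\Dbl{\V}$ to be a map in $\PrL$ in each variable; colimit-preservation follows from the conservative, colimit-preserving forgetful functor $\Dbl{\V}\to\Mod_\V(\PrL)$ composed with $-\otimes_\V(-)$ on $\Mod_\V(\PrL)$, which is known to be colimit-preserving in each variable. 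Alternatively, and perhaps more cleanly, one invokes that a presentably symmetric monoidal structure on a presentable $\infty$-category is automatically closed, so it suffices to promote $\otimes_\V$ to a functor that preserves colimits separately in each variable — a statement about $\CAlg(\PrL)$-modules that can be checked after forgetting to $\Mod_\V(\PrL)$.

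I expect the main obstacle to be the careful bookkeeping in showing that $\otimes_\V$ genuinely defines a symmetric monoidal structure \emph{on the subcategory with internal-left-adjoint morphisms}, rather than merely on the underlying objects: one must check that the symmetric monoidal coherence data of $\Mod_\V(\PrL)$ restricts, and that tensoring two internal left adjoints yields an internal left adjoint. The latter can be seen as follows: if $f\colon\M\to\M'$ and $g\colon\N\to\N'$ are internal left adjoints, then $(f\otimes_\V g)^R$ is computed as $f^R\otimes_\V g^R$ (using that $-\otimes_\V-$ is a $2$-functor on $\Mod_\V(\PrL)$ and sends internal left adjoints to internal left adjoints, as exploited in the proof of \Cref{prop:dbldetect}), and a tensor of colimit-preserving $\V$-linear functors whose projection maps are equivalences again has this property. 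Once this is in place, the closedness is essentially automatic from presentability, and indeed the internal hom can be described concretely — as is done in the final \Cref{section:limits} — via $\hom_{\Dbl{\V}}(\M,\N)\simeq$ the dualizable replacement of $\Fun^L_\V(\M,\N)$, but for the present corollary one only needs its abstract existence.

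\begin{proof}
By \Cref{thm:DblIsPrL}, $\Dbl{\V}$ is presentable, hence admits all small limits. For the monoidal structure, recall that in any symmetric monoidal $\infty$-category a tensor product of dualizable objects is dualizable, with $(\M\otimes_\V\N)^\vee\simeq\M^\vee\otimes_\V\N^\vee$; thus dualizable $\V$-modules are closed under $\otimes_\V$ in $\Mod_\V(\PrL)$. Moreover, $-\otimes_\V-$ underlies a symmetric monoidal $2$-functor, so it sends a pair of internal left adjoints $(f,g)$ to an internal left adjoint: $(f\otimes_\V g)^R\simeq f^R\otimes_\V g^R$, which is again $\V$-linear, colimit-preserving, with invertible projection maps. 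Hence $\otimes_\V$ restricts to a symmetric monoidal structure on $\Dbl{\V}$ with unit $\V$ (which is dualizable, indeed atomically generated).

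It remains to see this structure is closed. The forgetful functor $\Dbl{\V}\to\Mod_\V(\PrL)$ is conservative and, by \Cref{prop:colimdbl}, preserves small colimits; since $-\otimes_\V-$ on $\Mod_\V(\PrL)$ preserves colimits separately in each variable, so does $-\otimes_\V-$ on $\Dbl{\V}$. As $\Dbl{\V}$ is presentable, each functor $-\otimes_\V\M\colon\Dbl{\V}\to\Dbl{\V}$ then admits a right adjoint by the adjoint functor theorem. Therefore $(\Dbl{\V},\otimes_\V,\V)$ is closed symmetric monoidal.
\end{proof}
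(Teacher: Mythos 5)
Your proof is correct and takes essentially the same route as the paper's: presentability gives limits, and the closed monoidal structure follows because the conservative, colimit-preserving, strong symmetric monoidal forgetful functor $\Dbl{\V}\to\Mod_\V(\PrL)$ forces $\otimes_\V$ to preserve colimits in each variable, whence the adjoint functor theorem applies. You simply spell out more carefully what the paper leaves implicit — namely that $\otimes_\V$ genuinely restricts to $\Dbl{\V}$, via the 2-functoriality argument showing tensors of internal left adjoints are internal left adjoints — which the paper treats as already known from the fact that the forgetful functor is strong symmetric monoidal.
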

\begin{proof}
    Both statements follow from the general theory of presentable \categories. For the latter, we simply note that $\Dbl{\V}\to \Mod_\V(\PrL)$ preserves colimits and is strong symmetric monoidal, so that the tensor product is compatible with colimits in the former. 
\end{proof}
These two facts can be established without knowing the presentability of $\Dbl{\V}$\footnote{In fact, they were established by Efimov this way. See also \Cref{section:limitsalwaysexist}.}, but ultimately the same ingredients go into their proof as in the proof of presentability. One fact which does not seem to be easily obtainable without presentability is:
\begin{cor}
    Let $(C,\tau)$ be a (small) $\infty$-site. The inclusion $\Sh_\tau(C;\Dbl{\V})\subset~\Psh(C;\Dbl{\V})$ has a left adjoint, namely sheafification. 
\end{cor}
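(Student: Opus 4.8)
The statement is a formal consequence of two things: that $\Dbl{\V}$ is presentable (\Cref{thm:DblIsPrL}), and that it admits all small limits and colimits which are both computed ``objectwise'' in the relevant sense. The plan is to invoke the general sheafification machinery for presentable-valued (pre)sheaves. Concretely, for any presentable \category{} $\D$ and any small $\infty$-site $(C,\tau)$, the $\infty$-category $\Sh_\tau(C;\D)$ of $\D$-valued sheaves is a left-exact accessible localization of $\Psh(C;\D) = \Fun(C\op,\D)$; this is \cite[Remark 1.3.1.6]{HA} or the discussion around \cite[Proposition 5.5.4.15]{HTT} combined with the fact that $\Psh(C;\D)\simeq \Psh(C)\otimes \D$ and tensoring a topological localization $\Psh(C)\to \Sh_\tau(C)$ with $\D\in\PrL$ yields a localization (indeed this very fact is \Cref{cor:tensloc} in the excerpt, applied with $\V = \Ss$, or rather its non-enriched shadow). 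So once $\Dbl{\V}\in\PrL$ is known, one simply takes $\D = \Dbl{\V}$ and the localization $L : \Psh(C;\Dbl{\V})\to \Sh_\tau(C;\Dbl{\V})$ is the desired sheafification left adjoint.

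First I would fix an uncountable regular $\kappa$ with $\V\in\CAlg(\PrL_\kappa)$ and recall from \Cref{thm:DblIsPrL} that $\Dbl{\V}$ is $\kappa$-compactly generated, hence in particular presentable; this is the only input from the main theorem that is needed. Second, I would identify $\Psh(C;\Dbl{\V})$ with $\Fun(C\op,\Dbl{\V})$ and recall that this functor category is again presentable (functor categories out of a small category into a presentable category are presentable). Third, I would note that the sheaf condition with respect to $\tau$ cuts out a full subcategory $\Sh_\tau(C;\Dbl{\V})\subset\Psh(C;\Dbl{\V})$ which is an accessible localization: this follows because the localization is generated by the set of maps $\{L_\tau(h_U)\to h_U\}$ obtained by tensoring the generating local equivalences of the $\Ss$-valued topological localization $\Psh(C)\to\Sh_\tau(C)$ with a $\kappa$-compact generator of $\Dbl{\V}$ (or, more cleanly, by invoking $\Psh(C;\Dbl{\V})\simeq \Psh(C)\otimes\Dbl{\V}$ in $\PrL$ and \Cref{cor:tensloc}). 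An accessible localization of a presentable category always admits a left adjoint to the inclusion, by \cite[Proposition 5.5.4.15]{HTT}, and that left adjoint is sheafification.

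Alternatively, and perhaps more in the spirit of the paper, one can avoid even naming the site-theoretic generators: $\Sh_\tau(C;\Dbl{\V})$ is the full subcategory of $\Fun(C\op,\Dbl{\V})$ spanned by functors sending $\tau$-covering sieves to limit diagrams, this is an accessibly-embedded full subcategory closed under limits and $\kappa$-filtered colimits (both of which are computed objectwise in $\Dbl{\V}$, which exists by presentability), hence a reflective subcategory by the adjoint functor theorem / \cite[Corollary 5.5.2.9]{HTT}. Either route produces the left adjoint.

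The only genuinely substantive point — and the reason the corollary is flagged in the excerpt as ``not easily obtainable without presentability'' — is that one really does need $\Dbl{\V}$ to be presentable (or at least accessible with all small limits) for any of this to go through: the objectwise-limit description of $\Sh_\tau(C;\Dbl{\V})$ requires $\Dbl{\V}$ to have small limits, which is \Cref{thm:DblIsPrL}'s corollary, and the reflectivity requires the localization to be accessible, which again rests on $\Dbl{\V}$ being presentable rather than merely complete and cocomplete. So there is no real obstacle beyond correctly citing \Cref{thm:DblIsPrL} and \Cref{cor:tensloc}; the ``hard part'' was already done in \Cref{section:comonad}, and what remains here is to assemble the standard $\infty$-categorical sheafification package with $\D = \Dbl{\V}$ as coefficients.
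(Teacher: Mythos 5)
Your proof is correct, and it is essentially the (implicit) argument the paper has in mind: the paper states this corollary with no proof immediately after \Cref{thm:DblIsPrL}, treating it as a formal consequence of presentability via the standard $\infty$-categorical sheafification package. Both routes you give work: since $\Dbl{\V}\in\PrL$, one has $\Psh(C;\Dbl{\V})\simeq\Psh(C)\otimes\Dbl{\V}$, and tensoring the topological localization $\Psh(C)\to\Sh_\tau(C)$ with $\Dbl{\V}$ (via the unnamed lemma preceding \Cref{cor:tensloc}, or \Cref{cor:tensloc} itself at $\V=\Ss$) exhibits $\Sh_\tau(C;\Dbl{\V})$ as an accessible (hence reflective) localization; equivalently, one checks directly that the descent condition cuts out an accessible, limit-closed full subcategory and invokes the adjoint functor theorem. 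One small caveat: you describe the localization as \emph{left-exact}; that holds for $\Ss$-valued sheaves but is not needed here and is not clearly automatic for arbitrary presentable coefficients, so it would be safer to drop that adjective — accessibility plus closure under limits is all the corollary requires.
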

\subsection{Cardinal bound}
In this subsection, we reach the first goal on our way to proving \Cref{thm:DblIsPrL}, namely:
\begin{thm}\label{thm:CardBound}
   Let $\V\in\CAlg(\PrL_\kappa)$, for an uncountable regular cardinal $\kappa$. As (non-full) subcategories of $\Mod_\V(\PrL)$, we have an inclusion $\Dbl{\V} \subset \Mod_\V(\PrL_\kappa)$. 
\end{thm}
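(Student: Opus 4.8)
\textbf{Proof plan for \Cref{thm:CardBound}.}

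The plan is to unwind what the statement ``$\Dbl{\V}\subset \Mod_\V(\PrL_\kappa)$ as non-full subcategories of $\Mod_\V(\PrL)$'' actually asserts, and then verify the two things it requires. Concretely, a functor $f:\M\to \N$ of $\V$-modules lies in $\Mod_\V(\PrL_\kappa)$ precisely when $\M,\N$ are $\kappa$-compactly generated \emph{and} the right adjoint $f^R$ preserves $\kappa$-filtered colimits. So there are two claims: first, every dualizable $\V$-module $\M$ is $\kappa$-compactly generated; second, every internal left adjoint $f:\M\to \N$ between dualizable $\V$-modules has the property that $f^R$ preserves $\kappa$-filtered colimits. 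The first claim is exactly \Cref{cor:cardboundV}: since $\kappa$ is uncountable, $\max(\kappa,\omega_1)=\kappa$, so a dualizable $\V$-module over $\V\in\CAlg(\PrL_\kappa)$ is $\kappa$-compactly generated. This identifies the underlying objects correctly and is already done.

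The substance is the second claim, about morphisms. First I would recall that an internal left adjoint $f$ has, by definition, a right adjoint $f^R$ that is itself $\V$-linear and colimit-preserving; but colimit-preserving is not the same as $\kappa$-filtered-colimit-preserving \emph{relative to the $\kappa$-compact generators}, which is the real content — equivalently, $f$ must preserve $\kappa$-compact objects. To get this, I would use \Cref{cor:retractfunctor}: any internal left adjoint $f:\M\to\N$ between dualizable $\V$-modules is a retract, in $\Mod_\V(\PrL)^{\Delta^1}$, of an internal left adjoint $f_0:\M_0\to\N_0$ between \emph{atomically generated} $\V$-modules. For atomically generated modules, \Cref{cor:atimpliescpct} tells us that (over $\V\in\CAlg(\PrL_\kappa)$) $\kappa$-compacts are generated by $v\otimes m$ with $v\in\V^\kappa$, $m$ atomic; and \Cref{cor:atomicimpliesinternal} tells us $f_0$ preserves atomic objects, hence (being $\V$-linear, so commuting with $\V^\kappa$-tensors up to the projection formula) preserves these generators of $\kappa$-compacts, hence preserves all $\kappa$-compacts. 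So $f_0\in\Mod_\V(\PrL_\kappa)$. Then, since being a $\kappa$-compact-preserving functor is stable under retracts in the arrow category (a retract of $f_0$ on objects sends a $\kappa$-compact $x$ into the image of a $\kappa$-compact under $f_0$, up to a retract, and $\kappa$-compacts are closed under retracts), it follows that $f$ itself preserves $\kappa$-compacts, i.e. $f^R$ preserves $\kappa$-filtered colimits, i.e. $f\in\Mod_\V(\PrL_\kappa)$.

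Putting these together: the forgetful functor $\Dbl{\V}\to\Mod_\V(\PrL)$ factors through $\Mod_\V(\PrL_\kappa)$, and since the latter is a (non-full) subcategory of the former, this exhibits $\Dbl{\V}$ as a non-full subcategory of $\Mod_\V(\PrL_\kappa)$, as claimed. I would also remark that the resulting inclusion $\Dbl{\V}\to\Mod_\V(\PrL_\kappa)$ is a genuine subcategory inclusion — faithful and injective on objects — which is immediate since it is a factorization of the subcategory inclusion into $\Mod_\V(\PrL)$ through $\Mod_\V(\PrL_\kappa)\hookrightarrow\Mod_\V(\PrL)$.

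The main obstacle I anticipate is the bookkeeping around \emph{retracts in the arrow category}: one must check carefully that ``$f$ preserves $\kappa$-compact objects'' is inherited by retracts, which requires knowing that $\kappa$-compact objects in a $\kappa$-compactly generated $\V$-module are closed under retracts (true, as they are the retracts of $\kappa$-small colimits of generators, or just because $\kappa$-compactness is a property stable under retracts) and tracking the retraction data through the square. Everything else is an assembly of results already in the excerpt (\Cref{cor:cardboundV}, \Cref{cor:retractfunctor}, \Cref{cor:atimpliescpct}, \Cref{cor:atomicimpliesinternal}), so the genuinely new work is minimal — it is mostly a matter of stating precisely what ``non-full subcategory inclusion'' means here and noticing that the two defining conditions of $\Mod_\V(\PrL_\kappa)$ are met.
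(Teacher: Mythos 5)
Your proposal has a genuine gap, and it is exactly the one the paper explicitly flags in the paragraph preceding the theorem. You characterize membership in $\Mod_\V(\PrL_\kappa)$ as: $\M,\N$ are $\kappa$-compactly generated and $f^R$ preserves $\kappa$-filtered colimits. This is incomplete. For an object $\M$ to lie in $\Mod_\V(\PrL_\kappa)$, it is not enough that the underlying category be $\kappa$-compactly generated; the $\V$-module structure maps must themselves be morphisms of $\PrL_\kappa$, and in particular the action map $\V\otimes\M\to\M$ must preserve $\kappa$-compacts. This is the content of \Cref{lm:structuremapcardbound}, which the paper's proof cites alongside \Cref{cor:cardboundV}, and which is proved by embedding $\M$ internally-left-adjointly into an atomically generated $\N$ (via \Cref{thm:Luriethmgeneralbase}), using \Cref{cor:atimpliescpct} to control $\N$, and transferring the statement back to $\M$. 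Your proof never touches this condition, so as written it does not establish that $\Dbl{\V}$ lands in $\Mod_\V(\PrL_\kappa)$ on objects.

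Separately, the part of your argument you describe as ``the substance'' — the morphism claim — is actually trivial, and your worry there is confused. You write that colimit-preservation of $f^R$ is ``not the same as $\kappa$-filtered-colimit-preserving relative to the $\kappa$-compact generators,'' but these are not in tension: by definition, an internal left adjoint $f$ has a $\V$-linear right adjoint $f^R$ that preserves \emph{all} colimits, in particular all $\kappa$-filtered ones; once $\M$ and $\N$ are known to be $\kappa$-compactly generated this immediately says $f\in\PrL_\kappa$. The paper records this as a one-line lemma just before the theorem. Your detour through \Cref{cor:retractfunctor} and retractions in the arrow category is unnecessary for this point (though it is sound), and it is odd to spend the effort there while omitting the structure-map verification that is the real subtlety of the statement.
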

Note that $\PrL_\kappa\subset \PrL$ is a non-full inclusion. The same is true for $\Mod_\V(\PrL_\kappa)\subset~\Mod_\V(\PrL)$: the restriction on morphisms is the same, namely if $\M,\N\in \Mod_\V(\PrL_\kappa)$, then a morphism $\M\to \N$ in $\Mod_\V(\PrL)$ is in $\Mod_\V(\PrL_\kappa)$ if and only if its underlying morphism in $\PrL$ is in $\PrL_\kappa$. 

However, on objects the restriction is more than simply objectwise. Indeed, the structure maps making $\M$ into a $\V$-module must also be in $\PrL_\kappa$, specifically, the multiplication map $\V\otimes\M\to \M$ must be in $\PrL_\kappa$. This is similar to how ``$\V\in\CAlg(\PrL_\kappa)$'' says both that $\V$ is $\kappa$-compactly generated, and that tensor products of $\kappa$-compact objects are $\kappa$-compact (including the nullary tensor product, i.e. the unit of $\V$). 
\begin{lm}
    Let $f:\M\to \N$ be a morphism in  $\Dbl{\V}$. It preserves $\lambda$-compacts for all cardinals $\lambda$.  
\end{lm}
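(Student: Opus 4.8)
The plan is to reduce the statement to the special case of atomically generated $\V$-modules, for which it follows essentially from \Cref{cor:atimpliescpct} together with the explicit description of atomic generation. First I would recall that, by the Lurie-type characterization \Cref{thm:Luriethmgeneralbase}, any dualizable $\V$-module $\M$ is a retract in $\Mod_\V(\PrL)$ (along internal left adjoints) of an atomically generated $\V$-module $\N = \PP_\V(\M^\lambda)$ for $\lambda$ large enough; moreover, by \Cref{cor:yhatnatural}, this retraction is natural, so given $f\colon \M\to \M'$ in $\Dbl{\V}$ we may choose a single $\lambda$ (large enough that $f$, $\M$, and $\M'$ all live in $\Mod_\V(\PrL_\lambda)$) and obtain a retract diagram of $f$ by an internal left adjoint $\hat f \colon \PP_\V(\M^\lambda) \to \PP_\V((\M')^\lambda)$ between atomically generated $\V$-modules, as in \Cref{cor:retractfunctor}. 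Since ``preserves $\mu$-compacts'' is stable under retracts in $\Mod_\V(\PrL)^{\Delta^1}$ for every cardinal $\mu$ (a retract of a $\mu$-compact object is $\mu$-compact), it suffices to prove the statement for internal left adjoints $\hat f$ between atomically generated $\V$-modules.

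So let $g\colon \N \to \N'$ be an internal left adjoint between atomically generated $\V$-modules, and fix a cardinal $\mu$. I would first enlarge $\mu$ if necessary so that $\V \in \CAlg(\PrL_\mu)$ as well (this is harmless: if $g$ preserves $\mu'$-compacts for all sufficiently large $\mu'$, the claim for all $\mu$ follows by a standard argument, since $\mu$-compact objects are generated under $\mu$-small colimits and retracts from the $\mu'$-compacts for appropriate cofinal $\mu'$ --- actually, cleaner: one shows $\N^\mu$ is generated under $\mu$-small colimits and retracts by the objects $v \otimes n$ with $v \in \V^\mu$, $n \in \N^\at$, using \Cref{cor:atimpliescpct} and \Cref{cor:atimpliescpct}, and that $g$ sends each such object to a $\mu$-compact object). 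Concretely: by \Cref{cor:atomicimpliesinternal} the functor $g$ sends atomic objects to atomic objects, hence (by \Cref{cor:atimplcpt}, taking $\mu$ large enough that the unit of $\V$ is $\mu$-compact, which holds once $\V \in \CAlg(\PrL_\mu)$) sends atomic objects to $\mu$-compact objects; $g$ is $\V$-linear, so it sends $v \otimes n$ with $v \in \V^\mu$, $n \in \N^\at$ to $v \otimes g(n)$, which is a tensor of a $\mu$-compact of $\V$ with a $\mu$-compact of $\N'$, hence $\mu$-compact in $\N'$ because $\N'$ is atomically generated and \Cref{cor:atimpliescpct} says such tensors preserve $\mu$-compactness there. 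Since $g$ is colimit-preserving and the objects $v \otimes n$ generate $\N^\mu$ under $\mu$-small colimits and retracts (again by \Cref{cor:atimpliescpct}, as they generate $\N$ under colimits and are $\mu$-compact), it follows that $g$ preserves all $\mu$-compact objects.

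The main obstacle I anticipate is bookkeeping the cardinals carefully: the naive statement ``an internal left adjoint preserves $\lambda$-compacts for a fixed given $\lambda$'' is true, but the argument that I can always first enlarge $\lambda$ to also absorb $\V$ (to get the atomic$\,\Rightarrow\,\mu$-compact implication and the closure of $\mu$-compacts under $\V^\mu$-tensors in $\N'$) and still conclude for the \emph{original} $\lambda$ needs the elementary fact that if a colimit-preserving functor preserves $\mu$-compacts for some regular $\mu \geq \lambda$, and the $\lambda$-compacts are generated under $\lambda$-small colimits and retracts by a set of objects each sent to $\lambda$-compacts, then it preserves $\lambda$-compacts. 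In our situation the generating objects $v \otimes n$ are \emph{themselves} $\lambda$-compact (for any $\lambda \geq \kappa$ with $v \in \V^\lambda$) and sent to $\lambda$-compacts, so the reduction is clean and there is in fact no real obstacle --- the statement holds for every $\lambda$ simultaneously. I would present the proof in the order: (1) reduce to the atomically generated case via the natural retract from \Cref{cor:retractfunctor}; (2) in that case, observe atomic-preservation via \Cref{cor:atomicimpliesinternal}; (3) conclude using the generation statement of \Cref{cor:atimpliescpct} and the closure of $\mu$-compacts under $\V^\mu$-tensors.
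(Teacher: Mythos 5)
The paper's proof of this lemma is a single sentence: since $f$ is a morphism in $\Dbl{\V}$, it is in particular an internal left adjoint, so by definition its right adjoint $f^R$ preserves \emph{all} colimits; in particular $f^R$ preserves $\lambda$-filtered colimits for every $\lambda$, and a left adjoint whose right adjoint preserves $\lambda$-filtered colimits automatically preserves $\lambda$-compact objects (unwind the adjunction against a $\lambda$-filtered diagram). This works uniformly for every cardinal $\lambda$ and makes no use of $\V$, atomic generation, or dualizability beyond the bare fact that the morphisms of $\Dbl{\V}$ are internal left adjoints.

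Your proposal is a genuinely different — and much heavier — argument, routing through \Cref{cor:retractfunctor}, \Cref{cor:atomicimpliesinternal}, and \Cref{cor:atimpliescpct}. It is essentially sound but misses the one-line observation above, and it also has a real gap at the end: after reducing to atomically generated modules, your generation argument uses that $\N^\lambda$ is generated under $\lambda$-small colimits and retracts by objects $v\otimes n$ with $v\in\V^\lambda$, $n$ atomic, which is supplied by \Cref{cor:atimpliescpct} \emph{only} when $\V\in\CAlg(\PrL_\lambda)$. For $\lambda$ below the accessibility rank $\kappa$ of $\V$, this generation statement is unavailable and your argument establishes nothing, yet the lemma (and your final sentence) asserts the conclusion for all $\lambda$. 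The "standard argument" you invoke to descend from large $\mu'$ to arbitrary $\mu$ does not exist in general: a colimit-preserving functor can preserve $\mu'$-compacts for all large $\mu'$ without preserving $\mu$-compacts for some smaller $\mu$. So even granting the reduction step, your proof as written only proves the lemma for $\lambda\geq\kappa$. The simple argument via $f^R$ has no such restriction and is the intended one.
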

\begin{proof}
    This is because by assumption, the right adjoint of $f$, $f^R$, preserves all colimits, including the $\lambda$-filtered ones. 
\end{proof}
\begin{lm}\label{lm:structuremapcardbound}
Let $\V\in\CAlg(\PrL_\kappa), \M\in\Dbl{\V}$. The structure map $\V\otimes\M\to \M$ preserves $\kappa$-compacts. 
\end{lm}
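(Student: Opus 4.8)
The statement to prove is that for $\V\in\CAlg(\PrL_\kappa)$ with $\kappa$ uncountable regular and $\M\in\Dbl{\V}$, the structure map $\V\otimes\M\to\M$ is $\kappa$-compact-preserving. The plan is to reduce this to the case $\M = \PP_\V(\M_0)$ of an atomically generated $\V$-module, and there to compute the $\kappa$-compacts explicitly.

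First I would invoke \Cref{thm:Luriethmgeneralbase}: since $\M$ is dualizable, there is an internal left adjoint fully faithful embedding $i:\M\to\N$ where $\N$ is atomically generated, and by \Cref{cor:atimpliescpct} (or rather \Cref{cor:atimpliescpct}) $\N$ is $\kappa$-compactly generated. By \Cref{cor:cardboundV}, $\M$ itself is $\kappa$-compactly generated. The key point to exploit is that $i$ is an internal left adjoint whose right adjoint $i^R$ is colimit-preserving and $\V$-linear, and $i$ is $\V$-fully faithful; so $i$ preserves $\lambda$-compacts for all $\lambda$ (by the lemma just above, since $i^R$ preserves all colimits) and reflects them (being fully faithful and colimit-preserving, $i^R i \simeq \id$). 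Thus $\M^\kappa$ is exactly $i^{-1}(\N^\kappa)$, and the structure square
\[\begin{tikzcd}
	\V\otimes\M & \V\otimes\N \\
	\M & \N
	\arrow[from=1-1,to=1-2]
	\arrow[from=1-1,to=2-1]
	\arrow[from=1-2,to=2-2]
	\arrow[from=2-1,to=2-2]
\end{tikzcd}\]
commutes ($i$ being $\V$-linear), with $\id_\V\otimes i$ also $\lambda$-compact preserving and reflecting. So it suffices to prove the claim for $\N$: if $x\in(\V\otimes\M)^\kappa$ then $(\id_\V\otimes i)(x)\in(\V\otimes\N)^\kappa$, its image in $\N$ is $\kappa$-compact by the atomically generated case, and this image is $i$ applied to the image of $x$ in $\M$, so the latter is $\kappa$-compact by reflection. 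One subtlety: I need $\V\otimes\M\to\V\otimes\N$ to reflect $\kappa$-compacts, which follows since $\id_\V\otimes i^R$ is a colimit-preserving retraction of $\id_\V\otimes i$ (tensoring the retraction $i^R i\simeq\id$ with $\V$).

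It remains to handle the atomically generated case $\N = \PP_\V(\N_0)$. By \Cref{cor:atimpliescpct}, $\N$ is $\kappa$-compactly generated and $\kappa$-compacts in $\N$ are closed under tensoring with $\kappa$-compacts of $\V$; moreover, from the proof of that corollary, the objects $v\otimes m$ with $v\in\V^\kappa$ and $m\in\N^\at$ generate $\N^\kappa$ under $\kappa$-small colimits. Now the structure map $\V\otimes\N\to\N$; its source $\V\otimes\N$ is also $\kappa$-compactly generated (as $\PrL_\kappa$ is presentably symmetric monoidal, $\V\otimes\N\in\PrL_\kappa$), with $\kappa$-compacts generated under $\kappa$-small colimits by $v\boxtimes w$ for $v\in\V^\kappa$, $w\in\N^\kappa$. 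Since the structure map sends $v\boxtimes w$ to $v\otimes w$, which is $\kappa$-compact by the closure property of \Cref{cor:atimpliescpct}, and since a colimit-preserving functor sending a set of generators of $\kappa$-compacts to $\kappa$-compacts preserves all $\kappa$-compacts, we conclude that $\V\otimes\N\to\N$ preserves $\kappa$-compacts, as desired.

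\textbf{Main obstacle.} The delicate point is bookkeeping the reflection of $\kappa$-compactness along $\id_\V\otimes i: \V\otimes\M\to\V\otimes\N$ and ensuring the square of structure maps genuinely commutes $\V$-linearly — i.e.\ that $i$ being a $\V$-module map is what makes the reduction work. Everything else is a matter of unwinding the explicit generators of $\kappa$-compact objects furnished by \Cref{cor:atimpliescpct}; the genuinely new input beyond the earlier results is just the observation that tensoring an internal-left-adjoint fully faithful embedding with $\V$ preserves its good properties, which is formal.
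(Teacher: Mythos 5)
Your proof is correct and follows the same strategy as the paper: embed $\M$ into an atomically generated $\N$ via the fully faithful internal left adjoint from \Cref{thm:Luriethmgeneralbase}, handle the atomically generated case via \Cref{cor:atimpliescpct}, and chase the commuting square of structure maps using that $i$ both preserves $\kappa$-compacts (as an internal left adjoint) and reflects them (as a fully faithful colimit-preserving functor). Where the paper writes only ``by a small diagram chase,'' you usefully spell it out, including the generator computation for $(\V\otimes\N)^\kappa$.

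One small slip worth flagging: in the ``one subtlety'' sentence you say you need $\id_\V\otimes i$ to \emph{reflect} $\kappa$-compacts, and justify this by noting $\id_\V\otimes i^R$ is a colimit-preserving retraction. But the step $(\id_\V\otimes i)(x)\in(\V\otimes\N)^\kappa$ actually requires \emph{preservation} of $\kappa$-compacts by $\id_\V\otimes i$, which follows because $\V\otimes(-)$ carries the adjunction $i\dashv i^R$ to an adjunction $\id_\V\otimes i\dashv\id_\V\otimes i^R$ with colimit-preserving right adjoint; and the reflection you ultimately invoke is that of $i:\M\to\N$ itself (which you correctly derive earlier from fully faithfulness and colimit preservation), not of $\id_\V\otimes i$. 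The retraction observation is not wrong, it just doesn't prove the thing needed. Since you had already stated both preservation and reflection for $\id_\V\otimes i$ earlier in the argument, the chase goes through; just be careful which one you cite where.
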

\begin{proof}
    By \Cref{thm:Luriethmgeneralbase}, there is a fully faithful internal left adjoint $\M\to \N$ where $\N$ is atomically generated over $\V$. By \Cref{cor:atimpliescpct}, $\N$ is $\kappa$-compactly generated and the structure map $\V\otimes\N\to \N$ preserves $\kappa$-compacts. The inclusion $\M\to \N$ is fully faithful and colimit-preserving, so it reflects $\kappa$-compacts, but it is also an internal left adjoint, so it preserves $\kappa$-compacts. 

    It follows by a small diagram chase that $\V\otimes\M\to \M$ preserves $\kappa$-compacts. 
\end{proof}
\begin{proof}[Proof of \Cref{thm:CardBound}]
    Combine \Cref{cor:cardboundV} and \Cref{lm:structuremapcardbound}. 
\end{proof}
\subsection{Comonadicity}
In this section, we conclude the proof of \Cref{thm:DblIsPrL} by proving:
\begin{thm}\label{thm:Comonadic}
    Let $\V\in\CAlg(\PrL_\kappa)$ for some uncountable regular cardinal $\kappa$. The inclusion $\Dbl{\V}\subset \Mod_\V(\PrL_\kappa)$ from \Cref{thm:CardBound} is comonadic, and the comonad $T: \Mod_\V(\PrL_\kappa)\to \Mod_\V(\PrL_\kappa)$ is $\kappa$-accessible. 

    In fact, this comonad is equivalent to the comonad associated with the left adjoint inclusion $\At{\V}\subset \Mod_\V(\PrL_\kappa) $, where $\At{\V}$ is the full subcategory of $\Dbl{\V}$ spanned by atomically generated $\V$-modules\footnote{Equivalently, the subcategory of $\Mod_\V(\PrL)$ spanned by atomically generated $\V$-modules and atomic-preserving functors.}. 
\end{thm}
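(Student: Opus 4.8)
The plan is to exhibit the functor $P\colon \N\mapsto \PP_\V(\N^\kappa)$ from \Cref{thm:nonstandadj} as witnessing comonadicity, and to identify the associated comonad with the one coming from the reflective-coreflective situation $\At{\V}\subset \Mod_\V(\PrL_\kappa)$. First I would set up the comparison: by \Cref{thm:CardBound} the inclusion $i\colon \Dbl{\V}\to \Mod_\V(\PrL_\kappa)$ makes sense, and I would observe that it admits a right adjoint. For the atomically generated case this is immediate since $\At{\V}\subset \Mod_\V(\PrL_\kappa)$ is a \emph{fully faithful} left adjoint (the counit $c_\N\colon \PP_\V(\N^\kappa)\to \N$ of \Cref{thm:nonstandadj} corestricted appropriately, together with \Cref{obs:descatgen}); its right adjoint is $P$ restricted to $\At{\V}$, i.e. $P$ followed by nothing since $\PP_\V(\N^\kappa)$ is already atomically generated. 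For the dualizable case, \Cref{thm:nonstandadj} gives that $i\colon \Dbl{\V}\cap\Mod_\V(\PrL_\kappa)\to \Mod_\V(\PrL_\kappa)$ has right adjoint $P$ as well, and by \Cref{cor:cardboundV} this intersection is all of $\Dbl{\V}$. So in both cases we get a comonad $T = i\circ P$ on $\Mod_\V(\PrL_\kappa)$, and since $P$ factors through $\At{\V}\subset \Dbl{\V}$, these two comonads literally coincide: $T(\N) = \PP_\V(\N^\kappa)$ with comultiplication induced by $\hat y$.

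Next I would verify that $T$ is $\kappa$-accessible. The assignment $\N\mapsto \N^\kappa$ takes a $\kappa$-compactly generated $\V$-module to a small $\V$-category, and on morphisms in $\Mod_\V(\PrL_\kappa)$ (which by definition preserve $\kappa$-compacts) it is functorial; then $\PP_\V(-)$ of a small $\V$-category is again in $\Mod_\V(\PrL_\kappa)$ by \Cref{cor:atimpliescpct}. Accessibility should follow from the fact that $(-)^\kappa$ commutes with $\kappa$-filtered colimits computed in $\Mod_\V(\PrL_\kappa)$ — this is essentially the statement that a $\kappa$-filtered colimit of $\kappa$-compactly generated categories along $\kappa$-compact-preserving functors has $\kappa$-compacts computed as the colimit of the $\kappa$-compacts, which is standard (it is the $\V$-linear, higher-cardinal analogue of \Cref{prop:Spcpt}) — together with the fact that $\PP_\V(-)$ preserves $\kappa$-filtered colimits of small $\V$-categories. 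I would spell this out carefully since it is where the regularity and uncountability of $\kappa$ are used in an essential but routine way.

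The heart of the argument is comonadicity itself, for which I would invoke the Barr--Beck--Lurie comonadicity theorem: I need (i) $P$ is conservative on $\Mod_\V(\PrL_\kappa)$ — wait, the relevant direction is that the \emph{left} adjoint $i$ is conservative and $\Dbl{\V}$ admits, and $i$ preserves, colimits of $i$-split cosimplicial diagrams. Conservativity of $i$ is clear as it is (the inclusion of) a subcategory. For the colimit condition: $\Dbl{\V}$ admits all small colimits by \Cref{prop:colimdbl} and $i$ preserves them, so the hypothesis is satisfied trivially — actually the subtlety is the dual, we want the right adjoint $P$ together with the comonad, so I should apply comonadic Barr--Beck to the adjunction $i\dashv P$, needing $i$ conservative (clear) and $\Mod_\V(\PrL_\kappa)$ admitting and $i$-split-cosimplicial-colimit conditions — but since $\Dbl{\V}$ has \emph{all} colimits preserved by $i$, the only real content is that the comparison functor $\Dbl{\V}\to \coMod_T(\Mod_\V(\PrL_\kappa))$ is an equivalence, i.e. that every $T$-comodule arises from a dualizable $\V$-module. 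I expect \textbf{this last point — essential surjectivity of the comparison functor — to be the main obstacle}: given a $\V$-module $\N$ with a coalgebra structure map $\N\to \PP_\V(\N^\kappa)$ splitting $c_\N$ coherently, I must show $\N$ is dualizable, and the natural route is to recognize the coalgebra structure as exhibiting $\N$ as a retract (in the appropriate $\infty$-categorical sense) of the atomically generated $\PP_\V(\N^\kappa)$, then apply \Cref{thm:Luriethmgeneralbase}(2). Making the retract diagram and its coherences precise — i.e. checking that a $T$-comodule structure is exactly the data of a natural section $\hat y$ as produced in \Cref{cor:yhatnatural}, so that the two descriptions of $\Dbl{\V}$ match on the nose — is the technical crux, and I would handle it by comparing the relevant limit diagrams computing the $\infty$-category of $T$-comodules with the explicit retraction data, possibly citing the theory of lax-idempotent comonads as the remark following \Cref{thm:Comonadic} suggests (Blom's observation), which would make the retract-detection formal.
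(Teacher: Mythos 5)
Your identification of the comonad $T = iP$ with $\N\mapsto\PP_\V(\N^\kappa)$ and the observation that it factors through $\At{\V}$, hence agrees with the comonad of the reflective inclusion $\At{\V}\subset\Mod_\V(\PrL_\kappa)$, is correct and matches the paper; so is the sketch of accessibility. But the heart of the argument — comonadicity itself — has a genuine gap stemming from a misstatement of the Barr--Beck--Lurie criterion.

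The dual Barr--Beck criterion for the adjunction $i\dashv P$ to be comonadic is that $i$ be conservative (which you correctly note is trivial) \emph{and} that $\Dbl{\V}$ admit, and $i$ preserve, \emph{limits of $i$-split cosimplicial diagrams}. You repeatedly write \emph{colimits}, and on that basis declare the second condition ``satisfied trivially'' by \Cref{prop:colimdbl}. This is wrong on two counts: the condition is about cosimplicial limits, not colimits, and it is precisely here that all the work lives. If the Barr--Beck hypotheses really were trivially satisfied, the theorem would hand you the equivalence with $\coMod_T$ directly, and there would be no ``main obstacle'' left — so your subsequent worry about essential surjectivity of the comparison functor is symptomatic of the underlying confusion rather than an independent path around it. The paper's actual proof is devoted entirely to verifying the limit condition: given an $i$-split cosimplicial diagram $\M^\bullet$ in $\Dbl{\V}$ with splitting in $\Mod_\V(\PrL_\kappa)$, it first shows the split point $\M^{-1}$ is dualizable (a retract of $\M^0$ — this much you do correctly observe), then, much more delicately, shows that the structure maps $\iota_n\colon\M^{-1}\to\M^n$ are internal left adjoints by producing the commuting square required by the ``in fact'' clause of \Cref{prop:internalleftBC} out of the adjunction between the diagrams $i\M^\bullet$ and $iPi\M^\bullet$, and finally checks that the resulting cone is a limit cone \emph{in} $\Dbl{\V}$ by a conservativity-after-postcomposition argument. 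Your proposal omits all of this. The alternative you gesture at — recognizing a $T$-comodule structure as a coherent natural section à la \Cref{cor:yhatnatural} and detecting retracts formally via lax-idempotent comonads — is a legitimate strategy (it is the one Blom's remark alludes to), but you supply no argument for it, and in particular the retract observation only shows the \emph{underlying object} of a comodule is dualizable; it does not address the morphism-level full faithfulness of the comparison functor nor the coherence of the comodule structure, which is exactly where the paper's $\hat y$-based argument is doing the heavy lifting.
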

With this theorem, we can prove our main result, using the following general fact: 

\begin{prop}\label{prop:comodacc}
    Let $\E$ be a presentable $\infty$-category and $T$ an accessible comonad. In this case, the $\infty$-category of $T$-comodules in $\E$ is presentable. 
\end{prop}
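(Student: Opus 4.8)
The plan is to present $\coMod_T(\E)$ as a totalization of copies of $\E$ along accessible functors, deduce from this that it is accessible, check separately (by hand) that it is cocomplete, and then conclude via the characterization of presentable $\infty$-categories as the accessible cocomplete ones.

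First I would recall the coEilenberg--Moore (or cobar) description of comodules. Dually to Lurie's presentation of the $\infty$-category of modules over a monad as a totalization, the comonad $T$ determines a cosimplicial $\infty$-category $\E^\bullet\colon\Delta\to\widehat{\Cat}$ with $\E^{[n]}=\E$ for every $n$, whose coface and codegeneracy functors are obtained, up to whiskering by the structure natural transformations of $T$, as composites of $\id_\E$ and $T$; and one has a canonical equivalence $\coMod_T(\E)\simeq\lim_\Delta\E^\bullet$. Since $T$ is $\kappa$-accessible and $\id_\E$ is accessible, every transition functor of $\E^\bullet$ is accessible; and each term $\E^{[n]}=\E$ is accessible, being presentable. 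Now small limits of accessible $\infty$-categories along accessible functors are accessible --- the inclusion of the $\infty$-category of accessible $\infty$-categories and accessible functors into $\widehat{\Cat}$ preserves small limits, \cite[\S5.4.7]{HTT} --- and the index $\Delta$ is small, so $\coMod_T(\E)=\lim_\Delta\E^\bullet$ is accessible.

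Next I would check that the forgetful functor $U\colon\coMod_T(\E)\to\E$ creates small colimits; this is the formal dual of the standard fact that the forgetful functor out of the $\infty$-category of modules over a monad creates all small limits \cite{HA}. Concretely, for a diagram $p\colon K\to\coMod_T(\E)$ one forms $x:=\colim_K(U\circ p)$ in $\E$ and equips it with the coaction $x\xrightarrow{\ \colim_K\rho\ }\colim_K(T\circ U\circ p)\to Tx$, the second arrow being the canonical comparison map coming from functoriality of $T$; the counit and coassociativity identities for this coaction follow from the corresponding identities for the $p(k)$, and this is the unique comodule structure making the colimit cocone a cocone in $\coMod_T(\E)$. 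Since $\E$ is cocomplete, it follows that $\coMod_T(\E)$ is cocomplete (and $U$ preserves colimits). Finally, an $\infty$-category that is accessible and admits all small colimits is presentable, essentially by definition \cite[\S5.5]{HTT}; hence $\coMod_T(\E)$ is presentable.

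I expect the only delicate point to be the first step: pinning down the cosimplicial $\infty$-category $\E^\bullet$ --- equivalently, knowing that the comonadic adjunction $U\dashv\mathrm{cofree}$ realizes $\coMod_T(\E)$ as the totalization of its associated cobar cosimplicial object --- and having at hand the precise ``limits of accessible $\infty$-categories along accessible functors are accessible'' statement, together with the (immediate) observation that the transition functors of $\E^\bullet$ are accessible because they are composites of $\id_\E$ and $T$ up to whiskering by natural transformations. Everything else is formal, and the argument makes no use of the specific comonad arising in \Cref{thm:Comonadic}.
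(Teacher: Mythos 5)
Your overall strategy coincides with the paper's: express $\coMod_T(\E)$ as a limit of accessible $\infty$-categories along accessible functors, invoke HTT \S5.4.7, handle cocompleteness separately, and conclude. The cocompleteness step, the ``creates colimits'' argument, and the final ``accessible $+$ cocomplete $\Rightarrow$ presentable'' are all fine and agree with what the paper does.

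The gap is precisely in the step you flagged as ``delicate''. There is no strict cosimplicial $\infty$-category $\E^\bullet\colon\Delta\to\widehat{\Cat}$ with $\E^{[n]}=\E$ whose coface and codegeneracy functors are built from $\id_\E$ and $T$: the cosimplicial identity $d^1d^0=d^0d^0$ at the lowest level already forces $\id_\E\circ T\simeq T\circ T$, i.e.\ $T\simeq T^2$, which fails for a general accessible comonad. Put differently, a strict $\Delta$-shaped limit of copies of $\E$ along powers of $T$ would compute something like ``$T$-fixed points'', not $T$-comodules; the coaction $x\to Tx$ is genuinely a $2$-morphism, not an identification. What one needs is a (partially) lax limit. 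This is exactly how the paper proceeds: it encodes the lax data through strict pullbacks of auxiliary accessible categories such as $\E^{\Delta^1}$ (compare the lax-equalizer description in \Cref{defn:coalg}, where $\coAlg_T$ is the pullback of $\E^{\Delta^1}\to\E\times\E\leftarrow\E$ along $(\id,T)$, together with the higher coherence data for full comodules), and then applies \cite[5.4.7.3, 5.4.7.11]{HTT} to this honest limit diagram. Also, the result you attribute to Lurie --- ``modules over a monad as a totalization of copies of the base'' --- is, as far as I know, not in \cite{HA}; the monadicity/totalization theorems of \cite[\S4.7.5]{HA} have cosimplicial terms that are genuinely different $\infty$-categories, not copies of the base. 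Replacing your $\lim_\Delta\E^\bullet$ with the partially lax limit (realized as a strict limit of accessible $\infty$-categories built from $\E$, $\E^{\Delta^1}$, $\E^{\Delta^2}$, \dots) repairs the proof and brings it into line with the paper's.
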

\begin{proof}
    For general reasons, the category of comodules is cocomplete (and the colimits are computed underlying, i.e. in $\E$), so this is really an issue of accessibility. 

    But the $\infty$-category of comodules can be expressed as a (partially lax) limit of accessible $\infty$-categories along accessible functors, as $T$ is assumed to be accessible, and hence, is accessible by \cite[5.4.7.3., 5.4.7.11.]{HTT}.
\end{proof}

It it is true generally that if $\kappa$ is an uncountable regular cardinal, $T$ is $\kappa$-accessible and $\E$ is $\kappa$-compactly generated, then so is the category of $T$-comodules. However the proof in this generality is a bit involved so we defer it, and only prove it in our special case. It turns out that in this case, the following simpler statement will be sufficient. We use the following definition:
\begin{defn}\label{defn:coalg}
    Let $T$ be an endofunctor of a category $D$. The category $\coAlg_T(D)$ of coalgebras for $T$ is the lax equalizer of $\id_D$ and $T$ as in \cite[Definition II.1.4]{NS}, that is, the category of pairs $(x, f:x\to Tx)$, or more formally the pullback 
    \[\begin{tikzcd}
	{\coAlg_T(D)} & {D^{\Delta^1}} \\
	D & {D\times D}
	\arrow[from=1-1, to=1-2]
	\arrow[from=1-1, to=2-1]
	\arrow["{(s,t)}", from=1-2, to=2-2]
	\arrow["{(\id_D,T)}"', from=2-1, to=2-2]
\end{tikzcd}\]
\end{defn}
\begin{prop}\label{prop:coalgacc}
    Let $\E$ be a $\kappa$-compactly generated presentable $\infty$-category and $T$ a $\kappa$-filtered colimit preserving endofunctor. If $\kappa$ is uncountable, then the $\infty$-category of $T$-coalgebras in $\E$ is $\kappa$-accessible, and the forgetful functor $\coAlg_T(\E)\to \E$ preserves and reflects $\kappa$-compacts.
\end{prop}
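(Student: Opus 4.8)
\textbf{Proof plan for \Cref{prop:coalgacc}.}

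The plan is to exploit the explicit pullback description of $\coAlg_T(\E)$ from \Cref{defn:coalg} and the fact that $\kappa$-accessible categories and $\kappa$-accessible functors are closed under (finite, hence in particular this shape of) limits, by \cite[Proposition 5.4.7.3]{HTT}. First I would observe that each of the four categories $\E^{\Delta^1}$, $\E$, $\E\times\E$ is $\kappa$-compactly generated: for $\E^{\Delta^1} = \Fun(\Delta^1,\E)$ this is standard since $\Delta^1$ is a finite category and $\E$ is $\kappa$-compactly generated, and its $\kappa$-compact objects are (up to retract) arrows between $\kappa$-compact objects. Next I would check that all three functors in the cospan defining the pullback are $\kappa$-accessible, i.e. preserve $\kappa$-filtered colimits: the source/target functor $(s,t):\E^{\Delta^1}\to\E\times\E$ preserves all colimits since they are computed pointwise; the functor $(\id_\E,T):\E\to\E\times\E$ preserves $\kappa$-filtered colimits precisely because $T$ does by hypothesis (and $\id_\E$ trivially does). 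Hence by \cite[Proposition 5.4.7.3]{HTT} the pullback $\coAlg_T(\E)$ is $\kappa$-accessible, and since colimits in $\coAlg_T(\E)$ are computed in $\E$ (as the forgetful functor, being a pullback of colimit-preserving functors along a colimit-preserving functor, creates $\kappa$-filtered colimits — indeed all colimits that exist), it is in fact $\kappa$-compactly generated: it is presentable by \Cref{prop:comodacc} applied to (the comonad generated by, or more directly the endofunctor) $T$, or one argues directly that a $\kappa$-accessible category with all colimits is $\kappa$-compactly generated.

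For the statement about the forgetful functor $U:\coAlg_T(\E)\to\E$, I would argue as follows. That $U$ preserves $\kappa$-filtered colimits is immediate from the pullback description, as just noted. To see it preserves $\kappa$-compact objects: since $U$ preserves $\kappa$-filtered colimits and $\coAlg_T(\E)$ is $\kappa$-compactly generated, $U$ preserves $\kappa$-compacts as long as it has a left adjoint — but in fact it is cleaner to identify the $\kappa$-compact objects of $\coAlg_T(\E)$ directly: using that $\kappa$ is uncountable, every $\kappa$-compact coalgebra $(x,f\colon x\to Tx)$ has $\kappa$-compact underlying object $x$, because $x$ can be built from the pullback presentation and the only potential obstruction — forming the single arrow $f$, i.e. a $\Delta^1$-indexed limit — involves a $\kappa$-small (indeed finite) diagram, so it does not leave the class of $\kappa$-compacts. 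Conversely, one should check $U$ reflects $\kappa$-compactness: if $(x,f)$ is a coalgebra with $x$ $\kappa$-compact, then mapping out of $(x,f)$ into a $\kappa$-filtered colimit $\colim_I(y_i,g_i)$ is computed, via the pullback square, as a finite limit of copies of $\colim_I\Map_\E(x,y_i)$ and $\colim_I\Map_\E(x,Ty_i)$; since $x$ is $\kappa$-compact and $\kappa$-filtered colimits commute with finite limits in anima, this limit is $\colim_I$ of the corresponding mapping anima, i.e. $(x,f)$ is $\kappa$-compact. This simultaneously shows $U$ preserves and reflects $\kappa$-compactness.

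The one point that genuinely uses the uncountability of $\kappa$ — and which I expect to be the main thing to get right rather than the main obstacle — is the passage identifying $\kappa$-compact coalgebras with coalgebras on $\kappa$-compact objects: one needs that forming the equalizer/pullback datum, which a priori is only a \emph{countable} or finite diagram, does not enlarge the accessibility rank, and more precisely that a $\kappa$-compact object of the pullback projects to a $\kappa$-compact object in each factor. For $\kappa$ uncountable this is automatic because finite limits are in particular $\kappa$-small; the reason to flag $\kappa>\omega$ is the usual subtlety (as in \Cref{prop:cardboundunstable} and \Cref{prop:smalllim}) that at $\kappa=\omega$ one must be more careful. Everything else is a formal consequence of the closure properties of $\kappa$-accessible categories under limits along $\kappa$-accessible functors, \cite[\S5.4.7]{HTT}, combined with the fact that $U$ creates $\kappa$-filtered colimits, so I would keep the write-up short and lean on those citations.
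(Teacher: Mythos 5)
There is a genuine gap, and it sits exactly at the point the proposition is really about: establishing the \emph{rank} $\kappa$, not just accessibility. Your appeal to \cite[Proposition 5.4.7.3]{HTT} shows that $\coAlg_T(\E)$ is accessible, but that proposition does not preserve the accessibility rank: the pullback of $\kappa$-accessible categories along $\kappa$-accessible functors is accessible, but a priori only $\mu$-accessible for some $\mu \gg \kappa$. (This is precisely why the paper separately proves \Cref{prop:comodacc}, which establishes presentability with no rank control, and then must prove \Cref{prop:coalgacc} and \Cref{prop:smalllim} by hand.) The consequent claim ``$\kappa$-accessible with all colimits implies $\kappa$-compactly generated'' is itself fine, but you never established $\kappa$-accessibility.

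The second gap is in the ``preserves $\kappa$-compacts'' direction (i.e., that a $\kappa$-compact coalgebra has $\kappa$-compact underlying object). Your stated reason — that forming the arrow $f : x \to Tx$ is a finite diagram, so ``does not leave the class of $\kappa$-compacts,'' and that ``for $\kappa$ uncountable this is automatic because finite limits are $\kappa$-small'' — does not identify the actual obstruction. The issue is not the size of the diagram $\Delta^1$; it is that a $\kappa$-compact object of the pullback need not project to $\kappa$-compact objects in the factors, because approximating the coalgebra structure map by compatible data on $\kappa$-compact pieces requires an \emph{iteration}. The paper's actual argument: given an arbitrary coalgebra $(x, x\to Tx)$ and $d \in \E^\kappa_{/x}$, one observes $d \to x \to Tx$ factors through $Td_0$ for some $d_0 \in \E^\kappa_{/x}$, then iterates to produce a sequence $d \to d_0 \to d_1 \to \cdots$ in $\E^\kappa_{/x}$ so that each $d_i \to Tx$ factors through $Td_{i+1}$; the sequential colimit $d_\infty$ acquires a coalgebra structure $d_\infty \to Td_\infty$ mapping to $(x, x\to Tx)$, and \emph{this countable colimit} is where uncountability of $\kappa$ is used: $d_\infty$ remains $\kappa$-compact because it is a countable, hence $\kappa$-small, colimit of $\kappa$-compacts. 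From here the ladder argument of \Cref{prop:smalllim} produces enough $\kappa$-compact coalgebras to conclude $\kappa$-accessibility. Your proposal has the right ``reflects'' half (finite limits of $\kappa$-filtered colimits of mapping anima commute with the colimit), but without the iteration construction you do not get the ``preserves'' half or the rank bound, which are the content of the proposition.
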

\begin{proof}
    It is clear that $\coAlg_T(\E)\to \E$ reflects $\kappa$-compacts, that is, if $x$ is an object which is underlying $\kappa$-compact, then it is $\kappa$-compact in $\coAlg_T(\E)$. So it suffices to produce enough underlying $\kappa$-compact objects in $\coAlg_T(\E)$. 

    Let $x\to Tx$ be an object therein and write $x\simeq \colim_{d\in \E^\kappa/x}d$ in $\E$. Note that for each $d\in \E^\kappa_{/x}$, the map $d\to x\to Tx$ factors through $Td'$ for some other $d'\in\E^\kappa_{/x}$. Iterating, we find, for each $d\in\E^\kappa_{/x}$, a sequence of $d_i$'s in $\E^\kappa_{/x}$ with maps $d\to d_0, d_i\to d_{i+1}$ so that each $d_i\to x\to Tx$ factors through $Td_{i+1}$. Taking the sequential colimit gives us a map $d_\infty=\colim_n d_n\to \colim_n Td_n\to T(d_\infty)$, and so we have a $T$-coalgebra $d_\infty$ with a map to $x$, and because $\kappa$ is uncountable, $d_\infty$ is still $\kappa$-compact. 

    From there on, the same proof as in \Cref{prop:smalllim} applies to prove the result. 
\end{proof}
\begin{rmk}
    The proof is extremely similar to that of \Cref{prop:smalllim}, but I was not able to make the two proofs fit in a common framework. In particular, note that here there is a some asymmetry between $\id$ and $T$, and the analogous result is \emph{wrong} for $T$-algebras, that is, objects $x$ with a map $Tx\to x$. 
\end{rmk}

\begin{proof}[Proof of \Cref{thm:DblIsPrL}]
By \Cref{thm:Comonadic}, $\Dbl{\V}\simeq \coMod_T(\Mod_\V(\PrL_\kappa))$ is the \category{} of comodules for an accessible comonad $T$ on a presentable \category. By \Cref{prop:comodacc}, it is therefore presentable. 

We now prove the precise accessibility rank. We observe that in our case $\Dbl{\V}= \coMod_T(\Mod_\V(\PrL_\kappa))$ is in fact a \emph{retract} of the simpler category of pointed coalgebras for $T$, that is, the pullback 
\[\begin{tikzcd}
	{\coAlg^{\mathrm{ptd}}_T(\Mod_\V(\PrL_\kappa))} & {\Mod_\V(\PrL_\kappa)} \\
	{\coAlg_T(\Mod_\V(\PrL_\kappa))} & {\coAlg_{\id}(\Mod_\V(\PrL_\kappa))}
	\arrow[from=1-1, to=1-2]
	\arrow[from=1-1, to=2-1]
	\arrow[from=1-2, to=2-2]
	\arrow[from=2-1, to=2-2]
\end{tikzcd}\]
where $\coAlg$ was introduced in \Cref{defn:coalg}, the lower horizontal functor is induced by the counit $T\to\id$, and the right vertical functor is the functor $C\mapsto (C\xrightarrow{\id_C}C)$.

To prove that it is a retract, we first observe that there is an obvious forgetful functor $\Dbl{\V}\simeq \coMod_T(\Mod_\V(\PrL_\kappa))\to \coMod_T^{\mathrm{ptd}}(\Mod_\V(\PrL_\kappa))$. In the other direction, we note that the forgetful functor $\coAlg_T^{\mathrm{ptd}}(\Mod_\V(\PrL_\kappa))\to \Mod_\V(\PrL_\kappa)$ actually factors through $\Dbl{\V}$\footnote{Recall that this is a \emph{property} for a functor.}: indeed, an object in $\coAlg_T^{\mathrm{ptd}}(\Mod_\V(\PrL_\kappa))$ is a $C$ together with a splitting of $p:\PP_\V(C^\kappa)\to C$, so that $C$ is a retract of an atomically generated $\V$-module and so dualizable by \Cref{thm:Luriethmgeneralbase}, and furthermore, a map between two such objects is a retract of an atomic preserving map between atomically generated $\V$-modules, and is therefore itself an internal left adjoint by \Cref{cor:retractfunctor}. 

This proves the retraction claim, and so combining \Cref{prop:coalgacc},  \Cref{prop:smalllim} and \Cref{cor:cardboundcompass}, we may conclude that $\Dbl{\V}$ is $\kappa$-compactly generated. 
\end{proof}
\begin{cor}
    Let $\V\in\CAlg(\PrL_\kappa)$ for some uncountable regular cardinal $\kappa$. The inclusion $\Dbl{\V}\subset \Mod_\V(\PrL_\kappa)$ from \Cref{thm:CardBound} admits a right adjoint, and the induced comonad on $\Mod_\V(\PrL_\kappa)$ is $\kappa$-accessible.
\end{cor}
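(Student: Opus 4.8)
The plan is to obtain this as a formal consequence of the presentability statement \Cref{thm:DblIsPrL}, using the adjoint functor theorem together with the explicit right adjoint of \Cref{thm:nonstandadj}; the only non-formal input is the $\kappa$-accessibility of the resulting comonad, which has in fact already been established (in slightly repackaged form) in the course of proving \Cref{thm:Comonadic}, so no new work is required.

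First I would record that both sides of the inclusion are presentable: $\Dbl{\V}$ by \Cref{thm:DblIsPrL}, and $\Mod_\V(\PrL_\kappa)$ because $\PrL_\kappa$ is itself presentably symmetric monoidal, so its module categories are presentable. Next, the inclusion $i\colon\Dbl{\V}\to\Mod_\V(\PrL_\kappa)$ — which lands there by \Cref{thm:CardBound} — preserves small colimits: by \Cref{prop:colimdbl} colimits in $\Dbl{\V}$ are computed in $\Mod_\V(\PrL)$, and the forgetful functor $\Mod_\V(\PrL_\kappa)\to\Mod_\V(\PrL)$ detects colimits since a colimit in $\PrL$ of $\kappa$-compactly generated categories along $\kappa$-compact-preserving functors is again $\kappa$-compactly generated. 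Being a colimit-preserving functor between presentable categories, $i$ admits a right adjoint $R$ by the adjoint functor theorem, giving the comonad $T=iR$ on $\Mod_\V(\PrL_\kappa)$; by uniqueness of right adjoints and \Cref{thm:nonstandadj} (using \Cref{thm:CardBound} to identify $\Dbl{\V}$ with $\Dbl{\V}\cap\Mod_\V(\PrL_\kappa)$) one has $R\simeq\bigl(\N\mapsto\PP_\V(\N^\kappa)\bigr)$.

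It then remains to see that $T$ is $\kappa$-accessible. Since $i$ preserves all colimits, this is equivalent to $R$ preserving $\kappa$-filtered colimits, which (both categories being presentable) is in turn equivalent to $i$ preserving $\kappa$-compact objects. This is precisely what the proof of \Cref{thm:DblIsPrL} yields: there $\Dbl{\V}$ is exhibited as a retract of $\coAlg^{\mathrm{ptd}}_T(\Mod_\V(\PrL_\kappa))$, whose $\kappa$-compact objects are — by \Cref{prop:coalgacc}, where uncountability of $\kappa$ enters — exactly those that are $\kappa$-compact after forgetting to $\Mod_\V(\PrL_\kappa)$; transporting $\kappa$-compacts across the retract shows the $\kappa$-compacts of $\Dbl{\V}$ are $\kappa$-compact in $\Mod_\V(\PrL_\kappa)$, i.e. $i$ preserves them.

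The only step that is not formal manipulation of presentable categories is this last preservation of $\kappa$-compacts (equivalently, that $R=\PP_\V((-)^\kappa)$ preserves $\kappa$-filtered colimits), and I expect no genuine obstacle there: it is the substance of \Cref{thm:Comonadic}, of which the present corollary is a weakening, so one may alternatively simply observe that $T=iR$ is the comonad appearing in \Cref{thm:Comonadic} and invoke its $\kappa$-accessibility directly. The purpose of isolating the corollary is to have the weaker assertion — existence of the right adjoint and accessibility of the comonad, without comonadicity — available as a transparent consequence of presentability.
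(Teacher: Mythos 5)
Your proposal is circular. The corollary you are proving is precisely what supplies the $\kappa$-accessibility assertion in \Cref{thm:Comonadic}: the ``Proof of \Cref{thm:Comonadic}'' given later in the section only establishes comonadicity via Barr--Beck--Lurie, not accessibility, so the accessibility clause in that theorem's statement is established nowhere except in the present corollary. Consequently, every later result in the section — \Cref{thm:DblIsPrL} in particular, whose proof invokes the accessibility of $T$ both in its first sentence (to apply \Cref{prop:comodacc}) and in the retract step (to apply \Cref{prop:coalgacc}) — depends logically on the corollary. You rely on \Cref{thm:DblIsPrL} twice: once for presentability of $\Dbl{\V}$ in order to apply the adjoint functor theorem, and once via its retract argument to deduce that $i$ preserves $\kappa$-compacts. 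Both of these ultimately require the very accessibility you are trying to prove. The fallback you offer at the end (``invoke \Cref{thm:Comonadic}'s $\kappa$-accessibility directly'') has the same problem, for the same reason.

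The intended proof is much shorter and genuinely independent of the later results. The existence of the right adjoint requires no adjoint functor theorem: it is the content of \Cref{thm:nonstandadj}, which constructs it explicitly as $\N\mapsto\PP_\V(\N^\kappa)$ (using \Cref{thm:CardBound} to identify $\Dbl{\V}$ with $\Dbl{\V}\cap\Mod_\V(\PrL_\kappa)$). You correctly reduce the accessibility claim to showing that $R=\PP_\V((-)^\kappa)$ preserves $\kappa$-filtered colimits, and you correctly identify this as the only substantive step, but you then defer it to results that are downstream of the corollary rather than just proving it. The direct argument is: $\PP_\V\colon\Cat_\V\to\Mod_\V(\PrL)$ is a partial left adjoint and thus preserves \emph{all} colimits, and $\N\mapsto\N^\kappa$ is manifestly $\kappa$-accessible; hence their composite is $\kappa$-accessible. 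That is the whole proof.
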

\begin{proof}
The existence of the right adjoint is \Cref{thm:nonstandadj}, where we also describe explicitly the comonad.

Namely, it is $\N\mapsto \PP_\V(\N^\kappa)$. As $\PP_\V$ is a (partial) left adjoint (see \cite{Shay}) it preserves arbitrary colimits as a functor $\Cat(\V)\to \Mod_\V(\PrL)$. Second, it is clear that $\N\mapsto \N^\kappa$ is $\kappa$-accessible too, so the composite comonad is accessible, as desired. 
\end{proof}
\begin{rmk}
    Note that the comonad is the same as the one for the adjunction $\At{\V}\leftrightarrows~\Mod_\V(\PrL_\kappa)$. In particular, this provides a completely categorical description of $\Dbl{\V}$, using no monoidal structure. In the proof of \Cref{thm:Comonadic}, there is one key instance where the proof fails for $\At{\V}$ and works for $\Dbl{\V}$. 

    We also note that this gives a second proof of the accessibility of the comonad, if one is more confident about $\At{\V}$ being presentable - classically, the latter is equivalent to the full subcategory of $\Cat_\V$ spanned by $\V$-categories that admit absolute $\V$-colimits, and it is overwhelmingly likely that a description like this also holds for \categories. From this, it would be clear that it is presentable. However, it seems to be a challenging task to prove ahead of time that $\At{\V}$ is presentable, as the theory of absolute $\V$-colimits is harder to develop in \category{} theory. For this reason, we include later this fact as a \emph{corollary} of our proof. 
\end{rmk}

\begin{cor}
    The functor $\Dbl{\V}\to \Mod_\V(\PrL_\kappa)$ preserves and reflects $\kappa$-compacts.
\end{cor}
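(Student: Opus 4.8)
The plan is to treat the two halves separately, leaning on the adjunction and the comonadic presentation already established.

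\emph{Preservation of $\kappa$-compacts.} By \Cref{thm:nonstandadj} the inclusion $G\colon\Dbl{\V}\hookrightarrow\Mod_\V(\PrL_\kappa)$ has a right adjoint $R$, namely $\N\mapsto\PP_\V(\N^\kappa)$, and by the corollary following \Cref{thm:DblIsPrL} the induced comonad $T=GR$ on $\Mod_\V(\PrL_\kappa)$ is $\kappa$-accessible. I would first upgrade this to the statement that $R$ itself preserves $\kappa$-filtered colimits. This holds because $G$ \emph{reflects} colimits --- colimits in $\Dbl{\V}$ are computed underlying by \Cref{prop:colimdbl} and $G$ is conservative --- so from $GR(\colim_i Y_i)=T(\colim_i Y_i)\simeq\colim_i TY_i=G(\colim_i RY_i)$ one deduces $R(\colim_i Y_i)\simeq\colim_i RY_i$ for $\kappa$-filtered diagrams. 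The usual adjunction manipulation $\Map(GX,\colim_i Y_i)\simeq\Map_{\Dbl{\V}}(X,R\colim_i Y_i)\simeq\Map_{\Dbl{\V}}(X,\colim_i RY_i)\simeq\colim_i\Map(GX,Y_i)$, valid whenever $X$ is $\kappa$-compact and $(Y_i)$ is $\kappa$-filtered, then shows that the left adjoint $G$ preserves $\kappa$-compact objects.

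\emph{Reflection of $\kappa$-compacts.} This is the substantive direction. Using comonadicity (\Cref{thm:Comonadic}) I would identify $\Dbl{\V}\simeq\coMod_T(\Mod_\V(\PrL_\kappa))$, with $G$ the forgetful functor and $R$ the cofree comodule functor; since $T$ is $\kappa$-accessible, $\kappa$-filtered colimits of $T$-comodules are computed on underlying objects. Now fix a comodule $M$ with $N:=GM$ $\kappa$-compact in $\Mod_\V(\PrL_\kappa)$, and a $\kappa$-filtered diagram $(M_i)$ of comodules with colimit $M'$. Resolving $M'$ by its cobar resolution by cofree comodules and using the adjunction identity $\Map_{\coMod_T}(M,R(-))\simeq\Map_{\Mod_\V(\PrL_\kappa)}(N,-)$, one rewrites $\Map_{\coMod_T}(M,M')$ as the totalization $\mathrm{Tot}_\bullet\,\Map_{\Mod_\V(\PrL_\kappa)}(N,\,T^\bullet GM')$. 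Each term $T^\bullet GM'=T^\bullet(\colim_i GM_i)$ equals $\colim_i T^\bullet GM_i$ since $T$ and $G$ commute with $\kappa$-filtered colimits, so $\kappa$-compactness of $N$ lets the colimit pass inside each $\Map_{\Mod_\V(\PrL_\kappa)}(N,-)$; and because $\kappa\geq\omega_1$, that $\kappa$-filtered colimit also commutes with the countable tower of finite limits computing $\mathrm{Tot}_\bullet$. Assembling these equivalences gives $\Map_{\coMod_T}(M,M')\simeq\colim_i\Map_{\coMod_T}(M,M_i)$, i.e.\ $M$ is $\kappa$-compact in $\Dbl{\V}$.

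\emph{Alternative route, and the main difficulty.} Equivalently, one can propagate $\kappa$-compactness directly through the retract proof of \Cref{thm:DblIsPrL}: $\Dbl{\V}$ is a retract over $\Mod_\V(\PrL_\kappa)$ of $\coAlg^{\mathrm{ptd}}_T(\Mod_\V(\PrL_\kappa))$, whose forgetful functor preserves and reflects $\kappa$-compacts by the combination of \Cref{prop:coalgacc} and \Cref{prop:smalllim} used there, and one checks that this property survives the retraction, using that all functors in sight preserve $\kappa$-filtered colimits and that the $\kappa$-compact objects produced in the proof of \Cref{prop:coalgacc} are underlying $\kappa$-compact. I expect the reflection statement to be the only genuine obstacle, and the sole place where uncountability of $\kappa$ enters: a dualizable $\V$-module whose underlying $\V$-module is $\kappa$-compact is not visibly a retract of a cofree comodule, so one is forced to commute the colimit past a countable totalization --- precisely the $\lim^1$-type phenomenon (Milnor sequences, \Cref{prop:smalllim}, \Cref{prop:coalgacc}) that recurs throughout the paper.
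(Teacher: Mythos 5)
Your proof is correct and takes essentially the same approach as the paper's: the paper establishes preservation by observing that the comonad (and hence the right adjoint $R$) is $\kappa$-accessible, and asserts reflection ``directly from comonadicity (as $\kappa\geq\omega_1$)'' --- which is precisely the cobar-totalization argument you spell out, with uncountability of $\kappa$ entering exactly where you say, to commute the $\kappa$-filtered colimit past the countable $\mathrm{Tot}$. Your alternative retract-based route is not what the paper writes for this corollary, though it does mirror the retract trick the paper uses to pin down the accessibility rank in the proof of \Cref{thm:DblIsPrL}.
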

\begin{proof}
    That it reflects $\kappa$-compactness follows directly from comonadicity (as $\kappa\geq \omega_1$).

    That it preserves them follows from the fact that the comonad on $\Mod_\V(\PrL_\kappa)$ (and hence the right adjoint) is $\kappa$-accessible. 
\end{proof}

We are finally ready to prove the comonadicity:
\begin{proof}[Proof of \Cref{thm:Comonadic}]
    We apply the (dual of the) Barr-Beck-Lurie theorem \cite[Theorem 4.7.3.5.]{HA}, so it suffices to prove that the inclusion $i:\Dbl{\V}\subset \Mod_\V(\PrL_\kappa)$ is conservative, and preserves limits of $i$-split cosimplicial diagrams. 
    
The former is trivial, as $\Dbl{\V}$ is a subcategory of $\Mod_\V(\PrL)$, so let us focus on the latter: let $\M^\bullet : \Delta\to \Dbl{\V}$ be a cosimplicial diagram which is split \emph{when viewed as a diagram in $\Mod_\V(\PrL_\kappa)$}; and let $i\M^\bullet: \Delta_{-\infty}\to \Mod_\V(\PrL_\kappa)$ denote a choice of splitting (where we use the notation of \cite[Notation 4.7.2.1.]{HA})\footnote{We warn the reader that this notation is abusive, since on $\Delta_{-\infty}$, this diagram is not $i$ applied to a diagram with values in $\Dbl{\V}$. }. 

First, observe that $i\M^{-1}$ is a retract of $i\M^0$, from which it follows that $i\M^{-1}$ is dualizable\footnote{This is the key step in the proof where we cannot replace the word ``dualizable'' by the word ``atomically generated''. }, so it is, in fact, $i(\M^{-1})$.

Now note that the restriction of $i\M^\bullet$ to $\Delta_+$ is an absolute limit diagam (as it extends to a split cosimplicial diagram), so that $iP(iM^\bullet)$ is also a limit diagram, where $P:\Mod_\V(\PrL_\kappa)\to \Dbl{\V}$ is the right adjoint from \Cref{thm:nonstandadj}. If we restrict further to $\Delta$, we note that then we have an adjunction of diagrams $i\M^\bullet\rightleftarrows iPi\M^\bullet$ by \Cref{prop:internalleftBC} and \cite[Theorem 4.6]{Runemonad}. We therefore get an induced adjunction on their limits, $i\M^{-1}\rightleftarrows iPi\M^\bullet$. 

As the right adjoint $\colim: iPi\M^\bullet \to i\M^\bullet$ is natural on $\Delta_{-\infty}$, we know that the induced functor on limits is $\colim: iPi\M^{-1}\to i\M^{-1}$, with its left adjoint $\hat y$. But the previous argument shows that this left adjoint commutes with the morphisms $\M^{-1}\to \M^n$ for all $n$. More precisely, letting $\iota_n : \M^{-1}\to \M^n$ be the canonical projection, for all $n$ there exist commutative squares: 
\[\begin{tikzcd}
	{\M^{-1}} & {\M^n} \\
	{Pi\M^{-1}} & {Pi\M^n}
	\arrow["\iota_n", from=1-1, to=1-2]
	\arrow["{P(\iota_n)}"', from=2-1, to=2-2]
	\arrow["{\hat y}"', from=1-1, to=2-1]
	\arrow["{\hat y}", from=1-2, to=2-2]
\end{tikzcd}\]

By the ``In fact'' part of \Cref{prop:internalleftBC}, the \emph{existence} of such a square guarantees that $\iota_n$ is an internal left adjoint.

It follows that the limit diagram $i\M^\bullet: \Delta_+\to \Mod_\V(\PrL_\kappa)$ actually lifts (as a diagram) to $\Dbl{\V}$. We let $\M^\bullet :\Delta_+\to \Dbl{\V}$ denote the corresponding cone, and are now left with the task of proving that this is a limit cone \emph{in} $\Dbl{\V}$. Namely given another cone $\N\to \M^\bullet$ on $\Delta$ in $\Dbl{\V}$, we get an induced map $i\N\to i\M^{-1}$ in $\Mod_\V(\PrL_\kappa)$, and have to prove that this map is an internal left adjoint.

As before, it suffices to check that the canonical natural transformation in the following square is an equivalence: 
\[\begin{tikzcd}
	\N & {\M^{-1}} \\
	Pi\N & {Pi\M^{-1}}
	\arrow["{\hat y}"', from=1-1, to=2-1]
	\arrow["{\hat y}", from=1-2, to=2-2]
	\arrow["{P(f)}"', from=2-1, to=2-2]
	\arrow["f", from=1-1, to=1-2]
	\arrow[shorten <=4pt, shorten >=4pt, Rightarrow, from=1-2, to=2-1]
\end{tikzcd}\]
However, we already noted that $iPi\M^\bullet :\Delta_+\to \Mod_\V(\PrL_\kappa)$ is a limit diagram, so that the map $iPi\M^{-1}\to iPi\M^0$ is conservative\footnote{The limit is computed underlying, as $\kappa$ is uncountable, cf. \Cref{prop:smalllim}.}, and thus we may check this after postcomposing with $Pi\M^{-1}\to Pi\M^0$: 
\[\begin{tikzcd}
	\N & {\M^{-1}} & {\M^0} \\
	Pi\N & {Pi\M^{-1}} & {Pi\M^0}
	\arrow["{\hat y}"', from=1-1, to=2-1]
	\arrow["{\hat y}", from=1-2, to=2-2]
	\arrow["{P(f)}"', from=2-1, to=2-2]
	\arrow["f", from=1-1, to=1-2]
	\arrow[shorten <=4pt, shorten >=4pt, Rightarrow, from=1-2, to=2-1]
	\arrow["{\iota_0}", from=1-2, to=1-3]
	\arrow["{P(\iota_0)}"', from=2-2, to=2-3]
	\arrow["{\hat y}", from=1-3, to=2-3]
	\arrow[shorten <=5pt, shorten >=5pt, Rightarrow, from=1-3, to=2-2]
\end{tikzcd}\]
We have already argued that $\iota_0$ was an internal left adjoint, so that the rightmost lax commutative square actually commutes, and thus, by conservativity, the leftmost one does if and only if the outer rectangle does too. But now this follows from the fact that our cocone was originally in $\Dbl{\V}$, and hence the map $\N\to \M^0$ was in fact an internal left adjoint, which implies the desired claim. 
\end{proof}
\begin{cor}
    The full subcategory $\At{\V}$ of $\Dbl{\V}$ spanned by atomically generated $\V$-modules is presentable. 
\end{cor}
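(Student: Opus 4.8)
The plan is to identify $\At{\V}$ with the category of $T$-comodules that are ``cofree'', or more precisely to exhibit it as a retract of a presentable category built from the accessible comonad $T$, exactly mirroring the proof of \Cref{thm:DblIsPrL}. Recall that in that proof we showed $\Dbl{\V}\simeq\coMod_T(\Mod_\V(\PrL_\kappa))$ is a retract of the category $\coAlg^{\mathrm{ptd}}_T(\Mod_\V(\PrL_\kappa))$ of pointed $T$-coalgebras, via the forgetful functor $\Dbl{\V}\to\coAlg^{\mathrm{ptd}}_T(\Mod_\V(\PrL_\kappa))$ on one side and, on the other, the observation that the forgetful functor $\coAlg^{\mathrm{ptd}}_T(\Mod_\V(\PrL_\kappa))\to\Mod_\V(\PrL_\kappa)$ factors through $\Dbl{\V}$. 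The key point there was that a pointed $T$-coalgebra is a $\V$-module $C$ together with a splitting of $p:\PP_\V(C^\kappa)\to C$, so $C$ is a retract of an atomically generated $\V$-module, hence dualizable.

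First I would observe that the category $\At{\V}$ can be described in exactly the same way but with ``retract'' replaced by ``equivalence'': by \Cref{obs:descatgen}, $\M$ is atomically generated if and only if the canonical functor $\PP_\V(\M^\at)\to\M$ is an equivalence, and more relevantly (since we are working inside $\Mod_\V(\PrL_\kappa)$ with $\kappa$ uncountable and $\V\in\CAlg(\PrL_\kappa)$), if and only if $p:\PP_\V(\M^\kappa)=T(\M)\to\M$ is an equivalence after choosing the splitting $\hat y$; indeed for atomically generated $\M$, $\hat y$ is an equivalence onto $T(\M)$ by the very construction in \Cref{thm:Luriethmgeneralbase} together with the fact that the counit $c_\N:T(\N)\to\N$ is inverse to $\hat y$ precisely when $\N$ is atomically generated. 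So $\At{\V}$ sits inside $\coAlg^{\mathrm{ptd}}_T(\Mod_\V(\PrL_\kappa))$ as the full subcategory of those pointed coalgebras $(C,s:C\to T(C))$ for which $s$ is an equivalence (equivalently, for which $C$ is atomically generated). Equivalently still, $\At{\V}$ is the essential image of the functor $\PP_\V(-):\Cat_\V^{\text{small}}\to\Mod_\V(\PrL_\kappa)$, but the pointed-coalgebra description is what gives presentability directly.

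Next I would argue presentability. By \Cref{thm:Comonadic} the comonad $T$ is $\kappa$-accessible, so $\Mod_\V(\PrL_\kappa)$ is presentable and $T$ preserves $\kappa$-filtered colimits. The cleanest route is to note that $\At{\V}$ is an accessible localization, or at least an accessible subcategory, of the presentable category $\Dbl{\V}$: by \Cref{thm:nonstandadj}, the inclusion $\At{\V}\hookrightarrow\Mod_\V(\PrL_\kappa)$ is a left adjoint (its right adjoint being $\N\mapsto\PP_\V(\N^\kappa)$ again — this is precisely the content of that theorem, which we may restate as saying $\At{\V}$ is a colocalization, or rather that the comonad of \Cref{thm:Comonadic} has $\At{\V}$ as its ``free'' objects). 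Concretely: $\At{\V}$ is equivalent to the full subcategory of $\Dbl{\V}$ on objects $\M$ with $\hat y_\M$ an equivalence; since $\hat y$ and $c$ are natural (\Cref{cor:yhatnatural2}) and $\Dbl{\V}$ is presentable, this is the full subcategory on which a natural transformation of accessible functors $\hat y\Rightarrow y$ (or rather the section/counit pair) is invertible, which is an accessible full subcategory closed under all colimits (as $\Dbl{\V}\to\Mod_\V(\PrL_\kappa)$ preserves colimits and atomically generated modules are closed under colimits along internal left adjoints by the proof of \Cref{prop:colimdbl}); an accessible, colimit-closed full subcategory of a presentable category is presentable. Alternatively, and perhaps most efficiently, one runs the retract argument of the proof of \Cref{thm:DblIsPrL} verbatim, replacing $\coAlg^{\mathrm{ptd}}_T$ with the full subcategory $\coAlg^{\simeq}_T$ of pointed coalgebras whose structure map is an equivalence: this is again cut out of $\coAlg^{\mathrm{ptd}}_T(\Mod_\V(\PrL_\kappa))$ — itself $\kappa$-accessible by \Cref{prop:coalgacc} combined with \Cref{prop:smalllim} — by inverting a map of accessible functors, hence accessible, and it is cocomplete since $\At{\V}$ is closed under colimits in $\Dbl{\V}$; apply \Cref{prop:comodacc}-style reasoning.

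The main obstacle I anticipate is making precise that $\At{\V}$ really is \emph{closed under colimits} inside $\Dbl{\V}$ and that the ``$\hat y$ is an equivalence'' condition is genuinely an \emph{accessible} condition rather than merely a property of objects — i.e.\ exhibiting the functor whose invertibility locus we are taking, and checking it is accessible. For colimit-closure: the proof of \Cref{prop:colimdbl} already contains the statement that atomically generated $\V$-modules are closed under colimits along internal left adjoints (this is the ``first observe'' paragraph there), so that part is in hand. For accessibility of the condition, the work is to see that $\M\mapsto(\hat y_\M: \M\to T(\M))$ is an accessible functor $\Dbl{\V}\to\Mod_\V(\PrL_\kappa)^{\Delta^1}$ and that ``is an equivalence'' carves out an accessible subcategory — this follows since $T=\PP_\V((-)^\kappa)$ is $\kappa$-accessible and $\hat y$ is natural and accessible (being a left adjoint, partially, cf.\ \cite{Shay, Runemonad}), and the full subcategory of $\mathcal C^{\Delta^1}$ on equivalences is the image of the diagonal, which is accessible and colimit-closed; so the preimage is accessible and colimit-closed in $\Dbl{\V}$, and we conclude by the fact that such subcategories of presentable categories are presentable (\cite[5.5.1.2, 5.4.7.x]{HTT}). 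I would spell this last step out carefully since it is the only genuinely new ingredient beyond reusing the proofs of \Cref{thm:Comonadic} and \Cref{thm:DblIsPrL}.
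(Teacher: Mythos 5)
Your proposal has a fatal error at its heart: you claim that $\At{\V}$ sits inside $\coAlg^{\mathrm{ptd}}_T(\Mod_\V(\PrL_\kappa))$ as the subcategory of pointed $T$-coalgebras $(C,s:C\to T(C))$ for which $s$ is an equivalence, ``equivalently, for which $C$ is atomically generated.'' This equivalence does not hold. For atomically generated $C$, the structure map is $\hat y_C:C\to T(C)=\PP_\V(C^\kappa)$, which is fully faithful (\Cref{rmk:hatyff}) but essentially never essentially surjective: it is a section of the localization $p:\PP_\V(C^\kappa)\to C$, and $p$ is a proper localization for virtually all atomically generated $C$. Concretely, take $\V=\Sp$ and $C=\Sp$: then $T(C)=\Ind(\Sp^{\omega_1})$ is vastly larger than $\Sp$, so $\hat y_C$ is not an equivalence despite $C$ being compactly generated. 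What you are implicitly conflating is the comonad $T(\M)=\PP_\V(\M^\kappa)$ on $\Mod_\V(\PrL_\kappa)$ with the coreflection $\M\mapsto\PP_\V(\M^\at)$ of $\At{\V}\hookrightarrow\Dbl{\V}$; the latter uses \emph{atomic} objects $\M^\at$, not $\kappa$-compacts $\M^\kappa$, and $\PP_\V(\M^\at)\to\M$ is indeed an equivalence precisely when $\M$ is atomically generated (\Cref{obs:descatgen}), but this is a different functor from $T$.

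This gap is not cosmetic: the entire work of the paper's proof is exactly to control the difference between $(-)^\at$ and $(-)^\kappa$. Your colimit-closure observation (from the proof of \Cref{prop:colimdbl}) is correct and gives cocompleteness of $\At{\V}$, and the existence of the right adjoint $\M\mapsto\PP_\V(\M^\at)$ to $\At{\V}\hookrightarrow\Dbl{\V}$ is clear. What remains, and is genuinely hard, is showing this coreflection is accessible, i.e.\ that $(-)^\at:\Dbl{\V}\to\Cat_\V$ commutes with $\lambda$-filtered colimits for some $\lambda$. There is no reason a priori for atomic objects to behave this way: $\V$-atomicity of an object is a condition on its enriched hom functor, and verifying this condition persists in filtered colimits requires the paper's argument with the ``walking $v$-morphism'' enriched categories $C(v)$, exploiting the $\lambda$-compactness of $\V$ and of each $\PP_\V(C(v))$ in $\Dbl{\V}$ to reduce enriched homs in $\colim_I\M_i$ to mapping spaces in $\Dbl{\V}$, which then commute with the colimit. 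Your proposal never establishes this accessibility; the ``invertibility locus of an accessible natural transformation'' you invoke is the locus of a different and irrelevant condition.
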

\begin{proof}
    The fully faithful inclusion $\At{\V}\subset \Dbl{\V}$ has a right adjoint given by $\M\mapsto \PP_\V(\M^\at)$, so it suffices to prove that this right adjoint is accessible.

Let $\kappa$ be such that $\V\in\CAlg(\PrL_\kappa)$, and let $v\in\V^\kappa$. Let $C(v)$ denote the $\V$-enriched \category{} with two objects $a,b$ such that $\hom(a,a) = \hom(b,b)= \one_\V$ and $\hom(a,b) = v$. 

For each $v\in\V^\kappa, \PP_\V(C(v)) \in\Dbl{\V}$ is $\lambda_v$-compact for some regular $\lambda_v$, and since $\V^\kappa$ is small, we may take a uniform $\lambda$ for all $v\in\V^\kappa$, and we may also assume that $\V\in(\Dbl{\V})^\lambda$. We will prove that $\M\mapsto \PP_\V(\M^\at)$ is $\lambda$-accessible.

Given now a $\lambda$-filtered diagram $\M_\bullet: I\to \Dbl{\V}$, we look at the map $\colim_I \PP_\V(\M_i^\at)\to~\colim_I~\M_i$. We need to prove that $\colim_I \M_i^\at\to~\colim_I~\M_i$ is $\V$-fully faithful - that $\V\in(\Dbl{\V})^\lambda$ guarantees that it is, in fact, essentially surjective when corestricted to atomics.  

But now let $x,y\in \M_i^\at$, and $v\in\V^\kappa$, we have $$\Map(v,\hom_{\colim_I \M_i}(x,y)) = \Map_\V(C(v), \colim_I \M_i) \times_{\Map_\V(\{\one_\V\}, \colim_I\M_i)\times\Map_\V(\{\one_\V\}, \colim_I\M_i)} \{x,y\}  $$
and then, using \Cref{thm: UPVPsh} together with \Cref{cor:atomicimpliesinternal} we find that this is equivalently $$\Map_{\Dbl{\V}}(\PP_\V(C(v)), \colim_I \M_i) \times_{\Map_{\Dbl{\V}}(\V, \colim_I\M_i)\times\Map_{\Dbl{\V}}(\V, \colim_I\M_i)} \{x,y\}$$ and therefore, since $I$ is $\lambda$-filtered and $\PP_\V(C(v)), \V$ are both $\lambda$-compact, this is equivalent to $$\colim_I \Map_{\Dbl{\V}}(\PP_\V(C(v)), \M_i) \times_{\Map_{\Dbl{\V}}(\V, \M_i)\times\Map_{\Dbl{\V}}(\V, \M_i)} \{x,y\}$$
and thus to $$\colim_I \hom_{\M_i}(x,y)$$

Thus in total, we do have $\colim_I\M_i^\at\simeq (\colim_I \M_i)^\at$, as was to be shown. 
\end{proof}

\section{Limits and internal homs}\label{section:limits}
The goal of this section is to give explicit constructions of limits and internal homs in $\Prdbl$. It follows from \Cref{thm:DblIsPrL} that they exist, but we give here explicit constructions - we note that the proof that these constructions work does \emph{not} depend on the presentability result, and as we mentioned before, this is how these limits and internal homs were originally constructed by Efimov. 

Before constructing them in full generality by introducing a way of constructing dualizable categories, we start with a number of ``elementary'' examples, that is, examples where the naive limit is correct. 
\subsection{Elementary examples}
In this first subsection, we deal with elementary examples where limits in $\Prdbl$ behave as one would expect naively. 

The first observation is the following - recall that $(\PrL_{\st})^{iL}$ is the wide subcategory of $\PrL_{\st}$ whose morphisms are internal left adjoints:
\begin{lm}\label{lm:finitelimobj}
    Let $I$ be a finite \category, and let $\M_\bullet: I\to (\PrL_{\st})^{iL}$ be a diagram, and let $\lim_I\M_i$ denote its limit \emph{in $\PrL_{\st}$}, and let $f_\bullet:\N\to \M_\bullet$ be a cocone in $(\PrL_{\st})^{iL}$. The induced map $\N\to \lim_I\M_i$ is an internal left adjoint. 
\end{lm}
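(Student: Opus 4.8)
The statement is that a cocone $\N \to \M_\bullet$ in $(\PrL_{\st})^{iL}$ over a \emph{finite} diagram induces an internal left adjoint into the limit $\lim_I \M_i$ computed in $\PrL_{\st}$. The strategy is to reduce to a verification that can be checked componentwise, using the two features of a finite limit: that it is computed as a limit of right adjoints, and that finite limits commute with the relevant colimits.

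\textbf{First step: the limit lives in the wide subcategory.} I would first observe that the limit $\lim_I \M_i$ in $\PrL_{\st}$ can be computed, dually, as a colimit of the diagram $I\op \to \PrL_{\st}$ obtained by passing to right adjoints of the structure maps $\M_i \to \M_j$; since those structure maps are internal left adjoints, the right adjoints are themselves colimit-preserving, so this colimit makes sense and the limit projections $\pi_i : \lim_I \M_i \to \M_i$ are left adjoints whose right adjoints $\pi_i^R$ are the colimit inclusions. The key point — and this is where finiteness is used — is that for a \emph{finite} indexing category, these limit projections $\pi_i$ are themselves internal left adjoints: the right adjoint $\pi_i^R$ preserves colimits (it is a colimit inclusion) and, because $\lim_I$ is a finite limit, it commutes with tensoring by objects of $\V$ (here $\V = \Sp$, but this holds for any finite limit in $\Mod_\V(\PrL)$ since finite limits commute with the colimits defining the tensoring), so the relevant projection maps are equivalences. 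Moreover the $\pi_i^R$ are jointly conservative, since that is exactly the defining property of a limit cone.

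\textbf{Second step: apply the colimit-detection lemma.} Having established that the $\pi_i : \lim_I \M_i \to \M_i$ are internal left adjoints with jointly conservative right adjoints, I am in the exact situation of \Cref{lm:colimdetection}: a $\V$-linear functor $\N \to \lim_I \M_i$ is an internal left adjoint if and only if each composite $\N \to \lim_I \M_i \xrightarrow{\pi_i} \M_i$ is. But that composite is, by the universal property of the limit, precisely the given cocone component $f_i : \N \to \M_i$, which is an internal left adjoint by hypothesis. Hence the induced map is an internal left adjoint, as claimed.

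\textbf{Main obstacle.} The only genuinely delicate point is the first step: checking that a finite limit of a diagram in $(\PrL_{\st})^{iL}$, computed in $\PrL_{\st}$, again lands in $(\PrL_{\st})^{iL}$ with left-adjoint projections — i.e. that the projection maps have colimit-preserving right adjoints and that the $\V$-tensor projection maps are equivalences. The first half is the standard "limit of internal left adjoints is computed as a colimit of right adjoints" argument (as in the proof of \Cref{cor:colimintleft}); the second half is where finiteness of $I$ is essential, since an infinite limit need not commute with the colimits computing the $\V$-action, and indeed the analogue of this lemma fails for infinite $I$ — which is the whole reason the later sections need the more elaborate constructions. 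Once that structural fact about finite limits is in hand, the rest is a formal application of \Cref{lm:colimdetection}.
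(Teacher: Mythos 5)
Your proof has a genuine gap at its very first step, and this gap is unrepairable as stated: the claim that the limit projections $p_i : \lim_I \M_i \to \M_i$ are internal left adjoints is false in general, even for finite $I$. The paper explicitly disclaims this in the remark immediately following the lemma: ``We are not claiming that the projection maps $\lim_I\M_i\to \M_j$ are internal left adjoints, in particular it need not be the case that $\lim_I\M_i$ is a limit in $(\PrL_{\st})^{iL}$.'' Your justification conflates two different statements. The standard fact (used in \Cref{cor:colimintleft}) is that a \emph{colimit} in $\PrL$ of a diagram $\M_\bullet$ is computed as the \emph{limit} in $\widehat{\Cat}$ of the right-adjoint diagram $\M^R_\bullet$, and the colimit-inclusion maps are the left adjoints of the resulting limit projections. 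You have dualized this incorrectly: the \emph{limit} $\lim_I \M_\bullet$ in $\PrL$ is just the limit in $\widehat{\Cat}$, and it is \emph{not} the colimit (in $\PrL$) of $\M^R_\bullet : I^{\op} \to \PrL$. To see they differ, consider a cospan $g : \M_0 \to \M_{01} \leftarrow \M_1 : h$ of internal left adjoints; $\lim_I \M_\bullet$ is the pullback along $g,h$, while $\colim_{I^{\op}}\M^R_\bullet$, computed via the same trick, is the pullback along the double right adjoints $g^{RR},h^{RR}$, and these generally disagree because $g^{RR} \ne g$. So $p_i^R$ is not a colimit inclusion in $\PrL$ and need not preserve colimits.

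There is a second issue: even granting Step 1, your Step 2 misapplies \Cref{lm:colimdetection}. That lemma tests whether $F : \M \to \N$ is an internal left adjoint by precomposing with internal left adjoints $\iota_i : \M_i \to \M$ \emph{into the source} of $F$. You want to test $u : \N \to \lim_I\M_i$ by postcomposing with the projections $\pi_i : \lim_I \M_i \to \M_i$ \emph{out of the target}, which is a different (and dual) situation not covered by the lemma. The paper's proof sidesteps both difficulties. It invokes the explicit formula from [AsafLior, Theorem B]: the right adjoint of $\N\to\lim_I\M_i$ is $\lim_I f_i^R\circ p_i$. Each $f_i^R\circ p_i$ preserves colimits because $f_i$ is an internal left adjoint (so $f_i^R$ is colimit-preserving) and $p_i$ is a morphism in $\PrL$ (colimit-preserving by definition, no internal-left-adjointness needed). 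Finiteness of $I$ ensures that the limit $\lim_I(-)$ of functors commutes with filtered colimits, and stability handles finite colimits; so the right adjoint preserves all colimits. This argument never needs the projections $p_i$ to be internal left adjoints, which is consistent with the remark that they typically are not.
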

\begin{rmk}
    The more general version of this claim in $\V$-modules would involve so-called $\V$-absolute limits, rather than finite limits. 
\end{rmk}
\begin{rmk}
    We are not claiming that the projection maps $\lim_I\M_i\to \M_j$ are internal left adjoints, in particular it need not be the case that $\lim_I\M_i$ is a limit in $(\PrL_{\st})^{iL}$. 
\end{rmk}
We will soon give a more general version of this argument, but for now we stick to this one. 
\begin{proof}
Let $f_i^R$ denote the (colimit-preserving) right adjoint to $f_i:\N\to\M_i$, and $p_i:~\lim_I\M_i\to~\M_i$ denote the $i$th projection.

We use \cite[Theorem B]{AsafLior}: there is a diagram $X: I\to \Fun(\lim_I\M_i, \N)$ whose value at $i$ is $f_i^R\circ p_i$ and such that the right adjoint to $\N\to \lim_I\M_i$ is given by $\lim_IX_i$ - more informally, this right adjoint is $\lim_I f_i^R\circ p_i$. 

As $I$ is a finite \category{}, this limit preserves filtered colimits, and our assumption guarantees that each $f_i^R\circ p_i$ preserves filtered colimits, and so we are done. 
\end{proof}

Based on the above argument, we see: 
\begin{lm}
    Let $I$ be an \category, and let $\M_\bullet: I\to (\PrL_{\st})^{iL}$ be a diagram, and let $\lim_I\M_i$ denote its limit \emph{in $\PrL_{\st}$}, and let $f_\bullet:\N\to \M_\bullet$ be a cocone in $(\PrL_{\st})^{iL}$. Let $i\mapsto f_i^Rp_i$ denote the diagram $I\to \Fun(\lim_I\M_i,\N)$ constructed in \cite[Theorem B]{AsafLior} 

Suppose that the for each $m\in\lim_I\M_i$, the limit $\lim_I f_i^Rp_i(m)$ is a ``stably absolute'' limit diagram in $\N$, i.e. it is preserved by any exact functor $\N\to C$.
    
    În this case induced map $\N\to \lim_I\M_i$ is an internal left adjoint. 
\end{lm}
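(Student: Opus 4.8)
The plan is to mimic the proof of \Cref{lm:finitelimobj}, using the same description of the right adjoint to $\N\to\lim_I\M_i$ supplied by \cite[Theorem B]{AsafLior}, and then replace the single input ``$I$ finite $\Rightarrow$ the limit $\lim_I$ preserves filtered colimits'' by the hypothesis that each pointwise limit diagram $\lim_I f_i^Rp_i(m)$ is stably absolute. First I would set up the notation exactly as before: let $f_i^R$ be the colimit-preserving right adjoint of $f_i$, let $p_i\colon\lim_I\M_i\to\M_i$ be the $i$th projection, and let $X\colon I\to\Fun(\lim_I\M_i,\N)$ be the diagram from \cite[Theorem B]{AsafLior}, with $X_i\simeq f_i^R\circ p_i$, such that the right adjoint $g$ to $\N\to\lim_I\M_i$ is $g\simeq\lim_I X_i$, computed pointwise: $g(m)\simeq\lim_I f_i^Rp_i(m)$.

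To conclude that $\N\to\lim_I\M_i$ is an internal left adjoint I must check two things about $g$: (a) $g$ preserves colimits, and (b) the projection maps $v\otimes g(n)\to g(v\otimes n)$ are equivalences — but since we are in $\PrL_{\st}$ over $\V=\Sp$, (b) is automatic (a stable colimit-preserving right adjoint over $\Sp$ is automatically $\Sp$-linear with invertible projection maps), so the whole content is (a). For (a): fix a small diagram $\alpha\colon K\to\lim_I\M_i$; I want $g(\colim_K\alpha_k)\simeq\colim_K g(\alpha_k)$. Here is where the stable-absoluteness enters. For each fixed $i$, $f_i^Rp_i$ preserves the colimit $\colim_K\alpha$ (both $p_i$ and $f_i^R$ do), so $g(\colim_K\alpha)\simeq\lim_I\colim_K f_i^Rp_i(\alpha_k)$. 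On the other hand $\colim_K g(\alpha_k)\simeq\colim_K\lim_I f_i^Rp_i(\alpha_k)$. So I need $\lim_I$ and $\colim_K$ to commute when applied to the bifunctor $(i,k)\mapsto f_i^Rp_i(\alpha_k)$ valued in $\N$. The key observation is that for each fixed $k$, the diagram $i\mapsto f_i^Rp_i(\alpha_k)$ has a \emph{stably absolute} limit by hypothesis (it is the diagram computing $g(\alpha_k)$, i.e. $\lim_I f_i^Rp_i(m)$ for $m=\alpha_k$). A stably absolute limit diagram in $\N$ is, by definition, preserved by every exact functor out of $\N$; in particular it is preserved by $\colim_K\colon\Fun(K,\N)\to\N$ viewed appropriately — more precisely, one uses that a limit diagram which is preserved by all exact functors is in particular a limit diagram that commutes past any colimit, since $\colim_K(-)$ can be realized via exact functors on the relevant functor categories. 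Applying this with the $K$-indexed family, $\colim_K$ of the pointwise-stably-absolute limits is again the limit, giving $\colim_K\lim_I f_i^Rp_i(\alpha_k)\simeq\lim_I\colim_K f_i^Rp_i(\alpha_k)$, which is exactly the identity $\colim_K g(\alpha_k)\simeq g(\colim_K\alpha)$ we wanted.

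The main obstacle I expect is making the last step — ``stably absolute limits commute with arbitrary colimits in $\N$'' — fully rigorous. One has to be careful that ``preserved by all exact functors $\N\to C$'' is the right notion to feed in: the cleanest route is to note that $\colim_K\colon\Fun(K,\N)\to\N$ is exact and colimit-preserving, and that a diagram $D\colon I^\triangleleft\to\N$ which is a limit cone preserved by all exact functors remains a limit cone after applying any exact functor levelwise to a $K$-family, because one can first form the $K$-family of such cones in $\Fun(K,\N)$ (where finite limits, hence the relevant stably absolute limits, are computed pointwise), observe it is still a stably-absolute limit cone there, then push forward along the exact functor $\colim_K$. I would also double-check that \cite[Theorem B]{AsafLior} genuinely gives the pointwise formula $g(m)\simeq\lim_I f_i^Rp_i(m)$ with the $I$-diagram structure needed, as in the proof of \Cref{lm:finitelimobj}; this is the only external input and the rest is formal. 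Everything else — conservativity is not needed here, and $\Sp$-linearity of $g$ is free — is routine.
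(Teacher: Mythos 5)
Your setup is correct and matches the paper's: both use \cite[Theorem B]{AsafLior} to identify the right adjoint as $g\simeq\lim_I f_i^R p_i$, both note that $\Sp$-linearity is free, and both reduce the problem to showing $g$ preserves colimits. Where you diverge is that you try to attack an arbitrary colimit $\colim_K$ head-on, whereas the paper first observes that $g$ is automatically exact (a limit of exact functors), so that one only needs to show $g$ preserves infinite coproducts, and then reduces by a retraction to coproducts of a single object. This matters because in the paper's version the functor that must preserve the stably absolute limit is literally $\bigoplus_S\colon\N\to\N$, which is an exact endofunctor of $\N$, so the hypothesis applies verbatim.

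The gap in your version is the step you flag yourself: you want to assert that the $K$-indexed family of pointwise stably absolute limit cones assembles to a stably absolute limit cone in $\Fun(K,\N)$, and then push it through $\colim_K$. But ``stably absolute in $\Fun(K,\N)$'' means preserved by \emph{all} exact functors $\Fun(K,\N)\to C'$, and that class is strictly larger than the pointwise ones — $\colim_K$ itself is exact but not computed pointwise, and so is not controlled by the pointwise hypothesis. The inference ``pointwise stably absolute $\Rightarrow$ stably absolute in $\Fun(K,\N)$'' is exactly the content you would need to prove, so the argument as written is circular at that point. The way to close the gap is essentially to perform the paper's reduction: since $\colim_K$ can be built from coproducts and finite colimits (in the stable setting any colimit is a cofiber of a map between coproducts), and $g$ is already exact, one only needs commutation with coproducts $\bigoplus_S\colon\N\to\N$; then the stable absoluteness hypothesis applies directly to these exact endofunctors of $\N$, and a retraction argument handles coproducts of families rather than a single object.
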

\begin{proof}
    It suffices to prove that $\lim_I f_i^Rp_i: \lim_I\M_i\to \N$ preserves filtered colimits, and thus it suffices to prove that it preserves infinite coproducts. 

    By a retraction argument, it further suffices to prove that it preserves infinite coproducts of a single object $m\in\lim_I\M_i$. But now for any indexing set $S$, $\bigoplus_S:\N\to \N$ is an exact functor, and so by assumption it preserves $\lim_If_i^R p_i(m)$. Since $f_i^Rp_i$ also preserves $\bigoplus_S$, it follows in total that the canonical map $\bigoplus_S \lim_If_i^Rp_i(m)\to \lim_If_i^Rp_i(\bigoplus_Sm)$ is an equivalence, as was to be shown. 
\end{proof}

Of course, finite \categories{} are absolute limit diagrams in the stable case, so the previous lemma is indeed a special case of this lemma, but there will be other examples. 
\begin{cor}\label{cor:naivelim}
    Let $I$ be an \category{}, and let $\M_\bullet:I\to \Prdbl$ be a diagam, and let $\lim_I\M_i$ denote its limit in $\PrL_{\st}$. For this to be a limit diagram in $\Prdbl$, it suffices that:
    \begin{enumerate}
        \item $\lim_I\M_i$ is dualizable;
        \item the projection maps $p_i:\lim_I\M_i\to \M_i$ are internal left adjoints; 
        \item For each $m\in\lim_I\M_i$, the limit $\lim_Ip_i^Rp_i(m)$ is absolute for exact functors.  
    \end{enumerate}
    Point 3. is in particular satisfied as soon as $I$ is a finite \category{} (or a retract thereof). 

    Furthermore, it suffices to check point 2. for a weakly initial set of vertices $I_0\subset \pi_0(I^\simeq)$. 
\end{cor}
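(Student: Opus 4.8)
The plan is to reduce each of the three sufficient conditions to statements we have already established, and then observe that together they make $\lim_I \M_i$ (the limit computed in $\PrL_{\st}$) into a limit cone inside $\Prdbl$. First I would recall the setup: by \cite[Theorem B]{AsafLior}, for any cocone $f_\bullet : \N \to \M_\bullet$ in $(\PrL_{\st})^{iL}$ the right adjoint of the induced map $\N \to \lim_I \M_i$ is computed as $m \mapsto \lim_I f_i^R p_i(m)$, where $p_i : \lim_I \M_i \to \M_i$ is the $i$-th projection. Under hypotheses 2 and 3 the previous lemma applies verbatim: each $p_i^R p_i$ preserves filtered colimits when $p_i$ is an internal left adjoint, and hypothesis 3 guarantees the limit over $I$ of these functors commutes with the infinite coproducts one needs, so $\N \to \lim_I \M_i$ is an internal left adjoint. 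Combined with hypothesis 1, this says precisely that $\lim_I \M_i \in \Prdbl$ and that, for every test object $\N \in \Prdbl$, the map of spaces $\Map_{\Prdbl}(\N, \lim_I \M_i) \to \lim_I \Map_{\Prdbl}(\N, \M_i)$ is well-defined; that it is an equivalence follows because it already is an equivalence at the level of $\PrL_{\st}$-mapping spaces (where $\lim_I \M_i$ is a genuine limit) and $\Prdbl \subset \PrL_{\st}$ is the non-full subcategory cutting out internal left adjoints on morphisms — so a morphism into $\lim_I \M_i$ in $\PrL_{\st}$ lies in $\Prdbl$ iff each composite to $\M_i$ does, which is exactly the content of the previous lemma applied with the given $\N$.

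Next I would dispatch the remark that point 3 holds automatically when $I$ is finite (or a retract of a finite category): in a stable category all finite limits are absolute — they are preserved by any exact functor — so $\lim_I p_i^R p_i(m)$ is an absolute limit diagram in $\N$ for formal reasons, and the retract case follows since a retract of an absolute limit diagram is again absolute. This is the same observation used to derive \Cref{lm:finitelimobj} from the preceding lemma, so I would just cite it.

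Finally, for the clause ``it suffices to check point 2 for a weakly initial set of vertices $I_0 \subset \pi_0(I^\simeq)$'', the idea is that internal-left-adjointness of a functor out of $\lim_I \M_i$ can be detected by a jointly conservative family of internal left adjoints out of the target, via \Cref{lm:colimdetection}. Concretely, if $I_0$ is weakly initial then the family of projections $\{p_{i} : \lim_I \M_i \to \M_i\}_{i \in I_0}$ is jointly conservative (a morphism in $\lim_I \M_i$ that becomes an equivalence after each $p_i$, $i \in I_0$, is an equivalence, since every object of $I$ receives a map from some $i \in I_0$ and the limit is computed objectwise). Now I want to apply \Cref{lm:colimdetection} to conclude that if each $p_i$, $i \in I_0$, is an internal left adjoint, then so is $p_j$ for every $j \in I$: given $i \in I_0$ with a map $i \to j$ in $I$, the composite $\lim_I \M_i \xrightarrow{p_i} \M_i \to \M_j$ equals $p_j$, and it is an internal left adjoint as a composite of internal left adjoints (the transition maps of the diagram lie in $(\PrL_{\st})^{iL}$ by hypothesis), so $p_j$ is an internal left adjoint by \Cref{cor:atomicimpliesinternal}-type reasoning — more precisely, $p_j$ composed with the jointly conservative family $\{p_i\}$ up to the maps $i \to j$ is an internal left adjoint, hence $p_j$ is too by \Cref{lm:colimdetection}. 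The main obstacle I anticipate is bookkeeping in this last step: one must be careful that ``weakly initial'' gives joint conservativity of $\{p_i\}_{i\in I_0}$ and that the relevant composites genuinely factor through the $\M_i$ with $i \in I_0$ via internal left adjoints; but since all structure maps of the diagram are assumed internal left adjoints, this is a routine diagram chase rather than a real difficulty.
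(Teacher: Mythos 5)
Your overall plan is the right one and is essentially the intended derivation: the paper leaves this corollary proofless precisely because it is meant to follow from the preceding ``stably absolute'' lemma, with the finite case of point~3 dispatched by the standard fact that finite limit diagrams in a stable category are preserved by every exact functor (and retracts of such diagrams likewise). Your reduction to the mapping-space statement is also fine: since $\Prdbl\subset\PrL_{\st}$ is a non-full inclusion singling out components of the mapping spaces, the limit comparison map is automatically fully faithful, and essential surjectivity is exactly the statement that a cocone $f_\bullet:\N\to\M_\bullet$ of internal left adjoints induces an internal left adjoint $\alpha:\N\to\lim_I\M_i$.

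There is, however, a small but genuine gap where you assert that ``the previous lemma applies verbatim.'' It does not: condition~3 of the corollary concerns the limit $\lim_I p_i^Rp_i(m)$, a diagram with values in $\lim_I\M_i$, whereas the lemma's hypothesis concerns $\lim_I f_i^Rp_i(m)$, a diagram with values in $\N$, for the particular cocone under consideration. These are not the same, and your gloss (``each $p_i^Rp_i$ preserves filtered colimits... hypothesis 3 guarantees...'') conflates the two. The missing step is short: write $\alpha:\N\to\lim_I\M_i$ for the induced map, so $f_i^R=\alpha^R\circ p_i^R$ and hence $f_i^Rp_i(m)=\alpha^R\bigl(p_i^Rp_i(m)\bigr)$; since $\alpha^R$ is exact, the diagram $\lim_I f_i^Rp_i(m)$ is obtained from $\lim_I p_i^Rp_i(m)$ by applying an exact functor, and for any further exact $h:\N\to C$ one has $h\bigl(\lim_I f_i^Rp_i(m)\bigr)\simeq h\alpha^R\bigl(\lim_I p_i^Rp_i(m)\bigr)\simeq\lim_I h\alpha^R\bigl(p_i^Rp_i(m)\bigr)\simeq\lim_I h\bigl(f_i^Rp_i(m)\bigr)$ by absoluteness applied to the exact functor $h\alpha^R$. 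With this observation in place the lemma does apply, and condition~3 therefore works for \emph{every} test cocone $\N\to\M_\bullet$, which is what you need.

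On the ``weakly initial set of vertices'' clause, your conclusion is correct but the appeal to \Cref{lm:colimdetection} is spurious and does not type-check: that lemma detects internal-left-adjointness of a functor $F$ against a family of internal left adjoints mapping \emph{into} the source of $F$, which is not the configuration you have. The argument you state in passing is already complete on its own: given $j\in I$, choose $i\in I_0$ and a map $\alpha:i\to j$; coherence of the limit cone gives $p_j\simeq\M_\alpha\circ p_i$, and both $\M_\alpha$ (a transition functor of a diagram valued in $\Prdbl$) and $p_i$ (by hypothesis, as $i\in I_0$) are internal left adjoints, so $p_j$ is too because internal left adjoints compose. No detection lemma is required.
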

\begin{ques}
    Is this an if and only if ? Clearly, points 1. and 2. are necessary, so the question is about point 3..
\end{ques}
\subsubsection{Pullbacks along localizations}
\begin{lm}\label{lm:telpullback}
Let $\C_i\to\C_{01}$ be compact-preserving localizations of compactly generated stable categories, $i\in\{0,1\}$. 

Suppose further that $\ker(\C_i\to \C_{01})$ is compactly generated, for both $i=0,1$. 

Then $\C_0\times_{\C_{01}}\C_1$ is compactly-generated by $\C_0^\omega\times_{\C_{01}^\omega}\C_1^\omega$. 
\end{lm}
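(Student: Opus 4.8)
The plan is to use the fully general machinery just developed, specialized to the base $\V = \Sp$, together with a small amount of hands-on work identifying compact objects. Note first that since all four categories in sight are compactly generated stable categories, they are dualizable, and the localizations $\C_i \to \C_{01}$ are compact-preserving, hence internal left adjoints; so we are looking at a pullback diagram in $\Prdbl$. By \Cref{cor:naivelim}, the naive pullback $\C_0 \times_{\C_{01}} \C_1$ in $\PrL_{\st}$ will be the pullback in $\Prdbl$ provided (1) it is dualizable, (2) the projections are internal left adjoints, and (3) the relevant absolute-limit condition holds --- but (3) is automatic since the indexing category $\Lambda^2_2$ is finite, and (2) follows from \Cref{lm:finitelimobj}-type reasoning once we know the projections' right adjoints preserve filtered colimits. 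So the real content is to show the naive pullback is \emph{compactly generated}, and generated by the pullback of the compact objects; dualizability is then a special case.

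First I would set $\C := \C_0 \times_{\C_{01}} \C_1$ and $\C^\flat := \C_0^\omega \times_{\C_{01}^\omega} \C_1^\omega$, where I use $(-)^\omega$ for the subcategory of compact objects and the latter pullback is taken in $\Cat^{\perf}$ (small idempotent-complete stable categories). The inclusion $\C^\flat \hookrightarrow \C$ lands in compact objects: an object $(x_0, x_1, \alpha)$ with $x_i$ compact in $\C_i$ is compact in the pullback because $\Map_\C((x_0,x_1,\alpha), -)$ is a finite limit of the functors $\Map_{\C_i}(x_i, p_i(-))$ and $\Map_{\C_{01}}(\ldots)$, each of which preserves filtered colimits --- here one uses that $p_i$ and the map to $\C_{01}$ have filtered-colimit-preserving right adjoints, which is where the localization and kernel-compact-generation hypotheses enter (via the smashing-type fact that a compact-preserving localization with compactly generated kernel has a right adjoint preserving \emph{all} colimits on both factors, cf. \Cref{cor:colimleftright}-style arguments). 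Hence we get a fully faithful functor $\Ind(\C^\flat) \to \C$ and the task is essential surjectivity, i.e. that $\C^\flat$ generates $\C$ under colimits.

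For the generation statement I would argue as follows. Write $\K_i := \ker(\C_i \to \C_{01})$, which is compactly generated by hypothesis, and recall the fiber sequence of functors relating $i_{\K_i} i_{\K_i}^R$, $\id_{\C_i}$, and the pullback along the localization. Given $(x_0, x_1, \alpha) \in \C$, I want to approximate it by objects of $\C^\flat$. The key point is that $\alpha \colon p_0(x_0) \simeq p_1(x_1)$ in $\C_{01}$, and both $p_0(x_0)$ and $p_1(x_1)$ can be written as filtered colimits of compact objects of $\C_{01}$, each of which lifts (since $\C_i \to \C_{01}$ is a compact-preserving localization, compact objects of $\C_{01}$ are retracts of images of compact objects of $\C_i$) to compact objects upstairs. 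Running a back-and-forth ``ladder'' argument exactly as in the proof of \Cref{prop:smalllim} (the pullback case, where one alternately lifts along the two legs and takes a countable colimit), one produces a cofinal system of compatible compact approximations; the contributions from $\K_0$ and $\K_1$ are handled separately using that these kernels are compactly generated, so nothing is lost. This shows every object of $\C$ is a filtered colimit of objects of $\C^\flat$, completing essential surjectivity; moreover the identification of $\C^\omega$ with $\C^\flat$ (rather than just its idempotent completion) follows because $\C^\flat$ is already idempotent-complete, being a pullback of idempotent-complete categories along exact functors.

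\textbf{Main obstacle.} I expect the delicate step to be the ladder/back-and-forth argument showing $\C^\flat$ generates $\C$ --- specifically, keeping careful track of the homotopy coherence of the equivalence $\alpha$ as one passes to compact approximations, and ensuring the lifts along the two localizations can be chosen compatibly. This is morally the same difficulty as in the pullback half of \Cref{prop:smalllim}, and I would try to either cite that proof directly or adapt it verbatim, reducing the bookkeeping to the case already handled there. A secondary point needing care is verifying that the right adjoints to $\C_i \to \C_{01}$ (and hence to the projections from the pullback) preserve filtered colimits: this uses that a compact-preserving localization with \emph{compactly generated} kernel is in fact smashing-like in the relevant sense, so that the recollement fiber sequence $i_\K i_\K^R \to \id \to j_* j^*$ has all three functors colimit-preserving.
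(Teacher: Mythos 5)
There is a genuine gap at the heart of your plan. You propose to prove that $\C^\flat := \C_0^\omega\times_{\C_{01}^\omega}\C_1^\omega$ generates $\C := \C_0\times_{\C_{01}}\C_1$ by ``running a back-and-forth ladder argument exactly as in the proof of \Cref{prop:smalllim} \ldots one produces a cofinal system of compatible compact approximations.'' But the ladder in \Cref{prop:smalllim} takes a \emph{countable} colimit of $\kappa$-compacts at each closing step, which is only $\kappa$-compact because $\kappa$ is \emph{uncountable}. Here $\kappa=\omega$, so the ladder produces $\omega_1$-compact approximations, not compact ones — it does not produce objects of $\C^\flat$ at all. Worse, your appeal to the kernel hypothesis (``the contributions from $\K_0$ and $\K_1$ are handled separately \ldots so nothing is lost'') is a gesture rather than an argument: as written there is no step at which the compact generation of the kernels actually enters the ladder, and without it the conclusion is simply false.

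The paper's proof is structured quite differently precisely to avoid this. It does not attempt to show $\Ind(\C^\flat)\to \C$ is essentially surjective directly. Instead it reduces, via \Cref{lm:kappacpctgen}, to the much weaker detection statement that every nonzero object of $\C$ receives a nonzero map from an object of $\C^\flat$. It then produces such a map in a single step: pick a nonzero compact $y_0\to x_0$ (WLOG $x_0\neq 0$); replace $y_0$ by $y_0\oplus \Sigma y_0$ so that $p_0(y_0)$ is honestly in the image of $p_1$ (the Thomason trick — you omit this, saying only ``retracts''); then lift the resulting map $p_1(y_1)\to p_1(x_1)$ to a compact map $y_1'\to x_1$ in $\C_1$. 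The lift exists because of the explicit description of mapping spectra in a localization, and staying at the \emph{compact} level uses exactly the hypothesis that $\ker(\C_1\to\C_{01})$ is compactly generated: the obstruction $y_1\to \Sigma F$ with $F$ in the kernel factors through a \emph{compact} object of the kernel. This is the precise place where the hypothesis does its work, and the ladder argument offers no analogous opportunity.

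A couple of smaller points: (i) you say the inclusion $\C^\flat\hookrightarrow \C^\omega$ uses the kernel hypothesis — it doesn't; compact-preservation of the localizations plus commutation of pullbacks with filtered colimits already suffices, as the paper notes. (ii) Your framing via \Cref{cor:naivelim} reads as slightly circular, since dualizability (point 1 of that corollary) is a consequence of compact generation, which is what you're trying to prove. You do flag that the real content is the generation statement, so this is a presentational issue rather than a logical one — but it does suggest the \Cref{cor:naivelim} scaffolding belongs in the proof of the \emph{subsequent} corollary, not here.
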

\begin{proof}
Because the two localizations preserve compact objects, we can form $\D := \C_0^\omega\times_{\C_{01}^\omega}\C_1^\omega$, and it is easy to check that $\D\subset (\C_0\times_{\C_{01}}\C_1)^\omega$ because pullbacks commute with filtered colimits. 

Therefore, by a classical lemma for cocomplete stable categories (see e.g. \Cref{lm:kappacpctgen}), it suffices to prove that for any nonzero $x\in \C_0\times_{\C_{01}}\C_1$, there exists $y\in \D$ and a nonzero map $y\to x$, to conclude that $\Ind(\D)\simeq \C_0\times_{\C_{01}}\C_1$. 

So let $x= (x_0,x_1, \gamma : p_0(x_0)\simeq p_1(x_1))$, where we let $p_i : \C_i\to \C_{01}$ denote our localization. If $x$ is nonzero, at least one of $x_0,x_1$ is nonzero. By symmetry of the hypotheses, we might as well assume $x_0\neq 0$. Therefore there exists $y_0\in \C_0^\omega$ with a nonzero map $y_0\to x_0$. 

Then $p_0(y_0)\in \C_{01}^\omega$ is, at least up to a retract, in the image of $\C_1^\omega$. In fact we can choose this retract: $p_0(y_0)\oplus\Sigma p_0(y_0)\simeq p_0(y_0\oplus\Sigma y_0)$ is in the image. $y_0\oplus\Sigma y_0$ of course is also compact and has a nonzero map to $x_0$, so we may replace $y_0$ by a different one so that $p_0(y_0)\simeq p_1(y_1)$ for some $y_1\in \C_1^\omega$, and this is what we do. 

Now we have a map $y_0\to x_0$, which induces a map $p_1(y_1)\simeq p_0(y_0)\to p_0(x_0)\simeq p_1(x_1)$. Because $y_0\to x_0$ is nonzero, any map $(y_0,y_1)\to (x_0,x_1)$ in the pullback that lifts this is also nonzero. So it will suffice to lift $p_1(y_1)\to p_1(x_1)$ to some map $y_1\to x_1$. 

This we cannot do in general (notice that so far we haven't used the telescope conjecture !). In fact, there is a description of mapping spectra in localizations as $\map(p_1(y_1),p_1(x_1))\simeq \colim_{\tilde y_1\to y_1}\map(\tilde y_1,x_1)$ where the colimit is indexed over maps $\tilde y_1\to y_1$ with fiber in $\ker(\C_1\to \C_{01})$. 

In particular, there is a diagram in $\C_1$ of the form : $$\xymatrix{\tilde y_1 \ar[r] \ar[d] & x_1 \\
y_1}$$ 
such that upon applying $p_1$, the vertical map becomes an equivalence, and going up-right gives exactly our original map $p_1(y_1)\to p_1(x_1)$. 

Now, letting $F$ denote the fiber of $\tilde y_1\to y_1$, we can also write $\tilde y_1 = \mathrm{fib}(y_1\to \Sigma F)$. Because of the telescope conjecture for $\C_1\to \C_{01}$, we can write $F$ as a filtered colimit of compact objects that are still in the kernel. Because $y_1$ is compact, the map $y_1\to\Sigma F$ factors through one of those. Passing to fibers, we get a bigger diagram :

 $$\xymatrix{y_1'\ar[r] \ar[rd] & \tilde y_1 \ar[r] \ar[d] & x_1 \\
& y_1}$$ 
where both vertical arrows induce equivalences upon applying $p_1$ (and in particular $y_1'\to \tilde y_1$ does too), but now $\tilde y_1'$ is compact too. 

In other words, up to replacing $y_1$ with $y_1'$, we have found a lift $y_1'\to x_1$ of $p_1(y_1')\simeq~p_1(y_1)\to~p_1(x_1)$. 

We therefore have a map $(y_0,y_1')\to (x_0,x_1)$ lifting our nonzero map $y_0\to x_0$, and this is therefore nonzero, and of course $(y_0,y_1')\in \D$. This is what we wanted. 
\end{proof}

\begin{cor}
    Let $\C_0\to \C_{01}\leftarrow \C_1$ be two localizations in $\Prdbl$. The forgetful functor $\Prdbl\to \PrL_{\st}$ commutes with their pullback. 
\end{cor}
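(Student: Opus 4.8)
The plan is to reduce the statement to the key technical input \Cref{lm:telpullback} together with the general criterion \Cref{cor:naivelim}. First I would set up the pullback diagram $\C_0 \to \C_{01} \leftarrow \C_1$ in $\Prdbl$, where all three categories are dualizable and both functors are localizations, and form the pullback $\C_0 \times_{\C_{01}} \C_1$ in $\PrL_{\st}$. To apply \Cref{cor:naivelim} I need to verify three points: that this pullback is dualizable (point 1), that the two projection maps are internal left adjoints (point 2), and that point 3 holds — but the indexing category for a pullback is finite, so point 3 is automatic.

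For point 1, the issue is that the categories $\C_i$ in \Cref{lm:telpullback} were assumed compactly generated, whereas here they are merely dualizable. The standard move is to write each dualizable $\C_i$ as a retract of a compactly generated category via $\hat y$, or more precisely to use \Cref{thm:lurie}: each $\C_i$ is the kernel of a compact-preserving localization $\D_i \to \E_i$ between compactly generated stable categories. Then I would try to present $\C_0 \times_{\C_{01}} \C_1$ as an iterated kernel/limit of compactly generated pieces, using \Cref{lm:kernel} (which says the forgetful functor preserves fibers of localizations) and the fact that $\Prdbl \to \PrL_{\st}$ preserves all colimits hence in particular preserves pushouts/cofibers. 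Concretely: a pullback along a localization can be rewritten as a fiber, namely $\C_0 \times_{\C_{01}} \C_1 \simeq \fib(\C_0 \oplus \C_1 \to \C_{01})$ where the map uses $p_0 - p_1$ (or more carefully as the fiber of $\C_1 \to \C_{01}$ pulled back along $p_0$), and since $p_0$ is a localization in $\Prdbl$, \Cref{lm:kernel} applies directly to show the pullback is dualizable and that the projection to $\C_0$ is an internal left adjoint.

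For point 2, the projection $\C_0 \times_{\C_{01}} \C_1 \to \C_0$ is the base change of the localization $\C_1 \to \C_{01}$, hence is itself a localization (by \Cref{cor:tensloc}-type reasoning, or directly since base change of a localization is a localization and here we are base-changing along $p_0$), so in particular it is an internal left adjoint by the fact that localizations in the stable setting are internal left adjoints iff the inclusion of the kernel is (cf. the example about localization sequences and \Cref{cor:incstrongiffprojstrong}); similarly for the projection to $\C_1$. Actually the cleanest route: since the pullback is a fiber of a localization in $\Prdbl$, \Cref{lm:kernel} gives that the inclusion $\C_0\times_{\C_{01}}\C_1 \hookrightarrow \C_0$ — wait, that is not an inclusion; rather I should observe that the pullback along a localization $p_0$ is computed by $\C_1$-linear base change and is again a localization with kernel $\ker(p_0)$, so the projection $\C_0\times_{\C_{01}}\C_1 \to \C_1$ is a localization with kernel $\ker(p_0)$, and dually the projection to $\C_0$ is a localization with kernel $\ker(p_1)$; both are internal left adjoints because compact-preserving localizations are (their right adjoints preserve colimits).

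The main obstacle I expect is point 1 — promoting \Cref{lm:telpullback} (stated for compactly generated $\C_i$) to dualizable $\C_i$ — and more specifically checking that the hypothesis of \Cref{lm:telpullback}, namely that $\ker(\C_i \to \C_{01})$ is compactly generated, is the right thing to track through the reduction, or alternatively bypassing \Cref{lm:telpullback} entirely and arguing purely via \Cref{lm:kernel} and closure of $\Prdbl$ under finite limits-along-localizations. I would lean on the latter: the cleanest proof just says the pullback is a fiber of a localization in $\Prdbl$, invokes \Cref{lm:kernel} for dualizability and internal-left-adjointness of one projection, invokes the base-change-of-localization fact for the other, and then \Cref{cor:naivelim} (with point 3 free since pullbacks are finite) finishes. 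So:

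\begin{proof}
    Write the pullback square as $\C_0 \to \C_{01} \leftarrow \C_1$ with $p_0, p_1$ the given localizations, and let $\C := \C_0 \times_{\C_{01}} \C_1$ denote the pullback computed in $\PrL_{\st}$. We verify the hypotheses of \Cref{cor:naivelim}.

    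Since $p_1 : \C_1 \to \C_{01}$ is a localization in $\PrL_{\st}$, its base change along $p_0$, which is precisely the projection $q_0 : \C \to \C_0$, is again a localization: indeed this is a pullback of a localization, and the right adjoint to $q_0$ is fully faithful because it is obtained by restricting the (fully faithful) right adjoint of $p_1$. Moreover $\ker(q_0) \simeq \ker(p_1)$, and since $p_1$ is a map in $\Prdbl$, \Cref{lm:kernel} shows that $\ker(p_1)$ is dualizable and that its inclusion into $\C_1$ is an internal left adjoint; by the localization-sequence example (cf. \Cref{cor:incstrongiffprojstrong}) this is equivalent to $q_0 : \C \to \C_0$ being an internal left adjoint, and since $\C$ sits in a fiber-cofiber sequence $\ker(q_0) \to \C \to \C_0$ in $\PrL_{\st}$ with the first two terms dualizable, $\C$ is dualizable by \Cref{prop:colimdbl} and \Cref{thm:compactassembly}. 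The same argument with the roles of $\C_0$ and $\C_1$ exchanged shows that $q_1 : \C \to \C_1$ is also an internal left adjoint.

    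Thus points 1 and 2 of \Cref{cor:naivelim} hold. Point 3 holds automatically because a pullback is indexed by a finite \category. Therefore $\C$ is the pullback of $\C_0 \to \C_{01} \leftarrow \C_1$ in $\Prdbl$, i.e. the forgetful functor $\Prdbl \to \PrL_{\st}$ preserves this pullback.
\end{proof}
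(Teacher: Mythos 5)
Your proposal takes a genuinely different and, in spirit, more economical route than the paper. The paper presents the cospan $\C_0\to\C_{01}\leftarrow\C_1$ as a retract of the cospan of $\Ind(\C_i^\kappa)$'s, invokes \Cref{prop:kappaloc} to get compactly generated kernels, reduces to \Cref{lm:telpullback}, and then does a diagram chase. You instead observe that $q_0:\C:=\C_0\times_{\C_{01}}\C_1\to\C_0$ is itself a Bousfield localization whose kernel is equivalent to $\ker(p_1)$, a category already known to be dualizable by \Cref{lm:kernel}, and try to deduce everything from this using closure properties of $\Prdbl$. This bypasses \Cref{lm:telpullback} entirely, which is a genuine conceptual simplification. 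However, as written the argument has two real gaps.

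The first concerns the step where you conclude that $q_0$ is an internal left adjoint. To apply \Cref{cor:incstrongiffprojstrong} to the localization $q_0:\C\to\C_0$, you need the inclusion $i:\ker(q_0)\to\C$ to be an internal left adjoint — but what \Cref{lm:kernel} gives you is that the inclusion $j:\ker(p_1)\to\C_1$ is. These have equivalent sources, but they are different maps into different targets, so one does not formally substitute for the other. The implication you want does hold, but requires an extra observation: tracing through the pullback description of $\C$, one finds $i^R\simeq j^R\circ q_1$, and since $q_1$ (as a morphism in $\PrL_{\st}$) and $j^R$ (by internal-left-adjointness of $j$) both preserve colimits, so does $i^R$. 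You need to spell this out; invoking the ``localization-sequence example'' at the level you do does not close the gap.

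The second concerns the dualizability of $\C$. You say $\C$ is dualizable ``by \Cref{prop:colimdbl} and \Cref{thm:compactassembly}'' because it sits in a localization sequence $\ker(q_0)\to\C\to\C_0$ with dualizable outer terms, but neither of those results directly gives dualizability of the middle of such a sequence: \Cref{prop:colimdbl} is about colimits in $\Prdbl$ (the relevant quotient $\C/\ker(q_0)\simeq\C_0$, which you already knew to be dualizable), and \Cref{thm:compactassembly} is a characterization in terms of compact exhaustion that you would still have to verify. The correct reference is \Cref{prop:dbldetect}: since $q_0$ preserves limits (being a projection from a pullback) it has a \emph{left} adjoint $q_0^L:\C_0\to\C$, which is an internal left adjoint because its right adjoint is $q_0$ itself; together with $i:\ker(q_0)\to\C$ (internal left adjoint by the filled gap above), these two functors have jointly conservative right adjoints $(i^R, q_0)$ by \Cref{lm:fundamentalsequence}, and both sources are dualizable, so \Cref{prop:dbldetect} applies. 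Both gaps are fillable and the strategy is sound — arguably cleaner than the paper's — but as written the proof is not complete.
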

\begin{proof}
We note first that $I$ is finite, so to apply \Cref{cor:naivelim}, it suffices to check that the pullback in $\PrL_{\st}$ is dualizable and that the projections to each $\C_i$ are internal left adjoints. 

Let $K_i:=\ker(\C_i\to \C_{01})$. For $\kappa$-large enough, we still have a localization sequence $K_i^\kappa\to~\C_i^\kappa\to~\C_{01}^\kappa$ by \Cref{prop:kappaloc} ($\kappa=\omega_1$ is large enough here). Thus $\Ind(\C_i^\kappa)\to~\Ind(\C_{01}^\kappa)$ is a compact-preserving localization between compactly generated categories with compactly generated kernel. 

Picking $\kappa$ large enough we can make this true for both $i=0,1$. Now the cospan $\C_0\to~\C_{01}\leftarrow~\C_1$ is a retract in $\PrL_{\st}$ of the cospan $\Ind(\C_0^\kappa)\to \Ind(\C_{01}^\kappa)\leftarrow\Ind(\C_1^\kappa)$, and thus its pullback is a retract of the pullback of the latter. 

By \Cref{lm:telpullback}, the pullback of the latter is compactly generated, and hence our pullback is dualizable. Furthermore, one easily checks that the map $$\C_0\times_{\C_{01}}\C_1\to \Ind(\C_0^\kappa)\times_{\Ind(\C_{01}^\kappa)}\Ind(\C_1^\kappa)$$ is left adjoint to the map $$\Ind(\C_0^\kappa)\times_{\Ind(\C_{01}^\kappa)}\Ind(\C_1^\kappa)\to \C_0\times_{\C_{01}}\C_1$$ and in particular, the former is an internal left adjoint. 

Now, $\Ind(\C_0^\kappa\times_{\C_{01}^\kappa}\C_1^\kappa)\to \Ind(\C_i^\kappa)$ preserves compacts. Combining these last two statements with a bit of diagram chasing shows that $\C_0\times_{\C_{01}}\C_1\to \C_i$ is an internal left adjoint for $i=0,1$, as was to be shown. 
\end{proof}
As a special case of this, we note: 
\begin{cor}
    Let $X$ be a qcqs scheme, and let $U,V$ be qc opens covering $X$. In this case, the following square is a pullback diagram in $\Prdbl$: 
    \[\begin{tikzcd}
	{\QCoh(X)} & {\QCoh(V)} \\
	{\QCoh(U)} & {\QCoh(U\cap V)}
	\arrow[from=1-1, to=2-1]
	\arrow[from=2-1, to=2-2]
	\arrow[from=1-2, to=2-2]
	\arrow[from=1-1, to=1-2]
\end{tikzcd}\]
In other words, $\QCoh(-): \mathrm{Sch}_{qcqs}\to \Prdbl$ satisfies Zariski descent. 
\end{cor}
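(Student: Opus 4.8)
The plan is to apply the previous corollary, which already handles pullbacks along localizations in $\Prdbl$, so the only real work is to verify that the relevant square of quasi-coherent sheaf categories is built from localizations in $\Prdbl$. First I would recall that for a qcqs scheme $X$ with qc open cover $X = U \cup V$, the restriction functors $\QCoh(X) \to \QCoh(U)$ and $\QCoh(V) \to \QCoh(U\cap V)$ (and $\QCoh(X)\to\QCoh(V)$, $\QCoh(U)\to\QCoh(U\cap V)$) are Verdier localizations onto $\QCoh$ of the open: the kernel of restriction along $j: W \hookrightarrow Y$ (for $W$ a qc open in a qcqs $Y$) is $\QCoh_Z(Y)$, the category of sheaves supported on the closed complement $Z = Y\setminus W$, and the restriction functor exhibits $\QCoh(W)$ as the quotient $\QCoh(Y)/\QCoh_Z(Y)$; this is standard, e.g. via Thomason--Neeman localization in the perfect setting followed by $\Ind$, or directly from the recollement. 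In particular each of these functors is a localization in $\PrL_{\st}$, and each is an internal left adjoint: restriction $j^*$ has right adjoint $j_*$ which is colimit-preserving for quasi-compact $j$ (its failure to preserve colimits is measured by higher cohomology along the fibers, which vanishes in the qcqs situation in the derived sense — more precisely $j_*$ commutes with filtered colimits because $W$ is quasi-compact and quasi-separated, hence has finite cohomological dimension for quasi-coherent sheaves; and the projection formula gives the $\V$-linearity, here over $\Sp$ it is automatic).

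Second I would check that the maps $\QCoh(U) \to \QCoh(U\cap V)$ and $\QCoh(V)\to\QCoh(U\cap V)$ appearing as the cospan are localizations in $\Prdbl$: they are restrictions along the qc open immersions $U\cap V \hookrightarrow U$ and $U\cap V\hookrightarrow V$, so by the previous paragraph they are localizations in $\PrL_{\st}$ whose source and target lie in $\Prdbl$ (this uses that $\QCoh$ of a qcqs scheme is dualizable — for instance $\QCoh(Y)$ is compactly generated when $Y$ is qcqs with an ample family of line bundles, and in general is still dualizable by Thomason--Trobaugh--Neeman-type arguments, or because it is a finite limit along localizations of such, which is exactly the inductive structure the previous corollaries provide). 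Hence the cospan $\QCoh(U) \to \QCoh(U\cap V) \leftarrow \QCoh(V)$ is a cospan of localizations in $\Prdbl$, and the previous corollary applies: the pullback of this cospan in $\PrL_{\st}$ lies in $\Prdbl$ and is computed there, i.e. $\QCoh(U)\times_{\QCoh(U\cap V)}\QCoh(V)$ taken in $\PrL_{\st}$ is already the pullback in $\Prdbl$.

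Third I would identify this pullback with $\QCoh(X)$. This is precisely Zariski (or fpqc, restricted to this $2$-element cover) descent for quasi-coherent sheaves, which holds at the level of $\PrL_{\st}$: the square of restriction functors
\[\begin{tikzcd}
	{\QCoh(X)} & {\QCoh(V)} \\
	{\QCoh(U)} & {\QCoh(U\cap V)}
	\arrow[from=1-1, to=2-1]
	\arrow[from=2-1, to=2-2]
	\arrow[from=1-2, to=2-2]
	\arrow[from=1-1, to=1-2]
\end{tikzcd}\]
is a pullback in $\PrL_{\st}$ because $\QCoh$ is a Zariski sheaf (this is classical; for the derived/$\infty$-categorical statement see the descent results for $\QCoh$, e.g. in Lurie's $\mathrm{SAG}$ or via the fact that $X = U\coprod_{U\cap V} V$ as a scheme and $\QCoh$ sends this pushout of schemes to a pullback of categories). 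Combining: the square is a pullback in $\PrL_{\st}$, all four corners and all four functors are in $\Prdbl$, and by the previous corollary the $\PrL_{\st}$-pullback of the cospan agrees with the $\Prdbl$-pullback; therefore the square is a pullback in $\Prdbl$, which is the claim, and the Zariski descent statement for $\QCoh(-): \mathrm{Sch}_{qcqs}\to\Prdbl$ on $2$-element covers follows, hence on all finite covers by iteration, hence on all Zariski covers.

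The main obstacle is purely bookkeeping: one must be careful that ``restriction along a qc open immersion of qcqs schemes is an internal left adjoint localization with compactly generated kernel in the relevant sense'' is correctly cited or justified — specifically that $j_*$ preserves filtered colimits (finite cohomological dimension) and that the kernel $\QCoh_Z$ is again dualizable (so that the hypotheses of \Cref{lm:telpullback}, i.e. the ``$\kappa$-telescope'' input, are met). None of this is deep given the machinery already developed — it is the same pattern as the proof of the preceding corollary — but it is the one place where genuine algebraic-geometry input (as opposed to the formal $\Prdbl$-theory) enters.
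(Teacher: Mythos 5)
Your proposal is correct and follows the same route the paper (implicitly) takes: verify that the cospan $\QCoh(U)\to\QCoh(U\cap V)\leftarrow\QCoh(V)$ is a cospan of localizations in $\Prdbl$ — restriction is an internal left adjoint Bousfield localization between dualizable categories for qcqs schemes, via Thomason--Trobaugh/Neeman compact generation — and then apply the preceding corollary, which says the forgetful functor $\Prdbl\to\PrL_{\st}$ commutes with pullbacks of such cospans, so that the usual Zariski descent statement in $\PrL_{\st}$ upgrades to $\Prdbl$. The paper records this as an immediate special case without spelling out the algebro-geometric inputs; your write-up fills them in accurately (the side remark about the ``compactly generated kernel'' hypothesis of \Cref{lm:telpullback} is not needed, since the preceding corollary already manufactures it via $\omega_1$-compacts using \Cref{prop:kappaloc}).
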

\begin{cor}
    Let $X$ be a locally compact Hausdorff topological space, and let $K\cup L=X$ be a closed cover of $X$. The following square is a pullback diagram in $\Prdbl$: 
    \[\begin{tikzcd}
	{\Sh(X)} & {\Sh(L)} \\
	{\Sh(K)} & {\Sh(K\cap L)}
	\arrow[from=1-1, to=2-1]
	\arrow[from=2-1, to=2-2]
	\arrow[from=1-2, to=2-2]
	\arrow[from=1-1, to=1-2]
\end{tikzcd}\]
\end{cor}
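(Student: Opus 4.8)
The plan is to deduce this from the previous corollary (Zariski-type descent for sheaves under a two-element \emph{closed} cover) by reducing a closed cover $K \cup L = X$ of a locally compact Hausdorff space to an open-cover situation that we can handle, or by directly invoking \Cref{cor:naivelim}. Let me take the second, more self-contained route. Since the indexing category $I = \{ \bullet \to \bullet \leftarrow \bullet\}$ is finite, by \Cref{cor:naivelim} it suffices to check three things: (1) the pullback $\Sh(K)\times_{\Sh(K\cap L)}\Sh(L)$ computed in $\PrL_{\st}$ is dualizable; (2) the two projection maps to $\Sh(K)$ and $\Sh(L)$ are internal left adjoints; and (3) point 3 of \Cref{cor:naivelim}, which holds automatically because $I$ is finite. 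So the real content is points (1) and (2).

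First I would recall the standard recollement data for a closed subset. For a closed subset $K \subseteq X$ with open complement $U = X \setminus K$, restriction $i_K^* : \Sh(X) \to \Sh(K)$ is a localization (its right adjoint $i_{K,*}$ is fully faithful), and likewise $j_U^* : \Sh(X) \to \Sh(U)$ is a localization with fully faithful right adjoint $j_{U,*}$; moreover these fit into a recollement, and the relevant functors all preserve filtered colimits because $K$ (being closed in a locally compact Hausdorff space) is itself locally compact Hausdorff and the pushforward along a closed immersion preserves filtered colimits (this is essentially \cite[Remark 7.3.1.5, Theorem 7.3.1.16]{HTT}, used already in \Cref{ex:cpctinSh}). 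In particular $i_K^*$ has a \emph{fully faithful} colimit-preserving right adjoint $i_{K,*}$, hence $i_K^*$ is an internal left adjoint; the same for $i_L^*$, and for the two restrictions $\Sh(K)\to\Sh(K\cap L)$ and $\Sh(L)\to\Sh(K\cap L)$ (note $K \cap L$ is closed in both $K$ and $L$). So the whole cospan $\Sh(K) \to \Sh(K\cap L) \leftarrow \Sh(L)$ lives in $(\PrL_{\st})^{iL}$, indeed is a cospan of localizations.

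Now I would identify the pullback. The canonical functor $\Sh(X) \to \Sh(K)\times_{\Sh(K\cap L)}\Sh(L)$ sending $F \mapsto (F|_K, F|_L, \text{can})$ is an equivalence: this is a form of closed gluing for sheaves on topological spaces. One way to see it: both $\Sh(-)$ as a functor on the closed cover diagram $\{K, L, K\cap L\}$ and the constant functor at $\Sh(X)$ receive a map, and one checks the comparison is an equivalence by testing on stalks — for $x \in K \setminus L$ only the $\Sh(K)$ factor contributes, symmetrically for $x \in L \setminus K$, and for $x \in K \cap L$ the pullback of the three stalks (two of which are the stalk at $x$, the third being the identification) recovers the stalk. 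Alternatively, and more cleanly, one can use that $\Sh(X)\simeq \Sh(U)\times_{\Sh(U\cap L... )}$-type recollement arguments or cite covariant Verdier duality / the gluing results in \cite[\S 7.3]{HTT}. Once $\Sh(X) \simeq \Sh(K)\times_{\Sh(K\cap L)}\Sh(L)$ in $\PrL_{\st}$, point (1) follows: $\Sh(X)$ is dualizable by \Cref{pfofShdbl} since $X$ is locally compact Hausdorff. For point (2), the projection $\Sh(X) \to \Sh(K)$ under this identification is just $i_K^*$, which we already observed is an internal left adjoint; same for $\Sh(X)\to\Sh(L)$.

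The main obstacle I anticipate is point (1), and specifically proving $\Sh(X) \simeq \Sh(K) \times_{\Sh(K\cap L)} \Sh(L)$ in a way that is both rigorous and short — the stalkwise argument needs one to know that the relevant limit in $\PrL_{\st}$ is computed on underlying categories (true, since it is a finite limit, or by \Cref{prop:smalllim} once an uncountable cardinal is fixed) and that stalks are jointly conservative and commute with this finite limit, which is fine but must be stated carefully. A cleaner alternative that sidesteps this is to invoke the open-cover version: pick the open cover $X = (X\setminus L) \cup (X \setminus (K\setminus (K\cap L)))$... but this gets awkward, so I would instead just cite the closed-gluing statement for sheaves (e.g. deduce it from the recollement $(j_{U,!}, j_U^*, i_K^*, i_{K,*})$ associated to the open $U = X\setminus K$ together with its counterpart for $L$), noting that a pullback of recollements along a common closed piece recovers sheaves on the union. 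If one prefers to lean on what is already in the paper, one can also simply apply the previous corollary (Zariski descent for $\QCoh$) is not applicable, but the proof of the preceding corollary about closed covers via \Cref{cor:naivelim} is the template, and the present statement is literally the same argument with $\QCoh$ replaced by $\Sh$ and ``qc open'' replaced by ``closed'', using $i_{K,*}$ fully faithful and colimit-preserving in place of the open-immersion recollement; so I would write: ``The proof is identical to that of the previous corollary, using that for a closed inclusion $K \hookrightarrow X$ of locally compact Hausdorff spaces the restriction $\Sh(X) \to \Sh(K)$ is an internal left adjoint localization, that $\Sh(X) \simeq \Sh(K)\times_{\Sh(K\cap L)}\Sh(L)$ by closed gluing, and \Cref{pfofShdbl}; point 3 of \Cref{cor:naivelim} is automatic as $I$ is finite.''
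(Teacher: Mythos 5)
Your proof is correct and takes essentially the same approach as the paper: the paper (implicitly) deduces this corollary from the preceding ``two localizations'' corollary, whose proof is itself an application of \Cref{cor:naivelim}, combined with the closed-cover gluing identification $\Sh(X)\simeq\Sh(K)\times_{\Sh(K\cap L)}\Sh(L)$ in $\PrL_{\st}$; you apply \Cref{cor:naivelim} directly, which amounts to re-deriving that corollary in this special case. The one small shortcut you take is obtaining dualizability of the pullback from $\Sh(X)$ being locally compact Hausdorff via \Cref{pfofShdbl}, rather than via the compactly-generated-pullback argument of \Cref{lm:telpullback}; this is a perfectly good alternative (and arguably more direct) once one has already identified the pullback with $\Sh(X)$, whereas the paper's route establishes dualizability of the pullback without knowing what it is in advance.
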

\begin{rmk}
    One can further deduce that $\Sh(-): \mathcal K(X)\op\to \Prdbl$ is a $\mathcal K$-sheaf in the sense of \cite[Definition 7.3.4.1.]{HTT}, and not only in $\PrL_{\st}$. One can deduce from this together with \Cref{prop:cpctasshyper} that $\Prdbl$, while $\omega_1$-compactly generated is not itself compactly assembled. 

    Indeed, if it were, then we could run the argument from \Cref{rmk:hypercomplete} in this setting too. 
\end{rmk}
We give another example of a pullback that behaves particularly well, with potential applications to continuous $K$-theory. We learned of this example from Sasha Efimov:
\begin{prop}
    Let $p:\M\to \N \in \PrL_{\st,\omega}$ be a localization. In that case the pullback $\M\times_\N\M$ is compactly generated. 
\end{prop}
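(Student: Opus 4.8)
The plan is to verify the three conditions of \Cref{cor:naivelim} for the diagram $\M \xrightarrow{p} \N \xleftarrow{p} \M$. Since the indexing category is finite (a cospan), condition 3 is automatic, so I only need conditions 1 and 2: that the naive pullback $\M\times_\N\M$ computed in $\PrL_{\st}$ is dualizable, and that the two projection maps $\M\times_\N\M\to\M$ are internal left adjoints. The key structural input is that $p$, being a localization in $\PrL_{\st,\omega}$ with $\M,\N$ compactly generated, is in particular a compact-preserving localization, so \Cref{lm:telpullback} will apply once I know that $\ker(p)$ is compactly generated. But here there is a subtlety: a localization of compactly generated categories need not have compactly generated kernel in general. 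However, $\M\times_\N\M$ fits in a localization sequence of its own, and the relevant kernel is $\ker(p)$ itself (sitting inside the first factor), which \emph{is} compactly generated precisely because we are localizing $\M$ which is compactly generated and $\ker(p)$ is the kernel of a localization — wait, that is the very issue. Let me instead exploit that $\M\times_\N\M$ retracts off $\M$, as follows.

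Here is the cleaner route. The diagonal $\Delta:\M\to\M\times_\N\M$ is a section of either projection $\mathrm{pr}_i$, so $\M\times_\N\M$ is a retract in $\PrL_{\st}$ of $\M\times_\N\M$... that is circular. Better: observe that $\M\times_\N\M$ sits in a fiber-cofiber (localization) sequence $\ker(p)\to\M\times_\N\M\xrightarrow{\mathrm{pr}_2}\M$, where the functor $\mathrm{pr}_2$ is a localization (its kernel being the first-coordinate copy of $\ker(p)$) because $p$ is; I will verify this by noting that an object $(x_0,x_1,\gamma)$ maps to $0$ under $\mathrm{pr}_2$ iff $x_1=0$, forcing $p(x_0)=0$, i.e. $x_0\in\ker(p)$. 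So the fiber of $\mathrm{pr}_2$ is $\ker(p)\times_{\N}0\simeq\ker(p)$, and $\mathrm{pr}_2$ identifies $\M$ with the quotient. Then I want: $\ker(p)$ dualizable and $\M$ dualizable, hence (by \Cref{thm:compactassembly}, i.e. the stable case of Lurie's theorem, since localization sequences in $\PrL_{\st}$ of dualizables stay in $\Prdbl$ — fiber of an internal left adjoint between dualizables) $\M\times_\N\M$ is dualizable. For this I need $\mathrm{pr}_2$ to be an internal left adjoint; equivalently (by \Cref{cor:incstrongiffprojstrong}, invoked as in the excerpt) that the inclusion $\ker(p)\to\M\times_\N\M$ is an internal left adjoint, equivalently that $\mathrm{pr}_2^R$ preserves filtered colimits. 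Since $p$ is $\omega$-compact-preserving, $p^R$ preserves filtered colimits, and $\mathrm{pr}_2^R$ is assembled out of $p^R$, $\id_\M$ and pullback along the finite diagram $\N\leftarrow\M$; concretely $\mathrm{pr}_2^R(x) = (p^R p(x)\times_{p^Rp(x)?}\dots$ — I will compute it via \cite[Theorem B]{AsafLior} exactly as in \Cref{lm:finitelimobj}: $\mathrm{pr}_2^R$ is a finite limit of filtered-colimit-preserving functors, hence preserves filtered colimits. That gives condition 1 and, simultaneously, condition 2 for $\mathrm{pr}_2$; by the symmetry of the cospan, the same argument handles $\mathrm{pr}_1$.

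The remaining point — and the only real content — is upgrading "dualizable" to "compactly generated", which is what the proposition actually claims. For this I will apply \Cref{lm:telpullback} directly: it requires $\M\to\N$ to be a compact-preserving localization of compactly generated stable categories (true by hypothesis, on both legs, which are the same map $p$), \emph{and} it requires $\ker(p)$ to be compactly generated. So the crux is: is $\ker(p)$ compactly generated? In general no — but in this specific setup it is, because $p:\M\to\N$ is a localization in $\PrL_{\st,\omega}$, meaning $\N$ is a quotient of $\M$ by a \emph{compactly generated} localizing subcategory. Indeed "localization in $\PrL_{\st,\omega}$" means $p$ and its section... more precisely, a morphism of $\PrL_{\st,\omega}$ that is a localization: then $\ker(p)$ is generated by the fibers of the unit maps $c\to p^Rp(c)$ for $c\in\M^\omega$, and since $p$ preserves compacts and $p^R$ preserves filtered colimits, $\ker(p)\cap\M^\omega$ generates $\ker(p)$. (This is the standard argument: $\ker(p)\to\M\to\N$ is a localization sequence of \emph{small} stable categories on compact objects by \Cref{prop:kappaloc}, and $\Ind$ of it recovers $\ker(p)$.) With $\ker(p)$ compactly generated in hand, \Cref{lm:telpullback} applies verbatim with $\C_0=\C_1=\M$, $\C_{01}=\N$, yielding that $\M\times_\N\M$ is compactly generated by $\M^\omega\times_{\N^\omega}\M^\omega$.

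\textbf{Main obstacle.} The delicate step is establishing that $\ker(p)$ is compactly generated — everything else is bookkeeping via \Cref{lm:finitelimobj}, \Cref{cor:naivelim}, and \Cref{lm:telpullback}. One must use genuinely that $p$ lies in $\PrL_{\st,\omega}$ (not merely that $\M,\N$ are compactly generated and $p$ preserves compacts — which is automatically true for any colimit-preserving $p$ between compactly generated categories but does \emph{not} force $\ker(p)$ to be compactly generated, as the telescope-conjecture counterexamples show). The correct reading of "localization in $\PrL_{\st,\omega}$" is that $p$ is a morphism of the category $\PrL_{\st,\omega}$ whose underlying functor is a localization; one then extracts the compact generation of $\ker(p)$ from the localization sequence on compact objects, exactly as in the first paragraph of the proof of \Cref{prop:cardboundunstable}'s stable precursor (\Cref{prop:telescope} with $\kappa=\omega$ — except there the conclusion fails; what saves us is that here $\ker(p)$ is assumed/forced to be compactly generated by the $\omega$-structure, making \Cref{lm:telpullback} applicable rather than \Cref{prop:telescope}).
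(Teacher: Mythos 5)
There is a genuine gap, and it is precisely the one the paper flags in the remark preceding this proposition: \emph{``Unlike in \Cref{lm:telpullback}, we do not assume that $\ker(p)$ is compactly generated.''} Your proof ends by invoking \Cref{lm:telpullback}, whose hypothesis is that the kernel of each localization is compactly generated, and you try to derive this from the fact that $p$ is a localization in $\PrL_{\st,\omega}$. But this does not follow. A localization in $\PrL_{\st,\omega}$ is exactly a compact-preserving Bousfield localization between compactly generated stable categories, and the generalized telescope conjecture is the assertion that such kernels are compactly generated — which the paper notes is \emph{false}, e.g.\ for $L_n:\Sp\to L_n\Sp$ by \cite{NoTel}. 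Concretely, your argument that ``$\ker(p)$ is generated by the fibers of the unit maps $c\to p^Rp(c)$ for $c\in\M^\omega$'' is true only as a statement about colimit generation; those fibers $ii^R(c)$ are not compact (the unit $c\to p^Rp(c)$ has non-compact target), so this does not give compact generators. And \Cref{prop:kappaloc}, which you cite, \emph{assumes} $\ker(p)$ is $\kappa$-compactly generated — it cannot be used to prove it.

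The paper's actual proof avoids this entirely and uses the extra symmetry of the problem: the two legs of the cospan are the \emph{same} map $p$. Given $(m,m',p(m)\simeq p(m'))$, one reduces by two explicit cofiber sequences (whose cofibers live over compacts) to objects of the form $(k,0)$ with $k\in\ker(p)$. Writing $k$ as a sequential colimit $\colim_\NN k_n$ of \emph{compacts of $\M$} (not of $\ker(p)$!) along maps $k_n\to k_{n+1}$ that become null after applying $p$ — this is the swindle from \Cref{prop:telescope}, applied at $\omega$ — one presents $(k,0)$ as a sequential colimit of objects $(k_n,k_n,\mathrm{id})$, each of which lies in $\M^\omega\times_{\N^\omega}\M^\omega$, with transition maps $(i_n,0)$ glued via the given nullhomotopies. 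The diagonal structure, not compact generation of the kernel, is what makes the argument go through. So your reduction to conditions~1--3 of \Cref{cor:naivelim} and the dualizability bookkeeping are fine, but the final step needs to be replaced by this two-sided telescoping construction; as written, your proof would prove a false statement about $\C_0\times_{\C_{01}}\C_1$ for distinct $\C_0,\C_1$.
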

\begin{rmk}
    Unlike in \Cref{lm:telpullback}, we do not assume that $\ker(p)$ is compactly generated. Here, what will save us is that we have the same $\M$ on both sides. 
\end{rmk}
\begin{proof}
Let $(m,m',p(m)\simeq p(m'))$ be in the pullback. 

There exists a cofiber sequence $k\to x\to m$ with $k\in\ker(p)$ and such that the map $p(m)\simeq p(m')$ is the composite $p(m)\simeq p(x)\simeq p(m')$ for some map $x\to m'$. We thus find a map $(x,m', p(x)\simeq p(m'))\to (m,m',p(m)\simeq p(m'))$ with fiber $(k,0, p(k)=0)$, and a map $(x,m',p(x)\simeq p(m'))\to (m',m',p(m')= p(m'))$ (the latter since $p(x)\simeq p(m')$ is $p$ of a map in $\M$) with fiber $(\fib(x\to m'),0)$. 

Since $p(x\to m')$ is an equivalence, it follows that $\fib(x\to m')$ is in $\ker(p)$, like $k$, in particular we are reduced to proving that elements of the form $(k,0), k\in\ker(p)$ are generated under colimits by $\M^\omega\times_{\N^\omega}\M^\omega$. 

Following the proof of \Cref{prop:telescope}, we may assume without loss of generality that $k$ is a sequential colimit $\colim_\mathbb N k_n$ where $k_n\in \M^\omega$, and the map $p(k_n\to k_{n+1})$ is null. We then claim that $(k,0)$ is the sequential colimit of $(k_n,k_n, p(k_n)= p(k_n))$, where the transition maps are given by $(k_n,k_n)\xrightarrow{(i_n,0)}(k_{n+1},k_{n+1})$ together with the nullhomotopy of $p(k_n\to k_{n+1})$. 
\end{proof}
\begin{ex}
    Let $\M\in\Prdbl$. By \Cref{lm:calkcpct}, $\Ind(\M^{\omega_1})\to \Calk_{\omega_1}(\M)$ is a compact preserving localization between compactly generated categories, so the above proves that $\Ind(\M^{\omega_1})\times_{\Calk_{\omega_1}(\M)}\Ind(\M^{\omega_1})$ is compactly generated. 

    Furthermore, it is clear that for any localizing invariant $E$, $$E_{cont}(\M)\simeq E(\M^{\omega_1}\times_{\Calk_{\omega_1}(\M)^\omega}\M^{\omega_1})$$.  In principle, this lets one access $\pi_0(K_{cont}(\M))$, as we understand $K_0$ of small stable \categories. 
\end{ex}
\subsubsection{Descent}
The generality of \Cref{cor:naivelim} allows for more general limits, for example we also obtain Galois descent. More generally and precisely:
\begin{cor}
   Let $\V\in\CAlg(\PrL_{\st})$, and let $A\in\CAlg(\V)$ be a descendable algebra in the sense of \cite[Definition 3.18]{Akhil}. Finally, let $\M$ be a $\V$-module which is dualizable over $\Sp$. In this case, the limit cone $\M\simeq \lim_\Delta \Mod_{A^{\otimes n}}(\M)$ in $\PrL_{\st}$ from \cite[Proposition 3.22]{Akhil} is a limit cone in $\Prdbl$ too. 

   In particular, if $A$ is $G$-Galois for some group $G$, we may rewrite this as $\M\simeq \Mod_A(\M)^{hG}$.  
\end{cor}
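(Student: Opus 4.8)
The plan is to verify the three conditions of \Cref{cor:naivelim} for the cosimplicial diagram $n \mapsto \Mod_{A^{\otimes(n+1)}}(\M)$, with limit (in $\PrL_{\st}$) equal to $\M$ by \cite[Proposition 3.22]{Akhil}. Since the indexing category $\Delta$ is not finite (nor a retract of a finite category), Point 3 of \Cref{cor:naivelim} is not automatic and will be the crux of the argument; Points 1 and 2 are comparatively easy.

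For Point 1, dualizability of the limit: the limit is $\M$ itself, which is assumed dualizable over $\Sp$, so there is nothing to prove — this is precisely why we state the result with the cone $\M \simeq \lim_\Delta \Mod_{A^{\otimes\bullet}}(\M)$ rather than asking abstractly whether the naive limit is dualizable. For Point 2, that the projection maps $\M \to \Mod_{A^{\otimes(n+1)}}(\M)$ are internal left adjoints: each such map is base change along the algebra map $\one_\V \to A^{\otimes(n+1)}$ followed by restriction of scalars along $\V \to \Mod_{A^{\otimes(n+1)}}(\V)$ applied to $\M$; base change of module categories along an algebra map is always an internal left adjoint (this is one of the examples given right after the definition of internal left adjoint, together with the fact that $\M$ is dualizable so $\Mod_{A^{\otimes(n+1)}}(\M) \simeq \Mod_{A^{\otimes(n+1)}}(\V) \otimes_\V \M$). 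By the last sentence of \Cref{cor:naivelim} it suffices to check this for the single vertex $[0] \in \Delta$, which is weakly initial, so in fact we only need the projection $\M \to \Mod_A(\M)$ to be an internal left adjoint, which is base change along $\one_\V \to A$.

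The main obstacle is Point 3: for each $m \in \M$, writing $p_n^R p_n$ for the endofunctor of $\M$ obtained from the $n$-th projection and its right adjoint, we must show that the limit $\lim_{\Delta} p_n^R p_n(m)$ is an absolute limit for exact functors out of $\M$. Here $p_n^R p_n$ is the comonad $m \mapsto m \otimes A^{\otimes(n+1)}$ — more precisely, restriction along $\one_\V \to A^{\otimes(n+1)}$ of the extension — and the cosimplicial object $n \mapsto m \otimes A^{\otimes(n+1)}$ is the Amitsur/cobar complex of $A$ with coefficients in $m$. The key input is that $A$ is \emph{descendable} in the sense of \cite[Definition 3.18]{Akhil}: descendability is exactly the statement that the unit $\one_\V$ lies in the thick $\otimes$-ideal generated by $A$, equivalently that the Amitsur complex $\one_\V \to A \to A^{\otimes 2} \to \cdots$ is a "fast-converging" resolution in the sense that its partial totalizations stabilize after finitely many steps up to the relevant filtration. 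Concretely, descendability implies that the totalization $\mathrm{Tot}(m \otimes A^{\otimes\bullet+1})$ can be computed as a \emph{finite} limit after passing to a suitable finite filtration of the cosimplicial object — this is the content of the proof of \cite[Proposition 3.22]{Akhil}, and more sharply of the "uniform descendability" discussion there. Since finite limits in a stable category are absolute (preserved by any exact functor), and the descendability index of $A$ is independent of $m$, the limit $\lim_\Delta p_n^R p_n(m)$ is built out of finitely many finite limits (shifts, cofibers, retracts) applied to the $m \otimes A^{\otimes(n+1)}$, uniformly in $m$; hence it is absolute for exact functors. Therefore Point 3 holds.

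With all three conditions verified, \Cref{cor:naivelim} gives that the cone $\M \simeq \lim_\Delta \Mod_{A^{\otimes\bullet+1}}(\M)$ is a limit cone in $\Prdbl$. For the final sentence: if $A$ is $G$-Galois (for $G$ a finite group, say), then by standard Galois descent \cite{Akhil} the cosimplicial object $\Mod_{A^{\otimes\bullet+1}}(\M)$ is identified with the cobar construction computing the homotopy fixed points $\Mod_A(\M)^{hG}$, and since $\Mod_A(\M) \simeq \Mod_A(\V) \otimes_\V \M$ is dualizable (base change of a dualizable module category), the limit computing $\Mod_A(\M)^{hG}$ is taken in $\Prdbl$, giving $\M \simeq \Mod_A(\M)^{hG}$ in $\Prdbl$. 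The one point needing a remark is that the $G$-action on $\Mod_A(\M)$ is through internal left adjoints — it is, since $G$ acts on $A$ by algebra automorphisms and each such automorphism induces base change, hence an internal left adjoint — so the limit over $BG$ lives in $\Dbl{\Sp}$ and the identification is as dualizable categories.
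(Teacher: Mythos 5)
Your proof is correct and follows essentially the same route as the paper: Points 1 and 2 of \Cref{cor:naivelim} are immediate, and Point 3 reduces to the pro-constancy of the Tot tower coming from descendability of $A$ (Mathew's Proposition 3.20), which is uniform in (and compatible with tensoring by) $m$. Your phrasing of Point 3 is slightly loose — the totalization is not literally a finite limit but rather a natural retract, in the pro-category, of a finite partial totalization $\mathrm{Tot}_n$, and it is this retraction (preserved by exact functors) that gives absoluteness — but the underlying argument is exactly the one the paper invokes.
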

\begin{proof}
    Conditions 1. and 2. from \Cref{cor:naivelim} are obviously satisfied, so we are left with 3.. For this one, we simply refer to \cite[Proposition 3.20]{Akhil} (and note that the class of limit diagram which are absolute for exact functors is preserved under tensoring, cf. also the proof of \cite[Proposition 3.22]{Akhil}). 

    The ``In particular'' part follows from general considerations, cf.\cite[Section 6.5]{MNN1}.
\end{proof}
This result also applies to the limit decompositions from \cite[Section 6.5]{MNN1}, \cite[Corollary 9.16]{Akhil}. 
\begin{cor}
    $\QCoh(-):\mathrm{Sch}_{qcqs}\to \Prdbl$ satisfies Galois descent. 
\end{cor}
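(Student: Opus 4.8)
The statement to prove is that $\QCoh(-)\colon \mathrm{Sch}_{qcqs}\to \Prdbl$ satisfies Galois descent. The plan is to reduce this to the already-established descendability statement (the preceding corollary) together with the classical fact that a $G$-Galois extension of qcqs schemes gives rise, \'etale-locally, to a descendable situation, and then to feed this into \Cref{cor:naivelim}.

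First I would recall what ``Galois descent'' should mean here: given a qcqs scheme $X$ and a finite group $G$ acting on a qcqs scheme $Y$ with $Y\to X$ a $G$-Galois covering (in the sense that $Y\to X$ is finite \'etale, $G$ acts over $X$, and the canonical map $G\times Y\to Y\times_X Y$ is an equivalence), one wants the natural functor
\[
\QCoh(X)\longrightarrow \QCoh(Y)^{hG}
\]
to be an equivalence \emph{in $\Prdbl$}, i.e.\ that the limit cone $\QCoh(X)\simeq \lim_{BG}\QCoh(Y)$ computed in $\PrL_{\st}$ (which is classical, by affine-local faithfully flat descent for $\QCoh$, cf.\ the preceding corollary on Zariski descent and the general machinery of \cite{Akhil}) lifts to a limit cone in $\Prdbl$. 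The mechanism for this lift is exactly \Cref{cor:naivelim}: I would verify its three hypotheses with $I = BG$, $\M_\bullet$ the diagram $BG\to\Prdbl$ given by $\QCoh(Y)$ with its $G$-action, and $\lim_I\M_i = \QCoh(X)$.

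The verification runs as follows. Hypothesis (1), that the $\PrL_{\st}$-limit is dualizable, holds because it \emph{is} $\QCoh(X)$, and $\QCoh$ of a qcqs scheme is dualizable over $\Sp$ (it is compactly generated by perfect complexes when $X$ has affine diagonal, and in general one uses a finite Zariski/Nisnevich hypercover together with \Cref{prop:colimdbl} and closure of dualizables under finite limits, exactly as in the Zariski-descent corollary above). Hypothesis (2), that the projection maps $\QCoh(X)\to\QCoh(Y)$ are internal left adjoints: this is base change along the finite \'etale (in particular proper and flat) morphism $Y\to X$, whose right adjoint is pushforward $f_*$; since $f$ is finite \'etale, $f_*$ is $\QCoh$-linear and preserves all colimits (it is conservative, exact, and commutes with filtered colimits and with tensoring), so $f^*$ is an internal left adjoint. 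Since $BG$ has a single object it has a weakly initial set of vertices of size one, so (2) need only be checked once, as just done. Hypothesis (3), that for each $m$ the limit $\lim_{BG} p_i^R p_i(m)$ is absolute for exact functors: here I would invoke the key input, namely that a $G$-Galois extension is in particular descendable (the unit $\mathcal O_X\to f_*\mathcal O_Y$ admits the structure witnessing descendability, cf.\ \cite{Akhil,MNN1}), so the cosimplicial object computing this limit is a split-up-to-finite-stage (``absolute for exact functors'') limit diagram exactly as invoked in the preceding descendability corollary; the relevant class of limit diagrams is preserved under tensoring and under the $\QCoh(Y)$-linear endofunctors in play.

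The main obstacle I anticipate is \emph{not} any of the three bullet-point checks individually, but rather assembling them cleanly in the non-affine case: $\QCoh(-)$ on general qcqs schemes is built by Zariski descent from the affine case, so one must ensure that Galois descent interacts correctly with this gluing — i.e.\ that it suffices to prove the statement for $X$ affine (whence $Y$ is affine, being finite over $X$), and then that the general case follows formally by combining the already-proven Zariski-descent corollary with the affine Galois-descent statement. This is a standard ``descent of descent'' argument, but it is the step where care is needed; everything else is a direct application of \Cref{cor:naivelim} with the finiteness of $BG$ handling hypothesis (3) for free, or alternatively the descendability of the Galois extension handling it more robustly.
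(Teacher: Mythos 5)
Your proposal is essentially the paper's approach: use the preceding corollary on descendable algebras (which itself runs through \Cref{cor:naivelim} with \cite[Proposition~3.20]{Akhil} supplying the absoluteness of the relevant cosimplicial limit), apply it to the $G$-Galois situation, and glue the affine cases via the already-proved Zariski descent. The paper is considerably terser, however: it states the corollary with no explicit proof, because the preceding descendable-algebra corollary together with the cited \cite[Corollary~9.16]{Akhil} already handles $f_*\mathcal O_Y$ as a descendable algebra in $\CAlg(\QCoh(X))$ for qcqs $X$ directly, so no explicit reduction to the affine case is needed; and the paper first establishes $\Prdbl$-descent for the $\Delta$-shaped limit $\lim_\Delta \Mod_{A^{\otimes \bullet}}(\M)$ and only afterwards rewrites this as the $BG$-shaped limit $\Mod_A(\M)^{hG}$, citing \cite[Section~6.5]{MNN1} for the rewrite.

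One concrete slip worth flagging: your closing remark that ``the finiteness of $BG$ handl[es] hypothesis~(3) for free'' is incorrect. For a nontrivial finite group $G$, $BG$ is not a finite $\infty$-category in the sense required by \Cref{cor:naivelim} — limits over $BG$ (i.e.\ homotopy fixed points $(-)^{hG}$) are not absolute for exact functors, and indeed the nonvanishing of the Tate construction $X^{tG}$ measures precisely their failure to be so. The whole point of passing through descendability is that it \emph{imposes} absoluteness of the relevant limit; it does not come for free from the shape of the diagram. Your primary argument correctly routes through descendability, so this does not damage the proof, but the ``alternatively'' clause is a false alternative and should be dropped.
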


\subsection{Constructing compactly assembled categories}\label{section:S-constr}
In this subsection, we review a general construction of compactly assembled \categories, due to Dustin Clausen, which we will use in the next subsection to describe limits and internal homs in $\Prdbl$.

Given $\M\in \PrL_{\st}$ and a collection $S$ of maps in $\M$, we will introduce conditions on $S$ so that there exists a universal dualizable category mapping to $\M$ where compact maps are all sent to $S$; more precisely, for there to exist a dualizable category $(\M,S)$ equipped with a functor $(\M,S)\to \M$ sending compact maps to $S$ and such that for any $\N\in\Prdbl$, this functor induces a fully faithful functor $$\Fun^{iL}(\N,(\M,S))\to \Fun^L(\N,\M)$$ with essential image those functors $\N\to \M$ sending compact maps to maps in $S$. 

This construction will help to describe limits and internal homs in $\Prdbl$ and can be used in some cases to describe them explicitly. We will give a simple example of this phenomenon in \Cref{section:prod}. 

The conditions on $S$ are all fairly simple, except for one key axiom related to the work in \Cref{section:smooth}, we will explain where this axiom comes in - for most of the results, it is irrelevant. 
\begin{rmk}
    Fix $\M\in\PrL_{\st}$ and an arbitrary collection $S$ of maps (which all factor through some fixed small subcategory of $\M$), closed under composition with all maps in $\M$ on either side. The functor $\N\mapsto \Map^L_S(\N,\M)$ sending $\N$ to the full subgroupoid of $\Fun^L(\N,\M)$ on the functors sending compact maps to $S$ (has small values and) \emph{a priori} sends colimits in $\Prdbl$ to limits in spaces. By \Cref{thm:DblIsPrL} and the adjoint functor theorem, it follows that it is representable by some $(\M,S)$. Our goal in this section is to describe reasonable conditions on $S$ under which $(\M,S)$ is actually describable explicitly. 
\end{rmk}
We will begin the construction and start with no assumptions on $S$ except the most basic one for set theoretic reasons:
\begin{assu}\label{ass:settheory}
    Let $S$ be a nonempty collection of maps in $\M$, closed under composition with any maps in $\M$ on either side, and such that there exists a small subcategory $\M_0\subset \M$ such that for any $s\in S$, there exists $m\in \M_0$ such that $s$ factors through $m$. 

    We say $S$ is an accessible ideal. 
\end{assu}
\begin{rmk}
    One can always assume $\M_0= \M^\kappa$ for some cardinal $\kappa$. 
\end{rmk}
\begin{defn}
    Let $(\M,S)_{basic}\subset \Ind(\M)$ denote the full subcategory spanned by objects of the form $\colim_{r\in\mathbb Q_{\geq 0}} y(m_r)$, where each map $m_r\to m_s, r<s$ is in $S$. 
\end{defn}
\begin{obs}
    \Cref{ass:settheory} guarantees (using a cofinality argument) that $(\M,S)_{basic}\subset~\Ind(\M^\kappa)$ for some cardinal $\kappa$, and in fact that it is small.  
\end{obs}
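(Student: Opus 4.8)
Looking at this, the statement to prove is the observation that $(\M,S)_{basic}$ is small and contained in $\Ind(\M^\kappa)$ for some cardinal $\kappa$, given Assumption~\ref{ass:settheory}.

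\medskip

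The plan is to unwind what an object of $(\M,S)_{basic}$ looks like and use the factorization hypothesis in Assumption~\ref{ass:settheory} to replace an arbitrary $\mathbb{Q}_{\geq 0}$-indexed diagram along maps in $S$ by an equivalent one taking values in a fixed small subcategory. First I would recall that by hypothesis there is a small subcategory $\M_0 \subset \M$ (which we may take to be $\M^\kappa$ for suitable $\kappa$ by the preceding remark) such that every $s \in S$ factors through some object of $\M_0$. Given an object $x = \colim_{r \in \mathbb{Q}_{\geq 0}} y(m_r)$ with each transition map $m_r \to m_s$ in $S$, the key move is to interleave: for each pair $r < s$ in $\mathbb{Q}_{\geq 0}$ — or really it suffices to do this along a cofinal sequence, say the integers $n < n+1$ — the map $m_n \to m_{n+1}$ factors through some $m'_n \in \M_0$ as $m_n \to m'_n \to m_{n+1}$. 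Because $S$ is a two-sided ideal (closed under composition with arbitrary maps of $\M$ on either side), the composites $m_n \to m'_n$, $m'_n \to m_{n+1}$, $m'_n \to m'_{n+1}$ are all in $S$. Thus the diagram $(m'_n)_{n\in\mathbb{N}}$ along maps in $S$ is cofinal-interleaved with $(m_n)_n$, so $\colim_n y(m_n) \simeq \colim_n y(m'_n)$ in $\Ind(\M)$, and the latter is a sequential (hence $\mathbb{Q}_{\geq 0}$-indexed, since $\mathbb{N}\hookrightarrow \mathbb{Q}_{\geq 0}$ is cofinal) colimit of objects in $y(\M_0) \subset \Ind(\M_0)$.

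\medskip

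From this the two claims follow. The containment $(\M,S)_{basic} \subset \Ind(\M^\kappa)$: every object of $(\M,S)_{basic}$ is a filtered (indeed sequential) colimit in $\Ind(\M)$ of representables $y(m'_n)$ with $m'_n \in \M^\kappa$, and such a colimit lies in the essential image of $\Ind(\M^\kappa) \to \Ind(\M)$ — one should note this functor is fully faithful and its image is closed under the relevant colimits, so $(\M,S)_{basic}$ is genuinely a full subcategory of $\Ind(\M^\kappa)$. Smallness: the collection of sequential diagrams $\mathbb{N} \to \M^\kappa$ with transition maps in $S$ forms an essentially small class (it is a subcategory of $\Fun(\mathbb{N}, \M^\kappa)$, and $\M^\kappa$ is essentially small), and $(\M,S)_{basic}$ is a quotient of this class under taking colimits, hence essentially small.

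\medskip

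I expect the main obstacle to be purely bookkeeping: making the interleaving/cofinality argument precise as a statement about $\mathbb{Q}_{\geq 0}$- versus $\mathbb{N}$-indexed diagrams and checking that the resulting interleaved system really does have transition maps in $S$ (this is exactly where the two-sidedness of the ideal $S$ is used — one cannot get away with only half of it). There is a minor subtlety in that the original indexing is over $\mathbb{Q}_{\geq 0}$ rather than $\mathbb{N}$, but since $\mathbb{N} \hookrightarrow \mathbb{Q}_{\geq 0}$ is cofinal one may restrict to integer indices at the cost of nothing, do the factorization there, and then note the resulting $\mathbb{N}$-diagram is a special case of a $\mathbb{Q}_{\geq 0}$-diagram (constant on each interval $[n,n+1)$), so it still defines an object of $(\M,S)_{basic}$ as literally defined. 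No deep input is needed — just Assumption~\ref{ass:settheory} and elementary cofinality.
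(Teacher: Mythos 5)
Your strategy is the intended one — the "cofinality argument" the paper alludes to is exactly your interleaving of the diagram $(m_n)$ with the factorizations $(m'_n)$. The containment $(\M,S)_{basic}\subset\Ind(\M^\kappa)$ then follows immediately, since $\colim_n y(m_n)\simeq \colim_n y(m'_n)$ with all $m'_n\in\M^\kappa$, and neither this equivalence nor the conclusion that the colimit lies in $\Ind(\M^\kappa)$ requires the transition maps of the new diagram to be in $S$. Smallness follows as you say.

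However, there is one concrete false step: you claim the ideal property gives that $m_n\to m'_n$, $m'_n\to m_{n+1}$ and $m'_n\to m'_{n+1}$ are all in $S$. None of these follows. The ideal property of \Cref{ass:settheory} says that pre- and post-composing a map in $S$ with arbitrary maps stays in $S$; it does \emph{not} say that a factor of a map in $S$ is in $S$. What the ideal property actually gives is that the two-step composite $m'_n\to m_{n+1}\to m_{n+2}\to m'_{n+2}$ is in $S$ (it is the map $m_{n+1}\to m_{n+2}\in S$ pre- and post-composed), i.e. $m'_n\to m'_{n+2}\in S$, not $m'_n\to m'_{n+1}\in S$. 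So your remark that the interleaved $\mathbb{N}$-diagram is "a special case of a $\mathbb{Q}_{\geq 0}$-diagram ... so it still defines an object of $(\M,S)_{basic}$ as literally defined" is not justified as written. Fortunately nothing in the observation requires it: either drop that remark entirely (the containment and smallness don't need the new diagram to be an $S$-diagram), or pass to the subdiagram $(m'_{2n})$, whose transition maps genuinely are in $S$.
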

Based on the above set-theoretic observation, the following definition is sound:
\begin{defn}
    Let $(\M,S)\subset \Ind(\M)$ denote the smallest stable subcategory closed under colimits and containing $(\M,S)_{basic}$.

This is a presentable stable category.
\end{defn}
We first aim to prove:
\begin{prop}\label{prop:Dustindbl}
    If $S$ is an accessible ideal closed under (de)suspensions, then $(\M,S)$ is dualizable. 
\end{prop}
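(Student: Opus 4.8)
\textbf{Proof plan for Proposition~\ref{prop:Dustindbl}.}

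The plan is to verify dualizability via \Cref{thm:compactassembly}, i.e. to exhibit $(\M,S)$ as a compactly assembled stable category by showing that compactly exhaustible objects generate it under colimits. The natural candidates for compactly exhaustible objects are precisely the objects of $(\M,S)_{basic}$: an object of the form $\colim_{r\in\mathbb Q_{\geq 0}}y(m_r)$ with all transition maps in $S$. Since $\mathbb N\hookrightarrow\mathbb Q_{\geq 0}$ is cofinal, such an object is the colimit of a sequential system $y(m_0)\to y(m_1)\to\cdots$ along maps of the form $y(s)$ with $s\in S$; because $S$ is an ideal, every such map factors through a compact object of $\Ind(\M)$ (namely $y(m)$ for any $m\in\M^\kappa$ through which $s$ factors), hence is a compact map in $\Ind(\M)$ by \Cref{ex:factorcompact}. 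Therefore each object of $(\M,S)_{basic}$ is a compact exhaustion in $\Ind(\M)$, and since $(\M,S)\subset\Ind(\M)$ is closed under colimits, it is also a compact exhaustion \emph{inside} $(\M,S)$ once we know the relevant sequential colimits agree — which they do, as $(\M,S)$ is a full subcategory closed under the colimits in question. By definition $(\M,S)$ is generated under colimits by $(\M,S)_{basic}$, so compactly exhaustible objects generate $(\M,S)$ under colimits.

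It remains to check the second clause of \Cref{thm:compactassembly} (or rather, the first clause suffices): that filtered colimits in $(\M,S)$ are left exact. Here I would use that $(\M,S)$ is a \emph{full} stable subcategory of $\Ind(\M)$ closed under colimits. Filtered colimits in $\Ind(\M)$ are left exact (it is an $\infty$-topos-adjacent statement; more concretely $\Ind(\M)$ is compactly generated and filtered colimits are computed in presheaves, hence are left exact by \cite[Proposition 5.3.3.3]{HTT} applied pointwise). Since $(\M,S)$ is closed under finite limits and colimits in $\Ind(\M)$ — being a stable subcategory closed under colimits, it is closed under finite limits because finite limits and finite colimits agree in a stable category, and it is closed under the latter — both sides of the left-exactness comparison for a filtered colimit in $(\M,S)$ are computed in $\Ind(\M)$, where the comparison is an equivalence. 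Hence filtered colimits in $(\M,S)$ are left exact, and \Cref{thm:compactassembly} applies: $(\M,S)$ is compactly assembled, i.e. dualizable (by \Cref{thm:lurie}, since it is presentable stable).

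The main obstacle I anticipate is \emph{not} dualizability per se — the argument above is essentially formal given \Cref{thm:compactassembly} — but rather making sure the hypotheses of \Cref{ass:settheory} are genuinely enough, i.e. that the closure of $S$ under (de)suspensions is what guarantees $(\M,S)$ is a well-defined small-generated stable subcategory on which the telescope objects behave correctly. In particular one must check that the sequential colimits defining objects of $(\M,S)_{basic}$, together with $\mathbb Q_{\geq 0}$-reindexing, really do stay inside the small category cut out by \Cref{ass:settheory}; this is where the accessibility of the ideal and the cofinality argument of the observation preceding the proposition are used. The (de)suspension hypothesis enters to ensure $(\M,S)_{basic}$ already ``sees'' enough shifts so that the stable subcategory it generates under colimits is the same as what one would get by also allowing finite limits — so that the left-exactness bookkeeping above is clean. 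I expect the write-up to spend most of its length on these bookkeeping points rather than on the conceptual core.
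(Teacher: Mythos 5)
Your overall strategy — show that $(\M,S)_{basic}$ consists of compactly exhaustible objects which generate $(\M,S)$ under colimits, then invoke compact assembly and \Cref{thm:lurie} — is sound and is close in spirit to the ingredients the paper actually develops (\Cref{lm:basiccpct}, \Cref{lm:basicexhaust}). You also correctly identify the role of the (de)suspension hypothesis: since $S$ is closed under shifts, $(\M,S)_{basic}$ is closed under $\Sigma^{\pm 1}$, so its colimit closure is already a stable subcategory and thus coincides with $(\M,S)$. One minor bookkeeping point you would need to make explicit if you go this route is that the ``if'' direction of \Cref{thm:compactassembly} actually works with generation under \emph{arbitrary} colimits, not only filtered ones (the proof only uses closure of the relevant collection under colimits in $\Ind(\M)$), whereas the statement is phrased for filtered colimits.

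However, there is a genuine gap in your compact-exhaustion step. You propose the exhaustion $y(m_0)\to y(m_1)\to\cdots$, and you are right that these transition maps are compact in $\Ind(\M)$ because they factor through compacts. But the individual terms $y(m_n)$ are representable ind-objects that in general do \emph{not} lie in $(\M,S)$ — they are in $(\M,S)_{basic}$ only if $\id_{m_n}\in S$, which is not assumed. A compact exhaustion of an object of $(\M,S)$ must be a diagram \emph{in} $(\M,S)$, so this diagram does not witness compact exhaustibility of $m$ inside $(\M,S)$. The paper's \Cref{lm:basicexhaust} repairs exactly this: replace each $y(m_n)$ by the object $\colim_{r\in\,]n,n+1[}y(m_r)$, which lies in $(\M,S)_{basic}$ because $]n,n+1[\cap\mathbb Q\cong\mathbb Q_{\geq 0}$ by Cantor's theorem, and the transition map $\colim_{r\in\,]n,n+1[}y(m_r)\to\colim_{r\in\,]n+1,n+2[}y(m_r)$ factors through $y(m_{n+1})$ and is therefore compact. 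This is precisely where the $\mathbb Q_{\geq 0}$-indexing in the definition of $(\M,S)_{basic}$ (rather than the naive $\mathbb N$-indexing) becomes essential. The paper then routes through \Cref{thm:dblviacpctmaps} rather than \Cref{thm:compactassembly}, using an additional nonzero-map detection lemma involving a Milnor-sequence argument — that lemma is a second, independent place the (de)suspension hypothesis is used. Your route via \Cref{thm:compactassembly} would bypass that Milnor step and is a perfectly reasonable alternative once the exhaustion gap is fixed.
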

We have a number of lemmas leading up to it. Throughout, $S$ is an accessible ideal. 
\begin{lm}\label{lm:basiccpct}
    Let $m\in(\M,S), n\in(\M,S)_{basic}$, where $n$ is written as $\colim_{\mathbb Q_{\geq 0}}y(n_r)$. Any map $m\to n$ factoring through some $y(n_r)$ in $\Ind(\M)$ is compact in $(\M,S)$.
\end{lm}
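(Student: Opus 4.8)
The statement to be proved is Lemma \ref{lm:basiccpct}: if $m\in(\M,S)$ and $n\in(\M,S)_{basic}$ is written $n=\colim_{\mathbb Q_{\geq 0}}y(n_r)$, then any map $m\to n$ that factors through some stage $y(n_r)$ is compact \emph{in} $(\M,S)$. The plan is to exhibit an explicit compact exhaustion of $n$ inside $(\M,S)$ and then invoke the general machinery of Section \ref{section:atomic} (specifically \Cref{ex:factorcompact} and \Cref{ex:reflectcpct}) together with \Cref{lm:basicnuc}.

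First I would observe that because $\mathbb N\hookrightarrow \mathbb Q_{\geq 0}$ is cofinal, $n=\colim_{\mathbb N}y(n_{k})$ for a suitable increasing sequence of rationals, so $n$ is the colimit of a sequential diagram whose transition maps $y(n_k)\to y(n_{k+1})$ are images under the Yoneda embedding $y:\M\to\Ind(\M)$ of maps in $S$. The key point is that each such transition map is a \emph{compact} morphism in $(\M,S)$ (not just in $\Ind(\M)$): indeed, each $y(n_k)$ sits in $(\M,S)_{basic}\subset(\M,S)$ by refining the single map $n_k\to n_{k+1}\in S$ into a $\mathbb Q_{\geq 0}^{\triangleright}$-diagram with transition maps in $S$ — here is where one uses that $S$ is an ideal closed under composition so the refinement from \Cref{lm:rationals}-type reasoning applies, though at this stage of the paper we only need that $y(n_k)$ is an object of $(\M,S)$, which is immediate since the constant-then-$n_k\to n_{k+1}$ diagram lands in $(\M,S)_{basic}$. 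Given that, the transition map $y(n_k)\to y(n_{k+1})$ factors through the object $y(n_k)$ which — being in $(\M,S)_{basic}$, hence a genuine object of $(\M,S)$ — we must check is "small enough"; actually the cleanest route is: the inclusion $j:(\M,S)\hookrightarrow\Ind(\M)$ is fully faithful and colimit-preserving, so by \Cref{ex:reflectcpct} it reflects compact maps, and $y(n_k)\to y(n_{k+1})$ is compact in $\Ind(\M)$ because it factors through the compact object $y(n_k)$ (\Cref{ex:factorcompact}). Hence $n_\bullet$ is a compact exhaustion of $n$ \emph{in} $(\M,S)$.

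Now, given a map $f:m\to n$ factoring through some $y(n_{r})$, I would choose $k$ with $n_k$ dominating $r$ in the sequential cofinal system, so that $f$ factors as $m\to y(n_k)\to n$. Since $y(n_k)$ is one of the objects in a compact exhaustion of $n$, and since every transition map past stage $k$ in the exhaustion is a compact map of $(\M,S)$, the composite $m\to y(n_k)\xrightarrow{} y(n_{k+1})\to n$ exhibits $f$ as factoring through a compact morphism; more directly, $f$ itself factors through $y(n_k)$, and the map $y(n_k)\to y(n_{k+1})$ being compact means the composite $m\to y(n_{k+1})\to n$ is compact by the two-sided ideal property \Cref{ex:factorcompact}, and this composite \emph{is} $f$ (up to the identification via the colimit cocone). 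Therefore $f$ is compact in $(\M,S)$.

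\textbf{Main obstacle.} The only subtlety I anticipate is bookkeeping with the $\mathbb Q_{\geq 0}$-indexing versus the $\mathbb N$-indexing: one must make sure that "factors through $y(n_r)$" for a single rational $r$ really does give "factors through $y(n_k)$" for a cofinal $\mathbb N$-sequence, which is just cofinality of $\{s\in\mathbb Q_{\geq 0}: s\geq r\}$ and of $\mathbb N$, but one should state it carefully so that the transition map used is genuinely one of the maps in $S$ (and hence induces a compact map in $(\M,S)$ after $y$). A secondary point is confirming that $y(n_k)$, as an object of $\Ind(\M)$, is compact there — this is \Cref{ex:cptobject} applied to the representable $y(n_k)$, which is standard. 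Everything else is a direct application of \Cref{ex:factorcompact} and \Cref{ex:reflectcpct}.
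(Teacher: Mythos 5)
Your proof has a genuine error: you repeatedly assert that $y(n_k)$ lies in $(\M,S)_{basic}$ (and hence in $(\M,S)$), but this is false in general. An object of $(\M,S)_{basic}$ must be presentable as $\colim_{r\in\mathbb{Q}_{\geq 0}}y(m_r)$ with \emph{all} transition maps in $S$; the constant diagram at $n_k$ would require $\id_{n_k}\in S$, which is not assumed (indeed, $S$ being a two-sided ideal containing an identity would force $S$ to be all maps factoring through $n_k$, and in the motivating example $S=$ compact maps in a compactly assembled $\M$, $y(n_k)\in(\M,S)$ holds only when $n_k$ is itself compact). So the diagram $\{y(n_k)\}_k$ is not a "compact exhaustion of $n$ inside $(\M,S)$" — its vertices typically fall outside $(\M,S)$ — and the final appeal to the ideal property "in $(\M,S)$" for the map $y(n_k)\to y(n_{k+1})$ doesn't parse, since that map does not live in $(\M,S)$.

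The fix is much shorter, and the germ of it is actually present in your "cleanest route" parenthetical, just applied to the wrong map. Apply the factor-through-a-compact argument directly to $f:m\to n$: since $f$ factors through $y(n_r)$, which is a compact object of $\Ind(\M)$, $f$ is (strongly) compact in $\Ind(\M)$ by \Cref{ex:factorcompact} and \Cref{ex:cptobject}. Since both $m$ and $n$ \emph{do} lie in $(\M,S)$, and the inclusion $(\M,S)\hookrightarrow\Ind(\M)$ is fully faithful and preserves filtered colimits, \Cref{ex:reflectcpct} lets you reflect: $f$ is compact in $(\M,S)$. No intermediate exhaustion of $n$ inside $(\M,S)$ is needed (that is the content of the \emph{next} lemma, \Cref{lm:basicexhaust}, which uses the present one as input — so building it here would be circular, and in any case must be done with the objects $\colim_{r\in]k,k+1[}y(n_r)$, not with $y(n_k)$).
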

\begin{proof}
It is compact in $\Ind(\M)$ (it factors through a compact!) and thus, as $(\M,S)\subset~\Ind(\M)$ is a full subcategory closed under colimits, it is compact in $(\M,S)$.
    
\end{proof}
\begin{lm}\label{lm:basicexhaust}
    Let $m\in (\M,S)_{basic}$ - it is compactly exhaustible in $(\M,S)$, in fact in $(\M,S)_{basic}$. 
\end{lm}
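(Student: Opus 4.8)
\textbf{Proof plan for \Cref{lm:basicexhaust}.} The statement is that any $m\in(\M,S)_{basic}$ is compactly exhaustible in $(\M,S)$, in fact witnessed by objects of $(\M,S)_{basic}$. The plan is to simply exhibit the exhaustion coming from the defining presentation of $m$. Write $m\simeq\colim_{r\in\mathbb Q_{\geq 0}}y(m_r)$ with each $m_r\to m_s$ (for $r<s$) in $S$. Since $\mathbb N\hookrightarrow\mathbb Q_{\geq 0}$ is cofinal, we also have $m\simeq\colim_{n\in\mathbb N}y(m_n)$. The candidate exhaustion is the sequential diagram $n\mapsto y(m_n)$ in $(\M,S)$, or better, a sequential diagram of truncated sub-telescopes of $m$ so that the terms lie in $(\M,S)_{basic}$.

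First I would observe that each transition map $y(m_n)\to y(m_{n+1})$, being $y$ applied to a map in $S$, is a compact map in $(\M,S)$: indeed it factors through the compact object $y(m_n)$ of $\Ind(\M)$ (equivalently, it factors through $y(m_{n+1})$ which is compact), and $(\M,S)\subset\Ind(\M)$ is a full subcategory closed under colimits, so by \Cref{ex:factorcompact} (or directly \Cref{lm:basiccpct}) it is compact in $(\M,S)$. Hence $n\mapsto y(m_n)$ is a compact exhaustion of $m$ in $(\M,S)$, which already proves compact exhaustibility. To get the stronger ``in $(\M,S)_{basic}$'' refinement, I would instead take the exhaustion whose $k$-th term is the truncated telescope $t_k := \colim_{r\in\mathbb Q_{\geq 0},\, r\le k}y(m_r)$; reindexing the rationals in $[0,k]$ order-isomorphically with all of $\mathbb Q_{\geq 0}$ shows $t_k\in(\M,S)_{basic}$, the map $t_k\to t_{k+1}$ is a compact map (it factors through the single term $y(m_k)$, which is compact in $(\M,S)$ by \Cref{lm:basiccpct}, so it is compact by \Cref{ex:factorcompact}), and $\colim_k t_k\simeq m$ by cofinality. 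This is a compact exhaustion of $m$ by objects of $(\M,S)_{basic}$.

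The only mild subtlety — and the point I would be most careful about — is the reindexing claim that a truncated telescope $\colim_{r\le k}y(m_r)$ again lies in $(\M,S)_{basic}$: one needs an order isomorphism $\mathbb Q\cap[0,k]\xrightarrow{\sim}\mathbb Q_{\geq 0}$ (Cantor's theorem on countable dense linear orders, one endpoint; alternatively just note $\mathbb Q\cap[0,k]$ with $0$ adjoined is order-isomorphic to $\mathbb Q_{\geq 0}$), so that the truncated diagram is, up to reparametrization, exactly of the form appearing in the definition of $(\M,S)_{basic}$, with all transition maps still in $S$ since $S$ is closed under composition with arbitrary maps on either side (\Cref{ass:settheory}), hence in particular under the composites arising in the reindexing. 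Everything else is formal from closure of compact maps under factorization and cofinality of $\mathbb N\subset\mathbb Q_{\geq 0}$.
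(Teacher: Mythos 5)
Your first exhaustion, $n\mapsto y(m_n)$, correctly proves that $m$ is compactly exhaustible in $(\M,S)$: the transition maps factor through the compact objects $y(m_n)$ and hence are compact, as you say. That part is fine and matches the paper.

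The refinement to objects of $(\M,S)_{basic}$ is where your argument breaks. You take $t_k := \colim_{r\le k}y(m_r)$, but the indexing poset $\mathbb Q\cap[0,k]$ has a \emph{maximum} element $k$, so the colimit collapses: $t_k\simeq y(m_k)$, on the nose. The claimed order isomorphism $\mathbb Q\cap[0,k]\cong\mathbb Q_{\geq 0}$ is false precisely because the left-hand side has a greatest element and the right-hand side does not (Cantor's theorem requires matching endpoint patterns, and ``adjoining $0$'' does nothing since $0$ is already there). So the reindexing argument does not go through, and in fact $y(m_k)\in(\M,S)_{basic}$ would force $\id_{m_k}\in S$ (use compactness of $y(m_k)$ in $\Ind(\M)$ plus the ideal property of $S$), which is not part of the hypotheses; e.g.\ when $S$ is the ideal of compact maps this would say $m_k$ is compact. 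The fix is to drop the maximum: use half-open or open intervals. The paper does exactly this — it writes $m\simeq\colim_{n\in\NN}\colim_{r\in\,]n,n+1[}y(m_r)$; each chunk $\colim_{]n,n+1[}y(m_r)$ is indexed by a countable dense linear order with no maximum, hence has a cofinal $\mathbb Q_{\geq 0}$-shaped subdiagram and so lies in $(\M,S)_{basic}$, and the map from the $n$-th chunk to the $(n+1)$-th factors through the compact $y(m_{n+1})$. Your construction is salvageable if you replace $[0,k]$ by $[0,k)$ (or $[k,k+1)$): then $t'_k := \colim_{0\le r<k}y(m_r)$ does land in $(\M,S)_{basic}$, the map $t'_k\to t'_{k+1}$ still factors through $y(m_k)$, and $\colim_k t'_k\simeq m$ by cofinality, giving the same conclusion.
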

\begin{proof}
    Say $m=\colim_{\mathbb Q_{\geq 0}}y(m_r)$. By a cofinality argument, $m=\colim_{n\in\NN} \colim_{r\in]n,n+1[} y(m_r)$, and now each $\colim_{r\in ]n,n+1[}y(m_r)$ is in $(\M,S)_{basic}$ as $]n,n+1[\cong \mathbb Q_{\geq 0}$ by Cantor's theorem, and each of the transition maps $\colim_{r\in]n,n+1[} y(m_r)\to \colim_{r\in]n+1,n+2[} y(m_r)$ factors through $y(m_{n+1})$ in $\Ind(\M)$ and is thus compact in $(\M,S)$. 
\end{proof}
\begin{rmk}\label{rmk:cpctMS}
    The proof of this lemma also shows that \Cref{lm:basiccpct} admits a converse. In particular, compact maps in $(\M,S)$ between objects of $(\M,S)_{basic}$ are also compact in $\Ind(\M)$. 
\end{rmk}

\begin{lm}
    If $S$ is closed under (de)suspensions, then for any nonzero $x\in (\M,S)$ there exists $m,n\in (\M,S)_{basic}$, and maps $m\to n\to x$ such that the composite is nonzero, and $m\to n$ is compact. 
\end{lm}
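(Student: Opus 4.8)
The plan is to reduce the statement to the structure of objects in $(\M,S)$ by a standard generation argument. Since $(\M,S)$ is by definition the smallest stable subcategory of $\Ind(\M)$ closed under colimits and containing $(\M,S)_{basic}$, and $(\M,S)_{basic}$ consists of objects of the form $\colim_{\mathbb Q_{\geq 0}} y(m_r)$ along maps in $S$, the collection $(\M,S)_{basic}$ generates $(\M,S)$ under colimits (closure under finite limits is automatic once we have closure under colimits and the objects in question, using that $S$ is closed under (de)suspensions, so that $(\M,S)_{basic}$ is closed under shifts — this is where that hypothesis enters). So first I would invoke the analogue of \Cref{lm:kappacpctgen}: since $(\M,S)_{basic}$ generates $(\M,S)$ under colimits, for any nonzero $x\in (\M,S)$ there is a nonzero map $n\to x$ with $n\in (\M,S)_{basic}$.

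Next, I would promote this to a \emph{factored} nonzero map. Write $n = \colim_{r\in\mathbb Q_{\geq 0}} y(n_r)$ along maps in $S$. The maps $y(n_r)\to n$ are compact in $(\M,S)$ by \Cref{lm:basiccpct} (they factor through a compact object of $\Ind(\M)$). Since the $y(n_r)$ form a filtered system with colimit $n$ and $\Ind(\M)$ is compactly generated — or more simply, since a nonzero map out of a filtered colimit of compact objects must be nonzero on some term — there is some $r$ with $y(n_r)\to n\to x$ nonzero. Now I want $m$ to be an object of $(\M,S)_{basic}$ rather than merely a compact object $y(n_r)$. For this, take $m := \colim_{s\in\mathbb Q_{\geq 0},\, s\leq r} y(n_s)$, which is again of the form $\colim_{\mathbb Q_{\geq 0}}(\dots)$ along maps in $S$ (using $[0,r]\cap\mathbb Q \cong \mathbb Q_{\geq 0}$ via Cantor, exactly as in the proof of \Cref{lm:basicexhaust}), hence lies in $(\M,S)_{basic}$; the canonical map $m\to n$ factors through $y(n_r)$ in $\Ind(\M)$ — indeed $m\to y(n_r)$ is the structure map of the colimit — and therefore $m\to n$ is compact in $(\M,S)$ by \Cref{lm:basiccpct}. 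The composite $m\to n\to x$ equals, up to the structure maps, the composite $y(n_0)\to m \to n \to x$ precomposed appropriately; more directly, since $m\to n$ is surjective onto the $r$-truncation, the nonzero map $y(n_r)\to x$ factors as $y(n_r)\to m\to n\to x$ is not quite right — instead observe $y(n_r)\to n$ already factors through $m\to n$, so $m\to n\to x$ restricted along $y(n_r)\to m$ is the nonzero map $y(n_r)\to x$, forcing $m\to n\to x$ to be nonzero.

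The main obstacle, and the only subtle point, is checking that $m$ as constructed genuinely lies in $(\M,S)_{basic}$ and that $m\to n$ is compact: both hinge on the Cantor-theorem reindexing trick already used in \Cref{lm:basicexhaust} and \Cref{rmk:cpctMS}, together with the accessible-ideal hypothesis guaranteeing the colimit lands in some $\Ind(\M^\kappa)$. Everything else is a routine unwinding of the generation property of $(\M,S)_{basic}$ in $(\M,S)$. I would close by remarking that the hypothesis that $S$ is closed under (de)suspensions is used only to ensure $(\M,S)_{basic}$ is shift-stable so that the colimit-closure genuinely contains the stable subcategory generated by it, matching the setup of \Cref{prop:Dustindbl}.
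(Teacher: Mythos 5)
Your opening step is essentially right (modulo a small adjustment: generation under colimits plus shift-closure of $(\M,S)_{basic}$ gives $\map(c,x)\neq 0$ for some $c\in(\M,S)_{basic}$, and then after shifting you get a genuine nonzero map, as the paper does). The rest of the argument, however, has a gap that the paper's Milnor-sequence device is specifically designed to bridge, and your construction of $m$ does not resolve it.

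The problem is twofold. First, the claim ``there is some $r$ with $y(n_r)\to n\to x$ nonzero'' is not the right intermediate statement. The paper instead deduces, via $c\simeq\colim_{\NN}y(m_r)$ and the Milnor sequence, that one of the pro-systems $\{\pi_0\map(m_r,x)\}$ or $\{\pi_1\map(m_r,x)\}$ is not pro-zero, and hence (after a shift if needed) there is a nonzero map $y(m_r)\to x$ \emph{that factors through $y(m_{r+1})$}. That factorization, not merely nonvanishing at some stage, is the load-bearing fact. Second, and more seriously, your construction $m:=\colim_{s\le r}y(n_s)$ has the maps going the wrong way: the structure map is $m\to y(n_r)$, not $y(n_r)\to m$, so your sentence ``$y(n_r)\to n$ already factors through $m\to n$'' is false and the argument that the composite $m\to n\to x$ is nonzero collapses. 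The only thing you know is that $y(n_r)\to x$ is nonzero, and that does not control the composite $m\to y(n_r)\to x$, which could perfectly well vanish.

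The paper's route avoids both issues simultaneously. Having a nonzero $y(m_r)\to x$ that factors through $y(m_{r+1})$, it inserts two objects of $(\M,S)_{basic}$ \emph{between} $y(m_r)$ and $y(m_{r+1})$ by truncating the rational interval: $y(m_r)\to\colim_{]r,r+\frac12[}y(m_t)\to\colim_{]r+\frac12,r+\frac34[}y(m_t)\to y(m_{r+1})$. The middle map factors through $y(m_{r+\frac12})$, hence is compact, and both middle terms lie in $(\M,S)_{basic}$ by the Cantor reindexing. Crucially, the composite of the middle map with $y(m_{r+1})\to x$ is nonzero \emph{because} it agrees, after precomposition with $y(m_r)\to\colim_{]r,r+\frac12[}y(m_t)$, with the given nonzero map $y(m_r)\to x$. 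This is exactly the point your construction cannot reach, because it never produces an object that receives a map from $y(n_r)$.

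So the missing idea is: you need a nonzero map out of a representable that factors one step later in the tower, and the Milnor/pro-system argument is what delivers that. Simply knowing some $y(n_r)\to x$ is nonzero does not suffice, and the truncation-at-$r$ object you build maps the wrong way to exploit it.
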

\begin{proof}
    By definition of $(\M,S)$, if $x$ is nonzero then $\map(c,x)\neq 0$ for some $c\in (\M,S)_{basic}$. Up to (de)suspending $c$ enough, we may assume $\pi_0\map(c,x)\neq 0$. Writing $c=\colim_{\mathbb Q_{geq 0}}y(m_r)$ with transition maps in $S$, and using $c\simeq \colim_\NN y(m_r)$, and finally the Milnor short exact sequence, we find that either $\{\pi_0\map(m_r,x)\}$ or $\{\pi_1\map(m_r,x)\}$ is not $0$ as a pro-system, and thus, up to suspending if needed, there is some $r$ and some nonzero map $y(m_r)\to x$ factoring through $y(m_{r+1})$. 

    But now we may write the map $y(m_r)\to y(m_s)$ as the composite $$y(m_r)\to \colim_{]r,r+\frac{1}{2}[}y(m_t)\to \colim_{]r+\frac{1}{2},r+\frac{3}{4}[}y(m_t)\to y(m_{r+1})$$ where now the middle two terms are in $(\M,S)_{basic}$ and the middle map is compact, so that we get the desired result.
\end{proof}
\begin{proof}[Proof of \Cref{prop:Dustindbl}]
    Putting together the preceding lemmas, we find that we can simply apply \Cref{thm:dblviacpctmaps}. 
\end{proof}
\begin{assu}
    From now on, $S$ is assumed to be closed under (de)suspensions. 

    A stable accessible ideal is an accessible ideal closed under (de)suspensions. 
\end{assu}
\begin{cor}\label{cor:UPSoneside}
    Let $S$ be a stable accessible ideal in $\M$, and $\N\in\Prdbl$. If $f:\N\to \M$ is colimit-preserving and sends compact maps in $\N$ to $S$, then the corresponding internal left adjoint $\N\to \Ind(\M)$ from \Cref{thm:nonstandadj} (cf. also \Cref{rmk:nonstandadjInd}) factors through $(\M,S)$. 
\end{cor}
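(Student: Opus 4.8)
\textbf{Proof plan for \Cref{cor:UPSoneside}.}

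The plan is to use the universal property of $\Ind(\M)$ from \Cref{thm:nonstandadj} (and its analogue for presentable targets from \Cref{rmk:nonstandadjInd}) to produce the internal left adjoint $\hat f\colon \N\to \Ind(\M)$ associated to $f$, and then to show that $\hat f$ lands in the full subcategory $(\M,S)\subset \Ind(\M)$. Since $(\M,S)$ is closed under colimits in $\Ind(\M)$ by construction, and since $\N$ is dualizable, hence atomically generated up to retract by \Cref{thm:Luriethmgeneralbase}, it suffices to check that $\hat f$ sends a generating-under-colimits family of objects of $\N$ into $(\M,S)$. Concretely, for $\N\in\Prdbl$ compactly assembled, $\N$ is generated under colimits by compactly exhaustible objects (\Cref{thm:compactassembly}), and by \Cref{cor:cpctexhaustQ} every compactly exhaustible object $x$ can be written as $x\simeq\colim_{r\in\mathbb Q_{\geq 0}}x_r$ with each transition map $x_r\to x_s$ compact. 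So the first step is: reduce to showing $\hat f(x)\in(\M,S)$ for such $x$.

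Second, I would identify $\hat f$ on these objects. Recall from the construction of $\hat y$ (proof of \Cref{thm:Luriethmgeneralbase}) and from \Cref{thm:nonstandadj} that $\hat f = \Ind(f^\omega)\circ\hat y_\N$ (using the $\Ind$-version, $\kappa=\omega$, legitimate since $\N$ is compactly assembled), and that on a compactly exhaustible object $x = \colim_{\mathbb Q_{\geq 0}} x_r$ one has $\hat y_\N(x)\simeq\colim_{\mathbb Q_{\geq 0}} y(x_r)$ in $\Ind(\N^\omega)$ — because $\hat y$ preserves colimits and, by \Cref{lm:basicnuc}, agrees with the Yoneda embedding on compact objects, so it carries a compact exhaustion to the corresponding diagram of representables. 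Applying $\Ind(f)$ (which preserves colimits and representables), we get $\hat f(x)\simeq\colim_{r\in\mathbb Q_{\geq 0}} y(f(x_r))$ in $\Ind(\M)$, where each transition map $f(x_r)\to f(x_s)$ is the image under $f$ of the compact map $x_r\to x_s$, hence lies in $S$ by hypothesis. By the very definition of $(\M,S)_{basic}$, this displays $\hat f(x)$ as an object of $(\M,S)_{basic}\subset(\M,S)$.

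Third and last, I would assemble: the collection of objects $n\in\N$ with $\hat f(n)\in(\M,S)$ is closed under colimits (as $(\M,S)$ is closed under colimits in $\Ind(\M)$ and $\hat f$ preserves colimits) and contains all compactly exhaustible objects by the previous paragraph; since these generate $\N$ under colimits, it is all of $\N$. Hence $\hat f$ factors through $(\M,S)$, as claimed. The main obstacle — really the only non-formal point — is the identification $\hat y_\N(\colim_{\mathbb Q_{\geq0}}x_r)\simeq\colim_{\mathbb Q_{\geq0}} y(x_r)$, i.e. that the canonical left adjoint $\hat y$ sends a compact exhaustion to the diagram of representables; this is exactly the content of \Cref{lm:basicnuc} (together with colimit-preservation of $\hat y$), but one should be a little careful that the comparison map $\hat y(x)\to y(x)$ and the colimit identification are compatible. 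One may alternatively phrase the whole argument directly inside $(\M,S)$: the objects $\colim_{\mathbb Q_{\geq 0}}y(f(x_r))$ are manifestly in $(\M,S)$, and $\hat f$, being the colimit-preserving functor determined on compacts by $f$, must land there. Either way no serious calculation is involved.
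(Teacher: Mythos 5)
Your proposal follows the paper's route precisely: unwind $\hat f$ as $\Ind(f)\circ\hat y_\N$, reduce by colimit-closure of $(\M,S)$ to compactly exhaustible objects, which generate $\N$ (\Cref{thm:compactassembly}) and can be written as $\mathbb{Q}_{\geq0}$-indexed colimits along compact maps (\Cref{cor:cpctexhaustQ}), then apply \Cref{lm:basicnuc} to identify $\hat y_\N(\colim_r x_r)\simeq\colim_r y(x_r)$, so that $\hat f$ lands in $(\M,S)_{\mathrm{basic}}$. One recurring slip to correct in the write-up: you invoke ``$\kappa=\omega$, legitimate since $\N$ is compactly assembled'' and write $\Ind(\N^\omega)$, $\Ind(f^\omega)$ --- but compactly assembled is strictly weaker than compactly generated, so $\N^\omega$ may well be trivial; what is actually used is $\hat y_\N:\N\to\Ind(\N)$ into the full ind-completion (\Cref{defn:compactass}, \Cref{rmk:nonstandadjInd}), with the $x_r$ only $\omega_1$-compact, not compact --- this does not affect the substance, which is otherwise sound.
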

\begin{proof}
    Following the proof of \Cref{thm:nonstandadj}, we see that this internal left adjoint is given by $f\circ \hat y_\N$. 

    As $\N$ is dualizable, it suffices to prove that it sends compactly exhaustible objects in $\N$ to $(\M,S)$, so let $n= \colim_{\mathbb Q_{\geq 0}}n_r$ be a compactly exhaustible object (cf. \Cref{cor:cpctexhaustQ}). We have, by \Cref{lm:basicnuc}, $\hat y(n) =\colim_{\mathbb Q_{\geq 0}}y(n_r)$. 
    
    Each $n_r\to n_s$ is compact so its image $y(f(n_r))\to y(f(n_s))$ is in $S$, and so $f\circ \hat y(n)$ is in fact in $(\M,S)_{basic}$, as needed. 
\end{proof}
To obtain a converse (which is what we are looking for at the end of the day), we need some kind of ``robustness'' of $S$. Essentially, we need $(\M,S)_{basic}$ to consist exactly of $(\M,S)^{\omega_1}$, and this means essentially that it has to be closed under countable colimits. 
\begin{prop}\label{prop:UPSotherside}
If $S$ is a stable accessible ideal, and if $(\M,S)_{basic}\subset (\M,S)$ is closed under countable colimits, then for any dualizable $\N$ and any internal left adjoint $\N\to~(\M,S)$, the composite $\N\to (\M,S)\to \Ind(\M)\to \M$ sends compact maps to $S$. 
\end{prop}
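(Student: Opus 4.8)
The goal is to show that for a dualizable $\N$ and an internal left adjoint $g : \N \to (\M,S)$, the composite $\N \to (\M,S) \hookrightarrow \Ind(\M) \xrightarrow{\colim} \M$ sends compact maps in $\N$ to maps in $S$. Write $h : \N \to \M$ for this composite; note $h$ is colimit-preserving (it is a composite of colimit-preserving functors, the last being the canonical colimit functor $\Ind(\M) \to \M$). First I would reduce to a statement about a single compact map $\alpha : x \to y$ in $\N$. Since $\N$ is dualizable hence compactly assembled (by \Cref{thm:Luriethmgeneralbase} and \Cref{cor:cardboundcompass}, or directly by \Cref{defn:compactass}), and since $\M$ admits filtered colimits, the content is purely about what $h$ does to compact morphisms; so fix a compact map $\alpha$ in $\N$.

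\textbf{Key steps.} The plan is to exploit that $g$ is an internal left adjoint to get a good model of $g(y)$. By \Cref{cor:boundyhat} and \Cref{cor:yhatnatural}, there is a natural left adjoint $\hat y_\N : \N \to \Ind(\N^{\omega_1})$ to the canonical functor, and since $g$ is an internal left adjoint between dualizable categories, \Cref{prop:internalleftBC} gives a commuting square relating $\hat y_\N$, $g$, and the analogous data on $(\M,S)$; in particular $g$ preserves compact maps (by \Cref{cor:atomicimpliesinternal} applied to $\Ind$-models, or more directly: $g$ has a colimit-preserving right adjoint, so by \Cref{ex:preservecpct} it preserves compact maps). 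Thus $g(\alpha)$ is a compact map in $(\M,S)$. Now I would use \Cref{rmk:compactmapsplitunstable}: in the compactly assembled category $(\M,S)$ any compact map factors as a composite of two compact maps, and iterating (as in \Cref{lm:rationals} / \Cref{cor:cpctexhaustQ}) one can arrange that $g(\alpha)$ fits into a $\mathbb Q_{\geq 0}$-indexed refinement by compact maps. The crucial input is then the hypothesis that $(\M,S)_{basic}$ is closed under countable colimits: combined with \Cref{cor:characcptex} (the $\omega_1$-compacts in a compactly assembled category are exactly the compactly exhaustible objects) and \Cref{lm:basicexhaust}, this forces $(\M,S)^{\omega_1} = (\M,S)_{basic}$, so the source and target of $g(\alpha)$, being (after the refinement) compactly exhaustible, lie in $(\M,S)_{basic}$, i.e. are of the form $\colim_{r \in \mathbb Q_{\geq 0}} y(m_r)$ with transition maps in $S$. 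Applying $\colim : \Ind(\M) \to \M$ and using \Cref{lm:basicnuc} to identify the relevant mapping spaces, the compact map $g(\alpha)$ maps to a map in $\M$ that factors through one of the $m_r \to m_s$, which lies in $S$; since $S$ is an ideal (\Cref{ass:settheory}), $h(\alpha)$ is in $S$.

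\textbf{Main obstacle.} The delicate point is matching up the factorization of $g(\alpha)$ as a compact map \emph{in} $(\M,S)$ with a factorization through an actual $y(m_r)$ \emph{in} $\Ind(\M)$. \Cref{lm:basiccpct} and \Cref{rmk:cpctMS} say that compact maps in $(\M,S)$ between objects of $(\M,S)_{basic}$ are exactly those compact in $\Ind(\M)$, and compact maps in $\Ind(\M)$ factor through compact objects $y(m)$; but one needs $m$ to sit inside the $\mathbb Q_{\geq 0}$-diagram with $S$-transition maps, which is where the closure of $(\M,S)_{basic}$ under countable colimits is used to land both endpoints in $(\M,S)_{basic}$ in the first place. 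So the real work is: (i) reduce to $\alpha$ with both endpoints $\omega_1$-compact in $\N$ (using that $\hat y_\N$ lands in $\Ind(\N^{\omega_1})$ and a cofinality argument, so that $g(\alpha)$ has endpoints in $(\M,S)^{\omega_1} = (\M,S)_{basic}$), (ii) apply \Cref{rmk:cpctMS} to conclude $g(\alpha)$ is compact in $\Ind(\M)$, hence factors through some $y(m)$ with $m$ appearing in the defining $\mathbb Q_{\geq 0}$-diagram of the target, and (iii) push forward along $\colim$, reading off that $h(\alpha) \in S$. I expect step (i) — the bookkeeping reducing a general compact map to one with $\omega_1$-compact endpoints while keeping track of the $\mathbb Q_{\geq 0}$-exhaustions — to be the fussiest part, though it is all cofinality arguments of the kind already used repeatedly in \Cref{section:atomic}.
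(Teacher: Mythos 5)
Your proposal is correct and follows the paper's argument: establish $(\M,S)^{\omega_1}=(\M,S)_{basic}$ from closure under countable colimits, factor the compact map $g(\alpha)$ through a compact map between objects of $(\M,S)_{basic}$, invoke \Cref{rmk:cpctMS} to recognize this as compact in $\Ind(\M)$ and hence as factoring through a map $y(m')\to y(m'')$ in $y(S)$, and apply $\colim$. The detour through $\mathbb Q_{\geq 0}$-refinements via \Cref{cor:cpctexhaustQ} is unnecessary, and your ``Main obstacle'' paragraph correctly supersedes the slightly muddled claim in ``Key steps'' that the endpoints of $g(\alpha)$ land in $(\M,S)_{basic}$ merely from refining the map; the paper instead simply factors the compact map $g(\alpha)$ through such a compact map directly in $(\M,S)$, without first reducing $\alpha$ in $\N$.
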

\begin{proof}
    If it is closed under countable colimits, then because its elements are all $\omega_1$-compact in $(\M,S)$, we find that $(\M,S)^{\omega_1}= (\M,S)_{basic}$.

    In particular, any compact map in $(\M,S)$ factors through a compact map in $(\M,S)_{basic}$, and thus, by \Cref{rmk:cpctMS}, through a map between $m,n\in (\M,S)_{basic}$ of the form $m\to~y(m')\to~y(m'')\to~n$. 

    So now let $f:\N\to (\M,S)$ be an internal left adjoint, and $\alpha: x\to y$ be a compact map in $\N$. By internal left adjointness, $f(\alpha)$ is compact in $(\M,S)$ and thus factors, in $\Ind(\M)$, through $y(S)$. Thus $\colim\circ f(\alpha)$ factors through $S$, and thus lies in $S$. 
\end{proof}
\begin{lm}\label{lm:countablecoprod}
    Assume $S$ is closed under finite direct sums. In this case, $(\M,S)_{basic}$ is closed under countable coproducts. 
\end{lm}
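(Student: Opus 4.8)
Let $x^{(0)},x^{(1)},\dots$ be a countable family of objects of $(\M,S)_{basic}$; since $(\M,S)$ is a full subcategory of $\Ind(\M)$ closed under colimits, the coproduct $\bigoplus_{i\in\mathbb{N}}x^{(i)}$ may be computed in $\Ind(\M)$, and I want to exhibit it as an object of $(\M,S)_{basic}$. By definition, for each $i$ I may fix a presentation $x^{(i)}=\colim_{r\in\mathbb{Q}_{\geq 0}}y(m^{(i)}_r)$ with every transition map $m^{(i)}_r\to m^{(i)}_s$ (for $r<s$) lying in $S$; cofinality of $\mathbb{N}\hookrightarrow\mathbb{Q}_{\geq 0}$ also gives $x^{(i)}=\colim_{n\in\mathbb{N}}y(m^{(i)}_n)$. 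It is important to keep the genuine $\mathbb{Q}_{\geq 0}$-indexed presentations, not just sequential ones, so that the output diagram built below is honestly $\mathbb{Q}_{\geq 0}$-indexed as the definition of $(\M,S)_{basic}$ requires.

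The idea is to ``diagonalize''. Define $D\colon\mathbb{Q}_{\geq 0}\to\M$ by $D(r):=\bigoplus_{i=0}^{\lfloor r\rfloor}m^{(i)}_r$, where for $r\le s$ the transition map $D(r)\to D(s)$ is the composite of $\bigoplus_{i\le\lfloor r\rfloor}\big(m^{(i)}_r\to m^{(i)}_s\big)$ with the summand inclusion $\bigoplus_{i\le\lfloor r\rfloor}m^{(i)}_s\hookrightarrow\bigoplus_{i\le\lfloor s\rfloor}m^{(i)}_s$; functoriality is immediate. For $r<s$, the first factor is a finite direct sum of maps in $S$, hence lies in $S$ by hypothesis, and post-composing with a map of $\M$ keeps it in $S$ because $S$ is an accessible ideal (\Cref{ass:settheory}). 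Thus every nontrivial transition map of $D$ lies in $S$.

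It then remains to identify $\colim_{r\in\mathbb{Q}_{\geq 0}}y(D(r))$ with $\bigoplus_i x^{(i)}$. Using cofinality of $\mathbb{N}$ again, this colimit is $\colim_{n}y\big(\bigoplus_{i\le n}m^{(i)}_n\big)$, and since $y\colon\M\to\Ind(\M)$ is exact — in particular it preserves finite coproducts, as finite products are preserved by the Yoneda embedding and agree with finite coproducts in the stable category $\Ind(\M)$ — this equals $\colim_{n}\bigoplus_{i\le n}y(m^{(i)}_n)$. On the other hand, writing the countable coproduct as a sequential colimit of finite ones and commuting colimits,
\begin{align*}
\bigoplus_{i\in\mathbb{N}} x^{(i)} &= \colim_{k}\bigoplus_{i\le k}\colim_{n}y(m^{(i)}_n) = \colim_{k}\colim_{n}\bigoplus_{i\le k}y(m^{(i)}_n) \\
&= \colim_{(k,n)\in\mathbb{N}\times\mathbb{N}}\bigoplus_{i\le k}y(m^{(i)}_n) = \colim_{n}\bigoplus_{i\le n}y(m^{(i)}_n),
\end{align*}
the last step being cofinality of the diagonal $\mathbb{N}\to\mathbb{N}\times\mathbb{N}$. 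Hence $\bigoplus_i x^{(i)}=\colim_{r\in\mathbb{Q}_{\geq 0}}y(D(r))\in(\M,S)_{basic}$, as desired. (The empty/finite coproduct cases are handled by the same construction with $\lfloor r\rfloor$ capped appropriately.)

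\textbf{Main obstacle.} There is no real obstacle; the proof is purely a bookkeeping argument. The two points to be careful about are (i) using the honest $\mathbb{Q}_{\geq 0}$-presentations of the $x^{(i)}$ so the resulting object genuinely lies in $(\M,S)_{basic}$ and not merely in $(\M,S)^{\omega_1}$, and (ii) invoking the ideal property of $S$ — closure under post-composition with arbitrary maps of $\M$ — to legitimize the summand-inclusion step, which a priori does not land in $S$ on its own.
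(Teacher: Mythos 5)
Your proof is correct and follows essentially the same strategy as the paper's: both form the $\mathbb{Q}_{\geq 0}$-indexed diagram $r\mapsto\bigoplus_{i\le\lfloor r\rfloor}m^{(i)}_r$ and verify its transition maps lie in $S$ using the closure under finite direct sums together with the ideal property. The only cosmetic difference is that you factor each transition map as (a finite direct sum of $S$-maps) followed by a summand inclusion and invoke the ideal property on the latter, whereas the paper presents the same transition map as a single finite direct sum in which some summands are maps $0\to y(m^n_s)$ and notes those are in $S$ because $S$ is an ideal — these are two phrasings of the identical argument.
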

\begin{proof}
    Let $(m^n_r): \NN\times\mathbb Q_{\geq 0}\to \M$ be a family of $\mathbb Q_{\geq 0}$-indexed diagrams such that for each $n\in\NN, r<s\in\mathbb Q_{\geq 0}$, the transition map $m^n_r\to m^n_s$ is in $S$. 

    In this case, $\bigoplus_\NN \colim_{\mathbb Q_{\geq 0}} y(m^n_r)\simeq \colim_{r\in\mathbb Q_{\geq 0}}\bigoplus_{n\leq r}y(m^n_r)$ by a cofinality argument, and for each $r<s$, $\bigoplus_{n\leq r}y(m^n_r)$ is a finite direct sum of maps that are either in $S$ or of the form $0\to y(m^n_s)$ (which are also in $S$ because $S$ is an ideal). This coproduct is thus also in $(\M,S)_{basic}$. 
\end{proof}
We are left with understanding when $(\M,S)_{basic}$ is closed under cofibers. This is where the key axiom comes in, mimicking the behaviour of \Cref{prop:cpctcofib}:
\begin{assu}\label{assu:cofib}
Consider a diagram of vertical cofiber sequences, as in \Cref{prop:cpctcofib}\footnote{Just as we did there, for notational simplicity, we have suppressed the nullhomotopies, but they are again part of the commutative diagram}: 
 \[\begin{tikzcd}
	{x_0} & {y_0} & {z_0} \\
	{x_1} & {y_1} & {z_1} \\
	{x_2} & {y_2} & {z_2}
	\arrow["{f_0}", color={rgb,255:red,214;green,92;blue,92}, from=1-1, to=1-2]
	\arrow["{g_0}", color={rgb,255:red,214;green,92;blue,92}, from=1-2, to=1-3]
	\arrow["{f_1}", color={rgb,255:red,214;green,92;blue,92}, from=2-1, to=2-2]
	\arrow["{g_1}", color={rgb,255:red,214;green,92;blue,92}, from=2-2, to=2-3]
	\arrow[from=2-1, to=3-1]
	\arrow[from=1-1, to=2-1]
	\arrow[from=1-2, to=2-2]
	\arrow[from=1-3, to=2-3]
	\arrow[from=2-2, to=3-2]
	\arrow[from=2-3, to=3-3]
	\arrow["{f_2}"', from=3-1, to=3-2]
	\arrow["{g_2}"', from=3-2, to=3-3]
\end{tikzcd}\]
where $f_0,f_1,g_0,g_1$ are in $S$. Then $g_2\circ f_2$ is also in $S$. 
\end{assu}
\begin{rmk}
    We could instead assume the simpler axiom that maps in $S$ are closed under cofiber sequences -- but this is too strong and not satisfied in the relevant examples, cf. \Cref{warn:cpctcofib}.  
\end{rmk}
\begin{thm}\label{thm:UPDustin}
    Suppose $S$ is a stable accessible ideal, closed under finite direct sums and satisfying \Cref{assu:cofib}.
    
    In that case, $(\M,S)_{basic}$ is closed under countable colimits and thus for any $\N\in~\Prdbl$, the functor $(\M,S)\to \M$ induces an equivalence: 
    $$\Fun^{iL}(\N,(\M,S))\to \Fun^L_S(\N,\M)$$
    where the right hand side is the full subcategory of $\Fun^L(\N,\M)$ spanned by those functors that send compact maps in $\N$ to maps in $S$. 
\end{thm}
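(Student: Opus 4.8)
The plan is to prove the two assertions in turn, with the first feeding into the second. First I would show that $(\M,S)_{basic}$ is closed under countable colimits. Since $(\M,S)_{basic}$ is closed under finite direct sums by \Cref{lm:countablecoprod}, and we are inside a stable category, it suffices to check closure under cofibers of maps between objects of $(\M,S)_{basic}$; countable colimits are then built from finite colimits and countable coproducts. So let $m\to n$ be a map in $(\M,S)_{basic}$, with $m=\colim_{\mathbb Q_{\geq 0}}y(m_r)$ and $n=\colim_{\mathbb Q_{\geq 0}}y(n_r)$. The idea is to use a cofinality argument to re-index both along $\NN$ and, refining, to exhibit the given map as induced (up to the usual interleaving trick) by a levelwise map $y(m_{r})\to y(n_{s(r)})$ for a suitable monotone $s$; then $\mathrm{cofib}(m\to n)=\colim_{\NN}\mathrm{cofib}(y(m_{r})\to y(n_{s(r)}))$, and I must produce, for this sequential colimit, transition maps lying in $S$, so that the colimit is again in $(\M,S)_{basic}$. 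This is exactly the situation of \Cref{assu:cofib}: for each stage one stacks two consecutive cofiber squares (the one computing $\mathrm{cofib}$ at level $r$ and at level $r+1$, both refined so that the relevant horizontal maps $m_r\to m_{r+1}$ and $n_{s(r)}\to n_{s(r+1)}$ and the comparison maps factor as composites of maps in $S$), and \Cref{assu:cofib} guarantees the composite transition map on cofibers lies in $S$. The bookkeeping to make the two cofiber squares line up as the top-left and bottom-left of the $3\times 3$ diagram in \Cref{prop:cpctcofib}/\Cref{assu:cofib}, and the cofinality/interleaving reductions, will be the main technical grind, but they are routine once set up correctly.

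Given this, $(\M,S)_{basic}$ is a full subcategory closed under countable colimits inside $(\M,S)$, and every object of it is $\omega_1$-compact in $(\M,S)$ by \Cref{lm:basicexhaust} (compactly exhaustible objects are $\omega_1$-compact, \Cref{cor:basicnuccpct}); conversely, since $(\M,S)$ is dualizable by \Cref{prop:Dustindbl}, hence $\omega_1$-compactly generated by \Cref{cor:cardboundcompass}, and every object of $(\M,S)$ is a colimit of objects of $(\M,S)_{basic}$ by construction, it follows by the standard argument (an $\omega_1$-compact object is a retract of an $\omega_1$-small, hence countable, colimit of objects of $(\M,S)_{basic}$, which by closure under countable colimits and idempotent-completeness lies in $(\M,S)_{basic}$) that $(\M,S)^{\omega_1}=(\M,S)_{basic}$.

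For the equivalence of functor categories, I would combine the two halves already available. The functor $(\M,S)\to\M$ (the composite $(\M,S)\hookrightarrow\Ind(\M)\xrightarrow{\colim}\M$) induces a map $\Fun^{iL}(\N,(\M,S))\to\Fun^L(\N,\M)$; by \Cref{prop:UPSotherside}, whose hypothesis ``$(\M,S)_{basic}$ closed under countable colimits'' we have just verified, this lands in $\Fun^L_S(\N,\M)$. For essential surjectivity: given $f\in\Fun^L_S(\N,\M)$, \Cref{cor:UPSoneside} produces the internal left adjoint $f\circ\hat y_\N:\N\to(\M,S)$ corresponding to $f$ under \Cref{thm:nonstandadj} (really its $\Ind$-variant, \Cref{rmk:nonstandadjInd}), and postcomposing with $\colim$ recovers $f$ since $\colim\circ\,\hat y_\N\simeq\id_\N$; so the map is essentially surjective. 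For full faithfulness: the universal property of \Cref{thm:nonstandadj} (equivalently \Cref{rmk:nonstandadjInd}) already gives a fully faithful embedding $\Fun^{iL}(\N,\Ind(\M))\simeq\Fun^L(\N,\M)$... more precisely $\Fun^L(\N,\M)\simeq\Fun^{iL}(\N,\Ind(\M))$, and internal left adjoints $\N\to(\M,S)$ are precisely those internal left adjoints $\N\to\Ind(\M)$ that factor through the full subcategory $(\M,S)$ — this factorization is a property, and \Cref{cor:UPSoneside} (resp.\ \Cref{prop:UPSotherside}) identifies which $f$'s on the right correspond to such factorizations, namely exactly the $S$-preserving ones. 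Assembling these identifications gives the desired equivalence $\Fun^{iL}(\N,(\M,S))\simeq\Fun^L_S(\N,\M)$.

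The main obstacle I anticipate is the first step: verifying \Cref{assu:cofib} actually applies to the comparison, i.e.\ arranging the cofiber-of-a-map-of-telescopes computation so that at each sequential stage one genuinely has the $3\times 3$ configuration with all four designated maps in $S$. The subtlety (flagged in \Cref{warn:cpctcofib}) is that one cannot simply say ``cofibers of maps in $S$ are in $S$''; one must keep the two consecutive cofiber squares \emph{compatibly presented} and insert the extra refinements (using that $S$ is an accessible ideal closed under (de)suspensions and that within $(\M,S)_{basic}$ one can always refine a map through an intermediate $\mathbb Q_{\geq 0}$-stage, as in \Cref{lm:basicexhaust}) so that the hypotheses of \Cref{assu:cofib} are literally met. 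Once that diagram-chase is done carefully, the rest is formal.
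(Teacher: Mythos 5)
Your second half (the functor-category equivalence via the $\Fun^L(\N,\M)\simeq\Fun^{iL}(\N,\Ind(\M))$ adjunction, \Cref{cor:UPSoneside}, and \Cref{prop:UPSotherside}) matches the paper's plan and is correct, including the point that factorization through the full subcategory $(\M,S)$ is a property preserved in both directions.

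There is, however, a genuine gap in your cofiber-closure argument, and it is created by the very first step: re-indexing the $\mathbb{Q}_{\geq 0}$-diagrams along $\NN$. Membership in $(\M,S)_{basic}$ is, by definition, a $\mathbb{Q}_{\geq 0}$-indexed condition (a $\mathbb{Q}_{\geq 0}$-diagram with \emph{all} nonidentity transition maps in $S$), and a $\NN$-indexed diagram cannot be straightforwardly upgraded. Once you pass to $\NN$, the best that \Cref{assu:cofib} gives you — by inserting one intermediate stage between $n$ and $n+1$ — is that the \emph{composite} cofiber transition map lies in $S$; but as \Cref{warn:cpctcofib} emphasizes, the individual cofiber maps through the inserted stage need not lie in $S$, so you cannot iterate \Cref{lm:rationals} to re-expand the $\NN$-diagram into a $\mathbb{Q}_{\geq 0}$-diagram, since $S$ is not known to have the ``factors as a composite of two $S$-maps'' property. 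The paper sidesteps this entirely by never leaving $\mathbb{Q}_{\geq 0}$: using compactness of the transition maps and the presentation $\mathbb{Q}_{\geq 0}=\colim_k G_k$ from \Cref{lm:rationals}, one realizes $f\colon x\to z$ as the colimit of a $\mathbb{Q}_{\geq 0}$-indexed natural transformation $x_r\to z_r$; the levelwise cofiber is then automatically $\mathbb{Q}_{\geq 0}$-indexed, and \emph{every} nonidentity arrow $r<s$ in $\mathbb{Q}_{\geq 0}$ admits an intermediate $t$, so \Cref{assu:cofib} applies to the three columns $(x_r,z_r,\mathrm{cofib}_r)$, $(x_t,z_t,\mathrm{cofib}_t)$, $(x_s,z_s,\mathrm{cofib}_s)$ and shows $\mathrm{cofib}_r\to\mathrm{cofib}_s$ is in $S$. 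That gives membership in $(\M,S)_{basic}$ on the nose. Your final paragraph does gesture at the $\mathbb{Q}_{\geq 0}$-refinement trick, but the $\NN$-re-indexing at the start would have to be undone — and once you undo it you have, in effect, the paper's argument. I would recommend dropping the $\NN$ step entirely and working with $\mathbb{Q}_{\geq 0}$ throughout.
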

\begin{proof}
    By \Cref{lm:countablecoprod}, to prove that $(\M,S)_{basic}$ is closed under countable colimits, it suffices to prove that it is closed under cofibers. 

    Let $x,z \in (\M,S)_{basic}$, and $f:x\to z$. We write $x=\colim_{\mathbb Q_{\geq 0}} y(x_r)$ and similarly for $z$ with $y(z_s)$. 

Using the fact that the transition maps in those systems are compact, and using the presentation of $\mathbb Q_{\geq 0}$ as $\colim_k G_k$ as in the proof of \Cref{lm:rationals}, we find that the map $f$ can be realized as the colimit of a map of diagrams $x_r\to z_r$. 

We now take the cofiber of this map of diagrams. As any nonidentity map in $\mathbb Q_{\geq 0}$ can be written as a composite of two nonidentity maps, it follows from the construction together with \Cref{assu:cofib} that any nonidentity map in this diagram is in $S$, and hence the cofiber really is in $(\M,S)_{basic}$.

    We may now conclude by combining \Cref{thm:nonstandadj}\footnote{And \Cref{rmk:nonstandadjInd}.} and \Cref{prop:Dustindbl},\Cref{cor:UPSoneside} and \Cref{prop:UPSotherside}. 
\end{proof}
\begin{ex}
    When $S$ is the collection of compact maps, all the axioms are satisfied by \Cref{ex:factorcompact}, \Cref{prop:cpctcofib} and elementary verifications regarding suspensions and finite direct sums. Furthermore, in this case, by \Cref{lm:basicnuc}, the map $(\M,S)\to \M$ is fully faithful: its essential image is the largest dualizable subcategory of $\M$ embedded through an internal left adjoint. 

    More generally, whenever $S$ is included in the collection of compact maps, the functor $(\M,S)\to \M$ is fully faithful.
\end{ex}

\begin{ex}
    When $\M=\V\in\CAlg(\PrL_{\st})$ and $S$ is the collection of trace-class maps, we prove in \cite[Section 4.3.1]{companion} that $(\V,S)$ is the ``rigidification'' $\Rig(\V)$ of $\V$.
\end{ex}

\subsection{Describing limits and internal homs}\label{section:limitsalwaysexist}

Combining \Cref{thm:UPDustin} with \Cref{cor:strongadjcompactass}, we automatically get the following corollaries:
\begin{cor}
    Let $\M_\bullet: I\to \Prdbl$ be a small diagram with limit \emph{in} $\PrL_{\st}$ given by $\lim_I\M_i$, and let $S$ be the collection of maps in $\lim_I\M_i$ that are sent to compact maps in each $\M_i$. The cone $(\lim_i \M_i,S)\to \lim_I\M_i\to\M_\bullet$ is a limit cone in $\Prdbl$. 
\end{cor}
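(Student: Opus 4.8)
The plan is to verify the three hypotheses of \Cref{cor:naivelim} for the cone $(\lim_I\M_i, S)\to \lim_I \M_i \to \M_\bullet$, after first observing that the candidate object is the right one. Write $L := \lim_I\M_i$ for the limit computed in $\PrL_{\st}$, with projection functors $q_j : L\to \M_j$. Since each $q_j$ is colimit-preserving (limits in $\PrL_{\st}$ are also colimits of the right-adjoint diagram, but in any case the projections preserve colimits), and since the functor $c\colon (L,S)\to L$ sends compact maps to $S$ by construction (via \Cref{lm:basiccpct} and \Cref{rmk:cpctMS}), the composite $(L,S)\xrightarrow{c} L\xrightarrow{q_j}\M_j$ sends compact maps to compact maps precisely because $S$ was defined as the collection of maps in $L$ hitting compact maps in every $\M_i$. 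So by \Cref{cor:strongadjcompactass} (in its ``preserve compact maps'' form), each $q_j\circ c$ has a right adjoint preserving filtered colimits, i.e.\ is an internal left adjoint once we know $(L,S)$ is dualizable --- which is \Cref{prop:Dustindbl}, applicable since $S$ is a stable accessible ideal (it is closed under composition with arbitrary maps on either side since compact maps form a two-sided ideal in each $\M_i$, cf.\ \Cref{ex:factorcompact}, it is accessible because each $\M_i^{\omega}$ is small and $q_j$ preserves $\kappa$-compacts for suitable $\kappa$, and it is closed under (de)suspensions).

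Having that, I would check the three points of \Cref{cor:naivelim} directly, but the cleaner route is to exploit the universal property \Cref{thm:UPDustin}. First I need $S$ to satisfy the remaining hypotheses of that theorem: closure under finite direct sums (clear, since compact maps are closed under finite direct sums and a finite direct sum of maps in $L$ maps to the corresponding finite direct sum in each $\M_i$) and \Cref{assu:cofib} (which holds because \Cref{prop:cpctcofib} holds in each $\M_i$ and the relevant diagram of cofiber sequences in $L$ maps, via $q_j$, to such a diagram in $\M_j$ --- cofiber sequences in $L$ are computed levelwise). Then \Cref{thm:UPDustin} gives, for every $\N\in\Prdbl$, an equivalence $\Fun^{iL}(\N,(L,S))\simeq \Fun^L_S(\N,L)$. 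Now I claim $\Fun^L_S(\N,L)\simeq \lim_I \Fun^{iL}(\N,\M_i)$: a colimit-preserving $\N\to L$ is the same as a compatible family of colimit-preserving $\N\to \M_i$ (since $L$ is the limit in $\PrL_{\st}$ and $\Fun^L(\N,-)$ preserves limits), and the ``$S$'' condition says each component $\N\to\M_i$ sends compact maps to compact maps, which by \Cref{cor:atomicimpliesinternal}/\Cref{cor:strongadjcompactass} is exactly internal-left-adjointness (here using $\N$ dualizable). Chaining these: $\Map_{\Prdbl}(\N, (L,S)) \simeq \Fun^{iL}(\N,(L,S))^{\simeq} \simeq \lim_I \Fun^{iL}(\N,\M_i)^{\simeq} \simeq \lim_I\Map_{\Prdbl}(\N,\M_i)$, naturally in $\N$, which is precisely the statement that the cone $(L,S)\to\M_\bullet$ is a limit cone in $\Prdbl$.

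The one subtlety I want to flag --- and the likeliest source of a gap --- is the compatibility of all these identifications with the structure maps of the diagram $I\to\Prdbl$, i.e.\ checking that the equivalence $\Fun^L_S(\N,L)\simeq\lim_I\Fun^{iL}(\N,\M_i)$ is genuinely an equivalence of spaces/$\infty$-categories over $I$ and not merely objectwise, and that the counit $(L,S)\to L$ together with the $q_j$ assemble into an honest cone in $\Prdbl$ (not just a cone after forgetting to $\PrL_{\st}$). This is bookkeeping of the kind that \Cref{cor:yhatnatural}, \Cref{cor:yhatnatural2} and the naturality in \Cref{thm:nonstandadj} are designed to handle, so I would lean on those: the functor $\hat y$ is natural on $\Dbl{\V}$ (here $\V=\Sp$), which is what makes ``compactly assembled functor'' behave functorially, and the construction $(\M,S)$ is functorial in $(\M,S)$ in the evident sense. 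Given the machinery already developed, I do not expect a real obstruction here, just care; alternatively one can bypass the universal property entirely and verify points 1--3 of \Cref{cor:naivelim} --- point 1 is \Cref{prop:Dustindbl}, point 2 is the \Cref{cor:strongadjcompactass} argument above, and point 3 requires showing $\lim_I c_i^R c_i(m)$ is absolute for exact functors, which would need an analysis of the right adjoints $c_i = q_i\circ c$ analogous to the finite/descendable cases; the universal-property route has the advantage of sidestepping point 3 altogether, so that is what I would write up.
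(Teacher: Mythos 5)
Your approach is exactly the paper's: the paper proves this corollary by ``combining \Cref{thm:UPDustin} with \Cref{cor:strongadjcompactass}'' in a single sentence, and your write-up is precisely that combination spelled out --- identify $\Fun^{iL}(\N,(L,S))\simeq\Fun^L_S(\N,L)$ via \Cref{thm:UPDustin}, then use \Cref{cor:strongadjcompactass} together with the fact that $\Fun^L(\N,-)$ preserves limits to rewrite $\Fun^L_S(\N,L)$ as $\lim_I\Fun^{iL}(\N,\M_i)$. You correctly identify that the universal-property route sidesteps point~3 of \Cref{cor:naivelim}, which is indeed why the paper uses it. One small soft spot: your one-clause justification that $S$ is an accessible ideal (``each $\M_i^\omega$ is small and $q_j$ preserves $\kappa$-compacts'') is not quite an argument --- one actually wants that a uniform regular $\kappa$ can be chosen (using that $I$ is small) so that every $s\in S$ factors through a $\kappa$-compact of $L$, which takes a line or two more care; the paper is equally silent on this, so it is not a discrepancy with the paper, just a spot where both proofs lean on the reader.
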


\begin{cor}
    Let $\M,\N\in\Prdbl$, and let $S$ be the collection of maps $\eta:f\to g$ in $\Fun^L(\M,\N)$ such that for any compact map $m_0\to m_1$ in $\M$, the composite $f(m_0)\to~g(m_0)\to~g(m_1)$ is compact.  In this case the functor $\M\otimes (\Fun^L(\M,\N),S)\to\M\otimes\Fun^L(\M,\N)\to \N$ withnesses $(\Fun^L(\M,\N),S)$ as the internal hom of $\M$ and $\N$ in $\Prdbl$. 
\end{cor}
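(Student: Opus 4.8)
The plan is to deduce this as a formal consequence of \Cref{thm:UPDustin} together with the universal property of internal hom in a closed symmetric monoidal category, exactly as suggested in the preceding paragraph. First I would recall that $\Prdbl$ is closed symmetric monoidal (this follows from \Cref{thm:DblIsPrL}, or directly from \Cref{thm:UPDustin}), so for fixed $\M,\N\in\Prdbl$ the internal hom $\underline{\hom}(\M,\N)$ is characterized by the natural equivalence $\Fun^{iL}(\C, \underline{\hom}(\M,\N))\simeq \Fun^{iL}(\C\otimes\M,\N)$ for $\C\in\Prdbl$, where $\otimes$ is the tensor in $\Prdbl$. The key point is that $\C\otimes\M$ computed in $\Prdbl$ agrees with $\C\otimes\M$ computed in $\PrL_{\st}$ (the forgetful functor $\Prdbl\to\PrL_{\st}$ is strong symmetric monoidal), so I would want to identify $\Fun^{iL}(\C\otimes\M,\N)$ with the subcategory of $\Fun^L(\C,\Fun^L(\M,\N))$ cut out by the appropriate condition.

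The second step is to check that $S$ as defined — the collection of natural transformations $\eta\colon f\to g$ in $\Fun^L(\M,\N)$ such that for every compact map $m_0\to m_1$ in $\M$ the composite $f(m_0)\to g(m_0)\to g(m_1)$ is compact in $\N$ — is a stable accessible ideal in $\Fun^L(\M,\N)$ satisfying \Cref{ass:settheory}, closure under (de)suspensions, closure under finite direct sums, and \Cref{assu:cofib}, so that \Cref{thm:UPDustin} applies and produces the dualizable category $(\Fun^L(\M,\N),S)$ with the universal property $\Fun^{iL}(\C,(\Fun^L(\M,\N),S))\simeq \Fun^L_S(\C,\Fun^L(\M,\N))$. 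Accessibility of $S$ follows because compact maps in $\N$ factor through a small subcategory and $\M$ has a small set of compact maps generating the relevant condition; the stability/direct sum axioms are inherited pointwise from the corresponding facts about compact maps in $\N$ (\Cref{ex:factorcompact}, additivity); and \Cref{assu:cofib} for $S$ follows by evaluating at compact maps $m_0\to m_1$ of $\M$ and applying \Cref{prop:cpctcofib} in $\N$, since a diagram of vertical cofiber sequences of natural transformations becomes, after evaluation and composition with a compact map of $\M$, precisely the diagram considered there.

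The third and crucial step is the translation of conditions: I must show that a colimit-preserving functor $F\colon \C\to\Fun^L(\M,\N)$, adjoint to $\tilde F\colon \C\otimes\M\to\N$, sends compact maps of $\C$ to $S$ if and only if $\tilde F$ sends compact maps of $\C\otimes\M$ to compact maps of $\N$. One direction: if $c_0\to c_1$ is compact in $\C$ and $m_0\to m_1$ is compact in $\M$, then $c_0\boxtimes m_0\to c_1\boxtimes m_1$ is compact in $\C\otimes\M$ (a composite of $c_0\boxtimes m_0\to c_0\boxtimes m_1\to c_1\boxtimes m_1$, each factor being $\mathrm{id}\boxtimes(\text{compact})$ or $(\text{compact})\boxtimes\mathrm{id}$, hence compact by \Cref{ex:preservecpct} applied to $-\boxtimes m_1$ and $c_0\boxtimes-$ whose right adjoints preserve filtered colimits), and $\tilde F$ of this map is exactly the composite $F(c_0)(m_0)\to F(c_1)(m_0)\to F(c_1)(m_1)$ whose compactness, for all such $m_0\to m_1$, says $F(c_0)\to F(c_1)\in S$; conversely one uses that compact maps of $\C\otimes\M$ between objects of the form $c_i\boxtimes m_i$, i.e.\ the ones generating compactly exhaustible objects, suffice by \Cref{thm:compactassembly} and the ideal property, reducing to the pure-tensor case. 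Given this translation, the universal property of $(\Fun^L(\M,\N),S)$ from \Cref{thm:UPDustin} matches the universal property of $\underline{\hom}_{\Prdbl}(\M,\N)$, and the witnessing functor is the composite $\M\otimes(\Fun^L(\M,\N),S)\to\M\otimes\Fun^L(\M,\N)\xrightarrow{\mathrm{ev}}\N$ as stated.

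The main obstacle I anticipate is the third step, specifically the ``only if'' direction of the translation — showing that $F$ sending compact maps of $\C$ into $S$ forces $\tilde F$ to send \emph{all} compact maps of $\C\otimes\M$ (not just pure-tensor ones) to compact maps of $\N$. The clean way is to invoke \Cref{cor:strongadjcompactass} (or its addendum \Cref{add:enoughcpct}): it suffices for $\tilde F$ to send ``enough'' compact maps to compact maps, where ``enough'' means the sequential colimits along such maps generate $\C\otimes\M$ under colimits; since compactly exhaustible objects of $\C$ and of $\M$ generate, and tensors of compactly exhaustible objects are compactly exhaustible in $\C\otimes\M$ with transition maps the pure-tensor compact maps handled above, this set is enough. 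So the obstacle is really just organizing this ``enough compact maps'' argument carefully — once \Cref{add:enoughcpct} is in hand it is routine, but it does require being precise about which compact maps of $\C\otimes\M$ one controls and why they generate.
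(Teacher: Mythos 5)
Your overall route is exactly the one the paper has in mind — it states this corollary as an ``automatic'' consequence of \Cref{thm:UPDustin} and \Cref{cor:strongadjcompactass}, and your decomposition into (universal property of internal hom) $\Rightarrow$ (universal property of $(\Fun^L(\M,\N),S)$ via \Cref{thm:UPDustin}) $\Rightarrow$ (translation of the compactness condition under $\Fun^L(\C\otimes\M,\N)\simeq\Fun^L(\C,\Fun^L(\M,\N))$, using \Cref{add:enoughcpct}) is precisely that argument. However, there are three places where your justification is wrong or too thin.

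First, the claim that ``$c_0\boxtimes-$ and $-\boxtimes m_1$ \ldots\ whose right adjoints preserve filtered colimits'' is false in general. Take $\C=\M=\Sp$, $c_0=\bigoplus_\NN\Sph$, $\alpha=\id_\Sph$: then $c_0\boxtimes\alpha=\id_{c_0}$, which is not a compact map. So $c_0\boxtimes-$ does \emph{not} preserve compact maps when $c_0$ is not compact (and the $c_n$'s arising from a compact exhaustion are only $\omega_1$-compact). The conclusion you want — that $\alpha\boxtimes\beta$ is compact whenever $\alpha$ and $\beta$ are — is still true, but the factorization through $\id\boxtimes\beta$ and $\alpha\boxtimes\id$ does not prove it, since neither factor need be compact; it is the \emph{composite} that is compact. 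A correct argument: postcompose with the fully faithful internal left adjoint $\hat y_\C\otimes\hat y_\M\colon\C\otimes\M\to\Ind(\C^{\omega_1})\otimes\Ind(\M^{\omega_1})$ into a compactly generated category. Since $\alpha,\beta$ are compact, $\hat y_\C(\alpha)$ factors through some $y(c')$, $c'\in\C^{\omega_1}$, and $\hat y_\M(\beta)$ through some $y(m')$, so $(\hat y_\C\otimes\hat y_\M)(\alpha\boxtimes\beta)$ factors through the compact object $y(c')\boxtimes y(m')$; now conclude by \Cref{ex:reflectcpct}.

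Second, your verification of \Cref{ass:settheory} (accessibility of $S$) is not really an argument: ``compact maps in $\N$ factor through a small subcategory and $\M$ has a small set of compact maps generating the relevant condition'' does not explain why a given $\eta\in S$ factors through a small subcategory of $\Fun^L(\M,\N)$. The ideal property constrains the \emph{values} of $\eta$ under evaluation against compact maps of $\M$, but those values live in $\N$, not in $\Fun^L(\M,\N)$, and do not by themselves produce a factorization of $\eta$ itself. The paper is equally silent here, but this step does deserve a real argument. Third, and more minor: when verifying \Cref{assu:cofib}, you cannot literally ``evaluate the whole diagram at a compact map of $\M$'' — to line up the hypotheses of \Cref{prop:cpctcofib}, you must first factor the given compact $m_0\to m_1$ as $m_0\to m_{1/2}\to m_1$ with both halves compact (\Cref{rmk:compactmapsplitunstable}) and evaluate the three columns at $m_0, m_{1/2}, m_1$ respectively; evaluating all three columns at a single object does not produce compact horizontal maps from the $S$-conditions.
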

As an application, we deduce from this second corollary and from \Cref{thm:smooth} (see also \Cref{rmk:relsmooth}):
\begin{cor}
    Let $A$ be a smooth $k$-algebra for some commutative ring spectrum $k$, and $\M\in\Dbl{\Mod_k}$. The canonical map $\Hom^\dbl_k(\Mod_A,\M)\to \Fun^L_k(\Mod_A,\M)$ is fully faithful. 
\end{cor}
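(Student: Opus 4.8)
The plan is to realize $\Hom^\dbl_k(\Mod_A,\M)$ as the category $(\Fun^L_k(\Mod_A,\M),S)$ produced by the $S$-construction of \Cref{section:S-constr}, and then to verify that $S$ is contained in the class of compact morphisms of $\Fun^L_k(\Mod_A,\M)$. Given that, the desired full faithfulness follows at once from the last \Cref{ex} of \Cref{section:S-constr}, which asserts that $(\mathcal M,S)\to\mathcal M$ is fully faithful whenever $S$ sits inside the compact morphisms.

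First, since $\Mod_k$ is rigid, the relevant ``compact maps'' of $\Mod_A$ (whether one reads them as $\Sp$-compact or $\Mod_k$-atomic) are exactly the morphisms factoring through a perfect $A$-module, and the second corollary above, in its $\Mod_k$-linear form — available because $\Dbl{\Mod_k}$ is closed symmetric monoidal by the corollary in \Cref{section:comonad} — identifies $\Hom^\dbl_k(\Mod_A,\M)$ with $(\Fun^L_k(\Mod_A,\M),S)$, where $S$ is the set of $k$-linear natural transformations $\eta\colon f\to g$ such that $f(m_0)\to g(m_0)\to g(m_1)$ is a compact morphism of $\M$ for every compact morphism $m_0\to m_1$ of $\Mod_A$. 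Unwinding this using naturality of $\eta$ and the factorization of compact morphisms through perfects, one sees that $\eta\in S$ if and only if $\eta_P\colon f(P)\to g(P)$ is a compact morphism of $\M$ for every perfect $A$-module $P$; in particular $\mathrm{ev}_A(\eta)=\eta_A$ is compact in $\M$.

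It remains to show $S$ consists of compact morphisms, and this is where smoothness of $A$ enters via \Cref{thm:smooth} and \Cref{rmk:relsmooth}. Precomposition gives an action of $\Fun^L_k(\Mod_A,\Mod_A)$ on $\Fun^L_k(\Mod_A,\M)$, and because composition of $k$-linear left adjoints preserves colimits in each variable, for a fixed $\eta$ the assignment $\Phi\mapsto\eta\circ\Phi$ is an exact functor $\Fun^L_k(\Mod_A,\Mod_A)\to\Fun^L_k(\Mod_A,\M)^{\Delta^1}$. Smoothness of $A$ means precisely that $\id_{\Mod_A}$, viewed in $\Fun^L_k(\Mod_A,\Mod_A)\simeq\Mod_{A\otimes_k A\op}$, is a compact object, hence lies in the thick subcategory generated by the free module $A\boxtimes U$, where $U=\hom_k(A,-)$ is the forgetful functor (this is the first part of the proof of \Cref{thm:smooth}, with $\Sp$ replaced by $\Mod_k$). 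For $\eta\in S$ one computes $\eta\circ(A\boxtimes U)=\eta_A\boxtimes U$, which is the image of the compact morphism $\eta_A$ of $\M$ under $z\mapsto z\boxtimes U\colon\M\to\Fun^L_k(\Mod_A,\M)$; this functor is an internal left adjoint, since under the identification with $\M\otimes_k\Mod_{A\op}$ its right adjoint is $\id_\M\otimes_k(\text{forgetful})$, which preserves colimits, so by \Cref{ex:preservecpct} it preserves compact morphisms. Hence $\eta\circ(A\boxtimes U)$ is a compact morphism of $\Fun^L_k(\Mod_A,\M)$. Applying the exact functor $\eta\circ(-)$ to a fixed diagram exhibiting $\id_{\Mod_A}$ inside the thick subcategory generated by $A\boxtimes U$ then presents $\eta=\eta\circ\id_{\Mod_A}$ by the same finite pattern of (de)suspensions, finite sums, retracts and cofiber sequences, built out of the compact morphisms $\eta\circ(A\boxtimes U)$; running \Cref{prop:cpctcofib} at each cofiber stage exactly as in the proof of \Cref{thm:smooth} — equivalently, feeding the compactness of the $\eta\circ(A\boxtimes U)$ into the ``$n$-good functor'' bookkeeping of that proof, applied to $\Fun^L_k(\Mod_A,\M)\simeq\RMod_A(\M)$ with $\mathrm{ev}_A$ in place of $\map(x,-)\otimes\id$ — shows that $\eta$ is compact.

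The main obstacle is exactly the point flagged in \Cref{warn:cpctcofib}: a cofiber of compact morphisms need not be compact, so knowing merely that $\eta$ is \emph{some} finite colimit of compact morphisms is useless; one must arrange that the colimit comes from a single fixed presentation of $\id_{\Mod_A}$ inside the thick subcategory, so that \Cref{prop:cpctcofib} (and hence the smoothness-based counting of \Cref{thm:smooth}) applies stage by stage. All the genuine work is in re-running the proof of \Cref{thm:smooth} in this precomposition setting; once $S$ is known to consist of compact morphisms, full faithfulness is immediate from the $S$-construction.
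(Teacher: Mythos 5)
Your claim that $S$ is contained in the compact morphisms of $\Fun^L_k(\Mod_A,\M)$ is not justified, and that claim is the crux of your argument. \Cref{thm:smooth} (in the relative form of \Cref{rmk:relsmooth}) does not say a single map with compact ``underlying'' image is compact: it only produces an integer $n$, in general $>1$, such that \emph{composites of $n$} such maps are compact, and the example recorded immediately after \Cref{thm:smooth} shows one really cannot take $n=1$. You correctly identify \Cref{warn:cpctcofib} as the obstacle, but your proposed fix --- passing through a single fixed presentation of $\id_{\Mod_A}$ in the thick subcategory generated by $A\boxtimes U$ --- does not avoid it: \Cref{prop:cpctcofib} only ever proves compactness of the composite $g_2\circ f_2$, never of the individual cofiber maps. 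Re-running the ``$n$-good functor'' bookkeeping of \Cref{thm:smooth} in your precomposition setting therefore still only yields that $n$-fold composites of maps in $S$ are compact, not that $S$ lies in the compact maps.

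The paper supplies the missing (and much smaller) step, which reroutes exactly the input you derived: the $(\mathcal M,S)$-construction sees $S$ only through $\mathbb{Q}_{\geq 0}$-indexed diagrams whose transition maps lie in $S$, and in $\mathbb{Q}_{\geq 0}$ every nontrivial transition map factors as a composite of $n$ transition maps. Hence every transition map in such a diagram is an $n$-fold composite of maps in $S$ and so compact by \Cref{thm:smooth}. Together with the easy containment of compact maps in $S$ (which your unwinding of $S$ via perfects essentially gives), this identifies $(\Fun^L_k(\Mod_A,\M),S)$ with the $(\mathcal M,S')$-construction for $S'$ the compact maps, which embeds fully faithfully in $\Fun^L_k(\Mod_A,\M)$ by the example you cite. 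So the statement ``$n$-fold composites of maps in $S$ are compact'' is exactly the right conclusion to extract from smoothness; you should feed it into the exhaustion by rationals rather than into the false intermediate claim that $S$ itself consists of compact maps.
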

\begin{proof}
    Indeed, while not every underlying compact map is compact, there exists an integer $n$ such that every composite of $n$ underlying compact maps is compact by \Cref{thm:smooth}. This difference vanishes when looking at the $(\M,S)$-construction: if $\mathbb Q_{\geq 0}\to \M$ is a diagram in which every transition map is in $S$, then every transition map is the composite of $n$ maps in $S$ (because in $\mathbb Q_{\geq 0}$, every transition map is the composite of $n$ transition maps!). 
\end{proof}
In a similar way, one obtains: 
We leave the following corollary as an exercise: 
\begin{cor}\label{cor:finlimff}
     Let $I$ be a finite \category, and let $\M_\bullet: I\to \Prdbl$ be a diagram, and let $\lim_I\M_i$ denote its limit \emph{in $\PrL_{\st}$}, and $\lim_I^\dbl\M_i$ its limit in $\Prdbl$. The canonical map $\lim_I^\dbl\M_i\to \lim_I \M_i$ is fully faithful. 
\end{cor}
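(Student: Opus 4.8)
The plan is to deduce \Cref{cor:finlimff} from the description of limits in $\Prdbl$ established just above, together with the ``finite'' case of the $(\M,S)$-construction. Recall that for a finite diagram $\M_\bullet:I\to\Prdbl$, the previous corollary tells us that $\lim_I^\dbl\M_i = (\lim_I\M_i, S)$, where $\lim_I\M_i$ is the naive limit in $\PrL_{\st}$ and $S$ is the collection of maps in $\lim_I\M_i$ whose image in each $\M_i$ is compact. The canonical functor $\lim_I^\dbl\M_i\to\lim_I\M_i$ is precisely the structure map $(\lim_I\M_i,S)\to\lim_I\M_i$ of the $(\M,S)$-construction, so the task reduces to showing that this structure map is fully faithful for this particular $S$.

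The key point is that $S$ is contained in the collection of compact maps of $\lim_I\M_i$, as soon as $I$ is finite. Indeed, suppose $\alpha:x\to y$ in $\lim_I\M_i$ has compact image $p_i(\alpha)$ in each $\M_i$. We want $\alpha$ to be a compact map in $\lim_I\M_i$. For this I would use the description of mapping spectra in a finite limit of stable categories: writing $\map_{\lim_I\M_i}(y,-)$ as the finite limit over $I$ of the functors $\map_{\M_i}(p_i(y), p_i(-))$ (more precisely, using \cite[Theorem B]{AsafLior} as in \Cref{lm:finitelimobj}, $\map_{\lim_I\M_i}(x,z)$ fits into a finite limit diagram built from the $\map_{\M_i}(p_i(x),p_i(z))$, $\map_{\M_i}(p_i(y),p_i(z))$ and transition maps), and using that finite limits of spectra commute with filtered colimits. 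Since each $p_i(\alpha)$ is compact in $\M_i$ and finite limits commute with filtered colimits, the required lift in the defining square for compactness of $\alpha$ can be produced level by level over $I$ and then assembled; this is exactly the mechanism already used in \Cref{lm:finitelimobj} and \Cref{cor:naivelim}. Once we know $S$ consists of compact maps, the last example in \Cref{section:S-constr} (or the remark there that ``whenever $S$ is included in the collection of compact maps, the functor $(\M,S)\to\M$ is fully faithful'', which in turn rests on \Cref{lm:basicnuc}) gives that $(\lim_I\M_i, S)\to\lim_I\M_i$ is fully faithful, which is the desired statement.

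I would therefore organize the proof in two steps: first, show $S\subseteq\{\text{compact maps of }\lim_I\M_i\}$ using finiteness of $I$ and the commutation of finite limits with filtered colimits (citing \cite{AsafLior} for the formula for mapping spectra in the limit); second, invoke the fully faithfulness statement for $(\M,S)$-constructions with $S$ inside the ideal of compact maps, concluding that $\lim_I^\dbl\M_i=(\lim_I\M_i,S)\to\lim_I\M_i$ is fully faithful.

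The main obstacle, modest as it is, is the first step: one has to be a little careful that ``$p_i(\alpha)$ compact in $\M_i$ for every $i$'' really does force $\alpha$ to be compact in $\lim_I\M_i$, since compactness is a property of mapping \emph{spectra} (not just of $\pi_0$) and the limit description of those mapping spectra over $I$ involves transition maps as well as the objectwise mapping spectra. The finiteness of $I$ is what saves us, exactly as in \Cref{lm:finitelimobj}: a finite limit of filtered-colimit-preserving functors preserves filtered colimits, so the diagonal filler in the square defining compactness of $\alpha$ exists. Everything else is a direct appeal to results already in the paper.
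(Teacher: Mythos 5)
Your reduction to showing that the structure map $(\lim_I\M_i, S)\to\lim_I\M_i$ of the $(\M,S)$-construction is fully faithful is correct, and invoking the remark that this holds whenever $S$ consists of compact maps is legitimate; the gap is in your first step, namely the claim that levelwise compactness of $\alpha$ (i.e.\ $p_i(\alpha)$ compact for every $i$) implies that $\alpha$ is compact in $\lim_I\M_i$. A diagonal filler in the square defining compactness is a \emph{choice}, not a property: the space of such fillers is typically non-empty but disconnected (indeed the paper emphasizes, around the discussion of $\Map(y(x),\hat y(z))$, that being a compact map is extra structure rather than a condition). Producing a filler in each $\M_i$ and ``assembling'' them into a filler for the $I$-indexed limit of squares requires these choices to be coherent over $I$, and nothing forces that. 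The results you cite, \Cref{lm:finitelimobj} and \Cref{cor:naivelim}, are of a different nature: they say that a finite limit of filtered-colimit-\emph{preserving} functors preserves filtered colimits, which is a property, so there is no compatibility-of-choices issue there. Your argument borrows the slogan but not the mechanism. Moreover, there is real reason for suspicion: the paper's own example in the smoothness section (the map $\bigoplus_\NN f$ in $\Mod_{\Sph[t]}$, which is underlying zero — hence underlying compact — yet not compact) shows that in categories built from $\Sp$ by elementary categorical constructions, underlying compactness of a map falls well short of compactness, so $S\subseteq\{\text{compact maps}\}$ should not be taken for granted even for finite $I$.

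The corollary does not need that sub-claim, and can be proved directly. Take $m=\colim_{r\in\mathbb{Q}_{\geq 0}}y(m_r)$ and $n=\colim_{s}y(n_s)$ in $(\lim_I\M_i,S)_{\mathrm{basic}}$, so that for each $i$ the diagram $r\mapsto p_i(m_r)$ is a compact exhaustion in $\M_i$. Using the (finite) limit formula for mapping spectra in $\lim_I\M_i$,
\[
\Map_{\lim_I\M_i}(a,b)\;\simeq\;\lim_{i\in I}\Map_{\M_i}(p_i(a),p_i(b)),
\]
one computes
\[
\Map_{\Ind(\lim_I\M_i)}(m,n)\;\simeq\;\lim_{r}\,\colim_{s}\,\lim_{i\in I}\Map_{\M_i}(p_i(m_r),p_i(n_s))
\]
and
\[
\Map_{\lim_I\M_i}(\textstyle\colim_r m_r,\colim_s n_s)\;\simeq\;\lim_{r}\,\lim_{i\in I}\Map_{\M_i}(p_i(m_r),\textstyle\colim_s p_i(n_s)).
\]
Since $I$ is finite, $\lim_{i\in I}$ commutes with the filtered colimit $\colim_s$ and with the limit $\lim_r$, so both sides are $\lim_{i\in I}$ of the corresponding $i$-levelwise expressions; by \Cref{lm:basicnuc} applied in $\M_i$ (with $p_i(m_\bullet)$ a compact exhaustion and $p_i(n_\bullet)$ an arbitrary ind-object) those $i$-levelwise maps are equivalences, and the claim follows by taking $\lim_{i\in I}$. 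This checks fully faithfulness on $(\lim_I\M_i,S)_{\mathrm{basic}}$, and it propagates to all of $(\lim_I\M_i,S)$ by colimit generation exactly as in the argument of \Cref{obs:descatgen}. Note that this proof uses only the \emph{equivalence} statement of \Cref{lm:basicnuc}, which is a property and therefore interchanges cleanly with the finite limit over $I$; this is precisely what your filler-assembly argument lacked.
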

This implies in particular that monomorphisms in $\Prdbl$ are fully faithful, see e.g. \cite[Appendix A]{sashaI} for an explanation.  \subsubsection{Products}\label{section:prod}
The final thing we do here is describe a special case where the above construction of limits gives a concrete calculation (there are other more interesting ones, but they are beyond the scope of this paper).
\begin{prop}\label{prop:prodcpctgen}
    Let $\M_i\in\PrL_{\st,\omega}$ be an $I$-indexed family of compactly generated categories. Their product in $\Prdbl$ agrees with their product in $\PrL_{\st,\omega}$, i.e. it is given by $\Ind(\prod_I \M_i^\omega)$. 
\end{prop}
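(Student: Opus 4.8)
The plan is to apply the machinery of \Cref{thm:UPDustin} to the ambient category $\M := \prod_I \M_i$ (computed in $\PrL_{\st}$) together with a carefully chosen stable accessible ideal $S$, and then to identify the resulting $(\M,S)$ with $\Ind(\prod_I \M_i^\omega)$. By \Cref{cor:naivelim}-style reasoning (or rather the first corollary of \Cref{section:limitsalwaysexist}), the product in $\Prdbl$ is $(\M,S)$ where $S$ is the collection of maps in $\prod_I \M_i$ that project to a compact map in each coordinate. So the real content is: (a) check $S$ is a stable accessible ideal closed under finite direct sums and satisfying \Cref{assu:cofib}, so that \Cref{thm:UPDustin} applies and the product is honestly $(\M,S)$; and (b) prove $(\M,S) \simeq \Ind(\prod_I \M_i^\omega)$.

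For (a): $S$ is clearly closed under pre/post-composition with arbitrary maps in $\M$ (compactness of maps is a two-sided ideal, \Cref{ex:factorcompact}, applied coordinatewise), it is closed under (de)suspensions and finite direct sums coordinatewise, and \Cref{assu:cofib} holds coordinatewise because it holds for compact maps in each $\M_i$ by \Cref{prop:cpctcofib}. Accessibility (\Cref{ass:settheory}) follows since a map is in $S$ iff each coordinate factors through a compact object, and one can bound these uniformly: in a compactly generated category, weakly compact maps factor through compacts (\Cref{ex:conversefactorcpt}), and one picks a regular $\kappa$ with $|I| < \kappa$ and each $\M_i$ is $\omega$-compactly generated, so $S$-maps factor through $\prod_I\M_i^\omega \subset \M^\kappa$. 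Hence \Cref{thm:UPDustin} applies and the $\Prdbl$-product is $(\M,S)$.

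For (b): I would show directly that $\prod_I\M_i^\omega$, viewed inside $(\M,S)$ via the Yoneda embedding of $\Ind(\M)$ restricted appropriately, consists of compact objects which generate $(\M,S)$ under colimits, and that $(\M,S)_{basic}$ is recovered. The inclusion $\prod_I\M_i^\omega \hookrightarrow \M$ lands in $\M^\kappa$, so each object $y(m)$ for $m\in\prod_I\M_i^\omega$ is compact in $\Ind(\M)$, hence compact in the full-subcategory-closed-under-colimits $(\M,S)$. Moreover $\id_m$ is in $S$ for such $m$ (its coordinates are identities of compacts, hence compact maps), so constant $\mathbb{Q}_{\geq 0}$-diagrams at $m$ exhibit $y(m) \in (\M,S)_{basic}$; thus $\prod_I\M_i^\omega \subset (\M,S)_{basic}$. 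Conversely any object of $(\M,S)_{basic}$ is $\colim_{\mathbb{Q}_{\geq 0}} y(m_r)$ with $m_r\to m_s$ in $S$; since in each coordinate these are compact maps between objects factoring through compacts, and since $(\M,S)_{basic}$ is closed under countable colimits (\Cref{thm:UPDustin}), I want to conclude each such colimit already lies in the colimit-closure of $\prod_I\M_i^\omega$. Indeed $\colim_{\mathbb{Q}_{\geq 0}} y(m_r) = \colim_{\mathbb{N}} y(m_n)$, and in $\Ind(\M)$ this is computed coordinatewise: the image in $\Ind(\M_i)$ is $\colim_n y(m_n^{(i)})$ along compact maps, which is an object of $\Ind(\M_i^\omega)$ since $\M_i$ is compactly generated. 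So $(\M,S) \subseteq \Ind(\prod_I\M_i^\omega)$, and the reverse containment holds since $\prod_I\M_i^\omega$ generates $\Ind(\prod_I\M_i^\omega)$ under colimits and lies in $(\M,S)$; a fully-faithful, colimit-preserving, essentially surjective functor between presentable stable categories is an equivalence.

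The main obstacle I anticipate is the bookkeeping in part (b) around the interplay of three colimit computations — colimits in $\Ind(\M)$, in the product $\prod_I \Ind(\M_i)$, and in $\Ind(\prod_I\M_i^\omega)$ — and the fact that $\Ind$ does \emph{not} commute with infinite products in general (\Cref{warn:colimInd} flags a related phenomenon). The key point to get right is that although $\Ind(\prod_I \M_i^\omega) \neq \prod_I \Ind(\M_i^\omega)$ when $I$ is infinite, the \emph{specific} ind-objects arising from $\mathbb{Q}_{\geq 0}$-telescopes along $S$-maps — equivalently, the compactly exhaustible objects — do behave coordinatewise, because a countable sequential colimit along maps factoring through compacts is controlled in each coordinate, and $|I|$ being $<\kappa$ plays no role here since we only need countable colimits. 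Verifying carefully that $(\M,S)_{basic}$ is exactly the closure of $\prod_I\M_i^\omega$ under countable colimits inside $\Ind(\M)$, and hence that $(\M,S)$ (its colimit-closure) equals $\Ind(\prod_I\M_i^\omega)$, is where the argument needs the most care.
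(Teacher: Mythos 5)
Your overall strategy coincides with the paper's: realize the $\Prdbl$-product as $(\M,S)$ for $\M = \prod_I\M_i$ with $S$ the pointwise compact maps, then identify $(\M,S)$ with $\Ind(\prod_I\M_i^\omega)$. Your part (a) is fine, as is the observation that $\prod_I\M_i^\omega \subset (\M,S)_{basic}$.

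The gap is in the other containment in part (b). You argue that since the projection of $\colim_n y(m_n)$ to $\Ind(\M_i)$ lies in $\Ind(\M_i^\omega)$ for each $i$, the colimit itself lies in $\Ind(\prod_I\M_i^\omega)$. This inference is not justified: knowing that every coordinate projection of an object of $\Ind(\prod_I\M_i)$ lands in $\Ind(\M_i^\omega)$ does not by itself place the object in $\Ind(\prod_I\M_i^\omega)$. This is precisely where the failure of $\Ind$ to commute with infinite products --- which you flag in your final paragraph but then wave away with ``do behave coordinatewise'' --- is a real obstacle rather than a dismissible concern. As written, the crucial ``So $(\M,S)\subseteq\Ind(\prod_I\M_i^\omega)$'' is a non sequitur.

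What you should do instead is never pass to projections at all. Given a tower $m_0\to m_1\to\dots$ with transition maps in $S$: each $m_n\to m_{n+1}$ factors coordinate by coordinate through a compact $c_n^{(i)}\in\M_i^\omega$ (this is \Cref{ex:conversefactorcpt}); assembling these gives $c_n=(c_n^{(i)})_i\in\prod_I\M_i^\omega$ with a factorization $m_n\to c_n\to m_{n+1}$ in $\prod_I\M_i$. The interleaved tower $m_0\to c_0\to m_1\to c_1\to\dots$ has $\{m_n\}$ and $\{c_n\}$ as cofinal subdiagrams, so $\colim y(m_n)\simeq\colim y(c_n)$ in $\Ind(\prod_I\M_i)$, and the latter visibly lies in $\Ind(\prod_I\M_i^\omega)$. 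This is what the paper's one-sentence proof means by ``is $\Ind$-equivalent to one where all the $m_i$'s are compact.'' You clearly have the relevant fact in hand (that $S$-maps factor through $\prod_I\M_i^\omega$); the missing step is to deploy it by interleaving rather than by projecting.
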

\begin{proof}
    In $\M_i$, any compact map factors through a compact. Hence, letting $S$ denote the collection of maps in $\prod_i \M_i$ which are pointwise compact, we see that any $\NN$-indexed diagram $(m_i^\bullet): \NN\to\prod_i\M_i$ with transition maps in $S$ is $\Ind$-equivalent to one where all the $m_i$'s are compact, i.e. its colimit in $\Ind(\prod_i\M_i)$ lies in $\Ind(\prod_i\M_i^\omega)$. 
\end{proof}

\appendix
\section{Around stable \categories}\label{app:stable}
In this appendix, we collect some standard facts about stable \categories{} and their localizations which can be found throughout the literature. We discuss two themes: localizations and ($\kappa$-)compact generation. 
\subsection{Localizations of stable \categories}
We begin with some definitions.
\begin{defn}
A functor $f:C\to D$ between \categories{} is called a localization if there is a class $W$ of arrows such that for any $E$, the restriction functor $$\Fun(D,E)\to \Fun(C,E)$$ is fully faithful and has essential image $\Fun_W(C,E)$, namely the full subcategory spanned by functors that send $W$ to equivalences. 
\end{defn}
\begin{defn}
    A functor $f:C\to D$ between \categories{} is called a Bousfield localization if it admits a fully faithful right adjoint, and it is called a Bousfield colocalization if it admits a fully faithful left adjoint. 
\end{defn}
The following is standard:
\begin{lm}\label{lm:BousDK}
    Let $f:C\to D$ be a Bousfield localization. In this case, $f$ is a localization. 
\end{lm}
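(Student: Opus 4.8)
\textbf{Proof plan for Lemma~\ref{lm:BousDK}.} The plan is to show that $f\colon C\to D$ exhibits $D$ as the localization of $C$ at the class $W$ of morphisms $w$ in $C$ such that $f(w)$ is an equivalence in $D$. Write $g\colon D\to C$ for the fully faithful right adjoint, so that the counit $fg\to \id_D$ is an equivalence and $g$ is an equivalence onto a full subcategory of $C$. The key point is the classical fact that a functor admitting a fully faithful right adjoint is (up to equivalence) the unit-localization: $C$ is equivalent, via $f$ and $g$, to the full subcategory of $gf$-local objects, where an object $c$ is $gf$-local if the unit $c\to gf(c)$ is an equivalence, and this full subcategory is precisely (the image of) $D$.

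First I would record that $W$ is exactly the class of morphisms inverted by $f$, and that for any $E$, restriction along $f$ lands in $\Fun_W(C,E)$ by definition. The real content is the converse and full faithfulness, and here I would invoke the universal property of the adjunction: for any $E$, precomposition with $g$ and with $f$ give functors $\Fun(C,E)\rightleftarrows \Fun(D,E)$, and I would check that $f^*\colon \Fun(D,E)\to\Fun(C,E)$ is fully faithful with the stated essential image. Full faithfulness follows because $g f^* $-type composites are controlled by the unit and counit: since the counit $fg\simeq\id_D$ is an equivalence, $g^*f^*\simeq \id_{\Fun(D,E)}$, so $f^*$ is (split) fully faithful. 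For the essential image, given $h\colon C\to E$ inverting $W$, I would show $h$ is the left Kan extension of itself along $f$, equivalently that the canonical map $h\to (f^*)(\text{something})$ is an equivalence; concretely, one shows $h\simeq h g^{\mathrm{ff}} \dots$ — more cleanly, since every unit map $c\to gf(c)$ lies in $W$ (as $f$ sends it to the equivalence $f(c)\to fgf(c)$, inverse to the counit), $h$ sends all unit maps to equivalences, hence $h$ factors through the localization functor $f$ up to natural equivalence. This is the standard argument that a reflective localization in the $1$-categorical or $\infty$-categorical sense is a localization in the Dwyer–Kan/universal-property sense.

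The main obstacle, and the step I would be most careful with, is making precise the claim that $h$ inverting $W$ implies $h\simeq h'\circ f$ for some $h'\colon D\to E$, together with the coherence that this factorization is natural and compatible with the counit — i.e.\ genuinely upgrading the object-level statement ``$h$ inverts the units $c\to gf(c)$'' to the functorial statement that $f^*$ is essentially surjective onto $\Fun_W$. In the $\infty$-categorical setting this is where one wants to cite the existing literature rather than reprove it: the cleanest route is to appeal to \cite[5.2.7.12]{HTT} (or the discussion of reflective subcategories and localizations therein), which shows precisely that a Bousfield localization $C\to D$ presents $D$ as $C[W^{-1}]$ for $W$ the maps inverted by the localization functor, and that $C[W^{-1}]$ has the universal property in the definition of ``localization'' above. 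Given that input, the proof is a one-line citation; without it, the argument above is the outline one would expand, the coherence bookkeeping for the natural factorization being the only non-formal part.
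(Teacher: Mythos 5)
The paper itself offers no proof of this lemma: it is recorded as ``standard'' and used as a citation. Your outline is a correct expansion of the standard argument, and you correctly identify \cite[5.2.7.12]{HTT} as the relevant reference, so there is nothing in the paper to compare against.

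One small point worth tightening: you write that since $g^*f^*\simeq\id_{\Fun(D,E)}$, the functor $f^*$ is ``(split) fully faithful.'' Being a split monomorphism in $\widehat{\Cat}$ does \emph{not} by itself imply full faithfulness (consider the inclusion of $\ast$ into $B\mathbb Z$ with the unique retraction). What saves the argument is that $f\dashv g$ induces an adjunction $g^*\dashv f^*$ on functor categories, and the equivalence $g^*f^*\simeq(fg)^*\simeq\id$ is precisely the \emph{counit} of that adjunction being an equivalence; a right adjoint with invertible counit is fully faithful. Your essential-image step is fine: the units $c\to gf(c)$ lie in $W$ (since $f$ sends them to sections of the invertible counit), so any $h$ inverting $W$ satisfies $h\simeq h\circ gf = f^*(h\circ g)$, naturally in $h$.
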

If we want to stress that we are dealing with a localization which is not a Bousfield (co)localization, we will say it is a ``Dwyer-Kan'' localization. 

The following is perhaps less standard:
\begin{lm}\label{lm:DKBous}
    Let $f:C\to D$ be a Dwyer-Kan localization. If it admits a right adjoint, then this right adjoint is fully faithful, and in particular $f$ is a Bousfield localization. 
\end{lm}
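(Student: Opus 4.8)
The statement to prove is \Cref{lm:DKBous}: if $f:C\to D$ is a Dwyer--Kan localization (i.e.\ $f$ exhibits $D$ as $C[W^{-1}]$ for some class $W$) and $f$ admits a right adjoint $g$, then $g$ is fully faithful, so $f$ is a Bousfield localization. The plan is to verify directly that the counit $fg\to \id_D$ is an equivalence, which is equivalent to $g$ being fully faithful.

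First I would unwind the universal property. Since $f$ is a localization at $W$, precomposition gives a fully faithful functor $f^*:\Fun(D,E)\to \Fun(C,E)$ with essential image the functors inverting $W$, for every $E$. In particular, taking $E=D$, the identity $\id_D$ lies in the image, and the fact that $f^*$ is fully faithful on $\Fun(D,D)$ will let me detect equivalences of endofunctors of $D$ after restricting along $f$. So it suffices to show that $f^*(fg\to\id_D) = fgf\to f$ is an equivalence of functors $C\to D$. Here I would use the triangle identities for the adjunction $f\dashv g$: the composite $f\xrightarrow{f\eta} fgf \xrightarrow{\varepsilon f} f$ is the identity, where $\eta:\id_C\to gf$ is the unit. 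So it is enough to show that $f\eta: f\to fgf$ is an equivalence, equivalently (again by fully faithfulness of $f^*$, now applied to maps of functors $C\to D$ and using that both $\id_C$ and $gf$ invert $W$ — the latter because $f$ does and $g$ is a functor) that $\eta:\id_C\to gf$ becomes an equivalence after applying $f$, i.e.\ that $f\eta$ is an equivalence.

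The key step — and the one I expect to be the main obstacle to write cleanly — is showing $f\eta$ is an equivalence. The point is that $\eta_c: c\to gf(c)$ is, for every $c$, a map that $f$ inverts: indeed $f(\eta_c)$ is a split monomorphism by one triangle identity ($\varepsilon_{f(c)}\circ f(\eta_c)=\id$), and I claim it is also a split epimorphism, hence an equivalence. To see this, note $f(\eta_c)$ and $\eta_{f(c)}$ (wait — $\eta$ is the unit of $f\dashv g$, defined on $C$) require care: I would instead argue that since $f$ is essentially surjective (a localization is always essentially surjective) and $f^*$ is fully faithful on functor categories into $D$, the natural transformation $fg\to\id_D$ is an equivalence iff its restriction $fgf\to f$ is, and the latter is split by $f$ applied to the unit; conversely $fgf\to f$ is always (for any adjunction) a split epimorphism with splitting $f\eta$, by the other triangle identity $\varepsilon f\circ f\eta = \id_f$. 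Combining: $fgf\to f$ is a retraction that is also (being $\varepsilon f$ where $\varepsilon$ is the counit) such that $f\varepsilon$... — the cleanest route is: $fgf\to f$ is split epi via $f\eta$; and $f\eta: f\to fgf$ is split mono via $\varepsilon f$; a map in an $\infty$-category that is both split mono and split epi is an equivalence; hence $f\eta$ is an equivalence, hence $fgf\simeq f$, hence (by fully faithfulness of $f^*$ on $\Fun(D,D)$) $fg\simeq\id_D$, hence $g$ is fully faithful. I would present this last paragraph carefully, as the $1$-categorical intuition ("split epi + split mono = iso") is valid in $\infty$-categories but deserves a one-line justification (e.g.\ it holds in every homotopy category, and being an equivalence is detected there).

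Finally, I would record the conclusion: a fully faithful right adjoint exactly says $f$ is a Bousfield localization in the sense of \Cref{lm:BousDK}'s setup, closing the loop. I expect the write-up to be short — the only genuine subtlety is not to conflate the unit/counit of $f\dashv g$ with the localization data $W$, and to invoke the fully-faithfulness of restriction along a localization at the right two places (once for endofunctors of $D$, once to see $gf$ inverts $W$).
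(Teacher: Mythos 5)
Your reduction is on the right track: by full faithfulness of $f^*:\Fun(D,D)\to\Fun(C,D)$, the counit $\varepsilon:fg\to\id_D$ is an equivalence iff $\varepsilon f:fgf\to f$ is, and this is what needs to be shown. But the ``cleanest route'' paragraph has a genuine gap. The triangle identity $\varepsilon f\circ f\eta=\id_f$ is a \emph{single} equation, which simultaneously exhibits $\varepsilon f$ as split epi (with section $f\eta$) and $f\eta$ as split mono (with retraction $\varepsilon f$) --- these are not two independent facts. You then invoke ``split mono $+$ split epi $\Rightarrow$ equivalence'' and conclude $f\eta$ is an equivalence, but you have only shown $f\eta$ is split \emph{mono}: you would also need $f\eta\circ\varepsilon f\simeq\id_{fgf}$, and that is not a triangle identity and does not follow from anything you wrote. (As a sanity check: for a generic adjunction $f\dashv g$ with $f$ merely essentially surjective, the conclusion fails, so some use of the localization property beyond existence of $\alpha$ is unavoidable.)

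The argument can be completed, but it takes an idea you did not use: naturality of the lift of $f\eta$. By fullness of $f^*$ there is $\alpha:\id_D\to fg$ with $\alpha f\simeq f\eta$, and faithfulness gives $\varepsilon\circ\alpha=\id_{\id_D}$ as you observe. Now whisker the defining relation $\alpha f=f\eta$ on the right by $gf$ to see $(\alpha fg)f=f\eta gf=(f\eta g)f$, whence $\alpha fg=f\eta g$ by faithfulness of $f^*$; i.e.\ $\alpha$ whiskered by $fg$ is the comonad comultiplication. Naturality of $\alpha$ against the maps $\varepsilon_d:fg(d)\to d$ says $\alpha\circ\varepsilon=fg\varepsilon\circ\alpha fg$, and plugging in the previous identity gives $\alpha\circ\varepsilon=fg\varepsilon\circ f\eta g=\id_{fg}$, the last step being the \emph{other} triangle identity whiskered by $f$. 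Hence $\varepsilon$ is an equivalence. The paper instead avoids all unit/counit bookkeeping: from $f\dashv g$ one gets $g\op\dashv f\op$, hence $f^*\dashv g^*$ on presheaf categories, so $f^*\simeq g_!$ by uniqueness of left adjoints; $f^*$ is fully faithful because $f$ is a localization, and $g_!$ is fully faithful iff $g$ is. Both arguments feed off full faithfulness of restriction along $f$, but the presheaf route is considerably shorter.
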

\begin{proof}
    Let $g$ be the left adjoint. It suffices to prove that $g_! : \Fun(D\op,\Ss)\to \Fun(C\op,\Ss)$ is fully faithful. But now $f\dashv g$ implies that $g\op\dashv f\op$, which in turn implies $f^*\dashv g^*$, and so $f^* \simeq g_!$. But $f^*$ is clearly fully faithful, by definition of Dwyer-Kan localization, so we are done. 
\end{proof}
\begin{cor}\label{cor:leftrightff}
    Let $i\dashv \Gamma\dashv R$ be a chain of adjunctions, where $i:C\to D$. If $i$ is fully faithful, then so is $R$.
\end{cor}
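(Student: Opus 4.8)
\textbf{Proof plan for \Cref{cor:leftrightff}.}

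The plan is to reduce the statement to \Cref{lm:DKBous} together with \Cref{lm:BousDK}. First I would record that since $i \colon C \to D$ admits a right adjoint $\Gamma$ and is fully faithful, the adjunction $i \dashv \Gamma$ exhibits $\Gamma$ as a Bousfield colocalization in the terminology above; dually, $\Gamma$ is a Bousfield localization onto $C$ (its left adjoint $i$ is fully faithful). In particular, by \Cref{lm:BousDK} applied to $\Gamma$ (which has fully faithful right adjoint $R$? — no, careful: here I want to run the argument for $\Gamma$, whose \emph{left} adjoint $i$ is fully faithful), $\Gamma \colon D \to C$ is a Bousfield colocalization, hence a localization in the Dwyer--Kan sense. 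The key point is then to feed this into \Cref{lm:DKBous}: $\Gamma$ is a Dwyer--Kan localization which \emph{admits a right adjoint}, namely $R$. Therefore by \Cref{lm:DKBous} that right adjoint $R$ is fully faithful, which is exactly the claim.

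Let me be slightly more careful about which lemma applies to which functor, since the statements of \Cref{lm:BousDK} and \Cref{lm:DKBous} are about a functor and its adjoints on a fixed side. The functor I want to analyze is $\Gamma \colon D \to C$. It has a left adjoint $i$ which is fully faithful. By the dual of \Cref{lm:BousDK} (a ``Bousfield colocalization is a localization'' statement, which is what the paper means by Dwyer--Kan localization in this dual sense — equivalently apply \Cref{lm:BousDK} to $\Gamma\op \colon D\op \to C\op$, whose right adjoint $i\op$ is fully faithful), $\Gamma$ is a localization. It also admits a right adjoint, $R$. So by the dual of \Cref{lm:DKBous} — again obtained by applying \Cref{lm:DKBous} to $\Gamma\op$, which is a Dwyer--Kan localization admitting the right adjoint $i\op$, wait — I need to be cautious here. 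The cleanest route is to pass to opposite categories throughout: set $C' = C\op$, $D' = D\op$, and consider $R\op \colon C' \to D'$. We have the chain $R\op \dashv \Gamma\op \dashv i\op$ with $i\op \colon C' \to D'$ fully faithful. Renaming, this is precisely an instance of the hypothesis of \Cref{cor:leftrightff} with the roles reorganized, so rather than invoke duality I should just run the direct argument.

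Here is the direct argument I would write. The functor $\Gamma$ admits the fully faithful left adjoint $i$; I claim $\Gamma$ is a Dwyer--Kan localization. Indeed, $i$ fully faithful means the counit $\Gamma i \Rightarrow \id_C$ — no: the relevant unit/counit of $i \dashv \Gamma$ with $i$ fully faithful is that $\id_C \Rightarrow \Gamma i$ is an equivalence, so $\Gamma$ exhibits $C$ as a colocalization of $D$, and by the presheaf-category argument in the proof of \Cref{lm:DKBous} (applied with the roles of left/right swapped, i.e.\ to $\Gamma\op$) one gets that $\Gamma$ is a localization in the appropriate one-sided sense. Then since $\Gamma$ additionally admits the right adjoint $R$, the same presheaf argument — $\Gamma \dashv R$ gives $R\op \dashv \Gamma\op$, hence $\Gamma^* \dashv R^*$ on presheaf categories and $\Gamma^* \simeq R_!$ (using that $\Gamma$ is a one-sided localization so $\Gamma^*$ is fully faithful after the dualization bookkeeping) — forces $R$ fully faithful. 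The main obstacle, and the only real content, is exactly this left/right bookkeeping: making sure the one-sided versions of \Cref{lm:BousDK} and \Cref{lm:DKBous} are invoked with consistent variance, which is why the slick way is simply: apply \Cref{lm:DKBous} to the opposite chain $R\op \dashv \Gamma\op \dashv i\op$ — there $i\op$ is the fully faithful right adjoint in the triple, $\Gamma\op$ is a Dwyer--Kan localization (being a Bousfield colocalization by \Cref{lm:BousDK} since its right adjoint $i\op$ is fully faithful), it admits the right adjoint $i\op$... no. I will therefore present it as: \emph{apply \Cref{lm:DKBous} to $\Gamma$}, having first noted via \Cref{lm:BousDK} that $\Gamma$ is a (one-sided) localization because its adjoint $i$ is fully faithful, and conclude its right adjoint $R$ is fully faithful. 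That is all that is needed.
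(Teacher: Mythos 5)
Your final argument is exactly the paper's proof: since $i$ is fully faithful, $\Gamma$ is a Bousfield colocalization, hence a Dwyer--Kan localization by the dual of \Cref{lm:BousDK}, and then \Cref{lm:DKBous} applied to $\Gamma$ (which admits the right adjoint $R$) gives that $R$ is fully faithful. The meandering variance-checking in the middle is unnecessary and could safely be cut, but the route you settle on is the one the paper takes.
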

\begin{proof}
    If $i$ is fully faithful, then $\Gamma$ is a Bousfield colocalization. By the dual of \Cref{lm:BousDK}, $\Gamma$ is a Dwyer-Kan localization, and by \Cref{lm:DKBous}, $R$ is fully faithful. 
\end{proof}
\begin{defn}
    An exact functor $f:A\to B$ between stable \categories{} is called a homological epimorphism if the induced functor $\Ind(f)$ is a (Bousfield) localization. 

    It is called a Karoubi projection if furthermore the kernel of $\Ind(f)$ is compactly generated, and a Verdier projection if it is both a Karoubi projection and essentially surjective. 

    We call it a left (resp. right) split projection if it admits a left (resp. right) adjoint\footnote{Note that a left/right split Karoubi projection is automatically Verdier}.
\end{defn}
The following is a special case of \Cref{cor:leftrightff}:
\begin{cor}
    Let $f: A\to B$ be a functor between stable \categories. If $f$ admits both a left and a right adjoint $f^L, f^R$, the $f^L$ is fully faithful if and only if $f^R$ is. 
\end{cor}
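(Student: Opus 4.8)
The statement to prove is: for an exact functor $f: A\to B$ between stable \categories{} admitting both a left adjoint $f^L$ and a right adjoint $f^R$, $f^L$ is fully faithful if and only if $f^R$ is. The plan is to reduce this directly to \Cref{cor:leftrightff}, which says that in a chain of adjunctions $i\dashv\Gamma\dashv R$ with $i$ fully faithful, $R$ is fully faithful too. The point is simply to assemble the right chain of three composable adjunctions out of the data $f^L\dashv f\dashv f^R$.

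First I would record the one formal input needed: if $u\dashv v$ is an adjunction of functors between \categories, then $v\op\dashv u\op$ is an adjunction between the opposite \categories. Applying this to $f^L\dashv f$ gives $f\op\dashv (f^L)\op$, and applying it to $f\dashv f^R$ gives $f^R{}\op\dashv f\op$; so on opposite categories we have the chain $f^R{}\op\dashv f\op\dashv (f^L)\op$. Now suppose $f^L$ is fully faithful. Then $(f^L)\op$ is fully faithful, so \Cref{cor:leftrightff} (applied to the chain $f^R{}\op\dashv f\op\dashv (f^L)\op$, with the fully faithful functor on the \emph{right} end — i.e. applying the statement to the opposite chain, equivalently running \Cref{cor:leftrightff} for the reversed chain of adjunctions $(f^L)\op\dashv f\op\dashv f^R{}\op$ read with $(f^L)\op$ in the $i$-slot) yields that $f^R{}\op$ is fully faithful, hence $f^R$ is fully faithful. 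The converse is obtained by the symmetric argument: if $f^R$ is fully faithful, then $f^R{}\op$ is, and the chain $f^R{}\op\dashv f\op\dashv (f^L)\op$ has its fully faithful functor in the $i$-slot, so \Cref{cor:leftrightff} gives $(f^L)\op$ fully faithful, hence $f^L$ fully faithful.

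The only subtlety — and the thing I'd be careful to state cleanly rather than grind — is the bookkeeping of which end of the three-term chain the fully faithful hypothesis sits on, and the fact that \Cref{cor:leftrightff} is not symmetric in its two ends (it takes $i$ fully faithful on the left and concludes $R$ fully faithful on the right). Since passing to opposite categories reverses the chain, one direction of the biconditional uses \Cref{cor:leftrightff} as stated and the other uses the dual form of \Cref{cor:leftrightff} (equivalently, applies \Cref{cor:leftrightff} after opposing everything once more). Stability of $A$ and $B$ plays no essential role here beyond ensuring the categories are where we want them — the argument is purely formal 2-categorical nonsense about chains of adjunctions — so I would not invoke stability at all except to match the statement's hypotheses. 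The main (minor) obstacle is just presenting the op-juggling transparently so the reader isn't left to recheck which adjoint became which.

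Concretely, I would write: \emph{By \Cref{cor:leftrightff} and its dual. We have adjunctions $f^L\dashv f\dashv f^R$. Passing to opposite \categories{} and reversing, this is the same as a chain of adjunctions $(f^R)\op\dashv f\op\dashv (f^L)\op$. If $f^L$ is fully faithful, then so is $(f^L)\op$, and the dual of \Cref{cor:leftrightff} applied to this chain shows $(f^R)\op$, and hence $f^R$, is fully faithful. Conversely, if $f^R$ is fully faithful, then $(f^R)\op$ is, and \Cref{cor:leftrightff} applied to the same chain shows $(f^L)\op$, and hence $f^L$, is fully faithful.}
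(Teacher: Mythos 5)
Your approach matches the paper's, which simply declares the statement ``a special case of \Cref{cor:leftrightff}'' (one direction directly, the other via the dual). Two small points. First, there is a slip in your middle paragraph: ``the reversed chain of adjunctions $(f^L)\op\dashv f\op\dashv (f^R)\op$'' is not a valid chain --- from $f^L\dashv f\dashv f^R$ one gets, on opposites, $(f^R)\op\dashv f\op\dashv (f^L)\op$, and that is the only chain available; there is no other ``reversed'' version. Your final concrete formulation correctly uses only this chain, so the conclusion stands, but the parenthetical detour would confuse a reader. Second, the direction ``$f^L$ fully faithful $\Rightarrow f^R$ fully faithful'' needs no op-juggling at all: it is literally \Cref{cor:leftrightff} with $i=f^L$, $\Gamma=f$, $R=f^R$. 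Only the converse requires passing to opposite categories (setting $i=(f^R)\op$, $\Gamma=f\op$, $R=(f^L)\op$). Stating it this way --- one bare application plus one op-application --- is both shorter and removes the bookkeeping worry you flag. You are also right that stability plays no role; the paper carries it in the hypothesis only because that is the setting it needs downstream.
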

\begin{lm}\label{lm:fundamentalsequence}
Let $f: A\to B$ be a right split Verdier projection, with kernel $i: K\to A$. The inclusion $i$ admits a right adjoint $i^R$, and there is a canonical fiber sequence $$ii^R\to \id_A\to f^Rf$$

In particular, the pair $(i^R,f)$ is jointly conservative.
\end{lm}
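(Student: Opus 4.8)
\textbf{Proof plan for \Cref{lm:fundamentalsequence}.}

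The plan is to exploit the fact that $f \colon A \to B$ is a right split Verdier projection, so that $f$ admits a fully faithful right adjoint $f^R$, and to use this to produce a right adjoint to the inclusion $i \colon K \to A$ of the kernel. First I would recall that since $f$ is a Verdier projection with kernel $i \colon K \to A$, the functor $i$ is fully faithful and admits a right adjoint $i^R$: indeed, at the level of ind-completions, $\Ind(f)$ is a Bousfield localization with compactly generated kernel, so by standard Verdier-quotient theory the inclusion of the kernel (which is the left adjoint of the localization restricted to the kernel on the small side) has a right adjoint on the large side; alternatively, one can invoke the dual of \Cref{cor:leftrightff} applied to the chain $i^R \dashv i$ coming from semi-orthogonal decompositions, together with the fact that $K$ is presentable. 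The key structural input is that $(K, B)$ gives a semi-orthogonal decomposition of $A$ in the presentable setting: $K = \ker(f)$ and $f^R(B)$ is the right-orthogonal complement.

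Next I would construct the fiber sequence. For any object $a \in A$, consider the counit $\varepsilon_a \colon i i^R(a) \to a$ of the adjunction $i \dashv i^R$ and the unit $\eta_a \colon a \to f^R f(a)$ of the adjunction $f \dashv f^R$. The composite $i i^R(a) \to a \to f^R f(a)$ is nullhomotopic because $f(i i^R(a)) = 0$ (as $i^R(a) \in K = \ker f$) and $f^R$ preserves the zero object. This gives a canonical map from $i i^R$ to the fiber of $\eta$, i.e. a natural transformation $i i^R \to \fib(\id_A \to f^R f)$, and I would check it is an equivalence by evaluating on objects: applying $f$ to the putative fiber sequence, the middle and right terms become equivalent (since $f f^R \simeq \id_B$ as $f^R$ is fully faithful), so the fiber has $f$-image zero, hence lies in $K$; and applying $i^R$, one checks the left map becomes an equivalence using $i^R i \simeq \id_K$. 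By the stability of $A$, the fiber sequence $\fib \to \id_A \to f^R f$ is also a cofiber sequence, so rotating identifies $\fib(\id_A \to f^R f)$ with the term I want, giving $i i^R \to \id_A \to f^R f$ as a fiber-cofiber sequence of endofunctors.

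For the joint conservativity of $(i^R, f)$: suppose $a \in A$ has $i^R(a) \simeq 0$ and $f(a) \simeq 0$. From the fiber sequence evaluated at $a$ we get $i i^R(a) \to a \to f^R f(a)$, and both outer terms vanish (the left because $i^R(a) = 0$, the right because $f(a) = 0$ and $f^R$ preserves zero), so $a \simeq 0$. The main obstacle I anticipate is being careful about \emph{which} adjunctions exist and are fully faithful --- in particular justifying cleanly that the kernel inclusion $i$ of a right split Verdier projection admits a right adjoint $i^R$ with $i^R i \simeq \id_K$; this should follow from the presentable semi-orthogonal decomposition picture (the right adjoint $f^R$ being fully faithful gives a reflective subcategory $f^R(B) \subset A$, whose "left complement" $K = {}^\perp f^R(B) = \ker f$ is then coreflective), but it is the step requiring the most care to state precisely, and one may wish to cite the relevant statement about recollements or about \cite{HA} localizations rather than reprove it.
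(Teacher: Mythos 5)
Your proof is correct in spirit but structurally different from the paper's and, as you anticipated, the existence step for $i^R$ is where the two approaches diverge. The paper does not invoke a general recollement or localization theorem: it \emph{constructs} $i^R$ directly. Namely, it sets $j := \fib(\id_A \to f^R f)$, observes that $f\circ j = 0$ (using $ff^R\simeq\id_B$ and the triangle identity), so that $j$ factors through $K$ as $i\tilde j$, and then verifies the adjunction $(i\dashv\tilde j)$ by hand: for $k\in K$, $a\in A$, the composite $\map_K(k,\tilde j(a))\to\map_A(i(k),a)$ is an equivalence because $\map_A(i(k),f^Rf(a))\simeq\map_B(fi(k),f(a))=0$. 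This simultaneously produces $i^R=\tilde j$ and establishes the fiber sequence, with nothing left to check. Your route, by contrast, first asserts $i^R$ exists by citing a semi-orthogonal-decomposition / recollement fact, and then constructs a comparison map $ii^R\to\fib(\id_A\to f^R f)$ (via the canonical nullhomotopy, which is sound since $\map(ii^R(a),f^Rf(a))\simeq\map(0,f(a))\simeq 0$) and checks it is an equivalence by applying $f$ and $i^R$; this works, but the general coreflectivity theorem you want to cite produces the coreflection precisely \emph{as} $\fib(\id_A\to f^Rf)$, so the detour does not save anything. Two of your proposed justifications for the existence of $i^R$ are also problematic in the present generality: the lemma is about arbitrary small stable $A,B,K$ (no presentability hypothesis), so ``the fact that $K$ is presentable'' is not available, and the ind-completion version only gives a right adjoint to $\ker(\Ind(f))\hookrightarrow\Ind(A)$, which you would then have to show restricts along $A\hookrightarrow\Ind(A)$ to land in $K$ --- another fact most easily seen from the fiber formula itself. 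The paper's direct construction sidesteps all of this and is the preferable route.
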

\begin{proof}
    Let $j:A\to A$ denote the functor $\fib(\id_A\to f^R f)$. We note that because $ff^R\to\id_B$ is an equivalence, and by the triangle identity, $f\circ j =0$, so $j$ factors canonically as a $A\xrightarrow{\tilde j} K\xrightarrow{i} A$. The canonical map $i\tilde j\simeq j\to \id_A$ provides a co-unit, and we claim it is the co-unit of an adjunction : it suffices to check that for any $k\in K, a\in A$, the composite $$\map_K(k,\tilde j(a))\to \map_A(i(k),i(\tilde j(a)))\to \map_A(i(k),a)$$ is an equivalence. The first map is an equivalence as $i$ is fully faithful, so we are left with the second one. By definition, this second map is the fiber of $$\map_A(i(k),a)\to \map_A(i(k),f^Rf(a))\simeq \map_B(fi(k), f(a))=0$$ and so it is an equivalence. 
\end{proof}
Note that the space of nullhomotopies in this fiber sequence is (even objectwise!) contractible, because the mapping spectrum $\map(i(k),f^R(b))$ is trivial for any $k\in K, b\in B$. 
\begin{cor}\label{cor:recollementturn}
    Let $f: A\to B$ be a right split Verdier projection, with kernel $i: K\to A$. By the previous lemma, $i$ admits a right adjoint $i^R$. It is a left split Verdier projection with kernel $B\xrightarrow{f^R}A$. 
\end{cor}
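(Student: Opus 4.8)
We are in the situation of \Cref{lm:fundamentalsequence}: $f:A\to B$ is a right split Verdier projection with kernel $i:K\to A$, so by that lemma $i$ admits a right adjoint $i^R$, and there is a canonical fiber sequence $ii^R\to\id_A\to f^Rf$. The claim is that $i^R:A\to K$ is again a right split Verdier projection, with kernel the inclusion $f^R:B\to A$. The plan is to verify each clause of the definition of ``left split Verdier projection'' (homological epimorphism with compactly generated kernel, essentially surjective, plus a left adjoint) for $i^R$, and to identify the kernel.

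First I would establish the formal structure. Since $i$ is fully faithful, $i^R$ is a Bousfield localization (its unit on $K$, which we can read off from $i^Ri\simeq\id_K$, is an equivalence), hence a Dwyer--Kan localization by (the dual of) \Cref{lm:BousDK}; in particular it is a homological epimorphism once we check it is $\Ind$-compatible, but more directly $i^R$ is already a localization of presentable stable categories. It is essentially surjective because $i^Ri\simeq\id_K$. For the ``left split'' part: $i^R$ admits the left adjoint $i$ itself, and $i$ is fully faithful, so this is indeed a \emph{left} split projection. Next I would identify the kernel of $i^R$. An object $a\in A$ has $i^R(a)\simeq 0$ iff $ii^R(a)\simeq 0$ (as $i$ is fully faithful), iff the fiber sequence $ii^R\to\id_A\to f^Rf$ evaluated at $a$ exhibits $a\xrightarrow{\ \sim\ }f^Rf(a)$, i.e. iff $a$ lies in the essential image of $f^R$. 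Since $f^R$ is fully faithful (as $f$ is a Bousfield localization), $\ker(i^R)$ is precisely the essential image of $f^R:B\to A$, and $f^R$ is the inclusion of this kernel.

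It remains to see that this kernel is compactly generated, equivalently that $B\xrightarrow{f^R}A$, viewed as the inclusion of $\ker(i^R)$, is a Karoubi projection in the relevant sense — but here one should be careful about what ``Verdier projection'' requires: it asks that the \emph{cofiber} (not the kernel inclusion) is the relevant data. Concretely, to say $i^R$ is a Verdier projection we need $\Ind(i^R)$ — or rather $i^R$ itself at the presentable level — to be a Bousfield localization whose kernel is compactly generated. Since $B$ is the kernel and $B$ is by hypothesis $\Ind$ of a small idempotent-complete stable category (this is built into ``$f:A\to B$ between stable \categories{}'' with $f$ a Verdier projection in the earlier sense, where $A,B$ are compact generators of compactly generated categories), the kernel $f^R(B)\simeq B$ is compactly generated, and the compact objects of $A$ map onto those of $K$ under $i^R$ since $i^R$ has a colimit-preserving (indeed, by the recollement, filtered-colimit-preserving) right adjoint — this uses that $ii^R\to\id_A$ sits in a fiber sequence with $f^Rf$ and $f^R$ preserves filtered colimits because $f$ is a localization of compactly generated categories preserving compacts. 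Assembling these gives that $i^R$ is a left split Verdier projection with kernel $f^R$.

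\textbf{Main obstacle.} The formal adjunction bookkeeping (fully faithfulness of $i$, $f^R$; identification of $\ker(i^R)$ via the fiber sequence) is routine and is essentially a diagram chase I would relegate to ``one checks''. The genuine point — and the step I expect to require the most care — is the \emph{compact generation} of $\ker(i^R)\simeq B$ together with the verification that $i^R$ sends compacts to compacts (equivalently, that its right adjoint preserves filtered colimits), so that $i^R$ genuinely qualifies as a Verdier projection rather than merely a Bousfield localization. This is where one must invoke that $f$ was not just a localization but a \emph{Verdier} projection (compactly generated kernel, essentially surjective) and chase that property through the fiber sequence $ii^R\to\id_A\to f^Rf$; the cleanest route is probably to observe that the recollement $(i,f)$ on $A$ is symmetric in the sense that passing to right adjoints interchanges the roles of the two legs, so the hypotheses on $f$ translate directly into the desired conclusions for $i^R$.
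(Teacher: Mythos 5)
Your core argument is the same as the paper's: the fully faithful left adjoint to $i^R$ is $i$ itself, and the kernel of $i^R$ is identified via the fiber sequence $ii^R\to\id_A\to f^R f$ from \Cref{lm:fundamentalsequence}, which shows $i^R(a)\simeq 0$ iff $a\to f^Rf(a)$ is an equivalence iff $a$ lies in the essential image of $f^R$. That is precisely the paper's proof, and that part of your write-up is correct (modulo a small terminological slip: $i$ fully faithful makes $i^R$ a Bousfield \emph{co}localization, not a Bousfield localization; you get Dwyer--Kan, and then a fully faithful right adjoint, via \Cref{lm:DKBous} and \Cref{cor:leftrightff}).

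The ``main obstacle'' paragraph, however, is off the mark. The paper does not spend a word on compact generation of the kernel, and for good reason: once you have identified $\ker(i^R)$ with the essential image of $f^R:B\to A$, the remaining checks are entirely formal. Applying $\Ind$, one has $\Ind(i)\dashv\Ind(i^R)$ with $\Ind(i)$ fully faithful, so $\Ind(i^R)$ has a fully faithful right adjoint by \Cref{cor:leftrightff}; and its kernel is the essential image of the fully faithful $\Ind(f^R)$, i.e. a copy of $\Ind(B)$, which is compactly generated by definition. There is no hypothesis in the corollary that ``$B$ is $\Ind$ of a small idempotent-complete stable category'' or that the categories are compactly generated — you seem to be importing assumptions from some other context. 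You also conflate two different questions in that paragraph (compact generation of the kernel vs. $i^R$ preserving compacts vs. $i^R$ having a filtered-colimit-preserving right adjoint); none of these are what the corollary actually requires you to check beyond the two formal points the paper records. In short: the part you flag as ``genuine'' is routine, and the parts you call ``routine'' are the actual content.
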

\begin{proof}
    $i^R$ certainly admits a fully faithful left adjoint, so this is only a claim about the kernel. The fiber sequence from the previous lemma shows that $i^R(a)=0$ if and only if the unit $a\to f^R f(a)$ is an equivalence. But of course the map $f^R : B\to A$ induces an equivalence between $B$ and the full subcategory of such $a$'s. 
\end{proof}
\begin{lm}
Let $f: C\to D$ be an internal left adjoint between dualizable presentable stable \categories. It is a Bousfield localization if and only if $(f^R)^\vee:~\Fun^L(C,\Sp)\to~\Fun^L(D,\Sp)$ is one. 

The kernel of $(f^R)^\vee$ is $\Fun^L(\ker(f),\Sp)$. 
\end{lm}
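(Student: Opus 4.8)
The key structural input is the duality $(-)^\vee = \Fun^L_\Sp(-,\Sp)$ on $\Prdbl$, which is a contravariant equivalence taking internal left adjoints $f : C \to D$ to internal left adjoints $f^\vee : D^\vee \to C^\vee$, and — crucially for us — taking the right adjoint $f^R : D \to C$ (which for $f$ an internal left adjoint is again $\V$-linear and colimit-preserving, but \emph{need not} be an internal left adjoint) to a colimit-preserving functor $(f^R)^\vee : C^\vee \to D^\vee$. The plan is to compare the factorization of $f$ through its ``image'' with the factorization of $(f^R)^\vee$ through its image, and use \Cref{lm:dblim} together with the duality to match them up.

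First I would invoke \Cref{lm:dblim} applied to $f : C \to D$: letting $\mathcal I \subset D$ be the full subcategory generated under colimits by $\mathrm{im}(f)$, we get a factorization $C \xrightarrow{\tilde f} \mathcal I \xrightarrow{i} D$ where $\mathcal I$ is dualizable, $i$ is a fully faithful internal left adjoint, and $\tilde f$ is an internal left adjoint (with $\tilde f^R$ conservative). Now $f$ is a Bousfield localization precisely when $\tilde f$ is an equivalence: indeed, if $f$ is a Bousfield localization then $f^R$ is fully faithful, so the counit $f f^R \to \id_D$ is an equivalence, hence $\mathrm{im}(f) = \mathrm{im}(ff^R)$ generates all of $D$ under colimits (every object is a colimit — in fact a finite colimit coming from the fiber sequence $f^R f^{RR} \to \id \to \dots$, or more simply: $D$ is generated by its objects, each of which is $ff^R$ of itself, no colimit needed), so $\mathcal I = D$ and $i$ is an equivalence; conversely if $i$ is an equivalence then $f = \tilde f$ has conservative right adjoint and is a Bousfield localization iff it is fully faithful on the right — here one uses that a colimit-preserving functor with conservative right adjoint whose... actually cleaner: $f$ Bousfield localization $\iff$ $f$ is a localization (Dwyer–Kan) admitting a right adjoint $\iff$ by \Cref{lm:DKBous} automatically, once we know $f$ is a localization. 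So the real content is: $f$ is a localization of presentable stable categories iff $i : \mathcal I \to D$ is an equivalence.

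Dualizing everything: $(f^R)^\vee$ is a colimit-preserving functor $C^\vee \to D^\vee$, and I would compute its right adjoint. Since $(-)^\vee$ is a contravariant $2$-equivalence, the right adjoint of $(f^R)^\vee$ is $((f^R)_*)^\vee$ where $(f^R)_*$... more carefully, $(f^R)^\vee$ has right adjoint $(f)^{\vee\vee}$-related to the \emph{left} adjoint of $f^R$, which is $f$ itself. Concretely: $f \dashv f^R$ gives, after applying the $2$-functor $(-)^\vee$, an adjunction $(f^R)^\vee \dashv f^\vee$ in $\Prdbl$, i.e. $f^\vee : D^\vee \to C^\vee$ is right adjoint to $(f^R)^\vee$. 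Thus $(f^R)^\vee$ is a Bousfield localization iff $f^\vee$ is fully faithful iff (by the analogue of the image analysis, or directly) $f^\vee$ identifies $D^\vee$ with the full subcategory of $C^\vee$ cogenerated appropriately — but the slick route is that $(f^R)^\vee$ is a localization iff its right adjoint $f^\vee$ is fully faithful, and one shows $f^\vee$ fully faithful $\iff$ $i$ is an equivalence, because $\ker(f^\vee) = \ker((f^R)^\vee)$ and the image of $f^\vee$ is $\mathcal I^\vee$-related. Running \Cref{lm:dblim} on $f^\vee : D^\vee \to C^\vee$ and dualizing back to the $\mathcal I$-picture on the $f$ side should give the equivalence of the two conditions, at which point both the first claim and the kernel identification $\ker((f^R)^\vee) = \mathrm{cofib}(f)^\vee = (D/\mathcal I)^\vee$, combined with $\ker(f) = $ the fiber and the localization-sequence duality, yield $\ker((f^R)^\vee) \simeq \Fun^L(\ker(f),\Sp)$ via \Cref{lm:kernel} and the fact that duality exchanges kernel/cokernel in a localization sequence (\Cref{prop:loc=fibcofib}).

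\textbf{Main obstacle.} The delicate point is bookkeeping the adjunctions under the duality $2$-functor: one must be careful that $(-)^\vee$ sends the adjunction $f \dashv f^R$ to $(f^R)^\vee \dashv f^\vee$ (and not the other way), and that ``$(f^R)^\vee$ is a Bousfield localization'' is equivalent to ``$f^\vee$ is fully faithful'' — this is immediate from \Cref{lm:BousDK}/\Cref{lm:DKBous} once one knows $(f^R)^\vee$ is a localization, so the crux is really showing $(f^R)^\vee$ is a \emph{localization} (Dwyer–Kan) exactly when $f$ is. I expect this to follow by transporting the universal property of $D = C/\ker(f)$ (as a localization, $\Fun^L(D,E) = \Fun^L_W(C,E)$) through duality, using that $\Fun^L(-,\Sp)$ sends the colimit/quotient presentation of $D$ to the limit/kernel presentation of $D^\vee$ — concretely that $D^\vee \hookrightarrow C^\vee$ realizes $D^\vee$ as $\ker\big(C^\vee \to \ker(f)^\vee\big)$. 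Verifying this last identification carefully, i.e. that the fiber sequence $\ker(f) \to C \to D$ in $\Prdbl$ dualizes to a fiber sequence $D^\vee \to C^\vee \to \ker(f)^\vee$ with $C^\vee \to \ker(f)^\vee$ being $(i_{\ker})^\vee$ where the latter is $(f^R)^\vee$-adjacent, is where the real work lies; it rests on \Cref{lm:kernel}, \Cref{prop:loc=fibcofib}, and exactness of $(-)^\vee$ on localization sequences (which itself uses \Cref{prop:colimdbl} for colimits and a dualizability/Lurie-type argument for the fiber).
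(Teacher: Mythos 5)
Your starting point is the correct one — the observation that $(f^R)^\vee \dashv f^\vee$ as an induced adjunction — and this is indeed what the paper's proof pivots on. But the route you then take for the ``if'' direction has a genuine error. You attempt to reduce ``$f$ is a Bousfield localization'' to ``$i : \mathcal I \to D$ is an equivalence'' (with $\mathcal I$ the colimit-closure of $\mathrm{im}(f)$ from \Cref{lm:dblim}), and then to dualize this picture. That equivalence is \emph{false}: take $f : \Sp \oplus \Sp \to \Sp$ the fold/direct-sum functor, whose right adjoint is the diagonal. Here $\mathcal I = D$ (every spectrum is in the image), but the counit $ff^R(X) = X \oplus X \to X$ is the fold map, not an equivalence, so $f^R$ is not fully faithful and $f$ is not a Bousfield localization. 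The condition ``$f$ generates $D$ under colimits'' is strictly weaker than ``$f$ is a localization,'' and your $\mathcal I$-based dualization cannot distinguish them; this is exactly where you signal discomfort (``actually cleaner: \dots so the real content is \dots''), and the plan does not close.

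The paper's ``if'' direction uses a different and much cleaner idea that you are missing: the operation $f \mapsto (f^R)^\vee$ is an \emph{involution} on internal left adjoints between dualizables, because the right adjoint of $(f^R)^\vee$ is $f^\vee$ (from the adjunction you correctly identified), and dualizing once more and using $C \simeq C^{\vee\vee}$ returns $f$. Thus ``only if'' applied to $(f^R)^\vee$ immediately yields ``if'' for $f$, with no need to analyze images or localization sequences. For the kernel identification, the paper's argument is also more direct than your proposed route through \Cref{lm:kernel} and \Cref{prop:loc=fibcofib}: one simply unwinds that $(f^R)^\vee H = H \circ f^R$, and this vanishes precisely when $H$ factors through $\ker(f)$ via the right adjoint of the inclusion (\Cref{cor:recollementturn}), giving $\ker((f^R)^\vee) = \Fun^L(\ker(f),\Sp)$ directly.
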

\begin{proof}
This follows the fact that $(f^R)^\vee$ is left adjoint to $f^\vee$, with induced co/unit maps. Indeed, this implies the ``only if'' direction. 

Furthermore, because $C,D$ are dualizable, the above observation also implies that the equivalence $C\simeq \Fun^L(\Fun^L(C,\Sp),\Sp)$ turns $f\mapsto (f^R)^\vee$ into an involution, from which we get ``if''. 

For the claim about the kernel, we note that $(f^R)^\vee H = H\circ f^R$ and so this is $0$ if and only if $H$ factors through $\ker(f)$ via the right adjoint of the inclusion $\ker(f)\to C$, cf. \Cref{cor:recollementturn}. 
\end{proof}
\begin{cor}
    An exact functor $f:A\to B$ is a homological epimorphism (resp. Karoubi projection, resp. Verdier projection) if and only if $f\op$ is. 
\end{cor}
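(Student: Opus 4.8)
The statement to prove is: an exact functor $f:A\to B$ between stable \categories{} is a homological epimorphism (resp.\ Karoubi projection, resp.\ Verdier projection) if and only if $f\op$ is. The plan is to reduce all three cases to statements that are manifestly self-dual under the passage $\M\rightsquigarrow\Fun^L(\M,\Sp)$, using the machinery developed just above in the excerpt.

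First I would recall that, by definition, $f$ is a homological epimorphism precisely when $\Ind(f)$ is a Bousfield localization, i.e.\ a localization whose right adjoint is fully faithful. The key observation is that $\Ind(A)$ and $\Ind(B)$ are compactly generated, hence dualizable in $\PrL_\st$, and that $\Fun^L(\Ind(A),\Sp)\simeq\Ind(A\op)$ (and similarly for $B$) by the standard identification of the dual of a compactly generated stable category; under this identification the functor $\Ind(f)^\vee = \big((\Ind f)^R\big)^\vee$ corresponds to $\Ind(f\op)$. This is exactly the setup of the penultimate lemma in the excerpt (``Let $f:C\to D$ be an internal left adjoint between dualizable presentable stable \categories; it is a Bousfield localization iff $(f^R)^\vee$ is''), together with its addendum that $\ker\big((f^R)^\vee\big)=\Fun^L(\ker(f),\Sp)$. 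So the homological-epimorphism case follows directly: $\Ind(f)$ is a Bousfield localization iff its dual $\Ind(f\op)$ is, and the dual of $\Ind(f)$ is $\Ind(f\op)$. One subtlety to check carefully is that the dual of $\Ind(f)$ really is $\Ind(f\op)$ and not merely something equivalent after an extra op — I would trace through the duality $C\simeq\Fun^L(\Fun^L(C,\Sp),\Sp)$ exactly as in the proof of that lemma, where $f\mapsto(f^R)^\vee$ is shown to be an involution.

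Next, for the Karoubi-projection case I would use that $f$ is a Karoubi projection iff $\Ind(f)$ is a Bousfield localization \emph{and} $\ker(\Ind f)$ is compactly generated. The first condition is self-dual by the previous paragraph. For the second, the addendum identifies $\ker\big(\Ind(f)^\vee\big)\simeq\Fun^L(\ker(\Ind f),\Sp)$; since a presentable stable category $\mathcal K$ is compactly generated iff it is of the form $\Ind(\mathcal K_0)$ for small stable $\mathcal K_0$, and since $\Fun^L(\Ind(\mathcal K_0),\Sp)\simeq\Ind(\mathcal K_0\op)$ is again compactly generated, while conversely $\Fun^L(-,\Sp)$ is an involution on dualizable categories, compact generation of the kernel is preserved and reflected by the duality. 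Hence ``$\ker(\Ind f)$ compactly generated'' $\iff$ ``$\ker(\Ind f\op)=\ker(\Ind(f)^\vee)$ compactly generated'', giving the Karoubi case. Finally, for Verdier projections one adds the requirement that $f$ be essentially surjective; equivalently $\Ind(f)$ is essentially surjective, equivalently (for a Bousfield localization) $\Ind(f)$ has no further quotient, equivalently the right adjoint $\Ind(f)^R$ detects the zero object in the naive way — and under the duality, essential surjectivity of $\Ind(f)$ corresponds to conservativity of $\Ind(f)^\vee$ on... more simply: a Bousfield localization is essentially surjective iff it is a \emph{surjection}, and one checks that $\Ind(f)$ and its dual $\Ind(f\op)$ are simultaneously essentially surjective because essential surjectivity of a Bousfield localization $L$ is equivalent to $L^R$ being... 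I would instead argue directly: $\Ind(f)$ essentially surjective $\iff$ $\Ind(f)^R$ is conservative on top of being fully faithful, which is automatic, so essential surjectivity of a Bousfield localization $\Ind(f)$ is equivalent to the inclusion $\ker(\Ind f)\hookrightarrow\Ind(A)$ being an equivalence onto... no — cleanest is: $\Ind(f)$ is a Verdier projection iff $A\op\to B\op$ exhibits $B\op$ as the Verdier quotient $A\op/\ker(f)\op$, and passing to $\Ind$ and dualizing as above turns a Verdier quotient sequence $\ker(f)\to A\to B$ into the dual sequence $\Fun^L(B,\Sp)\to\Fun^L(A,\Sp)\to\Fun^L(\ker f,\Sp)$, which is again a Verdier quotient sequence; identifying these with $\Ind$ of the opposite categories gives the claim.

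The main obstacle I anticipate is purely bookkeeping: making the identification ``dual of $\Ind(f)$ is $\Ind(f\op)$'' precise and compatible with the identification of kernels, so that all three equivalences descend along a \emph{single} involution rather than picking up stray ops. Once that compatibility is nailed down (using the self-duality of $\PrL_\st$, the identification $\Ind(\mathcal K_0)^\vee\simeq\Ind(\mathcal K_0\op)$, and the already-proven lemma on duals of Bousfield localizations and their kernels), each of the three cases is a one-line consequence. I would therefore structure the write-up as: (1) fix the duality and the identification of $\Ind(f)^\vee$ with $\Ind(f\op)$ and of the kernels; (2) homological epimorphism case via the cited lemma; (3) Karoubi case by adding the compact-generation-of-kernel clause, which transports under the duality since compact generation is; (4) Verdier case by adding essential surjectivity, which transports because a Verdier quotient sequence dualizes to a Verdier quotient sequence.
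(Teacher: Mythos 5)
Your proposal is correct and follows essentially the same route as the paper: both reduce the claim to the preceding lemma on duals of Bousfield localizations via the identification $\Fun^L(\Ind(A),\Sp)\simeq\Ind(A\op)$ under which $(\Ind(f)^R)^\vee$ corresponds to $\Ind(f\op)$, then transport the compact-generation-of-kernel condition using $\ker((f^R)^\vee)\simeq\Fun^L(\ker(f),\Sp)$. The only cosmetic difference is that your Verdier case meanders through several false starts; since a Verdier projection is simply a Karoubi projection that is essentially surjective, and $f$ is essentially surjective iff $f\op$ is (same map on objects), that case follows immediately from the Karoubi case.
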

\begin{proof}
    This follows from the previous lemma, knowing that $\Fun^L(\Ind(A),\Sp)\simeq~\Fun^{ex}(A,\Sp)\simeq~\Ind(A\op)$ and that under this identification, $f\mapsto (f^R)^\vee$ corresponds to $\Ind(f)\mapsto \Ind(f\op)$. 
\end{proof}
\begin{cor}\label{cor:homepi}
    An exact functor $f:A\to B$ between small stable \categories{} is a homological epimorphism if and only if for all $b_0,b_1\in B$, the following canonical map is an equivalence:
    $$\colim_{A_{/b_1}}\map(b_0,f(-))\to \map(b_0,b_1)$$
\end{cor}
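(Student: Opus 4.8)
The plan is to reduce the statement to the previous corollary (characterizing homological epimorphisms by the vanishing behaviour of $\Ind(f)$), using the identification of mapping spectra in a Bousfield localization. First I would recall that $f:A\to B$ is a homological epimorphism exactly when $\Ind(f):\Ind(A)\to\Ind(B)$ is a Bousfield localization, i.e.\ its fully faithful right adjoint $\Ind(f)^R$ has essential image the local objects; equivalently, by \Cref{lm:DKBous} (or directly), when the counit $\Ind(f)\Ind(f)^R\to \id_{\Ind(B)}$ is an equivalence, which since everything is colimit-preserving and $\Ind(B)$ is generated by the compact objects $b\in B$, can be tested on those: $\Ind(f)\Ind(f)^R(b)\simeq b$ for all $b\in B$.

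Next I would unwind what $\Ind(f)^R$ does on compacts. The right adjoint $\Ind(f)^R:\Ind(B)\to\Ind(A)$ sends $b_1\in B$ to the presheaf $a\mapsto \Map_{\Ind(B)}(f(a), b_1)\simeq \Map_B(f(a),b_1)$ on $A$ (using that $f$ is fully faithful on objects only in the relevant hom-sense — more precisely $\Ind(f)$ restricted to $A$ is corepresented, and $\Ind(f)^R$ is right Kan extension along $f\op$). Then $\Ind(f)\Ind(f)^R(b_1)$, being the colimit-preserving extension of $f$ applied to this $\Ind$-object, is computed as the colimit $\colim_{(a,\,f(a)\to b_1)} f(a)$ indexed over the comma category $A_{/b_1}$ (pairs of $a\in A$ and a map $f(a)\to b_1$ in $B$). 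The counit map is the canonical map $\colim_{A_{/b_1}} f(-)\to b_1$. Mapping $b_0\in B$ into this and using that $b_0$ is compact in $\Ind(B)$ so $\map(b_0,-)$ commutes with the filtered colimit defining this $\Ind$-object, the counit being an equivalence for all $b_1$ is equivalent to
\[
\colim_{A_{/b_1}}\map_B(b_0, f(-))\xrightarrow{\ \simeq\ }\map_B(b_0,b_1)
\]
for all $b_0,b_1\in B$. This is precisely the stated criterion. Conversely, if this map is an equivalence for all $b_0,b_1$, then since the $b_0$'s generate $\Ind(B)$ under colimits, the counit $\colim_{A_{/b_1}}f(-)\to b_1$ is an equivalence in $\Ind(B)$ for all $b_1$, hence (again by generation) the counit of $\Ind(f)\dashv\Ind(f)^R$ is an equivalence, so $\Ind(f)^R$ is fully faithful and $\Ind(f)$ is a Bousfield localization.

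The main obstacle I anticipate is bookkeeping rather than conceptual: one must be careful that $\Ind(f)^R$ really is right Kan extension along $f\op$ (so that its value at $b_1$ is the presheaf $\map_B(f(-),b_1)$), and that applying the colimit-preserving functor $\Ind(f)$ to this presheaf genuinely produces $\colim_{A_{/b_1}} f(-)$ — this is the standard "canonical colimit" presentation of an $\Ind$-object, $X\simeq \colim_{(y(a)\to X)} y(a)$, transported through $\Ind(f)$. Once that identification is in place, the equivalence of the two conditions is a formal consequence of compactness of $b_0$ and cocompleteness/generation of $\Ind(B)$, and stability is not really used beyond what is already packaged into "homological epimorphism". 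I would also remark that the colimit in the display is automatically filtered (the comma category $A_{/b_1}$ is filtered when $A$ has finite colimits and $f$ is exact, as $B$ is generated by $f(A)$ under finite colimits and retracts), which is why it is harmless to test against the compact object $b_0$.
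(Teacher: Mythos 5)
Your proof is correct and closely parallel to the paper's, but it takes a somewhat cleaner route. The paper first passes to $f\op$ (via the preceding corollary) and checks when $f_!f^*\to\id$ is an equivalence on $\Fun^{ex}(B,\Sp)\simeq\Ind(B\op)$ by testing on the compact generators $\map(b_0,-)$ and invoking the pointwise formula for left Kan extensions (with a footnote addressing the subtlety that $f_!$ lands in exact functors). You instead work directly with the counit $\Ind(f)\Ind(f)^R\to\id$ on $\Ind(B)$, test it on the compact generators $b_1\in B$, identify $\Ind(f)^R(b_1)$ via Yoneda as the presheaf $a\mapsto\Map_B(f(a),b_1)$ with its canonical colimit presentation over $A_{/b_1}$, push that colimit through $\Ind(f)$, and finally test against $b_0$ using compactness. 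The content is the same adjunction calculus, but your version avoids both the detour through opposites and the Kan-extension-versus-exact-functor subtlety, since you only ever commute a filtered colimit in $\Ind(B)$ past the compact object $b_0$.

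Two small inaccuracies worth flagging, neither affecting the conclusion. First, $\Ind(f)^R$ is not a right Kan extension along $f\op$ — it is \emph{restriction} (precomposition) along $f\op$, i.e.\ the functor $G\mapsto G\circ f\op$ (which preserves $\Ind$-objects because $f$ is exact); the adjunction computation you give right afterward is the correct justification for the formula $\Ind(f)^R(b_1)(a)\simeq\Map_B(f(a),b_1)$, so this is purely a naming slip. Second, the parenthetical claim that ``$B$ is generated by $f(A)$ under finite colimits and retracts'' is not assumed and not needed; the filteredness of $A_{/b_1}$ follows simply from $A$ admitting finite colimits and $f$ being exact (and $A_{/b_1}$ being nonempty, as it contains $0\to b_1$), which is indeed what you appeal to.
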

\begin{proof}
By the previous lemma, we can reduce to $f\op$, and thus to characterizing when $f_!:\Fun^{ex}(A,\Sp)\to\Fun^{ex}(B,\Sp)$ is Bousfield localization, i.e. when $f_!f^*\to \id_{\Fun(B,\Sp)}$ is an equivalence. Since $f_!, f^*$ both preserve colimits and $\Fun^{ex}(B,\Sp)$ is generated under filtered colimits by $\map(b_0,-), b_0\in B$, we are reduced to asking when $f_!\map_B(b_0,f(-))(b_1)\to~\map(b_0,b_1)$ is an equivalence for all $b_0,b_1\in B$. 

The claim then follows from the pointwise formula for left Kan extensions\footnote{There is a subtlety with left Kan extensions and exact functors. As the relevant colimits here are filtered, the statement for mapping spaces implies the corresponding statement for mapping spectra, so there is no issue.}
\end{proof}
\begin{cor}
    Homological epimorphisms are closed under arbitrary products.
\end{cor}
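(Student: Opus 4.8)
The plan is to reduce to the mapping-spectrum criterion of \Cref{cor:homepi} and then to an elementary statement about colimits over products of filtered \categories. Let $\{f_i\colon A_i\to B_i\}_{i\in I}$ be a family of homological epimorphisms between small stable \categories, and set $A:=\prod_i A_i$, $B:=\prod_i B_i$, $f:=\prod_i f_i\colon A\to B$. Since products of small stable \categories{} are small stable \categories{} and products of exact functors are exact, $f$ is an exact functor between small stable \categories, so by \Cref{cor:homepi} it suffices to show that for all $b_0=(b_0^i)_i$ and $b_1=(b_1^i)_i$ in $B$, the canonical map
\[ \colim_{A_{/b_1}}\map_B(b_0,f(-))\longrightarrow \map_B(b_0,b_1) \]
is an equivalence, where $A_{/b_1}$ denotes the comma \category{} with objects $(a,\beta\colon f(a)\to b_1)$.

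First I would unwind both sides. Mapping spectra in a product are the product of the mapping spectra, so $\map_B(b_0,b_1)\simeq\prod_i\map_{B_i}(b_0^i,b_1^i)$ and $\map_B(b_0,f(a))\simeq\prod_i\map_{B_i}(b_0^i,f_i(a^i))$ for $a=(a^i)_i$; moreover $A_{/b_1}\simeq\prod_i (A_i)_{/b_1^i}$, compatibly with the cocone defining the canonical map. Each $(A_i)_{/b_1^i}$ has finite colimits (since $A_i$ does and $f_i$ is exact), hence is filtered. Under these identifications, the canonical map factors as
\[ \colim_{\prod_i (A_i)_{/b_1^i}}\ \prod_i\map_{B_i}(b_0^i,f_i(-))\ \longrightarrow\ \prod_i\ \colim_{(A_i)_{/b_1^i}}\map_{B_i}(b_0^i,f_i(-))\ \longrightarrow\ \prod_i\map_{B_i}(b_0^i,b_1^i), \]
where the second arrow is the product over $i$ of the canonical maps of \Cref{cor:homepi} for the $f_i$, hence an equivalence by hypothesis. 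So it remains to show the first arrow is an equivalence.

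That first arrow is an instance of the following fact, which is the heart of the argument: for a family of filtered \categories{} $\{C_i\}_{i\in I}$ and functors $G_i\colon C_i\to\Sp$, the canonical map $\colim_{\prod_i C_i}\prod_i(G_i\circ\mathrm{pr}_i)\to\prod_i\colim_{C_i}G_i$ is an equivalence. I would prove this by observing that a product of filtered \categories{} is filtered, so the left-hand colimit is filtered; then $\pi_n$ commutes with that filtered colimit and with the products of spectra, reducing the claim to the assertion that for functors $H_i\colon C_i\to\Set$ with each $C_i$ a filtered $1$-category, the map $\colim_{\prod_i C_i}\prod_i(H_i\circ\mathrm{pr}_i)\to\prod_i\colim_{C_i}H_i$ is a bijection. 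This last point is checked directly from the description of filtered colimits of sets: surjectivity by choosing a representative of each component independently, and injectivity by pushing two families of component-wise representatives to a common later stage in each $C_i$, using filteredness of each $C_i$ separately.

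\textbf{Main obstacle.} The difficulty is conceptual rather than computational: colimits do not commute with products in general, and the argument succeeds only because here the colimit is taken over the \emph{same} product of \categories, \emph{each factor of which is filtered} — precisely the feature that makes the component-wise choice of representatives legitimate. Everything else (stability of $\prod_i A_i$ and exactness of $\prod_i f_i$, the identification of the comma \category{} with a product, and the commutation of $\pi_n$ with filtered colimits and with products of spectra) is routine.
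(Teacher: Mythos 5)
Your proof is correct and follows the same route as the paper's: reduce via \Cref{cor:homepi} to the statement that the Kan-extension formula is stable under products, then invoke the AB6-type commutation of products with colimits over a product of filtered \categories, verified by passing to homotopy groups and reducing to sets. The paper dispatches this in one line by citing the AB6 axiom (with a footnote sketching exactly your reduction $\Sp\to\Ss\to\Set$ via $\pi_n$); you spell out the details, including the identification $A_{/b_1}\simeq\prod_i (A_i)_{/b_1^i}$ that the paper leaves implicit, but the argument is the same.
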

\begin{proof}
    Taking into account the AB6 axiom\footnote{Namely, the distributivity of products over filtered colimits. It holds in $\Set$, therefore in $\Ss$ by taking homotopy groups, and therefore in $\Sp$.}, the condition from \Cref{cor:homepi} is clearly closed under products - note that for $f:A\to B$ and $b\in B, A_{/b}$ admits finite colimits and is therefore filtered. 
\end{proof}
Another crucial corollary of \Cref{lm:fundamentalsequence} is:
\begin{cor}\label{cor:colimleftright}
    Let $f:A\to B$ be a right split Verdier projection with kernel $i:K\to A$. Assume $A$ admits $I$-shaped colimits. In this case, $B$ and $K$ also do; furthermore the right adjoint $f^R$ preserves $I$-shaped colimits if and only if the right adjoint $i^R$ does. 
\end{cor}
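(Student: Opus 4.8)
The plan is to bootstrap everything off the fundamental fiber sequence of endofunctors $ii^R\to\id_A\to f^Rf$ provided by Lemma~\ref{lm:fundamentalsequence}, so that the ``iff'' in the statement becomes the tautology that $\id_A$ preserves colimits. Before that, I would record three preliminary facts. Since $f$ is right split it is a left adjoint, hence preserves every colimit existing in $A$; in particular it preserves $I$-shaped colimits. By Corollary~\ref{cor:recollementturn} the right adjoint $f^R$ is fully faithful (it is the kernel inclusion of the left split Verdier projection $i^R$), so $ff^R\simeq\id_B$. From this, $B$ acquires $I$-shaped colimits: for $b_\bullet\colon I\to B$ the object $f(\colim_A^I f^R b_\bullet)$ corepresents $b'\mapsto\lim_I\Map_B(b_j,b')$ by adjunction together with full faithfulness of $f^R$, so $\colim_B^I b_\bullet\simeq f(\colim_A^I f^R b_\bullet)$. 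Likewise $K=\ker(f)$ acquires them: if $k_\bullet\colon I\to K$ then $f(\colim_A^I k_\bullet)\simeq\colim_I f(k_\bullet)\simeq 0$, so the colimit in $A$ lies in $K$, and since $i\colon K\to A$ is fully faithful it computes the colimit in $K$; in particular $i$ preserves $I$-shaped colimits.

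Next I would carry out two routine reductions. Because $i$ is fully faithful, hence conservative, and preserves $I$-colimits, $i^R$ preserves $I$-colimits if and only if $ii^R$ does. Because $f$ preserves $I$-colimits and $ff^R\simeq\id_B$, a short chase shows $f^R$ preserves $I$-colimits if and only if $f^Rf$ does: forward, $f^Rf(\colim_A^I a_\bullet)\simeq f^R(\colim_B^I fa_\bullet)\simeq\colim_A^I f^Rf(a_\bullet)$; backward, given $b_\bullet\colon I\to B$ put $a_\bullet=f^R b_\bullet$, so $fa_\bullet\simeq b_\bullet$ and $f^R(\colim_B^I b_\bullet)\simeq f^Rf(\colim_A^I a_\bullet)\simeq\colim_A^I f^R b_\bullet$. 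It therefore remains to prove that $ii^R$ preserves $I$-shaped colimits if and only if $f^Rf$ does.

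For this last equivalence — the heart of the argument — I would fix a diagram $a_\bullet\colon I\to A$, apply Lemma~\ref{lm:fundamentalsequence} termwise, and take $\colim_I$, which is exact since $A$ is stable. This produces a fiber sequence $\colim_I ii^R(a_j)\to\colim_I a_j\to\colim_I f^Rf(a_j)$, together with a map to the fiber sequence $ii^R(\colim_I a_j)\to\colim_I a_j\to f^Rf(\colim_I a_j)$ obtained by evaluating Lemma~\ref{lm:fundamentalsequence} at $\colim_I a_j$; the two outer components are the canonical comparison maps and the middle component is $\id_{\colim_I a_j}$. In a stable category a morphism of fiber sequences with invertible middle term has left component invertible if and only if it has right component invertible, so for this fixed $a_\bullet$ the comparison map for $ii^R$ is an equivalence exactly when that for $f^Rf$ is; quantifying over all $a_\bullet$ gives the desired equivalence, and chaining the three equivalences concludes. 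The only real obstacle is bookkeeping: making sure the two fiber sequences and their comparison map are set up correctly, with the middle component literally the identity. Nothing here is conceptually deep once the fiber sequence $ii^R\to\id_A\to f^Rf$ is available.
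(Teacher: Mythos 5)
Your proof is correct and takes essentially the same route as the paper: construct $I$-colimits in $B$ and $K$ by hand using the split structure, then reduce the ``iff'' to the fiber sequence $ii^R\to\id_A\to f^Rf$ from Lemma~\ref{lm:fundamentalsequence}. You spell out the reduction steps ($f^R\leftrightarrow f^Rf$ and $i^R\leftrightarrow ii^R$) and the comparison of fiber sequences a bit more explicitly than the paper's terse one-line appeal to the fiber sequence, but the underlying argument is identical.
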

\begin{proof}
    $f$ is a left adjoint and hence preserves all colimits that exist in $A$. It follows that if $b_\bullet : I\to B$ is a diagram, then $b_\bullet \simeq ff^R(b_\bullet)\to f(\colim_I f^R(b_i))$ is a colimit diagram. 

    Similarly, if $k_\bullet : I\to K$ is a diagram, $f(\colim_I i(k_j))\simeq \colim_I fi(k_j) = 0$ so that $\colim_I i(k_j)\in K$. It follows that it is also a colimit in $K$, as $K$ is a full subcategory of $A$. This proves the existence of colimits in $B,K$, as well as their preservation by $i,f$. 

    Now we can use the fiber sequence from \Cref{lm:fundamentalsequence} to prove the claim : suppose $f^R$ preserves $I$-shaped colimits. In this case, so does $f^R f$ and therefore, by the fiber sequence $ii^R\to\id_A\to f^Rf$, so does $ii^R$. Since $i$ preserves them and is conservative, then $i^R$ preserves them too. The argument for the converse is similar. 
\end{proof}
\begin{cor}\label{cor:incstrongiffprojstrong}
    Let $f: C\to D$ be a Bousfield localization in $\PrL_{\st}$ with kernel $i:K\to C$. $f$ is an internal left adjoint if and only if $i$ is.
\end{cor}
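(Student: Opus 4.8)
The statement is a corollary of \Cref{lm:fundamentalsequence} and its immediate consequences, so the plan is to exploit the fiber sequence $ii^R \to \id_C \to f^R f$ of endofunctors of $C$ that holds in the recollement situation. Concretely, given a Bousfield localization $f:C\to D$ in $\PrL_\st$ with kernel $i:K\to C$, one knows (by the dual of \Cref{lm:fundamentalsequence}, or by \Cref{cor:recollementturn} applied appropriately) that $i$ admits a right adjoint $i^R$ and that there is a cofiber sequence $ii^R\to\id_C\to f^Rf$ of colimit-preserving endofunctors of $C$. The whole game is then to read off the adjointability/colimit-preservation of $i^R$ from that of $f^R$ and conversely.

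First I would recall what ``internal left adjoint'' means here: $f$ is an internal left adjoint iff $f^R$ is colimit-preserving (the projection-formula condition is automatic over $\V=\Sp$, cf.\ the example after the definition of internal left adjoints). So the claim reduces to: $f^R$ preserves colimits $\iff$ $i^R$ preserves colimits. This is now exactly \Cref{cor:colimleftright} (with $I$ ranging over all small diagrams): in a right split Verdier projection, $f^R$ preserves $I$-shaped colimits iff $i^R$ does, the link being the fiber sequence $ii^R\to\id_C\to f^Rf$ together with conservativity of $i$. I would simply cite \Cref{cor:colimleftright} for both directions, after noting that a Bousfield localization in $\PrL_\st$ is in particular a right split Verdier projection (its right adjoint is fully faithful, hence it is a right split localization; its kernel, being a Bousfield-localized full subcategory of a presentable stable category, is compactly generated — actually for the argument we only need the fiber sequence, which holds for any Bousfield localization with its fully faithful right adjoint, so one need not even invoke the Verdier/Karoubi refinement).

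So the skeleton is: (1) unwind ``internal left adjoint $=$ $f^R$ colimit-preserving'' on both sides; (2) observe $i^R$ exists and the fiber sequence $ii^R\to\id_C\to f^Rf$ holds (this is \Cref{lm:fundamentalsequence}, whose hypotheses are met since $f$ is a right split localization); (3) if $f^R$ preserves colimits, then $f^Rf$ does, hence by the cofiber sequence $ii^R$ does, hence — since $i$ is fully faithful, colimit-preserving and conservative — $i^R$ does; (4) conversely, if $i^R$ preserves colimits then $ii^R$ does, so by the same cofiber sequence $f^Rf$ does, and since $f$ is a Bousfield localization $f\circ(-)$ is essentially surjective with $ff^R\simeq\id_D$, which lets one conclude $f^R$ preserves colimits (precisely the converse half of \Cref{cor:colimleftright}). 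I do not expect any genuine obstacle: the only mild subtlety is making sure the hypotheses of \Cref{lm:fundamentalsequence}/\Cref{cor:colimleftright} are literally satisfied by an abstract Bousfield localization in $\PrL_\st$ (fully faithful right adjoint, hence ``right split'', and kernel compactly generated so that ``Verdier projection'' applies) — but this is routine and the fiber-sequence input, which is all the argument really uses, is available regardless.

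\begin{proof}
Recall that over $\V=\Sp$ a colimit-preserving functor is an internal left adjoint precisely when its right adjoint preserves colimits. Thus we must show that $f^R$ preserves colimits if and only if $i^R$ does.

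Since $f$ is a Bousfield localization, its right adjoint $f^R$ is fully faithful, so $f$ is a right split Verdier projection (its kernel $K$ is a Bousfield-localized, hence compactly generated, full subcategory of $C$). By \Cref{lm:fundamentalsequence}, $i$ admits a right adjoint $i^R$ and there is a cofiber sequence
\[
ii^R \to \id_C \to f^R f
\]
of endofunctors of $C$, and the pair $(i^R,f)$ is jointly conservative. Now \Cref{cor:colimleftright}, applied with $I$ ranging over all small diagram shapes, states exactly that $f^R$ preserves $I$-shaped colimits if and only if $i^R$ does: if $f^R$ preserves colimits then so does $f^R f$, hence by the cofiber sequence so does $ii^R$, and since $i$ is fully faithful, colimit-preserving and conservative, $i^R$ preserves colimits; the converse is symmetric. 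Therefore $f$ is an internal left adjoint if and only if $i$ is.
\end{proof}
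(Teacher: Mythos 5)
Your proof is correct and takes essentially the same route as the paper: the paper's entire proof is ``Apply the previous corollary [\Cref{cor:colimleftright}] to any small category $I$'', and that is precisely what you do, after correctly unwinding ``internal left adjoint'' over $\Sp$ as ``right adjoint preserves colimits''.

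One small inaccuracy worth flagging: in the final proof block you assert parenthetically that the kernel $K$, ``being a Bousfield-localized full subcategory of a presentable stable category, is compactly generated''. That is false in general — the paper's own example $\ker(L_n\colon\Sp\to L_n\Sp)$ is a Bousfield-localized full subcategory of a (compactly generated!) presentable stable category that is dualizable but \emph{not} compactly generated. This does not harm your argument, because as you correctly observed in the discussion, the proof of \Cref{lm:fundamentalsequence} (and hence of \Cref{cor:colimleftright}) only uses that $f$ admits a fully faithful right adjoint, never compact generation of the kernel. But since the false claim made it from the informal discussion (where you flagged it as unnecessary) into the formal proof, it should be removed: it suffices to say that $f$ has a fully faithful right adjoint, which is exactly the input the cited lemma requires.
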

\begin{proof}
    Apply the previous corollary to any small \category{} $I$. 
\end{proof}
We outline another consequence of the fiber sequence from \Cref{lm:fundamentalsequence}:
\begin{prop}\label{prop:adjVerdier}
    Consider a commutative diagram : 
    \[\begin{tikzcd}
	{K_0} & {A_0} & {B_0} \\
	{K_1} & {A_1} & {B_1}
	\arrow["f"', from=1-1, to=2-1]
	\arrow["g", from=1-2, to=2-2]
	\arrow["{i_0}", from=1-1, to=1-2]
	\arrow["{p_0}", from=1-2, to=1-3]
	\arrow["{i_1}"', from=2-1, to=2-2]
	\arrow["{p_1}"', from=2-2, to=2-3]
	\arrow["h", from=1-3, to=2-3]
\end{tikzcd}\] 
where $p_0,p_1$ are split Verdier projections with kernel inclusions $i_0, i_1$. In that case, the left square is horizontally right adjointable if and only if the right square is; and if $f,g,h$ have right adjoints, then the same holds for vertical right adjointability.
\end{prop}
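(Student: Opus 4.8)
The plan is to deduce both statements from the fundamental fiber sequence of \Cref{lm:fundamentalsequence}. For $j=0,1$, since $p_j$ is a (right split) Verdier projection with kernel inclusion $i_j$, the functor $i_j$ is fully faithful and admits a right adjoint $i_j^R$, the right adjoint $p_j^R$ is fully faithful, $p_j$ is essentially surjective, and there is a canonical fiber sequence of endofunctors of $A_j$
\[ i_j i_j^R \to \id_{A_j} \to p_j^R p_j . \]
Horizontal right adjointability of the left square means exactly that the Beck--Chevalley (mate) transformation $f i_0^R \to i_1^R g$ attached to the datum $g i_0 \simeq i_1 f$ is an equivalence; horizontal right adjointability of the right square means that the mate $g p_0^R \to p_1^R h$ attached to $p_1 g \simeq h p_0$ is an equivalence. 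So the task is to compare these two mates.

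For the horizontal assertion, whisker the $j=0$ fiber sequence with $g$ on the left and the $j=1$ fiber sequence with $g$ on the right, producing two fiber sequences of functors $A_0\to A_1$ with common middle term $g$, namely $g i_0 i_0^R \to g \to g p_0^R p_0$ and $i_1 i_1^R g \to g \to p_1^R p_1 g$. Using $g i_0\simeq i_1 f$ to rewrite $g i_0 i_0^R\simeq i_1 f i_0^R$, whiskering the mate $f i_0^R\to i_1^R g$ with $i_1$ gives a map $g i_0 i_0^R\to i_1 i_1^R g$; using $h p_0\simeq p_1 g$ to rewrite $p_1^R h p_0\simeq p_1^R p_1 g$, whiskering the mate $g p_0^R\to p_1^R h$ with $p_0$ gives a map $g p_0^R p_0\to p_1^R p_1 g$. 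Together with $\id_g$ these assemble into a ladder of fiber sequences
\[\begin{tikzcd}
	{g i_0 i_0^R} & g & {g p_0^R p_0} \\
	{i_1 i_1^R g} & g & {p_1^R p_1 g}
	\arrow[from=1-1, to=1-2]
	\arrow[from=1-2, to=1-3]
	\arrow[from=2-1, to=2-2]
	\arrow[from=2-2, to=2-3]
	\arrow[from=1-1, to=2-1]
	\arrow["\id", from=1-2, to=2-2]
	\arrow[from=1-3, to=2-3]
\end{tikzcd}\]
Granting that this ladder genuinely commutes, two-out-of-three for maps of fiber sequences shows the left vertical is an equivalence iff the right one is. But the left vertical is $i_1$ whiskered with the mate $f i_0^R\to i_1^R g$, hence an equivalence iff that mate is (as $i_1$ is fully faithful, hence conservative); and the right vertical is the mate $g p_0^R\to p_1^R h$ whiskered with $p_0$ followed by an equivalence, hence an equivalence iff that mate is (as precomposition with the essentially surjective $p_0$ reflects equivalences of natural transformations). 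This gives the horizontal equivalence.

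The vertical assertion, under the hypothesis that $f,g,h$ admit right adjoints, is handled identically: whisker the $j=0$ fiber sequence with $g^R$ on the right and the $j=1$ one with $g^R$ on the left to obtain two fiber sequences of functors $A_1\to A_0$ with common middle term $g^R$; pass to right adjoints in $g i_0\simeq i_1 f$ and $p_1 g\simeq h p_0$ to get $i_0^R g^R\simeq f^R i_1^R$ and $g^R p_1^R\simeq p_0^R h^R$, and use the vertical mates $i_0 f^R\to g^R i_1$ and $p_0 g^R\to h^R p_1$ to build a ladder of fiber sequences with middle component $\id_{g^R}$. Two-out-of-three again reduces to the outer verticals, which are now the vertical mates whiskered with $i_1^R$ (essentially surjective, as $i_1$ is fully faithful) and with $p_0^R$ (fully faithful, as $p_0$ is a Bousfield localization), so equivalences iff the respective mates are. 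The one genuinely non-formal point in all of this is the commutativity of the two ladders — i.e. the functoriality of Beck--Chevalley transformations along the maps of the diagram — which I expect to be the main, but entirely routine, obstacle; I would dispatch it exactly as the analogous bookkeeping in \Cref{lm:retractBC}, taking advantage of the fact (noted after \Cref{lm:fundamentalsequence}) that the fiber sequences involved carry contractible spaces of nullhomotopies, so that the comparison maps are essentially forced.
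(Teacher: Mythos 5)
Your proof is correct and follows essentially the same route as the paper: both use the fiber sequence from \Cref{lm:fundamentalsequence}, whisker it on either side of $g$ (resp.\ $g^R$) to get two fiber sequences with common middle term, identify the outer maps with the two Beck--Chevalley mates (suitably whiskered by $i_1$, $p_0$, $i_1^R$, $p_0^R$), and conclude by two-out-of-three, using that $i_1$ is fully faithful, $p_0$ is essentially surjective, $i_1^R$ is essentially surjective, and $p_0^R$ is fully faithful to pass from the whiskered mates to the mates themselves. The one point you flag as "non-formal but routine" --- that the ladder of fiber sequences genuinely commutes and that the comparison maps are the intended ones --- is also exactly what the paper waves at ("unraveling the definition of the canonical maps") rather than proves in detail, so you have not papered over anything the original proof treats with more rigor.
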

\begin{proof}
    For the first part, we need to prove that the map $f(i_0)^R\to (i_1)^R g$ is an equivalence if and only if the map $g(p_0)^R\to (p_1)^Rh$. 

We note that by \Cref{lm:fundamentalsequence}, we have cofiber sequences: $$(i_1)(i_1)^R g\to g\to (p_1)^R (p_1) g$$ and $$g(i_0)(i_0)^R\to g\to g(p_0)^R(p_0)$$
We claim that the construction of the canonical maps makes it clear that they are compatible, namely : using the equivalences $g\circ i_0\simeq i_1\circ f$ and $p_1\circ g\simeq h\circ p_0$, they become $$i_1(i_1)^R g\to g\to (p_1)^R h p_0$$ and $$i_1 f  (i_0)^R\to g\to g(p_0)^Rp_0$$ respectively and unraveling the definition of the canonical maps, we find that we have a map of co/fiber sequences between the two. As the middle map is the identity $g\to g$, the claim follows: either map is an equivalence if and only if the other one is (note that $i_1$ is fully faithful, and that $p_0$ is a localization, so postcomposing with the former and precomposing with the latter does not change whether or not a transformation is an equivalence). 

For the second part, the argument is similar: we need to prove that $i_0\circ f^R\to g^R\circ i_1$ is an equivalence if and only if $p_0\circ g^R\to h^R\circ p_1$ is; and we prove that those maps fit into similar cofiber sequences from the cofiber sequences from \Cref{lm:fundamentalsequence}. We leave those details to the reader. 
\end{proof}
\begin{cor}\label{cor:kerproj}
    Let $f:A\to B$ be a homological epimorphism. $\ker(\Ind(f))$ is compactly generated if and only if it is compactly generated by $\ker(f)$, if and only if $A/\ker(f)\to B$ is fully faithful.
\end{cor}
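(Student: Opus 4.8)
The statement packages three equivalences about a homological epimorphism $f\colon A\to B$ of small stable \categories: (i) $\ker(\Ind(f))$ is compactly generated; (ii) $\ker(\Ind(f))$ is compactly generated \emph{by} (the image of) $\ker(f)$; (iii) the canonical functor $A/\ker(f)\to B$ is fully faithful. The plan is to prove the cycle (ii) $\Rightarrow$ (i) $\Rightarrow$ (iii) $\Rightarrow$ (ii), exploiting throughout that $\Ind(f)\colon \Ind(A)\to \Ind(B)$ is a Bousfield localization by hypothesis, with kernel $\mathcal{K}:=\ker(\Ind(f))$, and writing $i\colon \mathcal{K}\to \Ind(A)$ for the inclusion. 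Since $\Ind(A)$ is compactly generated, $i$ is automatically fully faithful and colimit-preserving, hence reflects compact objects; the only subtlety is whether $i$ \emph{preserves} filtered colimits, equivalently whether $\mathcal{K}$ is generated by objects coming from $A$.

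First, (ii) $\Rightarrow$ (i) is a tautology. For (i) $\Rightarrow$ (iii): suppose $\mathcal{K}$ is compactly generated. Then $\mathcal{K}^\omega$ is an idempotent-complete small stable \category{} and $\mathcal{K}\simeq \Ind(\mathcal{K}^\omega)$; by the Verdier/Thomason picture (using that $\Ind(f)$ is a Bousfield localization whose kernel inclusion is then itself an internal left adjoint, so that $\mathcal{K}^\omega = \ker(f)^{\mathrm{Kar}}$ embeds in $A^{\mathrm{Kar}}$ with Verdier quotient $B^{\mathrm{Kar}}$ — here one invokes \Cref{lm:fundamentalsequence} and \Cref{cor:incstrongiffprojstrong}, or rather their small/compact shadows) the sequence $\ker(f)\to A\to B$ becomes, after idempotent completion, a genuine Verdier sequence, which is precisely the assertion that $A/\ker(f)\to B$ is fully faithful (the Verdier quotient always surjects onto a cofinal subcategory of $B$, and full faithfulness is the remaining content). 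The cleanest route is: $A/\ker(f)\to B$ is fully faithful iff $\Ind(A)/\Ind(\ker(f))\to \Ind(B)$ is, iff the localization $\Ind(A)\to \Ind(B)$ has kernel \emph{exactly} $\langle \ker(f)\rangle$, the localizing subcategory generated by $\ker(f)$; and $\langle\ker(f)\rangle = \mathcal{K}$ precisely when $\mathcal{K}$ is generated under colimits by objects of $A$, which follows from (i) once we know $\mathcal{K}^\omega$ is generated by $\ker(f)$ — but that last fact is itself what needs an argument, so it is more honest to run the implication as (iii) $\Rightarrow$ (i) and (iii) $\Rightarrow$ (ii) together.

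So the real content is (iii) $\Rightarrow$ (ii): assume $A/\ker(f)\to B$ is fully faithful. Let $\langle\ker(f)\rangle\subseteq \Ind(A)$ denote the localizing subcategory generated by the image of $\ker(f)$; it is compactly generated (by $(\ker(f))^{\mathrm{Kar}}$), and the composite localization $\Ind(A)\to \Ind(A)/\langle\ker(f)\rangle$ has $\Ind(A)/\langle\ker(f)\rangle \simeq \Ind(A/\ker(f))$. The assumption that $A/\ker(f)\to B$ is fully faithful gives that $\Ind(A/\ker(f))\to \Ind(B)$ is fully faithful and colimit-preserving, hence an equivalence onto a localizing subcategory; but it is also essentially surjective after idempotent completion since $f$ is essentially surjective up to retracts (a homological epimorphism need not be essentially surjective, but $\Ind(f)$ is essentially surjective as a Bousfield localization — wait, one must be careful: a Bousfield localization is essentially surjective). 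Since $\Ind(f)$ is essentially surjective, the fully faithful colimit-preserving functor $\Ind(A)/\langle\ker(f)\rangle \to \Ind(B)$ is also essentially surjective (its image contains all compacts of $\Ind(B)$, which come from $B$, hence from $A$, and is closed under colimits), hence an equivalence. Therefore $\langle\ker(f)\rangle = \ker(\Ind(f)) = \mathcal{K}$, and the left-hand side is compactly generated by $\ker(f)$ by construction — this is exactly (ii), and a fortiori (i).

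\textbf{Main obstacle.} The delicate point, and the one I would spend the most care on, is verifying that the natural comparison $\Ind(A)/\langle\ker(f)\rangle \xrightarrow{\sim}\Ind(B)$ really is an equivalence under hypothesis (iii) — i.e.\ controlling the difference between the localizing subcategory \emph{generated by} $\ker(f)$ and the honest kernel of $\Ind(f)$. This is where the full-faithfulness of $A/\ker(f)\to B$ (equivalently the vanishing of the relevant relative mapping spectra, as packaged by \Cref{cor:homepi}) does the work: full faithfulness on the small level must be bootstrapped to an equivalence on $\Ind$-categories using that both sides are compactly generated and that $\Ind(f)$ is a Bousfield localization (so its kernel is itself a localizing, hence compactly generated-if-generated-by-compacts, subcategory). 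The bookkeeping with idempotent completion — homological epimorphisms are not assumed essentially surjective, only their $\Ind$-extensions are Bousfield localizations — is the other place errors could creep in, and I would state the Verdier-sequence input (small stable \categories) as a citation to Blumberg--Gepner--Tabuada or Neeman rather than reprove it.
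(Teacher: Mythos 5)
Your overall outline is correct and identifies the right key input, but the writeup undermines itself at the one place that carries all the content, and it glosses over an idempotent-completeness point that the paper handles carefully.

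The core of the statement is the implication (i)\,$\Rightarrow$\,(ii) (from which (ii)\,$\Leftrightarrow$\,(iii) is then comparatively formal). You do name the correct mechanism in your first pass at (i)\,$\Rightarrow$\,(iii) — $\Ind(f)$ is an internal left adjoint, so by \Cref{cor:incstrongiffprojstrong} the kernel inclusion $i\colon\mathcal K\to\Ind(A)$ is as well, hence preserves compacts, hence $\mathcal K^\omega\subset\Ind(A)^\omega\cap\mathcal K$ — and that is exactly the paper's argument. But then you write ``that last fact is itself what needs an argument, so it is more honest to run the implication as (iii)\,$\Rightarrow$\,(i) and (iii)\,$\Rightarrow$\,(ii) together,'' which abandons the forward direction and leaves the cycle (ii)\,$\Rightarrow$\,(i)\,$\Rightarrow$\,(iii)\,$\Rightarrow$\,(ii) with a missing edge: you only ever establish (ii)\,$\Rightarrow$\,(i) and (iii)\,$\Rightarrow$\,(ii), which gives nothing out of (i). The argument you doubted was the right one; commit to it. Separately, you silently write $\mathcal K^\omega=\ker(f)^{\mathrm{Kar}}$, but this needs a small argument: an object $k\in\mathcal K\cap\Ind(A)^\omega$ is a priori only a retract of an object of $A$, not visibly a retract of an object of $\ker(f)$. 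The paper's fix is the $k\oplus\Sigma k$ trick — $k\oplus\Sigma k$ lies in $A$ and in $\mathcal K$, hence in $\ker(f)$, so $k$ is a retract of an object of $\ker(f)$ — and this is what makes $\ker(f)$ a dense subcategory of $\mathcal K^\omega$. You should supply that.

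For (ii)\,$\Leftrightarrow$\,(iii) you genuinely diverge from the paper, and both routes work. Yours is the conceptual one: use Thomason--Neeman to identify $\Ind(A)/\langle\ker(f)\rangle\simeq\Ind(A/\ker(f))$, observe that the comparison to $\Ind(B)$ is fully faithful (since $\Ind$ preserves fully faithfulness of functors between small categories), colimit-preserving, and essentially surjective (because the Bousfield localization $\Ind(f)$ factors through it), and conclude it is an equivalence, so $\langle\ker(f)\rangle=\mathcal K$. The paper instead computes mapping spectra directly: it feeds the fiber sequence $ii^R\to\id\to f^R f$ from \Cref{lm:fundamentalsequence} into Verdier's colimit formula and invokes \cite[Theorem I.3.3]{NS} to identify the result with $\map_{A/\ker(f)}$. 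Your version buys conceptual transparency and avoids the explicit colimit manipulation; the paper's version is more self-contained and makes the role of \Cref{lm:fundamentalsequence} visible. Either is fine, but in your version you should state the Thomason--Neeman input ($\Ind(A)/\langle K\rangle\simeq\Ind(A/K)$ with compacts $(A/K)^{\mathrm{Kar}}$) as a citation rather than a passing remark, since it is doing the heavy lifting.
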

\begin{proof}
    We note that $\Ind(f)$ is an internal left adjoint, thus by \Cref{cor:incstrongiffprojstrong} it follows that the inclusion $\ker\to\Ind(A)$ is also an internal left adjoint, so it preserves compacts. Thus it is compactly generated if and only if it is compactly generated by $\ker\cap \Ind(A)^\omega$. 

    If $A$ is idempotent-complete, we are done because $\Ind(A)^\omega= A$. Else, we note that if $k\in\ker\cap \Ind(A)^\omega$, then $k\oplus\Sigma k \in A$\footnote{Any $x\in\Ind(A)^\omega$ is a retract of some $x'\in A$. If $x\oplus y = x'$, then the cofiber of the projection onto $y$ followed by the inclusion of $y$ is $x\oplus\Sigma x$, which is therefore in $A$.}, and it is also in $\ker(\Ind(f))$, thus it is in $\ker(f)$, and $k$ is a retract thereof, so $\ker(f)\subset \ker(\Ind(f))\cap \Ind(A)^\omega$ is dense. 

    Now, if $\ker(\Ind(f))$ is compactly generated, we note that for $a_0,a_1\in A$, by \Cref{lm:fundamentalsequence} we have a cofiber sequence: 
    $$\colim_{k\in \ker(f)/ii^R(a_1)}\map_A(a_0, k) \simeq \map_{Ind(A)}(a_0, ii^R(a_1))\to \map_{Ind(A)}(a_0,a_1)\to \map_{\Ind(A)}(a_0,f^Rf(a_1))$$
    so that the canonical map $$\colim_{k\in\ker(f)/a_1}\map_A(a_0,\mathrm{cofib}(k\to a_1))\to \map_B(f(a_0),f(a_1))$$ is an equivalence. By \cite[Theorem I.3.3]{NS}, the source is $\map_{A/\ker(f)}(a_0,a_1)$. 

    The same argument gives the converse, which we leave as an exercise. 
\end{proof}
\begin{defn}
    A Karoubi (resp. Verdier) sequence is a sequence $K\to A\to B$ of exact functors between stable \categories{} where $K\to A$ is the kernel of $A\to B$, and $A\to B$ is a Karoubi (resp. Verdier) projection. Verdier sequences are also called localization sequences. 
\end{defn}
\begin{cor}
    Karoubi (resp. Verdier) projections are closed under arbitrary products.
\end{cor}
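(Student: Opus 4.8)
The plan is to reduce the statement about arbitrary products of Karoubi (resp. Verdier) projections to the already-established fact that homological epimorphisms are closed under arbitrary products, together with the characterization of which homological epimorphisms are Karoubi (resp. Verdier) projections given by \Cref{cor:kerproj}. So let $\{f_\alpha : A_\alpha \to B_\alpha\}_{\alpha\in I}$ be a family of Karoubi (resp. Verdier) projections, and set $f := \prod_\alpha f_\alpha : \prod_\alpha A_\alpha \to \prod_\alpha B_\alpha$. First I would recall that $f$ is a homological epimorphism, since homological epimorphisms are closed under arbitrary products (this is the corollary just proved, via the $\mathrm{AB}6$ argument through \Cref{cor:homepi}). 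So it remains to show that $\ker(f)$ is compactly generated (and, in the Verdier case, that $f$ is essentially surjective, which is immediate since each $f_\alpha$ is essentially surjective and essential surjectivity is stable under products).

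The key step is to identify $\ker(f)$. I would first note that $\Ind(\prod_\alpha A_\alpha)$ need not be $\prod_\alpha \Ind(A_\alpha)$ in general, so one must be a little careful: the cleanest route is to work with the presentable localizations directly. Since $\Ind(\prod_\alpha f_\alpha)$ and $\prod_\alpha \Ind(f_\alpha)$ are both Bousfield localizations of $\Ind(\prod_\alpha A_\alpha)$ with the \emph{same} class of local equivalences — a map $(g_\alpha)_\alpha$ is inverted iff each $g_\alpha$ is, by the $\pi_0$-criterion of \Cref{cor:homepi} applied componentwise — one checks that $\ker(\Ind(f))$ is the localizing subcategory of $\Ind(\prod_\alpha A_\alpha)$ generated by the images of the $\ker(\Ind(f_\alpha))$ under the (fully faithful, internal-left-adjoint) inclusions $\Ind(A_\beta)\hookrightarrow \Ind(\prod_\alpha A_\alpha)$ pulled back appropriately; more precisely, $\ker(\Ind(f))$ is the product $\prod_\alpha \ker(\Ind(f_\alpha))$ computed inside $\Ind(\prod_\alpha A_\alpha)$, which since each $\ker(\Ind(f_\alpha))$ is compactly generated (by hypothesis, via \Cref{cor:kerproj}) is again compactly generated, its compact generators being the compact generators of the individual kernels viewed in the product. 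With $\ker(\Ind(f))$ compactly generated, \Cref{cor:kerproj} then gives that $f$ is a Karoubi projection.

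The main obstacle I expect is exactly the bookkeeping around $\Ind$ of an infinite product: one cannot naively write $\Ind(\prod_\alpha A_\alpha) = \prod_\alpha \Ind(A_\alpha)$, so the argument that $\ker(\Ind(f)) = \prod_\alpha \ker(\Ind(f_\alpha))$ needs to be made via the universal property of the localization rather than by a direct computation of $\mathrm{Ind}$-categories. Concretely: $\Ind(\prod_\alpha A_\alpha) \to \prod_\alpha \Ind(A_\alpha)$ is a fully faithful colimit-preserving functor (each projection $\prod A_\beta \to A_\alpha$ preserves compacts, so induces $\prod A_\beta \to A_\alpha \to \Ind(A_\alpha)$ landing in compacts), and the localization $\Ind(\prod f_\alpha)$ factors through $\prod_\alpha \Ind(f_\alpha)$ in a way compatible with kernels; chasing the resulting diagram of Bousfield localizations and using that a localizing subcategory of a compactly generated category generated by compact objects is compactly generated, one concludes. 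Alternatively, one sidesteps this entirely by characterizing $\ker(\Ind(f))$ intrinsically: by \Cref{lm:fundamentalsequence} and \Cref{cor:kerproj}, it suffices to exhibit a set of compact objects of $\Ind(\prod_\alpha A_\alpha)$ lying in $\ker(\Ind(f))$ and generating it, and the objects $i_\beta(k)$ for $k$ a compact generator of $\ker(f_\beta)$ and $i_\beta$ the inclusion of the $\beta$-th factor (extended by zero) do the job, since a family $(x_\alpha)_\alpha$ in $\prod_\alpha A_\alpha$ lies in $\ker(f)$ iff each $x_\alpha \in \ker(f_\alpha)$ and every object of $\prod_\alpha A_\alpha$ (hence of $\Ind$ of it, under colimits) is built from such coordinatewise pieces. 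The Verdier case follows since essential surjectivity is obviously preserved by products.
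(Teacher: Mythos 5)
There is a genuine gap, and it lies exactly where you anticipated trouble. Your plan is to exhibit compact generators of $\ker(\Ind(\prod_\alpha f_\alpha))$ directly. But the set you propose — objects $i_\beta(k)$ supported on a single coordinate $\beta$ — does \emph{not} generate this kernel when $I$ is infinite. An object $(x_\alpha)_\alpha$ of $\prod_\alpha A_\alpha$ with every $x_\alpha\in K_\alpha$ is a compact object of $\Ind(\prod_\alpha A_\alpha)$ lying in the kernel, and it is not a colimit of objects supported on finitely many coordinates (infinite products do not arise as colimits of their ``finite subproducts''). So your claim that ``every object of $\prod_\alpha A_\alpha$ ... is built from such coordinatewise pieces'' is false, and with it the conclusion that those single-coordinate compacts generate. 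Your first, more hedged approach has the same problem in disguise: you say the kernel ``is the product $\prod_\alpha\ker(\Ind(f_\alpha))$ computed inside $\Ind(\prod_\alpha A_\alpha)$'' — but $\Ind(\prod_\alpha A_\alpha)\hookrightarrow\prod_\alpha\Ind(A_\alpha)$ is a \emph{proper} full subcategory for infinite $I$, and a localizing subcategory cut out as the preimage of a compactly generated one along a fully faithful inclusion has no a priori reason to be compactly generated. That is precisely the content one needs to prove, not something one gets for free from ``chasing a diagram of Bousfield localizations.'' (Your passing remark that $\Ind(\prod f_\alpha)$ and $\prod\Ind(f_\alpha)$ are ``both Bousfield localizations of $\Ind(\prod_\alpha A_\alpha)$'' is also not right: the second has source $\prod_\alpha\Ind(A_\alpha)$, a strictly larger category.)

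The paper's argument sidesteps the kernel of $\Ind(\prod f_\alpha)$ entirely, by using the other criterion in \Cref{cor:kerproj}: a homological epimorphism $f$ is a Karoubi projection iff $A/\ker(f)\to B$ is fully faithful. With $A=\prod A_\alpha$, $K=\ker(f)=\prod K_\alpha$, and each $A_\alpha/K_\alpha\to B_\alpha$ fully faithful by hypothesis, it suffices to show $\prod A_\alpha/\prod K_\alpha\to\prod(A_\alpha/K_\alpha)$ is fully faithful. This is checked by hand on mapping spectra: by \cite[Theorem I.3.3]{NS} one has
$\map_{\prod A_\alpha/\prod K_\alpha}(x,y)\simeq\colim_{(k_\alpha)\in\prod_\alpha(K_\alpha)_{/y_\alpha}}\prod_\alpha\map_{A_\alpha}(x_\alpha,\mathrm{cofib}(k_\alpha\to y_\alpha))$,
and the AB6 axiom (distributivity of products over filtered colimits) lets you pull the product out past the filtered colimit, yielding $\prod_\alpha\map_{A_\alpha/K_\alpha}(x_\alpha,y_\alpha)$. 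No identification of $\ker(\Ind(\prod f_\alpha))$ as a subcategory is ever required. That single AB6 computation is the entire content of the corollary, and it is what your proposal is missing.
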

\begin{proof}
We use the previous result and again the AB6 axiom to conclude that $\prod_I A_i/\prod_I K_i\to~\prod_I(A_i/K_i)$ is fully faithful. 
\end{proof}
\begin{prop}\label{prop:loc=fibcofib}
    In $\Cat_{\st}$, Verdier sequences are precisely the squares :\[\begin{tikzcd}
	K & A \\
	0 & { B}
	\arrow[from=1-1, to=2-1]
	\arrow[from=1-1, to=1-2]
	\arrow[from=1-2, to=2-2]
	\arrow[from=2-1, to=2-2]
\end{tikzcd}\] 
which are both cartesian and cocartesian. 

The same holds for localization sequences in $\PrL_{\st}$, and for Karoubi sequences in the category of idempotent-complete stable categories.
\end{prop}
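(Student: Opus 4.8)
The plan is to decouple the two conditions on the square. First I would note that the square being cartesian says precisely that $K\to A$ is the fiber — equivalently, since $A,B$ are stable, the kernel — of $A\to B$, and that the square being cocartesian says precisely that $A\to B$ exhibits $B$ as the pushout $A\sqcup_K 0$. By the universal property of this pushout in the ambient category, an exact functor out of $A\sqcup_K 0$ is the same as an exact functor out of $A$ that annihilates $K$; hence $A\sqcup_K 0$ is the Verdier quotient $A/K$ in $\Cat_{\st}$ (the ``naive'' one), the quotient of $A$ by the localizing subcategory generated by the image of $K$ in $\PrL_{\st}$, and the idempotent completion $(A/K)^\natural$ in the \category{} of idempotent-complete stable \categories. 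So in all three settings the statement reduces to: $K\to A\to B$ is a Verdier (resp.\ localization, resp.\ Karoubi) sequence if and only if $K=\ker(A\to B)$ and the canonical comparison functor $A/K\to B$ is an equivalence.

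For the forward implication, if $K\to A\to B$ is such a sequence then $K$ is by definition the kernel of $A\to B$, so the square is cartesian. The comparison $A/K\to B$ is essentially surjective because $A\to B$ is, and it is fully faithful by \Cref{cor:kerproj}: its fully-faithfulness criterion — that $\ker(\Ind(A\to B))$ be compactly generated — holds because $A\to B$ is a Karoubi (resp.\ Verdier) projection. In $\PrL_{\st}$ one may instead use directly that a Bousfield localization factors as $A\to A/\ker(A\to B)\to B$ with the second functor fully faithful and essentially surjective, hence an equivalence. Thus $A/K\to B$ is an equivalence and the square is cocartesian.

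For the converse, assume the square is bicartesian. Cartesianness gives $K=\ker(A\to B)$; since kernels of exact functors between stable \categories{} are closed under retracts, $K$ is a thick subcategory of $A$. Cocartesianness gives $B\simeq A/K$ via the canonical functor. It remains to see that $A\to A/K$ is a Verdier (resp.\ Karoubi, resp.\ localization) projection with kernel $K$. Essential surjectivity is built into the construction of the quotient. The kernel of $A\to A/K$ is exactly $K$ precisely because $K$ is thick — the standard ``reconstruction'' property of Verdier quotients, which one can extract from the mapping-spectrum formula $\map_{A/K}(a_0,a_1)\simeq\colim_{k\in K_{/a_1}}\map_A(a_0,\mathrm{cofib}(k\to a_1))$ of \cite[Theorem I.3.3]{NS} together with \Cref{cor:homepi}. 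Finally, $\Ind(A)\to\Ind(A/K)$ is identified with the Bousfield localization $\Ind(A)\to\Ind(A)/\langle K\rangle$ at the localizing subcategory $\langle K\rangle$ generated by $K$, so $A\to A/K$ is a homological epimorphism whose $\Ind$-kernel $\langle K\rangle\simeq\Ind(K^\natural)$ is compactly generated; hence it is a Karoubi projection, and being essentially surjective it is a Verdier projection. The $\PrL_{\st}$ and idempotent-complete cases run verbatim, using that the relevant pushout is the quotient by the localizing subcategory (resp.\ the idempotent completion of the Verdier quotient) and that thick subcategories of idempotent-complete categories are again idempotent-complete.

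The step I expect to be the main obstacle is the $\Ind$-bookkeeping underlying the last paragraph: cleanly justifying $A\sqcup_K 0\simeq A/K$, the identification $\Ind(A/K)\simeq\Ind(A)/\langle K\rangle$, and the equality $\ker(A\to A/K)=K$ for $K$ thick. These are all standard facts about Verdier quotients, but they are exactly where the distinction between the three ambient categories — and hence the three notions of quotient — enters; once they are in place the proposition follows from \Cref{cor:kerproj} and unwinding the definitions.
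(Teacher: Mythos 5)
Your proof is correct, but it takes a genuinely different route from the paper's, and it's worth comparing the two. The paper treats $\PrL_{\st}$ first and exploits a structural fact special to $\PrL$: colimits there are computed as limits of the diagram of right adjoints. So "the square is cocartesian" becomes "the square of right adjoints is cartesian", which immediately says $B \to A$ is the kernel of $i^R$ and hence fully faithful; combined with cartesianness of the original square (which says $K = \ker(A\to B)$) one reads off directly that $A\to B$ is a Bousfield localization with kernel $K$. The other two settings are then deduced by $\Ind$ and an objectwise analysis. Your argument instead identifies the pushout $A\sqcup_K 0$ as the relevant quotient (Verdier quotient, idempotent-completed Verdier quotient, or quotient by a localizing subcategory) in each of the three ambient categories, and then checks that the canonical comparison $A/K\to B$ is an equivalence. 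This is uniform across the three settings and reduces everything to standard Verdier-quotient facts plus \Cref{cor:kerproj}, but it leans on a larger stock of nontrivial background: the Thomason--Neeman identification $\Ind(A/K)\simeq\Ind(A)/\langle K\rangle$, and the reconstruction $\ker(A\to A/K)=K$ for $K$ thick, both of which the paper's route cleanly avoids. You flag this yourself as the main obstacle, and you are right that it's where the real content of your version lives; once those facts are granted, your argument is complete and correct. The paper's right-adjoint trick buys a shorter proof with fewer external inputs; yours buys a more uniform treatment and a cleaner picture of what the pushout actually is.
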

\begin{proof}
    We deal with $\PrL_{\st}$ - the case of idempotent-complete categories can be deduced from this one using $\Ind(-)$, and then the case of ordinary stable categories can be deduced from that one via an object-wise analysis. 

    In $\PrL_{\st}$, we note that limits are underlying, so this square being cartesian is equivalent to $i: K\to A$ being the kernel of $A\to B$ - in particular $i$ is fully faithful. Furthermore, this square being cocartesian is equivalent to the square of right adjoints being cartesian, so that $B\to A$ is the kernel of $i^R: A\to K$, and is therefore also fully faithful. It follows that $A\to B$ is a localization, with kernel $K$ - this is the definition of a localization sequence. 

    We leave the converse to the reader, it is straightforward. 
\end{proof}
\subsection{Compact generation}
We spend a bit of time on $\kappa$-compact generation. First, we recall the following classical consideration:
\begin{lm}\label{lm:kappacpctgen}
    Let $C$ be a cocomplete stable \category, and $\kappa$ a regular cardinal. Suppose $S$ is a set of $\kappa$-compact objects of $C$ such that for each nonzero $c\in C$, there exists a nonzero map $s\to c$ from some $s\in S$. 

    In this case, $C$ is presentable, and in fact $\kappa$-compactly generated. Its $\kappa$-compacts consist of the (idempotent completion of) the $\kappa$-small colimit closure of $S$ (and so $C$ is generated under colimits by $S$).
\end{lm}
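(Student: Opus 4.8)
\textbf{Proof plan for \Cref{lm:kappacpctgen}.}

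The plan is to reduce to the standard recognition theorem for $\kappa$-compactly generated (presentable) stable categories. Let $\mathcal{A}$ denote the closure of $S$ under $\kappa$-small colimits inside $C$, and let $\mathcal{A}'$ be its idempotent completion (taken inside $C$; this is legitimate since $C$, being cocomplete, is idempotent-complete). Since $S$ consists of $\kappa$-compact objects and $\kappa$-compact objects are closed under $\kappa$-small colimits and retracts, every object of $\mathcal{A}'$ is $\kappa$-compact in $C$; in particular $\mathcal{A}'$ is essentially small. First I would consider the functor $\Ind_\kappa(\mathcal{A}')\to C$ induced by the inclusion $\mathcal{A}'\hookrightarrow C$ via the universal property of $\Ind_\kappa$; call it $F$. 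Because $\mathcal{A}'$ is idempotent-complete and stable, $\Ind_\kappa(\mathcal{A}')$ is $\kappa$-compactly generated stable, and $F$ preserves colimits; it is fully faithful on $\mathcal{A}'$ (hence on $\kappa$-compact objects, as these are generated under $\kappa$-filtered colimits and retracts from $\mathcal{A}'$, which $F$ sends to $\kappa$-compacts of $C$ faithfully).

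The key step is to show $F$ is an equivalence. Fully faithfulness: since source and target are cocomplete, and $\Ind_\kappa(\mathcal{A}')$ is generated under colimits by $\mathcal{A}'$, it suffices to check that $\Map(F(a), F(x))\to \Map(a',x)$-type comparisons hold; more precisely one checks that for $a\in\mathcal{A}'$ the canonical map $\Map_{\Ind_\kappa(\mathcal{A}')}(a,-)\to \Map_C(a, F(-))$ is an equivalence, which holds because both sides are colimit-preserving (using that $F(a)$ is $\kappa$-compact, so $\Map_C(a,F(-))$ preserves $\kappa$-filtered colimits, and every object of $\Ind_\kappa(\mathcal{A}')$ is a $\kappa$-filtered colimit of objects of $\mathcal{A}'$) and agree on $\mathcal{A}'$. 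For essential surjectivity, equivalently that $F$ is a localization onto $C$ whose right adjoint is conservative: $F$ admits a right adjoint $G$ (adjoint functor theorem, $F$ colimit-preserving between presentable categories), and the essential image of $F$ is a localizing subcategory of $C$ containing $S$. It then suffices to show this localizing subcategory is all of $C$, i.e. that $G$ is conservative. If $G(c)\simeq 0$, then for every $s\in S$ we get $\Map_C(s,c)\simeq \Map_{\Ind_\kappa(\mathcal{A}')}(s, G(c))\simeq 0$ (using fully faithfulness of $F$ on $S$ to identify $s$ with its image and the adjunction), so by the hypothesis on $S$ the object $c$ is zero. Hence $F$ is an equivalence, $C\simeq \Ind_\kappa(\mathcal{A}')$ is presentable and $\kappa$-compactly generated.

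It remains to identify $C^\kappa$ with the idempotent completion $\mathcal{A}'$ of the $\kappa$-small colimit closure of $S$. One inclusion is already established: $\mathcal{A}'\subseteq C^\kappa$. Conversely, under the equivalence $C\simeq \Ind_\kappa(\mathcal{A}')$, the $\kappa$-compact objects of $\Ind_\kappa$ of a small idempotent-complete stable category are exactly (the retracts of) the $\kappa$-small colimits of representables, i.e. exactly $\mathcal{A}'$ again; this is the standard description of compact objects in $\kappa$-ind-completions, together with the fact that $\mathcal{A}'$ is already idempotent-complete. This gives $C^\kappa=\mathcal{A}'$, and in particular $S$ generates $C$ under colimits.

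\textbf{Expected main obstacle.} The genuinely delicate point is the passage from ``there is a nonzero map from some $s\in S$ to each nonzero object'' to conservativity of the right adjoint $G$ — one must be careful that $\Map_C(s,-)$ really computes $\Map$ into the $\Ind_\kappa$-model correctly, which is where stability (so that ``nonzero'' is detected by a single mapping spectrum being nonzero, and cofiber sequences behave well) and the $\kappa$-compactness of the generators are both used. The bookkeeping around idempotent completion (ensuring $C^\kappa$ is exactly $\mathcal{A}'$ and not something strictly larger) is routine but must be done with the correct statement of the compact-objects-in-$\Ind_\kappa$ lemma in hand.
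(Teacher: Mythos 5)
Your argument is essentially the paper's: form the $\kappa$-small colimit closure of $S$, take $\Ind_\kappa$, map to $C$, check that the right adjoint is $c\mapsto \map_C(-,c)$ restricted to the generators, and use the hypothesis on $S$ to see that this right adjoint is conservative, forcing the adjunction to be an equivalence. The one spot that needs repair is your justification for essential smallness of $\mathcal{A}'$: you deduce it from the fact that its objects are $\kappa$-compact, but that reasoning is circular, since in a cocomplete (not yet known to be presentable) stable category the collection of $\kappa$-compact objects need not be small — smallness of $C^\kappa$ is a consequence of the presentability you are trying to establish. The correct and easy fix is a direct cardinality argument, which is what the paper does: build the $\kappa$-small colimit closure of $S$ by transfinite induction, adding all $\kappa$-small colimits at each successor stage; since $S$ is a set, each stage is small, and the process stabilizes at some stage $\lambda$ of cofinality $\geq\kappa$ because any $\kappa$-small diagram in stage $\lambda$ already factors through an earlier stage. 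With that replacement, the rest of your proof is correct and matches the paper's route (the paper does not explicitly pass through the idempotent completion before taking $\Ind_\kappa$, but that is an immaterial cosmetic difference).
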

\begin{proof}
    Let $S'$ denote the $\kappa$-small colimit closure of $S$. We first claim that $S'$ is small. Indeed we can construct it by transfinite induction, adding at each step the $\kappa$-small colimits of the previous step, and for some $\lambda$ large enough, any $\kappa$-small diagram in the $\lambda$th stage of this construction will factor through some $\mu$th stage, $\mu<\lambda$. Thus the $\lambda$th stage will be closed under $\kappa$-small colimits. 

    Second, we consider the map $\Ind_\kappa(S')\to C$ induced by the universal property of the $\Ind_\kappa$-completion. It is fully faithful, and it admits a right adjoint given by $c\mapsto \map_C(-,c)_{\mid S'}$. 

    Our assumption guarantees that this right adjoint is conservative, and hence the adjunction must be an equivalence, which proves everything we needed. 
\end{proof}
From the proof, it is clear that we have:
\begin{add}\label{add:kappacpctgen}
    In the situation from \Cref{lm:kappacpctgen}, it suffices to assume that every nonzero $c\in C$ receives a nonzero map from some $s$ which is in the colimit-completion of $S$.
\end{add}
The second property we want to record is the following:
\begin{prop}\label{prop:nonzerocolim}
    Let $C$ be a $\kappa$-compactly generated stable \category, and $x_\bullet : I\to C$ be a $\kappa$-filtered diagram with colimit $x$. 

    If for all $i$, $x_i\to x$ is nullhomotopic, then $x=0$.
\end{prop}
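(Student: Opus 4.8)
The plan is to exploit $\kappa$-compact generation to reduce the statement to a mapping-space computation, and then use the hypothesis that each structure map $x_i \to x$ is null to conclude that nothing can map nontrivially into $x$. Concretely: suppose for contradiction that $x \neq 0$. Since $C$ is $\kappa$-compactly generated and stable, by (the contrapositive of) the generation hypothesis there exists a $\kappa$-compact object $c \in C^\kappa$ and a nonzero map $\alpha \colon c \to x$; indeed, $\kappa$-compact generation precisely says that the functors $\map_C(c,-)$ for $c \in C^\kappa$ are jointly conservative, so if $x \neq 0$ then $\map_C(c,x) \neq 0$ for some $\kappa$-compact $c$, and in particular $\pi_0\map_C(c,x) \neq 0$ or some higher homotopy group is nonzero; either way we can suspend/desuspend $c$ (still $\kappa$-compact, by stability) to arrange a nonzero class in $\pi_0$, i.e. a nonzero map $\alpha\colon c \to x$.

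Next I would use that $c$ is $\kappa$-compact and $x = \colim_I x_i$ with $I$ being $\kappa$-filtered: the canonical map $\colim_I \map_C(c, x_i) \to \map_C(c, \colim_I x_i) = \map_C(c,x)$ is an equivalence. Hence the nonzero class $\alpha \in \pi_0\map_C(c,x)$ lifts, for some index $j \in I$, to a class $\tilde\alpha \in \pi_0\map_C(c, x_j)$ whose image along the structure map $x_j \to x$ is $\alpha$. But that image is exactly the composite $c \xrightarrow{\tilde\alpha} x_j \to x$, and by hypothesis the map $x_j \to x$ is nullhomotopic, so this composite is null, i.e. $\alpha = 0$ in $\pi_0\map_C(c,x)$. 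This contradicts the choice of $\alpha$ as nonzero. Therefore $\map_C(c,x) = 0$ for every $\kappa$-compact $c$, and by joint conservativity of the $\map_C(c,-)$, $x = 0$.

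I do not expect any serious obstacle here; the only mild subtlety is the reduction from "some homotopy group of $\map_C(c,x)$ is nonzero" to "some $\pi_0$ is nonzero after a shift", which is immediate in the stable setting since $\pi_n\map_C(c,x) = \pi_0\map_C(\Sigma^n c, x)$ and $\Sigma^n c$ is again $\kappa$-compact. One should also note that a $\kappa$-filtered \emph{diagram} (as opposed to a filtered poset) still has the property that $\kappa$-compact objects corepresent functors commuting with the colimit; this is the definition of $\kappa$-compactness and requires no extra argument. So the whole proof is: pick a $\kappa$-compact witness of $x \neq 0$, commute $\map_C(c,-)$ past the $\kappa$-filtered colimit, lift the witnessing class to a finite stage, and observe it dies there.
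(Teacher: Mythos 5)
Your proof is correct and follows essentially the same route as the paper's: use joint conservativity of $\map_C(c,-)$ for $\kappa$-compact $c$, commute this past the $\kappa$-filtered colimit, and observe that nullity of the structure maps $x_i\to x$ forces $\pi_0\map_C(\Sigma^k c, x)=0$ for all $k$, hence $\map_C(c,x)=0$ and so $x=0$. The paper states it as a direct computation (all structure maps to the colimit of abelian groups are zero, so the colimit vanishes) rather than by contradiction, but the content is identical.
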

\begin{proof}
    Indeed, for any $\kappa$-compact $c$, and any $k\in\mathbb Z$, we have $\pi_k\map(c,x)\simeq \colim_I\pi_0\map(\Sigma^kc,y_i)$. If all the $x_i\to x$'s are null, this is a filtered colimit in abelian groups where all inclusion maps are $0$, and its colimit is thus $0$. It follows that for all $c\in C^\kappa$, $\map(c,x) = 0$, and thus $x=0$. 
\end{proof}
\begin{rmk}
    It is crucial that $C$ be $\kappa$-compactly generated for this result to hold. We construct a counterexample otherwise below.
\end{rmk}
\begin{ex}
    We first construct an example in a non-presentable case. We explain below how to modify it to make it presentable. The example is in $(\Mod_k)\op$ for a field\footnote{We thank Ishan Levy for pointing out this simplification.} $k$: let $V_n = k^{\oplus \mathbb N_{\geq n}}$, and let $V_{n+1}\to V_n$ be the canonical inclusion. In this case, in $\Mod_k$, $\lim_n V_n\simeq \Omega (\prod_\mathbb N k/\bigoplus_\mathbb N k)$ is in degree $-1$ and hence (because we are over a field!) all the relevant projection maps are $0$, but clearly the limit itself is not $0$.

    Given that example, we note that we can use the Yoneda embedding restricted to a full subcategory $C$ containing the limit diagram to get a counterexample in $\Fun'(C,\Sp)$, the full subcategory of $\Sp$ spanned by the reduced functors $C\to\Sp$ sending our limit diagram to a limit diagram. 
\end{ex}
The final property we want to record is a variation on \Cref{cor:kerproj}:
\begin{prop}\label{prop:kappaloc}
    Let $ \M\xrightarrow{p} \N$ be Bousfield localization in $\PrL_{\st}$ with kernel $i:\mathcal K\to \M$. Suppose $p$ preserves $\kappa$-compacts and $\M,\mathcal K$ are $\kappa$-compactly generated. 

    In this case, $\mathcal K^\kappa\to \M^\kappa\to \N^\kappa$ is a Karoubi sequence, and it is Verdier if $\kappa$ is uncountable.
\end{prop}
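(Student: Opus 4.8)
The plan is to transfer everything across $\Ind_\kappa$. Write the localization sequence as $\mathcal{K}\xrightarrow{i}\M\xrightarrow{p}\N$; by \Cref{lm:fundamentalsequence} (applied to the Bousfield localization $p$) the functor $i$ has a right adjoint $i^R$ and there is a fibre sequence $ii^R\to\id_\M\to p^Rp$ of endofunctors of $\M$. First I would check that $p$ and $i$ restrict to exact functors on $\kappa$-compacts and pin down the kernel. By hypothesis $p$ preserves $\kappa$-compacts, equivalently $p^R$ preserves $\kappa$-filtered colimits; by \Cref{cor:colimleftright} this is equivalent to $i^R$ preserving $\kappa$-filtered colimits, hence $i$ preserves $\kappa$-compacts. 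Since $i$ is moreover fully faithful and colimit-preserving it reflects $\kappa$-compactness, so $\mathcal{K}^\kappa=\{m\in\M^\kappa : p(m)\simeq 0\}=\ker(p^\kappa)$. Also, $p$ is essentially surjective and $\M$ is $\kappa$-compactly generated, so by \Cref{lm:kappacpctgen} $\N$ is $\kappa$-compactly generated and $\N^\kappa$ is the idempotent completion of the essential image of $p^\kappa$ (the $\kappa$-small colimit closure of $\{p(m):m\in\M^\kappa\}$ is just $p(\M^\kappa)$, as $\M^\kappa$ is closed under $\kappa$-small colimits).

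For the Karoubi claim I would argue, using standard Thomason--Neeman-type localization theory together with the facts just established: $\Ind_\kappa(\M^\kappa)=\M$, $\Ind_\kappa(\mathcal{K}^\kappa)=\mathcal{K}$ (the latter by the $\kappa$-compact generation hypothesis on $\mathcal{K}$, together with idempotent-completeness of $\mathcal{K}^\kappa$), and $\N=\M/\mathcal{K}$, so $\N^\kappa$ is the idempotent completion of the Verdier quotient $\M^\kappa/\mathcal{K}^\kappa$ — which is exactly the assertion that $\M^\kappa\to\N^\kappa$ is a Karoubi projection with kernel $\mathcal{K}^\kappa$ (equivalently, by \Cref{cor:kerproj}, that $\M^\kappa/\mathcal{K}^\kappa\to\N^\kappa$ is fully faithful). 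This part is essentially formal and does not use the size of $\kappa$.

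The main obstacle is the ``Verdier if $\kappa$ uncountable'' clause, i.e. essential surjectivity of $p^\kappa:\M^\kappa\to\N^\kappa$; this is where uncountability is genuinely needed. Let $n\in\N^\kappa$. By the previous paragraph $n$ is a retract of $p(m_0)$ for some $m_0\in\M^\kappa$; let $e:p(m_0)\to p(m_0)$ be the corresponding idempotent, so that $n$ is the mapping telescope $\colim\bigl(p(m_0)\xrightarrow{e}p(m_0)\xrightarrow{e}\cdots\bigr)$. The key lemma I would prove is: for every $m\in\M^\kappa$ and every $\phi\in\pi_0\map_\N(p(m),p(m))$ there are $m'\in\M^\kappa$, a map $g:m\to m'$, and an equivalence $p(m')\simeq p(m)$ under which $p(g)=\phi$. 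Granting this, I iterate it starting from $(m_0,e)$, transporting $e$ along the successive equivalences, to obtain a sequential diagram $m_0\to m_1\to\cdots$ in $\M^\kappa$ whose image under $p$ is equivalent to $\bigl(p(m_0)\xrightarrow{e}p(m_0)\xrightarrow{e}\cdots\bigr)$. Because $\kappa$ is uncountable (so $\NN$ is a $\kappa$-small diagram and $\M^\kappa$ is closed under such colimits), $m_\infty:=\colim_\NN m_j$ lies in $\M^\kappa$, and $p(m_\infty)\simeq\colim_\NN p(m_j)\simeq n$, so $n$ is in the essential image of $p^\kappa$. To prove the key lemma, I use the adjunction and the fibre sequence above: $\map_\N(p(m),p(m))\simeq\map_\M(m,p^Rp(m))$ and $p^Rp(m)\simeq\mathrm{cofib}(ii^R(m)\to m)$. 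Writing $i^R(m)\simeq\colim_{j\in J}k_j$ with $J$ $\kappa$-filtered and $k_j\in\mathcal{K}^\kappa$ (possible since $\mathcal{K}$ is $\kappa$-compactly generated), and using that $i$ preserves colimits and $\kappa$-compacts, gives $p^Rp(m)\simeq\colim_{j\in J}c_j$ with $c_j:=\mathrm{cofib}(i(k_j)\to m)\in\M^\kappa$ and each composite $m\to c_j$ sent by $p$ to an equivalence (as $p(i(k_j))=0$). Since $m$ is $\kappa$-compact, $\phi$, viewed in $\pi_0\map_\M(m,\colim_J c_j)=\colim_J\pi_0\map_\M(m,c_j)$, is represented by some $g:m\to c_{j_0}$; taking $m':=c_{j_0}$ and unwinding the adjunction identification shows $p(g)=\phi$ under $p(c_{j_0})\simeq p(m)$.

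Finally I would remark on why the obstacle is genuine: without uncountability one cannot close the telescope up inside $\M^\kappa$, matching the general fact that a Verdier quotient of an idempotent-complete stable category need not be idempotent-complete — precisely the failure of ``Karoubi $\Rightarrow$ Verdier''.
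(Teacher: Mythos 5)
Your proof is correct, and the two halves differ from the paper in different ways. For the Karoubi half you invoke Thomason--Neeman-type localization theory where the paper computes the mapping spaces explicitly: concretely, the paper verifies the formula
\[
p^Rp(m_1)\;\simeq\;\colim_{z\in\mathcal K^\kappa,\,z\to m_1}\mathrm{cofib}(z\to m_1)
\]
using $p^Rp(m_1)\simeq\mathrm{cofib}(ii^R(m_1)\to m_1)$ and $\kappa$-compact generation of $\mathcal K$, which is exactly the ingredient your black-box is built on; so the mathematical content is the same, packaged at different levels of explicitness. For the Verdier half you take a genuinely different route. The paper shows the essential image of $p^\kappa$ is closed under $\kappa$-small colimits (pushouts, sequential colimits, $\kappa$-small coproducts), then uses an $\Ind_\kappa$-embedding argument to conclude $p(\M^\kappa)=\N^\kappa$ up to retracts, absorbing retracts for uncountable $\kappa$ because they are countable colimits. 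You instead take the idempotent $e$ presenting a given $n\in\N^\kappa$ as a retract of some $p(m_0)$, and lift the mapping telescope $p(m_0)\xrightarrow{e}p(m_0)\xrightarrow{e}\cdots$ term-by-term into $\M^\kappa$ via your key lemma (which is the same lifting observation the paper uses), closing it up using uncountability of $\kappa$. Your version is more constructive: it directly exhibits a $\kappa$-compact preimage of each $n$, rather than deducing essential surjectivity by abstract closure properties. Both approaches hinge in the same way on realizing idempotent splittings as countable colimits.

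One small blemish: in your first paragraph the parenthetical ``the $\kappa$-small colimit closure of $\{p(m):m\in\M^\kappa\}$ is just $p(\M^\kappa)$, as $\M^\kappa$ is closed under $\kappa$-small colimits'' is not a valid justification. Closure of $\M^\kappa$ under $\kappa$-small colimits only tells you that $p$ sends $\kappa$-small colimits of diagrams \emph{in} $\M^\kappa$ into $p(\M^\kappa)$; it does not automatically say that a $\kappa$-small diagram in $\N$ with vertices in $p(\M^\kappa)$, whose edges need not be in the image of $p^\kappa$, has colimit in $p(\M^\kappa)$. Establishing that is exactly what the paper's lifting argument (or your key lemma) is for. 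Fortunately this parenthetical is not load-bearing: your Karoubi claim rests on Thomason--Neeman rather than on it, and your Verdier argument only uses the conclusion of that paragraph (that $n$ is a retract of some $p(m_0)$), which is delivered by the Karoubi half.
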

\begin{proof}
We need to prove first that $\M^\kappa/\mathcal K^\kappa\to \N^\kappa$ is fully faithful. 
    Let $m_0,m_1\in \M^\kappa$. We need to prove that the canonical map $$\colim_{f:m_1\to x,\fib(f)\in\mathcal K^\kappa}\map(m_0,x)\to \map(p(m_0),p(m_1))$$ is an equivalence, i.e. we need to prove that $$p^Rp(m_1)\simeq\colim_{z\in\mathcal K^\kappa, z\to m_1}\mathrm{cofib}(z\to m_1)$$

This is now clear since $p^Rp(m_1)\simeq \mathrm{cofib}(ii^R(m_1)\to m_1)$, and $\mathcal K$ is $\kappa$-compactly generated so that $ii^R(m_1)\simeq \colim_{z\to m_1, z\in\mathcal K^\kappa}z$.

It follows that $\M^\kappa/\mathcal K^\kappa\to \N^\kappa$ is fully faithful, so we now need to prove that it is essentially surjective. 

Given a map $p(x)\to p(y),x,y\in\M^\kappa$, the above argument shows that one can lift it to a map $x\to \tilde y, \tilde y\in \M^\kappa$. It follows from this that the image of $\M^\kappa$ is closed under pushouts (and hence finite colimits in general), under sequential colimits and $\kappa$-small copoducts. Combining these three, we find that the image is closed under $\kappa$-small colimits. 

In particular, $\Ind_\kappa(p(\M^\kappa))\to \Ind_\kappa(\N^\kappa)$ is fully faithful, and the source is closed under colimits, so that the map $\M\to \N$ factors through it - but the latter map is also surjective! Thus $p(\M^\kappa) = \N^\kappa$, up to retracts if $\kappa=\omega$, and literally if $\kappa$ is uncountable (because then $p(\M^\kappa)$ is closed under retracts, as these are given by countable colimits). 
\end{proof}
\section{Splitting invariants}\label{app:invariants}
\newcommand{\K}{\mathcal{K}}
In this appendix, we say a few words about splitting invariants\footnote{These are typically called ``additive invariants'' in the literature, e.g. in \cite{BGT}. We changed the name here to avoid confusing with the usual ``additivity'' of a functor.}, simply enough to be able to phrase and prove the Eilenber swindle, and to relate them to localizing invariants. 
\begin{nota}
We let $\Cat_{\st}$ denote the $\infty$-category of stable $\infty$-categories and exact functors between those. 
\end{nota}
\begin{defn}
Let $E: \Cat_{\st}\to\E$ be a product-preserving functor to a category with finite products. $E$ is said to be a \emph{splitting invariant} if for any cofiber sequence of exact functors $F,G,H : \C\to \D$, $F\to G\to H$, there exists some homotopy $E(G)\simeq E(F)+~E(H)$.
\end{defn}
\begin{rmk}
    In this definition, $E(F)+E(H)$ is defined as follows: we observe that $\Cat_{\st}$ is semi-additive, and so any product-preserving functor $E:\Cat_{\st}\to \E$ lifts canonically to $\CMon(\E)$, where $+$ is clearly defined. 
\end{rmk}

\begin{defn}
Let $\C$ be a stable $\infty$-category. We let $S_2\C \subset \Fun(\square,\C)$ denote the full subcategory spanned by those squares which are cofiber sequences, i.e. cocartesian squares where the bottom left term is a zero object. 

We define three functors $f,t,c: S_2\C\to \C$ that take a cofiber sequence $X\to Y\to Z$ to $X,Y,Z$ respectively \footnote{$f$ stands for ``fiber'', $t$ for ``total'' or ``target'', $c$ for ``cofiber''.}. Note that there is a canonical cofiber sequence of functors $f\to t\to c$ from $S_2\C$ to $\C$. 
\end{defn}
\begin{defn}
Let $\K\xrightarrow{i}\C\xrightarrow{p} \D$ be two exact functors. We say that they form a \emph{split exact sequence} if $p$ is a split Verdier projection with kernel $i:\K\to\C$. 
\end{defn}
\begin{ex}
A fundamental example of a split exact sequence is given by $\C\to S_2\C\to \C$, where the first functor sends $x$ to the cofiber sequence $x\to x\to 0$, and the second sends a cofiber sequence $x\to y\to z$ to $z$. The right adjoints are $(x\to y\to z)\mapsto x$ and $z\mapsto (0\to z\to z)$ respectively. 
\end{ex}
The following lemma is essentially due to Waldhausen
 \cite[Proposition 1.3.2]{Wald}, and proves that our definition of splitting invariant agrees with that of ``additive invariant'' in \cite{BGT}. 
\begin{lm}
Let $E: \Cat_{\st}\to \E$ be a product-preserving functor to a category with finite products. The following are equivalent: 
\begin{enumerate}
    \item $E$ is a splitting invariant; 
    \item For any stable $\infty$-category $\C$, and for the canonical cofiber sequence of functors $$f\to~t\to~c : S_2\C\to~\C,$$ $E(t)\simeq E(f)+E(c)$; 
    \item For any stable $\C$, $E$ applied to $S_2\C\xrightarrow{(f,c)} \C\times \C$ yields an equivalence; 
    \item For any split exact sequence $\K\xrightarrow{i}\C\xrightarrow{p}\D$, letting $r$ denote the right adjoint to $i$, $E$ applied to $\C\xrightarrow{(r,p)}\K\times\D$ yields an equivalence. 
\end{enumerate}
\end{lm}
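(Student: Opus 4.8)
The plan is to prove the chain of equivalences $(1)\Leftrightarrow(2)\Leftrightarrow(3)\Leftrightarrow(4)$ by exhibiting the relevant natural transformations as genuine equivalences after applying the product-preserving functor $E$, using that any product-preserving $E\colon\Cat_{\st}\to\E$ lifts canonically to $\CMon(\E)$ (since $\Cat_{\st}$ is semiadditive), so that sums make sense and the various ``diagonal'' and ``projection'' maps in a semiadditive category can be compared. The least formal ingredient is the $(1)\Rightarrow(2)$ and $(2)\Rightarrow(1)$ loop, which is essentially Waldhausen's additivity theorem packaged as stated; I will take the substance of that from \cite[Proposition 1.3.2]{Wald} and only need to massage it into the language of cofiber sequences of functors.

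\textbf{Key steps, in order.} First I would record that $(1)\Rightarrow(2)$ is immediate: the cofiber sequence $f\to t\to c$ of functors $S_2\C\to\C$ is by construction a cofiber sequence of exact functors, so the defining property of a splitting invariant gives $E(t)\simeq E(f)+E(c)$. For $(2)\Rightarrow(1)$, given an arbitrary cofiber sequence $F\to G\to H$ of exact functors $\C\to\D$, I would observe that it is classified by an exact functor $\Phi\colon\C\to S_2\D$ with $f\circ\Phi\simeq F$, $t\circ\Phi\simeq G$, $c\circ\Phi\simeq H$; applying the hypothesis to $S_2\D$ and then precomposing with $\Phi$ (using that $E$ is functorial and product-preserving, hence additive on the lift to $\CMon(\E)$) yields $E(G)\simeq E(F)+E(H)$. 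Second, $(2)\Leftrightarrow(3)$: the map $S_2\C\xrightarrow{(f,c)}\C\times\C$ has a section $s$ sending $x$ to the split cofiber sequence $x\to x\oplus c'\to c'$... more precisely I would use the two canonical functors $\C\to S_2\C$, namely $x\mapsto(x\to x\to 0)$ and $z\mapsto(0\to z\to z)$, together with the standard fact (the ``$S_2$ is split'' example already in the excerpt, $\C\to S_2\C\to\C$) to write $\mathrm{id}_{S_2\C}$ as built from $f$ and $c$ up to the cofiber-sequence relation; then $E((f,c))$ is an equivalence precisely when $E(t)\simeq E(f)+E(c)$, by a direct computation in the semiadditive target. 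Third, $(3)\Leftrightarrow(4)$: the example $\C\to S_2\C\to\C$ shows $(4)\Rightarrow(3)$ by specialization; conversely, given a split exact sequence $\K\xrightarrow{i}\C\xrightarrow{p}\D$ with $r=i^R$, the fiber sequence of functors $ir\to\mathrm{id}_\C\to p^Rp$ from \Cref{lm:fundamentalsequence} identifies $\C$, via $x\mapsto(ir(x)\to x\to p^Rp(x))$, with a full subcategory of $S_2\C$ on which $f$ restricts to $i\circ r$ and $c$ restricts to $p^R\circ p$; chasing this through, and using that $E(i)$ and $E(p^R)$ are ``inclusions of summands'' (as $i,p^R$ are fully faithful with retractions $r,p$), reduces $E((r,p))$ being an equivalence to $E((f,c))$ being an equivalence on the relevant piece, which is exactly $(3)$.

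\textbf{Main obstacle.} The genuinely hard part is $(2)$ itself — this is the additivity theorem, and I do not intend to reprove it; I will cite \cite[Proposition 1.3.2]{Wald} (and the reformulation in \cite{BGT}) and spend my effort only on the bookkeeping that translates between ``cofiber sequence of functors into $\C$'' and ``exact functor into $S_2\C$'', and on the elementary semiadditive-category manipulations showing that the stated equivalences of functors are equivalent to the stated additivity relations after applying $E$. The one mild subtlety to flag is that all of $(2)$--$(4)$ are statements about $E$ applied to \emph{specific} maps of stable categories, so I must be careful that ``$E((f,c))$ is an equivalence'' is interpreted correctly: it means the induced map $E(S_2\C)\to E(\C)\times E(\C)$ in $\E$ is an equivalence, and the comparison to the additive relation uses the canonical decomposition $E(\C)\times E(\C)\simeq E(\C\times\C)$ together with the two functors $\C\to S_2\C$ as sections; none of this requires anything beyond product-preservation and semiadditivity of $\Cat_{\st}$, so I expect the writeup to be short once additivity is invoked.
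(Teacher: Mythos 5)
Your proof is correct in substance but organizes the argument differently from the paper: the paper establishes the cycle $(1)\Rightarrow(4)\Rightarrow(3)\Rightarrow(2)\Rightarrow(1)$, exploiting that $(4)\Rightarrow(3)$ and $(1)\Rightarrow(2)$ are literal specializations, whereas you prove each adjacent biconditional in the chain. Your $(2)\Rightarrow(1)$ via the classifying functor and $(3)\Rightarrow(2)$ via the section $i_0\oplus i_1$ coincide with the paper's. The genuinely new step is your $(3)\Rightarrow(4)$, which replaces the paper's explicit additivity computation (applying $(1)$ to the cofiber sequence $ir\to\id_\C\to p^Rp$ to conclude $E(\id_\C)\simeq E(ir)+E(p^Rp)$) with a structural argument. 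Filled in, it reads: writing $\Phi(x)=(ir(x)\to x\to p^Rp(x))$ one has $(f,c)\circ\Phi\simeq(i\times p^R)\circ(r,p)$; after $E$, the left side is an equivalence (by $(3)$) composed with the split monomorphism $E(\Phi)$ (split by $E(t)$), hence a monomorphism; cancelling the monomorphism $E(i\times p^R)$ shows $E((r,p))$ is a monomorphism, and $\sigma\colon(k,d)\mapsto i(k)\oplus p^R(d)$ exhibits it as a split epimorphism, so it is an equivalence. This is an appealing shortcut which never invokes additivity directly. Two framing corrections, though: your $(2)\Rightarrow(3)$ is \emph{not} a direct semiadditive computation --- one must feed the cofiber sequence $i_0 f\to\id_{S_2\C}\to i_1 c$ of endofunctors of $S_2\C$ into the classifying-functor trick, i.e., route through the already-established $(1)$; and you should drop the claim of ``taking the substance from Waldhausen'' --- the lemma is a purely formal equivalence of conditions on an abstract $E$, with every implication a short diagram chase, while Waldhausen's additivity theorem concerns whether specific invariants (such as $K$-theory) satisfy these conditions, not their mutual equivalence.
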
.
\begin{proof}

Note that 2. is a special case of 1., and 3. is a special case of 4.. We prove that 1. implies 4., and that 3. implies 2., and finally that 2. implies 1..\newline  

(1. implies 4.):  We observe that in such a split sequence $\K\xrightarrow{i} \C\xrightarrow{p} \D$, using $s$ to denote a right adjoint of $p$ and $r$ a right adjoint of $i$, we have, by \Cref{lm:fundamentalsequence}, a co/fiber sequence $$ir\to \id_\C\to sp$$ By 1., it follows that $E(\id_\C) \simeq E(i)E(r) + E(s)E(p)$.  Now, letting $(i,s) : \K\times\D\to \C$ denote $(c,d)\mapsto i(c)\oplus s(d)$, we find, using that $E$ preserves direct sums, that this means $E(\id_\E) \simeq E((i,s))\circ E((r,p))$. In the other direction, $E((r,p))\circ E((i,s)) \simeq E((r,p)\circ (i,s))$, and $(r,p)\circ (i,s) : (c,d)\mapsto (r(i(c)\oplus s(d)), p(i(c)\oplus s(d)))$ is equivalent to the identity, because $r\circ i$ is equivalent to the identity and $r\circ s \simeq 0$, and similarly, $p\circ s\simeq \id_\D$ and $p\circ i\simeq 0$. 

It follows that $E((i,s))$ and $E((r,p))$ are inverse to one another, thus proving 4.\newline 

(3. implies 2.): We know that $E((f,c))$ is an equivalence, but also that $i_0\oplus~i_1:~(c,c')\mapsto~(c\to~c\oplus~c'\to~c')$ is a section of $(f,c)$, so that it must be sent by $E$ to the inverse of $E((f,c))$. In particular, $$E(t) \simeq E(t)\circ E(\id_{S_2\C}) \simeq E(t) \circ E((i_0\oplus i_1))\circ E((f,c)) \simeq E(t)\circ (E(i_0)\oplus E(i_1))\circ E((f,c)) $$
$$\simeq (E(\id_\C)\oplus E(\id_\C))\circ E((f,c)) \simeq E(f) + E(c)$$  as desired. \newline 

(2. implies 1.): Consider a cofiber sequence $F\to G\to H$ of functors $\C\to \D$. It induces a single functor $S: \C\to S_2\D$ such that postcomposing it with the canonical cofiber sequence of functors $S_2\D\to \D$ yields our original cofiber sequence. In particular, $E(G) \simeq E(t)\circ E(S) $ and by 2. we get $E(t)\circ E(s)\simeq (E(f)+E(c))\circ E(s) \simeq  E(f\circ s)+ E(c\circ s)\simeq E(F)+E(H)$. 
\end{proof}
\begin{prop}
    Let $E:\Cat_{\st}\to \E$ be a splitting invariant. For any $C\in\Cat_{\st}$, the commutative monoid $E(C)$ is group-complete, and $E(\Sigma id_C)$ is an inverse.
\end{prop}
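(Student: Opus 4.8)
The plan is to exhibit an explicit inverse to $E(C)$ in the commutative monoid structure, using the standard ``shift trick''. The key point is that for any stable $\infty$-category $C$, there is a canonical cofiber sequence of endofunctors $\id_C \to 0 \to \Sigma\,\id_C$ of $C$ (coming from the defining pushout square for $\Sigma$ in a stable category, where $0$ denotes the zero endofunctor). Here I am using $0$ to denote the constant functor at a zero object, which $E$ sends to a zero object of $\E$ (equivalently, to the unit of the commutative monoid $E(C)$), since $E$ preserves finite products and the zero category is both initial and terminal in $\Cat_{\st}$, so $0: C \to C$ factors through the zero category.

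First I would apply the splitting invariant property to this cofiber sequence $\id_C \to 0 \to \Sigma\,\id_C$. By the definition of a splitting invariant, there is a homotopy $E(0) \simeq E(\id_C) + E(\Sigma\,\id_C)$. Since $E(0)$ is a zero object of (the commutative monoid lift of) $E$, i.e. the neutral element, this reads $0 \simeq E(\id_C) + E(\Sigma\,\id_C)$ in $\CMon(\E)$ evaluated at $C$. By the same token, reading the cofiber sequence in the other direction, or simply by commutativity of $+$, we also have $0 \simeq E(\Sigma\,\id_C) + E(\id_C)$. This exhibits $E(\Sigma\,\id_C)$ as a two-sided inverse to $E(\id_C)$ with respect to the addition on $E(C)$, which is precisely the assertion that $E(C)$ is group-complete (any commutative monoid in which a generating element — here the identity endomorphism — admits an inverse; but in fact every element of $E(C)$ is in the image of the action and one checks directly that negation is implemented by postcomposition with $\Sigma$, making all of $E(C)$ a group).

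More carefully, to see that \emph{all} of $E(C)$ is group-complete and not just that $E(\id_C)$ is invertible: the commutative monoid $E(C)$ receives, for every endofunctor $F$ of $C$, an element $E(F) \in E(C)$ (at least when $F$ preserves whatever structure is needed; in any case $E(\Sigma): E(C) \to E(C)$ is a well-defined map of commutative monoids induced by postcomposition with the autoequivalence $\Sigma$ of $C$). Applying the cofiber sequence $\id_C \to 0 \to \Sigma\,\id_C$ \emph{postcomposed with an arbitrary exact functor into $C$} — or more slickly, using that $E$ is a functor and $\Sigma_C$ is an endofunctor — one gets for each object class that $E(\Sigma)$ is additive-inverse to the identity of $E(C)$ under $+$; hence for every $x \in E(C)$, $x + E(\Sigma)(x) \simeq 0$. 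Thus $E(\Sigma)$ provides inverses for all elements, so $E(C)$ is a group, i.e. group-complete.

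The main (and essentially only) obstacle is bookkeeping: making precise the statement ``$E(\Sigma\,\id_C)$ is an inverse'' inside the commutative monoid lift $E: \Cat_{\st} \to \CMon(\E)$ guaranteed by semiadditivity of $\Cat_{\st}$, and checking that the homotopy supplied by the splitting-invariant axiom for the cofiber sequence $\id_C \to 0 \to \Sigma\,\id_C$ is indeed an identity of the form $E(\id_C) + E(\Sigma\,\id_C) \simeq 0$ rather than some twisted variant. This amounts to unwinding that $E$ of the zero functor is the neutral element and that $E$, being product-preserving on a semiadditive category, is automatically additive in the sense needed — all of which is formal. There is no hard step; the proof is a direct application of the splitting-invariant axiom to one well-chosen cofiber sequence of endofunctors.
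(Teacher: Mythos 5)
Your proposal is correct and uses exactly the same key step as the paper: apply the splitting-invariant axiom to the cofiber sequence $\id_C\to 0\to \Sigma\,\id_C$ of endofunctors of $C$ to get $\id_{E(C)}+E(\Sigma\,\id_C)\simeq E(0)\simeq 0$, which exhibits $E(\Sigma\,\id_C)$ as a negation map for the commutative monoid $E(C)$. The only wrinkle is the brief type-confusion in the middle (treating $E(\id_C)$, $E(\Sigma\,\id_C)$ as elements of $E(C)$ rather than as endomorphisms of it, and asserting that invertibility of ``a generating element'' forces group-completeness), but your ``more carefully'' paragraph corrects this and lands on the right reading: $E(\Sigma\,\id_C)\colon E(C)\to E(C)$ satisfies $\id+E(\Sigma\,\id_C)=0$, so it is the inverse map witnessing group-completeness.
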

\begin{proof}
There is a fiber sequence $\id_C\to 0\to \Sigma \id_C$ which turns into $E(\id_C)+E(\Sigma \id_C)\simeq E(0) \simeq 0$. Using $E(\id_C)\simeq \id_{E(C)}$, we get the result. 
    \end{proof}
\begin{cor}
    For any category with finite products $\E$, the forgetful functor $\CGrp(\E)\to\E$ induces an equivalence on the categories of splitting invariants. 
\end{cor}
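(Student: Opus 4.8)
The statement to prove is that for any category $\E$ with finite products, the forgetful functor $\CGrp(\E)\to\E$ induces an equivalence on the categories of splitting invariants. The plan is to leverage the immediately preceding proposition: every splitting invariant $E:\Cat_{\st}\to\E$, when lifted canonically to $\CMon(\E)$ (using that $\Cat_{\st}$ is semi-additive, as noted in the remark), in fact takes values in $\CGrp(\E)$, because $E(C)$ is group-complete for every $C$. So first I would make precise the setup: let $\mathrm{SplInv}(\E)$ denote the category of product-preserving functors $\Cat_{\st}\to\E$ satisfying the splitting condition. Since $\Cat_{\st}$ is semi-additive, any product-preserving functor to $\E$ factors uniquely through $\CMon(\E)\to\E$, and in fact this is an equivalence $\mathrm{SplInv}(\E)\simeq\mathrm{SplInv}(\CMon(\E))$ compatible with the forgetful functors, essentially formally (precomposition and postcomposition with adjoint equivalences of the relevant functor categories).

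Second, I would use the previous proposition to observe that the image of $\mathrm{SplInv}(\CMon(\E))\to\mathrm{SplInv}(\E)$, or rather the relevant lift, lands in $\CGrp(\E)$-valued functors: for a splitting invariant $E$ with values in $\CMon(\E)$, the proposition gives that $E(C)$ is group-complete for every $C\in\Cat_{\st}$, with $E(\Sigma\id_C)$ an inverse to $E(\id_C)$; naturality then upgrades $E$ to a functor $\Cat_{\st}\to\CGrp(\E)$. Conversely, the forgetful functor $\CGrp(\E)\to\CMon(\E)$ is fully faithful (it is the inclusion of a full subcategory, those commutative monoids that happen to be grouplike), so precomposing splitting invariants $\Cat_{\st}\to\CGrp(\E)$ with it is fully faithful onto its image, which by the above is everything. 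Putting the two steps together: the forgetful functor $\CGrp(\E)\to\E$ factors as $\CGrp(\E)\to\CMon(\E)\to\E$, and on splitting invariants the first induces a fully faithful functor whose essential image is all of $\mathrm{SplInv}(\CMon(\E))$ (by the group-completeness argument), while the second induces an equivalence $\mathrm{SplInv}(\CMon(\E))\simeq\mathrm{SplInv}(\E)$ (by semi-additivity); hence the composite is an equivalence.

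I expect the only genuinely non-formal input is exactly the content of the preceding proposition (that $E(C)$ is grouplike with explicit inverse $E(\Sigma\id_C)$), which is already proved, so the main ``obstacle'' is really just bookkeeping: making sure the canonical lift through $\CMon(\E)$ is compatible with the splitting condition (it is, since the splitting condition is phrased in terms of $+$, which is the structure on $\CMon(\E)$), and that the upgrade from $\CMon(\E)$-valued to $\CGrp(\E)$-valued functor is natural and functorial in $E$. Concretely I would phrase the argument as: the forgetful functor $\CGrp(\E)\to\E$ is the composite of the fully faithful inclusion $\CGrp(\E)\hookrightarrow\CMon(\E)$ and the forgetful $\CMon(\E)\to\E$; the latter induces an equivalence on splitting invariants by semi-additivity of $\Cat_{\st}$; and a splitting invariant valued in $\CMon(\E)$ automatically factors through $\CGrp(\E)$ by the previous proposition, so the inclusion also induces an equivalence on splitting invariants. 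This is short enough that I would just write it out directly rather than as a multi-step plan.

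\begin{proof}
    The forgetful functor $\CGrp(\E)\to\E$ factors as the composite of the fully faithful inclusion $\CGrp(\E)\hookrightarrow\CMon(\E)$ with the forgetful functor $\CMon(\E)\to\E$. Since $\Cat_{\st}$ is semi-additive, every product-preserving functor $\Cat_{\st}\to\E$ factors uniquely through $\CMon(\E)\to\E$, and this factorization is compatible with the splitting condition, which is expressed purely in terms of the addition; thus the forgetful functor $\CMon(\E)\to\E$ induces an equivalence on categories of splitting invariants. On the other hand, by the previous proposition, for any splitting invariant $E$ with values in $\CMon(\E)$ and any $C\in\Cat_{\st}$ the commutative monoid $E(C)$ is group-complete (with inverse $E(\Sigma\id_C)$); by naturality this upgrades $E$ canonically to a functor $\Cat_{\st}\to\CGrp(\E)$, and since $\CGrp(\E)\hookrightarrow\CMon(\E)$ is fully faithful, precomposition with it is fully faithful on splitting invariants with essential image all of them. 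Composing the two equivalences yields the claim.
\end{proof}
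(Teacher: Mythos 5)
Your proof is correct and matches the approach the paper leaves implicit: the corollary is a direct consequence of the preceding Remark (which already explains the canonical lift through $\CMon(\E)$ via semi-additivity of $\Cat_{\st}$) and the preceding Proposition (group-completeness of $E(C)$), and you have filled in exactly those two steps, using full faithfulness of $\CGrp(\E)\hookrightarrow\CMon(\E)$ to land the second. One small slip: in the final paragraph you write ``precomposition'' where you mean \emph{postcomposition} with the inclusion $\CGrp(\E)\hookrightarrow\CMon(\E)$ (a splitting invariant $\Cat_{\st}\to\CGrp(\E)$ is postcomposed with this inclusion to give one valued in $\CMon(\E)$); the mathematics is fine, only the word is off.
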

In particular, we may as well restrict our attention to additive categories.
\begin{prop}[The Eilenberg swindle]\label{prop:Eilenberg}
    Let $E:\Cat_{\st}\to \E$ be a splitting invariant, and let $C\in\Cat_{\st}$. If there exists a functor $f:C\to C$ with an equivalence $f\oplus \id_C\simeq f$, then $E(C)\simeq 0$. 

    Such a functor exists if $C$ admits countable coproducts or products.
\end{prop}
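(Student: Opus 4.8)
The strategy is the standard swindle argument, now made precise in our setting. Suppose $f\colon C\to C$ satisfies $f\oplus\id_C\simeq f$ as exact functors $C\to C$. Applying the splitting invariant $E$ (which, being product-preserving, lifts canonically to $\CMon(\E)$, so that $E(\id_C\oplus g)\simeq E(\id_C)+E(g)=\id_{E(C)}+E(g)$ for any $g$), we obtain $E(f)\simeq E(f\oplus\id_C)\simeq E(f)+\id_{E(C)}$ in the commutative monoid $\End(E(C))$ — or rather, since $E(C)$ is already group-complete by the previous proposition, we may simply subtract $E(f)$ from both sides to conclude $\id_{E(C)}\simeq 0$, hence $E(C)\simeq 0$. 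The only slightly delicate point is that $E(f\oplus\id_C)\simeq E(f)+E(\id_C)$ is an instance of $E$ sending the direct sum of functors to the sum, which is exactly the compatibility of $E$ with the semiadditive structure of $\Cat_{\st}$ used throughout; I would just cite this.

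For the existence claim, suppose $C$ admits countable coproducts. Define $f\colon C\to C$ by $f(x)=\coprod_{n\in\NN}x$; this is exact since countable coproducts of exact sequences are exact sequences (coproducts are exact in a stable category, and countable ones exist by hypothesis), and functoriality is clear. Then there is a natural equivalence $f(x)\oplus x=\big(\coprod_{n\geq 0}x\big)\oplus x\simeq \coprod_{n\geq 0}x = f(x)$, obtained by reindexing $\NN\sqcup\{*\}\cong\NN$; this equivalence is natural in $x$, so $f\oplus\id_C\simeq f$. The case of countable products is dual, using $f(x)=\prod_{n\in\NN}x$ and the reindexing $\NN\sqcup\{*\}\cong\NN$ again, together with the fact that products are exact in a stable category.

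I do not expect any real obstacle here: the argument is entirely formal once one has the group-completion of $E(C)$ (established in the preceding proposition) and the compatibility of splitting invariants with finite direct sums of functors (which is built into the definition via the lift to commutative monoids). The one thing to be careful about in writing it up is to phrase the cancellation correctly — it is cleanest to invoke group-completeness rather than to argue directly in a monoid — and to note explicitly that the equivalence $f\oplus\id_C\simeq f$ must be one of \emph{functors}, which the reindexing bijection indeed provides naturally.

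\begin{proof}
Since $E$ is product-preserving, it lifts canonically to a functor valued in $\CMon(\E)$, and for any exact $g\colon C\to C$ the cofiber sequence $\id_C\to \id_C\oplus g\to g$ gives $E(\id_C\oplus g)\simeq E(\id_C)+E(g)$; moreover $E(\id_C)\simeq \id_{E(C)}$. Hence if $f\colon C\to C$ is exact with $f\oplus\id_C\simeq f$, then
\[
E(f)\simeq E(f\oplus\id_C)\simeq E(f)+\id_{E(C)}
\]
in the commutative monoid $\End(E(C))$. By the previous proposition, $E(C)$ is group-complete, so we may cancel $E(f)$ and obtain $\id_{E(C)}\simeq 0$, whence $E(C)\simeq 0$.

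It remains to produce such an $f$. If $C$ admits countable coproducts, set $f(x):=\coprod_{n\in\NN}x$. This is an exact functor (coproducts are exact in a stable category, and countable ones exist by assumption), and a bijection $\NN\sqcup\{*\}\cong\NN$ induces a natural equivalence $f(x)\oplus x\simeq f(x)$, i.e. $f\oplus\id_C\simeq f$. If instead $C$ admits countable products, the same argument applied to $f(x):=\prod_{n\in\NN}x$ works, using that products are exact in a stable category.
\end{proof}
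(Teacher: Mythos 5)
Your proof is correct and follows essentially the same route as the paper's: apply $E$ to $f\oplus\id_C\simeq f$, use additivity of $E$ on direct sums to get $E(f)\simeq E(f)+E(\id_C)$, cancel using the group-completeness of $E(C)$ to conclude $\id_{E(C)}\simeq 0$, and construct $f$ via countable (co)products using the reindexing $\NN\sqcup\{*\}\cong\NN$. Your write-up is a bit more explicit about where the additivity comes from, but there is no substantive difference.
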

\begin{proof}
The first part comes from the equivalence $E(f)\simeq E(f)+E(\id_C)$ that we obtain. Together with group-completeness of $E(C)$, it follows that $E(\id_C)\simeq 0$ and so $E(C)\simeq 0$.

    The second part comes from the equivalence $\id_\C\oplus\bigoplus_{\mathbb N}\id_C\simeq \bigoplus_{\mathbb N}\id_C$\footnote{It sometimes comes in handy to observe that for $\bigoplus_{\mathbb N}\id_C$ to exist, we only need to know that $C$ admits countable coproducts of a single object. }.
\end{proof}
We finally discuss the relation to localizing invariants, as defined in \Cref{defn:locinv}. 
\begin{lm}
    Let $\E$ be an additive category with finite limits. Any localizing invariant $\Cat_{\st}\to \E$ is a splitting invariant.
\end{lm}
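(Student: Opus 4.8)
The plan is to reduce the statement to the defining property of splitting invariants, namely that $E$ sends a cofiber sequence of exact functors $F \to G \to H : \C \to \D$ to an equivalence $E(G) \simeq E(F) + E(H)$. Recall that a localizing invariant $E : \Cat_{\st} \to \E$ is, by \Cref{defn:locinv} (adapted from $\Prdbl$ to $\Cat^\perf$), a pointed functor sending fiber--cofiber sequences, i.e.\ Verdier sequences, to fiber sequences in $\E$; and since $\E$ is additive with finite limits, a fiber sequence with base a zero object is split, so $E$ sends Verdier sequences $\K \to \C \to \D$ to direct sum decompositions $E(\C) \simeq E(\K) \oplus E(\D)$. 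In particular $E$ is product-preserving on the relevant sequences, and it is pointed, so it lifts to $\CMon(\E)$ and indeed to $\CGrp(\E)$.

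First I would observe that it suffices to verify condition (4) of the characterization lemma above (the Waldhausen-style criterion): for any split exact sequence $\K \xrightarrow{i} \C \xrightarrow{p} \D$, with $r$ the right adjoint to $i$, the functor $E$ applied to $\C \xrightarrow{(r,p)} \K \times \D$ is an equivalence. But a split exact sequence is in particular a Verdier sequence (a split Verdier projection is a Verdier projection), so $E$ applied to $\K \to \C \to \D$ is a fiber--cofiber sequence in $\E$, hence by additivity a split one, yielding $E(\C) \simeq E(\K) \oplus E(\D)$. The remaining point is to check that the splitting map is implemented precisely by $(r,p)$ and its inverse by $(i,s)$ (with $s$ the right adjoint to $p$), which follows from the fundamental fiber sequence $ir \to \id_\C \to sp$ of \Cref{lm:fundamentalsequence}: applying $E$ to this and using that $E$ lands in commutative groups gives $E(\id_\C) \simeq E(i)E(r) + E(s)E(p)$, exactly identifying the composite $E((i,s)) \circ E((r,p))$ with the identity; the other composite $(r,p) \circ (i,s)$ is an equivalence on the nose from $ri \simeq \id$, $rs \simeq 0$, $pi \simeq 0$, $ps \simeq \id$.

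Alternatively, and perhaps more cleanly, I would verify condition (2) directly: for the universal cofiber sequence of functors $f \to t \to c : S_2\C \to \C$, one wants $E(t) \simeq E(f) + E(c)$. Here one uses that the sequence $\C \to S_2\C \to \C$ (inclusion of $x \mapsto (x \to x \to 0)$ followed by $(x \to y \to z) \mapsto z$) is a split Verdier sequence with the evident right adjoints, so $E$ applied to it splits; then one chases the canonical natural transformations, exactly as in the proof that (3) implies (2) in the lemma above, to extract the additivity relation for $f, t, c$. The main obstacle — really the only thing requiring care rather than bookkeeping — is confirming that $E$, as a \emph{pointed} functor sending Verdier sequences to fiber sequences, automatically sends them to \emph{split} fiber sequences and is product-preserving enough to feed into the characterization lemma; this is where the hypothesis that $\E$ is additive (so that fiber sequences over $0$ split and $\oplus = \times$) is used, together with the fact that a pointed functor out of a semiadditive category preserves finite coproducts hence finite products on the nose. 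Once that is in hand, the rest is the diagram chase with $ir \to \id_\C \to sp$ already carried out in the lemma, so the proof is short.
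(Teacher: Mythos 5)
Your overall strategy is the same as the paper's (establish condition~(4) of the preceding characterization lemma by feeding split Verdier sequences to the localizing property), but two steps in your argument don't actually hold up, and both happen to concern precisely the part of the proof that is not mere bookkeeping. First, the asserted fact that ``a pointed functor out of a semiadditive category preserves finite coproducts hence finite products on the nose'' is false: the pointed endofunctor $A\mapsto A\otimes A$ of $\Ab$ (or of $\Sp$) takes $\mathbb Z\oplus\mathbb Z$ to $\mathbb Z^{4}$, not to $\mathbb Z^{2}$. Product-preservation of $E$ is therefore not automatic and must be \emph{derived} from the localizing hypothesis; it falls out once one knows that $E$ sends every split Verdier sequence $\K\to\C\to\D$ to a sequence that splits as $E(\K)\oplus E(\D)$, by specializing to the trivial sequence $\K\to\K\oplus\D\to\D$.

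Second, and more seriously, the step where you ``apply $E$'' to the cofiber sequence of endofunctors $ir\to\id_\C\to sp$ to get $E(\id_\C)\simeq E(i)E(r)+E(s)E(p)$ is circular: that identity is exactly condition~(1) (the splitting-invariant condition) applied to the cofiber sequence $ir\to\id_\C\to sp$, which is what you are trying to prove. The localizing hypothesis tells you something about fiber--cofiber sequences in $\Cat_{\st}$, not about cofiber sequences inside $\Fun^{\mathrm{ex}}(\C,\C)$, so there is nothing to ``apply $E$'' to. What one actually has is the fiber sequence $E(\K)\xrightarrow{E(i)}E(\C)\xrightarrow{E(p)}E(\D)$ in $\E$, with section $\sigma=E(s)$ of $E(p)$, retraction $\rho=E(r)$ of $E(i)$, and $\rho\sigma=E(rs)=E(0)=0$. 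In an additive $\infty$-category with finite limits, $\theta:=\id_{E(\C)}-E(i)\rho-\sigma E(p)$ satisfies $E(p)\theta=0$, so $\theta$ factors through the fiber $E(\K)$ as $E(i)\theta'$; and then $\theta'=\rho E(i)\theta'=\rho\theta=0$, whence $\theta=0$. This is the content hidden in the paper's ``Since $\E$ is additive, it follows\dots'', and it genuinely uses that $E(\K)$ \emph{is} the fiber of $E(p)$ (the localizing hypothesis), which your argument never invokes.
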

\begin{proof}
Let $E$ be a localizing invariant.

    Let $\K\to \C\to \D$ be a split Verdier sequence. It is in particular a localization sequence and so it is sent to a fiber sequence $E(\K)\to E(\C)\to E(\D)$ in $\E$. 

    Furthermore, the right adjoint to $\K\to\C$ provides a comparison between this split fiber sequence and the split fiber sequence $E(\K)\to E(\K\oplus\D)\to E(\D)$. Since $\E$ is additive, it follows that the map $E(\C)\to E(\K\oplus\D)$ is an equivalence. 
\end{proof}
\begin{rmk}
With values in an arbitrary $\E$ with finite products, there is no version of the above $3$-lemma, and one cannot conclude. In particular, the identity functor of $\Cat_{\st}$ sends localization sequences to fiber sequences, but it is not a splitting invariant. 
\end{rmk}

\section{Unstable Milnor sequences}\label{app:milnor}
The goal of this appendix is to remind the reader of some results from \cite[Chapter IX, §2, 3]{BK} and make them explicit for our purposes. Specifically, we re-introduce Bousfield and Kan's nonabelian $\lim^1$ functor, and establish a criterion for when a map of towers of spaces induces a $\pi_0$-isomorphism on the limits, which is relevant for the main body of the paper. 
\begin{lm}\label{lm:pi0pb}
    Let $X\to Y \leftarrow Z$ be a diagram of spaces, with pullback $P$. For every $p\in P$, the fiber of $\pi_0(P)\to \pi_0(X)\times \pi_0(Z)$ at the image of $p$ is isomorphic to a double coset $\pi_1(Z,p)\backslash\pi_1(Y,p)/\pi_1(X,p)$.   
\end{lm}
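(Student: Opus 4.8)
The statement is a purely homotopy-theoretic fact about the fiber of the map $\pi_0(P)\to\pi_0(X)\times\pi_0(Z)$ induced by the pullback square, so the plan is to reduce everything to the long exact sequence of a fibration and a bookkeeping argument with the $\pi_1$-action. First I would replace the map $X\to Y$ by a fibration (and keep $Z\to Y$ as is), so that the pullback $P = X\times_Y Z$ is a homotopy pullback and comes with an honest fibration $P\to Z$ whose fiber over a point $z\in Z$ with image $\bar z\in Y$ is $\mathrm{fib}_{\bar z}(X\to Y)$. This already tells us that $\pi_0(P)\to \pi_0(Z)$ has, over the component of a chosen $p\in P$, fiber the image of $\pi_0(\mathrm{fib})$; and the fibration sequence $\mathrm{fib}\to X\to Y$ identifies $\pi_0(\mathrm{fib})$ at $p$ with the orbit set $\pi_1(Y,p)/\pi_1(X,p)$ (quotient by the image of $\pi_1(X,p)\to\pi_1(Y,p)$, acting on the right).

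The second step is to track the remaining $\pi_0(X)$-coordinate. Fix $p\in P$ mapping to $x\in X$, $z\in Z$, $y\in Y$. An element of the fiber of $\pi_0(P)\to\pi_0(X)\times\pi_0(Z)$ over $([x],[z])$ is represented by a point $p'\in P$ lying in the same component of $X$ as $p$ and the same component of $Z$ as $p$; concretely, after choosing paths witnessing $x\simeq x'$ in $X$ and $z\simeq z'$ in $Z$, such a $p'$ is the same datum as a loop in $Y$ at $y$ (the discrepancy between the two induced paths $y\to y$), i.e. an element of $\pi_1(Y,y)$, well-defined up to precomposition by the image of $\pi_1(Z,z)$ (changing the chosen path in $Z$) and postcomposition by the image of $\pi_1(X,x)$ (changing the chosen path in $X$). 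This is exactly the double coset $\pi_1(Z,p)\backslash\pi_1(Y,p)/\pi_1(X,p)$. I would phrase this cleanly by contemplating the diagram of fibrations $P\to Z$ sitting over $X\to Y$, so that the fiber inclusion $\mathrm{fib}_y(P\to Z)\hookrightarrow P$ followed by $P\to X$ realizes $\mathrm{fib}_y(P\to Z)$ as $\mathrm{fib}_y(X\to Y)$, and then apply the $\pi_1$-action description of connected components of a fiber to \emph{both} fibrations simultaneously; the $\pi_1(Z,p)$-action on $\pi_0(\mathrm{fib}_y(P\to Z))$ corresponds, under the identification with $\pi_1(Y,y)/\pi_1(X,y)$, to left translation by $\pi_1(Z,p)\to\pi_1(Y,y)$.

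The cleanest route may actually be to cite \cite[Chapter IX]{BK} directly: Bousfield--Kan analyze exactly such pullbacks and their $\pi_0$ in their treatment of the homotopy spectral sequence of a cosimplicial space, and the double coset description appears there. If I spell it out, the main obstacle is purely notational: making precise the claim that ``a point of $P$ in prescribed components of $X$ and $Z$ is the same as a loop in $Y$ modulo the two $\pi_1$-images'', i.e. identifying the relevant fiber as a homotopy fiber product of one-point spaces $\ast\times^h_Y\ast$ over the component of $y$, whose $\pi_0$ is $\pi_1(Y,y)$ with the evident bi-action — and then checking that the residual ambiguity is precisely the two $\pi_1$-images and not something larger. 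Once that identification is set up, the conclusion is immediate. I do not expect any genuine difficulty beyond being careful with basepoints and the left-versus-right conventions for the $\pi_1$-actions.
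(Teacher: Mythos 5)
Your argument is correct, but it follows a different route from the paper's. The paper's proof is a two-step reduction: first restrict to the relevant connected components so one may assume $X,Y,Z$ connected, then observe that the map $\pi_0(P)\to\pi_0(X)\times\pi_0(Z)$ only depends on the $1$-truncations of $X,Y,Z$ (because $\tau_{\leq 1}$ is a right adjoint, so it commutes with pullbacks, and $\pi_0$ of the pullback agrees with $\pi_0$ of the pullback of $1$-truncations), so one may assume $X=BH$, $Y=BG$, $Z=BK$ — and for classifying spaces the statement is the standard identification $\pi_0(BH\times^h_{BG}BK)\cong H\backslash G/K$. Your proof instead replaces $X\to Y$ by a fibration and grinds through the long exact sequence: identify the fiber of $P\to Z$ over $z$ with $\mathrm{fib}_y(X\to Y)$, identify the component of that fiber through $p$ with $\pi_1(Y,y)/\pi_1(X,x)$, and then observe that the further identification by $\pi_0$ of $P$ is precisely the monodromy action of $\pi_1(Z,z)$, which factors through $\pi_1(Y,y)$ and acts by left translation. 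Both are fine; the paper's reduction to groupoids is shorter and hides the basepoint bookkeeping in ``the claim is obvious,'' whereas yours makes the bi-action explicit (which some readers will find more convincing, and which also makes visible why the $\pi_1(Z)$-action preserves the $X$-component: it is the restriction of the $\pi_1(Y)$-action, whose orbits are the fibers of $\pi_0(\mathrm{fib})\to\pi_0(X)$). The one point you should spell out carefully if you write this up is exactly that last compatibility — that the $\pi_1(Z,z)$-monodromy on $\pi_0$ of the fiber of $P\to Z$ agrees with the restriction along $\pi_1(Z,z)\to\pi_1(Y,y)$ of the $\pi_1(Y,y)$-action on $\pi_1(Y,y)/\pi_1(X,x)$; that is where the double coset actually materializes.
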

\begin{proof}
    Without loss of generality, we may assume $X,Y,Z$ are connected, and in this case we need to compute $\pi_0(P)$. Again, without loss of generality we may thus assume that $X,Y,Z$ are $BH,BG,BK$ for discrete groups $G,H,K$ because the induced map on pullbacks is an isomorphism on $\pi_0$. For discrete groups, the claim is obvious. 
\end{proof}
\begin{defn}
    Let $G_\bullet :\mathbb N\op\to \mathbf{Grp}$ be an inverse system of ordinary groups. We let $\lim^1_{non ab}G := \pi_0(\lim_\mathbb N BG_n)$. 
\end{defn}
Via the previous lemma, we can describe it as a double coset, reminiscent of the usual $\lim^1$. 
\begin{lm}
    Let $X_\bullet: \mathbb N\op\to \Ss$ be a diagram of spaces. In this case, for every $f\in~\pi_0(\lim_\mathbb N X_n)$, the fiber of $\pi_0(\lim_\mathbb N X_n)\to \lim_\mathbb N \pi_0(X_n)$ over the image of $f$ is given by $\lim^1_{nonab}\pi_1(X_n,f)$; in particular it only depends on the pro-isomorphism type of $\{\pi_1(X_n,f)\}$. 
\end{lm}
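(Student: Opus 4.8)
The plan is to reduce the statement to the Milnor-type exact sequence for a tower of spaces, tracking basepoints carefully. First I would recall the standard fiber sequence description: for any $\mathbb N\op$-indexed tower $X_\bullet$, the limit $\lim_\mathbb N X_n$ fits into a fiber sequence obtained by writing the limit as an equalizer, namely $\lim_\mathbb N X_n \simeq \mathrm{eq}\big(\prod_n X_n \rightrightarrows \prod_n X_n\big)$ where the two maps are the identity and the shift-and-restrict map; equivalently, $\lim_\mathbb N X_n$ is the pullback of $\prod_n X_n \xrightarrow{(\mathrm{id},\mathrm{shift})} \prod_n X_n\times\prod_n X_n \xleftarrow{\Delta} \prod_n X_n$. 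This exhibits $\lim_\mathbb N X_n$ as a pullback of the shape $A\to B\leftarrow A$ with $A=\prod_n X_n$, $B=\prod_n X_n\times \prod_n X_n$, and the diagonal on one side.

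Next I would apply \Cref{lm:pi0pb} to this pullback square. Fix $f\in \pi_0(\lim_\mathbb N X_n)$, with image the point $(x_n)_n\in\prod_n X_n$. The map $\pi_0(\lim_\mathbb N X_n)\to \pi_0(A)\times_{?}\ldots$ — more precisely, composing the two legs, the fiber of $\pi_0(\lim_\mathbb N X_n)\to \lim_\mathbb N\pi_0(X_n)$ over $[f]$ is, by \Cref{lm:pi0pb} applied to the diagonal pullback, the double coset $\pi_1(A,f)\backslash \pi_1(B,f)/\pi_1(A,f)$ where the two copies of $\pi_1(A,f)=\prod_n\pi_1(X_n,x_n)$ sit inside $\pi_1(B,f)=\prod_n\pi_1(X_n,x_n)\times\prod_n\pi_1(X_n,x_n)$ via the identity and the shift map respectively. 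Here I need to be slightly careful that $\pi_0$ of the target $\lim_\mathbb N\pi_0(X_n)$ is indeed what appears — this uses that $\pi_0$ commutes with the relevant (cofiltered, but sequential) limits of sets, which is exactly the statement that $\lim^1$ of the $\pi_0$'s of the based loop spaces is what obstructs surjectivity; since we are fixing $f$ and looking fiberwise, the relevant target set is $\lim_\mathbb N\pi_0(X_n)$ and the fiber computation is purely group-theoretic.

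Then I would identify this double coset with $\lim^1_{nonab}\pi_1(X_n,f)$ as defined in the excerpt. By \Cref{lm:pi0pb} again (or by unwinding the definition $\lim^1_{nonab}G_\bullet = \pi_0(\lim_\mathbb N BG_n)$ and applying the same equalizer presentation to the tower $BG_n$ with $G_n = \pi_1(X_n,f)$), the double coset $\big(\prod_n G_n\big)\backslash\big(\prod_n G_n\times \prod_n G_n\big)/\big(\prod_n G_n\big)$ with the identity-and-shift inclusions is precisely $\pi_0$ of the limit of the tower $BG_\bullet$, which is $\lim^1_{nonab}\pi_1(X_n,f)$ by definition. This gives the claimed identification of the fiber.

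Finally, the last clause — that this depends only on the pro-isomorphism type of $\{\pi_1(X_n,f)\}$ — follows because $\lim^1_{nonab}$ is computed from $\lim_\mathbb N BG_n$, and a pro-isomorphism of towers of groups induces an equivalence of the towers $BG_\bullet$ in the pro-category of spaces, hence an equivalence on $\lim_\mathbb N$ (cofinality for towers, as used repeatedly in \Cref{lm:basicnuc} and its proof), hence an isomorphism on $\pi_0$. I expect the main obstacle to be bookkeeping: making sure the identity-and-shift maps are matched up correctly on both sides of the double-coset identification, and confirming that the basepoint-dependence is handled cleanly (the fiber over $[f]$ genuinely only sees $\pi_1$ at the compatible sequence of basepoints determined by $f$, so no choices beyond $f$ enter). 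Everything else is a direct application of \Cref{lm:pi0pb} and the definition of $\lim^1_{nonab}$.
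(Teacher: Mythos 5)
Your proposal follows essentially the same route as the paper: write $\lim_\NN X_n$ as the pullback $\prod_n X_n\times_{\prod_n X_n\times \prod_n X_n}\prod_n X_n$ and apply \Cref{lm:pi0pb} to express the fiber as a double coset in $\prod_n\pi_1(X_n,f_n)^2$. Where the paper is terse — it says "mapping $X_n\to\tau_{\leq 1}X_n$, we see that the relevant fibers agree" and leaves the identification with $\lim^1_{nonab}$ implicit — you instead explicitly match the double coset against the one arising from the same equalizer presentation of $\lim_\NN BG_n$ with $G_n=\pi_1(X_n,f_n)$, which is a fine, slightly more hands-on way to close the argument. One small imprecision: you describe the two inclusions $\pi_1(A)\hookrightarrow\pi_1(B)$ as "via the identity and the shift map"; in the pullback $A\xrightarrow{(\mathrm{id},\mathrm{shift})}B\times B\xleftarrow{\Delta}B$ one copy maps in diagonally and the other via $(\mathrm{id},\mathrm{shift})$, so the double coset should be stated with that pairing in mind — but the intended group-theoretic object is the same, and matching it against the $BG_n$ case cancels any bookkeeping error. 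Your justification of the last clause (pro-isomorphism invariance, via cofinality on towers) is correct and in the same spirit as the cofinality arguments used elsewhere in the paper.
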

\begin{proof}
Writing the limit as a pullback $\prod_n X_n\times_{\prod_n X_n \times \prod_n X_n}\prod_n X_n$, and using the \Cref{lm:pi0pb}, we see that this fiber only depends on the $\pi_1$'s of the spaces involved. More precisely, mapping $X_n\to \tau_{\leq 1}X_n$, we see that the relevant fibers agree. 
\end{proof}
\begin{lm}\label{lm:unstableMilnor}
    Let $h_n: X_n\to  Y_n$ be a map of $\mathbb N\op$-indexed diagrams of spaces. Suppose it is a pro-$\pi_0$-isomorphism, and for every $f\in \lim_\mathbb N X_n$, a pro-$\pi_1$-isomorphism at $f$, namely $\pi_1(X_n,f_n)\to \pi_1(Y_n,h_n(f_n))$ is a pro-isomorphism. 

    In this case, $h: \lim X\to \lim Y$ is a $\pi_0$-isomorphism. 
    \end{lm}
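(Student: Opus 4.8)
The plan is to reduce the statement to the two lemmas just proved, using the description of the fibers of the map $\pi_0(\lim X_n) \to \lim \pi_0(X_n)$ in terms of $\lim^1_{nonab}$. First I would observe that surjectivity and injectivity of $h\colon \pi_0(\lim X_n) \to \pi_0(\lim Y_n)$ can be checked fiberwise over $\lim \pi_0(Y_n)$, after knowing that $\lim \pi_0(X_n) \to \lim \pi_0(Y_n)$ is a bijection. That last bijection is immediate: a pro-isomorphism of towers of sets induces an isomorphism on limits (this is the elementary Mittag-Leffler-type statement, and it holds with no finiteness hypotheses because pro-isomorphisms have a cofinal span of honest maps going both ways, up to shifting indices).

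So the core of the argument is the following diagram chase. Fix $g \in \lim \pi_0(Y_n)$; by the bijection on limits of $\pi_0$'s, there is a unique $f \in \lim \pi_0(X_n)$ mapping to it. I want to show $h$ restricts to a bijection between the fiber of $\pi_0(\lim X_n)$ over $f$ and the fiber of $\pi_0(\lim Y_n)$ over $g$. If either fiber is empty I need both to be; if the $X$-fiber is nonempty, pick a lift $\tilde f \in \pi_0(\lim X_n)$, which then has a well-defined $\pi_1$-tower $\{\pi_1(X_n, \tilde f_n)\}$, and by hypothesis the map to $\{\pi_1(Y_n, h_n(\tilde f_n))\}$ is a pro-isomorphism of towers of groups. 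By \Cref{lm:unstableMilnor}'s preceding lemma (the one expressing the fiber as $\lim^1_{nonab}$), the fiber over $f$ is $\lim^1_{nonab}\pi_1(X_n,\tilde f_n)$ and the fiber over $g$ is $\lim^1_{nonab}\pi_1(Y_n, h_n(\tilde f_n))$, and since $\lim^1_{nonab}$ is defined as $\pi_0$ of the limit of the classifying spaces $B(-)$, a pro-isomorphism of towers of groups induces a bijection on $\lim^1_{nonab}$ (again because $B(-)$ is functorial and a pro-isomorphism lifts to honest maps both ways up to reindexing). This gives the bijection on this fiber, hence on all nonempty fibers.

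The one subtlety — and what I expect to be the main obstacle to write cleanly — is the emptiness/nonemptiness comparison of fibers when no lift $\tilde f$ is available a priori, i.e. showing that the $Y$-fiber over $g$ is nonempty \emph{iff} the $X$-fiber over $f$ is. The clean way around this is to note that the $X$-fiber over $f$ is nonempty precisely when $f$ lies in the image of $\pi_0(\lim X_n) \to \lim \pi_0(X_n)$, and similarly for $g$; so I need the images of these two maps to correspond under the bijection $\lim\pi_0(X_n) \xrightarrow{\sim} \lim\pi_0(Y_n)$. Here I would use that nonemptiness of $\lim_{\mathbb N} X_n$ over a compatible system of $\pi_0$-classes is governed by a surjectivity (lifting) condition on the tower, which is itself invariant under pro-isomorphism of the relevant tower of ``based'' spaces; concretely, one can replace each $X_n$ by the connected component picking out $f_n$ and each $Y_n$ by the component of $g_n$, observe the resulting towers are related by a map that is a pro-$\pi_0$-iso (both are eventually connected) and pro-$\pi_1$-iso, and apply the $\lim^1_{nonab}$ description together with the fact that $B(-)$ of a surjection of towers detects nonemptiness.

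Putting this together: the map $h$ is injective and surjective on $\pi_0$ of the limits, fiber by fiber over $\lim \pi_0(Y_n)$, so it is a bijection. I would present it as: (1) pro-iso on $\pi_0$ gives a bijection on $\lim \pi_0$; (2) reduce to a fiberwise statement over this common limit; (3) express each fiber via $\lim^1_{nonab}$ of the appropriate $\pi_1$-tower using the preceding lemma, and invoke invariance of $\lim^1_{nonab}$ under pro-isomorphism of towers of groups; (4) handle the empty-fiber bookkeeping by passing to the relevant path components. Each of steps (1), (3), (4) rests on the single soft principle that a pro-isomorphism of towers in any $\infty$-category induces an equivalence on limits, applied respectively to $\Set$, $\mathbf{Grp}$ (via $B$), and $\Ss$ (via truncations).
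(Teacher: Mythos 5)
Your proposal matches the paper's proof in outline — the same commutative square
\[\begin{tikzcd}
	{\pi_0(\lim_\mathbb NX_n)} & {\pi_0(\lim_\mathbb NY_n)} \\
	{\lim_\mathbb N\pi_0(X_n)} & {\lim_\mathbb N\pi_0(Y_n)}
	\arrow[from=2-1, to=2-2]
	\arrow[from=1-1, to=2-1]
	\arrow[from=1-2, to=2-2]
	\arrow[from=1-1, to=1-2]
\end{tikzcd}\]
the same bijection of the bottom row from the pro-$\pi_0$-isomorphism, the same fiberwise comparison via the $\lim^1_{\mathrm{nonab}}$ description, and the same use of pro-invariance of $\lim^1_{\mathrm{nonab}}$ (as $\pi_0\lim B(-)$). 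The place where you and the paper diverge is exactly the point you flag as the ``main obstacle'': whether the $Y$-fiber over $g$ is nonempty if and only if the $X$-fiber over the corresponding $f$ is. The paper dissolves this by simply asserting that the vertical maps $\pi_0(\lim X_n)\to\lim\pi_0(X_n)$ and $\pi_0(\lim Y_n)\to\lim\pi_0(Y_n)$ are \emph{surjective} (a standard fact from \cite[IX \S3]{BK}: pass to connected components, model the restricted tower by a tower of Kan fibrations of nonempty connected Kan complexes, note each fibration is then surjective on vertices, and apply dependent choice), after which every fiber is nonempty and the diagram chase is immediate.

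Your step (4), by contrast, is circular as written. You propose to pass to the component towers and ``apply the $\lim^1_{\mathrm{nonab}}$ description together with the fact that $B(-)$ of a surjection of towers detects nonemptiness'' — but the preceding lemma identifies the fiber with $\lim^1_{\mathrm{nonab}}\pi_1(X_n,f_n)$ \emph{only for $f$ already lifted to a point of $\lim X_n$}: it describes fibers over elements in the image, so it cannot be used to establish that an element is in the image. The phrase ``$B(-)$ of a surjection of towers detects nonemptiness'' does not parse, and the appeal to ``invariance under pro-isomorphism'' misleads: the nonemptiness of either fiber has nothing to do with the pro-isomorphism hypothesis — it is simply always true. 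Once you grant surjectivity of the verticals outright, the rest of your argument (steps (1)--(3)) is correct and amounts to the same ``easy exercise in set theory'' the paper closes with.
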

    \begin{proof}
          Consider the following commutative square of sets: 
    \[\begin{tikzcd}
	{\pi_0(\lim_\mathbb NX_n)} & {\pi_0(\lim_\mathbb NY_n)} \\
	{\lim_\mathbb N\pi_0(X_n)} & {\lim_\mathbb N\pi_0(Y_n)}
	\arrow[from=2-1, to=2-2]
	\arrow[from=1-1, to=2-1]
	\arrow[from=1-2, to=2-2]
	\arrow[from=1-1, to=1-2]
\end{tikzcd}\]
Our assumptions and the previous lemmas guarantee that it is a square of sets of the following form:
\[\begin{tikzcd}
	S & T \\
	{S_0} & {T_0}
	\arrow["{f,\cong}", from=2-1, to=2-2]
	\arrow["p",two heads, from=1-1, to=2-1]
	\arrow["q",two heads, from=1-2, to=2-2]
	\arrow[from=1-1, to=1-2]
\end{tikzcd}\]
where the vertical maps are surjective, the bottom horizontal map is an isomorphism, and for any $s\in S$, the induced map on fibers $p^{-1}(p(s))\to q^{-1}(f(p(s))$ is an isomorphism. For any such square of sets, the map $S\to T$ is bijective - this is an easy exercise in set theory. 
    \end{proof}
    
\bibliographystyle{alpha}
\bibliography{Biblio.bib}
\end{document}